\documentclass{amsart}
\usepackage{amsmath}
\usepackage{amssymb}
\usepackage{mathtools}    
\usepackage{stmaryrd}     
\usepackage{xcolor}
\usepackage{booktabs}
\usepackage[hidelinks]{hyperref}
\usepackage{aliascnt}
\usepackage[nameinlink]{cleveref}

\usepackage{tikz}
\usetikzlibrary{positioning, arrows.meta, decorations.pathreplacing}

\usepackage[backend=biber,style=alphabetic,useprefix=true]{biblatex}
\usepackage{enumitem}
\setlist{leftmargin=8mm}
\newlist{enumabc}{enumerate}{1}
\setlist[enumabc]{label={\upshape(\alph*)}}
\newlist{enumalgo}{enumerate}{1}
\setlist[enumalgo]{label={\upshape\arabic*.}}

\usepackage{caption}
\usepackage{xspace}
\renewcommand{\th}{\nobreakdash-th\xspace}   

\allowdisplaybreaks

\numberwithin{equation}{section}
\numberwithin{figure}{section}

\usepackage[scr=dutchcal]{mathalpha}

\newcommand{\makesharedthm}[4]{%
  \newaliascnt{#1}{#4}            
  \newtheorem{#1}[#1]{#2}         
  \aliascntresetthe{#1}           
  \crefname{#1}{\MakeLowercase{#2}}{\MakeLowercase{#3}} 
  \Crefname{#1}{#2}{#3}           
}

\numberwithin{allthm}{section}
\makesharedthm{thm}{Theorem}{Theorems}{allthm}
\makesharedthm{prop}{Proposition}{Propositions}{allthm}
\makesharedthm{lem}{Lemma}{Lemmas}{allthm}
\makesharedthm{cor}{Corollary}{Corollaries}{allthm}
\makesharedthm{conj}{Conjecture}{Conjectures}{allthm}
\theoremstyle{definition}
\makesharedthm{defn}{Definition}{Definitions}{allthm}
\theoremstyle{remark}
\makesharedthm{rem}{Remark}{Remarks}{allthm}

\newcommand{\ZZ}{\mathbf{Z}}
\newcommand{\QQ}{\mathbf{Q}}
\newcommand{\FF}{\mathbf{F}}
\newcommand{\RR}{\mathbf{R}}
\newcommand{\eps}{\varepsilon}
\renewcommand{\leq}{\leqslant}
\renewcommand{\geq}{\geqslant}
\newcommand{\divides}{\mathrel{|}}
\newcommand{\ndivides}{\mathrel{\nmid}}
\DeclareMathOperator{\Mint}{\mathsf{M}_{\textrm{int}}}
\DeclareMathOperator{\trace}{Tr}
\DeclareMathOperator{\real}{Re}
\DeclareMathOperator{\wt}{wt}
\DeclareMathOperator{\ord}{ord}
\DeclareMathOperator{\li}{li}
\newcommand{\bbracket}[1]{\llbracket #1 \rrbracket}
\newcommand{\bangle}[1]{\langle #1 \rangle}
\newcommand{\cdotspace}[1]{\hspace{#1} \cdot \hspace{#1}}
\newcommand{\dset}{\mathcal{D}}
\newcommand{\Sset}{\mathscr{S}}
\newcommand{\Aset}{\mathcal{A}}
\newcommand{\Pset}{\mathscr{P}}
\newcommand{\Dset}{\mathscr{D}}
\newcommand{\Rset}{\mathscr{R}}
\newcommand{\Gset}{\mathscr{G}}
\newcommand{\Aalg}{\mathcal{A}}
\newcommand{\fwd}{\mathcal{F}}
\newcommand{\inv}{\mathcal{I}}
\newcommand{\Rring}{\mathscr{R}}
\newcommand{\FFext}{\FF_{2^\lambda}}

\newcommand{\tsp}{\mspace{1.5mu}}    
\newcommand{\textn}{\textnormal}
\newcommand{\step}[2]{\textit{Step #1: #2}}

\newcommand{\floatingbar}[1]{%
   \mathord{\smash{\bar{#1}}\vphantom{#1}}}
\newcommand{\makecost}[1]{%
   \expandafter\DeclareMathOperator\csname #1cost\endcsname{\mathsf{#1}}}
\newcommand{\makebarcost}[1]{%
   \expandafter\DeclareMathOperator\csname #1barcost\endcsname{\floatingbar{\mathsf{#1}}}}
\newcommand{\makestar}[1]{%
   \expandafter\newcommand\csname #1star\endcsname{%
      \csname #1cost\endcsname^{*\mspace{-1mu}}}}
\newcommand{\makebarstar}[1]{%
   \expandafter\newcommand\csname #1barstar\endcsname{%
      \csname #1barcost\endcsname^{*\mspace{-1mu}}}}

\makecost{M}
\makebarcost{M}
\makestar{M}
\makebarstar{M}
\makecost{R}
\makebarcost{R}
\makestar{R}
\makebarstar{R}
\makecost{T}
\makecost{C}
\makestar{C}
\makecost{F}
\makecost{G}
\makecost{K}
\makestar{K}

\makeatletter
\newcommand{\tpmod}[1]{{\@displayfalse\pmod{#1}}}
\makeatother

\DeclareRobustCommand{\restrict}{\mathbin{\protect\tikzrestrict}}
\newcommand{\tikzrestrict}{%
   \begin{tikzpicture}[baseline=0.05ex]
      \useasboundingbox (0,0) rectangle (1.7ex,1ex);
      \draw[line width=0.03em] (0,1ex) -- (1.4ex,1ex) -- (1.4ex,0) -- cycle;
   \end{tikzpicture}%
}

\begin{document}

\title{Faster enumeration of primes}
\author{David Harvey}

\begin{abstract}
We describe several new algorithms for finding all prime numbers
up to a given bound~$N$,
achieving the first ever speedup by a positive power of $\log N$
over the ancient sieve of Eratosthenes.
The fastest version, which is not fully rigorous, runs in
\[
N (\log \log N)^{1+o(1)}
\]
bit operations when analysed in the multitape Turing model.
This improves on the best existing algorithms
due to Pritchard (1981), Atkin--Bernstein (2004) and Sergeev (2016)
by a factor of almost $\log N$.
We also present a rigorous randomised (Las Vegas) variant
that is slower by a factor of $(\log \log N)^{1+o(1)}$,
and a rigorous deterministic variant
that is slower by a factor of $(\log N)^{1/2+o(1)}$.
The new algorithms make heavy use of
fast polynomial arithmetic over finite fields,
and also involve ideas from the theory of error-correcting codes.
\end{abstract}

\maketitle

\setcounter{tocdepth}{1}

\begin{center}
\textit{This work is dedicated to Terry Gagen, \\
who taught me that ``it always just fits''}
\end{center}
\bigskip

\makeatletter
\renewcommand{\tocsection}[3]{%
\indentlabel{\makebox[2em][l]{\@ifnotempty{#2}{\ignorespaces#1 #2.\quad}}}#3}
\makeatother
\tableofcontents

\section{Introduction}
\label{sec:introduction}


\newcommand{\sectionhyperref}[2]{%
   \hyperref[#1]{\begin{tabular}{@{}c@{}}\ref*{#1}. #2\end{tabular}}}

\begin{figure}[ht]
\centering

\begin{tikzpicture}[
    section/.style={align=center, draw=black, rounded corners=1pt,
    inner xsep=6pt, inner ysep=3pt},
   arrow/.style={-{Stealth[length=5pt,width=4pt]}}
]

\def\padtop{9pt}
\def\padbottom{7pt}
\def\padleft{3pt}
\def\padright{7pt}

\node[section]
   (intro)
   {\sectionhyperref{sec:introduction}%
   {Introduction}};

\node[section, right=1cm of intro.north east, anchor=north west]
   (turing)
   {\sectionhyperref{sec:turing}%
   {Turing machines}};

\node[section, below=0.6cm of intro, xshift=-3.8cm, anchor=north west]
   (genfunc)
   {\sectionhyperref{sec:generating-function}%
   {A generating function \\ for primes}};

\node[section, right=0.8cm of genfunc.north east, anchor=north west]
   (polyarith)
   {\sectionhyperref{sec:polynomial-arithmetic}%
   {Polynomial \\ arithmetic}};

\node[section, right=0.8cm of polyarith.north east, anchor=north west]
   (compression)
   {\sectionhyperref{sec:compression}%
   {Compression and \\ decompression}};

\node[section, below=0.9cm of genfunc, xshift=-0.4cm, anchor=north west]
   (core)
   {\sectionhyperref{sec:core}%
   {The core algorithm}};

\node[section, right=1.1cm of core.east, anchor=west]
   (sieve)
   {\sectionhyperref{sec:sieve}%
   {Sieve estimates for \\ square-rough integers}};

\node[section, below=0.9cm of core, xshift=-3.5cm, anchor=north west]
   (heuristic)
   {\sectionhyperref{sec:heuristic}%
   {The heuristic \\ algorithm}};

\node[section, right=0.8cm of heuristic.north east, anchor=north west]
   (probabilistic)
   {\sectionhyperref{sec:probabilistic}%
   {The probabilistic \\ algorithm}};

\node[section, right=0.8cm of probabilistic.north east, anchor=north west]
   (deterministic)
   {\sectionhyperref{sec:deterministic}%
   {The deterministic \\ algorithm}};

\draw[arrow] (turing) -- (polyarith);
\draw[arrow] (genfunc) -- (core);
\draw[arrow] (polyarith) -- (core);
\draw[arrow] (compression) -- (core);
\draw[arrow] (polyarith) -- (compression);
\draw[arrow] (core) -- (heuristic);
\draw[arrow] (core) -- (probabilistic);
\draw[arrow] (core) -- (deterministic);
\draw[arrow] (sieve) -- (probabilistic);
\draw[arrow] (sieve) -- (deterministic);
\draw[arrow,dashed] (heuristic) -- (probabilistic);
\draw[arrow,dashed] (probabilistic) -- (deterministic);


\end{tikzpicture}

\caption{Main dependencies between sections.
   Dashed arrows indicate dependence on minor auxiliary results.}
\end{figure}

\subsection{Notation}
\label{sec:notation}

The notation $f = O(g)$, or equivalently $f \ll g$,
means that there is a constant $C > 0$ such that
$|f(x)| \leq C g(x)$ for all $x$ in the domain under consideration.
We use subscripts to indicate that $C$ depends on some other variable,
writing for instance $f = O_\eps(g)$ or $f \ll_\eps g$.

The notation $f = \Theta(g)$, or equivalently $f \asymp g$,
means that both $f \ll g$ and $g \ll f$ hold.
The notation $f \sim g$ means that
$f(x)/g(x) \to 1$ as $x \to \infty$,
and $f = o(g)$ means that $f(x)/g(x) \to 0$ as $x \to \infty$.
In particular, $o(1)$ stands for a function that
approaches zero as $x \to \infty$.

We write $\log x$ for the natural logarithm and
$\log_2 x$ for the base two logarithm.
We define $\lg x \coloneqq \max(1, \lceil \log_2 x \rceil)$,
so that $\lg x$ takes positive integer values for all $x > 0$.
We write $\log^* x$ for the iterated logarithm,
i.e., $\log^* x$ is the smallest integer $k \geq 0$ such that
$\log^{\circ k} x \leq 1$,
where $\log^{\circ k}$ denotes the $k$-fold composition of $\log$.

In many situations, especially when writing complexity bounds,
we observe the convention that $\log x$ should be read as $\log \max(x, 2)$.
For instance, if an algorithm is said to run in time $O(\lambda n \log n)$,
then when $n = 1$ the running time is $O(\lambda)$, not zero.
We rely on the reader's judgment to decide when this substitution
should be made; it should not cause any confusion.

Let $R$ be a ring (always assumed commutative with identity).
For $n \geq 0$ we write $R[x]_n$ for the set of polynomials in $R[x]$
of degree less than~$n$,
i.e., the $R$-module spanned by $\{1, x, \ldots, x^{n-1}\}$.
Similarly, $R[x^{-1}]_n$ denotes the $R$-module spanned by
$\{1, x^{-1}, \ldots, x^{-n+1}\}$.
We write $R\bbracket{x}$ and $R\bbracket{x^{-1}}$ for the rings of formal
power series in $x$ or $x^{-1}$ over $R$.
If $f \in R[x]$ or $R\bbracket{x}$,
we often write $f_i$ for the coefficient of $x^i$ in~$f$.
For $f \in R[x^{-1}]$ or $R\bbracket{x^{-1}}$,
by $f_i$ we mean instead the coefficient of $x^{-i}$.

\subsection{The sieve of Eratosthenes}
\label{sec:eratosthenes}

The object of this paper is to describe several new algorithms for
finding all prime numbers up to a prescribed bound~$N$.
This problem has a long history going back to Eratosthenes
(3rd--2nd century BCE),
whose sieve method is familiar to modern school students.
To perform the sieve, 
one first writes down all the integers from $2$ up to~$N$.
After crossing out the multiples of~$2$,
one discovers that the next prime is~$3$.
Continuing in this way, crossing out multiples of~$3$, then~$5$,
and so on for all primes up to~$\sqrt N$,
the integers remaining are exactly the primes up to~$N$.%
\footnote{Greaves \cite[\S1.1.1]{Greaves-sieves}
expresses some doubt that Eratosthenes' main goal was to locate primes,
suggesting instead that he may have been building a factor table.
His source for this claim is apparently Dickson
\cite[Ch.\,XIII]{Dickson-history-vol1}.
This is an interesting possibility, but appears to be a minority view.
Unfortunately Eratosthenes' original work has been lost.}

Eratosthenes' sieve is remarkably efficient.
To describe its asymptotic time complexity precisely,
we must specify a model of computation.
Most of the existing literature on prime enumeration algorithms
assumes some variant of the RAM (random access machine) model.
The exact model is not always clearly stated in these works,
but for the present discussion
let us assume that memory is made up of machine words,
each of which can store an integer of magnitude $N^{O(1)}$,
i.e., requiring $O(\log N)$ bits,
and let us define the time complexity of an algorithm to be
the number of ``word operations'' that it performs.
Such operations include basic arithmetic operations on single words
(addition, subtraction, comparison, and sometimes multiplication or division),
and memory accesses (reads or writes) to single words
via a word-sized address.
For a more detailed discussion of the sort of model we have in mind,
see \cite[\S1.2]{AHU-algorithms}.

To analyse the complexity of Eratosthenes' sieve,
observe that it marks off $N/p + O(1)$ multiples of $p$
for each prime $p < \sqrt N$.
By Mertens' theorem \cite[Thm.\,2.7(d)]{MV-mult-nt} we have
$\sum_{p < \sqrt N} N/p \sim N \log \log N$,
so the sieve runs in
\begin{equation}
\label{eq:eratosthenes-bound}
O(N \log \log N)
\end{equation}
word operations.

On the other hand,
according to the prime number theorem \cite[Eq.\,8.1]{MV-mult-nt}
there are about $N / \log N$ primes less than~$N$,
so it is conceivable that they can be generated
using only $O(N / \log N)$ word operations.
Over the past two thousand years,
the complexity bound \eqref{eq:eratosthenes-bound} has been modestly improved,
but not to anything approaching $O(N / \log N)$.
Prior to the present work, the best result was
\begin{equation}
\label{eq:pritchard-bound}
O\left(\frac{N}{\log \log N}\right)
\end{equation}
word operations,
which is faster than the basic sieve by a factor of $(\log \log N)^2$,
but still short of the ideal $O(N / \log N)$ by a factor of almost $\log N$.
The bound \eqref{eq:pritchard-bound} was first achieved
by Pritchard \cite{Pri-sublinear}.
Atkin and Bernstein later discovered a different approach that yields
the same bound \cite{AB-prime-sieves}.
We will give an overview of these algorithms in \Cref{sec:ram-results}.

More than a decade after establishing
the bound \eqref{eq:pritchard-bound},
with no further progress having been made,
Pritchard conjectured that \eqref{eq:pritchard-bound} is optimal
\cite[\S6]{Pri-incremental}.
In this paper we will see that this claim is incorrect.

\subsection{The Turing model}
\label{sec:turing-model}

Our new algorithms are analysed most naturally in a bit-oriented
model of computation.
For most of the paper we will work in the standard
\emph{multitape Turing model}.
In this model, time complexity refers to the number of steps
(or ``bit operations'') performed by the Turing machine.
For a detailed exposition of this model see
\cite[Ch.\,2]{Pap-complexity} or \cite[\S1.6]{AHU-algorithms}.

We emphasise that the Turing model is much more conservative regarding
the cost of memory access than the RAM model.
For example, to access the $i$\th bit of an array of~$n$ bits,
one potentially has to do $\Theta(n)$ work to move the tape head
to the $i$\th slot.
This is especially significant in sieving algorithms,
which frequently write to widely separated memory locations
in quick succession.
Thus, even though a RAM might appear to be doing
$O(\log N)$ ``bit operations'' of work per step,
one cannot simply multiply the number of word operations by $\log N$
to deduce the cost in the Turing model.

As an illustration,
consider the RAM bound \eqref{eq:pritchard-bound}
for enumerating the primes up to~$N$.
One might guess that this should correspond to
\begin{equation}
\label{eq:pritchard-bit}
O\left(\frac{N \log N}{\log \log N}\right)
\end{equation}
bit operations.
But in fact, prior to this paper, it was not known how to achieve
\eqref{eq:pritchard-bit} on a Turing machine.
Instead, the best bound known for this model was
\begin{equation}
\label{eq:sergeev-bound}
O(N \log N)
\end{equation}
bit operations,
which falls behind \eqref{eq:pritchard-bit} by a factor of $\log \log N$.
This relatively recent result is due to Sergeev \cite{Ser-prime-turing}
(see \cite{Ser-prime-turing-arxiv} for an English translation).

On the other hand,
it \emph{is} possible to convert a Turing bound
directly to a RAM bound via a simulation argument.
Suppose that the Turing machine has $t$ tapes
and an alphabet with $s$ symbols.
By inspecting an interval of width $w$ centred around each tape head,
the RAM can predict the next $\approx w/2$ steps
executed by the Turing machine.
Taking $w \approx (\log N) / (2 t \log s)$,
the number of possibilities is $s^{tw} \approx N^{1/2}$.
By building a precomputed lookup table of size roughly $N^{1/2}$
for each machine state (at a cost of $N^{1/2+o(1)}$ operations)
and encoding the contents of the Turing machine tapes
in the RAM memory using $\Theta(\log N)$ symbols per word,
the RAM can simulate $w/2 = \Theta(\log N)$ steps of the Turing machine
with only $O(1)$ word operations.
Thus, the bit complexity bounds stated in our main results below
(\Crefrange{thm:main-deterministic}{thm:main-heuristic})
may be converted to RAM word operation bounds
by simply dividing by $\Theta(\log N)$.

\subsection{Summary of new results}
\label{sec:new-results}

The main results of this paper are the following three theorems,
which give bit complexity bounds for finding all primes $p < N$
under various conditions.
Note that whenever we discuss an algorithm that generates all primes $p < N$,
it is expected that the primes are written in standard binary notation
and listed in increasing order.
The size of the output is therefore
$\sum_{p < N} \Theta(\log p) = \Theta(N)$ bits;
this observation also implies a lower bound of $\Omega(N)$
for the bit complexity.

\begin{rem}
Other output formats are of course possible.
One could write a bit array $(a_0, \ldots, a_{N-1})$
with $a_n = 1$ if and only if $n$ is prime.
This is actually equivalent to our format,
as one can convert back and forth in linear time
(\Cref{lem:convert-format}).
A more aggressive option would be to compress the data by storing
the \emph{differences} between consecutive primes.
As pointed out in \cite{Pri-compact,Pri-incremental},
under this scheme the total space required is only
$O(N \log \log N / \log N)$ bits.
It is conceivable that a Turing machine could generate
such a compressed table in (say)
$(N / \log N) (\log \log N)^{O(1)}$ bit operations,
but such a complexity bound seems hopelessly optimistic at present.
\end{rem}

Our first main result is as follows.
\begin{thm}[Deterministic enumeration of primes]
\label{thm:main-deterministic}
There is a Turing machine with the following properties.
Its input is a positive integer~$N$
and its output is the list of all primes $p < N$.
It runs in time
\[
N (\log N)^{1/2} (\log \log N)^{1+o(1)}.
\]
(See \Cref{rem:main-deterministic-precise} for a more precise bound.)
\end{thm}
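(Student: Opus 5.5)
The plan is to reduce prime enumeration to computing, for each block of integers, a generating function that separates primes from composites with few prime factors. We then evaluate this generating function over $\FF_2$ (or a small extension $\FFext$) using fast polynomial arithmetic.

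\textbf{Preprocessing.} First, recursively enumerate the primes up to $N^{1/2}$ (or $N^{1/3}$). This costs negligible time by induction, since a bound $M(\log M)^{1/2+o(1)}$ at $M=N^{1/2}$ is $o(N)$. Next, run a cheap partial sieve: strike out multiples of all primes up to a bound $y$ with $\log y \asymp \log\log N$. The survivors are $y$-rough. By the estimates promised in the section on square-rough integers, survivors that are not prime are either products of exactly two primes $pq$ with $p,q>y$, or belong to a sparse exceptional set of size $N(\log N)^{-1/2}$-ish. Sieving by small primes in the Turing model costs $O(N\log\log N)$ via the standard block-wise approach, since we sort arithmetic progressions within blocks of size $\sqrt N$. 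That is within budget.

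\textbf{Core step.} Split $[0,N)$ into blocks of length $L$. For each block, form the generating function over $\FF_2$ described in the section on the generating function for primes, namely
\[
F(x)=\sum_{p\le \sqrt N}\ \sum_{m} x^{pm}
\]
restricted to the block. In compressed form this is a sum of rational functions $x^{a}/(1-x^{p})$. Its coefficients record, mod $2$, the number of representations $n=pm$. The idea of the core algorithm is that only the survivors of the partial sieve need testing. So we evaluate $F$ only at the few surviving positions by multipoint/transposed evaluation, using the compression and decompression lemmas. These treat the sparse set of survivors as the support of a sparse vector, recovered via a syndrome-decoding (Reed–Solomon style) argument over $\FFext$. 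The sum of the rational functions is computed by a product tree of the denominators $1-x^p$. With Turing-model polynomial multiplication in $O(n\log n)$ bit operations, each block costs about $L\log L$ times the size factor of the coefficients.

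Parameters are then balanced. The deterministic loss comes from the fact that we cannot randomise the choice of evaluation points or hash. We therefore must handle ambiguous residues, namely numbers with an even number of representations, such as squares of primes and products $pq$ whose count is even. We handle these deterministically by a second pass over the square-rough exceptional set, whose density from the sieve estimates is about $(\log N)^{-1/2}$. Choosing the block length and $y$ to equate the cost of the core evaluation, about $N(\log\log N)^{1+o(1)}$ per unit of work, with the cost of directly verifying the exceptional candidates by trial division against the small-prime list gives the $(\log N)^{1/2}$ factor.

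\textbf{Main obstacle.} The hardest step will be the deterministic control of the exceptional set, that is, bounding rigorously (without heuristics on random evaluation) the number of survivors for which the mod-$2$ generating function gives a false answer. For this I would invoke the square-rough sieve estimates to show this set has size $O(N/(\log N)^{1/2})$ per unit scale, and that each element can be resolved in $(\log N)(\log\log N)^{O(1)}$ bit operations after sorting. Sorting the candidates is $O(n\log n)$ on tapes, which is the source of the final $(\log\log N)^{1+o(1)}$ factor.
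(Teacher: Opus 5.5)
\textbf{There is a genuine gap: the core step fails.} The series $\sum_{p\le\sqrt N}\sum_{m\ge1}x^{pm}$ over $\FF_2$ is not the paper's generating function. Its coefficient at $n$ is the parity of the number of distinct primes $p\le\sqrt N$ dividing $n$. So every $n$ with an even number of such prime factors (e.g.\ $pq$ with $y<p<q\le\sqrt N$) looks exactly like a prime $>\sqrt N$. These form a set far denser than $(\log N)^{-1/2}$.

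Your preprocessing claim is also false. After sieving to $y$ with $\log y\asymp\log\log N$ there remain $\asymp N/\log y$ survivors. Most of them have many prime factors; they are not mainly products $pq$.

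Compression and decompression are not a device for ``evaluating at surviving positions''. They recover $a\in\FF_2^T$ from $\kappa(a)$ only when $\wt(a)\le R$. They pay off in the paper for two reasons. First, the mod-$2$ series $H_{-1}+H_{-2}+H_2$ built from the forms $a^2+4b^2$ and $a^2\pm 2b^2$ (\Cref{thm:H-congruence}) is supported, up to a sparse correction, on the odd square-primes, so it is sparse on every length-$T$ interval. Second, $\kappa$ is linear and can be pushed through the inverse transforms. Hence only about $R\log T$ rather than $T$ inverse transforms are needed, and slicing modulo $W$ makes most forward transforms vanish. You give no mechanism for reading off coefficients at scattered positions more cheaply than expanding the series to length $N$. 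That expansion alone costs at least $\Mcost(N)\gg N\log N$, which is above the target.

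\textbf{The deterministic part is missing its key ingredient.} The real obstruction is not parity ambiguity. It is that decompression silently returns an impostor whenever an interval contains more than $R$ odd square-primes, and nothing deterministic in $\kappa(a^r)$ reveals this. The paper locates such intervals in advance:
\begin{itemize}
\item It runs a generalised Sergeev sieve (\Cref{thm:sergeev}) for $Y$-rough integers with $\log Y\asymp(\log N)^{1/2}\log\log N$, at cost $O(N\log Y)$.
\item It rejects intervals containing more than $R$ odd square-$Y$-rough integers (or a square-prime that is not square-$Y$-rough).
\item It uses the Selberg-sieve bound \Cref{thm:sieve-main} to show that only $O(N/\log^{13}N)$ intervals are rejected.
\item It tests the elements of rejected intervals with AKS via \Cref{prop:square-prime-bulk}, in total time $O(N)$.
\end{itemize}
The $(\log N)^{1/2}$ comes from balancing $N\log Y$ against the core cost $N\frac{R\log N}{T}(\log\log N)^{1+o(1)}$. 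This balance is forced because \Cref{thm:sieve-main} only gives a useful bound when $R/T\gg\log\log N/\log Y$.

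Your claim that each exceptional candidate can be resolved in $(\log N)(\log\log N)^{O(1)}$ bit operations, or by trial division against small primes, has no basis. Trial division by small primes cannot certify primality. The available deterministic test costs $(\log N)^{10.5+o(1)}$, which is exactly why the exceptional set must be shown to have size $N/(\log N)^{c}$ with $c>11.5$. Finally, the $(\log\log N)^{1+o(1)}$ factor does not come from sorting.
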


\Cref{thm:main-deterministic} improves on
the previous best bit complexity bound \eqref{eq:sergeev-bound}
by a factor of almost $(\log N)^{1/2}$.
This result represents the first ever speedup by
a positive power of $\log N$ over Eratosthenes' sieve,
and closes almost half the gap between \eqref{eq:sergeev-bound}
and the trivial $\Omega(N)$ lower bound.
The proof is given in \Cref{sec:deterministic}.

To put this result into perspective,
observe that even \emph{writing down the integers from $1$ up to $N$}
takes more time; namely, $\Theta(N \log N)$ bit operations.
Another example is that
\emph{sorting an incorrectly ordered list of the primes $p < N$
into the correct order} requires $\Theta(N \log N)$ bit operations
via the standard merge sort algorithm (\Cref{lem:sort}).

Our second main result, proved in \Cref{sec:probabilistic},
shows that we can do even better if randomisation is permitted.
\begin{thm}[Probabilistic enumeration of primes]
\label{thm:main-probabilistic}
There is a probabilistic Turing machine with the following properties.
Its input is a positive integer~$N$.
Its output is either the list of all primes $p < N$, or ``FAIL''.
The second case occurs with probability at most~$\frac12$.
It runs in time
\begin{equation}
\label{eq:main-probabilistic}
N (\log \log N)^{2+o(1)}.
\end{equation}
(See \Cref{rem:main-probabilistic-precise} for a more precise bound.)
\end{thm}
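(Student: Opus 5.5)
The plan is to reduce the enumeration to a problem about a generating function for primes over a finite field of characteristic two, and then solve that problem with fast polynomial arithmetic, using randomisation only to certify correctness. Write $\chi_n$ for the indicator that $n$ is prime. First we pre-sieve: primes up to a small bound $y$ are removed by a cheap wheel, so that only $y$-rough integers remain. Among these, a composite $n < N$ must have a prime factor $p \leq \sqrt N$. The main idea is to count representations of $n$ as a product in a structured way, for instance $n = p m$ with $p$ in a fixed range and $m$ rough. This count can be encoded as the coefficients of a product of sparse power series: one series is indexed by the logarithmic scale, or equivalently by a substitution $x^{n}$ evaluated in a suitable ring. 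Working mod~$2$ over $\FFext$ for suitable $\lambda \approx \lg\lg N$, the parity information then identifies the composites.

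The steps, in order, are as follows.
\begin{enumerate}
\item Split $[0,N)$ into blocks of length about $(\log N)^{O(1)}\sqrt N$.
\item On each block, form the polynomial $\sum_n a_n x^n$ whose coefficients encode the numbers still unsieved.
\item Compute, via multiplication of sparse polynomials by dense ones, the aggregated contribution of all primes $p$ in a dyadic range. Arithmetic over $\FFext$ costs $n\lambda^{1+o(1)}$ per length-$n$ product, which is where the factor $(\log\log N)^{1+o(1)}$ arises.
\item Recover the actual set of composites from this aggregated data. The key device is compression and decompression: the set of composites surviving the pre-sieve in a block is sparse relative to the block length. We therefore only need a syndrome-like sketch (power sums over $\FFext$, as in decoding BCH/Reed--Solomon codes) of the indicator vector, and we recover the support by a Berlekamp--Massey/root-finding step.
\item Output the survivors and convert to the binary list format in linear time.
\end{enumerate}

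Randomisation enters in two places. First, we choose random evaluation points or a random field embedding so that the sketches are injective with good probability. Second, and more importantly, the sieve estimates for square-rough integers only bound the number of exceptional integers (those with a repeated small prime factor, or too many representations) on average. We choose a random shift or random parameter so that, with probability at least $\tfrac12$ overall, each block has few enough exceptions for decoding to succeed. We then verify each decoded block, and output FAIL if any verification fails, giving a Las Vegas algorithm. The extra factor $(\log\log N)^{1+o(1)}$ over the heuristic version comes from the verification and from the need to handle worst-case exception counts with a larger code length.

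The main obstacle is step (4) combined with the Turing-model cost. We must show that the per-block sketch size times $\lambda^{1+o(1)}$, summed over blocks, is $N(\log\log N)^{2+o(1)}$. This requires the rigorous sieve bound that the expected number of non-square-rough or multiply-represented integers per block is small. It also requires that all data movement is arranged in long sequential passes, using sorting only on sets of size $O(N/\log N)$ so that the cost is $O(N)$. I would attack this by first fixing the block length and $\lambda$, then applying the square-rough sieve estimate with Markov's inequality to the random shift, and finally choosing the code redundancy so that failure occurs with probability at most $\tfrac12$.
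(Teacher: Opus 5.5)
Your proposal has a genuine gap at the first step. Counting factorisations $n = pm$ is a multiplicative (Dirichlet) convolution. Its generating series $\sum_p \sum_m x^{pm}$ is a sum of substitutions $x \mapsto x^p$, not a product of two power series in $x$. Passing to a ``logarithmic scale'' does not produce integer exponents, so fast polynomial multiplication never applies. The missing idea is the Atkin--Bernstein-type identity of \Cref{thm:H-congruence}. The parity of the number of representations by $a^2+4b^2$, $a^2+2b^2$ and $a^2-2b^2$ is an \emph{additive} convolution of sparse series of squares:
\[
F G_{-1} + F G_{-2} + F \restrict G_2 \equiv \sum_{m \geq 1,\, \textn{$m$ odd}} \; \sum_{p \geq 3,\, l \geq 1} x^{m^2 p^l} \pmod 2 .
\]
This is why the paper targets odd square-primes, and recovers the primes only at the end. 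Their density $\asymp 1/\log N$ is what makes the compression ratio $R/T$ small. Your sparse object, composites among $y$-rough integers, has density $\asymp 1/\log y$, which is far too large. Your cost accounting also fails. A length-$n$ product over $\FFext$ costs $\Mcost(\lambda n)$, which carries a $\log n$ factor, not $n\lambda^{1+o(1)}$. Computing the products and then taking syndromes already costs about $N \log N$. The paper's saving comes from two devices. It applies the linear map $\kappa$ in transform space, before the inverse transforms (see \eqref{eq:kappa-ar}), so only $S \ll R \log T$ inverse transforms are needed per $T$ output coefficients. It also slices by $W$, so that most forward transforms vanish.

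The randomisation also cannot do what you ask of it. Whether an interval contains more than $R$ odd square-primes is a property of the integers, not of your random choices. Markov's inequality over a random shift only bounds the \emph{expected number} of bad blocks. \Cref{thm:sieve-main} only bounds that number by $N/(\log N)^{O(1)}$, nowhere near $\tfrac12$. So ``verify every block and output FAIL otherwise'' has no provable failure bound; it is just the heuristic algorithm.

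The paper instead \emph{rejects} the dense intervals and handles them by a fallback, \Cref{prop:square-prime-bulk}, which uses Sergeev's sieve up to $N/\log^2 N$ plus AKS. \Cref{thm:sieve-main}, applied by pigeonhole to subintervals of length $U \asymp \log N$, shows there are only $O(N/\log^{14} N)$ rejected intervals, so the fallback costs $O(N)$. Randomness, via a random permutation $\pi$ and $\kappa_\pi = \kappa \circ Q_\pi$, serves only to make impostors unlikely. For fixed $a$ with $\wt(a) > R$, the probability that $\kappa_\pi(a)$ decodes to a vector of weight $\leq R/2$ is below $2^{1-R}$ (\Cref{prop:kappa-unlucky}). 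With $R \approx \log_2 N$ this survives a union bound over all $N/T$ intervals. A global count (\Cref{prop:count-square-primes}) then detects any undetected error and triggers FAIL. The extra $\log\log N$ comes from forcing $R \asymp \log N$ while the sieve limits $T$ to $\asymp \log^2 N/\log\log N$, not from verification.
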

By a probabilistic Turing machine,
we mean a Turing machine that is allowed to flip an independent,
unbiased coin at each step.
We stress that \emph{if} the algorithm in \Cref{thm:main-probabilistic}
returns a list of primes, then this output is guaranteed to be correct.
Therefore, by running the algorithm repeatedly until it succeeds,
with probability~$1$ we obtain a provably correct list of primes,
with the \emph{expected} running time given by
\eqref{eq:main-probabilistic}.
We will also see that the number of random bits needed by the algorithm
is very small, namely $O(\log^2 N)$.
(One might prefer to think of the algorithm as being deterministic,
but accepting an auxiliary input string of length $O(\log^2 N)$.
The theorem is then that the algorithm succeeds for a large fraction
of possible auxiliary inputs.)

Compared to \eqref{eq:sergeev-bound},
the bound \eqref{eq:main-probabilistic} entirely eliminates
the remaining positive power of $\log N$.
This result is especially striking when we consider the
\emph{average time} required to output each prime.
Whereas the best previous algorithms in the literature required
$O(\log^2 N)$ bit operations per prime,
and \Cref{thm:main-deterministic} improves this to
$(\log N)^{3/2+o(1)}$ bit operations,
in \Cref{thm:main-probabilistic} the work per prime falls to just
$\log N \tsp (\log \log N)^{2+o(1)}$ bit operations,
almost \emph{linear} in the bit size of each prime.
The difference is even more stark in the RAM model:
the best existing algorithms require $O(\log N / \log \log N)$
word operations per prime,
but \Cref{thm:main-probabilistic} reduces this to
$(\log \log N)^{2+o(1)}$ word operations,
i.e., from exponential in $\log \log N$ to only polynomial in $\log \log N$.

Finally, if one is willing to accept a reasonable hypothesis
concerning the number of ``square-primes''
(see \Cref{defn:square-prime})
in short intervals,
then our last main result shows that it is possible
to shave off another factor of $\log \log N$.
\begin{thm}[Heuristic enumeration of primes]
\label{thm:main-heuristic}
There is a (deterministic) Turing machine with the following properties.
Its input is a positive integer~$N$.
Its output is either the list of all primes $p < N$, or ``FAIL''.
If \Cref{conj:square-primes} is true, then the second case never occurs.
It runs in time
\begin{equation}
\label{eq:main-heuristic}
N (\log \log N)^{1+o(1)}.
\end{equation}
(See \Cref{rem:main-heuristic-precise} for a more precise bound.)
\end{thm}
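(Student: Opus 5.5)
Theorem~\ref{thm:main-heuristic} will follow from the core algorithm of \Cref{sec:core}, run with parameters chosen as aggressively as \Cref{conj:square-primes} allows. The plan rests on the generating function of \Cref{sec:generating-function}. Write $n$ for an odd integer in a block $[M, M+L)$. Then $n$ is prime essentially iff a suitable count of its representations by binary quadratic forms has the right parity, after the small prime factors have been removed by a short preliminary sieve. Concretely, reducing modulo~$2$, a polynomial in $\FF_2[x]$ encodes this data.

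Step 1 is the precomputation. Using fast polynomial arithmetic over $\FF_{2^\lambda}$ (\Cref{sec:polynomial-arithmetic}), we compute for each block the mod-$2$ representation counts. We obtain them as products of short sparse polynomials encoding lattice points of the forms. The block length $L$ is taken of size roughly $N^{1/2}$ times a power of $\log N$, so that each block's polynomial multiplication costs $L(\log\log N)^{1+o(1)}$ bit operations. Summing over blocks gives $N(\log\log N)^{1+o(1)}$, while the cost of moving data between tapes stays linear.

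Step 2 handles the correction. The parity test misclassifies the \emph{square-primes}, i.e.\ integers divisible by $p^2$ for some prime $p$ with all other structure looking prime. To filter them, we detect the exceptional positions in each block by a compression/decompression step (\Cref{sec:compression}), treating the exceptions as a sparse error pattern to be recovered via a syndrome-decoding argument. This step costs time proportional to the number of exceptions times $\lg N$. \Cref{conj:square-primes} bounds that number in each short interval by roughly $L/\log N$ up to $(\log\log N)^{O(1)}$ factors. The total therefore remains within $N(\log\log N)^{1+o(1)}$. If some block exceeds the decoder's capacity, the algorithm outputs ``FAIL''; under the conjecture this never happens.

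Step 3 produces the output. We convert the resulting bit arrays to the binary list of primes in increasing order via \Cref{lem:convert-format}, in linear time. The main obstacle is Step 2. We must arrange that the error-correction budget per block is small enough that decoding does not reintroduce a $\log\log N$ factor. This requires choosing $L$ and the code length so that the conjectured exceptional count fits with $O(1)$ slack. My first attempt would be to take the redundancy equal to a constant times the conjectured bound, and to use a Reed--Solomon-type decoder over $\FF_{2^\lambda}$ with $\lambda \asymp \lg N$.
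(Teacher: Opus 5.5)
Your Step~1 is where the argument fails, and it fails by exactly the factor the theorem is about. Multiplying polynomials of length $L\approx N^{1/2}$ in $\FF_2[x]$ costs $\Mcost(L)=L\log L\cdot\Mstar(L)$, i.e.\ roughly $L\log N$, not $L(\log\log N)^{1+o(1)}$. More generally, any method that materialises all $N$ coefficients of $F G_{-1}$, $F G_{-2}$ and $F\restrict G_2$ by FFT-type multiplication pays about $N\log N$, which is just the starting bound \eqref{eq:FT-improvement1}. Moreover, once the full parity array is known, Step~2 has nothing left to decode, so compression saves nothing there.

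The missing idea is the one the core algorithm is built around (\Cref{rem:key-idea}, \Cref{sec:core-compression}): never compute the product array at all. Instead:
\begin{itemize}
\item work with intervals of length $T\asymp\log^3 N$;
\item cut everything into slices modulo $W=T\prod_{3\le p\le X}p$, so that all but a fraction $\rho(W)/W\ll 1/\log N$ of the forward transforms vanish identically (\Cref{lem:zero-slices});
\item form the sums $Z_{h,w}$ of pointwise products in Fourier space, and apply the linear map $\kappa$ to them \emph{before} the linear inverse transforms.
\end{itemize}
Then only $S=2\lambda R$ inverse transforms are needed where $T$ would have been. Decompression recovers each $a^r$ from $\kappa(a^r)$ whenever $\wt(a^r)\le R$. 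This gains a factor $T/S\asymp\log N/\log\log N$ only because $\lambda=\lg(T+1)\asymp\log\log N$. With your block length $L\approx N^{1/2}$ and $\lambda\asymp\lg N$, one gets $S=2\lambda R\asymp L$, i.e.\ no compression at all.

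There are correctness gaps as well. First, square-primes are the integers $m^2p$, and they are not errors to be filtered out. By \Cref{thm:H-congruence} the mod-$2$ series directly encodes the odd square-primes, plus a sparse correction $E_1$ for $m^2p^l$ with $l\ge 2$ that is computed separately. The primes are extracted at the end by the M\"obius-inversion step of \Cref{prop:square-primes-to-primes}; no preliminary sieve is involved.

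Second, your ``FAIL'' rule is not sound. When $\wt(a^r)>R$, the decoder may silently return an impostor $a'$ with $\wt(a')\le R$ and $\kappa(a')=\kappa(a^r)$. So if \Cref{conj:square-primes} were false, your machine could output a wrong list, which the theorem forbids. The paper fixes this by counting the odd square-primes below $N$ exactly in time $N^{1/2+o(1)}$ (\Cref{prop:count-square-primes}). Every wrong candidate has weight at most $R<\wt(a^r)$, so any error makes the total strictly too small.

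Third, the conjecture only bounds intervals of length $\log^3 x$ near $x$, so it gives $\wt(a^r)\le R$ only where $\log x\approx\log N$. Near smaller $x$ the density $\asymp 1/\log x$ is larger. This is why the paper first finds the primes below $N_0\approx N/\log N$ with Sergeev's algorithm in time $O(N)$, and uses the core algorithm only above $N_0$.
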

In this result the average time per prime drops to
$\log N \tsp (\log \log N)^{1+o(1)}$ bit operations,
barely more than the $O(\log N \log \log N)$ cost of a
single multiplication of integers of bit size $\log N$ \cite{HvdH-nlogn}.
We emphasise that the algorithm never outputs an incorrect list of primes,
even if \Cref{conj:square-primes} is false.
(If the algorithm ever returns ``FAIL'', 
then we of course immediately obtain a disproof of the conjecture.)
The conjecture seems very likely to be true,
but also extremely difficult to prove.
See \Cref{sec:heuristic} for a detailed discussion,
and also for the proof of \Cref{thm:main-heuristic}.

The bound \eqref{eq:main-heuristic} is close to optimal
in the following weak sense.
The amount of ``entropy'' recovered by the core algorithm
is $\Theta(N \log \log N / \log N)$ bits
(see the discussion in \Cref{sec:overview}).
On the other hand,
standard conjectures concerning the complexity of polynomial multiplication
(see \Cref{rem:nlogn-conjecture})
suggest that our algorithms necessarily incur
an overhead of $\Theta(\log N)$ over the total bit size.
In combination, these observations suggest a lower bound
of $\Omega(N \log \log N)$ bit operations for any algorithm
along the same lines as ours.

\medskip
We have made no attempt to implement any of the new algorithms
on a real computer.
Our aim in this paper is to establish the theoretical complexity bounds
as directly as possible,
paying no attention to questions of practical efficiency.
The algorithms do not appear to be practical in their current form,
for several reasons.
\begin{itemize}
\item
The most serious problem is that the \emph{space} complexity (memory usage)
of these algorithms is very large.
We will not carry out a formal analysis,
but one can show that if the algorithms
are implemented in a straightforward way,
then in each of the main theorems,
the space complexity is smaller than the time complexity by
a factor of only $(\log \log N)^{O(1)}$,
and it does not seem easy to improve this by much.
By contrast, many existing algorithms admit ``segmented'' versions that
can generate primes in intervals (or arithmetic progressions)
of length $N^\beta$ using only $N^{\beta+o(1)}$ space for some $\beta < 1$
(see \Cref{sec:ram-results}).
The author has no idea how to achieve this for the new algorithms
without an unacceptable tradeoff in time complexity.
Therefore, at present we are stuck with needing roughly linear space
to generate the primes up to~$N$,
rendering the new algorithms useless for practical applications.

\item
The factors such as $(\log \log N)^{1+o(1)}$
appearing in the complexity bounds
are somewhat misleading from a practical point of view.
Some of these factors arise from quasilinear estimates
for the cost of operations on objects of size $(\log N)^{O(1)}$,
which are unlikely to be realistic for feasible values of~$N$.

\item
In our presentation we take a cavalier attitude towards constants,
preferring brevity over practicality in almost all cases.
For instance, in the very first step of the probabilistic algorithm
(see \Cref{sec:probabilistic-algorithm})
we implicitly assume that $N > \exp(80e^3) \approx 10^{698}$.
Many other equally outrageous examples could be given.
\end{itemize}

We conclude this section with a few open problems.
\begin{enumerate}
\item
Does there exist a ``segmented'' algorithm for finding the primes $p < N$
that runs in $O(N (\log N)^\alpha)$ bit operations,
and uses only $O(N^\beta)$ bits of space,
for some $\alpha < 1$ and $\beta < 1$?
The algorithms presented in this paper are the first to achieve $\alpha < 1$,
but only with $\beta = 1 + o(1)$,
or with a more careful analysis, possibly $\beta = 1 - o(1)$.
Previous algorithms have achieved $\beta < 1$ (see \Cref{sec:ram-results})
but only with $\alpha = 1 - o(1)$.

\item
Can the $(\log N)^{1/2}$ factor in the deterministic algorithm
(\Cref{thm:main-deterministic}) be improved?
In particular, can we achieve $N (\log N)^{o(1)}$ bit operations
deterministically and unconditionally?

\item
Can the $(\log \log N)^{2+o(1)}$ factor in the probabilistic algorithm
(\Cref{thm:main-probabilistic}) be improved to $(\log \log N)^{1+o(1)}$?
This would then match the heuristic algorithm (\Cref{thm:main-heuristic})
and would also agree with the exponent of $\log \log N$ appearing in the
weak $\Omega(N \log \log N)$ lower bound mentioned earlier.
For further discussion see \Cref{sec:probabilistic-complexity-gap}.

\item
Can the $(\log \log N)^{o(1)}$ factors be removed
in any of the three main theorems?
For Theorems \ref{thm:main-deterministic} and \ref{thm:main-probabilistic}
it would be enough to improve the complexity of polynomial multiplication
over $\FF_2$: see \Cref{rem:nlogn-conjecture},
\Cref{rem:main-probabilistic-precise} and
\Cref{rem:main-deterministic-precise}.
The situation for \Cref{thm:main-heuristic} is much more complicated: see
\Cref{rem:cyclotomic},
\Cref{sec:restricted-faster},
\Cref{rem:improve-compression},
\Cref{rem:varphi-W-speedup},
\Cref{rem:W-logloglogN} and
\Cref{rem:main-heuristic-precise}.
 
In the RAM model,
removing the $(\log \log N)^{o(1)}$ factors seems much more plausible,
at least for Theorems \ref{thm:main-deterministic}
and \ref{thm:main-probabilistic}: see \Cref{rem:RAM-multiply}.
Again, the situation for Theorem \ref{thm:main-heuristic} is more complicated:
see \Cref{rem:RAM-transform} and \Cref{rem:RAM-restricted}.
\end{enumerate}

In the rest of \Cref{sec:introduction},
we will survey previous results in the literature
for the RAM model (\Cref{sec:ram-results})
and the Turing model (\Cref{sec:turing-results}),
and then give an overview of the new algorithms (\Cref{sec:overview}).

\subsection{Previous results in the RAM model}
\label{sec:ram-results}

Eratosthenes' sieve visits each composite integer several times,
once for each prime divisor $p < \sqrt N$.
Historically, the first improvements over \eqref{eq:eratosthenes-bound}
arose by reorganising the sieve so that
each composite is visited exactly once.
The first such ``linear'' sieves, with complexity $O(N)$,
were described independently and roughly simultaneously in
\cite{Mai-primes} and \cite{Pratt-CGOL}%
\footnote{%
In the literature the paper \cite{Pratt-CGOL} is consistently
attributed to Gale and Pratt.
In fact, Pratt was the sole author.
The full version (currently available at the URL indicated in the bibliography)
has a ``SAMPLE PROGRAMS'' supplement that includes a linear-time
sieve of Eratosthenes explicitly labelled
``Due to Ross Gale and Vaughan Pratt''.
The miscitation apparently originated in \cite{GM-linear-sieve}
and then propagated through all of the subsequent literature.
}.
Other algorithms achieving linear time were given in
\cite{GM-linear-sieve}, \cite{Mis-program} and \cite{Ben-incremental}.
As explained in the survey paper \cite{Pri-familytree},
these methods may all be recognised as applying the same
general sieve strategy,
but in each case taking advantage of a different canonical form
for composite numbers.

The next asymptotic speedup was obtained by means of
the so-called ``wheel'' technique.
To illustrate the basic idea,
observe that by organising the sieve into blocks of size
$2 \times 3 \times 5 = 30$,
and ignoring those integers not coprime to~$30$,
we obtain some constant factor speedup.
By allowing the set of small primes to grow with $N$,
one can achieve a speedup proportional to $\log \log N$,
leading to the bound \eqref{eq:pritchard-bound},
i.e., $O(N / \log \log N)$ word operations.
This was first proved by Pritchard in \cite{Pri-sublinear}
and explained more succinctly in \cite{Pri-wheel}.
(An earlier incorrect attempt was made in \cite{Mai-primes}.)
As mentioned in \Cref{sec:eratosthenes},
this was the best complexity bound known for the RAM model
prior to the present work.

We remark in passing that much of the literature in the 1970s and 80s
is preoccupied with minimising the number of
multiplications relative to additions,
since this had an enormous payoff in practical implementations.
However, in the theoretical RAM model there is really no difference,
as one can build lookup tables to compute word-sized products in $O(1)$ time.
In the context of prime enumeration algorithms,
this fact seems to have been first pointed out explicitly
in \cite[\S2]{DJS-space}.

A different approach leading to the same $O(N / \log \log N)$ bound,
without using the sieve of Eratosthenes at all,
was given by Atkin and Bernstein \cite{AB-prime-sieves}.
(A preprint version appeared as early as 1999.)
The Atkin--Bernstein sieve plays an important role in the present paper,
so we explain it here in a little more detail.

Consider first the problem of finding all primes $p < N$,
$p \equiv 1 \pmod 4$.
It turns out that an integer $n \equiv 1 \pmod 4$ is prime
if and only if it is squarefree
and has an \emph{odd} number of representations
in the form $4u^2 + v^2$, with $u, v > 0$;
this follows from arithmetic properties
of the ring $\ZZ[\sqrt{-1}]$ \cite[Thm.\,6.1]{AB-prime-sieves}.
The Atkin--Bernstein algorithm proceeds by iterating over all pairs $(u,v)$
of positive integers in the elliptical region $4u^2 + v^2 < N$,
updating a bit array to keep track of the parity of how many times
each integer up to $N$ is represented by $4u^2 + v^2$.
After throwing away the non-squarefree integers,
the remaining integers with an odd number of such representations
are the primes.
The complexity bound follows from the fact that there are
$O(N)$ pairs $(u,v)$ to enumerate,
with a factor of $\log \log N$ being saved by
skipping useless congruence classes modulo a few small primes.

The integers $n \equiv 7 \pmod{12}$ are handled by a similar method
involving the quadratic form $3u^2 + v^2$;
the underlying ring here is $\ZZ[\tfrac12(-1 + \sqrt{-3})]$
\cite[Thm.\,6.2]{AB-prime-sieves}.
This leaves the case $n \equiv 11 \pmod{12}$,
which is handled via the \emph{indefinite} form $3u^2 - v^2$,
corresponding to the ring $\ZZ[\sqrt 3]$ \cite[Thm.\,6.3]{AB-prime-sieves}.
In this last case, one must iterate over points
in a region bounded by a hyperbola instead of an ellipse.

There are many other papers in the literature on prime enumeration
in a RAM-like model,
dealing with issues other than time complexity:
\begin{itemize}
\item
\textit{Space complexity}.
Early references for the ``segmented sieve'',
which uses only $O(N^{1/2})$ bits of space,
include \cite{Sin-357} and \cite{Bre-large-gaps}.
A more formal analysis was given in \cite{BH-segmented}.
(Of course, we assume here that the primes are processed in batches,
i.e., each interval of length $N^{1/2}$ is sieved separately,
with the primes being discarded before the next interval is processed.)

The Atkin--Bernstein algorithm \cite{AB-prime-sieves} was the first
to achieve $O(N^{1/2})$ space and $O(N / \log \log N)$ time simultaneously.
The space complexity of the Atkin--Bernstein sieve
was reduced to $O(N^{1/3})$ bits by Galway \cite{Gal-dissect}
(with full details provided in \cite[Ch.\,5]{Gal-thesis}).
Recently Helfgott adapted these ideas to achieve $N^{1/3+o(1)}$
space for the Eratosthenes sieve \cite{Hel-eratosthenes}.
Various other space-time tradeoffs are investigated in
\cite{Pri-compact}, \cite{DJS-space},
\cite{Sor-trading}, \cite{Sor-pseudosquares} and \cite{HS-space}.

\item
\textit{Incremental sieving}.
Instead of finding all primes up to a requested bound~$N$,
an ``incremental'' sieve uses a dynamic data structure to find
each successive prime with very little work.
This is perhaps not a very clearly defined notion,
but several authors have given interesting algorithms along these lines,
including \cite{Ben-incremental}, \cite{Pri-incremental},
and \cite{Sor-compact-incremental}.
Our new algorithms are in some sense
the opposite of an incremental sieve:
they provide almost no information about any primes
at all until the algorithm is near completion.

\item
\textit{Parallelism}.
The paper \cite{SP-parallel} gives a detailed analysis of sieve methods
in a parallel RAM model,
and \cite{BCCFGJMSS-IO} studies their I/O complexity.
Further references on these topics may be found in the recent survey paper
\cite[\S3.3]{GP-sieving}.
The latter also gives some pointers to a few state-of-the-art
practical implementations.
\end{itemize}

\subsection{Previous results in the Turing model}
\label{sec:turing-results}

The literature on prime enumeration in the Turing model is rather
sparse compared to the RAM model.
The reason for this is probably that
sieving is quite an unnatural approach in the Turing model,
due to the lack of random array access.
For example, in the sieve of Eratosthenes,
the cost in the Turing model of marking off the multiples of $p$ up to $N$
becomes \emph{linear} in~$N$,
so the total complexity over all $p < \sqrt N$ balloons to about $N^{3/2}$.

The first nontrivial result for the Turing model was given by
Sch\"onhage, Grotefeld and Vetter \cite[\S6.5]{SGV-turing}.
Their algorithm is very simple.
They first enumerate the primes $p < \sqrt N$ in time $N^{1/2+o(1)}$
using any reasonable method.
For each of these~$p$,
they use repeated addition to generate a list of the
multiples $kp$ (for $k \geq 2$) up to~$N$.
Next, they compute the union of these lists by repeatedly merging
pairs of lists,
noting that one can merge two sorted lists
on a Turing machine in linear time (\Cref{lem:merge}).
This produces a sorted list of all composite numbers up to~$N$.
Finally, they compute the complement to obtain the list of primes up to~$N$.
The complexity is dominated by the merging step, and is given by
\begin{equation}
\label{eq:SGV-bound}
O(N \log^2 N \log \log N).
\end{equation}
The authors also make a cryptic remark that
\eqref{eq:SGV-bound} is not optimal and can be improved substantially,
but no hints are given.

Farach-Colton and Tsai \cite{FT-prime-tables}
present two different algorithms.
First, they explain how to adapt the
list-and-merge strategy of \cite{SGV-turing}
to the Atkin--Bernstein sieve,
leading to an algorithm that runs in
\begin{equation}
\label{eq:FT-bound1}
O\left(\frac{N \log^2 N}{\log \log N}\right)
\end{equation}
bit operations \cite[\S3.1]{FT-prime-tables}.
This improves on \eqref{eq:SGV-bound} by a factor of $(\log \log N)^2$.

Their second algorithm \cite[\S3.2]{FT-prime-tables},
while being somewhat slower than the first,
contains the seeds of the approach taken in the present paper.
Recall that in the Atkin--Bernstein sieve,
for each $n \equiv 1 \pmod 4$
one must count the number of positive integer pairs $(u,v)$
such that $4u^2 + v^2 = n$.
Farach-Colton and Tsai propose doing this for all $n$
\emph{simultaneously} by considering the series product
\begin{equation}
\label{eq:FT-product}
\sum_{u \geq 1} x^{4u^2} \cdot
   \sum_{\substack{v \geq 1 \\ \textn{$v$ odd}}} x^{v^2}
   = (x^4 + x^{16} + x^{36} + \cdots) (x + x^9 + x^{25} + \cdots).
\end{equation}
For any $n \equiv 1 \pmod 4$,
the coefficient of $x^n$ in \eqref{eq:FT-product}
is exactly the number of representations of $n$
in the form $4u^2 + v^2$ with $u, v > 0$.
They suggest using Kronecker substitution \cite[Cor.\,8.27]{vzGG-compalg3}
to compute this product up to~$x^N$,
i.e., packing the coefficients into large integers
and then multiplying those integers.
Similar techniques can be applied to the forms $3u^2 + v^2$ and $3u^2 - v^2$
to handle the integers $n \equiv 3 \pmod 4$.
The authors show that this leads to the complexity bound
\begin{equation}
\label{eq:FT-bound2-mint}
O(\Mint(N \log N))
\end{equation}
for enumerating the primes $p < N$,
where $\Mint(n)$ denotes the cost of multiplying integers with $n$ bits.
Using the currently best available bound
$\Mint(n) = O(n \log n)$ \cite{HvdH-nlogn},
the overall running time becomes
\begin{equation}
\label{eq:FT-bound2}
O(N \log^2 N)
\end{equation}
bit operations.
This is inferior to \eqref{eq:FT-bound1},
and considerably worse than Sergeev's complexity bound
to be discussed shortly,
but as we will see in the next section,
it can be substantially improved.

\begin{rem}
\label{rem:FT-complications}
The analysis leading to \eqref{eq:FT-bound2-mint}
is more involved than one might expect.
The most straightforward estimate of the cost of computing the product
\eqref{eq:FT-product} modulo $x^N$ using Kronecker substitution
is indeed $O(\Mint(N \log N))$.
Moreover, Farach-Colton and Tsai show that this estimate
can be improved by a factor of $\log \log N$,
by proving nontrivial bounds on the size of the coefficients
in \eqref{eq:FT-product} \cite[Lem.\,5]{FT-prime-tables}.
Unfortunately, the counting problem for the $3u^2 - v^2$ case
cannot be encoded directly as a polynomial multiplication problem,
and their workaround for this issue loses a factor of $\log \log N$ again;
see \cite[Lem.\,8]{FT-prime-tables}.
\end{rem}

Shortly after \cite{FT-prime-tables} appeared,
Sergeev \cite{Ser-prime-turing} found a way to streamline the
original list-and-merge strategy of \cite{SGV-turing}
to obtain the current best Turing bound \eqref{eq:sergeev-bound}.
This was the first bound in the Turing model
for which the power of $\log N$ matches the original Eratosthenes bound
(after the usual translation from bit complexity to word operations).
Sergeev's improvements may be summarised as follows:
\begin{itemize}
\item
He incorporates Mairson's strategy \cite{Mai-primes},
so that each composite number is listed exactly once,
instead of once for each prime divisor $p < \sqrt N$.
\item
He merges the lists in a specific order,
taking into account the varying sizes of the lists.
\item
His most important innovation, which accounts for most of the speedup,
is that he represents the intermediate sorted lists of composites using
a ``compressed'' format.
This format takes advantage of the fact that in long runs of composites,
the leading bits in their binary representations are identical,
and do not need to be stored.
\end{itemize}

\subsection{Overview of the new algorithms}
\label{sec:overview}

In this section we explain the main ideas behind the new algorithms.
Our starting point is the second algorithm of Farach-Colton and Tsai
mentioned above \cite[\S3.2]{FT-prime-tables}.

At the outset, we make one modification that is very minor in principle
but affects many details throughout the paper:
instead of using the forms $4u^2 + v^2$ and $3u^2 \pm v^2$
inherited from \cite{AB-prime-sieves},
we will switch to $a^2 + 4b^2$ and $a^2 \pm 2b^2$.
Everything could be made to work with the original forms,
but a few technicalities become easier to manage with this choice of forms.
The constants in various complexity bounds may be affected by the switch,
but we have not investigated this.

The analogue of \eqref{eq:FT-product} in our setup runs as follows.
Define $H_{-1}(x) \coloneqq F(x) G_{-1}(x)$ where
\begin{align*}
F(x) & \coloneqq \sum_{\substack{a \geq 1 \\ \textn{$a$ odd}}} x^{a^2}
   = x + x^9 + x^{25} + \cdots, \\
G_{-1}(x) & \coloneqq \sum_{b \geq 1} x^{4b^2}
   = x^4 + x^{16} + x^{36} + \cdots.
\end{align*}
As in \cite{FT-prime-tables},
this series encodes information about the primes $p \equiv 1 \pmod 4$.
For the $p \equiv 3 \pmod 8$ case there is a corresponding product
$H_{-2}(x) \coloneqq F(x) G_{-2}(x)$ arising from the form $a^2 + 2b^2$.
For $a^2 - 2b^2$ we cannot use an ordinary power series product;
instead, we introduce an operation called the \emph{restricted product},
denoted by $F \restrict G$.
The primes $p \equiv 7 \pmod 8$ are then covered by
$H_2(x) \coloneqq F(x) \restrict G_2(x)$ for a suitable series $G_2(x)$.
See \Cref{sec:generating-function} for the definitions.

\medskip
To explain our first algorithmic improvement over \cite{FT-prime-tables},
recall that the Atkin--Bernstein primality condition is sensitive to only
the \emph{parity} of the number of representations of each~$n$.
It therefore suffices to compute the product $H_{-1} = F \cdot G_{-1}$
in $\FF_2[x]$ rather than $\ZZ[x]$.
By deploying polynomial multiplication algorithms
designed expressly for $\FF_2[x]$,
we avoid the coefficient growth inherent in
the Kronecker substitution method from \cite{FT-prime-tables}.

Let $\Mcost(n)$ denote the cost of multiplying
polynomials of degree $n$ in $\FF_2[x]$.
It is known that we may take $\Mcost(n) < n \log n \, (\log \log n)^{o(1)}$
(see \Cref{sec:poly-mult}).
Applying this result to compute $H_{-1} = F \cdot G_{-1}$ up to $x^N$
leads to the complexity bound
\begin{equation}
\label{eq:FT-improvement1}
N \log N \, (\log \log N)^{o(1)}
\end{equation}
for enumerating the primes $p < N$, $p \equiv 1 \pmod 4$.
This improves on \eqref{eq:FT-bound2} by a factor of almost $\log N$,
bringing us within striking distance of
Sergeev's bound \eqref{eq:sergeev-bound}.
The same bound \eqref{eq:FT-improvement1}
may be achieved for the primes $p \equiv 3 \pmod 4$,
but as in \Cref{rem:FT-complications},
the $p \equiv 7 \pmod 8$ case is more involved due to the
restricted product in the definition of $H_2(x)$;
we will not discuss this further here.

Incidentally, one can improve \eqref{eq:FT-improvement1}
by a further factor of $\log \log N$ without too much additional effort,
by again arranging to skip useless congruence classes
modulo small primes.
This leads to the complexity bound
\[
\frac{N \log N}{(\log \log N)^{1-o(1)}},
\]
which is comfortably below the current Turing machine record
\eqref{eq:sergeev-bound},
and almost matches the current RAM record \eqref{eq:pritchard-bound}.
We will not go into the details,
as this small speedup will be completely overwhelmed by the
much larger gains to be discussed in the rest of this section.

\medskip
Indeed, the bound \eqref{eq:FT-improvement1} is still almost a factor
of $\log N$ short of our target complexity bounds
\eqref{eq:main-probabilistic} and \eqref{eq:main-heuristic}.
The key to unlocking this additional speedup is
the simple observation that by the prime number theorem,
\emph{only a fraction of about $1/\log N$ of numbers less than $N$ are prime}.
Essentially, our plan is to take advantage of the \emph{sparsity}
of $H_{-1} = F \cdot G_{-1}$ to accelerate the polynomial multiplication.

Before describing how we do this,
we must confront an annoying technical detail,
which is that the Atkin--Bernstein condition for the primality of $n$
only applies when $n$ is \emph{squarefree}.
A positive proportion of integers are not squarefree,
and we cannot ignore their contribution to~$H_{-1}$.
Instead, we must analyse the behaviour of the coefficients of $x^n$
in $H_{-1}(x)$ for \emph{all}~$n$.
What we will prove is that
\[
H_{-1}(x) \equiv
   \sum_{\substack{m \geq 1 \\ \textn{$m$ odd}}} \;
   \sum_{\substack{p \equiv 1 \bmod 4 \\ l \geq 1}}
   x^{m^2 p^l} \pmod 2.
\]
Observe that this formula implies the Atkin--Bernstein criterion:
the exponent $m^2 p^l$ can only be squarefree when $m = l = 1$,
and the sum of such terms is exactly
$\sum_{p \equiv 1 \bmod 4} x^p$.
Similar identities can be proved for $H_{-2}(x)$ and $H_2(x)$,
and combining them we will find (see \Cref{thm:H-congruence}) that
\begin{equation}
\label{eq:H-sum-congruence-overview}
(H_{-1} + H_{-2} + H_2)(x) \equiv
   \sum_{\substack{m \geq 1 \\ \textn{$m$ odd}}} \;
   \sum_{\substack{p \geq 3 \\ l \geq 1}}
   x^{m^2 p^l} \pmod 2.
\end{equation}
The formula \eqref{eq:H-sum-congruence-overview} motivates
the following definition.
\begin{defn}
\label{defn:square-prime}
A \emph{square-prime} is an integer of the form $m^2 p$
where $m \geq 1$ is an integer and $p \geq 2$ is prime.
\end{defn}
In particular, an integer $n$ is an \emph{odd} square-prime if and only if
it is of the form $n = m^2 p$ where $m$ is odd and $p \geq 3$ is prime.
For most of the paper we will concentrate on the problem of enumerating
the odd square-primes $n < N$,
as this is what falls out naturally
from \eqref{eq:H-sum-congruence-overview}.
The first few odd square-primes are
\[
3, 5, 7, 11, 13, 17, 19, 23, \underline{27}, 29, 31, 37, 41, 43, 
   \underline{45}, 47, 53, 59, 61, \underline{63}, 67, 71, 73,
   \underline{75}, \ldots
\]
where we have underlined the integers that are not themselves prime.
After finding the odd square-primes up to~$N$,
it is straightforward to extract the actual primes
(\Cref{prop:square-primes-to-primes}).

\begin{rem}
These integers do not seem to have been explored much in the literature.
There is a closely related entry \href{https://oeis.org/A228056}{A228056}
in the Online Encyclopedia of Integer Sequences \cite{OEIS}
listing integers of the form $m^2 p$ where $m \geq 2$ and $p$ is prime,
i.e., the same as our definition but excluding the primes themselves.
Bhat \cite{Bhat-square-prime} calls such integers ``square-primes''
and investigates some of their properties,
including computing their density.
\end{rem}

Having shifted focus from primes to odd square-primes,
let us return to the question of how to accelerate
the computation of $H_{-1} = F \cdot G_{-1}$.
Let $T \geq 1$ be an integer parameter,
and define a sequence of vectors $a^r \in \FF_2^T$
representing the locations of the odd square-primes
in intervals of length~$T$:
for $r \geq 0$ and $0 \leq t < T$ we set $(a^r)_t \coloneqq 1$ if
the corresponding integer $n = rT + t$ is an odd square-prime,
and otherwise $(a^r)_t \coloneqq 0$.
Then by \eqref{eq:H-sum-congruence-overview} we have
\[
(H_{-1} + H_{-2} + H_2)(x) \equiv
   E_1(x) + \sum_{r \geq 0} \; \sum_{0 \leq t < T}
   (a^r)_t \, x^{rT + t} \pmod 2,
\]
where $E_1 \in \FF_2\bbracket{x}$ is an error term accounting for
the terms $x^{m^2 p^l}$ with $l \geq 2$.
We will ignore $E_1(x)$ for now;
it is very sparse and must be treated separately.

Given some large $N$, the odd square-primes up to $N$
are encoded by the vectors $a^r$ for $r < N/T$.
Let us adopt a statistical mindset and ask what these $a^r$ look like,
assuming that the distribution of odd square-primes
is ``random'' with respect to intervals of length~$T$.
We assume here that $T$ is much smaller than $N$ but not too small,
for now say $T = (\log N)^{\Theta(1)}$.
The prime number theorem implies that such intervals
contain on average about $T / \log N$ primes.
The situation for the odd square-primes is similar:
they have density $\pi^2/8 \approx 1.2337$ times that
of the ordinary primes,
and an interval of length $T$ contains on average
about $(\pi^2/8) \tsp T / \log N$ odd square-primes
(see \Cref{rem:odd-square-primes-asymptotic}).
This suggests that the ``entropy'' or ``information content'' of
a typical such $a^r$ is $O(T \log T / \log N)$ bits,
since we could represent $a^r$ by writing down a list
of the roughly $(\pi^2/8) \tsp T / \log N$ integers
$t \in \{0, \ldots, T-1\}$ for which $(a^r)_t = 1$.
Of course we cannot \emph{provably} represent $a^r$
using $O(T \log T / \log N)$ bits,
because we cannot rule out the existence of intervals containing
many more odd square-primes than expected.
Nevertheless it is true that $O(T \log T / \log N)$ bits
\emph{usually} suffice to represent~$a^r$.

The preceding discussion motivates the introduction of a function
$\kappa \colon \FF_2^T \to \FF_2^S$ that we call the \emph{compression map}.
The construction of $\kappa$,
which is inspired by the theory of Reed--Solomon codes \cite{RS-codes},
is described in detail in \Cref{sec:compression}.
It is a linear map, depending on $T$ and a ``threshold'' parameter~$R$,
with the following key property:
if $a \in \FF_2^T$ with $\wt(a) \leq R$,
then $a$ can be recovered unambiguously from~$\kappa(a)$.
Here $\wt(a)$ denotes the \emph{weight} of~$a$,
i.e., the number of nonzero entries in~$a$.
The process of recovering $a$ from $\kappa(a)$ is called \emph{decompression}.
Using the compression map,
we may represent any vector $a \in \FF_2^T$ of weight up to $R$
by the value $\kappa(a) \in \FF_2^S$, i.e., using only $S$ bits.
We will see in \Cref{sec:compression} that it is possible
to construct such a map $\kappa$ with $S \asymp R \log T$.

For the vectors~$a^r$,
to maintain a safe margin above the expected weight
$(\pi^2/8) \cdot T / \log N$,
we might take for instance $R \approx 2 T / \log N$.
In this case $S \asymp T \log T / \log N$,
which matches our earlier entropy estimate for $a^r$ (up to a constant).
In this sense, the compression map succeeds in representing each $a^r$
using only $O(T \log T / \log N)$ bits ---
provided of course that always $\wt(a^r) \leq R$.

\medskip
Returning to the product $H = F \cdot G_{-1}$,
recall that the textbook strategy for multiplying polynomials
is to first compute \emph{forward transforms} of the two input polynomials
(using some sort of fast Fourier transform),
then multiply the transforms pointwise,
and finally perform an \emph{inverse transform} to recover the product.
The complexity is typically dominated by the forward and inverse transforms,
with the pointwise multiplication step making a negligible contribution.

To take advantage of the compression map in the
computation of $H = F \cdot G_{-1}$, we must modify this strategy.
Let us begin by decomposing the input polynomials into~$T$ ``slices'',
writing
\begin{align*}
     F(x) & = \mspace{11.5mu} F^0(x^T) + \mspace{11.5mu} x F^1(x^T)
      + \cdots + x^{T-1} F^{T-1}(x^T), \\
G_{-1}(x) & = G_{-1}^0(x^T) + x G_{-1}^1(x^T)
   + \cdots + x^{T-1} G_{-1}^{T-1}(x^T).
\end{align*}
Expanding out the product $H_{-1} = F \cdot G_{-1}$ yields
\[
H_{-1}(x) = H_{-1}^0(x^T) + x H_{-1}^1(x^T)
   + \cdots + x^{T-1} H_{-1}^{T-1}(x^T)
\]
where
\begin{equation}
\label{eq:Ht-formula}
H_{-1}^t(y) \coloneqq
   \sum_{\substack{0 \leq i, j < T \\ i + j = t}} F^i(y) G_{-1}^j(y) +
   \sum_{\substack{0 \leq i, j < T \\ i + j = t + T}} y F^i(y) G_{-1}^j(y),
   \qquad 0 \leq t < T.
\end{equation}
One can also write down similar formulas for $H_{-2} = F \cdot G_{-2}$
and $H_2 = F \restrict G_2$.
To obtain $(H_{-1} + H_{-2} + H_2)(x)$ up to~$x^N$,
we need to compute $(H^t_{-1} + H^t_{-2} + H^t_2)(y)$
up to about $y^{N/T}$ for each~$t$.
As a first attempt at an algorithm for this problem,
we might try to adapt the textbook strategy as follows:
\begin{enumalgo}
\item
Compute forward transforms of the slices $F^i(y)$ and $G_d^j(y)$
(truncated to degree about $N/T$).
\item
Evaluate the sums of products on
the right hand side of \eqref{eq:Ht-formula},
and the analogous expressions for $H_{-2}$ and~$H_2$,
working in ``Fourier space''.
\item
Perform inverse transforms to recover the $T$ output polynomials
$H^t(y) \coloneqq (H_{-1}^t + H_{-2}^t + H_2^t)(y)$ up to $y^{N/T}$.
\end{enumalgo}
Comparing to the textbook strategy,
we see that so far nothing has really changed.
The number of transforms has increased from $\Theta(1)$ to $\Theta(T)$,
but their length has decreased from $\Theta(N)$ to $\Theta(N/T)$,
and these effects cancel each other out as fast Fourier transforms
run in quasi-linear time.
The cost of Step~2 remains negligible.
To achieve our target speedup,
we must somehow improve the complexity of both Steps~1 and~3
by a factor of roughly $\log N$.

For Step~1 this is actually quite easy.
The exponents in $F(x)$ and $G_{-1}(x)$ are always squares modulo~$T$,
so if we take $T$ to be a product of many small primes,
then a large proportion of the $F^i(y)$ and $G_{-1}^j(y)$
are automatically zero, and their transforms may be skipped.
Similar remarks apply to $H_{-2}$ and~$H_2$.
(We are being slightly dishonest here:
our choice $T = (\log N)^{\Theta(1)}$ is not quite
big enough to deliver savings comparable to $\log N$.
In the real algorithm we instead use a more complicated decomposition
involving a second slicing parameter~$W$.
See \Cref{rem:W-bigger-than-T}.)

The speedup for Step~3 is more involved,
and this is where the compression map
$\kappa \colon \FF_2^T \to \FF_2^S$ enters the game.
The key idea is that instead of performing the inverse transforms directly,
we first apply the compression map to the output of Step~2,
\emph{still working in Fourier space}.
Then in Step~3 we only need to perform $S$ inverse transforms instead of~$T$.
At this point what we have actually computed is
a list of compressed vectors $\kappa(a^r)$, or equivalently,
$S$ carefully chosen \emph{linear combinations} of the~$H^t$.
We finish up by applying the decompression routine to recover the~$a^r$,
or equivalently, to ``solve'' for the~$H^t$.

The net effect is that the number of transforms in Step 3 drops
by a factor equal to the ``compression ratio'' $T/S$.
For the choice $T = (\log N)^{\Theta(1)}$ and $R \asymp T / \log N$
mentioned earlier,
this results in a speedup relative to \eqref{eq:FT-improvement1}
by a factor of $T/S \asymp \log N / \log \log N$,
leading to the overall complexity bound $N (\log \log N)^{1+o(1)}$.
This explains more or less where the bound \eqref{eq:main-heuristic}
for the heuristic algorithm comes from,
although a complete proof must include a detailed analysis of
all steps, including compression, decompression and the ``pointwise''
operations in Step~2.
All of these steps are non-trivial,
and we will have to work quite hard to
stay within the target complexity bound.
We will refer to the procedure just sketched as the ``core algorithm'',
and it is worked out in detail in \Cref{sec:core}.

\begin{rem}
\label{rem:key-idea}
The trick in Step 3 of commuting the inverse transforms
through the compression map is the
principal technical innovation of the paper.
It would not be an exaggeration to say that every aspect
of the design of the main algorithms was ultimately driven
by the requirements of implementing this step.
The trick may well have other applications ---
any algorithm that involves multiplying polynomials to obtain
a sparse result could potentially benefit.
\end{rem}

Of course, there is a catch:
the core algorithm only correctly recovers a given $a^r$
if $\wt(a^r) \leq R$.
If $\wt(a^r) > R$, then we might be in trouble.
In some situations the decompression routine is able to report
that something has gone wrong:
in particular, it can recognise when there does not exist
\emph{any} $a' \in \FF_2^T$ with $\wt(a') \leq R$
such that $\kappa(a') = \kappa(a^r)$.
We then have a chance to test the square-primality of each
integer in the interval by some other means.
But if we are unlucky, such an ``impostor'' $a'$ may actually exist,
and then the decompression routine will simply return $a'$
with no indication of the mistake.

At this point we must consider several different approaches,
depending on how we want to handle these problematic intervals.
\begin{itemize}[itemsep=5pt]
\item
\textit{The heuristic algorithm} (\Cref{sec:heuristic}).
For this version we simply \emph{assume} that no problematic intervals exist.
More precisely, \Cref{conj:square-primes} proposes a specific upper bound
for the number of square-primes that can occur in a short interval.
In \Crefrange{sec:primes-very-short}{sec:square-primes-very-short}
we give some heuristic arguments and numerical evidence
in favour of the conjecture.
Then in \Cref{sec:heuristic-algorithm} we
present the main heuristic algorithm and prove \Cref{thm:main-heuristic}.

Note that if the conjecture is false and the algorithm consequently
errs at the decompression stage,
then the final list of odd square-primes will have too few elements.
The algorithm checks for this at the very end by means of a
fast prime-counting algorithm (\Cref{prop:count-square-primes}),
and so in this case correctly returns ``FAIL''.

\item
\textit{The probabilistic algorithm} (\Cref{sec:probabilistic}).
In this variant we replace $\kappa$ by a ``randomised'' compression map
$\kappa_\pi \colon \FF_2^T \to \FF_2^S$
depending on a randomly chosen parameter~$\pi$.
With this modification, and for suitable choices of various parameters,
we can prove that if there exist any intervals with $\wt(a^r) > R$,
then with high probability we will detect all of them
at the decompression stage,
i.e., with high probability no impostors $a'$ will exist for any $r < N/T$
(\Cref{sec:permuted-core}).
In the unlikely event that an impostor exists and we fail to spot it,
we can again use fast prime-counting techniques to recognise the mistake
at the very end.

To handle the intervals with $\wt(a^r) > R$,
we will use a modified version of the
AKS primality test to directly test the affected integers
for square-primality (\Cref{sec:modified-AKS}).
To estimate the cost of this step,
we need an upper bound for the number of intervals
containing an unusually large number of odd square-primes.
Fortunately, sieve methods are strong enough to prove such bounds;
the details are worked out in \Cref{sec:sieve}.
(\Cref{conj:square-primes} says that
such intervals should not exist at all,
but sieve methods are apparently too weak to prove this.)
Finally, the proof of \Cref{thm:main-probabilistic} is given
in \Cref{sec:probabilistic-algorithm}.

\item
\textit{The deterministic algorithm} (\Cref{sec:deterministic}).
If we forgo randomisation,
then there does not seem to be any rigorous way of avoiding impostors.
We must instead find some other means of detecting intervals with large
values of $\wt(a^r)$.
For this purpose we will deploy (a slight modification of)
Sergeev's algorithm to identify all integers $n < N$ having
no prime divisors less than a given bound~$Y$
(\Cref{sec:sergeev}).
This information can then be used to rule out
large values of $\wt(a^r)$ for most intervals.
As in the probabilistic case,
the remaining intervals must be checked directly via AKS,
and we again need good bounds for the number of such intervals,
which are obtained by sieve methods (\Cref{sec:sieve}).
There is a tradeoff between the cost of running Sergeev's algorithm,
which depends mainly on~$Y$,
and the cost of running the core algorithm
for a given $\wt(a^r)$ threshold,
which depends mainly on the ratio~$T/R$.
Optimising these parameters leads to \Cref{thm:main-deterministic},
whose proof is given in \Cref{sec:deterministic-algorithm}.
\end{itemize}

\subsection{Acknowledgments}

The author would like to thank Igor Sergeev and Daniel Johnston
for helpful conversations.
Many thanks to Anthropic's Claude Opus 4.8
for proofreading the paper and finding a number of embarrassing
but fortunately minor mistakes.
Any remaining errors are of course the sole responsibility of the author.

\section{A generating function for primes}
\label{sec:generating-function}

The main result of this section is \Cref{thm:H-congruence},
which gives a congruence for a certain generating function
involving the primes.
This result plays a central role in the core algorithm
presented in \Cref{sec:core}.
To state the theorem we must introduce some terminology and notation.

\begin{defn}
Let
\[
\dset \coloneqq \{-1, -2, 2\},
\]
and for each $d \in \dset$ define a character
\[
\chi_d(n) \coloneqq \left(\frac{d}{n}\right), \qquad \textn{$n$ odd},
\]
where $\left(\frac{\cdotspace{1pt}}{\cdot}\right)$
denotes the Jacobi symbol.
\end{defn}

These characters are given explicitly by
\begin{align*}
\chi_{-1}(n) & =
\begin{cases}
   \phantom{-} 1, & n \equiv 1,5 \tpmod 8, \\
   -1,            & n \equiv 3,7 \tpmod 8,
\end{cases} \\
\chi_{-2}(n) & =
\begin{cases}
   \phantom{-} 1, & n \equiv 1,3 \tpmod 8, \\
   -1,            & n \equiv 5,7 \tpmod 8,
\end{cases} \\
\chi_2(n) & =
\begin{cases}
   \phantom{-} 1, & n \equiv 1,7 \tpmod 8, \\
   -1,            & n \equiv 3,5 \tpmod 8.
\end{cases}
\end{align*}

\begin{defn}
\label{defn:restricted-product}
Let $R$ be a ring,
and let $f \in R\bbracket{x}$ and $g \in R\bbracket{x^{-1}}$, say
\[
f(x) = \sum_{i \geq 0} f_i x^i, \qquad
g(x) = \sum_{j \geq 0} g_j x^{-j}.
\]
Then the \emph{restricted product} of $f$ and $g$ is the series
\begin{equation}
\label{eq:restricted-product}
(f \restrict g)(x) \coloneqq
   \sum_{\substack{j \geq 0 \\ i \geq 2j}} f_i g_j x^{i-j} \in R\bbracket{x}.
\end{equation}
\end{defn}
The sum in \eqref{eq:restricted-product} is well defined,
as for each integer $n \geq 0$ there are only finitely many
integer pairs $(i,j)$ with $i \geq 2j \geq 0$ such that $i - j = n$.
One may think of these as corresponding to a subset of the
terms appearing in the nonsensical ``product''
$f(x) g(x) = \sum_{i,j \geq 0} f_i g_j x^{i-j}$.
See \Cref{fig:restricted-product}.

\newcommand{\lightshade}{black!8}
\newcommand{\darkshade}{black!20}
\newcommand{\gridshade}{black!50}
\newcommand{\gridshadelight}{black!35}

\begin{figure}[h]
\begin{tikzpicture}[scale=0.72]
\fill[fill=\lightshade] (0,0) rectangle (9.5,-1);
\fill[fill=\lightshade] (2,-1) rectangle (9.5,-2);
\fill[fill=\lightshade] (4,-2) rectangle (9.5,-3);
\fill[fill=\lightshade] (6,-3) rectangle (9.5,-4);
\fill[fill=\lightshade] (8,-4) rectangle (9.5,-5);
\foreach \i in {0,...,3}
   \draw (3.5 + \i, -0.5 - \i) node {$\bigstar$};
\foreach \i in {0,...,9}
   \draw [draw=\gridshade] (\i,0) -- (\i,-5.5);
\foreach \i in {0,...,5}
   \draw [draw=\gridshade] (0,-\i) -- (9.5,-\i);
\draw[very thick] (9.5,0) -- (0,0) -- (0,-1) -- (2,-1) -- (2,-2) -- 
   (4,-2) -- (4,-3) -- (6,-3) -- (6,-4) -- (8,-4) -- (8,-5) -- (9.5,-5);
\foreach \i in {0,...,4}
   \draw (-0.1, -\i - 0.5) node[left] {$g_{\i}$};
\draw (-0.3,-5.2) node[left] {$\vdots$};
\foreach \i in {0,...,8}
   \draw (\i + 0.5, 0.1) node[above] {$f_{\i}$};
\draw (9.4, 0.2) node[above] {$\cdots$};
\end{tikzpicture}
\caption{%
The restricted product $f \restrict g$.
The shaded region shows pairs $(i,j)$ such that $i \geq 2j \geq 0$.
The stars indicate products $f_i g_j$ contributing to the
coefficient of $x^3$ in $f \restrict g$, i.e., pairs for which $i - j = 3$.}
\label{fig:restricted-product}
\end{figure}

\begin{rem}
The definition of the restricted product is customised for
working with the quadratic form $x^2 - 2y^2$.
If we wanted to switch to a different form,
such as the form $3x^2 - y^2$ used in \cite{AB-prime-sieves,FT-prime-tables},
we would need to adjust the definition.
\end{rem}

\begin{defn}
\label{defn:series}
Throughout the paper,
the symbols~$F$, $G_d$ and $H_d$ (for $d \in \dset$)
denote the following power series:
\begin{alignat}{2}
\label{eq:defn-F}
F(x) & \coloneqq
   \sum_{\substack{a \geq 1 \\ \textn{$a$ odd}}} x^{a^2}
   = x + x^9 + x^{25} + \cdots & & \in \ZZ\bbracket{x}, \\
G_{-1}(x) & \coloneqq
   \sum_{b \geq 1} x^{4b^2}
   = x^4 + x^{16} + x^{36} + \cdots & & \in \ZZ\bbracket{x}, \\
G_{-2}(x) & \coloneqq
   \sum_{b \geq 1} x^{2b^2}
   = x^2 + x^8 + x^{18} + \cdots & & \in \ZZ\bbracket{x}, \\
\label{eq:defn-G2}
G_2(x) & \coloneqq
   \sum_{b \geq 1} x^{-2b^2}
   = x^{-2} + x^{-8} + x^{-18} + \cdots \; & & \in \ZZ\bbracket{x^{-1}},
\end{alignat}
\begin{alignat*}{2}
H_{-1} & \coloneqq F \cdot G_{-1} & & \in \ZZ\bbracket{x}, \\
H_{-2} & \coloneqq F \cdot G_{-2} & & \in \ZZ\bbracket{x}, \\
H_2 & \coloneqq F \restrict G_2 & & \in \ZZ\bbracket{x}.
\end{alignat*}
For legibility we will write $G_{d,i}$ instead of $(G_d)_i$
for the coefficient of $x^i$ in $G_d(x)$ (or $x^{-i}$ in the case $d = 2$),
and similarly for $H_d(x)$.
\end{defn}

\begin{rem}
The coefficients of each $H_d(x)$ up to $x^{125}$ are given by
\begin{align*}
H_{-1}(x) & =
   x^{5} + x^{13} + x^{17} + x^{25} + x^{29} + x^{37} +
   x^{41} + x^{45} + x^{53} + x^{61} + 2x^{65} +
   \phantom{x} \\ & \qquad
   x^{73} + 2x^{85} + x^{89} + x^{97} + x^{101} +
   x^{109} + x^{113} + x^{117} + 2x^{125} +
   \cdots, \\
H_{-2}(x) & =
   x^{3} + x^{9} + x^{11} + x^{17} + x^{19} +
   2x^{27} + 2x^{33} + x^{41} + x^{43} +
   \phantom{x} \\ & \qquad
   2x^{51} + 2x^{57} + x^{59} + x^{67} + x^{73} + x^{75} + 2x^{81} + x^{83} +
   \phantom{x} \\ & \qquad
   x^{89} + x^{97} + 3x^{99} + x^{107} + x^{113} + x^{121} + 2x^{123} +
   \cdots, \\
H_2(x) & =
   x^{7} + x^{17} + x^{23} + x^{31} + x^{41} +
   x^{47} + x^{49} + x^{63} + x^{71} + x^{73} +  
   \phantom{x} \\ & \qquad
   x^{79} + x^{89} + x^{97} + x^{103} + x^{113} + 2x^{119} + \cdots.
\end{align*}
\end{rem}

We may now state the main theorem of the section.
\begin{thm}
\label{thm:H-congruence}
For each $d \in \dset$,
\begin{equation}
\label{eq:H-congruence}
H_d(x) \equiv
   \sum_{\substack{m \geq 1 \\ \textn{$m$ odd}}} \;
   \sum_{\substack{p \geq 3 \\ \chi_d(p) = 1}} \;
   \sum_{l \geq 1}
   x^{m^2 p^l} \pmod 2.
\end{equation}
Consequently
\begin{equation}
\label{eq:H-sum-congruence}
(H_{-1} + H_{-2} + H_2)(x) \equiv
   \sum_{\substack{m \geq 1 \\ \textn{$m$ odd}}} \;
   \sum_{\substack{p \geq 3 \\ l \geq 1}} x^{m^2 p^l} \pmod 2.
\end{equation}
\end{thm}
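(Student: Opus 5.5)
The plan is to compute, for each $d \in \dset$ and each $n \geq 1$, the coefficient $H_{d,n}$ as a count of representations, and then determine its parity from the arithmetic of the ring $\ZZ[\sqrt d]$.

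\textbf{Translating coefficients into representation counts.} Unwinding \Cref{defn:series} and \Cref{defn:restricted-product} gives the following descriptions.
\begin{itemize}
\item $H_{-1,n}$ is the number of pairs $(a,b)$ with $a \geq 1$ odd, $b \geq 1$, and $a^2 + 4b^2 = n$.
\item $H_{-2,n}$ is the number of pairs $(a,b)$ with $a \geq 1$ odd, $b \geq 1$, and $a^2 + 2b^2 = n$.
\item $H_{2,n}$ is the number of pairs $(a,b)$ with $a \geq 1$ odd, $b \geq 1$, $a^2 - 2b^2 = n$, and $a^2 \geq 4b^2$, i.e.\ $a \geq 2b$.
\end{itemize}
All three coefficients vanish for even $n$, and the right side of \eqref{eq:H-congruence} also involves only odd exponents. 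So I will fix an odd $n$ and let $r_d(n)$ denote the relevant count.

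\textbf{Relating $r_d(n)$ to ideals of norm $n$.} For $d=-1$, write $a + 2bi$. The units $\pm1, \pm i$ act on representations of $n = x^2+y^2$; normalising $x$ odd positive and $y$ even nonzero picks out exactly one element in each unit orbit of nonzero-$y$ elements, up to the sign of $y$. This should give
\[
2r_{-1}(n) = \#\{\alpha \in \ZZ[i] : N\alpha = n\}/2 - (\textn{square contribution}).
\]
The cleaner formulation is $r_{-1}(n) = \tfrac12\bigl(\rho(n) - [n=\square]\bigr)$, where $\rho(n) = \sum_{k \mid n} \chi_{-1}(k)$ is the number of ideals of norm $n$; the correction term comes from $b=0$. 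For $d=-2$ the units are only $\pm1$, and the same bookkeeping with $\ZZ[\sqrt{-2}]$ gives $r_{-2}(n) = \tfrac12\bigl(\rho_{-2}(n) - [n=\square]\bigr)$. For $d=2$, the condition $a \geq 2b > 0$ should select exactly one representative per orbit under the unit group $\pm(1+\sqrt2)^{\ZZ}$, up to the $b \mapsto -b$ symmetry. This gives $r_2(n) = \tfrac12\bigl(\rho_2(n) - [n=\square]\bigr)$, where $\rho_2(n)$ again counts ideals of norm $n$; this uses that $\ZZ[\sqrt2]$ has class number one and a unit of norm $-1$, so every ideal of norm $n>0$ has a generator of norm $+n$. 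The main obstacle is verifying this fundamental-domain claim for the $d=2$ case, i.e.\ that each orbit of $a+b\sqrt2$ with $a^2-2b^2=n$ under multiplication by $(3+2\sqrt2)^k$ and sign contains exactly one element with $a>0$ odd and $0 \leq b \leq a/2$, with the endpoint cases handled correctly. I would do this by checking that multiplication by $3+2\sqrt2$ maps the boundary ray $b=0$ to $b = 2a/3\cdot$(something), and then comparing slopes directly. If this proves too messy, the fallback is to show that the $b \mapsto -b$ reflection composed with a unit swaps the two halves of a fundamental domain.

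\textbf{Parity.} By multiplicativity, $\rho_d(n) = \prod_{p^e \parallel n} \sigma_p(e)$, where:
\begin{itemize}
\item $\sigma_p(e) = e+1$ if $\chi_d(p) = 1$;
\item $\sigma_p(e) = [e \textn{ even}]$ if $\chi_d(p) = -1$.
\end{itemize}
Hence $\rho_d(n) = \prod_{\chi_d(p)=1} (e_p+1)$ when all inert primes appear to even powers, and $0$ otherwise. Now compute $H_{d,n} = \tfrac12(\rho_d(n) - [n=\square]) \bmod 2$, i.e.\ $\rho_d(n) \bmod 4$. If $n$ is a square, all exponents are even, so $\rho_d(n)$ is odd, and $(\rho_d(n) - 1)/2$ is odd iff $\rho_d(n) \equiv 3 \pmod 4$. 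If $n$ is not a square but $\rho_d(n) \neq 0$, then $\rho_d(n)$ is even, and $\rho_d(n)/2$ is odd iff exactly one split prime $p$ has odd exponent $e_p$ and $e_p+1 \equiv 2 \pmod 4$. The task is to match both cases against the count of pairs $(m,p,l)$ with $m^2p^l = n$ and $\chi_d(p)=1$, taken mod 2. For a given split $p$ with $p^{e}\parallel n$, the number of valid $l$ is the number of $l \in [1,e]$ with $l \equiv e \pmod 2$, namely $\lceil e/2 \rceil$, provided every other prime has even exponent. Summing these parities over $p$ and comparing with $(\rho_d(n) - [n = \square])/2 \bmod 2$ is an elementary case check that I will carry out via the identity $\tfrac12\bigl(\prod(e_p+1) - 1\bigr) \equiv \sum \lfloor (e_p+1)/2 \rfloor \pmod 2$ for all $e_p$ even, together with its analogue in the single-odd-exponent case. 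Finally, \eqref{eq:H-sum-congruence} follows by summing over $d$: each odd prime $p$ satisfies $\chi_d(p)=1$ for exactly one $d \in \dset$, since the three characters multiply to $1$ and are not all trivial.
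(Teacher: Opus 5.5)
Your route is sound, and its first half coincides with the paper. The identity $H_{d,n} = \tfrac12(\rho_d(n) - [n=\square])$ for odd $n$, obtained by choosing a normalised generator for each ideal of norm $n$, is exactly \Cref{prop:Psi-H-relation}, since $F_n = [n=\square]$ for odd $n$.

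The parity step is where you genuinely differ. The paper writes the Dedekind zeta function, with the Euler factor at $2$ removed, as $\sum_{m\ \text{odd}} m^{-2s}\prod_{\chi_d(p)=1}(1+p^{-s})/(1-p^{-s})$. It then observes that this product is $\equiv 1+2\sum_{p,l}p^{-ls} \pmod 4$, which settles every $n$ at once. You instead use the explicit local counts ($e+1$ at split primes, $[e\ \text{even}]$ at inert primes) and split into cases according to which exponents of $n$ are odd. Your case analysis is correct, including the cases you leave implicit: an inert prime to an odd power, or at least two split primes to odd powers, where both sides vanish modulo $2$. Your approach makes the contributing triples $(m,p,l)$ completely explicit, while the Euler product argument is shorter and needs no case split.

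Two statements need repair, though.

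First, your last step is false as written. An odd prime need not satisfy $\chi_d(p)=1$ for exactly one $d\in\dset$: for $p\equiv 1 \pmod 8$ all three characters equal $1$. What $\chi_{-1}\chi_{-2}\chi_2=1$ actually gives is that an even number of the values $\chi_d(p)$ equal $-1$. Hence $\chi_d(p)=1$ for one or three values of $d$; either way this is odd, and that is what yields \eqref{eq:H-sum-congruence} modulo $2$.

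Second, the $d=2$ fundamental-domain claim you set out to verify is misstated. The region meeting each orbit of $\pm(3+2\sqrt2)^{\ZZ}$ on elements of norm $n$ exactly once is $a\ge 2|b|$, not $0\le b\le a/2$. For example, with $n=7$ the orbit of $3-\sqrt2$ contains no element with $a>0$ and $0\le b\le a/2$. The boundary rays to compare are $b=-a/2$ and $b=a/2$. The unit carries the first onto the second, since $(2-\sqrt2)(3+2\sqrt2)=2+\sqrt2$. The endpoint worry disappears because $a$ is odd, so $a\ne 2|b|$. The paper handles this by normalising the generator to satisfy $\omega^{-1}\sqrt n\le\theta<\omega\sqrt n$ with $\omega=1+\sqrt2$. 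It then checks that, when $N(\theta)=n$, this is equivalent to $a\ge 2|b|$.

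There is also a minor slip: in your first display for $d=-1$, each ideal has four generators, so the count of elements of norm $n$ should be divided by $4$, not $2$. The formula you then adopt is correct.
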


Note that summation over the variable $p$ always indicates a sum over primes.
The generating function \eqref{eq:H-sum-congruence} is the main input
used by the core algorithm in \Cref{sec:core}.

\begin{rem}
It is possible for a given monomial $x^n$ to appear
more than once on the right hand side of \eqref{eq:H-congruence}
or \eqref{eq:H-sum-congruence}.
For example, $n = 27$ may be written as $m^2 p^l$ with
$(m,p^l) = (1,27)$ or $(3,3)$.
\end{rem}

\begin{rem}
\label{rem:mobius}
It is unfortunate that the series in \Cref{thm:H-congruence}
involve exponents of the form $m^2 p^l$ rather than simply~$p^l$,
but this seems to be in the nature of things.
One can apply M\"obius inversion to remove the $m^2$ factors, obtaining
\[
\sum_{\substack{m \geq 1 \\ \textn{$m$ odd, squarefree}}}
   \hspace{-17pt} (H_{-1} + H_{-2} + H_2)(x^{m^2}) \equiv
   \sum_{\substack{p \geq 3 \\ l \geq 1}} x^{p^l} \pmod 2,
\]
but this increases the number of terms on the left hand side,
and the author does not know how to use this identity to
obtain a prime enumeration algorithm whose complexity is competitive
with \Crefrange{thm:main-deterministic}{thm:main-heuristic}.
\end{rem}

The rest of this section is devoted to the
proof of \Cref{thm:H-congruence}.
As mentioned in \Cref{sec:overview},
this result modifies and extends
\cite[Thm.\,6.1--6.3]{AB-prime-sieves}:
it works with a different set of forms,
and it describes the coefficients of $x^n$ for all~$n$,
not just squarefree~$n$.
Following \cite{AB-prime-sieves}, the proof involves studying the arithmetic
of the rings of integers $\ZZ[\sqrt d]$ of the quadratic fields $\QQ(\sqrt d)$
for $d \in \dset$.

For odd $n \geq 1$, let $\psi_d(n)$ denote the number of ideals
in $\ZZ[\sqrt d]$ of norm~$n$,
and set $\psi_d(n) \coloneqq 0$ for even~$n$.
Define
\[
\Psi_d(x) \coloneqq \sum_{n \geq 1} \psi_d(n) x^n \in \ZZ\bbracket{x}.
\]
\begin{prop}
\label{prop:Psi-H-relation}
For $d \in \dset$ we have
\[
\Psi_d(x) = 2 H_d(x) + F(x).
\]
\end{prop}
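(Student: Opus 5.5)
We prove the coefficientwise identity $\psi_d(n) = 2H_{d,n} + [\,n \text{ is an odd square}\,]$ for every $n \geq 1$, using the fact that $\ZZ[\sqrt d]$ is a principal ideal domain for each $d \in \dset$. Indeed, $\ZZ[i]$ and $\ZZ[\sqrt{-2}]$ are Euclidean, and $\ZZ[\sqrt 2]$ is norm-Euclidean. For even $n$ both sides vanish: $F$ has only odd exponents, and every exponent $a^2 - d b^2$ occurring in $H_d$ has $a$ odd, so it is odd. So fix odd $n$. Since every ideal is principal, ideals of norm $n$ correspond to generators $\alpha = a + b\sqrt d$ taken modulo units, with $|N(\alpha)| = n$. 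The task is to choose a canonical generator in each unit orbit and then compare the set of canonical generators with the pairs counted by $H_{d,n}$.

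\emph{Imaginary cases.} For $d = -1$ the units are $\pm 1, \pm i$. An element $a + bi$ of odd norm has exactly one of $a, b$ odd, and multiplying by $i$ if necessary we may assume $a$ is odd. The units preserving this parity condition are $\pm 1$, so the ideals of norm $n$ are in bijection with the pairs $(a,b)$ satisfying $a^2 + b^2 = n$, $a > 0$ odd, and $b$ even of any sign. Those with $b \neq 0$ come in pairs $\pm b$, and each pair corresponds to one representation $n = a^2 + 4b'^2$ with $b' \geq 1$; there are $H_{-1,n}$ of these. Those with $b = 0$ occur exactly when $n = a^2$, and these are counted by $F$. For $d = -2$ the units are $\pm 1$. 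Odd norm forces $a$ odd, so we normalise to $a > 0$, and the same pairing $\pm b$ gives $\psi_{-2}(n) = 2\#\{(a,b) : a \text{ odd},\ a, b \geq 1,\ a^2 + 2b^2 = n\} + [n = a^2] = 2H_{-2,n} + F_n$.

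\emph{Real case $d = 2$ (the main obstacle).} Here the unit group $\pm \varepsilon^k$ with $\varepsilon = 1 + \sqrt 2$ is infinite, and we need a fundamental domain that matches the condition $i \geq 2j$ in the restricted product. Write $\alpha = a + b\sqrt 2$ with $a > 0$ and $|a^2 - 2b^2| = n$. Since $N(\varepsilon) = -1$, multiplying by $\varepsilon$ flips the sign of the norm, so in each orbit we can arrange $a^2 - 2b^2 = n > 0$. The remaining units are $\pm \varepsilon^{2k}$, powers of $\varepsilon^2 = 3 + 2\sqrt 2$. Multiplication by $\varepsilon^2$ acts on the hyperbola $a^2 - 2b^2 = n$, $a > 0$, by $(a,b) \mapsto (3a + 4b,\, 2a + 3b)$. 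I claim that the set $\{(a,b) : a > 0,\ a^2 \geq 4b^2\}$, equivalently $|b| \leq a/2$, meets each $\varepsilon^2$-orbit exactly once apart from boundary effects. Its slope range is $|b|/a \leq 1/2$, and under the map the slope $-1/2$ is sent to $(2 - 3/2)/(3 - 2) = 1/2$, so the region between slopes $-1/2$ and $1/2$ is a fundamental domain provided we include exactly one of the two boundary rays. The boundary case $a = 2|b|$ gives $n = 2b^2$, which is even, so it never occurs for odd $n$. Hence $\psi_2(n)$ counts the pairs $(a,b)$ with $a > 0$ odd and $a^2 - 2b^2 = n$, $a^2 > 4b^2$. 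As before, $b = 0$ contributes $F_n$ and the values $\pm b$ with $b \neq 0$ pair up. Finally, with $i = a^2$ and $j = 2b^2$, the condition $i \geq 2j$ becomes $a^2 \geq 4b^2$, so the pairs with $b \geq 1$ are exactly the terms of $F \restrict G_2$ contributing to $x^n$. This gives $\psi_2(n) = 2H_{2,n} + F_n$.

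The step that needs the most care is the real case: checking that the region $a^2 \geq 4b^2$ is an exact fundamental domain for $\langle \pm \varepsilon^2 \rangle$ acting on the positive-norm branch. This requires showing that the slope map $s \mapsto (2 + 3s)/(3 + 4s)$ is increasing and carries $[-1/2, 1/2)$ onto $[1/2, \cdot)$ with consecutive images tiling the branch. I would verify that directly from the formula.
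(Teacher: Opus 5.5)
Your proposal is correct and follows essentially the same route as the paper: you pick a normalised generator of each ideal of odd norm ($a$ odd and $a \geq 1$, and for $d=2$ also norm exactly $+n$), pair the representations with $\pm b$, and let $b=0$ account for $F$. For $d=2$, your slope domain $-\tfrac12 \leq b/a < \tfrac12$ for $\langle \pm(1+\sqrt2)^2\rangle$ is exactly the paper's window $\omega^{-1}\sqrt n \leq \theta < \omega\sqrt n$ in another coordinate: writing $\theta = \sqrt n\, e^t$ gives $b/a = \tanh(t)/\sqrt 2$. Your observation that the boundary $a = 2|b|$ would force $n = 2b^2$ to be even plays the same role as the paper's remark that $a > 2|b|$ because $a$ is odd.
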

The proof of \Cref{prop:Psi-H-relation} depends on the following three lemmas,
which describe a ``standard'' choice of generator for each ideal
of $\ZZ[\sqrt{d}]$.
These results are well known; for completeness we include quick proofs.
\begin{lem}
\label{lem:bijection-m1}
Let $n \geq 1$ be odd.
The map $(a,b) \mapsto \langle a + 2b \sqrt{-1} \rangle$
is a bijection from
\[
\{(a,b) \in \ZZ^2 : a^2 + 4 b^2 = n, \; a \geq 1\}
\]
to the set of ideals of $\ZZ[\sqrt{-1}]$ of norm~$n$.
\end{lem}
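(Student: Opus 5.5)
The plan uses two standard facts about $\ZZ[\sqrt{-1}]$: it is a principal ideal domain, and its unit group is $\{\pm1, \pm\sqrt{-1}\}$. Every ideal of norm $n$ is then $\langle \alpha \rangle$ with $\alpha = u + v\sqrt{-1}$ and $N(\alpha) = u^2 + v^2 = n$. Moreover, $\alpha$ is determined by the ideal up to multiplication by one of the four units. So the task is to show that each orbit of the unit group acting on $\{\alpha : N(\alpha) = n\}$ contains exactly one element of the form $a + 2b\sqrt{-1}$ with $a \geq 1$. I will check well-definedness, surjectivity and injectivity in turn.

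\emph{Well-definedness.} If $a^2 + 4b^2 = n$, then $N(a + 2b\sqrt{-1}) = n$, so the ideal $\langle a + 2b\sqrt{-1}\rangle$ does have norm $n$.

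\emph{Surjectivity.} Take an ideal $\langle \alpha\rangle$ of norm $n$ with $\alpha = u + v\sqrt{-1}$ and $u^2 + v^2 = n$. Since $n$ is odd, exactly one of $u, v$ is odd and the other is even. If $u$ is even, replace $\alpha$ by $\sqrt{-1}\,\alpha = -v + u\sqrt{-1}$, whose real part is odd. This makes the real part odd and the imaginary part even. If the real part is now negative, multiply by $-1$. Oddness of the real part guarantees it is nonzero, so after this step it is positive. The resulting generator has the form $a + 2b\sqrt{-1}$ with $a \geq 1$.

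\emph{Injectivity.} Suppose $\langle a + 2b\sqrt{-1}\rangle = \langle a' + 2b'\sqrt{-1}\rangle$ with $a, a' \geq 1$. Then $a' + 2b'\sqrt{-1} = \varepsilon(a + 2b\sqrt{-1})$ for some unit $\varepsilon$. If $\varepsilon = \pm\sqrt{-1}$, the real part of the right-hand side is $\mp 2b$, which is even. This contradicts $a'$ being odd (note $a'$ is odd because $a'^2 = n - 4b'^2$ is odd). If $\varepsilon = -1$, then $a' = -a < 0$, again a contradiction. Hence $\varepsilon = 1$ and $(a,b) = (a',b')$.

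There is no real obstacle in this argument. The only point needing care is the parity bookkeeping: the hypothesis that $n$ is odd is exactly what forces one coordinate to be odd and the other even, and what makes the real part nonzero so that its sign can be normalised.
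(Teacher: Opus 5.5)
Your proposal is correct and follows the same route as the paper: use that $\ZZ[\sqrt{-1}]$ is a PID with unit group $\{\pm1,\pm\sqrt{-1}\}$, and observe that each ideal of odd norm has exactly one generator $u + v\sqrt{-1}$ with $u$ odd, $v$ even and $u \geq 1$, which gives $a = u$, $b = v/2$. You spell out the surjectivity and injectivity parity checks that the paper leaves implicit, and those checks are correct.
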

\begin{proof}
The ring $\ZZ[\sqrt{-1}]$ is a PID.
Since its unit group is $\{(\sqrt{-1})^k : k = 0, 1, 2, 3\}$,
every ideal of odd norm has a unique generator
$u + v \sqrt{-1}$ with $u$ odd, $v$ even and $u \geq 1$.
The correspondence is then $a = u$, $b = v/2$.
\end{proof}

\begin{lem}
\label{lem:bijection-m2}
Let $n \geq 1$ be odd.
The map $(a,b) \mapsto \langle a + b \sqrt{-2} \rangle$
is a bijection from
\[
\{(a,b) \in \ZZ^2 : a^2 + 2 b^2 = n, \; a \geq 1\}
\]
to the set of ideals of $\ZZ[\sqrt{-2}]$ of norm~$n$.
\end{lem}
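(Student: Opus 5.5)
We follow the same pattern as the proof of \Cref{lem:bijection-m1}. First, the ring $\ZZ[\sqrt{-2}]$ is a PID. It is norm-Euclidean: for any $\alpha \in \QQ(\sqrt{-2})$ we can choose $\beta \in \ZZ[\sqrt{-2}]$ with $|N(\alpha - \beta)| \leq \tfrac14 + \tfrac24 < 1$. Consequently every ideal of norm~$n$ has the form $\langle a + b\sqrt{-2}\rangle$ for some $(a,b) \in \ZZ^2$ with $a^2 + 2b^2 = n$. Here we use that the ideal norm equals the absolute value of the element norm, and that element norms $a^2 + 2b^2$ are positive.

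Next we use the unit group. Since $a^2 + 2b^2 = \pm 1$ forces $b = 0$ and $a = \pm 1$, the units are exactly $\{\pm 1\}$. Thus the generators of a given nonzero ideal are precisely $\pm(a + b\sqrt{-2})$.

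Now suppose $n$ is odd and $a^2 + 2b^2 = n$. Then $a$ is odd, so in particular $a \neq 0$. Hence exactly one of the two generators $a + b\sqrt{-2}$ and $-a - b\sqrt{-2}$ has positive rational part. This shows that the map $(a,b) \mapsto \langle a + b\sqrt{-2}\rangle$ is surjective onto the ideals of norm~$n$. It is also injective: if two pairs in the set give the same ideal, their generators differ by a unit $\pm1$, and the condition $a \geq 1$ forces that unit to be $+1$. Finally, the image does lie in the set of ideals of norm~$n$, because $N\langle a+b\sqrt{-2}\rangle = a^2+2b^2 = n$.

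The only step that is not purely formal is the PID property, and it is handled by the Euclidean estimate above. The rest is bookkeeping. Oddness of $n$ is needed only to rule out $a = 0$; without it, the sign normalisation $a \geq 1$ could fail to pick out a generator.
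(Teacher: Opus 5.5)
Your proposal is correct and follows the paper's proof: both use that $\ZZ[\sqrt{-2}]$ is a PID with unit group $\{\pm 1\}$, and both pick the generator with $a \geq 1$, where odd $n$ guarantees $a \neq 0$. The only difference is that you also justify the PID property via the norm-Euclidean bound $\tfrac14 + \tfrac24 < 1$, which the paper takes as known.
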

\begin{proof}
The ring $\ZZ[\sqrt{-2}]$ is a PID with unit group $\{\pm 1\}$.
Any ideal of odd norm has exactly one generator
$a + b \sqrt{-2}$ with $a \geq 1$.
\end{proof}

\begin{lem}
\label{lem:bijection-2}
Let $n \geq 1$ be odd.
The map $(a,b) \mapsto \langle a + b \sqrt{2} \rangle$
is a bijection from
\[
\{(a,b) \in \ZZ^2 : a^2 - 2 b^2 = n, \; a \geq 2|b|\}
\]
to the set of ideals of $\ZZ[\sqrt{2}]$ of norm~$n$.
\end{lem}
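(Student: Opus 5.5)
The plan follows the pattern of the two preceding lemmas. $\ZZ[\sqrt2]$ is a PID, so every ideal of norm $n$ is principal, say $\langle \alpha\rangle$ with $|N(\alpha)| = n$. The unit group is $\{\pm \eps^k : k \in \ZZ\}$ with fundamental unit $\eps = 1+\sqrt2$, which has norm $-1$. We must show that each ideal of odd norm $n$ has exactly one generator $a + b\sqrt2$ satisfying $a^2 - 2b^2 = n$ (norm $+n$, not $-n$) and $a \geq 2|b|$. For such a generator $a$ is automatically odd and positive when $n$ is odd and positive. Conversely, any such pair gives an ideal of norm $n$, so the map is well defined.

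\emph{Step 1: reduction to totally positive generators.} Since $N(\eps) = -1$, multiplying a generator by $\eps$ if necessary gives norm $+n > 0$. The generators of norm $+n$ are then $\pm\eps^{2k}\alpha$, because the units of norm $+1$ are $\pm \eps^{2k} = \pm(3+2\sqrt2)^k$. Since $n > 0$, both real embeddings $\alpha = a+b\sqrt2$ and $\alpha' = a - b\sqrt2$ have the same sign. Choosing the overall sign $\pm$ makes both positive. So we are reduced to showing that the orbit of a totally positive $\alpha$ under multiplication by $\eta = 3 + 2\sqrt2$ contains exactly one element with $a \geq 2|b|$.

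\emph{Step 2: the fundamental domain.} Consider the quantity $r(\alpha) = \alpha/\alpha' > 0$. Multiplication by $\eta$ multiplies $r$ by $\eta/\eta' = \eta^2 = (3+2\sqrt2)^2$. The condition $a \geq 2|b|$, together with $a > 0$, is equivalent to
\[
\frac{a - 2|b|}{a} \geq 0 .
\]
Expressed via $r = (a+b\sqrt2)/(a-b\sqrt2)$, we have $b\sqrt2/a = (r-1)/(r+1)$. So $|b|/a \leq 1/2$ becomes $|r-1|/(r+1) \leq 1/\sqrt2$. This says $r \in [c^{-1}, c]$ with $c = (\sqrt2+1)/(\sqrt2-1) = (1+\sqrt2)^2 = \eta$. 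The orbit of $r$ is $r\eta^{2k}$, and an interval $[\eta^{-1}, \eta]$ of multiplicative length $\eta^2$ contains exactly one point of each such orbit, except for possible double counting at the endpoints. The endpoints correspond to $a = 2|b|$, which forces $n = a^2 - 2b^2 = 2b^2$ to be even. That is excluded since $n$ is odd, so uniqueness holds.

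The main obstacle is exactly this boundary check in Step 2: confirming that the region $a \geq 2|b|$ is a fundamental domain for $\eta$ and handling the endpoint cases. The oddness of $n$ disposes of the endpoints, while the rest is bookkeeping.
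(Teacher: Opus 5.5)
Your proof is correct and is essentially the paper's argument. The paper normalises the generator $\theta$ to have norm $+n$, be positive, and lie in the fundamental interval $\omega^{-1}\sqrt n \leq \theta < \omega\sqrt n$ for multiplication by $\omega^2 = 3+2\sqrt2$. Since your ratio satisfies $r = \theta/\bar\theta = \theta^2/n$, your interval $[\eta^{-1},\eta]$ is that same interval reparametrised. The only cosmetic difference is in the final step: you check the equivalence with $a \geq 2|b|$ via the identity $(r-1)/(r+1) = b\sqrt2/a$ and dispose of the boundary case $a = 2|b|$ by parity of $n$, whereas the paper adds and subtracts the inequalities for $\theta$ and $\bar\theta$ and uses that $a$ is odd.
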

\begin{proof}
The ring $\ZZ[\sqrt 2]$ is a PID
with unit group $\{\pm \omega^k : k \in \ZZ\}$,
where the fundamental unit $\omega \coloneqq 1 + \sqrt 2$
satisfies $N(\omega) = -1$.
Every ideal $I \subseteq \ZZ[\sqrt 2]$ of norm~$n$
has a unique generator $\theta$ such that $N(\theta) = n$ and
\begin{equation}
\label{eq:theta-interval}
\omega^{-1} \sqrt n \leq \theta < \omega \sqrt n.
\end{equation}
To prove this, let $I = \langle \theta \rangle$ for some~$\theta$.
Then $|N(\theta)| = N(I) = n$, so $N(\theta) = \pm n$.
Multiplying $\theta$ by $\omega$ if necessary,
we may ensure that $N(\theta) = n$.
Multiplying by $-1$ if necessary, we may further ensure that $\theta > 0$.
Multiplying finally by $\omega^{2k}$ for suitable $k \in \ZZ$,
we may ensure that \eqref{eq:theta-interval} holds.
The uniqueness of such $\theta$ follows from the structure of the unit group.

Writing $\theta = a + b \sqrt 2$,
the condition $a^2 - 2b^2 = n$ is equivalent to $N(\theta) = n$.
Assume now that $a^2 - 2b^2 = n$;
we must show that $a \geq 2|b|$ is equivalent to~\eqref{eq:theta-interval}.
Let $\bar\theta \coloneqq a - b \sqrt 2 = n/\theta$.
If \eqref{eq:theta-interval} holds,
then $\omega^{-1} \sqrt n < \bar\theta \leq \omega \sqrt n$.
Adding this inequality to \eqref{eq:theta-interval} shows that $a > 0$;
subtracting it from \eqref{eq:theta-interval} leads to $|b| \leq \sqrt{n/2}$.
Then $a^2 = 2b^2 + n \geq 4b^2$, so $a \geq 2|b|$.
Conversely, suppose that $a \geq 2|b|$.
Then $a > 2|b|$ as $a$ is odd.
From $a^2 - 2b^2 = n$ we get $|b| < \sqrt{n/2}$
and $\sqrt n \leq a < \sqrt{2n}$.
These imply that $0 < \theta < \omega\sqrt n$ and
$0 < \bar\theta < \omega\sqrt n$
and hence that \eqref{eq:theta-interval} holds.
\end{proof}

\begin{proof}[Proof of \Cref{prop:Psi-H-relation}]
For the $d = -1$ case, \Cref{lem:bijection-m1} implies
that $\Psi_{-1}(x) = \sum_{n \geq 1} \psi_{-1}(n) x^n$ is equal to
\[
\sum_{\substack{a \geq 1 \\ \textn{$a$ odd}}} \;
   \sum_{b \in \ZZ} x^{a^2 + 4b^2}
= \sum_{\substack{a \geq 1 \\ \textn{$a$ odd}}} x^{a^2}
   \left( 1 + \sum_{b \geq 1} x^{4b^2} + \sum_{b \leq -1} x^{4b^2} \right)
= F(x)(1 + 2 G_{-1}(x)).
\]
Similarly, for $d = -2$,
by \Cref{lem:bijection-m2} we find that $\Psi_{-2}(x)$ is equal to
\[
\sum_{\substack{a \geq 1 \\ \textn{$a$ odd}}} \; \sum_{b \in \ZZ} x^{a^2+2b^2}
= \sum_{\substack{a \geq 1 \\ \textn{$a$ odd}}} x^{a^2}
   \left( 1 + \sum_{b \geq 1} x^{2b^2} + \sum_{b \leq -1} x^{2b^2} \right)
= F(x)(1 + 2 G_{-2}(x)).
\]
For $d = 2$, by \Cref{lem:bijection-2} we have
\[
\Psi_2(x) =
\sum_{\substack{a \geq 1 \\ \textn{$a$ odd}}} \;
   \sum_{|b| \leq \frac12 a} x^{a^2 - 2b^2}
= \sum_{\substack{a \geq 1 \\ \textn{$a$ odd}}} x^{a^2}
   + 2 \sum_{\substack{a \geq 1 \\ \textn{$a$ odd}}} \;
      \sum_{1 \leq b \leq \frac12 a} x^{a^2 - 2b^2}.
\]
The condition $1 \leq b \leq \frac12 a$ is equivalent to
$2 \leq 2b^2 \leq \frac12 a^2$ (with $a, b > 0$), so this becomes
\[
\Psi_2(x) = F(x) + 2 \sum_{2 \leq j \leq \frac12 i} F_i G_{2,j} x^{i-j}.
\]
As $G_{2,0} = G_{2,1} = 0$,
we may replace the condition $2 \leq j \leq \frac12 i$
by $0 \leq j \leq \frac12 i$.
Thus
\[
\Psi_2(x) = F(x) + 2 \sum_{i \geq 2j \geq 0} F_i G_{2,j} x^{i-j}
   = F(x) + 2 F(x) \restrict G_2(x).   \qedhere
\]
\end{proof}

We may now give the proof of \Cref{thm:H-congruence}.
Consider the formal Dirichlet series $\sum_{n \geq 1} \psi_d(n) n^{-s}$.
This is exactly the Dedekind zeta function of $\QQ(\sqrt d)$
\cite[\S{}VIII.2]{FT-ANT}
with the Euler factor at $p = 2$ omitted. Therefore
\[
\sum_{n \geq 1} \psi_d(n) n^{-s} =
   \prod_{\chi_d(p) = 1} (1 - p^{-s})^{-2}
   \prod_{\chi_d(p) = -1} (1 - p^{-2s})^{-1}.
\]
(The first product corresponds to prime ideals of norm~$p$,
i.e., arising from primes that split in $\QQ(\sqrt d)$,
and the second product to prime ideals of norm~$p^2$,
i.e., arising from primes that remain inert.)
We may rewrite this as
\[
\prod_{p \geq 3} (1 - p^{-2s})^{-1}
   \prod_{\chi_d(p) = 1} \frac{1 - p^{-2s}}{(1 - p^{-s})^2}
= \sum_{\substack{m \geq 1 \\ \textn{$m$ odd}}} m^{-2s}
   \prod_{\chi_d(p) = 1} \frac{1 + p^{-s}}{1 - p^{-s}},
\]
and the last product becomes
\begin{align*}
\prod_{\chi_d(p) = 1} \frac{1 + p^{-s}}{1 - p^{-s}}
& = \prod_{\chi_d(p) = 1} (1 + 2 p^{-s} + 2 p^{-2s} + 2 p^{-3s} + \cdots) \\
& \equiv 1 + \sum_{\substack{\chi_d(p) = 1 \\ l \geq 1}} 2p^{-ls} \pmod 4.
\end{align*}
By the definition of $F(x)$, we also have
\[
\sum_{n \geq 1} F_n \tsp n^{-s} =
   \sum_{\substack{m \geq 1 \\ \textn{$m$ odd}}} m^{-2s}.
\]
Putting everything together, we obtain
\[
\sum_{n \geq 1} \frac{\psi_d(n) - F_n}{2} \cdot n^{-s} \equiv
   \sum_{\substack{m \geq 1 \\ \textn{$m$ odd}}} m^{-2s}
   \sum_{\substack{\chi_d(p) = 1 \\ l \geq 1}} p^{-ls} \pmod 2.
\]
Equating coefficients of $n^{-s}$,
we see that $(\psi_d(n) - F_n)/2$ is congruent modulo~$2$ to the
number of representations of $n$ in the form $n = m^2 p^l$
where $m \geq 1$ is odd, $\chi_d(p) = 1$ and $l \geq 1$.
By \Cref{prop:Psi-H-relation}
this quantity is exactly $H_{d,n}$,
so \eqref{eq:H-congruence} is proved.

To deduce \eqref{eq:H-sum-congruence} from \eqref{eq:H-congruence},
note that if $p \equiv 1 \pmod 8$
then $\chi_d(p) = 1$ for all three values of~$d$,
whereas if $p \equiv 3,5,7 \pmod 8$
then $\chi_d(p) = 1$ for exactly one value of~$d$.
This completes the proof of \Cref{thm:H-congruence}.

\section{Turing machines}
\label{sec:turing}

In this section we briefly recall
a few simple complexity results for the Turing model,
and discuss some issues relating to data layout on the Turing machine tape.

\subsection{Integer arithmetic}

We store integers in the usual binary representation.
We may add or subtract $n$-bit integers in time $O(n)$.
Operations such as computing products, quotients, remainders and square roots
may be carried out in time $n^{1+o(1)}$ for $n$-bit inputs.
We will sometimes need to evaluate formulas involving
logarithms and exponentials,
when choosing parameters for various algorithms.
For example, given an $n$-bit integer~$N$, we might need to compute
some integer $k = \exp((\log \log N)^{1/2}) + O(1)$;
this can also be done in time $n^{1+o(1)}$.
For details of all the algorithms just mentioned,
see for instance \cite{BZ-mca}.

\subsection{Counting}
\label{sec:counting}

Updating a $k$-bit counter from $n$ to $n+1$
requires $\Theta(k)$ bit operations in the worst case.
However, if we start at zero and increment repeatedly,
then the amortised time per increment is only $O(1)$,
because the $i$\th bit is only updated on every $2^i$\th increment
\cite[\S16.1]{CLRS-algorithms}.
The same remark applies when decrementing a counter from $N$ to zero.

As an application,
consider a vector $v \in \FF_2^n$.
We will sometimes need to convert between the \emph{binary array format},
i.e., representing $v$ as an array of $n$ bits,
and the \emph{sorted list format},
i.e., listing those indices $i$ such that $v_i = 1$ in increasing order.
For example, $v$ might represent the set of primes $p < n$.
\begin{lem}
\label{lem:convert-format}
Let $v \in \FF_2^n$ and let $\wt(v)$ denote
the number of $i \in \{0, \ldots, n-1\}$ such that $v_i = 1$.
We may convert $v$ between the binary array format and the sorted list format,
in either direction, in time $O(n + \wt(v) \log n)$.
\end{lem}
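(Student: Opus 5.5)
Both directions are a single left-to-right sweep that maintains a binary counter holding the current index $i$. The amortised counting bound from \Cref{sec:counting} keeps the total cost of all increments at $O(n)$. The only other costs are writing or reading $\wt(v)$ indices of $\lg n$ bits each, and comparing the counter against the next listed index. Throughout I use a multitape machine with separate tapes for the input, the output and the counter.

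\emph{Array to list.} First I compute $\lg n$ in time $O(\log n)$ and initialise a counter of $\lg n$ bits to zero. Then I scan the input array one bit at a time, incrementing the counter after each bit. Whenever the current bit $v_i$ equals $1$, I copy the counter's $\lg n$ bits to the output tape, followed by a separator; for this it is convenient to use fixed-width fields of $\lg n$ bits, or standard binary with delimiters. Each copy costs $O(\log n)$, since the counter head returns to the start of the counter after copying. The increments cost $O(n)$ in total by the amortised bound, and the copies cost $O(\wt(v)\log n)$. The output is automatically in increasing order.

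\emph{List to array.} I again maintain the counter $i$, starting at zero, and keep the input head at the next unread listed index $j$. At each step I must decide whether $i=j$. A naive comparison costs $\Theta(\log n)$, which would give $O(n\log n)$ overall; avoiding this is the main obstacle.

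I would handle it with a second counter that holds the gap $g = j - i$ and is decremented instead.
\begin{enumerate}
\item When a new index $j$ is read, compute $g = j - i_{\mathrm{prev}}$ by one subtraction of $O(\log n)$-bit numbers, in time $O(\log n)$, where $i_{\mathrm{prev}}$ is the previously listed index or $-1$.
\item Then repeatedly write $0$ to the output and decrement $g$, stopping when $g$ reaches $1$; at that point write $1$.
\end{enumerate}

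Zero-detection needs care, because checking whether $g=0$ naively also costs $O(\log n)$. I would resolve it by keeping a count of the nonzero bits of $g$, or more simply by decrementing $g$ until a borrow propagates off the top. Concretely, I store $g-1$ and decrement until underflow: detecting underflow happens during the borrow propagation itself, so it adds no extra cost.

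For the cost of the decrements, decrementing a counter from $g$ down to $0$ costs $O(g + \log g)$ amortised, by the same argument as for counting down from $N$. Summing over all gaps gives $O(n + \wt(v)\log n)$. After the last listed index, I write zeros until position $n$ using the main counter. One bookkeeping point: the main counter $i$ need not be maintained at every step, since $i_{\mathrm{prev}}$ is simply the previous listed index, which can be copied from the input tape in time $O(\log n)$. The total cost is therefore $O(n + \wt(v)\log n)$.
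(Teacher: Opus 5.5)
Your proposal is correct and follows the same route as the paper: for array-to-list you copy a running counter to the output at each $1$, and for list-to-array you use the differences between successive listed indices to count off the runs of zeros, with the amortised counter bound of \Cref{sec:counting} keeping the total counting cost at $O(n)$. The paper leaves the zero-detection and index-bookkeeping details implicit, and you supply them. The one loose end is the padding after the last index: you should count down $n-1-j_{\mathrm{last}}$ rather than use the main counter you had just dropped, and this fits within the same bound.
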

\begin{proof}
Given a binary array,
we simply increment a counter as we walk through the array,
copying its value to the output whenever we see a~$1$.
For the other direction, we compute the differences between successive
list elements,
and use them to work out the distance to travel
between each~$1$ in the output array.
\end{proof}

\subsection{Merging and sorting}

We will often need to deal with sorted lists.
Suppose that the elements of each list are represented
by bitstrings of a fixed length, say $b \geq 1$ bits.
Assume also that we have agreed on a total ordering for the elements,
and that we can compare two elements in time $O(b)$.
For example, the objects might simply be $b$-bit integers
with the usual ordering.
\begin{lem}
\label{lem:merge}
Given two sorted lists $S_1$ and $S_2$ of $b$-bit objects
of lengths respectively $n_1$ and $n_2$,
we may merge the lists to compute the union $S \coloneqq S_1 \cup S_2$,
as a sorted list, in time $O(b (n_1 + n_2))$.
\end{lem}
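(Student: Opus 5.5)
The plan is the textbook two-pointer merge, arranged so that the Turing machine never has to move a head backwards over long stretches of tape. Assume $S_1$ and $S_2$ sit on two input tapes, each a contiguous block of $b$-bit records with an end-of-list marker, and that the output $S$ is written on a third tape.

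If the inputs share a tape with other data, first copy each list to its own work tape. This costs $O(b(n_1+n_2))$ steps: the copy is a single linear scan, and the tape heads start at the beginning of each list.

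Then run the main loop. At each stage the head on tape~$i$ sits at the start of the current element of $S_i$.

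\begin{itemize}
\item Compare the two current elements. By hypothesis this takes $O(b)$ time. I will assume the comparison proceeds bit by bit from the start of each record, with both heads moving in parallel.
\item Return both heads to the record starts. This costs another $O(b)$ steps.
\item Copy the smaller element to the output tape and advance past it, in $O(b)$ steps.
\item If the two elements are equal, also advance past the other one, so that $S$ is a genuine union.
\item Once one list is exhausted, copy the remainder of the other list.
\end{itemize}

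Every iteration consumes at least one input record and costs $O(b)$ steps. The number of iterations is therefore at most $n_1+n_2$, and the total is $O(b(n_1+n_2))$.

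The one point needing care, and the nearest thing to an obstacle, is to make sure that each comparison really costs $O(b)$ head movement in the Turing model and forces no rewinding beyond the current record. This is handled by the parameterisation in the statement: the record length $b$ is fixed and the comparison is assumed to take $O(b)$ time. The finite control can also count to $b$ using a counter on a work tape, with amortisation as in \S\ref{sec:counting}. Alternatively, each record can carry a delimiter, so that the machine can always detect record boundaries.

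Duplicates within a single list are not an issue, since each $S_i$ is a sorted list representing a set.
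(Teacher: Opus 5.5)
Your proposal is correct and is the same argument as the paper's: the paper's proof just says to walk through the two sorted lists in parallel, copying elements to the output as you go, and you have spelled out the Turing-machine bookkeeping (one head per list, $O(b)$ work per step, at most $n_1+n_2$ steps). One small point: the paper later reuses this merge on multisets (e.g.\ in the proof of \Cref{prop:square-primes-to-primes}), where equal elements must be kept rather than collapsed, but this does not affect the $O(b(n_1+n_2))$ bound.
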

\begin{proof}
We walk through the lists in parallel,
copying elements to the output appropriately as we proceed.
\end{proof}
\begin{lem}
\label{lem:sort}
We may sort a list of $b$-bit objects of length $n$ in time $O(b n \log n)$.
\end{lem}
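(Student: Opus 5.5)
The plan is to use merge sort, with every pass implemented as a linear scan over a constant number of tapes, so that each pass costs $O(bn)$ by \Cref{lem:merge} and only $O(\log n)$ passes are needed. We store the list as $n$ consecutive blocks of $b$ bits; we may assume $b$ is known, or that blocks are delimited by a separator symbol.

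For $k = 0, 1, 2, \ldots$ we keep the invariant that the list consists of consecutive sorted runs of length $2^k$, the last run possibly being shorter. Initially $k=0$ and the invariant holds trivially.

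One pass, from $2^k$ to $2^{k+1}$, goes as follows.
\begin{enumalgo}
\item Scan the list once and deal the runs alternately onto two auxiliary tapes: runs $0, 2, 4, \ldots$ to tape~A and runs $1, 3, 5, \ldots$ to tape~B. To detect run boundaries we use a counter of $k+1 = O(\log n)$ bits, incremented per element and reset at each boundary. By the amortisation argument of \Cref{sec:counting}, counting up to $2^k$ repeatedly costs $O(1)$ amortised per increment plus $O(\log n)$ per reset. Counter bookkeeping is therefore $O(n + (n/2^k)\log n)$ per pass, which over all passes totals $O(n\log n)$ (since $\sum_k n/2^k \cdot \log n = O(n \log n)$) and is in any case dominated by $O(bn\log n)$. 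Copying the elements costs $O(bn)$.
\item Rewind both tapes and merge the $i$\th run of A with the $i$\th run of B, as in the proof of \Cref{lem:merge}, writing the merged runs consecutively back to the main tape. We again use counters to know when each run is exhausted. This costs $O(b \cdot 2^{k+1})$ per pair of runs, hence $O(bn)$ per pass plus the counter overhead already bounded.
\end{enumalgo}
Rewinding tapes costs $O(bn)$ per pass.

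After $\lceil \log_2 n\rceil$ passes there is a single sorted run, so the total time is $O(bn\log n)$, plus $O(bn)$ for the initial setup.

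The only delicate point is the bookkeeping of run lengths on a Turing machine, since we cannot afford $\Theta(\log n)$ per element to compare positions. The amortised counter analysis above handles this. Alternatively, one can mark run boundaries with a special tape symbol, which can be updated in each pass by erasing every other marker during the merge.
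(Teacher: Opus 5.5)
Your proof is correct and is the same merge sort argument the paper uses; the paper just cites the recursive version (sort each half, then merge via \Cref{lem:merge}), giving $O(\log n)$ levels of $O(bn)$ work. Your bottom-up formulation, with explicit run-boundary bookkeeping by amortised counters or separator symbols, is a more detailed Turing-machine implementation of that same idea.
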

\begin{proof}
We apply the \emph{merge sort} algorithm,
i.e., recursively sort each half of the list
and then merge the results \cite[\S5.2.4]{Knuth-TAOCP3}.
\end{proof}

\begin{rem}
It is well known that merging and sorting are efficient in the Turing model,
but it is not so easy to find a formal treatment in the literature.
According to \cite{Petersen-sorting} there is a presentation in
\cite{Reischuk-complexity},
but the author was unable to obtain a copy of the latter.
See also \cite[Thm.\,1]{Diem-turing} and \cite[\S6.5.2]{SGV-turing}.
\end{rem}

\subsection{Arrays and transposition}

Let $A$ be an $m \times n$ array of $b$-bit objects.
We represent $A$ on the one-dimensional tape by listing
the entries $A_{i,j}$ in row-major order,
i.e., sorted first by $i$ and then by~$j$.
\begin{lem}
\label{lem:transpose}
We may transpose an $m \times n$ array of $b$-bit objects,
to obtain an $n \times m$ array, in time $O(b m n \log \min(m, n))$.
\end{lem}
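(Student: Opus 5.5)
We prove \Cref{lem:transpose} by divide and conquer on the larger dimension, using only linear scans of the tape. By symmetry (transposing an $n \times m$ array is the inverse operation), assume $m \leq n$. The plan is to reduce the problem to transposing arrays with fewer rows, with a recursion depth of $O(\log m)$ and linear cost per level.

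\emph{Splitting step.} Split $A$ into its top half $A_1$ (rows $0,\ldots,\lfloor m/2\rfloor-1$) and bottom half $A_2$. In row-major order these are contiguous blocks of the input, so we may copy them onto separate tapes in time $O(bmn)$. Recursively transpose each half to obtain $A_1^T$ and $A_2^T$, of sizes $n \times \lfloor m/2 \rfloor$ and $n \times \lceil m/2\rceil$.

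\emph{Combining step.} Row $j$ of $A^T$ is row $j$ of $A_1^T$ followed by row $j$ of $A_2^T$. We can therefore form $A^T$ by walking through $A_1^T$ and $A_2^T$ in parallel on two tapes. For each $j$, copy $\lfloor m/2\rfloor$ entries from the first tape, then $\lceil m/2\rceil$ entries from the second. To know when to switch, keep a counter of the position within the current row, which has $O(\log m)$ bits. Amortized as in \Cref{sec:counting}, or simply by copying a precomputed marker string of length $m$ alongside, this costs $O(1)$ per entry, so the combining step takes $O(bmn)$ time.

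\emph{Complexity.} At each level of the recursion the arrays together contain $mn$ entries, so each level costs $O(bmn)$ in splitting and merging. Since the row count halves at each level, there are $O(\lg m)$ levels. The base case $m = 1$ is trivial: a $1 \times n$ array in row-major order is already an $n \times 1$ array in row-major order. The total is $O(bmn\log m) = O(bmn\log\min(m,n))$.

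\emph{Main obstacle.} The main difficulty is Turing-machine bookkeeping. The recursion has to be carried out with a fixed number of tapes. This is done by storing the subproblems consecutively on a work tape, separated by delimiters, and processing a whole level of the recursion in one sweep, so that all subarrays at a given level are split or merged together. The per-row boundary detection must also not cost $\Theta(\log m)$ per entry. The delimiter/marker approach solves this: we lay down a template of length $\lfloor m/2\rfloor$ once per level and rewind it once per row, which costs $O(m)$ per row and hence $O(bmn)$ per level.
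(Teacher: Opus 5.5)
Your proof is correct and takes the paper's route: halve the shorter dimension, transpose the two halves recursively, and recombine in linear time, giving $O(\log\min(m,n))$ levels of $O(bmn)$ work each. Despite your opening phrase about the ``larger dimension'', what you actually (and correctly) halve is the row count $m=\min(m,n)$. Your ``by symmetry'' step for $m>n$ is the reversed procedure, which should be stated: de-interleave each row into its left and right halves, recurse, then concatenate.
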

\begin{proof}
The idea is to split the array in half along the shorter dimension,
transpose each half recursively and combine the results.
For details see \cite[Lem.\,18]{BGS-recurrences}.
\end{proof}

More generally, let $A$ be a $d$-dimensional array
of $b$-bit objects of size $n_1 \times \cdots \times n_d$.
Again we represent $A$ by listing the entries $A_{i_1,\ldots,i_d}$
lexicographically,
i.e., sorted first by~$i_1$, then by~$i_2$ and so on.
Complexity bounds similar to \Cref{lem:transpose}
may be proved for transposition problems involving such arrays
by decomposing into two-dimensional problems;
we will develop these in an \emph{ad hoc} manner as they arise.

\section{Polynomial arithmetic}
\label{sec:polynomial-arithmetic}

Polynomials in $\FF_2[x]$ will be represented by an array of bits,
i.e., as a sequence of coefficients in $\FF_2$
with respect to the usual monomial basis.
We may add (or subtract!) polynomials in $\FF_2[x]$
of degree at most $n$ in time $O(n)$.

\subsection{Polynomial multiplication and division}
\label{sec:poly-mult}

Fix some algorithm (Turing machine)
for multiplying polynomials in $\FF_2[x]$.
For $n \geq 1$ let $\Mbarcost(n)$ denote the worst-case running time
of the algorithm on inputs of degree less than~$n$.

It will be technically convenient to work with a ``smoothed'' version
of $\Mbarcost(n)$.
Let
\[
\Mbarstar(n) \coloneqq \frac{\Mbarcost(n)}{n \max(1, \log n)}, \qquad n \geq 1,
\]
i.e., so that $\Mbarstar(n) = \Mbarcost(n) / (n \log n)$ for $n \geq 3$.
Then set
\[
\Mstar(n) \coloneqq \max_{1 \leq m \leq n} \Mbarstar(m), \qquad n \geq 1.
\]
By construction $\Mstar(n)$ is \emph{non-decreasing}.
Finally put
\[
\Mcost(n) \coloneqq n \max(1, \log n) \cdot \Mstar(n), \qquad n \geq 1,
\]
so that $\Mcost(n) = n \log n \cdot \Mstar(n)$ for $n \geq 3$.
Clearly $\Mbarcost(n) \leq \Mcost(n)$ for all~$n$.
For the remainder of the paper we will work only
with $\Mcost(n)$ and $\Mstar(n)$,
and ignore the original $\Mbarcost(n)$.

The non-decreasing property of $\Mstar(n)$ implies that
$\Mcost(n)/n$ is also non-decreasing,
and hence that $\Mcost(n)$ is superadditive, i.e.,
\begin{equation}
\label{eq:superadditive}
\Mcost(n_1) + \cdots + \Mcost(n_r) \leq \Mcost(n_1 + \cdots + n_r).
\end{equation}
It also implies that
\[
\Mcost(n) \gg n \log n
\]
and that $\Mcost(n)$ is itself non-decreasing.

For technical reasons it will also be convenient to assume that
\begin{equation}
\label{eq:mult-constant}
\Mcost(kn) \ll_k \Mcost(n)
\end{equation}
for each $k \geq 1$.
This condition will be satisfied for all multiplication algorithms
considered below.

Let us give some examples.
The classical long multiplication algorithm achieves $\Mcost(n) \ll n^2$,
and Karatsuba's algorithm achieves $\Mcost(n) \ll n^\theta$ with
$\theta = \tfrac{\log 3}{\log 2} \approx 1.58$ \cite[\S8.1]{vzGG-compalg3}.
The polynomial version of the Sch\"onhage--Strassen algorithm \cite{Sch-char2}
achieves $\Mcost(n) \ll n \log n \log \log n$,
so that $\Mstar(n) \ll \log \log n$.
This bound held the asymptotic complexity record for several decades,
but was eventually superseded by an algorithm achieving
$\Mstar(n) \ll 8^{\log^* n}$ \cite{HvdHL-ffmul}.
(See \Cref{sec:notation} for the definition of $\log^* n$.)
The current best result \cite{HvdH-ffmul-cyclotomic}
is the further minuscule improvement
\begin{equation}
\label{eq:ffmul}
\Mstar(n) \ll 4^{\log^* n}.
\end{equation}

A large fraction of complexity bounds in this paper
depend in some way or another on $\Mcost(n)$.
We will generally try to write everything in terms of
$\Mcost(n)$ and $\Mstar(n)$
(and related functions to be introduced below)
so that our results can take advantage of future developments
in multiplication technology.
However, when this becomes too unwieldy,
or when we are dealing with less important secondary terms,
we will simplify matters by hard-wiring in a multiplication algorithm
such as \cite{HvdH-ffmul-cyclotomic},
whose complexity is given by \eqref{eq:ffmul}.
Since $\log^* n$ grows much more slowly than any fixed iterate of $\log$,
this allows us to assume that $\Mstar(n) < (\log n)^{o(1)}$
or even $\Mstar(n) < (\log \log n)^{o(1)}$.

\begin{rem}
The complexity bounds in
\Crefrange{thm:main-deterministic}{thm:main-heuristic}
rely crucially on the estimate $\Mstar(n) < (\log \log n)^{o(1)}$,
i.e., on \cite{HvdHL-ffmul} and subsequent work.
If we substituted instead the earlier Sch\"onhage--Strassen bound
$\Mstar(n) \ll \log \log n$,
the exponents of $\log \log N$ in those theorems would increase.
\end{rem}

\begin{rem}
\label{rem:nlogn-conjecture}
The complexity of polynomial multiplication over finite fields
is an area of active research.
The author suspects that $\Mcost(n) \ll n \log n$ is attainable,
i.e., $\Mstar(n) \ll 1$,
by analogy with the integer case \cite{HvdH-nlogn}.
If $O(n \log n)$ is feasible,
then the main results of the paper could be improved slightly:
see \Cref{rem:main-heuristic-precise}, \Cref{rem:main-probabilistic-precise}
and \Cref{rem:main-deterministic-precise}.
It was shown in \cite{HvdH-ffnlogn} that $O(n \log n)$ can be achieved
if we allow certain plausible (but as yet unproved)
number-theoretic hypotheses.
Moreover it is reasonable to guess that $O(n \log n)$ is the best possible
complexity for this problem,
again by analogy with the lower bound conjecture of Sch\"onhage and Strassen
for the integer case \cite{SS-multiply}.
\end{rem}

\begin{rem}
\label{rem:RAM-multiply}
The papers \cite{HvdHL-ffmul}, \cite{HvdH-ffmul-cyclotomic} and
\cite{HvdH-ffnlogn} all work primarily in the Turing model,
and the impact of these new methods in the RAM model
has not yet been studied.
On a RAM with words of bit size $O(\log N)$,
if one uses a packed encoding to represent polynomials in $\FF_2[x]$,
i.e., with $O(\log N)$ coefficients per word,
then it seems likely that one can multiply polynomials of degree $n \leq N$
in only $O((n/\log N) \log n) = O(n)$ word operations.
Namely, after running $O(1)$ recursion levels of the algorithm
of \cite{HvdH-ffmul-cyclotomic},
the resulting multiplication subproblems should be small enough
to be handled by lookup tables.
This may in effect allow us to take $\Mstar(n) \asymp 1$,
which could lead to small improvements in
\Crefrange{thm:main-deterministic}{thm:main-heuristic}
for the RAM model.
The author has not checked the details.
\end{rem}

\begin{prop}
\label{prop:ffdiv}
Let $f, g \in \FF_2[x]$ have degree at most~$n$, with $g \neq 0$.
Then we may perform the Euclidean division of $f$ by~$g$,
obtaining quotient and remainder, in time $O(\Mcost(n))$.
\end{prop}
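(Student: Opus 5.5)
The plan is the standard Newton-iteration approach to division (Sieveking--Kung), phrased over $\FF_2$ in the Turing model. First handle trivial cases: if $\deg f < \deg g$ the quotient is $0$ and the remainder is $f$, at cost $O(n)$. Otherwise put $m \coloneqq \deg f - \deg g \leq n$. Pass to reversed polynomials $\bar f \coloneqq x^{\deg f} f(1/x)$ and $\bar g \coloneqq x^{\deg g} g(1/x)$; reversal costs $O(n)$ on the tape, for instance by copying the coefficient array backwards using a second tape. The identity $f = qg + r$ with $\deg r < \deg g$ becomes $\bar q \equiv \bar f \cdot \bar g^{-1} \pmod{x^{m+1}}$. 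Since $\bar g(0) = 1$ (the leading coefficient of $g$ is $1$ over $\FF_2$), the series $\bar g$ is invertible in $\FF_2\bbracket{x}$.

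To compute $\bar g^{-1} \bmod x^{m+1}$, I will use Newton iteration $h_{2k} \coloneqq h_k(2 - \bar g h_k) = h_k^2 \bar g \bmod x^{2k}$ in characteristic $2$, starting from $h_1 = 1$ and doubling the precision until it reaches $m+1$. The step from precision $k$ to $2k$ uses $O(1)$ multiplications of polynomials of degree less than $2k$, costing $O(\Mcost(2k)) = O(\Mcost(k))$ by \eqref{eq:mult-constant}, plus $O(k)$ for truncation and addition. Summing over $k = 1, 2, 4, \ldots, 2^{\lceil \log_2(m+1)\rceil}$, superadditivity \eqref{eq:superadditive} (equivalently, $\Mcost(k)/k$ non-decreasing) gives $\sum_j \Mcost(2^j) \leq \Mcost(\sum_j 2^j) \leq \Mcost(4n)$, which is $O(\Mcost(n))$ again by \eqref{eq:mult-constant}. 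The quotient is then $\bar q = \bar f h \bmod x^{m+1}$, obtained by one multiplication followed by reversal to get $q$. The remainder is $r = f - qg$, computed with one more multiplication of degree at most $n$ and a subtraction costing $O(n)$.

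The only point I expect to need care is the Turing-model bookkeeping. Truncations, reversals, zero-padding to a common length, and locating $\deg g$ (a linear scan) must all be linear-time tape operations. The iteration control must also be cheap: maintaining $k$ as a counter costs $O(\log n)$ per step, hence $O(\log^2 n)$ in total, which is negligible because $\Mcost(n) \gg n\log n$. Finally, when the precision is doubled, the final stage may overshoot $m+1$ by a factor up to $2$, and this is absorbed by \eqref{eq:mult-constant}. None of this is a real obstacle; the algebraic content is classical (see \cite[Ch.\,9]{vzGG-compalg3}).
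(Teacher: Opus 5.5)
Your proposal is correct and follows essentially the same route as the paper, which simply cites the Newton-iteration division algorithm of \cite[Thm.\,9.6]{vzGG-compalg3}. The paper notes that the needed bound on the geometric sum of $\Mcost$ values follows from \eqref{eq:mult-constant} together with $\Mcost(n)/n$ being non-decreasing, which is exactly the superadditivity argument you use; you have simply written out the reversal, the characteristic-$2$ Newton step and the Turing-model bookkeeping that the paper omits.
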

\begin{proof}
This follows from \cite[Thm.\,9.6]{vzGG-compalg3}
via an algorithm that uses a Newton iteration
to reduce polynomial division to multiplication.

(The argument in \cite{vzGG-compalg3} is for an algebraic complexity model,
but the underlying algorithm can be easily adapted to the Turing model;
we omit the details.
A similar remark applies to all subsequent references to
algebraic algorithms from the literature.
We also mention that the argument relies on certain technical
assumptions concerning the algebraic analogue of $\Mcost(n)$,
which are described in the paragraphs following
\cite[Defn.\,8.26]{vzGG-compalg3}.
These are used to derive estimates such as
$\Mcost(n) + \Mcost(n/2) + \Mcost(n/4) + \cdots \ll \Mcost(n)$
that arise in the analysis of Newton's method.
In our setting a similar role is played by
the assumption \eqref{eq:mult-constant},
and the fact that $\Mcost(n)/n$ is non-decreasing.
Again we omit the details.)
\end{proof}

\subsection{Polynomial arithmetic over $\FFext$}
\label{sec:ffext}

In \Cref{sec:transform-schemes} and \Cref{sec:compression},
we will work extensively with polynomials over a finite extension of~$\FF_2$.
In this section we collect together a few basic complexity results
concerning such polynomials.

Let $\lambda \geq 1$.
To work with the finite field $\FFext$
we must first precompute a suitable irreducible polynomial.
\begin{lem}[\cite{Sho-irreducible}]
\label{lem:irreducible}
We may (deterministically) find an irreducible polynomial $h \in \FF_2[z]$
of degree $\lambda$ in time $O(\lambda^{O(1)})$.
\end{lem}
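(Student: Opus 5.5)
The plan is to invoke Shoup's deterministic algorithm directly, since the statement is cited to \cite{Sho-irreducible}, and then check that its cost survives translation to the multitape Turing model.

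Recall that Shoup's method reduces the construction of an irreducible polynomial of degree $\lambda$ over $\FF_p$ to two kinds of subproblem. The first is building irreducible polynomials of prime-power degree $r^e$ for each prime power $r^e \divides \lambda$. The second is combining these via composed products (or tensor-product constructions) of pairwise coprime degrees. Each prime-power case uses deterministic linear algebra over $\FF_p$, such as Berlekamp-style kernel computations and factoring in small extensions, with $p = 2$ fixed here. Every step is a polynomial-time algebraic algorithm: $\lambda^{O(1)}$ operations in $\FF_2$ on objects of size $\lambda^{O(1)}$. The combination step computes a polynomial $\prod (z - \alpha_i\beta_j)$ via resultants, which again costs $\lambda^{O(1)}$ field operations.

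Next I would argue that each such algebraic step runs on a Turing machine in time polynomial in $\lambda$. This holds because $\FF_2$-arithmetic is $O(1)$ bit operations. Matrices of dimension $\lambda^{O(1)}$ can be stored in row-major order, and Gaussian elimination then costs $\lambda^{O(1)}$ even with the tape-head movement overhead, since each access costs at most the tape length, which is itself $\lambda^{O(1)}$. Polynomial multiplication and division by classical methods are likewise polynomial. Hence any polynomial-time RAM or algebraic algorithm translates to polynomial Turing time.

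The main obstacle is not mathematical but a matter of checking that the cited algorithm is genuinely deterministic over $\FF_2$ for every $\lambda$. The difficult case in Shoup's paper concerns primes $r \divides \lambda$ for which one needs an $r$-th root of unity or a non-$r$-th power in some extension. For small characteristic this is handled by his deterministic search, whose running time is polynomial in $p$ and $\lambda$. With $p = 2$ this is polynomial in $\lambda$, so the bound $O(\lambda^{O(1)})$ follows.
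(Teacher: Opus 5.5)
Your proposal is correct and follows the paper's approach. The paper simply cites Shoup's deterministic algorithm, which runs in time polynomial in $\lambda$ and $p$, and hence polynomial in $\lambda$ for $p = 2$. For the transfer to the multitape Turing model it relies on its blanket remark in the proof of \Cref{prop:ffdiv}, which says algebraic algorithms from the literature adapt to that model; your sketch of Shoup's internals (prime-power degrees combined by composed products) is accurate but more detail than the paper needs.
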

We then have $\FFext \cong \FF_2[z]/h(z)$,
and we represent elements of $\FFext$
by polynomials in $\FF_2[z]$ of degree less than~$\lambda$.
\begin{prop}[Arithmetic in $\FFext$]
\label{prop:ffext-arithmetic}
We may
\begin{enumabc}
\item add elements of $\FFext$ in time $O(\lambda)$,
\item multiply elements of $\FFext$ in time $O(\Mcost(\lambda))$, and
\item find the inverse of a nonzero element of $\FFext$ in time
$O(\Mcost(\lambda) \log \lambda)$.
\end{enumabc}
\end{prop}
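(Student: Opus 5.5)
Elements of $\FFext$ are stored as polynomials $u \in \FF_2[z]$ of degree less than $\lambda$, with $\FFext \cong \FF_2[z]/h(z)$, where $h$ is the precomputed irreducible polynomial of \Cref{lem:irreducible}. The three parts reduce to polynomial arithmetic over $\FF_2$, and only part (c) needs more than one call to the earlier results.

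For (a), addition in $\FFext$ is coefficientwise addition in $\FF_2[z]$. There is no carry and no reduction, since the sum of two polynomials of degree less than $\lambda$ again has degree less than $\lambda$. We walk through the two bit arrays in parallel and XOR them, which takes time $O(\lambda)$.

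For (b), given $u, v$ of degree less than $\lambda$, we first form the product $uv \in \FF_2[z]$. It has degree at most $2\lambda - 2$ and costs $\Mcost(\lambda)$. We then reduce it modulo $h$, which has degree $\lambda$. By \Cref{prop:ffdiv}, the Euclidean division of a polynomial of degree at most $2\lambda$ by $h$ costs $O(\Mcost(2\lambda))$, and this is $O(\Mcost(\lambda))$ by \eqref{eq:mult-constant}. The remainder is the required representative, so the total is $O(\Mcost(\lambda))$.

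For (c), we need the inverse of a nonzero $u$. Since $h$ is irreducible and $u \neq 0$ has degree less than $\lambda$, we have $\gcd(u, h) = 1$. An extended gcd computation therefore produces $s, t$ with $su + th = 1$, and $s \bmod h$ is the inverse. We would run the fast (half-gcd) extended Euclidean algorithm, for instance as in \cite[Cor.\,11.9]{vzGG-compalg3}. It computes the Bézout coefficients of two polynomials of degree at most $n$ using $O(\Mcost(n)\log n)$ field operations; over $\FF_2$ each such operation is a bit operation. Adapting it to the Turing model in the manner indicated in the proof of \Cref{prop:ffdiv} gives cost $O(\Mcost(\lambda)\log\lambda)$ with $n = \lambda$. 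A final reduction of $s$ modulo $h$, if needed, costs $O(\Mcost(\lambda))$ by \Cref{prop:ffdiv}.

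The main obstacle is the cost analysis of the half-gcd recursion in (c). It splits into two half-size subproblems plus $O(1)$ multiplications and divisions of size $n$, giving the recurrence $C(n) \leq 2C(n/2) + O(\Mcost(n))$. Because $\Mcost(n)/n$ is non-decreasing, the sum over the $O(\log n)$ recursion levels is $O(\Mcost(n)\log n)$. The data movement on the Turing tape at each level is linear in the data size, so it is absorbed into this bound. As in the proof of \Cref{prop:ffdiv}, we would cite this rather than re-derive it.
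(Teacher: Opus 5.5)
Your proposal is correct and follows the paper's proof. There, (a) is immediate, (b) is a product followed by reduction modulo $h$ via \Cref{prop:ffdiv}, and (c) is the fast extended Euclidean algorithm, obtained simply by citing \cite[Cor.\,11.11]{vzGG-compalg3}; your half-gcd recurrence sketch and your appeal to \eqref{eq:mult-constant} are accurate but add nothing the citations do not already supply.
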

\begin{proof}
(a) Clear.
(b) Follows from \Cref{prop:ffdiv}.
(c) Follows from \cite[Cor.\,11.11]{vzGG-compalg3}.
\end{proof}

\begin{lem}[\cite{Shp-primitive}]
\label{lem:generator}
We may (deterministically) find a primitive element in $\FFext^*$
in time $(2^\lambda)^{1/4+o(1)}$.
\end{lem}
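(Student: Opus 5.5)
The plan is the standard one: test candidate elements of $\FFext^*$ one at a time, in a fixed order, until one passes a primitivity test. The cost then splits into two parts: the number of candidates tried, and the cost of each test.

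\emph{Step 1: the test.} Factor $q-1 = 2^\lambda - 1$ by trial division, which takes time $2^{\lambda/2+o(1)}$. This is negligible, and it yields the distinct prime divisors $r_1, \ldots, r_k$, with $k \ll \lambda$. An element $g \in \FFext^*$ is primitive if and only if
\[
g^{(q-1)/r_i} \neq 1 \qquad \text{for all } i.
\]
Each power is computed by repeated squaring using $O(\lambda)$ multiplications in $\FFext$, via \Cref{prop:ffext-arithmetic}(b). A single candidate therefore costs $\lambda^{O(1)}$.

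\emph{Step 2: the candidate set.} We enumerate elements of small "height", meaning polynomials in $\FF_2[z]/h(z)$ of degree less than $m$. We need some such element to be primitive, and we need it to be found after roughly $(2^\lambda)^{1/4+o(1)}$ trials.

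This is exactly Shparlinski's result \cite{Shp-primitive}. It rests on character sum estimates: Weil/Katz type bounds show that the set of polynomials of degree $<m$ cannot avoid the primitive elements once $m$ is slightly more than $\lambda/2$. Combining this with a sieve over the characters of order dividing $r_i$ gives the standard criterion for the existence of a primitive element in such a set.

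The exponent $1/4$ is smaller than the naive $1/2$, and there are two ways to reach it. One is Shparlinski's refinement, which uses sets of the form $\{a + b\theta\}$ or a Burgess-type argument. The other is a product trick: take sets $A$ and $B$ of size $2^{\lambda/4+o(1)}$ each, such that every primitive element is forced to appear among the products $ab$, and then test those products.

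\emph{Main obstacle.} The hard step is this existence bound, and I would not reprove it. We simply cite \cite{Shp-primitive}, so the lemma follows directly: the number of trials is $(2^\lambda)^{1/4+o(1)}$, each trial costs $\lambda^{O(1)} = (2^\lambda)^{o(1)}$, and the one-time factorization of $2^\lambda - 1$ is also absorbed into this bound.
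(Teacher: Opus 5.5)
\textbf{Gap in Step~1: the factorisation cost.} By your own estimate, trial division of $2^\lambda-1$ costs $2^{\lambda/2+o(\lambda)}$. The target is $(2^\lambda)^{1/4+o(1)}=2^{\lambda/4+o(\lambda)}$, so this step is larger by a factor of roughly $2^{\lambda/4}$. It is therefore not negligible, and your closing claim that the factorisation ``is also absorbed into this bound'' is false. You cannot skip the factorisation either, because the test $g^{(q-1)/r_i}\neq 1$ needs the primes $r_i$.

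The missing ingredient is a deterministic factoring algorithm with exponent $1/4$, namely Pollard--Strassen. It uses fast multipoint evaluation of $\prod_{i=1}^{k}(x+i)$ modulo $n$ at $x=0,k,2k,\ldots$ with $k\approx n^{1/4}$, and completely factors $n=2^\lambda-1$ in time $n^{1/4+o(1)}$. For $q=2^\lambda$, this factorisation is what fixes the exponent at $1/4$. The search itself can be made polynomial in small characteristic: Shoup showed that for small $p$ one can deterministically generate, in polynomial time, a polynomial-size subset of the field that contains a primitive element.

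\textbf{The search step.} As written, your account of the search does not give $(2^\lambda)^{1/4+o(1)}$ trials. By your own count, the character-sum argument needs degree slightly above $\lambda/2$, which means about $2^{\lambda/2}$ candidates. The product trick as stated tests all $|A|\,|B|=2^{\lambda/2+o(\lambda)}$ products, so it saves nothing.

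The paper simply cites \cite{Shp-primitive} for the whole statement. If you want to spell out an algorithm instead, the correct decomposition is:
\begin{enumerate}
\item factor $2^\lambda-1$ by Pollard--Strassen;
\item run your Step~1 test, at cost $\lambda^{O(1)}$ each, on a candidate set of size at most $(2^\lambda)^{1/4+o(1)}$, for instance Shoup's polynomial-size set.
\end{enumerate}
Incidentally, every application of this lemma in the paper would tolerate a bound $(2^\lambda)^{1/2+o(1)}$, so your trial-division version would suffice there. It does not, however, prove the lemma as stated.
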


We next consider polynomials over $\FFext$.
A polynomial in $\FFext[x]$ will be represented as a
sequence of coefficients in $\FFext$.
We may add two such polynomials of degree at most $n$
in time $O(\lambda n)$.
Let $\Mcost_\lambda(n)$ denote the cost of multiplying two polynomials
in $\FFext[x]$ of degree at most~$n$.

\begin{prop}
\label{prop:ffextmul}
There is a multiplication algorithm for $\FFext[x]$ achieving
\[
\Mcost_\lambda(n) \ll \Mcost(\lambda n).
\]
\end{prop}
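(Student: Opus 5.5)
We will use Kronecker substitution, reducing multiplication in $\FFext[x]$ to multiplication in $\FF_2[x]$. An element of $\FFext$ is a polynomial in $\FF_2[z]$ of degree less than~$\lambda$, so a polynomial $f = \sum_{i \leq n} f_i(z) x^i \in \FFext[x]$ lifts to a bivariate polynomial in $\FF_2[z][x]$ of $x$-degree at most $n$ and $z$-degree less than~$\lambda$.

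The algorithm has three steps.

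\textbf{Pack.} Substitute $x = z^{2\lambda - 1}$, producing the univariate polynomial
\[
\tilde f(z) \coloneqq \sum_i f_i(z)\, z^{(2\lambda-1) i} \in \FF_2[z],
\]
of degree less than $(2\lambda-1)(n+1)$. Do the same for $g$. On the tape this only requires copying each $\lambda$-bit coefficient into a block of $2\lambda-1$ bits padded with zeros. Hence the cost is $O(\lambda n)$.

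\textbf{Multiply.} Compute $\tilde f \tilde g$ in $\FF_2[z]$ in time $O(\Mcost((2\lambda-1)(n+1)))$. Note that $(2\lambda - 1)(n+1) \leq 4\lambda n$ for $n \geq 1$, the case $n=0$ being a single field multiplication. By \eqref{eq:mult-constant} and monotonicity of $\Mcost$, this cost is $O(\Mcost(\lambda n))$.

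\textbf{Unpack and reduce.} For the product $fg$ in the lifted ring, the coefficient of $x^k$ is $c_k(z) = \sum_{i+j=k} f_i g_j$, which has $z$-degree at most $2\lambda - 2 < 2\lambda - 1$. So consecutive blocks do not overlap, and $c_k$ can be read off directly as the $k$-th block of $2\lambda-1$ bits of $\tilde f \tilde g$. In $\FF_2$ there are no carries at all, so no coefficient-size argument is needed, unlike the integer Kronecker method of \cite{FT-prime-tables}. It remains to reduce each of the $2n+1$ blocks $c_k$ modulo the defining polynomial $h$.

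By \Cref{prop:ffdiv}, reducing one block costs $O(\Mcost(\lambda))$, so all reductions together cost $O(n\,\Mcost(\lambda))$. Superadditivity \eqref{eq:superadditive} gives $n\,\Mcost(\lambda) \leq \Mcost(n\lambda)$, so this step is also within the bound. Scanning the tape to split out the blocks costs only $O(\lambda n)$.

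The only delicate point is the last reduction step, where the per-block $\Mcost(\lambda)$ costs must be summed without loss. Superadditivity handles this exactly.

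One should also remark that the Turing machine needs $h$ on a tape, as precomputed by \Cref{lem:irreducible}. Like all arithmetic in $\FFext$, the statement is understood relative to this fixed representation. Summing the three steps gives $\Mcost_\lambda(n) \ll \Mcost(\lambda n)$.
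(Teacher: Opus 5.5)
Your proof is correct and is essentially the paper's argument. The paper uses Kronecker substitution via $x \mapsto u^{2\lambda}$, $z \mapsto u$ rather than your slightly tighter $x \mapsto z^{2\lambda-1}$, performs one product in $\FF_2[u]$ of degree $O(\lambda n)$, and reduces each coefficient modulo $h$ via \Cref{prop:ffdiv}. It absorbs the resulting $O(\Mcost(\lambda n) + n\Mcost(\lambda))$ using the fact that $\Mcost(n)/n$ is non-decreasing, which is the same fact underlying the superadditivity you invoke.
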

\begin{proof}
The input polynomials lie in
$\FFext[x] \cong \FF_2[x,z]/h(z)$.
We lift the problem to $\FF_2[x,z]$
and then use Kronecker substitution \cite[Cor.\,8.28]{vzGG-compalg3}
to reduce to a univariate multiplication problem in $\FF_2[u]$
of degree $O(\lambda n)$, i.e., via the substitution
$x \mapsto u^{2\lambda}$, $z \mapsto u$.
After recovering the product in $\FF_2[x,z]$
we must divide the resulting coefficients in $\FF_2[z]$ by $h(z)$.
By \Cref{prop:ffdiv} the total cost is
$O(\Mcost(\lambda n) + n \Mcost(\lambda))$.
Using the assumption that $\Mcost(n)/n$ is non-decreasing,
this simplifies to $O(\Mcost(\lambda n))$.
\end{proof}
\begin{prop}
\label{prop:ffextdiv}
Let $f, g \in \FFext[x]$ have degree at most~$n$,
with $g$ nonzero and monic.
Then we may perform the Euclidean division of $f$ by~$g$,
obtaining quotient and remainder, in time $O(\Mcost(\lambda n))$.
\end{prop}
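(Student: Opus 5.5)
The plan is to mimic the proof of \Cref{prop:ffdiv}, that is, the Newton-iteration reduction of division to multiplication in \cite[Thm.\,9.6]{vzGG-compalg3}, but now over the ground ring $\FFext$. The multiplication algorithm used will be the one from \Cref{prop:ffextmul}, with cost $\Mcost_\lambda(m) \ll \Mcost(\lambda m)$. If $\deg f < \deg g$ there is nothing to do. Otherwise, set $k \coloneqq \deg f - \deg g \leq n$ and use reversal: writing $\mathrm{rev}(g) = x^{\deg g} g(1/x)$, the quotient $q$ satisfies
\[
\mathrm{rev}(q) \equiv \mathrm{rev}(f) \cdot \mathrm{rev}(g)^{-1} \pmod{x^{k+1}}.
\]
Since $g$ is monic, $\mathrm{rev}(g)$ has constant term $1$. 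This is why the monic hypothesis matters: no inversion in $\FFext$ is ever needed, so we avoid the $\log\lambda$ factor from \Cref{prop:ffext-arithmetic}(c).

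The first step is to compute $\mathrm{rev}(g)^{-1} \bmod x^{k+1}$ by Newton iteration. Starting from $h_0 = 1$, put $h_{i+1} = 2h_i - \mathrm{rev}(g) h_i^2 = h_i - (\mathrm{rev}(g) h_i - 1) h_i$ modulo $x^{2^{i+1}}$; in characteristic two this is simply $h_{i+1} = \mathrm{rev}(g) h_i^2$ truncated. Each step doubles the precision, and at precision $2^{i+1}$ it costs $O(1)$ multiplications in $\FFext[x]$ of degree $O(2^i)$. That is $O(\Mcost(\lambda 2^i))$ bit operations by \Cref{prop:ffextmul}, plus $O(\lambda 2^i)$ for additions and truncations, which are linear-time tape operations. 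Summing over $i \leq \lg(k+1)$ gives
\[
\sum_i \Mcost(\lambda 2^i) \ll \Mcost(\lambda n),
\]
because $\Mcost(m)/m$ is non-decreasing: each term is at most $(2^i/2^{I})\Mcost(\lambda 2^{I})$, so the sum is a geometric series dominated by the top level, which is $\ll \Mcost(2\lambda n) \ll \Mcost(\lambda n)$ by \eqref{eq:mult-constant}.

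With the inverse in hand, the quotient takes one more multiplication, $\mathrm{rev}(f)\cdot h \bmod x^{k+1}$, followed by reversal. The remainder is then $r = f - qg$, which costs one multiplication of degree $\leq n$ and one subtraction. Both are $O(\Mcost(\lambda n))$ by \Cref{prop:ffextmul} and \eqref{eq:mult-constant}.

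The only real obstacle is the Turing-model bookkeeping rather than the algebra. Reversal, truncation, and padding of coefficient arrays of $\lambda$-bit entries must be done in time linear in $\lambda n$; a single sequential pass copying blocks of $\lambda$ bits handles this. The iteration control (counters up to $\lg n$) must also stay negligible, which it does since it is $O(\lambda n)$ overall. I would handle this exactly as the paper does for \Cref{prop:ffdiv}: note that the algebraic algorithm adapts routinely and omit the details.
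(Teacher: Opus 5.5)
Your proposal is correct and follows the same route as the paper. The paper's proof simply invokes the Newton-iteration division algorithm of \cite[Thm.\,9.6]{vzGG-compalg3}, with multiplications costed via \Cref{prop:ffextmul}; you have unrolled that algorithm explicitly (reversal, the characteristic-two Newton step for $\mathrm{rev}(g)^{-1} \bmod x^{k+1}$, and the geometric summation using that $\Mcost(m)/m$ is non-decreasing together with \eqref{eq:mult-constant}), and your observation that monicity of $g$ avoids any field inversion is exactly why that hypothesis appears.
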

\begin{proof}
Again this follows from \cite[Thm.\,9.6]{vzGG-compalg3}
(and \Cref{prop:ffextmul}).
\end{proof}

\begin{prop}
\label{prop:ffextgcd}
Let $f, g \in \FFext[x]$ have degree at most~$n$.
Then we may compute $\gcd(f, g)$ in time
$O(\Mcost(\lambda n) \log \lambda n)$.
\end{prop}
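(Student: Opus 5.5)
The plan is to invoke the standard half-gcd (Knuth--Schönhage) algorithm, as presented in \cite[Cor.\,11.9, Cor.\,11.11]{vzGG-compalg3}. In the algebraic model over a field $K$, that algorithm computes $\gcd(f,g)$ for polynomials of degree at most~$n$ using $O(\Mcost_K(n)\log n)$ arithmetic operations on polynomials, together with $O(n)$ inversions of field elements. I will transfer it to the Turing model with $K = \FFext$ and then simplify the resulting bound.

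\textbf{Step 1: cost of one recursion level.} The half-gcd recursion consists of two recursive calls on inputs of half the size, plus $O(1)$ polynomial multiplications and Euclidean divisions of size~$n$. Each of those operations is a multiplication in $\FFext[x]$, costing $O(\Mcost_\lambda(n)) = O(\Mcost(\lambda n))$ by \Cref{prop:ffextmul}, or a division. Divisions need a monic divisor. I will arrange this by scaling the divisor by the inverse of its leading coefficient, which costs one field inversion ($O(\Mcost(\lambda)\log\lambda)$ by \Cref{prop:ffext-arithmetic}(c)) plus $O(n)$ field multiplications. After that, \Cref{prop:ffextdiv} gives a cost of $O(\Mcost(\lambda n))$. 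The recurrence for a call of size $n$ is therefore $C(n) \leq 2C(n/2) + O(\Mcost(\lambda n)) + (\text{inversion costs})$.

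\textbf{Step 2: solve the recurrence.} Since $\Mcost(m)/m$ is non-decreasing, we have $2\Mcost(\lambda n/2) \leq \Mcost(\lambda n)$. Each of the $O(\log n)$ recursion levels therefore contributes $O(\Mcost(\lambda n))$, for a total of $O(\Mcost(\lambda n)\log n)$.

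\textbf{Step 3: field inversions.} There are at most $O(n)$ quotient steps in total, since each strictly decreases degree, so at most $O(n)$ inversions occur. Their total cost is $O(n\,\Mcost(\lambda)\log\lambda)$. By superadditivity \eqref{eq:superadditive} this is $O(\Mcost(\lambda n)\log\lambda)$.

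\textbf{Step 4: combine and account for Turing overhead.} Adding Steps 2 and 3 gives $O(\Mcost(\lambda n)(\log n + \log\lambda)) = O(\Mcost(\lambda n)\log \lambda n)$, which is the claimed bound. The remaining Turing-model overheads are cheap: copying $2\times 2$ polynomial matrices and moving data within a recursion level costs $O(\lambda n)$ bit operations per level, which is dominated by the multiplication cost since $\Mcost(m) \gg m$.

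The main obstacle is the bookkeeping in the half-gcd algorithm. The degree sequence of the remainders is not normal in general, so the quotients can have varying degrees. Charging all the quotient and inversion work correctly across the recursion is exactly what \cite[Ch.\,11]{vzGG-compalg3} handles. I would cite that analysis rather than redo it, noting, as in the proof of \Cref{prop:ffdiv}, that the algebraic algorithm adapts directly to the Turing model and that the required regularity of $\Mcost$ follows from \eqref{eq:mult-constant} and the monotonicity of $\Mcost(n)/n$.
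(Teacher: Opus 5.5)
Your proposal is correct and is essentially the paper's proof. The paper likewise quotes the fast Euclidean algorithm bound of $O(\Mcost_\lambda(n)\log n)$ plus $O(n)$ inversions in $\FFext$ from \cite[Thm.\,11.7]{vzGG-compalg3}, converts it to bit complexity via \Cref{prop:ffextmul} and \Cref{prop:ffext-arithmetic}(c), and absorbs the $n\Mcost(\lambda)\log\lambda$ inversion cost into $\Mcost(\lambda n)\log\lambda n$; your Steps~1--2 simply re-derive the half-gcd recurrence that the paper takes as given from that reference.
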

\begin{proof}
According to \cite[Thm.\,11.7]{vzGG-compalg3}
the cost is $O(\Mcost_\lambda(n) \log n)$
plus $O(n)$ inversions in $\FFext$.
By \Cref{prop:ffextmul} and \Cref{prop:ffext-arithmetic}(c)
the bit complexity is
\[
O(\Mcost(\lambda n) \log n + n \Mcost(\lambda) \log \lambda)
   = O(\Mcost(\lambda n) \log \lambda n).  \qedhere
\]
\end{proof}

\begin{prop}
\label{prop:product-tree}
Let $g_1, \ldots, g_m \in \FFext[x]$
be nonconstant polynomials,
and let $n \coloneqq \sum_i \deg g_i$.
Then we may compute the product $g_1 \cdots g_m$ in time
$O(\Mcost(\lambda n) \log n)$.
\end{prop}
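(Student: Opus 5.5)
We will use the standard product (subproduct) tree, balanced according to degree rather than number of factors. A naive balanced binary tree on $m$ leaves could have depth $\log m$ with very unequal degrees. That is fine for depth, since $m \leq n$ because each $g_i$ is nonconstant. However, each level must cost $O(\Mcost(\lambda n))$, and that needs superadditivity together with a bound on the total degree at each level.

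\textbf{Tree construction.} Pair up consecutive factors: at each level, multiply $g_1 g_2$, $g_3 g_4$, and so on, carrying over an unpaired last factor unchanged. After $k$ levels the number of factors is at most $\lceil m/2^k\rceil$, so there are $O(\log m) = O(\log n)$ levels.

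\textbf{Cost of one level.} At each level the polynomials present have degrees summing to exactly $n$, since degrees add under multiplication over a field. Multiplying a pair of degrees $d_1, d_2$ costs $O(\Mcost_\lambda(d_1+d_2))$. By \Cref{prop:ffextmul} this is $O(\Mcost(\lambda(d_1+d_2)))$; the implied constant is uniform, since \eqref{eq:mult-constant} absorbs the degree-versus-length shift. Summing over pairs and applying the superadditivity \eqref{eq:superadditive} bounds the multiplication cost of the level by $O(\Mcost(\lambda n))$.

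\textbf{Turing-machine overhead.} The operands of each multiplication are adjacent on the tape, and each output is written sequentially. Copying and moving data therefore costs $O(\lambda n)$ per level, which is at most $O(\Mcost(\lambda n))$ because $\Mcost(k) \gg k$. The leaves also need some care. Degree-$d$ polynomials occupy $O(\lambda(d+1)) = O(\lambda d)$ bits since $d \geq 1$, so the input size is $O(\lambda n)$. Locating factor boundaries requires reading the stored degrees, whose counters have $O(\log n)$ bits each. The total is $O(m \log n)$ per level, again absorbed because $m \leq n$.

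\textbf{Total.} Multiplying the per-level bound by the $O(\log n)$ levels gives $O(\Mcost(\lambda n)\log n)$. The only point needing care is the uniform applicability of superadditivity, with constants independent of the pair degrees, and \eqref{eq:superadditive} together with \eqref{eq:mult-constant} handles this directly. No real obstacle remains.
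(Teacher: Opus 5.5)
Your proposal is correct and follows the paper's argument: the paper also uses the standard product tree (citing von zur Gathen--Gerhard, Lem.~10.4), which has $O(\log m) = O(\log n)$ levels, each costing $O(\Mcost(\lambda n))$ by \Cref{prop:ffextmul} and the superadditivity \eqref{eq:superadditive}. Your opening remark about balancing by degree is unnecessary, since the tree you actually build pairs factors by count, and that already gives the bound.
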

\begin{proof}
Using the standard product tree algorithm,
by \cite[Lem.\,10.4]{vzGG-compalg3} the complexity is
$O(\Mcost_\lambda(n) \log m) = O(\Mcost(\lambda n) \log n)$.
(The argument here makes use of the
superadditivity property \eqref{eq:superadditive},
and similarly for the next result.)
\end{proof}

\begin{prop}[Simultaneous modular reduction]
\label{prop:ffextmod}
Let $g_1, \ldots, g_m \in \FFext[x]$
be monic nonconstant polynomials,
and let $n \coloneqq \sum_i \deg g_i$.
Given $f \in \FFext[x]_n$,
we may compute the remainders $f \bmod g_i$
for $i = 1, \ldots, m$ in time $O(\Mcost(\lambda n) \log n)$.
\end{prop}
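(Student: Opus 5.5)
The plan is to use the standard remainder tree algorithm, essentially the argument of \cite[Thm.\,10.15]{vzGG-compalg3}, with the costs converted to bit complexity via \Cref{prop:ffextmul} and \Cref{prop:ffextdiv}.

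\emph{Step 1: the product tree.} Build a balanced binary tree whose leaves are $g_1, \ldots, g_m$ and whose internal nodes hold the products of their two children. To keep the depth at $O(\log n)$, split the list so that the total degree is roughly halved at each level. Splitting by count also works: $m \leq n$ because every $g_i$ is nonconstant, so depth $O(\log m) = O(\log n)$. Every node is monic, being a product of monic polynomials. At each level the degrees of the nodes add up to at most $n$. By superadditivity \eqref{eq:superadditive} and \Cref{prop:ffextmul}, one level therefore costs $O(\Mcost(\lambda n))$, and the whole tree costs $O(\Mcost(\lambda n)\log n)$. This is exactly \Cref{prop:product-tree}, and we keep all the intermediate nodes.

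\emph{Step 2: descending the tree.} First reduce $f$ modulo the root $P = g_1\cdots g_m$. Since $\deg f < n = \deg P$, this reduction is free. Then walk down the tree: at a node $Q$ with children $Q_1, Q_2$, given $r = f \bmod Q$, compute $r \bmod Q_1$ and $r \bmod Q_2$. These are correct because $Q_i \divides Q$ implies $(f \bmod Q)\bmod Q_i = f \bmod Q_i$. Each division has $\deg r < \deg Q$ and monic divisor $Q_i$, so \Cref{prop:ffextdiv} gives cost $O(\Mcost(\lambda \deg Q))$. Summing over one level with superadditivity gives $O(\Mcost(\lambda n))$, and over $O(\log n)$ levels the total is $O(\Mcost(\lambda n)\log n)$. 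The leaves then hold $f \bmod g_i$.

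\emph{Step 3: Turing-model bookkeeping, the main obstacle.} In the Turing model we must also account for data movement. I would store each level of the tree contiguously on a tape, in left-to-right order, with each polynomial preceded by its degree in binary (or with fixed-width coefficient blocks of $\lambda$ bits). Processing a level is then one linear sweep over two tapes: the remainders from the previous level and the node polynomials of the current level. Moving between operands costs $O(\lambda n + m\log n)$ per level, which is $O(\lambda n\log n)$ in total and is absorbed by $\Mcost(\lambda n)\log n$, since $\Mcost(k)\gg k$. The one subtle point is that \Cref{prop:ffextdiv} costs $O(\Mcost(\lambda\deg Q))$ even for tiny $\deg Q$, where $\Mcost$ of small arguments is $\gg \lambda$. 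This is still fine: the number of nodes per level is at most the sum of the degrees, and superadditivity covers it. If $\deg f$ is smaller than $n$ nothing changes; we simply pad.
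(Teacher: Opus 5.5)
Your proposal is correct and follows the paper's route. The paper simply invokes the standard remainder tree via \cite[Cor.\,10.17]{vzGG-compalg3} to get $O(\Mcost_\lambda(n)\log m)$, then converts this to $O(\Mcost(\lambda n)\log n)$ using \Cref{prop:ffextmul} and superadditivity \eqref{eq:superadditive}; your level-by-level accounting and Turing-model bookkeeping make explicit what that citation, together with the paper's blanket remark about adapting algebraic algorithms to the Turing model, leaves implicit.
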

\begin{proof}
Using the standard remainder tree algorithm,
by \cite[Cor.\,10.17]{vzGG-compalg3} the complexity is again
$O(\Mcost_\lambda(n) \log m) = O(\Mcost(\lambda n) \log n)$.
\end{proof}

\begin{prop}[Evaluation and interpolation on a geometric progression]
\label{prop:geometric}
Let $n \geq 1$
and let $\beta \in \FFext^*$ have multiplicative order at least~$n$.
Both of the following problems may be solved in time
$O(\Mcost(\lambda n) + \Mcost(\lambda) \log \lambda)$:
\begin{enumabc}
\item
Given $\beta$ and $f \in \FFext[x]_n$,
evaluate $v_j \coloneqq f(\beta^j) \in \FFext$ for $j = 0, \ldots, n-1$.
\item
Given $\beta$ and $v_0, \ldots, v_{n-1} \in \FFext$,
find the unique polynomial $f \in \FFext[x]_n$ such that
$f(\beta^j) = v_j$ for $j = 0, \ldots, n-1$.
\end{enumabc}
\end{prop}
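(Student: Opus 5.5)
The plan is to use the Bluestein (chirp) transform, which turns evaluation on a geometric progression into a single polynomial multiplication. The interpolation direction will then be handled by the standard transposition and inversion trick, again reduced to a constant number of multiplications.

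\textbf{Part (a).} Write $f = \sum_{i<n} f_i x^i$. Over characteristic two the usual identity $ij = \binom{i+j}{2} - \binom{i}{2} - \binom{j}{2}$ still works, because it uses no division by $2$ inside the field. It gives
\[
v_j = \beta^{-\binom{j}{2}} \sum_{i<n} \bigl(f_i \beta^{-\binom{i}{2}}\bigr) \beta^{\binom{i+j}{2}}.
\]
So the $v_j$ are, up to scaling, the middle coefficients of the product of $\sum_i f_i \beta^{-\binom i2} x^{n-1-i}$ with $\sum_{k<2n-1} \beta^{\binom k2} x^k$. I carry out the steps in this order:
\begin{enumalgo}
\item Compute $\beta^{\binom{k}{2}}$ for $k < 2n$ using the recurrences $\beta^{\binom{k+1}{2}} = \beta^{\binom k2}\beta^k$ and $\beta^{k+1}=\beta^k\beta$. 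This costs $O(n)$ multiplications in $\FFext$.
\item Compute their inverses. A single inversion of $\beta$ costs $O(\Mcost(\lambda)\log\lambda)$ by \Cref{prop:ffext-arithmetic}(c), and the same recurrences then produce all the inverses.
\item Perform one multiplication in $\FFext[x]$ of degree $O(n)$, costing $O(\Mcost(\lambda n))$ by \Cref{prop:ffextmul} and \eqref{eq:mult-constant}.
\item Do $O(n)$ further scalings.
\end{enumalgo}
The $O(n)$ field multiplications cost $O(n\Mcost(\lambda)) = O(\Mcost(\lambda n))$ because $\Mcost(m)/m$ is non-decreasing. This gives the claimed bound.

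\textbf{Part (b).} The matrix $V = (\beta^{ij})_{0\le i,j<n}$ is invertible because $\beta^0,\dots,\beta^{n-1}$ are distinct, by the order hypothesis. I will use the classical approach of Bostan--Schost: interpolation at geometric points reduces to $O(1)$ polynomial multiplications and $O(n)$ scalar operations.

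The route I would take writes $f = \sum_j v_j L_j$ with Lagrange basis $L_j = \prod_{k\ne j}(x-\beta^k)/(\beta^j-\beta^k)$. The steps are:
\begin{enumalgo}
\item Compute the denominators $w_j = \prod_{k\ne j}(\beta^j-\beta^k)$ in closed form. Factoring out powers of $\beta$ expresses them through the partial products $q_i = \prod_{k=1}^{i}(1-\beta^k)$ for $i<n$. All $w_j$ then follow by $O(n)$ multiplications, together with one batch inversion: Montgomery's trick uses a single field inversion plus $O(n)$ multiplications.
\item Set $u_j = v_j/w_j$. It remains to compute $\sum_j u_j \prod_{k\ne j}(x-\beta^k)$. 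Its values at the $\beta^i$ are known, so the task is to find its coefficients.
\item Use the structure of the points. The polynomial $f$ is determined by $f(\beta^i)$, and the $q$-analogue of Newton/Taylor expansion applies: $g(x)=f(x)$ and $g(\beta x)$ are related by a diagonal scaling of coefficients. This gives the formula $V^{-1} = D_1 \, T_1 \, D_2 \, T_2 \, D_3$, where the $D$'s are diagonal matrices with entries built from $\beta^{\binom i2}$ and the $q_i$, and the $T$'s are lower and upper triangular Toeplitz matrices whose entries are $q$-binomial-type coefficients $\pm\beta^{\binom{i}{2}}/q_i$. Each Toeplitz product is one multiplication of degree $O(n)$.
\end{enumalgo}
So step 3 costs $O(\Mcost(\lambda n))$ plus $O(n)$ field operations.

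The main obstacle is step 3: verifying the $q$-binomial factorisation of $V^{-1}$. This is the $q$-binomial theorem $\prod_{k<n}(1+\beta^k x) = \sum_i \beta^{\binom i2}\frac{q_n}{q_iq_{n-i}}x^i$, and it holds in any characteristic. One also has to confirm that only the $q_i$ with $i<n$ are ever inverted, which is exactly where the hypothesis $\ord\beta\ge n$ guarantees $q_i\ne0$.
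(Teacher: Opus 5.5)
Your proposal is correct and matches the paper's approach. The paper simply cites Bostan--Schost for algorithms using $O(1)$ polynomial multiplications plus $O(n)$ field operations, and your chirp transform for (a) and your $q$-binomial factorisation $V^{-1}=D_1T_1D_2T_2D_3$ for (b) spell out algorithms of exactly that form; like the paper, you then use Montgomery's batch inversion so that only one field inversion is needed, and only of $q_i$ with $i<n$. The only blemish is that your Lagrange-weight steps 1--2 in part (b) are never used, since step 3 applies the factorisation directly to $(v_j)$.
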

\begin{rem}
The result in part~(a) is a factor of $\log n$ faster than invoking
\Cref{prop:ffextmod} with the moduli $x - \beta^j$.
\end{rem}
\begin{proof}
Propositions~3,~4 and~5 of \cite{BS-special} describe algorithms for
these problems that amount to performing $O(1)$ multiplications
of polynomials in $\FFext[x]$ of degree $O(n)$,
together with $O(n)$ ``field operations'' in $\FFext$.
The latter include $O(n)$ additions and multiplications,
but also $O(n)$ inversions.
Using Montgomery's simultaneous inversion algorithm
(see for instance \cite[Alg.\,11.15]{CFA+-handbook}),
the latter may be replaced by $O(n)$ multiplications plus
a single field inversion.
By Propositions \ref{prop:ffext-arithmetic} and \ref{prop:ffextmul}
the bit complexity is
$O(\Mcost(\lambda n) + n \Mcost(\lambda) + \Mcost(\lambda) \log \lambda)
= O(\Mcost(\lambda n) + \Mcost(\lambda) \log \lambda)$.
\end{proof}

\begin{prop}[Rational function reconstruction]
\label{prop:reconstruction}
Let $n \geq 1$ and let
\[
c(x) = c_0 + c_1 x + \cdots + c_{2n-1} x^{2n-1} \in \FFext[x]_{2n}.
\]
Then there exists at most one pair of polynomials
$f, g \in \FFext[x]$,
with $f$ and $g$ relatively prime, $g$ monic and $x \ndivides g(x)$
such that
\[
\frac{f(x)}{g(x)} \equiv c(x) \pmod{x^{2n}},
\qquad \deg f < n, \quad \deg g \leq n.
\]
Given $n$ and $c(x)$ as input,
we may determine if such $f(x)$ and $g(x)$ exist,
and if so compute them, in time $O(\Mcost(\lambda n) \log \lambda n)$.
\end{prop}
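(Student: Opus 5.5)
Uniqueness and the algorithm are handled separately; both are the standard Padé/rational reconstruction argument, with the complexity coming from the half-gcd machinery already used in \Cref{prop:ffextgcd}.

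\emph{Uniqueness.} Suppose $(f,g)$ and $(f',g')$ both satisfy the conditions. The hypothesis $x \ndivides g$ makes $g$ a unit in $\FFext\bbracket{x}$, so the congruences make sense, and they give
\[
f g' \equiv c\tsp g g' \equiv f' g \pmod{x^{2n}}.
\]
Since $\deg(fg' - f'g) < 2n$, we get $fg' = f'g$ exactly. Now $\gcd(f,g) = \gcd(f',g') = 1$, and $g, g'$ are both monic, so $(f,g) = (f',g')$. There is one degenerate case: if $f = 0$, then $g = 1$ is forced by coprimality and monicity, and the same argument applies.

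\emph{Algorithm.} Run the extended Euclidean algorithm on $r_0 = x^{2n}$ and $r_1 = c(x)$, and stop at the first remainder $r_j$ with $\deg r_j < n$. The corresponding cofactor $t_j$ satisfies
\[
s_j x^{2n} + t_j c = r_j, \qquad \deg t_j \leq 2n - \deg r_{j-1} \leq n.
\]
By the standard theorem on rational reconstruction \cite[Thm.\,5.16]{vzGG-compalg3}, a valid pair exists if and only if $\gcd(r_j, t_j) = 1$ and $x \ndivides t_j$. In that case the pair is $(f,g) = (r_j/\mathrm{lc}(t_j),\, t_j/\mathrm{lc}(t_j))$; here $t_j \neq 0$, and the normalisation uses a single inversion in $\FFext$. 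I will check this claim directly: any solution $(f,g)$ satisfies $f \equiv cg \pmod{x^{2n}}$, so by the uniqueness property of the Euclidean remainder sequence \cite[Lem.\,5.15]{vzGG-compalg3} it is a scalar-times-polynomial multiple of $(r_j,t_j)$. Coprimality then forces that multiplier to be a constant.

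\emph{Complexity.} The $j$th row of the remainder sequence, meaning the single pair $(r_j,t_j)$ for the degree threshold $n$, is exactly what the fast half-gcd algorithm returns. Per \cite[Cor.\,11.9/Thm.\,11.7]{vzGG-compalg3}, it costs $O(\Mcost_\lambda(n)\log n)$ operations plus $O(n)$ inversions in $\FFext$. By \Cref{prop:ffextmul} and \Cref{prop:ffext-arithmetic}(c) this is $O(\Mcost(\lambda n)\log n + n\Mcost(\lambda)\log\lambda)$, which is $O(\Mcost(\lambda n)\log \lambda n)$ because $\Mcost(n)/n$ is non-decreasing, exactly as in \Cref{prop:ffextgcd}. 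The final checks are cheap by comparison: the gcd test costs $O(\Mcost(\lambda n)\log\lambda n)$ by \Cref{prop:ffextgcd}, the test $t_j(0)\neq 0$ is constant work, and the normalisation costs one inversion plus $O(n)$ multiplications.

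The main obstacle is the complexity claim. Invoking the fast extended Euclidean algorithm so that it yields the specific intermediate row $(r_j,t_j)$, rather than only the final gcd, within the stated bound relies on the half-gcd formulation in \cite{vzGG-compalg3}. Translating that algebraic count to the Turing model, including the inversion count, is the step needing care. The correctness criterion itself is classical.
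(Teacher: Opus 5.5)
Your proposal is correct and follows the paper's route: run the EEA on $x^{2n}$ and $c(x)$, take the first row with $\deg r_j<n$, test $\gcd(r_j,t_j)=1$, normalise by $\mathrm{lc}(t_j)$, and obtain that row via the half-gcd algorithm at the cost bound of \Cref{prop:ffextgcd}. The differences are minor. You prove uniqueness directly by cross-multiplication instead of citing the converse part of the vzGG reconstruction result. Your extra test $x \ndivides t_j$ is harmless but redundant, since $x \mid t_j$ would force $x \mid r_j = s_j x^{2n} + t_j c$ and hence $x \mid \gcd(r_j,t_j)$.
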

The traditional Berlekamp--Massey algorithm
solves this problem in time quadratic in~$n$.
It is well known that this can be improved to quasilinear time,
but the author was unable to find a clear statement
in the literature in the form stated above.
We will briefly indicate how the result may be pieced together from
various ingredients in \cite{vzGG-compalg3}.
\begin{proof}
Consider applying \cite[Alg.\,3.14]{vzGG-compalg3}
(the Extended Euclidean Algorithm, or EEA) to the
input polynomials $x^{2n}$ and $c(x)$.
Let $(r_i,s_i,t_i)_{0 \leq i \leq \ell}$ be the list of ``rows''
generated by the EEA;
thus the $r_i$ are the successive remainders (normalised to be monic),
and $r_i(x) = s_i(x) x^{2n} + t_i(x) c(x)$ for all~$i$.

Let $j$ be the least index such that $\deg r_j < n$.
If $\gcd(r_j,t_j) = 1$ then \cite[Cor.\,5.21(i)]{vzGG-compalg3}
(with $n \coloneqq 2n$, $k \coloneqq n$)
implies that the pair $f \coloneqq r_j/\tau$, $g \coloneqq t_j/\tau$,
where $\tau$ denotes the leading coefficient of~$t_j$,
satisfies the desired conditions.
Conversely, if $f$ and $g$ satisfy the conditions,
then \cite[Cor.\,5.21(ii)]{vzGG-compalg3} implies that
$f = r_j/\tau$ and $g = t_j/\tau$;
therefore at most one such pair exists.
Moreover, this argument also shows that the desired $f$ and $g$
exist if and only if $\gcd(r_j,t_j) = 1$.

To actually compute $f$ and~$g$,
we first run \cite[Alg.\,11.6]{vzGG-compalg3} (Half-GCD),
with $n \coloneqq 2n$, $k \coloneqq n$, $h \coloneqq j$,
to compute the $j$\th row $(r_j,s_j,t_j)$.
The cost is $O(\Mcost(\lambda n) \log \lambda n)$,
by the same argument as in the proof of \Cref{prop:ffextgcd}.
We then compute $\gcd(r_j,t_j)$,
which again costs $O(\Mcost(\lambda n) \log \lambda n)$.
If $\gcd(r_j,t_j) = 1$, we return
$f \coloneqq r_j/\tau$, $g \coloneqq t_j/\tau$;
otherwise we return ``FAIL''.
\end{proof}

\begin{prop}[Root finding]
\label{prop:root-finding}
Given monic $g \in \FFext[x]$ of degree $n \geq 1$,
we may find all roots of $g(x)$
in $\FFext$ in (deterministic) time
$O(\lambda^2 \Mcost(\lambda n) \log \lambda n)$.
\end{prop}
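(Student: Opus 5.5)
We will use the classical deterministic approach for characteristic~$2$: first extract the part of $g$ that splits into distinct linear factors over $\FFext$, then separate its roots using trace polynomials evaluated at the elements of a fixed $\FF_2$-basis of $\FFext$.

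\textbf{Extracting the split part.} We compute $x^{2^\lambda} \bmod g$ by $\lambda$ successive squarings modulo~$g$. Each squaring is one multiplication (\Cref{prop:ffextmul}) followed by one Euclidean division by the monic polynomial~$g$ (\Cref{prop:ffextdiv}), so it costs $O(\Mcost(\lambda n))$. The whole step therefore costs $O(\lambda \Mcost(\lambda n))$. We then set
\[
g_1 \coloneqq \gcd\bigl(g,\; (x^{2^\lambda} - x) \bmod g\bigr),
\]
which costs $O(\Mcost(\lambda n)\log \lambda n)$ by \Cref{prop:ffextgcd}. The polynomial $g_1$ is the product of $x - \alpha$ over the distinct roots $\alpha \in \FFext$ of~$g$, and we only need its roots.

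\textbf{Splitting by traces.} Let $\beta_1, \ldots, \beta_\lambda$ be the basis $1, z, \ldots, z^{\lambda-1}$ of $\FFext$ over~$\FF_2$. The trace form $(u,v) \mapsto \trace(uv)$ is nondegenerate. Hence two distinct roots $\alpha \neq \alpha'$ differ in the value of $\trace(\beta_k \alpha)$ for some~$k$, since otherwise $\trace(\beta_k(\alpha-\alpha')) = 0$ for all $k$, forcing $\alpha = \alpha'$. For each $\alpha \in \FFext$ we have $\trace(\beta\alpha) \in \FF_2$, so $x - \alpha$ divides exactly one of $\trace(\beta x)$ and $\trace(\beta x) + 1$, where
\[
\trace(\beta x) = \sum_{i<\lambda} (\beta x)^{2^i}.
\]
We maintain a list of monic factors $f_1, \ldots, f_s$ with $\prod_i f_i = g_1$, starting from the single factor~$g_1$. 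In round $k$, for each current factor $f$ we do the following:
\begin{itemize}
\item compute $\trace(\beta_k x) \bmod f$ via $\lambda$ squarings modulo $f$, accumulating the sum as we go;
\item take $u \coloneqq \gcd(f, \trace(\beta_k x) \bmod f)$;
\item replace $f$ by the nonconstant members among $u$ and $f/u$.
\end{itemize}
After all $\lambda$ rounds, any two distinct roots have been separated, so every factor is linear and its root can be read off directly.

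\textbf{Cost.} Within one round, the factors have total degree at most~$n$. By superadditivity \eqref{eq:superadditive}, the $\lambda$ squarings modulo all factors cost $O(\lambda\Mcost(\lambda n))$ in total. The gcds and exact divisions cost $O(\Mcost(\lambda n)\log\lambda n)$ in total (\Cref{prop:ffextgcd}, \Cref{prop:ffextdiv}). Monic normalisations need only $O(n)$ field inversions, each costing $O(\Mcost(\lambda)\log\lambda)$ by \Cref{prop:ffext-arithmetic}, and these are absorbed by the other terms. Summing over the $\lambda$ rounds gives
\[
O\bigl(\lambda^2\Mcost(\lambda n) + \lambda\Mcost(\lambda n)\log \lambda n\bigr),
\]
which lies within the claimed bound $O(\lambda^2\Mcost(\lambda n)\log\lambda n)$.

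\textbf{Main obstacle.} The step that needs care is the bookkeeping in the splitting phase. We must check that every superadditivity application is legitimate: this requires that $\Mcost(m)/m$ is non-decreasing, so that many small factors are never more expensive than one large factor. We must also check that the Turing machine layout allows all factors in the list to be processed in a single sweep, without extra tape-movement overhead. Correctness of the splitting itself rests only on the nondegeneracy of the trace form, which is standard.
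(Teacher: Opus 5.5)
Your proposal is correct and follows essentially the same route as the paper: extract the split squarefree part via $\gcd(x^{2^\lambda}-x, g)$, then run $\lambda$ refinement rounds of gcds against the traces $\trace(z^k x)$, which form Berlekamp's separating set. The only difference is that the paper computes each $s_k = \trace(z^k x) \bmod f$ once, modulo the split part $f$, and reduces it modulo the current factors with the remainder tree of \Cref{prop:ffextmod}; you instead recompute the trace modulo each current factor by repeated squaring and use superadditivity, and both give the same total $O(\lambda^2 \Mcost(\lambda n) + \lambda \Mcost(\lambda n)\log \lambda n)$.
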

For the application in \Cref{sec:compression}
the key issue is that we need the complexity to be quasilinear in~$n$.
We will use Berlekamp's classical strategy from
\cite[\S5]{Ber-factoring-large}.
\begin{proof}
\step{1}{remove extraneous factors.}
Let $f(x) \coloneqq \gcd(x^{2^\lambda} - x, g(x))$.
Then $f(x) = \prod_{i=1}^d (x - \alpha_i)$
where $\alpha_1, \ldots, \alpha_d$ ($d \leq n$)
are the distinct roots of $g(x)$ in $\FFext$.
We may compute $f(x)$ by first using repeated squaring
to compute $x^{2^\lambda} \pmod g$ in time
$O(\lambda \Mcost(\lambda n))$ (\Cref{prop:ffextdiv})
and then obtaining the gcd via \Cref{prop:ffextgcd}
at a cost of $O(\Mcost(\lambda n) \log \lambda n)$.

\step{2}{compute separating set.}
Let $\Rring \coloneqq \FFext[x]/f(x) \cong (\FFext)^d$
and consider the trace map $\trace \colon \Rring \to \Rring$ given by
$\trace(u) \coloneqq \sum_{m=0}^{\lambda-1} u^{2^m}$.
For $k = 0, \ldots, \lambda-1$
define $\sigma_k \coloneqq \trace(z^k x) \in \Rring$,
where $z \in \FFext$ is the variable defining $\FFext = \FF_2[z]/h(z)$,
and let $s_k \in \FFext[x]_d$ be the unique polynomial
representing $\sigma_k$.
Then $\{s_k\}_{k=0}^{\lambda-1}$ is a \emph{separating set} for~$f$,
in the sense that for any $i, j \in \{1, \ldots, d\}$, $i \neq j$,
there exists some index $k$ such that
$s_k(x)$ is divisible by exactly one of $x - \alpha_i$ and $x - \alpha_j$.
(For a proof of this statement, see \cite[\S5]{Ber-factoring-large}.)
We may compute each $s_k$ using repeated squaring in time
$O(\lambda \Mcost(\lambda d))$,
and thus all $s_k$ in time
$O(\lambda^2 \Mcost(\lambda d)) = O(\lambda^2 \Mcost(\lambda n))$.

\step{3}{refinement.}
We use the $s_k$ to carry out the following refinement procedure.
At the $k$\th step,
we begin with a partial factorisation $f = f_1 \cdots f_{m_k}$.
Using \Cref{prop:ffextmod},
we compute $s_k \bmod f_i$ for $i = 1, \ldots, m_k$
in time $O(\Mcost(\lambda d) \log d)$.
We then invoke \Cref{prop:ffextgcd} to compute
$t_i \coloneqq \gcd(f_i, s_k \bmod f_i)$ for each~$i$;
by \eqref{eq:superadditive} the cost over all $i$ is
$O(\Mcost(\lambda d) \log \lambda d)$.
If $t_i \neq 1$ and $t_i \neq f_i$
we refine the factorisation by replacing $f_i$ with $t_i$ and $f_i/t_i$.
After repeating this process for $k = 0, \ldots, \lambda-1$
at a total cost of
$O(\lambda \Mcost(\lambda d) \log \lambda d)
   = O(\lambda \Mcost(\lambda n) \log \lambda n)$,
the separation property guarantees that we arrive at
a complete factorisation of~$f$,
and hence obtain all the roots of~$g$.
\end{proof}

\subsection{Transform schemes}
\label{sec:transform-schemes}

In many of the algorithms discussed in this paper,
certain polynomials will occur as multiplicands repeatedly in many
separate multiplication problems.
To take advantage of this redundancy we introduce the following definition.
(For more on the history of this technique,
see the discussion of ``FFT caching'' and ``FFT addition''
in \cite[\S2.9]{Ber-fastmult}.)

\begin{defn}
\label{defn:transform-pair}
Let $n \geq 1$ be an integer.
By a \emph{transform pair of order $n$} (over~$\FF_2$)
we mean an integer $K \geq 1$ together with a pair of $\FF_2$-linear maps
\[
\fwd \colon \FF_2[x]_n \to \FF_2^K,
\qquad
\inv \colon \FF_2^K \to \FF_2[x]_{2n-1}
\]
with the property that
\[
\inv(\fwd(f) \cdot \fwd(g)) = fg
\]
for any $f, g \in \FF_2[x]_n$.
Here the dot operator means
pointwise multiplication in $\FF_2^K$.
This is simply a logical AND operation in each component,
and may be computed in time $O(K)$.
We call $\fwd$ the \emph{forward transform}
and $\inv$ the \emph{inverse transform}.
\end{defn}

Since $\inv$ is $\FF_2$-linear,
a transform pair may also be used to evaluate finite sums of products, i.e.,
\[
\inv\Bigl(\sum_i \fwd(f_i) \cdot \fwd(g_i)\Bigr)
   = \sum_i f_i g_i, \qquad f_i, g_i \in \FF_2[x]_n.
\]

\begin{defn}
\label{defn:transform-scheme}
A \emph{transform scheme} (over~$\FF_2$) consists of the following data:
\begin{itemize}
\item
a family of transform pairs $(K_n, \fwd_n, \inv_n)$,
one for each integer $n \geq 1$;
\item
a forward transform algorithm that takes as input $n \geq 1$
and $f \in \FF_2[x]_n$
and computes $\fwd_n(f) \in \FF_2^{K_n}$;
\item
an inverse transform algorithm that takes as input $n \geq 1$
and $u \in \FF_2^{K_n}$
and computes $\inv_n(u) \in \FF_2[x]_{2n-1}$.
\end{itemize}
We will omit the subscripts in $K_n$, $\fwd_n$ and $\inv_n$ when the value of $n$ is clear from context.
\end{defn}

We write $\Tcost(n)$ for the running time of the forward and inverse
transform algorithms (whichever is larger).
We also define
\[
\Kstar(n) \coloneqq \max_{1 \leq m \leq n} \frac{K_m}{m}, \qquad
n \geq 1,
\]
so that $\Kstar(n)$ is non-decreasing, and set
\[
\Kcost(n) \coloneqq n \cdot \Kstar(n).
\]
Thus $K_n \leq \Kcost(n)$ for all $n \geq 1$.
We also have automatically $\Kcost(n) \gg n$.

\begin{rem}
In the language of algebraic complexity theory,
$\Kcost(n)$ is analogous to the \emph{multiplicative complexity}
(or \emph{nonscalar complexity}),
and $\Tcost(n) + \Kcost(n)$ to the \emph{total complexity}
\cite[Defn.\,4.7]{BCS-complexity}.
We emphasise however that the concepts here are slightly different
as we are working in the Turing model.
\end{rem}

\begin{thm}
\label{thm:transform-scheme}
Given a multiplication algorithm for $\FF_2[x]$
with associated cost function $\Mcost(n)$,
there exists a transform scheme achieving
\begin{equation}
\label{eq:Kcost-bound}
\Kcost(n) \ll n \cdot 4^{\log^* n}
\end{equation}
(or equivalently, $\Kstar(n) \ll 4^{\log^* n}$) and
\begin{equation}
\label{eq:Tcost-bound}
\Tcost(n) \ll \Mcost(n).
\end{equation}
\end{thm}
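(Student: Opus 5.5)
The plan is to build the scheme recursively: a polynomial over $\FF_2$ of length $n$ is repackaged as a polynomial over $\FFext$ with $\lambda \approx \log_2 n$. Evaluation at a geometric progression turns one multiplication of length $n$ into about $4n/\lambda$ multiplications in $\FFext$. Each multiplication in $\FFext$ is a multiplication in $\FF_2[z]_\lambda$ followed by a reduction, which is linear, so it is handled by the transform pair of order $\lambda$. This yields a recurrence $\Kstar(n) \leq 4(1+o(1))\Kstar(\lambda)$, which is where the constant $4$ and the $\log^*$ come from.

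\emph{Construction.} For $n$ below some absolute constant, use the classical bilinear scheme: $\fwd(f)$ lists the coefficients of $f$, so $K = n^2$ and $\fwd(f)\cdot\fwd(g)$ lists all products $f_i g_j$, which $\inv$ sums along antidiagonals. For larger $n$, the steps are as follows.
\begin{itemize}
\item Choose $\lambda = \lg n + O(1)$. Find an irreducible $h$ of degree $\lambda$ by \Cref{lem:irreducible} in time $\lambda^{O(1)}$, and a primitive element $\beta$ by \Cref{lem:generator} in time $n^{1/4+o(1)}$. Both costs are negligible compared with $\Mcost(n)$.
\item Let $\ell \coloneqq \lfloor \lambda/2 \rfloor$ and $m \coloneqq \lceil n/\ell \rceil$. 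Cut $f \in \FF_2[x]_n$ into $m$ blocks of length $\ell$, writing $f = \sum_k f_k(x) x^{k\ell}$, and set $\tilde f(y) \coloneqq \sum_k f_k(z) y^k \in \FFext[y]_m$.
\item Check that no information is lost. The map $\FF_2[z]_\lambda \to \FFext$ is an $\FF_2$-linear bijection. Each product $f_k g_{k'}$ has degree at most $2\ell - 2 < \lambda$, and so do sums of such products. Hence no reduction modulo $h$ ever occurs, and $fg$ is recovered from $\tilde f \tilde g$ by the linear substitution $y \mapsto x^\ell$ followed by overlap-adding the blocks.
\item Define $\fwd_n(f)$ as follows. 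Evaluate $\tilde f$ at $\beta^0, \ldots, \beta^{2m-2}$ using \Cref{prop:geometric}(a). This is legitimate since $2m-1 \leq 2^\lambda - 1$, the order of $\beta$. Then apply $\fwd_\lambda$ (recursively) to each value, viewed as an element of $\FF_2[z]_\lambda$.
\item Define $\inv_n$ as follows. Apply $\inv_\lambda$ to each block of the input, obtaining a polynomial of degree $< 2\lambda - 1$. Reduce it modulo $h$, which costs $O(\Mcost(\lambda))$ by \Cref{prop:ffdiv}. Interpolate with \Cref{prop:geometric}(b), then unpack.
\item Verify correctness. Every map involved is $\FF_2$-linear. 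Pointwise products of forward transforms correspond, block by block, to products in $\FF_2[z]$, and hence to products in $\FFext$ after reduction. So $\inv(\fwd(f)\cdot\fwd(g)) = fg$.
\end{itemize}

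\emph{Bounds.} The construction gives $K_n = (2m-1)K_\lambda \leq (2n/\ell + 2)\,\lambda\Kstar(\lambda)$. This yields
\[
\Kstar(n) \leq 4\bigl(1 + O(1/\lambda)\bigr)\Kstar(\lambda), \qquad \lambda \asymp \log n.
\]
Iterating about $\log^* n$ times, the product $\prod (1+O(1/\lambda_i))$ over the iterated logarithms $\lambda_i$ converges, giving \eqref{eq:Kcost-bound}. The maximum in the definition of $\Kstar$ is harmless because $\lambda$ is monotone in $n$.

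For the time bound, the non-recursive work in both directions is dominated by \Cref{prop:geometric} and \Cref{prop:ffextmul}. These cost $O(\Mcost(\lambda m) + m\Mcost(\lambda)) = O(\Mcost(n))$, using \eqref{eq:mult-constant} and the fact that $\Mcost(n)/n$ is non-decreasing. The recursive work is $O(n/\lambda)\,\Tcost(\lambda)$. If $\Tcost(\lambda) \leq C\Mcost(\lambda)$, then because $\Mstar$ is non-decreasing,
\[
\frac{n}{\lambda}\Mcost(\lambda) \leq \frac{\log\lambda}{\log n}\Mcost(n).
\]
Therefore $\Tcost(n)/\Mcost(n) \leq C_0 + O(\log\log n/\log n)\cdot C$, and induction closes with a uniform constant.

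\emph{Main obstacle.} The delicate point is the constant $4$ in \eqref{eq:Kcost-bound}. The block length must be exactly about $\lambda/2$ so that products do not wrap modulo $h$, and the number of evaluation points must be about $2m$, which together give the factor $4$. We also need the additive slack, such as the $O(1)$ in $\lambda$ and the rounding in $m$, to contribute only multiplicative factors $1+O(1/\lambda)$ whose product over the $\log^*$ levels stays bounded. I would first try to fix $\lambda = \lg n + 2$ and verify these inequalities directly. If that fails, I would replace $\Kstar$ by a slightly smoothed majorant in the induction.
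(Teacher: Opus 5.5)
Your construction and your $\Tcost$ argument follow the paper's proof. The shared steps are:
\begin{itemize}
\item half-length chunks packed into $\FFext$ with $\lambda\approx\lg n$;
\item evaluation at $2r-1$ powers of a primitive element via \Cref{prop:geometric};
\item the order-$\lambda$ pair applied recursively;
\item reduction modulo $h$ and interpolation in $\inv_n$;
\item an induction closing $\Tcost(n)\ll\Mcost(n)$ because $\Mstar$ is non-decreasing.
\end{itemize}
The paper takes chunk length $\lceil\lambda/2\rceil$, which still satisfies $2\lceil\lambda/2\rceil-1\le\lambda$, so its per-level overhead is only $1+O(\log n/n)$. It proves the $\Kcost$ bound by induction on $K_n/n$ with the smoothed hypothesis $K_n/n<A(1-n^{-1/2})\,4^{\Phi^*(n)}$, where $\Phi(x)=\log_2x+1$ and $\Phi^*(x)=\log^*x+O(1)$ by \cite[Lem.\,3]{HvdHL-mul}. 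That is exactly your fallback.

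There are two points to fix.

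First, your base case is not a transform pair. \Cref{defn:transform-pair} applies the \emph{same} map $\fwd$ to both factors. If $\fwd$ lists the coefficients, then $\fwd(f)\cdot\fwd(g)$ contains only the products $f_ig_i$ and never $f_ig_j$ with $i\ne j$. Use a symmetric scheme instead. One option is the linear forms $f_i$ and $f_i+f_j$ for $i<j$, giving $K=n(n+1)/2$, since over $\FF_2$ we have $f_ig_j+f_jg_i=(f_i+f_j)(g_i+g_j)+f_ig_i+f_jg_j$. Another is Karatsuba, as the paper uses.

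Second, state the recurrence for $K_n/n$, where $K_n=(2m-1)K_\lambda$ holds exactly, rather than for $\Kstar$. The maximum defining $\Kstar(n)$ ranges over all $n'\le n$, including small $n'$ whose factor $1+O(1/\lambda(n'))$ is much larger than $1+O(1/\lambda(n))$. So $\Kstar(n)\le4(1+O(1/\lambda))\Kstar(\lambda)$ does not follow from the per-$n$ bound. Only a fixed factor $4(1+\varepsilon)$ per level follows, and that iterates to $(4+4\varepsilon)^{\log^*n}$, which is too weak.

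Unrolling instead along a single chain $n\to\lambda(n)\to\cdots$ does work. The factors $1+O(1/\lambda_i)$ have bounded product because the $\lambda_i$ grow super-exponentially going up the chain. You then need two more steps:
\begin{itemize}
\item the chain length is $\log^*n+O(1)$, which is the iterator lemma above rather than something automatic;
\item take the maximum at the end, using the monotonicity of $4^{\log^*n}$.
\end{itemize}
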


\begin{rem}
All modern multiplication algorithms for $\FF_2[x]$
have a similar structure:
``evaluate'' the two input polynomials
(using some sort of fast Fourier transform,
possibly after re-encoding the polynomials in some way),
perform ``pointwise multiplications'' recursively,
and then ``interpolate'' to obtain the result.
In principle, by starting with such an algorithm
and unrolling the recursion all the way down to $\FF_2$,
one can construct a transform scheme in the sense of
\Cref{defn:transform-scheme}.
This approach has the advantage of producing forward and inverse transform
algorithms achieving \eqref{eq:Tcost-bound}
with the best possible big-$O$ constant,
but the downside is that the construction depends on the
details of the particular multiplication algorithm being used.
To avoid forcing the reader to digest copious technical details
of papers such as \cite{HvdH-ffmul-cyclotomic},
we have instead opted to formulate \Cref{thm:transform-scheme}
in such a way that we can treat the underlying multiplication algorithm
as a black box.
(In a real implementation one should of course structure the software
so as to take advantage of whatever FFT representation
is being used internally by the underlying multiplication algorithm.)
\end{rem}

\begin{proof}[Proof of \Cref{thm:transform-scheme}]
We will construct the desired transform scheme recursively,
reducing a transform problem of size $n$
to many problems of size $\lg n$.
The main recursive step only works for sufficiently large~$n$.
For small values of~$n$,
we may use any reasonable method;
for example, by adapting Karatsuba's multiplication algorithm
\cite[\S8.1]{vzGG-compalg3}
we obtain a transform scheme with $K_n = 3^{\lg n}$
(details omitted).
We now explain the main recursive construction for large~$n$.

\medskip
\textit{(1) Set up parameters.}
Define
\[
\lambda \coloneqq \lg n,
\qquad m \coloneqq \lceil \lambda/2 \rceil,
\qquad r \coloneqq \left\lceil \frac{n}{m} \right\rceil
   \asymp \frac{n}{\log n}.
\]
Fix an irreducible $h \in \FF_2[z]$ of degree~$\lambda$,
and put $\FFext \coloneqq \FF_2[z]/h(z)$.
Fix also a primitive element $\zeta \in \FFext^*$.
We have $\lambda < n$ for large $n$;
let $(K_\lambda,\fwd_\lambda,\inv_\lambda)$
be a transform pair of order $\lambda$ (constructed recursively).

\medskip
\textit{(2) Definition of $\fwd_n$ and $K_n$.}
Given an input polynomial $f \in \FF_2[x]_n$,
cut it up into $r$ chunks of length $m$
(noting that $rm \geq n$), say
\[
f(x) = \sum_{i=0}^{r-1} x^{im} f_i(x),
   \qquad f_i \in \FF_2[x]_m.
\]
Define a corresponding bivariate polynomial
$\tilde f(y,z) \coloneqq \sum_{i=0}^{r-1} f_i(z) y^i \in \FF_2[z]_m[y]_r$,
so that $f(x) = \tilde f(x^m, x)$.
Let $F(y)$ be the image of $\tilde f$ in $\FFext[y]_r$.
Define
\[
U_j \coloneqq F(\zeta^j) \in \FFext,
   \qquad 0 \leq j < 2r-1,
\]
and let $\tilde U_j \in \FF_2[z]_\lambda$ be polynomials such that
$\tilde U_j(z) = U_j$.
Finally set
\[
K_n \coloneqq (2r-1) K_\lambda,
   \qquad \fwd_n(f) \coloneqq \fwd_\lambda(\tilde U_0) \oplus \cdots
      \oplus \fwd_\lambda(\tilde U_{2r-2}) \in \FF_2^{K_n}.
\]

\textit{(3) Definition of $\inv_n$.}
We must construct a linear map
$\inv_n \colon \FF_2^{K_n} \to \FF_2[x]_{2n-1}$
in such a way that for any $f, g \in \FF_2[x]_n$,
it sends $\fwd_n(f) \cdot \fwd_n(g)$ to~$fg$.

Let $\tilde f$, $F$, $U_j$ and $\tilde U_j$ be as above,
and let $\tilde g$, $G$, $V_j$ and $\tilde V_j$
be the corresponding objects for~$g$.
First, applying
$\inv_\lambda \colon \FF_2^{K_\lambda} \to \FF_2[z]_{2\lambda-1}$
to each component of
\[
\fwd_n(f) \cdot \fwd_n(g) =
   \bigl(\fwd_\lambda(\tilde U_0) \cdot \fwd_\lambda(\tilde V_0)\bigr)
   \oplus \cdots \oplus
   \bigl(\fwd_\lambda(\tilde U_{2r-2})
      \cdot \fwd_\lambda(\tilde V_{2r-2})\bigr)
\]
recovers the polynomials $\tilde U_j \tilde V_j \in \FF_2[z]_{2\lambda-1}$.
On each component,
division by $h(z)$ then gives a linear map
$\FF_2[z]_{2\lambda-1} \to \FFext$
that yields the products $U_j V_j \in \FFext$.
The powers $\{\zeta^j\}_{j=0}^{2r-2}$ are distinct
(since $\ord \zeta = 2^\lambda - 1 \geq n - 1 \geq 2r - 1$ for large~$n$)
so the product $H \coloneqq FG \in \FFext[y]_{2r-1}$ is determined
by its values $H(\zeta^j) = F(\zeta^j) G(\zeta^j) = U_j V_j$ for these~$j$.
The corresponding interpolation map
$\oplus_{j=0}^{2r-2} \FFext \to \FFext[y]_{2r-1}$
is clearly linear (even over $\FFext$).
Since $2m-1 \leq \lambda$ the product
$\tilde f \tilde g \in \FF_2[z]_{2m-1}[y]_{2r-1}$
is determined in turn by~$H = FG$,
so we recover~$H$ by applying the linear map
$\FFext[y]_{2r-1} \to \FF_2[z]_{2m-1}[y]_{2r-1}$
that simply truncates with respect to~$z$.
Finally we deduce
$(fg)(x) = (\tilde f \tilde g)(x^m, x) \in \FF_2[x]_{2n-1}$
by first performing the substitution $(y,z) \mapsto (x^m,x)$,
which is a linear map
$\FF_2[z]_{2m-1}[y]_{2r-1} \to \FF_2[x]_{2rm-1}$,
followed by the truncation $\FF_2[x]_{2rm-1} \to \FF_2[x]_{2n-1}$.
(Note that the substitution automatically lands in $\FF_2[x]_{2n-1}$
if the input happens to be a sum of terms of the form
$\fwd_n(f) \cdot \fwd_n(g)$,
but this is not always true for arbitrary input in $\FF_2^{K_n}$.)

\medskip
\textit{(4) Cost of evaluating $\fwd_n$ and $\inv_n$.}
The parameters $\lambda$, $r$ and $m$
may be computed in time $(\log n)^{O(1)}$.
By \Cref{lem:irreducible} and \Cref{lem:generator}
we may (deterministically) find suitable $h(z)$ and $\zeta$
in time $(\log n)^{O(1)} + n^{1/4 + o(1)} = o(n)$.

Given $f \in \FF_2[x]_n$,
we may construct $\tilde f(y,z)$ in time $O(rm) = O(n)$
and then $F(y)$ in time $O(r\lambda) = O(n)$.
We may then compute the values $U_j = F(\zeta^j)$
via \Cref{prop:geometric}(a)
in time $O(\Mcost(\lambda r) + \Mcost(\lambda) \log \lambda) = O(\Mcost(n))$,
and carry out $2r-1$ applications of $\fwd_\lambda$
in time $O(r \Tcost(\lambda))$.
The cost of evaluating $\fwd_n(f)$ is thus
\begin{equation}
\label{eq:fwd-cost}
O(\Mcost(n) + r \Tcost(\lambda)).
\end{equation}
The complexity analysis for $\inv_n$ is similar;
using \Cref{prop:geometric}(b),
the cost of evaluating $\inv_n$ is given
by the same bound \eqref{eq:fwd-cost}.

Since $r \ll n / \lg n$, we deduce the recursive bound
$\Tcost(n) \ll \Mcost(n) + (n  / \lg n) \Tcost(\lg n)$
for all large~$n$, say for $n \geq n_0$.
Let $B > 0$ be a constant such that
\[
\Tcost(n) < B \Bigl( \Mcost(n) + \frac{n}{\lg n} \Tcost(\lg n) \Bigr)
\]
for all $n \geq n_0$.
Increasing $n_0$ if necessary,
we may ensure that $(\log \lg n) / \log n < (2B)^{-1}$
for all $n \geq n_0$.
Now choose $C > 2B$ such that
\begin{equation}
\label{eq:Tcost-bound-basecase}
\Tcost(n) < C \Mcost(n)
\end{equation}
for all $n < n_0$.
We will prove by induction that \eqref{eq:Tcost-bound-basecase}
also holds for $n \geq n_0$.
We may assume that $\lg n < n$ for such~$n$,
so the inductive hypothesis yields
\begin{align*}
\Tcost(n)
   & < B \Bigl(\Mcost(n) + \frac{C n}{\lg n} \Mcost(\lg n)\Bigr) \\
   & = B \bigl(\Mcost(n) + C n \log \lg n \cdot \Mstar(\lg n)\bigr) \\
   & \leq B \bigl(\Mcost(n) + C n \log \lg n \cdot \Mstar(n)\bigr) \\
   & = B \biggl(1 + \frac{C \log \lg n}{\log n} \biggr) \Mcost(n) \\
   & < B \biggl(1 + \frac{C}{2B} \biggr) \Mcost(n)
      < \biggl(\frac{C}{2} + \frac{C}{2}\biggr) \Mcost(n) = C \Mcost(n)
\end{align*}
as desired.
We conclude that $\Tcost(n) \ll \Mcost(n)$.

\medskip
\textit{(5) Estimating $K_n$.}
Since
\[
r \leq \frac{n}{m} + 1 \leq \frac{2n}{\lambda} + 1
   = \left(1 + O\left(\frac{\log n}{n}\right)\right) \frac{2n}{\lg n}
\]
we have
\[
\frac{K_n}{n} = \frac{(2r-1) K_\lambda}{n}
   \leq \left(1 + O\left(\frac{\log n}{n}\right)\right)
      \frac{4 K_\lambda}{\lambda}
\]
for large~$n$.
This shows that the ratio $K_n/n$ increases by
a factor of essentially $4$ at each recursion level,
and it is not difficult to deduce that $K_n \ll n \cdot 4^{\log^* n}$.

To make the argument precise, let $\Phi(x) \coloneqq \log_2 x + 1$.
Then $\Phi(x)$ is \emph{logarithmically slow} in the sense of
\cite[\S5]{HvdHL-mul},
so it admits a (non-decreasing) iterator $\Phi^*(x)$,
i.e., a function such that $\Phi^*(x) = \Phi^*(\Phi(x)) + 1$
for all large $x$ \cite[Lem.\,2]{HvdHL-mul}.
Moreover $\Phi^*(x) = \log^* x + O(1)$ \cite[Lem.\,3]{HvdHL-mul}.
We claim that for a suitable constant $A > 0$,
\begin{equation}
\label{eq:Kn-induction}
\frac{K_n}{n} < A \left(1 - \frac{1}{\sqrt n}\right) 4^{\Phi^*(n)}
\end{equation}
for all $n \geq 2$.
Indeed, if \eqref{eq:Kn-induction} holds with $\lambda$ in place of $n$,
then (for large $n$)
\begin{align*}
\frac{K_n}{n}
   & \leq \left(1 + O\left(\frac{\log n}{n}\right)\right)
      \frac{4 K_\lambda}{\lambda} \\
   & < \left(1 + O\left(\frac{\log n}{n}\right)\right)
       \left(1 - \frac{1}{\sqrt{\lg n}}\right)
       4A \cdot 4^{\Phi^*(\lg n)}
   < A \left(1 - \frac{1}{\sqrt n}\right) 4^{\Phi^*(\lg n) + 1}.
\end{align*}
Since $\Phi(n) \geq \lg n$ we have
$\Phi^*(\lg n) \leq \Phi^*(\Phi(n)) = \Phi^*(n) - 1$,
so \eqref{eq:Kn-induction} also holds for~$n$.
Increasing $n_0$ if necessary
so that the preceding argument is valid for $n \geq n_0$,
and choosing $A$ so that \eqref{eq:Kn-induction} holds for $n < n_0$,
we conclude by induction that \eqref{eq:Kn-induction} holds
for all $n \geq 2$.
Therefore $K_n \ll n \cdot 4^{\Phi^*(n)} \asymp n \cdot 4^{\log^* n}$,
so \eqref{eq:Kcost-bound} holds.
\end{proof}

\begin{rem}
\label{rem:cyclotomic}
The bound $\Kcost(n) \ll n \cdot 4^{\log^* n}$
in \Cref{thm:transform-scheme} is not optimal.
As mentioned earlier,
the paper \cite{HvdH-ffmul-cyclotomic} describes a polynomial multiplication
algorithm for $\FF_2[x]$ achieving
$\Mcost(n) \ll n \log n \cdot 4^{\log^* n}$.
It is pointed out in the paragraphs following
\cite[Thm.\,1.1]{HvdH-ffmul-cyclotomic} that for
an $\FF_2$-algebra~$\Aalg$,
the same approach leads to a multiplication algorithm for $\Aalg[x]$
whose multiplicative complexity is $O(n \cdot 2^{\log^* n})$.
Similarly, by adapting the strategy of \cite{HvdH-ffmul-cyclotomic}
to the setting of \Cref{thm:transform-scheme},
it is possible to improve \eqref{eq:Kcost-bound} to
\begin{equation}
\label{eq:Kcost-bound-improved}
\Kcost(n) \ll n \cdot 2^{\log^* n}.
\end{equation}
We do not have space to give the details here,
and suggest that the reader consult \cite[\S1.3]{HvdH-ffmul-cyclotomic}
for an overview of the construction.
The basic idea is to replace the coefficient field $\FFext = \FF_2[z]/h(z)$
by the ring $\FF_2[z]/\phi_\alpha(z)$
for a suitable cyclotomic polynomial $\phi_\alpha(z)$.
After dealing with some number-theoretic complications,
this enables a savings in zero-padding by a factor of two
at each recursive step.

The author does not know how to achieve simultaneously
$\Tcost(n) \ll \Mcost(n)$ and $\Kcost(n) \ll n \cdot C^{\log^* n}$
for any $C < 2$.
It \emph{is} possible to design a transform scheme
that achieves $\Kcost(n) \ll n$,
but currently not with $\Tcost(n)$ anywhere close to quasilinear in~$n$;
so far the best result seems to be $\Tcost(n) < n^{O(1)}$ \cite{BBP-linear}.
\end{rem}

\begin{rem}
\label{rem:RAM-transform}
As in \Cref{rem:RAM-multiply},
it may be possible to do better than \Cref{thm:transform-scheme}
in the RAM model.
Instead of mapping $\FF_2[x]_n$ to $\FF_2^K$,
we could map to $(\FF_{2^\mu})^L$ where $\mu \asymp \log n$
and $L \asymp n / \log n$ ---
in fact, this is already what is done by the first recursion level
of the algorithm in \Cref{thm:transform-scheme}.
Users of transform schemes elsewhere in the paper
(such as the core algorithm in \Cref{sec:core})
would then perform pointwise multiplications
directly in $\FF_{2^\mu}$ via lookup tables.
This may in effect allow us to take $\Kstar(n) \asymp 1$,
leading to a small improvement in \Cref{thm:main-heuristic}
for the RAM model (see \Cref{rem:main-heuristic-precise}).
\end{rem}

For the rest of the paper
we fix a transform scheme with $\Tcost(n) \ll \Mcost(n)$,
such as the scheme constructed in \Cref{thm:transform-scheme}.
Regarding $\Kcost(n)$,
we will adopt a similar policy as for $\Mcost(n)$:
we will express everything in terms of $\Kcost(n)$
until this becomes inconvenient,
at which point we will invoke \Cref{thm:transform-scheme},
using \eqref{eq:Kcost-bound} to replace $\Kstar(n)$
by $(\log n)^{o(1)}$ or $(\log \log n)^{o(1)}$.

\subsection{Restricted products}
\label{sec:restricted-products}

In this section we discuss efficient algorithms for computing
restricted products (see \Cref{defn:restricted-product})
over $\FF_2$.

Let $n \geq 1$.
If $f \in \FF_2[x]_{2n}$ and $g \in \FF_2[x^{-1}]_n$,
then $f \restrict g \in \FF_2[x]_{2n}$.
We call this a \emph{restricted product of size $n$} (over $\FF_2$).
Given an algorithm for performing this operation,
let $\Rbarcost(n)$ denote its worst-case running time.
By analogy with $\Mcost(n)$ we define
\[
\Rbarstar(n) \coloneqq \frac{\Rbarcost(n)}{n \max(1, \log n)}, \qquad n \geq 1,
\]
and then
\[
\Rstar(n) \coloneqq \max_{1 \leq m \leq n} \Rbarstar(m), \qquad
\Rcost(n) \coloneqq n \max(1, \log n) \cdot \Rstar(n),
\qquad n \geq 1.
\]
Then $\Rstar(n)$ is non-decreasing,
and $\Rbarcost(n) \leq \Rcost(n)$ for all $n \geq 1$.

To simplify certain calculations later,
it will be convenient to assume that
\begin{equation}
\label{eq:MR-reduction}
\Rcost(n) \gg \Mcost(n).
\end{equation}
This relation is satisfied for all restricted product algorithms
considered in this paper.
It is also a reasonable assumption more generally,
as it is possible to reduce a multiplication problem
of size $n$ to a restricted product of size $O(n)$
(we leave this as an exercise for the reader).

Our main result concerning the complexity of restricted products
is as follows.
\begin{thm}
\label{thm:restricted-simple}
There is a restricted product algorithm achieving
\[
\Rcost(n) < n \log n \cdot \exp((\log \log n)^{1/2+o(1)}),
\]
or equivalently $\Rstar(n) < \exp((\log \log n)^{1/2+o(1)})$.
\end{thm}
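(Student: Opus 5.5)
The plan is a block-recursive decomposition of the wedge $\{(i,j) : i \geq 2j \geq 0\}$ from \Cref{fig:restricted-product}, in the spirit of van der Hoeven's relaxed multiplication. Products over blocks will be computed in the transform domain of \Cref{thm:transform-scheme}, so that the $\exp((\log\log n)^{1/2+o(1)})$ overhead comes from balancing a constant-factor loss per recursion level against the number of levels.

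Fix a block size $b$ (to be chosen), and write $f = \sum_k x^{kb} f^{(k)}(x)$ and $g = \sum_l x^{-lb} g^{(l)}(x^{-1})$, with chunks of length $b$. A pair of blocks $(k,l)$ lies entirely inside the wedge when $k > 2l + O(1)$. It lies entirely outside when $k < 2l - O(1)$. Only $O(n/b)$ ``boundary'' block pairs straddle the line $i = 2j$.

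For a block pair inside the wedge, $f_i g_j x^{i-j}$ summed over the block is an ordinary product of $f^{(k)}$ with the reversal of $g^{(l)}$, shifted by $x^{(k-l)b}$. So the interior contribution is a staircase-shaped sum of block products. Since the staircase is indexed by $k \geq 2l$, it is itself a restricted product in the block variable $y = x^b$, with coefficients in $\FF_2[x]_b$.

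I would apply $\fwd_b$ (or $\fwd$ of order $b$) to every chunk at a cost of $O((n/b)\Tcost(b)) = O((n/b)\Mcost(b))$. Because $\inv$ is linear, the staircase sum then splits into $K_b$ independent restricted products over $\FF_2$ of size $O(n/b)$, one per transform coordinate. These are handled recursively, and $\inv_b$ is applied afterwards to each of the $O(n/b)$ output block coefficients. Note that a product in the transform domain really is a restricted product over $\FF_2$ coordinatewise, because the transform-domain multiplication is pointwise AND. The boundary block pairs form $O(n/b)$ restricted products of size $O(b)$, again handled recursively. Adjacent-block spillover (chunk products have length $2b-1$) is dealt with by adding the overlapping halves, at cost $O(n)$.

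This yields a recurrence of the shape
\[
\Rcost(n) \leq K_b \, \Rcost(O(n/b)) + O(n/b)\, \Rcost(O(b)) + O\bigl((n/b)\Mcost(b)\bigr) + O(n).
\]
Normalising by $n\log n$, and using $K_b \leq b\,\Kstar(b)$ with $\Kstar(b) \ll 4^{\log^* b}$ (in effect a constant factor $c$ per level), the relation becomes roughly
\[
\Rstar(n) \lesssim c\,\frac{\log(n/b)}{\log n}\,\Rstar(n/b) + \frac{\log b}{\log n}\,\Rstar(b) + O(\Mstar(n)).
\]
Also \eqref{eq:MR-reduction} is consistent with this.

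The main obstacle is choosing $b$ so that the multiplicative loss $c$ per level does not compound over too many levels. I would take $\log b \approx (\log n)/2^{s}$, with $s \approx (\log\log n)^{1/2}$. Then the ``$n/b$'' branch shrinks $\log n$ only slightly but is entered with weight $\log(n/b)/\log n$, while the ``$b$'' branch shrinks $\log n$ drastically. The aim is a bound $\Rstar(n) \leq C^{\,h(n)}$, where $h$ counts the number of times $\log\log$ drops by $\asymp (\log\log n)^{1/2}$. Optimising gives $h \asymp (\log\log n)^{1/2}$ levels, each costing a factor $C\cdot 4^{\log^* n} = \exp(O(1) + o(\cdot))$. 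That gives $\exp((\log\log n)^{1/2+o(1)})$.

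I expect the delicate part to be verifying this induction precisely, namely that the additive $\Mstar$ terms and the boundary recursion do not accumulate beyond this. I would do it, as in the proof of \Cref{thm:transform-scheme}, by an explicit inductive hypothesis of the form $\Rstar(n) \leq A\exp\bigl(B(\log\log n)^{1/2}\bigr)\Mstar(n)\Kstar(n)$, checked for large $n$ with base cases handled by the naive quadratic algorithm.
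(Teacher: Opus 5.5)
Your decomposition is the paper's. The $n/b$ boundary products of size $b$ are its diagonal terms $f_*^k \restrict g_*^k$ from \Cref{lem:restricted-decomposition}. The staircase in $y = x^b$, computed coordinatewise after a transform whose order is the block size, is its $\hat f \restrict \hat g$. Your normalised recurrence is also correct.

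The gap is that your choice of $b$ points the wrong way. The per-level loss $c = \Kstar(b)$ cannot be made close to $1$: the products $fg$ with $f, g \in \FF_2[x]_b$ span $\FF_2[x]_{2b-1}$, so every transform pair has $K_b \geq 2b-1$, i.e.\ $c \geq 2 - 1/b$. With $\log b \approx (\log n)/2^s$, this factor multiplies the $n/b$ branch. That branch has weight $\log(n/b)/\log n \approx 1 - 2^{-s}$, close to $1$, and it lowers $\log\log n$ by only about $2^{-s}$ per level. Along it the weights telescope while the losses multiply. After the roughly $2^s \log\log n$ levels needed to reach bounded size, the factor is of order $c^{2^s\log\log n}/\log n = (\log n)^{2^s \log c - 1}$. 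This is far worse than the trivial $O(\log n \cdot \Mstar(n))$ bound from halving. It is not an artefact of the analysis: the total bit size of the subproblems on that branch really grows by a factor $c$ per level. Your count of $(\log\log n)^{1/2}$ levels describes only the $b$-branch, which has coefficient $1$ and carries no multiplicative loss at all.

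The fix, which is what the paper does, is to use few, large blocks. Set $u = \exp((\log\log n)^{1/2})$, take $r = n^{1/u} + O(1)$ blocks, and use block size $m = \lceil n/r \rceil$. The recurrence becomes
\[
\Rbarstar(n) \lesssim \frac{\log m}{\log n}\bigl(\Rbarstar(m) + A\Mstar(m)\bigr) + \frac{\log r}{\log n}\,\Kstar(m)\bigl(\Rbarstar(r) + B\bigr).
\]
Now the branch of weight near $1$ (the $r$ diagonal products of size $m$) has coefficient exactly $1$ and pays only the additive $\Mstar(m)$. The lossy factor $\Kstar(m)$ is confined to a branch of weight about $1/u$, on which $\log\log$ drops by $(\log\log n)^{1/2}$.

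Induct on $\Rbarstar(n) < C_\eps \exp((\log\log n)^\alpha)$ with $\alpha = \tfrac12 + \tfrac\eps2$. Then $(\log\log r)^\alpha = (\log\log n)^\alpha - \alpha(\log\log n)^{\eps/2} + O(1)$, which swamps $\log\Kstar(m) = O(\log^* n)$. The gap $\exp((\log\log n)^\alpha) - \exp((\log\log m)^\alpha)$ has relative size about $\alpha/(u(\log\log n)^{1/2-\eps/2})$. It absorbs the additive $\Mstar(m)$ precisely because $\alpha > \tfrac12$.

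The same computation shows why your hypothesis with exponent exactly $\tfrac12$ does not close as stated. With that exponent, $(\log\log n)^{1/2} - (\log\log r)^{1/2} = O(1)$. That constant gain cannot pay for the extra factor $\Kstar(r)$ that appears on the lossy branch, unless $\Kstar$ is bounded.
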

Note that $\exp((\log \log n)^{1/2+o(1)})$ grows more slowly
than any positive power of $\log n$,
and the estimate in \Cref{thm:restricted-simple} implies that
$\Rcost(n) < n (\log n)^{1+o(1)}$.
Before embarking on the proof,
we point out that it is relatively easy to design an algorithm
achieving the weaker bound $\Rcost(n) < n (\log n)^{2+o(1)}$.
This may be accomplished by reducing the problem to
two half-sized restricted products (handled recursively)
together with an ordinary product of polynomials of degree $O(n)$.
This leads to the recurrence
$\Rcost(n) < 2 \Rcost(n/2) + O(\Mcost(n))$
and hence to the bound
$\Rcost(n) \ll \Mcost(n) \log n < n (\log n)^{2+o(1)}$.
(This decomposition appears implicitly
in the proof of \cite[Lem.\,8]{FT-prime-tables},
although in that context the authors were able to
avoid recursing all the way to the bottom.)
Unfortunately this is not quite strong enough
to prove \Cref{thm:main-heuristic};
using this estimate for $\Rcost(n)$ would degrade the bound in
\Cref{thm:main-heuristic}
from $N (\log \log N)^{1+o(1)}$ to $N(\log \log N)^{2+o(1)}$
(see \Cref{rem:main-heuristic-precise}).

To prove the sharper estimate in \Cref{thm:restricted-simple}
we will generalise the idea sketched above,
splitting the polynomials into $r$ chunks rather than just two,
where $r$ is a slowly growing function of~$n$.
The following lemma describes the required decomposition
in a slightly more general context.
\begin{lem}[Block decomposition for restricted product]
\label{lem:restricted-decomposition}
Let $R$ be a ring and let $m \geq 1$ be an integer.
Then for $f \in R\bbracket{x}$ and $g \in R\bbracket{x^{-1}}$ we have
\begin{equation}
\label{eq:restricted-decomposition}
(f \restrict g)(x) =
   f g_0 + \sum_{k \geq 0} x^{km} (f_*^k \restrict g_*^k)(x)
   + (\tilde f \restrict \tilde g)(x^m),
\end{equation}
where
\begin{align*}
   f_*^k(x) & \coloneqq \sum_{0 \leq u < 2m} f_{2km+u} x^u \in R[x]_{2m}, \\
   g_*^k(x) & \coloneqq \sum_{1 \leq v < m} g_{km+v} x^{-v} \in R[x^{-1}]_m,
\end{align*}
and
\begin{align*}
\tilde f(y) & \coloneqq
   \sum_{i \geq 0} f^i(x) y^i \in R[x]\bbracket{y},
   & f^i(x) & \coloneqq
   \sum_{0 \leq s < m} f_{im+s} x^s \in R[x]_m, \\
\tilde g(y) & \coloneqq
   \sum_{j \geq 1} g^j(x) y^{-j} \in R[x]\bbracket{y^{-1}},
   & g^j(x) & \coloneqq
   \sum_{0 \leq t < m} g_{jm-t} x^t \in R[x]_m.
\end{align*}
\end{lem}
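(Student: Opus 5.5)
The proof is a direct comparison of coefficients. I will partition the index set $\{(i,j) : i \geq 2j \geq 0\}$ from \eqref{eq:restricted-product} into three disjoint pieces, one for each term of \eqref{eq:restricted-decomposition}. I will then check that each term of \eqref{eq:restricted-decomposition} contributes exactly the monomials $f_i g_j x^{i-j}$ over its piece, each exactly once. For every pair with $j \geq 1$, write $j = km + v$ with $k \geq 0$ and $0 \leq v < m$. The three pieces are:
\begin{enumerate}
\item[(A)] $j = 0$, with $i \geq 0$ arbitrary;
\item[(B)] $v \geq 1$ and $2j \leq i < 2km + 2m$;
\item[(C)] all remaining pairs, namely $j \geq 1$ with either $v = 0$ and $i \geq 2j$, or $v \geq 1$ and $i \geq 2km + 2m$.
\end{enumerate}
For $v \geq 1$ we have $2j = 2km + 2v < 2km + 2m$, so (B) and (C) are disjoint and together exhaust the pairs with $j \geq 1$.

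Piece (A) gives precisely $\sum_i f_i g_0 x^i = f g_0$.

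For piece (B), set $u \coloneqq i - 2km$. Then $0 \leq u < 2m$, and the constraint $i \geq 2j$ becomes $u \geq 2v$. Expanding $x^{km}(f_*^k \restrict g_*^k)(x)$ with \Cref{defn:restricted-product} gives the terms $f_{2km+u}\, g_{km+v}\, x^{km+u-v}$ over exactly the range $0 \leq u < 2m$, $1 \leq v < m$, $u \geq 2v$. The exponent is $km + u - v = i - j$, and the map $(k,u,v) \mapsto (i,j)$ is a bijection onto (B). Note that the coefficient $g_{*,0}^k$ vanishes because $v$ starts at $1$; this is exactly why the $v = 0$ pairs are pushed into (C).

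For piece (C), first observe that \Cref{defn:restricted-product} applies verbatim over the coefficient ring $R[x]$ in the variable $y$. Since $\tilde g$ has no $y^0$ term, expanding $(\tilde f \restrict \tilde g)(y)$ and substituting $y = x^m$ yields
\[
\sum_{\substack{j' \geq 1 \\ i' \geq 2j'}} \; \sum_{0 \leq s, t < m} f_{i'm+s}\, g_{j'm-t}\, x^{s + t + (i'-j')m}.
\]
Put $i \coloneqq i'm + s$ and $j \coloneqq j'm - t$. Then the exponent equals $i - j$, and $(i', s)$ is recovered as $i' = \lfloor i/m \rfloor$, $s = i \bmod m$. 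Likewise $(j', t)$ is recovered from $j \geq 1$ as $j' = \lceil j/m \rceil$, $t = j'm - j$. It remains to show that the condition $i' \geq 2j'$ carves out exactly piece (C):
\begin{enumerate}
\item[(i)] if $v = 0$, then $j' = k$, and $\lfloor i/m \rfloor \geq 2k$ is equivalent to $i \geq 2km = 2j$;
\item[(ii)] if $v \geq 1$, then $j' = k + 1$, and $\lfloor i/m \rfloor \geq 2k + 2$ is equivalent to $i \geq 2km + 2m$.
\end{enumerate}
So the substituted expansion runs bijectively over (C).

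Summing the three contributions then proves \eqref{eq:restricted-decomposition}. The well-definedness of all the infinite sums follows as after \Cref{defn:restricted-product}, since each exponent $n$ receives only finitely many contributions. The only step needing care is the index bookkeeping in (C), in particular the off-by-one behaviour of the ceiling at $v = 0$ versus $v \geq 1$; everything else is routine.
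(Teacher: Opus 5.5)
Your proof is correct and is essentially the paper's argument. The paper also peels off the $j=0$ terms as $fg_0$. For $j \geq 1$ it writes $j = j'm - t$ with $j' \geq 1$ and $0 \leq t < m$ (so $j' = \lceil j/m \rceil$), and splits the remaining pairs according to whether $2j \leq i < 2j'm$ or $i \geq 2j'm$. This is exactly your division into pieces (B) and (C). The only difference is cosmetic: you parametrize by $j = km + v$ and check the bijections explicitly via floor and ceiling, whereas the paper does the same bookkeeping by direct substitution of indices.
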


The expression $(\tilde f \restrict \tilde g)(y)$
means the restricted product with respect to $y$
over the coefficient ring $R[x]$.
Each coefficient of $\tilde f \restrict \tilde g$
is a polynomial in $R[x]_{2m-1}$.
Evaluating $(\tilde f \restrict \tilde g)(y)$ at $y = x^m$ amounts to taking
a suitable overlapping sum of these polynomials.

The decomposition of \Cref{lem:restricted-decomposition}
is illustrated in \Cref{fig:restricted-decomposition}.
The parameter $m$ indicates the block size.
The first term in \eqref{eq:restricted-decomposition}
corresponds to the horizontal strip along the top,
the second term to the dark triangular regions along the diagonal,
and the third term to the lightly shaded square blocks.

The reader may wonder why the vertical chunks $g_*^k(x)$ and $g^j(x)$
are offset from each other by one unit.
We have chosen to define the $g^j(x)$ this way to streamline the
discussion of ``slicing'' in \Cref{sec:core-slice}.

\begin{figure}[h]
\vspace{0.5em}
\begin{tikzpicture}[scale=0.32]
\newcommand{\m}{4}     
\renewcommand{\r}{4}   
\newcommand{\extright}{0.6}
\newcommand{\extdown}{0.6}

\pgfmathtruncatemacro{\rminusone}{\r-1}
\pgfmathtruncatemacro{\rminustwo}{\r-2}
\pgfmathtruncatemacro{\mminusone}{\m-1}


\fill[fill=\lightshade]
   ({2*\r*\m+\extright,0}) -- (0,0)
   -- (0,-1) -- ({2*\r*\m+\extright,-1}) -- cycle;

\foreach \i in {0,...,\rminusone}
{
   \fill[fill=\darkshade]
   ({2*(\i+1)*\m},{-\i*\m-1}) -- ({2*\i*\m+2},{-\i*\m-1})
   \foreach \s in {1,...,\mminusone}
   { -- ({2*\i*\m+2*\s},{-\i*\m-\s-1}) -- ({2*\i*\m+2*\s+2},{-\i*\m-\s-1}) }
   -- cycle;
}

\foreach \j in {0,...,\rminustwo}
{
   \fill[fill=\lightshade]
   ({(2*\j+2)*\m},{-\j*\m-1}) --
   ({2*\r*\m+\extright},{-\j*\m-1}) --
   ({2*\r*\m+\extright},{-(\j+1)*\m-1}) --
   ({(2*\j+2)*\m},{-(\j+1)*\m-1}) -- cycle;
}
\fill[fill=\lightshade]
({2*\r*\m},{-(\r-1)*\m-1}) --
({2*\r*\m+\extright},{-(\r-1)*\m-1}) --
({2*\r*\m+\extright},{-\r*\m-\extdown}) --
({2*\r*\m},{-\r*\m-\extdown}) -- cycle;


\pgfmathtruncatemacro{\ymax}{\r*\m}
\foreach \y in {0,...,\ymax}
   \draw[draw=\gridshadelight] (0,-\y) -- (2*\r*\m+\extright,-\y);

\pgfmathtruncatemacro{\xmax}{2*\r*\m}
\foreach \x in {0,...,\xmax}
   \draw[draw=\gridshadelight] (\x,0) -- (\x,-\r*\m-\extdown);


\draw[draw=black, thick]
   ({2*\r*\m+\extright,0}) -- (0,0) -- (0,-1) -- ({2*\r*\m+\extright,-1});

\foreach \j in {0,...,\rminustwo}
{
   \pgfmathtruncatemacro{\firsti}{2*\j+2}
   \pgfmathtruncatemacro{\lasti}{2*\r-1}
   \foreach \i in {\firsti,...,\lasti}
   {
      \draw[draw=black, thick]
      ({\i*\m},{-\j*\m-1}) -- ({(\i+1)*\m},{-\j*\m-1}) --
         ({(\i+1)*\m},{-(\j+1)*\m-1}) -- ({\i*\m},{-(\j+1)*\m-1}) -- cycle;
   }
   \draw[draw=black, thick]
   ({2*\r*\m+\extright},{-(\j+1)*\m-1}) -- ({2*\r*\m},{-(\j+1)*\m-1});
}
\draw[draw=black, thick]
({2*\r*\m},{-(\r-1)*\m-1}) -- ({2*\r*\m},{-\r*\m-\extdown});

\foreach \i in {0,...,\rminusone}
{
   \draw[draw=black, thick]
   ({2*(\i+1)*\m},{-\i*\m-1}) -- ({2*\i*\m+2},{-\i*\m-1})
   \foreach \s in {1,...,\mminusone}
   { -- ({2*\i*\m+2*\s},{-\i*\m-\s-1}) -- ({2*\i*\m+2*\s+2},{-\i*\m-\s-1}) }
   -- cycle;
}


\foreach \i in {0,1,2}
   \draw (\i + 0.5, 0.1) node[above] {$f_{\i}$};
\draw (4.0, 0.3) node[above] {$\cdots$};

\foreach \j in {0,1,2}
   \draw (-0.1, -\j - 0.55) node[left] {$g_{\j}$};
\draw (-0.4, -3.7) node[left] {$\vdots$};

\foreach \k in {0,1,2}
{
   \draw[decorate,decoration={brace,amplitude=5pt,raise=2pt,mirror}]
      (-1.5,{-0.1-\k*\m}) -- (-1.5,{0.1-(\k+1)*\m})
      node[midway,left=6pt] {$g_*^\k$};
}
\draw (-1.3, -3*\m-0.7) node[left] {$\vdots$};

\foreach \k in {0,1,2}
{
   \draw[decorate,decoration={brace,amplitude=5pt,raise=2pt}]
      ({2*\k*\m+0.1},1.4) -- ({2*(\k+1)*\m-0.1},1.4)
      node[midway,above=6pt] {$f_*^\k$};
}
\draw (2*3*\m+1.5, 1.1) node[above] {$\cdots$};

\foreach \j in {1,2,3}
{
   \draw[decorate,decoration={brace,amplitude=5pt,raise=2pt}]
      ({2*\r*\m+\extright},{-1.1-(\j-1)*\m})
      -- ({2*\r*\m+\extright},{-0.9-\j*\m})
      node[midway,right=6pt] {$g^\j$};
}
\draw (2*\r*\m+\extright+0.7, -3*\m-1.9) node[right] {$\vdots$};

\foreach \i in {0,...,4}
{
   \draw[decorate,decoration={brace,amplitude=5pt,raise=2pt,mirror}]
      ({\i*\m+0.1},{-\r*\m-\extdown}) -- ({(\i+1)*\m-0.1},{-\r*\m-\extdown})
      node[midway,below=6pt] {$f^\i$};
}
\draw (5*\m+1.5, -\r*\m-\extdown-0.9) node[below] {$\cdots$};

\end{tikzpicture}
\caption{Decomposition of \Cref{lem:restricted-decomposition} for $m = 4$.}
\label{fig:restricted-decomposition}
\end{figure}

\begin{proof}[Proof of \Cref{lem:restricted-decomposition}]
The result follows more or less directly from
\Cref{fig:restricted-decomposition},
but for the reader's convenience let us give an algebraic proof.
By definition
\[
(f \restrict g)(x) =
   \sum_{\substack{b \geq 0 \\ a \geq 2b}} f_a g_b x^{a-b} \in R\bbracket{x}.
\]
The sum of the terms with $b = 0$ is exactly $f g_0$.
For $b \geq 1$ we write $b = jm - t$ with $j \geq 1$ and $0 \leq t < m$;
the sum over these terms becomes
\[
\sum_{j \geq 1} \; \sum_{0 \leq t < m} \;
   \sum_{a \geq 2jm - 2t} f_a g_{jm-t} x^{a-jm+t}.
\]
We will consider separately the contributions from those $a$
lying in the intervals $2jm - 2t \leq a < 2jm$ and $a \geq 2jm$.

The contribution from the first interval is
\[
\sum_{j \geq 1} \; \sum_{1 \leq t < m} \;
   \sum_{2jm - 2t \leq a < 2jm} f_a g_{jm-t} x^{a-jm+t},
\]
where we have deleted the $t = 0$ terms as the interval is empty for $t = 0$.
Making the substitutions $j = k + 1$, $t = m - v$ and $a = 2km + u$,
the sum becomes
\[
\sum_{k \geq 0} \; \sum_{1 \leq v < m} \;
   \sum_{2v \leq u < 2m} f_{2km+u} g_{km+v} x^{km+u-v}.
\]
This is exactly $\sum_{k \geq 0} x^{km} (f_*^k \restrict g_*^k)(x)$,
i.e., the second term of \eqref{eq:restricted-decomposition}.

The contribution from the second interval is
\[
\sum_{j \geq 1} \; \sum_{0 \leq t < m} \;
   \sum_{a \geq 2jm} f_a g_{jm-t} x^{a-jm+t}.
\]
Writing $a = im + s$ with $i \geq 2j$ and $0 \leq s < m$, this becomes
\[
\sum_{\substack{j \geq 1 \\ i \geq 2j}} \;
\sum_{\substack{0 \leq t < m \\ 0 \leq s < m}} \;
   f_{im+s} g_{jm-t} x^{(i-j)m + s + t}
= \sum_{\substack{j \geq 1 \\ i \geq 2j}} \;
   f^i(x) g^j(x) x^{(i-j)m}.
\]
This is exactly $(\tilde f \restrict \tilde g)(x^m)$,
i.e., the third term of \eqref{eq:restricted-decomposition}.
\end{proof}

We are now ready for the proof of \Cref{thm:restricted-simple}.
\begin{proof}[Proof of \Cref{thm:restricted-simple}]
We are given as input polynomials
$f \in \FF_2[x]_{2n} \subseteq \FF_2\bbracket{x}$
and $g \in \FF_2[x^{-1}]_n \subseteq \FF_2\bbracket{x^{-1}}$.
For small~$n$, say $n < n_0$, we simply evaluate $f \restrict g$
directly from the definition.

For $n \geq n_0$,
we will describe a recursive algorithm that reduces the problem
to a collection of restricted products of size $r$ and size
$m \coloneqq \lceil n / r \rceil$, where
\[
r \coloneqq n^{1/u} + O(1), \qquad
u \coloneqq \exp\bigl( (\log \log n)^{1/2} \bigr).
\]
We may compute such $r$ in time $(\log n)^{O(1)}$.
By taking $n_0$ sufficiently large,
we may ensure that $3 \leq r < n$ and $3 \leq m < n$ for $n \geq n_0$.

We begin by applying \Cref{lem:restricted-decomposition}
with the given~$m$.
In the second term of \eqref{eq:restricted-decomposition},
we have $f_*^k = 0$ and $g_*^k = 0$ for $k \geq r$,
since $2rm \geq 2n$ and $rm \geq n$.
In the third term, we have $f^i = 0$ for $i \geq 2r$
and $g^j = 0$ for $j \geq r+1$;
furthermore, $g^r$ does not contribute to the sum,
because $\tilde f \restrict \tilde g$ involves the products $f^i g^j$
only for $i \geq 2j$,
and we just observed that $f^i = 0$ for $i \geq 2r$.
Therefore in the present situation \eqref{eq:restricted-decomposition} becomes
\begin{equation}
\label{eq:restricted-decomposition-2}
(f \restrict g)(x) =
   f g_0 + \sum_{0 \leq k < r} x^{km} (f_*^k \restrict g_*^k)(x)
   + (\hat f \restrict \hat g)(x^m)
\end{equation}
where
\[
\hat f(y) \coloneqq
   \sum_{0 \leq i < 2r} f^i(x) y^i \in \FF_2[x][y]_{2r},
\qquad
\hat g(y) \coloneqq
   \sum_{1 \leq j < r} g^j(x) y^{-j} \in \FF_2[x][y^{-1}]_r.
\]

We now estimate the complexity of evaluating
\eqref{eq:restricted-decomposition-2}.
The first term trivially requires time $O(n)$.
The second term may be computed recursively in time
$r \Rbarcost(m) + O(n)$.
For the third term, we must evaluate the sums
\[
S_h \coloneqq
   \sum_{\substack{j \geq 1, \, 2j \leq i < 2r \\ i - j = h}} f^i(x) g^j(x)
   \in \FF_2[x]_{2m-1},
   \qquad 0 \leq h < 2r.
\]
We will handle these via a transform pair of order~$m$
(see \Cref{defn:transform-pair}).
Let $\fwd \colon \FF_2[x]_m \to \FF_2^K$
and $\inv \colon \FF_2^K \to \FF_2[x]_{2m-1}$
be the corresponding forward and inverse transforms,
where $K \leq \Kcost(m)$.
We first compute $\fwd(f^i) \in \FF_2^K$ and $\fwd(g^j) \in \FF_2^K$
for $0 \leq i < 2r$ and $1 \leq j < r$ in time
$O(r \Tcost(m)) = O(r \Mcost(m))$.
We must then compute the pointwise sums
\[
\sum_{\substack{j \geq 1, \, 2j \leq i < 2r \\ i - j = h}}
   \fwd(f^i) \cdot \fwd(g^j) \in \FF_2^K,
   \qquad 0 \leq h < 2r.
\]
This involves computing a restricted product of size~$r$ (over $\FF_2$)
in each coordinate of $\FF_2^K$,
which may be done recursively in time $K \Rbarcost(r) + O(Kr)$.
We must also carry out $O(1)$ array transpositions of size
$K \times O(r)$ to access the data for these restricted products;
by \Cref{lem:transpose} the cost of this step is $O(K r \log r)$.
Finally, we must perform $2r$ inverse transforms to recover the $S_h$,
at a cost of $O(r \Tcost(m)) = O(r \Mcost(m))$,
and then accumulate the $S_h$ to obtain
$(\hat f \restrict \hat g)(x^m) = \sum_{0 \leq h < 2r} x^{hm} S_h(x)$
in time $O(n)$.
Combining these estimates, we obtain the recursive bound
\[
\Rbarcost(n) < r \Rbarcost(m) + \Kcost(m) \Rbarcost(r)
   + O(r \Mcost(m) + \Kcost(m) \, r \log r),
   \qquad n \geq n_0.
\]
Dividing by $n \log n$, and using the estimate
\[
rm = r \left\lceil \frac{n}{r} \right\rceil = n + O(r)
   = \left(1 + \frac{O(1)}{m}\right) n,
\]
the recurrence becomes
\begin{multline}
\label{eq:Rbarstar-recurrence}
\Rbarstar(n) < \left(1 + \frac{O(1)}{m}\right) \biggl(
   \frac{\log m}{\log n} (\Rbarstar(m) + A \Mstar(m)) \\
   + \frac{\log r}{\log n} \Kstar(m) (\Rbarstar(r) + B)
   \biggr), \qquad n \geq n_0,
\end{multline}
for suitable constants $A, B > 0$.

Fix $\eps \in (0, \tfrac12)$;
our goal is to prove that $\Rstar(n) < \exp((\log \log n)^{1/2+\eps})$
for all sufficiently large~$n$.
It is certainly enough to prove that
$\Rbarstar(n) < \exp((\log \log n)^{1/2+\eps})$ for large~$n$.
For this it suffices to find a constant $C_\eps > 0$ such that
\begin{equation}
\label{eq:Rstar-bound}
\Rbarstar(n) < C_\eps \exp((\log \log n)^\alpha)
\end{equation}
for all $n \geq 3$,
where for convenience we set $\alpha \coloneqq 1/2 + \eps/2$.
Indeed, it is clear that
$(\log \log n)^{1/2+\eps/2} + \log C_\eps < (\log \log n)^{1/2+\eps}$
for large enough $n$.

Let $n_\eps \geq n_0$ be a threshold to be chosen later.
We will take
\[
C_\eps \coloneqq 2 \max\left(A, B, \max_{3 \leq n < n_\eps}
      \frac{\Rbarstar(n)}{\exp((\log \log n)^\alpha)} \right),
\]
so that \eqref{eq:Rstar-bound} holds automatically for $3 \leq n < n_\eps$.
We will prove by induction on~$n$ that
\eqref{eq:Rstar-bound} also holds for $n \geq n_\eps$.
(Note that $n_\eps$ depends on $\eps$,
but $n_0$ must be an absolute constant,
as the algorithm does not ``know'' the value of $\eps$.)

Assume therefore that $n \geq n_\eps$.
Applying the inductive hypothesis to \eqref{eq:Rbarstar-recurrence} yields
\begin{multline*}
\Rbarstar(n) < \left(1 + \frac{O(1)}{m}\right) \biggl(
   \frac{\log m}{\log n} \, C_\eps
      \bigl( \exp((\log \log m)^\alpha) + \Mstar(m) \bigr) \\
   + \frac{\log r}{\log n} \, C_\eps \Kstar(m)
      \bigl( \exp((\log \log r)^\alpha) + 1 \bigr)
   \biggr).
\end{multline*}
Since $m = (1 + \frac{O(1)}{m}) (n/r)$ we have
$\log m = \log n - \log r + \frac{O(1)}{m}$, so
\begin{multline*}
\Rbarstar(n) < \left(1 + \frac{O(1)}{m}\right) \biggl(
   \left(1 + \frac{O(1)}{m \log n}\right) C_\eps
      \bigl( \exp((\log \log m)^\alpha) + \Mstar(m) \bigr) \\
   + \frac{\log r}{\log n} \, C_\eps \Bigl( \Kstar(m)
      \bigl( \exp((\log \log r)^\alpha) + 1 \bigr)
      - \exp((\log \log m)^\alpha) - \Mstar(m) \Bigr)
   \biggr) \\
   \shoveleft{\phantom{\Rbarstar(n)} {}< \left(1 + \frac{O(1)}{m}\right)
   C_\eps \bigl( \exp((\log \log m)^\alpha) + \Mstar(m) \bigr)} \\
   + \left(1 + \frac{O(1)}{m}\right) \frac{\log r}{\log n} \,
      C_\eps \Bigl( \Kstar(m) \bigl( \exp((\log \log r)^\alpha) + 1 \bigr)
       - \exp((\log \log m)^\alpha) \Bigr).
\end{multline*}
To establish \eqref{eq:Rstar-bound} for the given value of $n$,
it thus suffices to show that
\[
\Kstar(m) \bigl( \exp((\log \log r)^\alpha) + 1 \bigr)
   < \exp((\log \log m)^\alpha)
\]
and that
\[
\left(1 + \frac{O(1)}{m}\right)
   \bigl( \exp((\log \log m)^\alpha) + \Mstar(m) \bigr)
      < \exp((\log \log n)^\alpha).
\]
Taking logarithms,
and using the fact that $\log(1+x) < x$ for small $x > 0$,
to establish these two inequalities it suffices in turn to prove that
\begin{equation}
\label{eq:restrict-second-term}
\log \Kstar(m) + (\log \log r)^\alpha
   + \frac{1}{\exp((\log \log r)^\alpha)} < (\log \log m)^\alpha
\end{equation}
and
\begin{equation}
\label{eq:restrict-first-term}
\frac{O(1)}{m} + (\log \log m)^\alpha +
   \frac{\Mstar(m)}{\exp((\log \log m)^\alpha)} < (\log \log n)^\alpha.
\end{equation}

We may estimate $(\log \log r)^\alpha$ and $(\log \log m)^\alpha$ as follows.
From the definition of~$r$ we see that
$\log r = u^{-1} \log n + O(1)$ and hence
\begin{align*}
\log \log r & = \log \log n - (\log \log n)^{1/2} + O(1) \\
   & = \log \log n \left(1 - \frac{1}{(\log \log n)^{1/2}}
         + \frac{O(1)}{\log \log n} \right).
\end{align*}
Using the binomial expansion $(1 - x)^\alpha = 1 - \alpha x + O(x^2)$
(and increasing $n_0$ if necessary) we obtain
\begin{align*}
(\log \log r)^\alpha
   & = (\log \log n)^\alpha
         \left(1 - \frac{\alpha}{(\log \log n)^{1/2}}
            + \frac{O(1)}{\log \log n} \right) \\
   & = (\log \log n)^\alpha - \alpha (\log \log n)^{\eps/2} + O(1).
\end{align*}
Similarly for $m$ we have
\[
\log m = (1 - u^{-1}) \log n + O(1)
   = (1 - u^{-1}) \log n \left(1 + \frac{O(1)}{\log n} \right)
\]
so
\begin{align*}
\log \log m & = \log \log n + \log(1 - u^{-1}) + \frac{O(1)}{\log n} \\
   & = \log \log n - \frac{1 + O(u^{-1})}{u}
   = \log \log n \left(1 - \frac{1 + O(u^{-1})}{u \log \log n} \right),
\end{align*}
and then
\begin{align*}
(\log \log m)^\alpha
   & = (\log \log n)^\alpha
      \left(1 - \frac{\alpha + O(u^{-1})}{u \log \log n} \right) \\
   & = (\log \log n)^\alpha
         - \frac{\alpha + O(u^{-1})}{u (\log \log n)^{1/2-\eps/2}}.
\end{align*}

Using these estimates,
to prove \eqref{eq:restrict-second-term} it suffices to show that
\begin{equation}
\label{eq:restrict-second-term-2}
\log \Kstar(m) + O(1) < \alpha (\log \log n)^{\eps/2}.
\end{equation}
This holds provided that our transform scheme
has sufficiently low multiplicative complexity.
For instance, if we use the transform scheme constructed in
\Cref{thm:transform-scheme},
then the left hand side of \eqref{eq:restrict-second-term-2} is bounded by
$\log \Kstar(n) + O(1) \ll \log^* n$,
and then certainly \eqref{eq:restrict-second-term-2} holds
for $n \geq n_\eps$ for large enough $n_\eps$.

Similarly, to prove \eqref{eq:restrict-first-term} it suffices to show that
\[
\frac{O(1)}{m} + \frac{\Mstar(m)}{\exp((\log \log m)^\alpha)} <
   \frac{\alpha + O(u^{-1})}{u (\log \log n)^{1/2-\eps/2}}.
\]
Since $\Mstar(n)$ is non-decreasing,
and since $(\log \log m)^\alpha > \tfrac12 (\log \log n)^\alpha$
and $m > n^{1/2}$ for large $n$,
it is enough to prove that
\begin{equation}
\label{eq:restrict-first-term-2}
\frac{O(1)}{n^{1/2}}
   + \frac{\Mstar(n)}{\exp(\tfrac12(\log \log n)^{1/2+\eps/2})} <
   \frac{\Theta(1)}{\exp((\log \log n)^{1/2})(\log \log n)^{1/2-\eps/2}}.
\end{equation}
Again, assuming that we use a fast enough multiplication algorithm,
satisfying for instance \eqref{eq:ffmul},
then \eqref{eq:restrict-first-term-2} certainly holds for $n \geq n_\eps$
for large enough $n_\eps$.
This completes the proof of \eqref{eq:Rstar-bound}.
\end{proof}

\subsection{Faster restricted products}
\label{sec:restricted-faster}

The complexity bound in \Cref{thm:restricted-simple} is not optimal.
By pushing the techniques from \Cref{sec:restricted-products}
a bit further, one can obtain
\begin{equation}
\label{eq:restricted-advanced-complexity}
\Rcost(n) \ll \exp\bigl(D (\log \log n)^{1/2}\bigr) (\log \log n)^{1/2}
   \cdot 2^{\log^* n} \cdot \Mcost(n)
\end{equation}
where
\[
D \coloneqq (2 \log 2)^{1/2} \approx 1.177.
\]
This is the best bound for $\Rcost(n)$ known to the author.
Note that $\exp(D (\log \log n)^{1/2})$
grows much faster than any fixed power of $\log \log n$,
so even this tighter bound for $\Rcost(n)$ still leaves
a considerable gap in complexity between
the restricted product and ordinary polynomial multiplication.
In this section we will sketch a proof of
\eqref{eq:restricted-advanced-complexity},
mainly to give the reader a sense of how difficult it is to
completely eliminate the $(\log \log N)^{o(1)}$ factor in
\Cref{thm:main-heuristic} (see \Cref{rem:main-heuristic-precise}).

We first reduce to computing restricted products over $\FFext$
for suitable $\lambda$.
Let $\Rbarcost_\lambda(n)$ denote the cost of computing the restricted product
of $f \in \FFext[x]_{2n}$ and $g \in \FFext[x^{-1}]_n$.
Given a restricted product problem of size $n$ over $\FF_2$,
we may apply \Cref{lem:restricted-decomposition}
with $m \coloneqq \lg n$ to reduce to
$O(n/\log n)$ restricted products over $\FF_2$ of size $\lg n$,
plus a restricted product of size $O(n/\log n)$ over $\FF_2[z]_m$.
We embed the latter into a restricted product over $\FFext$
for $\lambda \coloneqq 2 \lg n$, leading to the bound
\[
\Rbarcost(n) < \Rbarcost_\lambda(O(n / \log n))
   + O\left(\frac{n}{\log n} \Rcost(\lg n)\right) + O(n).
\]
The second term on the right is $n (\log \log n)^{1+o(1)}$
by \Cref{thm:restricted-simple},
which is negligible compared to \eqref{eq:restricted-advanced-complexity}.
To prove \eqref{eq:restricted-advanced-complexity},
it therefore suffices to show that
\begin{equation}
\label{eq:Rcost-lambda-bound}
\Rbarcost_\lambda(n) \ll \exp\bigl(D (\log \log n)^{1/2} \bigr)
   (\log \log n)^{1/2} \cdot 2^{\log^* \lambda} \cdot \Mcost(\lambda n)
\end{equation}
in the regime where $\lambda \sim 2 \lg n$.

We will first explain how to achieve the slightly weaker bound
\begin{equation}
\label{eq:Rcost-lambda-weaker-bound}
\Rbarcost_\lambda(n) \ll \exp\bigl(D (\log \log n)^{1/2} \bigr)
   (\log \log n)^{1/2} \cdot 4^{\log^* \lambda} \cdot \Mcost(\lambda n)
\end{equation}
over a range of parameters $(n,\lambda)$ that includes $\lambda \sim 2 \lg n$.
The algorithm involves two different reductions,
depending on the relative size of $n$ and~$\lambda$.
When $n$ is large enough,
we reduce to a smaller value of $n$ by following the same plan as in
the proof of \Cref{thm:restricted-simple},
except that we evaluate-interpolate directly over $\FFext$ instead of
introducing a transform scheme.
This strategy contributes the
$\exp(D (\log \log n)^{1/2}) (\log \log n)^{1/2}$ factor
in \eqref{eq:Rcost-lambda-weaker-bound}.
When $n$ becomes too small,
i.e., when the polynomial arithmetic in the ``$\lambda$ direction''
becomes relatively too expensive,
we re-encode the problem over $\FF_{2^{\lambda'}}$
with $\lambda'$ exponentially smaller than $\lambda$;
this explains the $4^{\log^* \lambda}$ factor
in \eqref{eq:Rcost-lambda-weaker-bound}.

We now give a few more details.
Let $\Phi(x) \coloneqq 2(\log_2 x + 1)^4$
(unrelated to the $\Phi(x)$ from \Cref{sec:transform-schemes}).
Then $\Phi(x)$ is logarithmically slow and admits an iterator $\Phi^*(x)$
such that $\Phi^*(x) = \log^* x + O(1)$ \cite[\S5]{HvdHL-mul}.
Define
\begin{align*}
\Fcost(n) & \coloneqq
   \exp\bigl(D (\log \log n)^{1/2}\bigr) (\log \log n)^{1/2}, \\
\Gcost(\lambda) & \coloneqq
   \left(1 - \frac{1}{\log \log \lambda}\right) 4^{\Phi^*(\lambda)}.
\end{align*}
We will show that there exist constants $C > 0$ and $n_0 \geq 1$ such that
\begin{equation}
\label{eq:Rcost-lambda-induction}
\Rbarcost_\lambda(n) < C \Fcost(n) \Gcost(\lambda) \Mcost(\lambda n)
\end{equation}
for all $(n,\lambda)$ satisfying
\[
\lg^2 \lambda < \lg n < \lambda, \qquad n \geq n_0.
\]

The argument is by induction on $(n,\lambda)$.
First consider the case that $n$ is small,
say $\lg n < 2 \lg^4 \lambda$.
Set $\lambda' \coloneqq 2 \lg^4 \lambda$;
then $\lg^2 \lambda' < \lg n < \lambda'$ (for large~$n$).
We lift the coefficient ring $\FFext \cong \FF_2[z]/h(z)$ to
$\FF_2[z]_\lambda$,
and cut up these polynomials into
$k \coloneqq \lceil 2\lambda/\lambda' \rceil$ chunks of length $\lambda'/2$.
We thereby reduce to a restricted product of size~$n$ over
$\FF_2[z]_{\lambda'/2}[y]_k$.
We then embed $\FF_2[z]_{\lambda'/2}$ in $\FF_{2^{\lambda'}}$,
thus reducing to a restricted product of size $n$ over
$\FF_{2^{\lambda'}}[y]_k$.
Next we use \Cref{prop:geometric}
to evaluate/interpolate $y$ at $2k-1$ powers of a
generator of $\FF_{2^{\lambda'}}$.
One checks that $2k-1 \leq 2^{\lambda'} - 1$ (for large $n$),
so there are certainly enough evaluation points in $\FF_{2^{\lambda'}}$.
The cost of the evaluation/interpolation steps is
$O(n (\Mcost(\lambda' k) + \Mcost(\lambda') \log \lambda'))
= O(n \Mcost(\lambda)) = O(\Mcost(\lambda n))$.
We have thus finally reduced to $2k-1$ separate restricted products
of size $n$ over $\FF_{2^{\lambda'}}$, and we obtain the estimate
\[
\Rbarcost_\lambda(n) <
   \left(\frac{4\lambda}{\lambda'} + O(1)\right) \Rbarcost_{\lambda'}(n)
   + O(\Mcost(\lambda n)).
\]
Using the inductive hypothesis \eqref{eq:Rcost-lambda-induction}
to estimate $\Rbarcost_{\lambda'}(n)$ leads to
\[
\Rbarcost_\lambda(n) <
   C \Fcost(n) \left(4 + O\left(\frac{\lambda'}{\lambda} + 
   \frac{1}{\Fcost(n) \Gcost(\lambda')} \right) \right)
   \Gcost(\lambda') \Mcost(\lambda n).
\]
One checks that
\[
\left(4 + O\left(\frac{\lambda'}{\lambda} + 
   \frac{1}{\Fcost(n) \Gcost(\lambda')} \right) \right)
\left(1 - \frac{1}{\log \log \lambda'}\right)
   < 4 \left(1 - \frac{1}{\log \log \lambda}\right)
\]
for large $n$,
and since $\lambda' \leq 2(\log_2 \lambda + 1)^4 = \Phi(\lambda)$,
also $\Phi^*(\lambda') + 1 \leq \Phi^*(\Phi(\lambda)) + 1 = \Phi^*(\lambda)$
for large $n$.
Combining these facts shows that \eqref{eq:Rcost-lambda-induction}
holds for $(n,\lambda)$.

We now consider the case that $n$ is large,
i.e., $\lg n \geq 2 \lg^4 \lambda$.
Define
\[
u \coloneqq \exp\bigl(D (\log \log n)^{1/2}\bigr),
\qquad r \coloneqq n^{1/u} + O(1),
\qquad m \coloneqq \lceil n / r \rceil.
\]
Applying \Cref{lem:restricted-decomposition},
we reduce the problem to $r$ restricted products of size $m$ over $\FFext$
together with a restricted product of size $r$ over $\FFext[x]_m$.
To handle the latter, we use \Cref{prop:geometric}
to evaluate/interpolate directly over $\FFext$;
there are enough evaluation points in $\FFext$
as $2m-1 < n < 2^\lambda - 1$ for large $n$.
The cost of the evaluations and interpolations
(and associated transpositions) is
$O(r (\Mcost(\lambda m) + \Mcost(\lambda) \log \lambda) + \lambda r m \log r)
   = O(r \Mcost(\lambda m) + \lambda r m \log r)$,
and this reduces the restricted product over $\FFext[x]_m$
to $2m-1$ restricted products of size $r$ over $\FFext$.
We thus obtain the estimate
\[
\Rbarcost_\lambda(n) < r \Rbarcost_\lambda(m) + 2m \Rbarcost_\lambda(r)
   + O(r \Mcost(\lambda m) + \lambda r m \log r).
\]

\begin{rem}
This last inequality is reminiscent of recurrences that arise
in the analysis of the fastest known algorithms
for \emph{relaxed multiplication} (or \emph{online multiplication});
see for example \cite[Eq.\,8]{vdH-relaxed}.
Indeed, the bound \eqref{eq:restricted-advanced-complexity}
(and our choice of $r$) was inspired by the analysis in \cite{vdH-relaxed}.
\end{rem}

Define the normalisation
\[
\Rbarstar_\lambda(n) \coloneqq \frac{\Rbarcost_\lambda(n)}{\lambda n \log n}.
\]
Using calculations similar to those in the proof of
\Cref{thm:restricted-simple},
and since $\log \lambda \ll \log m$ for large $n$,
the previous inequality becomes
\[
\Rbarstar_\lambda(n) <
\left(1 + \frac{O(1)}{m}\right) \biggl(
   \frac{\log m}{\log n} (\Rbarstar_\lambda(m) + A \Mstar(\lambda m))
   + \frac{\log r}{\log n} (2 \Rbarstar_\lambda(r) + B) \biggr)
\]
for suitable constants $A, B > 0$.
One checks that $\lg^2 \lambda < \lg r, \lg m < \lambda$ for large $n$,
so we may apply the inductive hypothesis \eqref{eq:Rcost-lambda-induction}
for $(r,\lambda)$ and $(m,\lambda)$.
This yields
\begin{align*}
\Rbarstar_\lambda(m)
   & < \left(1 + \frac{O(1)}{(\log m)^{1/2}}\right)
   C \Fcost(m) \Gcost(\lambda) \Mstar(\lambda m), \\
\Rbarstar_\lambda(r)
   & < \left(1 + \frac{O(1)}{(\log r)^{1/2}}\right)
   C \Fcost(r) \Gcost(\lambda) \Mstar(\lambda r).
\end{align*}
Substituting these into the previous inequality, choosing $C > 4 \max(A, B)$,
and using the fact that $\log r, \log m \gg (\log n)^{1/2}$ for large $n$,
we obtain
\[
\Rbarstar_\lambda(n)
   < \left(1 + \frac{O(1)}{(\log n)^{1/4}}\right)
      C \Gcost(\lambda) \Mstar(\lambda n)
      \biggl( \frac{\log m}{\log n} (\Fcost(m) + \tfrac14)
      + \frac{\log r}{\log n} (2\Fcost(r) + \tfrac14) \biggr).
\]
Since $\log m \leq \log n - \log r + O(1)$,
the last bracketed expression is bounded by
\[
\left(1 + \frac{O(1)}{\log n}\right) (\Fcost(m) + \tfrac14)
   + \frac{\log r}{\log n} (2 \Fcost(r) - \Fcost(m)).
\]
Now, beginning with the definitions of $r$ and $m$,
some calculation shows that
\begin{align*}
(\log \log r)^{1/2} & = (\log \log n)^{1/2} - \frac{D}{2}
   - \frac{D^2}{8 (\log \log n)^{1/2}}
   + O\left(\frac{1}{\log \log n}\right), \\
(\log \log m)^{1/2} & = (\log \log n)^{1/2} - \frac{1 + O(u^{-1})}{2 \Fcost(n)}
\end{align*}
for large $n$,
and further calculation then leads to
\begin{align}
\Fcost(r) & = \frac12 \Fcost(n) \left(
   1 - \frac{D}{2} \left(\frac{D^2}{4} + 1\right)
      \frac{1 + O((\log \log n)^{-1/2})}{(\log \log n)^{1/2}} \right),
      \notag \\
\Fcost(m) & = \Fcost(n) \left(1 - \frac{D}{2} \cdot
   \frac{1 + O((\log \log n)^{-1/2})}{\Fcost(n)} \right).
\label{eq:Fcost-m}
\end{align}
Since $\Fcost(n)$ grows much faster than $(\log \log n)^{1/2}$,
we deduce that $\Fcost(m) > 2 \Fcost(r)$ for large $n$.
Also, from \eqref{eq:Fcost-m} we get $\Fcost(m) < \Fcost(n) - \tfrac12$
for large $n$.
This leaves us with
\[
\Rbarstar_\lambda(n) < \left(1 + \frac{O(1)}{(\log n)^{1/4}}\right) C
    (\Fcost(n) - \tfrac14) \Gcost(\lambda) \Mstar(\lambda n).
\]
Since $\Fcost(n) = o((\log n)^{1/4})$,
we conclude that \eqref{eq:Rcost-lambda-induction} holds for $(n,\lambda)$.
By induction \eqref{eq:Rcost-lambda-induction} holds for all $(n,\lambda)$
in the desired range
(after increasing $C$ if necessary to make it hold for small $n$).
This completes the proof of \eqref{eq:Rcost-lambda-weaker-bound}.

To reach \eqref{eq:Rcost-lambda-bound},
i.e., to improve the $4^{\log^* \lambda}$ factor to $2^{\log^* \lambda}$,
one may proceed as in \Cref{rem:cyclotomic},
replacing the coefficient ring $\FFext = \FF_2[z]/h(z)$
by $\FF_2[z]/\phi_\alpha(z)$ for a suitable cyclotomic polynomial
$\phi_\alpha(z)$.
We do not have space to explain the details here.

\begin{rem}
\label{rem:RAM-restricted}
In the RAM model,
it is plausible that one can achieve the same bound as
\eqref{eq:restricted-advanced-complexity}
but with the $2^{\log^* n}$ factor deleted.
Indeed, each factor of $2$ arises from one of the re-encoding steps
(i.e., the small $n$ case),
but one should be able to switch to table lookups after $O(1)$ such steps.
This may yield a small improvement in \Cref{thm:main-heuristic}
for the RAM model (see \Cref{rem:main-heuristic-precise}).
\end{rem}

\section{Compression and decompression}
\label{sec:compression}

In this section we study the following problem.
We are given positive integers $T$ and $R \leq T/2$.
For a vector $a \in \FF_2^T$,
let $\wt(a)$ denote the \emph{weight} of $a$,
i.e.,
\[
\wt(a) \coloneqq \bigl| \{ 0 \leq i < T : a_i = 1 \} \bigr|.
\]
We want to construct a linear map
\[
\kappa \colon \FF_2^T \to \FF_2^S,
\]
for a suitable positive integer $S$,
with the following properties:
\begin{itemize}
\item
If $a \in \FF_2^T$ with $\wt(a) \leq R$,
then $a$ can be recovered unambiguously from $\kappa(a)$.
In other words, we want $\kappa$ to be injective when restricted
to the subset of vectors in $\FF_2^T$ of weight at most $R$.
\item
There should be efficient algorithms for computing $\kappa(a)$ from $a$
and recovering $a$ from $\kappa(a)$.
We call these algorithms respectively
\emph{compression} and \emph{decompression}.
\end{itemize}

A trivial solution to this problem is to take $S \coloneqq T$
and $\kappa$ to be the identity map.
Of course, to obtain the best possible ``compression ratio'',
we would prefer to make $S$ as small as possible.

How small can $S$ be?
The number of vectors in $\FF_2^T$ that we want to distinguish
is $n_{T,R} \coloneqq 1 + \binom{T}{1} + \cdots + \binom{T}{R}$,
so $S$ must be at least $\log_2(n_{T,R})$.
If $R$ is not too large relative to $T$,
say $R < T^c$ for some fixed $c \in (0, 1)$,
then a brief calculation shows that $\log n_{T,R} \asymp R \log T$.
Therefore $S$ must satisfy $S \gg R \log T$.
In this section we will see how to meet this bound up to a constant factor,
i.e., we will construct a map $\kappa \colon \FF_2^T \to \FF_2^S$
with the above properties, and such that $S \ll R \log T$.

\subsection{Definition of the compression map}
\label{sec:construct-kappa}

As above, let $T$ and $R \leq T/2$ be positive integers.
Let
\begin{equation}
\label{eq:defn-lambda}
\lambda \coloneqq \lg(T+1) \ll \log T.
\end{equation}
As in \Cref{sec:ffext},
we will represent the field $\FFext$ as $\FF_2[z]/h(z)$
for a fixed irreducible $h \in \FF_2[z]$ of degree $\lambda$.
We also fix a generator $\beta \in \FFext^*$, so that
\[
\ord \beta = 2^\lambda - 1 \geq T \geq 2R.
\]
By \Cref{lem:irreducible} and \Cref{lem:generator},
we may (deterministically) compute suitable $h(z)$ and $\beta$ in time
$O(\lambda^{O(1)} + (2^\lambda)^{1/4+o(1)}) = T^{1/4+o(1)} = o(T)$.

We now define $\kappa$ as follows.
Given a vector $a \in \FF_2^T$, consider the polynomial
\[
\phi_a(x) \coloneqq a_0 + a_1 x + \cdots + a_{T-1} x^{T-1} \in \FF_2[x]_T.
\]
Evaluating $\phi_a(x)$ at the points $\beta^j$ for $j = 0, \ldots, 2R-1$,
we obtain the vector
\[
\tilde\kappa(a) \coloneqq
   \bigl( \phi_a(\beta^j) \bigr)_{j=0}^{2R-1} \in (\FFext)^{2R}.
\]
Each copy of $\FFext = \FF_2[z]/h(z)$
is isomorphic to $\FF_2^\lambda$ via the map sending
$u_0 + u_1 z + \cdots + u_{\lambda-1} z^{\lambda-1}$ to
$(u_0, u_1, \ldots, u_{\lambda-1})$.
By concatenating these vectors for $j = 0, \ldots, 2R-1$
we obtain an isomorphism
$\psi \colon (\FFext)^{2R} \to \FF_2^S$ where
\begin{equation}
\label{eq:defn-S}
S \coloneqq 2 \lambda R \ll R \log T.
\end{equation}
Finally we put
\[
\kappa \colon \FF_2^T \to \FF_2^S,
\qquad \kappa(a) \coloneqq \psi(\tilde \kappa(a)).
\]

In the remainder of this section,
we explain how to efficiently evaluate $\kappa(a)$ and
recover $a$ from $\kappa(a)$.
The key injectivity property of $\kappa$ will be deduced as a byproduct
of the correctness of the decompression algorithm.

\begin{rem}
\label{rem:reed-solomon}
The above construction of $\kappa$ is inspired by the theory of
\emph{Reed--Solomon codes} \cite{RS-codes} (or \emph{BCH codes}).
In the Reed--Solomon situation,
given a message of length $k$ over $\FF_q$,
one regards it as a polynomial in $\FF_q[x]_k$,
evaluates the polynomial at $n > k$ points in $\FF_q$,
and transmits the $n$ function values to the intended recipient.
Since the number of evaluation points is larger than the input size,
this process introduces redundancy
and hence the potential to detect and possibly correct errors.
By contrast, in our situation we evaluate at \emph{fewer} points than
the input size ($2R \leq T$).
In general this loses information,
but we will see below that standard decoding methods for Reed--Solomon
codes can be adapted to obtain a decompression algorithm for $\kappa$
with the desired properties.
One might also view our setup as a discrete variant of
\emph{compressed sensing} \cite{FR-sensing}:
we are trying to recover a function from a limited number
of function values, given some sparsity constraint on the function.
\end{rem}

\subsection{The compression algorithm}
\label{sec:compression-algorithm}

The symbols $T$, $R$, $S$ and $\kappa \colon \FF_2^T \to \FF_2^S$
continue to have the same meanings as in \Cref{sec:construct-kappa}.
For the rest of \Cref{sec:compression} we fix a transform scheme
(see \Cref{sec:transform-schemes}) with associated cost functions
$\Kcost(n)$ and $\Tcost(n) \ll \Mcost(n)$.
For convenience we define
\[
\Cstar(n) \coloneqq \Mstar(n) + \Kstar(n), \qquad n \geq 1.
\]

\begin{thm}[Compression]
\label{thm:compression}
Given $a \in \FF_2^T$,
we may evaluate $\kappa(a) \in \FF_2^S$ in time
\begin{equation}
\label{eq:compression-bound}
O(T \log T \cdot \Cstar(T)) + R (\log T)^{2+o(1)},
\end{equation}
after a precomputation (depending only on $T$ and~$R$) of cost
\[
T (\log T)^{3+o(1)}.
\]
\end{thm}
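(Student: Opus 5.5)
The plan is to write $\phi_a$ in blocks of length $2R$ and reduce it modulo $P(x) \coloneqq \prod_{j=0}^{2R-1}(x - \beta^j) \in \FFext[x]$, which has degree $2R$. This is legitimate because $\tilde\kappa(a)$ depends only on $\phi_a \bmod P$. Reducing directly via \Cref{prop:ffextdiv} costs about $\Mcost(\lambda T)$, which is roughly $T \log^2 T$ and therefore too expensive. The fix is to use the transform scheme, exploiting two facts: $\phi_a$ has coefficients in $\FF_2$, and the reduction is linear.

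Concretely, write
\[
\phi_a(x) = \sum_{0 \leq k < T/2R} x^{2Rk} b_k(x), \qquad b_k \in \FF_2[x]_{2R}.
\]
Then $\phi_a \equiv \sum_k b_k c_k \pmod P$, where $c_k \coloneqq x^{2Rk} \bmod P \in \FFext[x]_{2R}$. Since $\FFext = \FF_2[z]/h(z)$, each $c_k$ decomposes as $c_k = \sum_{l<\lambda} z^l c_{k,l}$ with $c_{k,l} \in \FF_2[x]_{2R}$.

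The precomputation, which depends only on $T$ and $R$, proceeds in the following order:
\begin{enumerate}
\item Find $h$ and $\beta$, at cost $o(T)$ as in \Cref{sec:construct-kappa}.
\item Compute $P$ by a product tree (\Cref{prop:product-tree}).
\item Compute the $c_k$ successively, via $c_{k+1} = x^{2R} c_k \bmod P$. This takes $O(T/R)$ multiplications and divisions of size $O(\lambda R)$ (\Cref{prop:ffextdiv}), for a total of $T(\log T)^{2+o(1)}$.
\item Compute all $\fwd(c_{k,l})$ using a transform pair of order $2R$. This is $\lambda T/2R$ transforms, costing $O(\lambda (T/R) \Mcost(R)) = T(\log T)^{2+o(1)}$.
\end{enumerate}
Everything is comfortably inside the allowed $T(\log T)^{3+o(1)}$.

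The main computation then runs as follows:
\begin{enumerate}
\item Compute $\fwd(b_k)$ for every $k$. The cost is $O((T/R)\Tcost(2R)) = O(T \log T \cdot \Mstar(T))$.
\item For each $l < \lambda$, form the accumulated vector $U_l \coloneqq \sum_k \fwd(b_k) \cdot \fwd(c_{k,l}) \in \FF_2^K$. There are $\lambda (T/R)$ pointwise products of length $K \leq \Kcost(2R)$, costing $O(\lambda T \Kstar(T)) = O(T \log T \cdot \Kstar(T))$.
\item Apply $\inv$ to each $U_l$. By linearity this recovers $\sum_k b_k c_{k,l} \in \FF_2[x]_{4R-1}$. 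The cost is $\lambda$ inverse transforms, $O(\lambda \Mcost(R)) = R(\log T)^{2+o(1)}$.
\item Reassemble $Q \coloneqq \sum_l z^l \sum_k b_k c_{k,l} \in \FFext[x]_{4R-1}$ and reduce it mod $P$ (\Cref{prop:ffextdiv}), costing $O(\Mcost(\lambda R))$.
\item Evaluate the remainder at $\beta^0, \ldots, \beta^{2R-1}$ using \Cref{prop:geometric}(a). The order condition $\ord\beta \geq 2R$ holds.
\item Apply $\psi$, which amounts to writing out the bits.
\end{enumerate}
Steps 3 through 6 cost $R(\log T)^{2+o(1)}$ in total, and steps 1 and 2 give the first term of \eqref{eq:compression-bound}.

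The step I expect to be the main obstacle is the data layout on the Turing tape in step 2. For each $k$ we must stream $\fwd(b_k)$ together with the $\lambda$ precomputed vectors $\fwd(c_{k,l})$, and update $\lambda$ accumulators of $K$ bits each. My plan is to store the precomputed transforms in the order $(k,l)$, so they are read sequentially, and to keep the $\lambda$ accumulators contiguous on a separate tape. We then sweep once over the accumulators per $k$ while reading the corresponding $\lambda$ precomputed vectors. A sweep costs $O(\lambda K)$, which is the same order as the products themselves, so this layout costs nothing extra. The $\fwd(b_k)$ can likewise be produced in order of $k$ on another tape. If that arrangement turns out to be awkward, the fallback is a transposition via \Cref{lem:transpose}; its extra $\log$ factor applies only to arrays of total size $O(\lambda T K/R)$ and must be checked not to exceed the budget.
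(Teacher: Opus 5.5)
Your proposal is correct and follows essentially the same route as the paper's proof. Both reduce $\phi_a$ modulo $\prod_{j=0}^{2R-1}(x-\beta^j)$ by splitting it into blocks of length $2R$, precompute the transforms of the $\FF_2$-components of the powers $x^{2Rk}$ reduced modulo that product, accumulate pointwise products, and finish with $\lambda$ inverse transforms, one division and a geometric-progression evaluation. The only bookkeeping you omit is the pair of array transpositions, one splitting each $c_k$ into the $c_{k,l}$ and one reassembling $Q$; the paper bounds these by $T(\log T)^{1+o(1)}$ and $R(\log T)^{1+o(1)}$ respectively, so they fit within your budgets.
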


\begin{rem}
It is natural to attempt to compute $\kappa(a)$
by directly applying \Cref{prop:geometric}
to evaluate $\phi_a(x)$ at the powers of $\beta$.
However, this approach requires first lifting $\phi_a(x)$
from $\FF_2[x]$ to $\FFext[x]$ and leads to the complexity bound
$O(\Mcost(\lambda T)) < \lambda T (\log \lambda T)^{1+o(1)}
< T (\log T)^{2+o(1)}$.
This is worse than the main term of \eqref{eq:compression-bound},
which is only $T (\log T)^{1+o(1)}$
(assuming \eqref{eq:ffmul} and \eqref{eq:Kcost-bound}).
Moreover, using this bound in the core algorithm would increase
the complexity of \Cref{thm:main-heuristic} to $N (\log \log N)^{2+o(1)}$.

To prove \Cref{thm:compression}
we will instead use a more elaborate method that improves the
$T (\log T)^{2+o(1)}$ bound to $T (\log T)^{1+o(1)}$
at the expense of introducing a secondary $R (\log T)^{2+o(1)}$ term.
This secondary term will turn out to be negligible
in the applications of \Cref{thm:compression} in later sections.
\end{rem}

\begin{rem}
\label{rem:improve-compression}
Even if someone finds a polynomial multiplication algorithm
achieving $\Mcost(n) \ll n \log n$, i.e., $\Mstar(n) \ll 1$
(see \Cref{rem:nlogn-conjecture}),
it is still unclear whether the main term in \Cref{thm:compression}
can be improved to $O(T \log T)$.
The problem is the $\Kstar(T)$ term.
This constitutes yet another barrier to removing the $(\log \log N)^{o(1)}$
factor in \Cref{thm:main-heuristic},
via the $N \log T \cdot \Cstar(T) \Kstar(N)$
term in \eqref{eq:core-auxiliary-bound}.
\end{rem}

\begin{proof}
As mentioned in \Cref{sec:construct-kappa},
we may compute suitable $h(z) \in \FF_2[z]$ and $\beta \in \FFext^*$
in time $o(T)$, which certainly fits within the precomputation cost bound.
We also remind the reader of the estimates
$\Mcost(n) < n (\log n)^{1+o(1)}$ and $\lambda \ll \log T$
(see \eqref{eq:ffmul} and \eqref{eq:defn-lambda}),
which we use frequently below without further comment.

Suppose that we are given as input $a \in \FF_2^T$.
We may immediately reinterpret $a$ as the polynomial
$\phi_a(x) = a_0 + \cdots + a_{T-1} x^{T-1} \in \FF_2[x]_T$.
Consider the polynomial
\[
u(x) \coloneqq \prod_{j=0}^{2R-1} (x - \beta^j) \in \FFext[x],
\]
which has degree~$2R$.
If we can compute
\[
\bar\phi_a \coloneqq \phi_a \bmod u \in \FFext[x]_{2R},
\]
then we obtain the desired values
$\phi_a(1), \ldots, \phi_a(\beta^{2R-1}) \in \FFext$
(and hence immediately $\kappa(a)$)
by applying \Cref{prop:geometric} to $\bar\phi_a(x)$,
which costs
\begin{align}
\label{eq:bar-phi-a-bluestein}
O(\Mcost(\lambda R) + \Mcost(\lambda) \log \lambda)
   & < \lambda R (\log \lambda R)^{1+o(1)} + \lambda^{1+o(1)} \\
   & < R (\log T)^{2+o(1)}. \notag
\end{align}
Our task is therefore to show how to directly compute $\bar\phi_a(x)$
from $\phi_a(x)$ in time bounded by \eqref{eq:compression-bound}.

We begin by precomputing~$u(x)$.
Using \Cref{prop:ffext-arithmetic}(b)
to compute the powers of $\beta$,
and then applying \Cref{prop:product-tree},
this requires time
\[
O( R \Mcost(\lambda) + \Mcost(\lambda R) \log R ) < R (\log T)^{3+o(1)}.
\]

Let us split up the input polynomial $\phi_a \in \FF_2[x]_T$ into
$m$ chunks of size $2R$ for
\[
m \coloneqq \lceil T/2R \rceil \ll T/R,
\]
say
\begin{equation}
\label{eq:phi-a-split}
\phi_a(x) = f_0(x) + x^{2R} f_1(x) + \cdots + x^{2(m-1)R} f_{m-1}(x),
\end{equation}
where $f_i \in \FF_2[x]_{2R}$ for $i = 0, \ldots, m-1$.
Define
\[
v_i(x) \coloneqq x^{2Ri} \bmod u(x) \in \FFext[x]_{2R},
         \qquad i = 0, \ldots, m-1,
\]
so that
\[
\phi_a \equiv v_0 f_0 + v_1 f_1 + \cdots + v_{m-1} f_{m-1} \pmod u.
\]
Each $v_i(x)$ may be obtained from the previous one by multiplying
by $x^{2R}$ and dividing by $u(x)$.
By \Cref{prop:ffextdiv} each such multiplication and division
costs $O(\Mcost(\lambda R)) < R (\log T)^{2+o(1)}$,
so the cost of precomputing all of the $v_i$ is
\[
m R (\log T)^{2+o(1)} = T (\log T)^{2+o(1)}.
\]
Recalling that $\FFext = \FF_2[z]/h(z)$,
let us further decompose each $v_i \in \FFext[x]_{2R}$ as
\[
v_i(x) = v_{i,0}(x) + v_{i,1}(x) z + \cdots
   + v_{i,\lambda-1}(x) z^{\lambda-1},
   \qquad v_{i,j} \in \FF_2[x]_{2R}.
\]
(Extracting the $v_{i,j}$ from $v_i$
involves an array transposition of size $2R \times \lambda$ for each~$i$;
by \Cref{lem:transpose},
this contributes $O(m R \lambda \log \lambda) < T (\log T)^{1+o(1)}$
to the precomputation cost.)
Then we obtain
\begin{equation}
\label{eq:phi-a-mod-u}
\phi_a \equiv \sum_{j=0}^{\lambda-1}
   \bigl( v_{0,j} f_0 + \cdots + v_{m-1,j} f_{m-1} \bigr) z^j \pmod u.
\end{equation}

Our plan is now to use a transform pair of order $2R$ over $\FF_2$
to evaluate the sums of the products $v_{i,j} f_i$,
which live entirely in $\FF_2[x]$.
Let
\[
\fwd \colon \FF_2[x]_{2R} \to \FF_2^K,
\qquad
\inv \colon \FF_2^K \to \FF_2[x]_{4R-1}
\]
be the corresponding transform maps.
The cost of evaluating $\fwd$ or $\inv$ is $\Tcost(2R) \ll \Mcost(R)$,
and for $K$ we have the bound
\[
K \leq \Kcost(2R) = 2R \cdot \Kstar(2R) \leq 2R \cdot \Kstar(T).
\]

We may precompute $\fwd(v_{i,j}) \in \FF_2^K$ for all
$i = 0, \ldots, m-1$ and $j = 0, \ldots, \lambda-1$ in time
\[
O(m \lambda \Mcost(R)) < T (\log T)^{2+o(1)}.
\]
Returning to the given input polynomial $\phi_a(x)$
and the decomposition \eqref{eq:phi-a-split},
we may compute $\fwd(f_i) \in \FF_2^K$
for all $i = 0, \ldots, m-1$ in time
\[
O(m \Mcost(R)) = O(\Mcost(T)) = O(T \log T \cdot \Mstar(T)).
\]
We may then compute all of the pointwise sums
\[
\sum_{i=0}^{m-1} \fwd(v_{i,j}) \cdot \fwd(f_i) \in \FF_2^K,
   \qquad j = 0, \ldots, \lambda-1
\]
in time
\[
O(m \lambda K) = O(m \lambda R \cdot \Kstar(T))
   = O(T \log T \cdot \Kstar(T)).
\]
Applying the inverse transform $\inv$ once for each~$j$,
we deduce the polynomials
\[
v_{0,j} f_0 + \cdots + v_{m-1,j} f_{m-1} \in \FF_2[x]_{4R-1},
   \qquad j = 0, \ldots, \lambda-1
\]
in time
\[
O(\lambda \Mcost(R)) < R (\log T)^{2+o(1)}.
\]
Assembling these polynomials into
\[
\sum_{j=0}^{\lambda-1}
   \bigl( v_{0,j} f_0 + \cdots + v_{m-1,j} f_{m-1} \bigr) z^j
   \in \FFext[x]_{4R-1}
\]
involves an array transposition of size $(4R-1) \times \lambda$,
which costs $R (\log T)^{1+o(1)}$ as before.
Finally, we perform a division by $u(x)$ in $\FFext[x]$
to obtain $\bar\phi_a(x)$ as in \eqref{eq:phi-a-mod-u};
by \Cref{prop:ffextdiv},
the cost is the same as \eqref{eq:bar-phi-a-bluestein}.
\end{proof}

\subsection{The decompression algorithm}
\label{sec:decompression-algorithm}

We turn now to decompression.
As mentioned in \Cref{rem:reed-solomon},
our approach to this problem closely follows
standard decoding methods for Reed--Solomon codes.

\begin{thm}[Decompression]
\label{thm:decompression}
There is an algorithm with the following properties.
It takes as input a vector $b \in \FF_2^S$.
If there exists a vector $a \in \FF_2^T$
such that $\kappa(a) = b$ and $\wt(a) \leq R$,
then such $a$ is unique, and the algorithm returns~$a$.
Otherwise it returns ``FAIL''.
Its running time is
\[
O(T \log T \cdot \Cstar(T)) + R (\log T)^{5+o(1)},
\]
after a precomputation (depending only on $T$ and $R$) of cost
\[
T (\log T)^{3+o(1)}.
\]
\end{thm}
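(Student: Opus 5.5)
We follow the standard syndrome decoding of Reed--Solomon/BCH codes. Given $b$, apply $\psi^{-1}$ to obtain $c_j \coloneqq \phi_a(\beta^j)$ for $0 \leq j < 2R$ (if $a$ exists). Suppose $a$ has support $I = \{i_1,\dots,i_w\}$ with $w \leq R$. Put $\gamma_k \coloneqq \beta^{i_k}$; these are distinct since $\ord\beta \geq T$. Then $c_j = \sum_k \gamma_k^j$, and hence
\[
\sum_{j \geq 0} c_j x^j = \sum_{k=1}^{w} \frac{1}{1 - \gamma_k x} = \frac{f(x)}{g(x)},
\qquad g(x) \coloneqq \prod_{k=1}^{w}(1-\gamma_k x).
\]
Here $\deg f < w \leq R$ and $\deg g = w \leq R$. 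Moreover $f = \sum_k \prod_{l \neq k}(1-\gamma_l x)$, so $f(\gamma_k^{-1}) \neq 0$ (this uses that each coefficient of $a$ is $1$, and that the characteristic is $2$ only in the harmless sense that $1 \neq 0$). Thus $\gcd(f,g)=1$. Normalising $g$ to be monic (dividing by $\prod\gamma_k$) preserves $x \ndivides g$.

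We apply \Cref{prop:reconstruction} with $n \coloneqq R$ to $c(x) = \sum_{j<2R} c_j x^j$, at cost $O(\Mcost(\lambda R)\log \lambda R) < R(\log T)^{2+o(1)}$. Uniqueness in that proposition gives uniqueness of $a$, since $a$ is determined by the roots of $g$. We then find the roots of $g$ in $\FFext$ by \Cref{prop:root-finding}, at cost $O(\lambda^2\Mcost(\lambda R)\log\lambda R) < R(\log T)^{5+o(1)}$; this is the dominant secondary term. We check that $g$ splits into $\deg g$ distinct roots, and recover each exponent $i_k = \log_\beta(\gamma_k^{-1})$, checking $i_k < T$. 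For the discrete logarithms we precompute a table of pairs $(\beta^i, i)$ for $0 \leq i < T$, sorted by the first component, in time $O(T\lambda\log T)$ plus $T$ field multiplications, which fits within $T(\log T)^{3+o(1)}$. The $\leq R$ queries are handled by sorting the queries (\Cref{lem:sort}) and merging them against the table (\Cref{lem:merge}). This costs $O(T\log T)$ per decompression, which is acceptable. We then form the candidate $a'$ with support $\{i_k\}$. If reconstruction fails, or any check fails, we return ``FAIL''.

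The final step is verification. We compute $\kappa(a')$ via \Cref{thm:compression}, in time $O(T\log T\cdot\Cstar(T)) + R(\log T)^{2+o(1)}$, using its precomputation, and return $a'$ only if $\kappa(a') = b$; otherwise ``FAIL''. This makes correctness automatic. If a valid $a$ exists, the argument above shows the algorithm produces exactly $a$, and the check passes. If the algorithm returns $a'$, then $a'$ is a valid preimage of weight $\leq R$. For uniqueness, suppose two vectors $a, a''$ of weight $\leq R$ both map to $b$. Each yields a coprime pair satisfying the hypotheses of \Cref{prop:reconstruction}, so the two pairs coincide and hence have the same root sets, giving $a = a''$.

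The main obstacle is keeping everything within the stated costs. Root finding is what forces the $R(\log T)^{5+o(1)}$ term. The $T$-dependent part comes only from the verification compression and the table lookup, so obtaining the $O(T\log T\cdot\Cstar(T))$ main term relies on \Cref{thm:compression}.
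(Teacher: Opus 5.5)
This is essentially the paper's proof: the same pipeline of rational reconstruction (\Cref{prop:reconstruction}), root finding, discrete logarithms via a sorted precomputed table, and a final recompression check, with uniqueness coming from the uniqueness clause of \Cref{prop:reconstruction}. Your coprimality argument, which evaluates $f(\gamma_k^{-1}) = \prod_{l \neq k}(1 - \gamma_l\gamma_k^{-1}) \neq 0$, is a slicker replacement for the paper's Vandermonde argument.

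There are two small slips to fix.
\begin{itemize}
\item The roots of $g$ are $\gamma_k^{-1} = \beta^{-i_k}$, so $i_k$ is \emph{minus} the discrete logarithm of the root. Your table should therefore hold the pairs $(\beta^{-i}, i)$ for $0 \leq i < T$, as in the paper; alternatively you can invert the roots before lookup. As written, for $i_k \neq 0$ the lookup either fails or returns $2^\lambda - 1 - i_k$. The algorithm would then wrongly output ``FAIL'' even when a valid $a$ exists.
\item The reconstruction step costs $R(\log T)^{3+o(1)}$, not $R(\log T)^{2+o(1)}$. This is harmless, since root finding dominates.
\end{itemize}
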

\begin{proof}
We are given as input $b \in \FF_2^S$.
We may immediately reinterpret $b$ as the vector
\[
c = (c_0, \ldots, c_{2R-1}) \coloneqq \psi^{-1}(b) \in (\FFext)^{2R}.
\]
Before describing the algorithm,
we will first show that if a suitable $a \in \FF_2^T$ exists,
i.e., with $\kappa(a) = b$ and $\wt(a) \leq R$,
then there exists a pair of polynomials $f, g \in \FFext[y]$
satisfying the hypotheses of \Cref{prop:reconstruction}
(with $n \coloneqq R$)
for the input
\[
c(y) \coloneqq c_0 + c_1 y + \cdots + c_{2R-1} y^{2R-1} \in \FFext[y]_{2R}.
\]

Let $w \coloneqq \wt(a)$,
so that $\phi_a(x) = \sum_{i=1}^w x^{s_i}$ for some sequence
$0 \leq s_1 < \cdots < s_w < T$.
(The case~$w = 0$, i.e., $\phi_a = 0$, is allowed.)
By definition $c_j = \phi_a(\beta^j)$ for $j = 0, \ldots, 2R-1$.
Let $\gamma_i \coloneqq \beta^{s_i} \in \FFext^*$ for each $i$;
then we have
\[
c_j = \sum_{i=1}^w \beta^{j s_i} = \sum_{i=1}^w \gamma_i^j,
      \qquad 0 \leq j < 2R.
\]
The latter power sums satisfy
certain linear relations over~$\FFext$.
Namely, define
\[
g(y) \coloneqq \prod_{i=1}^w (y - \gamma_i^{-1})
   = y^w + g_{w-1} y^{w-1} + \cdots + g_0 \in \FFext[y].
\]
(We take $g = 1$ if $w = 0$.)
Then since $g(\gamma_i^{-1}) = 0$ for all~$i$, we have
\[
0 = \sum_{i=1}^w \gamma_i^k g(\gamma_i^{-1})
      = \sum_{i=1}^w (\gamma_i^{k-w} + g_{w-1} \gamma_i^{k-w+1}
         + \cdots + g_0 \gamma_i^k)
\]
for any $k \in \ZZ$.
In particular,
\[
c_{k-w} + g_{w-1} c_{k-w+1} + \cdots + g_0 c_k = 0,
   \qquad w \leq k < 2R.
\]
Thus the coefficient of $y^k$ in $g(y) c(y)$ is zero for $w \leq k < 2R$,
or in other words,
\[
g(y) c(y) \equiv f(y) \pmod{y^{2R}}
\]
for some $f \in \FFext[y]$ with $\deg f < w$.
We note also that $y \ndivides g(y)$
(since $\gamma_i^{-1} \neq 0$ for all~$i$),
so $g(y)$ is invertible modulo~$y^{2R}$.

We claim that $\gcd(f,g) = 1$.
If this does not hold, then dividing out $f$ and $g$ by the gcd yields
\[
g^*(y) c(y) \equiv f^*(y) \pmod{y^{2R}}
\]
for polynomials $g^*, f^* \in \FFext[y]$
with $\deg f^* < \deg g^* < w$.
A similar argument to the previous paragraph then implies that
$\sum_{i=1}^w \gamma_i^k g^*(\gamma_i^{-1}) = 0$
for $\deg g^* \leq k < 2R$,
and in particular, for at least $2R - \deg g^* \geq 2R - w + 1 \geq w$
consecutive values of~$k$.
The corresponding $w \times w$ Vandermonde determinant is nonzero
(since the $\gamma_i$ are distinct),
so we deduce that $g^*(\gamma_i^{-1}) = 0$ for all $i = 1, \ldots, w$.
But this is impossible as $\deg g^* < w$.

We conclude that $f$ and $g$ satisfy the hypotheses of
\Cref{prop:reconstruction}.
This leads to the following algorithm.

\medskip
\step{1}{recover characteristic polynomial.}
We first attempt to recover $g(y)$ via \Cref{prop:reconstruction};
this requires time
\[
O(\Mcost(\lambda R) \log \lambda R) < R (\log T)^{3+o(1)}.
\]
If this fails then the required $a \in \FF_2^T$ cannot exist,
so we return ``FAIL''.
Otherwise we have found a candidate for~$g(y)$,
which must be correct if $a$ exists.
In any case, the candidate certainly has degree at most~$R$.

\medskip
\step{2}{find roots.}
We next invoke \Cref{prop:root-finding}
to find all roots of $g(y)$ in $\FFext$;
this requires time
\[
O(\lambda^2 \Mcost(\lambda R) \log \lambda R) < R (\log T)^{5+o(1)}.
\]
(If $\deg g = 0$, this step is trivial.)
If $g(y)$ does not split completely into linear factors,
then again the required $a \in \FF_2^T$ cannot exist
and we return ``FAIL''.
Otherwise we have found candidates for the roots~$\gamma_i^{-1}$,
which must be correct if $a$ exists.

\medskip
\step{3}{discrete logarithms.}
We must now compute the discrete logarithm of each
$\gamma_i^{-1} = \beta^{-s_i}$ to recover the indices~$s_i$.
To make this fast enough,
we precompute a table of pairs $\{(\beta^{-j}, j)\}_{j=0}^{T-1}$
sorted by the first component.
This precomputation involves one inversion and
$O(T)$ multiplications in~$\FFext$,
followed by sorting an array of length~$T$ with entries of bit size
$O(\lambda + \log T) = O(\log T)$;
by \Cref{prop:ffext-arithmetic} and \Cref{lem:sort} the cost is
\[
O\bigl((T + \log \lambda) \Mcost(\lambda)
   + (T \log T) \log T \bigr) = O(T \log^2 T).
\]

Now, given the candidates for the $\gamma_i^{-1}$ from Step 2,
we sort the list of candidates in time
$O((R \lambda) \log R) = O(R \log^2 T)$
and then perform a parallel linear search through the precomputed table
and the sorted candidate list to find the desired $s_i$
in time $O(T \log T)$.
Sorting them into the correct order ($s_1 < \cdots < s_w$)
additionally requires time $O((R \log T) \log R) = O(R \log^2 T)$.
If any of the $\gamma_i^{-1}$ candidates are not found, we return ``FAIL''.
Otherwise we have found candidates for the $s_i$,
which must be correct if $a$ exists.

\medskip
\step{4}{convert to vector.}
Using \Cref{lem:convert-format} we may compute a candidate for
$\phi_a(x) = \sum_{i=1}^w x^{s_i} \in \FF_2[x]_T$,
and hence for $a \in \FF_2^T$,
in time $O(T + R \log T) = O(T \log T)$.
This candidate must be correct if $a$ exists.

\medskip
\step{5}{verify result.}
Finally, we feed our candidate for~$a$ into \Cref{thm:compression}
at a cost of $O(T \log T \cdot \Cstar(T)) + R (\log T)^{2+o(1)}$.
If the output agrees with the original $b \in \FF_2^S$,
then the candidate must be correct and we return $a$.
Otherwise we return ``FAIL''.

This argument also establishes the uniqueness of~$a$,
because if a suitable vector $a \in \FF_2^T$ does exist,
then the above algorithm is guaranteed to find it.
\end{proof}

\section{The core algorithm}
\label{sec:core}

In this section we present a deterministic algorithm that attempts to
find all odd square-primes (see \Cref{defn:square-prime})
up to a given bound~$N$.
This algorithm forms the core of all three of the
main prime enumeration algorithms of the paper
(\Crefrange{thm:main-deterministic}{thm:main-heuristic}).
We target odd square-primes rather than primes because
this is what falls out naturally from \Cref{thm:H-congruence}
(see \Cref{rem:mobius}).

The main properties of the algorithm are summarised in
\Cref{thm:core} below.
The algorithm operates on intervals of a prescribed length $T \geq 1$.
We represent the odd square-primes in the $r$\th interval by a vector
$a^r \in \FF_2^T$, defined by
\begin{equation}
\label{eq:defn-ar}
(a^r)_t \coloneqq \begin{cases}
   1 & \textn{if $rT+t$ is an odd square-prime}, \\
   0 & \textn{otherwise},
\end{cases}
\qquad r \geq 0, \quad 0 \leq t < T.
\end{equation}
In particular, $\wt(a^r)$ counts the number of odd square-primes in
the $r$\th interval.

\begin{thm}[The core algorithm]
\label{thm:core}
There is an algorithm with the following properties.
It takes as input positive integers~$N$, $T$ and $R$
such that $T \divides N$ and $R \leq T/2$.
Its output is a list of candidates
\[
c^r \in \FF_2^T, \qquad 0 \leq r < N/T
\]
with $\wt(c^r) \leq R$.
For each candidate, if $\wt(a^r) \leq R$, then $c^r = a^r$.
Assuming that
\begin{equation}
\label{eq:logT-bound}
\log T < C \log \log N
\end{equation}
for some $C > 0$, the running time of the algorithm is
\begin{equation}
\label{eq:core-bound}
O_C\left(\left(1 + \frac{R \log N}{T} \right)
   N (\log \log N)^{1+o(1)} \right).
\end{equation}
\end{thm}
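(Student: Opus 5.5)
The plan is to realise the strategy of \Cref{sec:overview}: compute the series $H \coloneqq H_{-1}+H_{-2}+H_2 \bmod 2$ up to $x^N$, but only in the compressed form $\kappa(\cdot)$ applied slice-wise, and then decompress. By \Cref{thm:H-congruence} the coefficient of $x^n$ in $H$ is the parity of the number of representations $n=m^2p^l$ with $m$ odd, $p\geq 3$, $l\geq 1$. If $n$ is an odd square-prime, then the representation with $l=1$ is unique, since $p$ is the unique prime occurring to an odd power in $n$. Hence
\[
H \equiv E_1 + \sum_{r}\sum_t (a^r)_t x^{rT+t} \pmod 2,
\]
where $E_1$ collects the terms with $l\geq 2$. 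These involve only $p\leq\sqrt N$, and there are $O(\sqrt N\log\log N)$ of them, so I will compute the primes up to $\sqrt N$ by any older method (e.g.\ \cite{Ser-prime-turing}) in time $N^{1/2+o(1)}$. I then list the exponents $m^2p^l$, sort them (\Cref{lem:sort}), cancel pairs, and form the vectors $e^r\in\FF_2^T$ of $E_1$ restricted to each interval. Since $\kappa$ is linear, from $\kappa(a^r+e^r)$ I obtain $\kappa(a^r)$ by adding $\kappa(e^r)$. This is computed only for the $O(\sqrt N\log\log N)$ indices $r$ with $e^r\neq 0$, each via \Cref{thm:compression}, which is negligible. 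Finally I run \Cref{thm:decompression} on each $\kappa(a^r)$ and output its result as $c^r$, or $c^r\coloneqq 0$ on FAIL. By \Cref{thm:decompression} this gives $c^r=a^r$ whenever $\wt(a^r)\leq R$, and always $\wt(c^r)\leq R$. The cost is $O(N/T)$ decompressions of cost $T(\log T)^{1+o(1)}+R(\log T)^{5+o(1)}$, i.e.\ $N(\log\log N)^{1+o(1)}$ plus $(R\log N/T)\cdot N(\log\log N)^{O(1)}/\log N$, which fits. The precomputations cost $T(\log T)^{3+o(1)}=(\log N)^{O(C)}$, which is negligible.

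The heart of the argument is computing $\kappa(a^r+e^r)$ for all $r$. I slice $F$ and $G_d$ into $T$ slices $F^i(y),G_d^j(y)$ as in \eqref{eq:Ht-formula}, truncated to degree about $N/T$, and fix a transform pair of order $\approx N/T$ (\Cref{thm:transform-scheme}). The steps are as follows.
\begin{enumalgo}
\item Forward-transform all nonzero slices.
\item In each of the $K\leq\Kcost(N/T)$ Fourier coordinates, compute the length-$2T$ vector of sums $\sum_{i+j=s}\fwd(F^i)\cdot\fwd(G^j_d)$ for $0\leq s<2T-1$. This is a product in $\FF_2[x]_T$ for $d=-1,-2$. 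For $d=2$, the pairing between the $x$-slicing and the $y$-blocks is exactly what \Cref{lem:restricted-decomposition} with block size $m=T$ provides: the diagonal triangles $f^k_*\restrict g^k_*$ form a restricted product computed directly in $\FF_2[x]$ of total cost $\ll (N/T)\Rcost(T)$, the square blocks give a restricted product of size $N/T$ in the $y$-variable, done coordinatewise in Fourier space, and $fg_0$ is trivial.
\item Still in each Fourier coordinate, split the length-$2T$ vector into its low half ($s<T$) and high half ($s\geq T$, which is to be multiplied by $y$), and apply $\kappa$ to each half via \Cref{thm:compression}.
\item Perform $2S$ inverse transforms of length $N/T$, apply the shift by $y$ to the high half, and add. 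This yields $\kappa(a^r+e^r)$ for every $r$, by linearity of $\kappa$ and of $\inv$.
\end{enumalgo}
Transpositions between the slice-major and coordinate-major layouts cost $O(N\log N/T\cdot\Kstar\cdot\log T)$-type amounts by \Cref{lem:transpose}.

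For the accounting, Step~2 costs about $K\cdot\Mcost(T)\approx N\log T\,(\log\log N)^{o(1)}$, with the $d=2$ case using $\Rcost(T)$ in place of $\Mcost(T)$ and \Cref{thm:restricted-simple} giving $\Rstar(T)=(\log\log N)^{o(1)}$ since $\log T\ll\log\log N$. Step~3 costs $K\cdot T\log T\,\Cstar(T)=N(\log\log N)^{1+o(1)}$. Step~4 costs $S\,\Mcost(N/T)\asymp R\log T\cdot(N/T)\log N\,(\log\log N)^{o(1)}$, which is the $(R\log N/T)$ term of \eqref{eq:core-bound}.

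The step I expect to be the main obstacle is Step~1. Naively it is $T$ transforms of length $N/T$, costing $\Mcost(N)\approx N\log N$, which is too much. My plan is to exploit that the exponents $a^2$, $4b^2$, $\pm2b^2$ lie in few residue classes modulo $T$ when $T$ has many small prime factors. Because $T=(\log N)^{O(C)}$ alone cannot give a saving factor of $\log N$, I would introduce a second slicing modulus $W$, a product of primes up to about $\log\log N$ for which the density of squares modulo $W$ is $\leq 1/\log N$. I would slice by $W$ for the forward transforms and regroup into $T$-slices inside Fourier space, so that only $\approx N/\log N\cdot(\log\log N)^{O(1)}$ worth of transform input is nonzero. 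Making this regrouping compatible with the pointwise Step~2, and in particular with the restricted product for $d=2$, within the $N(\log\log N)^{1+o(1)}$ budget is where I expect the real work to lie. The remaining steps are bookkeeping against \eqref{eq:logT-bound}.
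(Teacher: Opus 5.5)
Your treatment of $E_1$, of the ordinary products for $d=-1,-2$, of the ``compress in Fourier space, then invert'' step, of decompression, and the associated cost accounting all agree with the paper. The gap is the $d=2$ part of your Step~2. After applying \Cref{lem:restricted-decomposition} with block size $T$, the square-block term is $(\tilde f\restrict\tilde g)(x^T)$. This is a restricted product in the block index, over the coefficient ring $\FF_2[x]_T$. That block index is your $y$-variable of length $N/T$, which is exactly the variable your transform pair of order $N/T$ acts on. A transform pair only turns ordinary products into pointwise products, and nothing lets you enforce the constraint $i\geq 2j$ coordinatewise after $\fwd$. The paper explicitly says it does not know how to compute restricted products with FFTs. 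The only salvage at block size $T$ is to transform in $x$ instead and do a restricted product of size $N/T$ in each of the roughly $T$ coordinates. That costs about $N\log N$, and it is incompatible with compressing before inverting in $y$.

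The missing idea is the two-scale block decomposition of \Cref{sec:core-blocks}. Apply \Cref{lem:restricted-decomposition} with a large block size $M\approx N/\log^3 N$. The restriction then lives only in the block index $h$, of length $B=N/M\approx\log^3 N$. Within blocks all products are ordinary and can be transformed in the slice variable of length $L=M/W$. In each Fourier coordinate you then do a genuine restricted product of size $B$, at cost $\Rcost(B)$ with $\Rstar(B)<(\log\log N)^{o(1)}$; this is the fifth term of \eqref{eq:core-auxiliary-bound}. The price is that the diagonal triangles now have size $M$, and computing them densely would cost $B\,\Rcost(M)\approx N\log N$. The paper instead uses the sparsity of $F$ and $G_2$ (\Cref{prop:compute-E2}) to get $E_2$ in $O(N+(M+B)\log^3 N)$.

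Your Step~1, which you leave open, is resolved without any regrouping into residue classes mod~$T$. Interleaving $W/T$ slices of length $N/W$ into one of length $N/T$ is not a coordinatewise operation on transforms. The paper keeps the $W$-slices throughout and uses a second transform pair of order $W$ for the sums over the slice index (\Cref{prop:compute-Z}). Since $T\divides W$, each interval $[rT,rT+T)$ corresponds via $rT+t=hM+\ell W+mT+t$ to fixed $(h,\ell,m)$. So $\kappa$ is applied to runs of $T$ consecutive slice indices $w=mT+t$, and this commutes with the inverse transforms in~$y$.
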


We emphasise that the theorem guarantees correctness
of the computed candidate $c^r$
only for those intervals containing at most $R$ odd square-primes.
If the $r$\th interval contains more than $R$ odd square-primes,
then the candidate $c^r$ will certainly be incorrect,
but the algorithm gives no indication of this.
The difference between the heuristic, probabilistic and deterministic
algorithms is in how we detect and repair these exceptional intervals.
These correction strategies will be discussed in later sections.

The first step of the algorithm is to select two auxiliary parameters
as a function of $N$, $T$ and~$R$:
a block size~$M$, and a slicing parameter~$W$
(both positive integers).
The following theorem expresses the complexity of the algorithm
in terms of all five parameters $N$, $T$, $R$, $M$ and~$W$,
i.e., treating $M$ and $W$ as additional inputs.
\begin{thm}
\label{thm:core-auxiliary}
There is an algorithm with the following properties.
It takes as input positive integers $N$, $T$, $R$, $M$ and~$W$ such that
\begin{equation}
\label{eq:core-auxiliary-hypotheses}
T \divides W, \qquad W \divides M, \qquad M \divides N,
   \qquad R \leq T/2.
\end{equation}
Its output is a list of candidates
\[
c^r \in \FF_2^T, \qquad 0 \leq r < N/T
\]
with $\wt(c^r) \leq R$.
For each candidate, if $\wt(a^r) \leq R$, then $c^r = a^r$.
The running time of the algorithm is
\begin{multline}
\label{eq:core-auxiliary-bound}
O\biggl(
   N \tsp \frac{R \log T \log N}{T} \Mstar(N)
   + N \tsp \frac{\rho(W) \log N}{W} \Mstar(N)
   + (M + B) \log^3 N \\[3pt]
   + N \log W \cdot \Mstar(W) \Kstar(N)
   + N \log B \cdot \Rstar(B) \Kstar(W) \Kstar(N) \\[3pt]
   + \left(1 + \frac{R(\log T)^{4+o(1)}}{T}\right)
      N \log T \cdot \Cstar(T) \Kstar(N)
   + T (\log T)^{3+o(1)}
\biggr)
\end{multline}
where $B \coloneqq N/M$ and where the quantity $\rho(W)$ is defined
as in \eqref{eq:defn-rho}.
\end{thm}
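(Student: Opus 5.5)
The plan is to implement the three-step strategy of \Cref{sec:overview} literally, but with slicing done modulo $W$ and not modulo $T$, and with the restricted product handled blockwise with block size $M$. By \Cref{thm:H-congruence}, the coefficients of $H \coloneqq H_{-1}+H_{-2}+H_2 \bmod 2$ below $x^N$ agree with the indicator of odd square-primes, up to the sparse error $E_1$ coming from the terms with $l \geq 2$. These have only $O(N^{1/2}\log N)$ exponents, so I would enumerate them directly within the $(M+B)\log^3 N$ budget and subtract them at the end.

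\emph{Slicing.} I would write $F(x)=\sum_{0\le i<W}x^iF^i(x^W)$, and similarly for $G_{-1},G_{-2}$ (and for $G_2$ in $x^{-1}$). The exponents are squares or twice squares, so only residues in a set of size $\rho(W)$ (presumably \eqref{eq:defn-rho} counts these) give nonzero slices. I would generate the nonzero slices directly and apply the fixed transform scheme of order $\approx N/W$ to each of them. The transforms are of length $\Kcost(N/W)\le (N/W)\Kstar(N)$, and this step costs $O(\rho(W)\Mcost(N/W))$, which is the second term. For the restricted product $F\restrict G_2$, I would first apply \Cref{lem:restricted-decomposition} with block size $M$. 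This leaves $B$ small diagonal restricted products of size $O(M)$, which are sparse and can be done naively, giving the $(M+B)\log^3N$ term; it also leaves a restricted product of size $B$ over $\FF_2[x]_M$, and this is what I would slice and transform. The sliced product $H^t$ for $0\le t<W$ is then a convolution of the slices over $\ZZ/W$-type indices, with a wrap-around multiplication by $y$ as in \eqref{eq:Ht-formula}.

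\emph{Pointwise stage.} In each of the $\Kcost(N/W)$ Fourier coordinates we must compute a length-$W$ product in the slice index, which I would compute using a second transform pair of order $W$. This costs $N\log W\,\Mstar(W)\Kstar(N)$ in total, plus transpositions via \Cref{lem:transpose}. For $d=2$ the inner operation is instead a restricted product of size $B$ with coefficients that are themselves in the Fourier domain of order $W$. Using the algorithm of \Cref{thm:restricted-simple} coordinatewise gives the term $N\log B\,\Rstar(B)\Kstar(W)\Kstar(N)$.

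\emph{Compression and inverse transforms.} This is the key step (\Cref{rem:key-idea}). The $W$ slice outputs in each coordinate split into $W/T$ groups of $T$ consecutive residues, and each group corresponds coordinatewise to the vectors $a^r$. Since $\kappa$ is $\FF_2$-linear and commutes with the (linear) inverse transform, I would apply the compression of \Cref{thm:compression} in each Fourier coordinate to each group. This costs $\Kcost(N/W)\cdot(W/T)\cdot\bigl(T\log T\,\Cstar(T)+R(\log T)^{2+o(1)}\bigr)$, plus the precomputation $T(\log T)^{3+o(1)}$. I would then perform only $S\cdot W/T$ inverse transforms of order $N/W$, at cost $O(N R\log T\log N\,\Mstar(N)/T)$, which is the first term. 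Finally I would run \Cref{thm:decompression} on each of the $N/T$ compressed vectors $\kappa(a^r)$, outputting $c^r$ (or $0$ on FAIL), at cost $(N/T)\bigl(T\log T\,\Cstar(T)+R(\log T)^{5+o(1)}\bigr)$. Together with the compression cost this is absorbed into the sixth term.

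The main obstacle I expect is bookkeeping rather than mathematics. The slicing modulo $W$ must be interleaved with the $T$-intervals, the restricted-product blocks of size $M$ must align with the slices (hence $T\mid W\mid M\mid N$), and the many transpositions must each stay within $O(N\log(\cdot))$. I would first settle the exact layout of the data on the tape, checking each transposition against \Cref{lem:transpose}, before adding up the costs.
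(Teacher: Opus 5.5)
Your plan follows the paper's proof closely. It treats $E_1$ separately and applies \Cref{lem:restricted-decomposition} with block size $M$, which gives the sparse diagonal products making up $E_2$ plus a block restricted product. It then slices modulo $W$, forward-transforms only the nonzero slices, and uses a second transform of order $W$ in the slice index with ordinary or restricted products of size $B$ in each coordinate. Finally it pushes compression through the inverse transform and decompresses. Your attribution of costs to the terms of \eqref{eq:core-auxiliary-bound} is right. Dropping the blocks for $d=-1,-2$ is a harmless variant, at the price of running two pipelines with transforms of different orders.

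However, two steps do not work as you describe them. First, there are no ``$W$ slice outputs in each coordinate''. The pointwise stage yields sums indexed by $0\le w<2W-1$. Turning these into the $W$ slices of the answer requires multiplying the part with $w\ge W$ by $y$. For the block restricted product, whose coefficients lie in $\FF_2[x]_{2M-1}$, it also requires moving the upper half of each block product into the next block. Neither operation can be carried out on vectors in $\FF_2^K$ without first inverting, because a transform pair only guarantees $\inv(\fwd(f)\cdot\fwd(g))=fg$. The paper's fix is to carry the four families $Z_{h-\sigma,\,w+\tau W}$, $(\sigma,\tau)\in\{0,1\}^2$, separately through compression and inversion. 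Only afterwards does it apply the shifts, by reading off positions $\sigma L-\tau\le t<(\sigma+1)L-\tau$ of each inverse transform (the maps $\inv^{\sigma,\tau}$). This is linear, so it still commutes with $\kappa$, and it costs only a factor $4$ in compressions and inverse transforms. On your unblocked $d=-1,-2$ path only $\tau$ is needed.

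Second, the error series must be injected in compressed form \emph{before} decompression, not subtracted ``at the end'', and you do not say where $E_2$ enters at all. If you decompress what the transform pipeline produces, you are decompressing $\kappa(a^r+e^r)$, where $e$ is the coefficient vector of $E=E_1+E_2$. \Cref{thm:decompression} only guarantees recovery when $\wt(a^r+e^r)\le R$, and this can fail even though $\wt(a^r)\le R$. For instance, $H$ has a $1$ at $q^2$ for every odd prime $q$, whereas $A$ has a $0$ there. Also, for large $M$, $E_2$ is essentially all of $H_2 \bmod x^N$. Moreover, an output corrected after decompression need not satisfy $\wt(c^r)\le R$, which the later algorithms rely on. The correct step, as in the paper, has three parts. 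Compute $E\bmod x^N$ densely in time $O(N)$. Form $\hat e^r=\kappa(e^r)$ with $N/T$ further calls to \Cref{thm:compression}, which fits within the sixth term. Then add $\hat e^r$ to the inverted compressed vectors to obtain $\kappa(a^r)$ before decompressing.
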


\begin{rem}
In the probabilistic and deterministic algorithms
the overall complexity is driven by the first term of
\eqref{eq:core-auxiliary-bound},
which arises from the inverse transform step
(\Cref{prop:core-compute-kappa-ar}),
and the remaining terms are at most $N (\log \log N)^{1+o(1)}$
which is negligible.
In the heuristic algorithm we are able to squeeze the first term down to
$O(N \log \log N \cdot \Mstar(N)) < N (\log \log N)^{1+o(1)}$
and the other terms start to become significant.
\end{rem}

The rest of the section is structured as follows.
In \Crefrange{sec:core-prime-powers}{sec:core-inverse}
we prove \Cref{thm:core-auxiliary}.
The description of the algorithm is broken up into several steps:
\begin{itemize}
\item
\Cref{sec:core-prime-powers}:
reduce from primes to prime powers.
\item
\Cref{sec:core-blocks}:
reduce to products of blocks.
\item
\Cref{sec:core-slice}:
reduce to products of slices.
\item
\Cref{sec:core-forward}:
introduce a transform scheme and perform forward transforms.
\item
\Cref{sec:core-pointwise}:
evaluate pointwise sums.
\item
\Cref{sec:core-compression}:
apply compression.
\item
\Cref{sec:core-inverse}:
perform inverse transforms and decompression.
\end{itemize}
Then in \Cref{sec:core-main} we show how to deduce
\Cref{thm:core} from \Cref{thm:core-auxiliary}
via a suitable choice of $M$ and~$W$
(and after possibly adjusting $N$ upwards slightly to satisfy
the divisibility constraints in \eqref{eq:core-auxiliary-hypotheses}).

\begin{rem}
The reader may wish to keep in mind the following notational convention
that is used throughout this section:
parameters represented by capital letters close to~$N$ in the alphabet
($M$, $L$,~$K$)
are intended to have the same ``level of exponentiality'' as~$N$,
whereas more distant letters ($T$, $S$, $R$, $W$,~$B$)
represent quantities that are exponentially smaller.
\end{rem}

\begin{table}

{
\Crefname{thm}{Thm.}{}%
\Crefname{defn}{Defn.}{}%
\Crefname{section}{Sec.}{}%

\begin{tabular}{l@{\hspace{13pt}}l@{\hspace{13pt}}l}
\toprule
$N$ &
target prime bound &
\Cref{thm:core} \\

$T$, $R$ &
interval length, weight threshold &
\Cref{sec:compression} \& \Cref{thm:core} \\

$S$ &
length of compressed vectors &
\eqref{eq:defn-S} \& \Cref{sec:core-compression} \\

$M$, $W$ &
block size, number of slices &
\Cref{thm:core-auxiliary} \\

$B$, $L$ &
number of blocks, length of slices &
\eqref{eq:defn-B}, \eqref{eq:defn-L} \\

$K$, $\fwd$, $\inv$ &
transform size, forward/inverse transforms &
\eqref{eq:core-transforms} \\

$\Aset_d(h)$ &
index set for ordinary/restricted products &
\eqref{eq:defn-Aset} \\

$\rho(W)$ &
bound for number of nonzero slices &
\eqref{eq:defn-rho} \\

\midrule

$F$, $G_d$ &
main series multiplicands &
\Cref{defn:series} \\

$A$ &
generating function for odd square-primes &
\eqref{eq:defn-A} \\

$E_1$, $E_2$, $E$ &
error series &
\eqref{eq:defn-E1}, \eqref{eq:defn-E2}, \eqref{eq:defn-E} \\

$a^r$, $e^r$ &
target vectors, error vectors &
\eqref{eq:defn-ar}, \eqref{eq:defn-er} \\

$\hat a^r$, $\hat e^r$ &
compressed vectors &
\eqref{eq:defn-hat-ar-er} \\

$F^i$, $G_d^j$ &
blocks of $F$ and $G_d$ &
\eqref{eq:F-blocks}, \eqref{eq:Gj-blocks}, \eqref{eq:G2-blocks} \\

$A^h$, $E^h$ &
blocks of $A$ and $E$ &
\eqref{eq:AE-blocks} \\

$F^{i,u}$, $G_d^{j,v}$ &
slices of $F$ and $G_d$ &
\eqref{eq:AEFG-slices} \\

$E^{h,w}$, $A^{h,w}$ &
slices of $A$ and $E$ &
\eqref{eq:AEFG-slices} \\

\midrule

$\inv^{\sigma,\tau}$ &
$(\sigma,\tau)$-specialised inverse transforms &
\eqref {eq:inv-sigmatau} \\

$Z_{h,w}$ &
pointwise products of transforms &
\eqref{eq:defn-Z} \\

$Z_{h,w}^{\sigma,\tau}$ &
$(\sigma,\tau)$-specialisation of $Z_{h,w}$ &
\eqref{eq:defn-Z-sigmatau} \\

$Z^{\sigma,\tau}_{h,m,t}$ &
reindexed version of $Z_{h,w}^{\sigma,\tau}$ &
\eqref{eq:Z-sigmatau-reindexed} \\

$\hat Z_{h,m,s}^{\sigma,\tau}$ &
compressed version of $Z_{h,m,t}^{\sigma,\tau}$ &
\eqref{eq:compressed-Z} \\

\bottomrule
\end{tabular}
}   
\caption{Glossary of selected notation in \Cref{sec:core}}
\end{table}

\subsection{Reduction from primes to prime powers}
\label{sec:core-prime-powers}

We now embark on the proof of \Cref{thm:core-auxiliary}.
From this point until the end of \Cref{sec:core-inverse},
we assume that we are given parameters $N$, $T$, $R$, $M$ and $W$
satisfying \eqref{eq:core-auxiliary-hypotheses}.
Our goal is to compute vectors $c^r \in \FF_2^T$ for
$0 \leq r < N/T$ with the properties specified in
\Cref{thm:core-auxiliary}.

The target vectors $a^r$ may be expressed as
\[
(a^r)_t = A_{rT + t},
\qquad r \geq 0, \quad 0 \leq t < T
\]
where $A \in \FF_2\bbracket{x}$ is the generating function
\begin{equation}
\label{eq:defn-A}
A(x) \coloneqq
   \sum_{\substack{m \geq 1 \\ \textn{$m$ odd}}} \;
   \sum_{p \geq 3}
   x^{m^2 p}
   \in \FF_2\bbracket{x}.
\end{equation}
Our goal is therefore to compute (candidates for)
the first $N$ coefficients of $A(x)$.
However, since our main tool is \Cref{thm:H-congruence},
which involves prime powers instead of primes,
we will work instead with the modified series
\[
A'(x) \coloneqq
   \sum_{\substack{m \geq 1 \\ \textn{$m$ odd}}} \;
   \sum_{\substack{p \geq 3 \\ l \geq 1}}
   x^{m^2 p^l}
   \in \FF_2\bbracket{x}.
\]
Let
\begin{equation}
\label{eq:defn-E1}
E_1(x) \coloneqq A(x) - A'(x) \in \FF_2\bbracket{x}
\end{equation}
be the corresponding error.

\begin{prop}
\label{prop:compute-E1}
We may compute $E_1(x) \pmod{x^N}$ in time $O(N)$.
\end{prop}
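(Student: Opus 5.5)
We need to compute $E_1(x) = A - A' \bmod x^N$, which is $\sum_{m \text{ odd}} \sum_{p\ge3,\, l\ge2} x^{m^2p^l}$ over $\FF_2$ (signs are irrelevant mod 2). The plan is to enumerate the exponents $m^2 p^l < N$ with $l \geq 2$ explicitly, sort them, and reduce multiplicities mod 2.

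\emph{Generating the exponents.} First find all primes $p < N^{1/2}$ by any reasonable method, for instance the Sch\"onhage--Grotefeld--Vetter list-and-merge sieve or even trial division. This costs $N^{1/2+o(1)} = o(N)$. For each such $p$, list the prime powers $q = p^l < N$ with $l \geq 2$. There are $O(N^{1/2})$ such $q$ in total, since the $l=2$ terms dominate.

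For each $q$, we then need the odd squares $m^2 < N/q$. The total number of pairs $(m,q)$ is
\[
\sum_{q} O\bigl((N/q)^{1/2}\bigr) \ll N^{1/2} \sum_{p} \sum_{l \geq 2} p^{-l/2},
\]
and this sum over $p$ and $l$ behaves like $\sum_p p^{-1}$, which is $\ll \log\log N$. That bound is too large by a $\log$ factor when each exponent is written with $\Theta(\log N)$ bits. So I will count more carefully. The terms with $m=1$ contribute $O(N^{1/2})$. The pairs $(m,p)$ with $m^2p^2<N$ number $\sum_{m} \pi(\sqrt{N}/m) \ll N^{1/2}\sum_m 1/(m\log(\sqrt N/m))$, which is $O(N^{1/2}\log\log N)$. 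Either way the count is $N^{1/2+o(1)}$ exponents, each of $O(\log N)$ bits.

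\emph{Computing the values and sorting.} Computing each value $m^2 q$ costs $(\log N)^{1+o(1)}$, or it can be done by repeated addition. Sorting the resulting list by \Cref{lem:sort} costs $N^{1/2+o(1)}\log^2 N = o(N)$. Multiplicities are then cancelled mod 2 in one linear pass.

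\emph{Writing the output.} Finally, \Cref{lem:convert-format} converts the sorted list into the bit array of $E_1 \bmod x^N$ in time $O(N + N^{1/2+o(1)}\log N) = O(N)$.

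The only real point of care is confirming that the number of terms is sublinear, say $N^{1/2+o(1)}$, so that everything except the final conversion is $o(N)$. This is immediate from $m^2p^l \geq m^2p^2$, which gives at most $\sum_m \sqrt N/m \ll N^{1/2}\log N$ pairs. The dominant cost is therefore the $O(N)$ output conversion.
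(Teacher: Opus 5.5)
Your proposal is correct and is essentially the paper's argument: enumerate the $N^{1/2+o(1)}$ exponents $m^2p^l<N$ with $l\ge 2$, sort them, cancel repeated exponents mod~$2$, and finish with \Cref{lem:convert-format} at cost $O(N)$. You count via $q=p^l$ and get $O(N^{1/2}\log\log N)$ terms, whereas the paper fixes $(l,m)$ and gets $O(N^{1/2}\log^2 N)$; the difference is immaterial. Two small slips do no harm. An $N^{1/2}\log\log N$ count of $O(\log N)$-bit exponents is not ``too large by a $\log$ factor'', since it is still $o(N)$. Your closing bound $\sum_m \sqrt N/m$ counts pairs $(m,p)$ rather than triples $(m,p,l)$, but your earlier estimate $\sum_q (N/q)^{1/2}$ already covers every term.
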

\begin{proof}
We have
\[
E_1(x) =
   \sum_{\substack{m \geq 1 \\ \textn{$m$ odd}}} \;
   \sum_{\substack{p \geq 3 \\ l \geq 2}}
   x^{m^2 p^l}.
\]
To compute $E_1(x) \pmod{x^N}$,
we must find all pairs $(m,p^l)$
with $m \geq 1$ odd, $p \geq 3$ prime, and $l \geq 2$,
such that $m^2 p^l < N$.
In all these pairs we clearly have $l < \log_3 N$ and $m < N^{1/2}$.
For fixed $l$ and~$m$, the number of possible $p$ is at most
$(N/m^2)^{1/l} \leq (N/m^2)^{1/2} = N^{1/2}/m$,
so the total number of pairs is at most
\[
\sum_{\substack{2 \leq l < \log_3 N \\ 1 \leq m < N^{1/2}}}
   \frac{N^{1/2}}{m}
   \ll N^{1/2} \log^2 N.
\]
We may enumerate all such pairs in time $N^{1/2+o(1)}$,
and then compute the exponents $m^2 p^l$, sort them,
and remove pairs of adjacent duplicates (since we are working over $\FF_2$)
in time $N^{1/2+o(1)}$.
We then recover $E_1(x) \pmod{x^N}$ via \Cref{lem:convert-format} in time
$O(N + N^{1/2+o(1)} \log N) = O(N)$.
\end{proof}

\subsection{Reduction to products of blocks}
\label{sec:core-blocks}

Recall the series $F, G_d, H_d \in \ZZ\bbracket{x^{\pm1}}$
for $d \in \dset = \{-1, -2, 2\}$
(see \Cref{defn:series}).
For the rest of this section we only care about these series modulo $2$,
so we regard them as lying in $\FF_2\bbracket{x^{\pm1}}$.
\Cref{thm:H-congruence} then implies that
\begin{equation}
\label{eq:Aprime-sum}
A'(x) = H_{-1}(x) + H_{-2}(x) + H_2(x).
\end{equation}
The goal of this section is to express each $H_d(x)$,
and hence $A'(x)$ and $A(x)$,
in terms of products of ``blocks'' of size~$M$.

\begin{rem}
The reason for splitting into blocks is to reduce
to ordinary polynomial multiplication,
which can be tackled by FFT methods.
In particular, for the $d = 2$ case,
the author does not know how to
compute restricted products directly using FFTs,
so the block decomposition enables us to handle this case efficiently.
Strictly speaking the block decomposition is unnecessary for $d = -1, -2$,
since in these cases $H_d(x)$ already has the form of a polynomial product.
For uniformity of presentation we will use the block decomposition
for all three values of~$d$.
(In the sketch of the core algorithm given in \Cref{sec:overview},
the block decomposition was omitted in order to simplify the discussion.)
\end{rem}

The cases $d = -1, -2$ are entirely straightforward.
Recall that in these cases $H_d = F \cdot G_d$.
We split up $F(x)$ and $G_d(x)$ into blocks, writing
\begin{align}
\label{eq:F-blocks}
F(x)   & = \sum_{i \geq 0} x^{iM} F^i(x),
\\
\label{eq:Gj-blocks}
G_d(x) & = \sum_{j \geq 0} x^{jM} G_d^j(x), \qquad d = -1,-2,
\end{align}
where $F^i, G_d^j \in \FF_2[x]_M$.
Then $H_d(x) = F(x) \cdot G_d(x)$ becomes
\begin{equation}
\label{eq:Hd-blocks}
H_d(x) = \sum_{i, j \geq 0} x^{(i+j)M} F^i(x) G_d^j(x), \qquad d = -1,-2.
\end{equation}

We want to prove a formula analogous to \eqref{eq:Hd-blocks}
for the case $d = 2$.
However, the restricted product $H_2 = F \restrict G_2$
does not decompose into blocks in such a simple way.
Instead we must apply \Cref{lem:restricted-decomposition},
which tells us that
\begin{equation}
\label{eq:H2-blocks}
H_2(x) = (F \restrict G_2)(x) = F G_{2,0} + E_2(x)
   + \sum_{\substack{j \geq 1 \\ i \geq 2j}} x^{(i-j)M} F^i(x) G_2^j(x)
\end{equation}
where
\begin{alignat}{2}
\label{eq:defn-E2}
E_2(x) & \coloneqq
   \sum_{k \geq 0} x^{kM} \bigl( F_*^k \restrict (G_2)_*^k \bigr)
   & & \in \FF_2\bbracket{x}, \\
F_*^k(x) & \coloneqq \notag
   \sum_{0 \leq u < 2M} F_{2kM+u} x^u & & \in \FF_2[x]_{2M}, \\
(G_2)_*^k(x) & \coloneqq \notag
   \sum_{1 \leq v < M} G_{2,kM+v} x^{-v} & & \in \FF_2[x^{-1}]_M,
\end{alignat}
and
\begin{equation}
\label{eq:G2-blocks}
G_2(x) = \sum_{j \geq 1} x^{-jM} G_2^j(x), \qquad G_2^j \in \FF_2[x]_M.
\end{equation}
The first term in \eqref{eq:H2-blocks} vanishes as $G_{2,0} = 0$
(see \eqref{eq:defn-G2}).
The third term is analogous to \eqref{eq:Hd-blocks}.
The second term is handled by means of the following proposition.
Here and below, we will write
\begin{equation}
\label{eq:defn-B}
B \coloneqq \frac{N}{M}
\end{equation}
for the number of blocks up to~$x^N$;
this is a positive integer thanks to \eqref{eq:core-auxiliary-hypotheses}.
\begin{prop}
\label{prop:compute-E2}
We may compute $E_2(x) \pmod{x^N}$ in time
\begin{equation}
\label{eq:E2-complexity}
O(N + (M + B) \log^3 N).
\end{equation}
\end{prop}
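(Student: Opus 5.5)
The plan is to exploit the extreme sparsity of $F$ and $G_2$ and compute $E_2(x) \bmod x^N$ by directly enumerating every pair of monomials that contributes. Then we sort the resulting exponents, cancel duplicates modulo~$2$, and convert to a bit array. By \eqref{eq:defn-E2} and \Cref{defn:restricted-product}, each term of $E_2$ is a monomial $x^{a^2 - 2b^2}$. Here $a$ is odd with $a^2 \in [2kM, 2kM+2M)$ and $kM < 2b^2 < kM + M$. The restriction $u \geq 2v$ reads $a^2 - 2kM \geq 2(2b^2 - kM)$.

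First we delimit the range. We have $a^2 \geq 4b^2$, so the exponent satisfies $a^2 - 2b^2 \geq a^2/2$. Hence only $a^2 < 2N$ matters, and the exponent is at least $kM$, so only $k < B$ matters. We generate the odd squares $a^2 < 2N$ and the values $2b^2 < N$ in increasing order by repeated addition, at a cost of $O(N^{1/2} \log N)$. We then walk through both sorted lists in parallel, grouping entries by the block index $k$, i.e.\ by $\lfloor a^2/2M \rfloor$ and $\lfloor (2b^2-1)/M \rfloor$ respectively. For each $k < B$ we form all pairs $(a,b)$ in the two groups, test the condition $u \geq 2v$, and when it holds output the exponent $a^2 - 2b^2$ if it is $< N$. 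Each pair costs $O(\log N)$ bit operations.

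The key counting step bounds the total number $P$ of pairs. An interval $[X, X+L)$ with $X \geq L$ contains $O(L/\sqrt{X} + 1)$ squares. For $k \geq 1$ both groups therefore have size $O(\sqrt{M/k} + 1)$. For $k = 0$ both have size $O(\sqrt M)$. So
\[
P \ll M + \sum_{1 \leq k < B} \Bigl(\frac{M}{k} + \sqrt{\frac{M}{k}} + 1\Bigr) \ll M \log B + \sqrt{MB} + B \ll (M+B) \log N.
\]
The overhead $N^{1/2} \log N = \sqrt{MB} \log N$ is also $O((M+B)\log N)$. This count, including making sure the blocks near $k=0$ and the cross terms are handled, is the main point to get right. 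I expect it to go through with the elementary square-counting estimate above.

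Finally we sort the $P$ exponents, each of $O(\log N)$ bits, by \Cref{lem:sort}, in time $O(P \log N \log P) = O((M+B)\log^3 N)$. We then delete adjacent pairs of equal exponents, since we work over $\FF_2$. Lastly we convert the remaining list to a bit array via \Cref{lem:convert-format} in time $O(N + P \log N)$. The total cost is $O(N + (M+B)\log^3 N)$, which is \eqref{eq:E2-complexity}.
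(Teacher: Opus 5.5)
Your proposal is correct and follows essentially the same route as the paper: you exploit the sparsity of $F$ and $G_2$, enumerate the contributing pairs block by block, bound their number by $\sum_{k<B}\bigl(\sqrt{M/(k+1)}+1\bigr)^2 \ll M\log B + \sqrt{MB} + B$, and then sort, cancel duplicates modulo~$2$, and convert via \Cref{lem:convert-format}. The differences are only organisational and give the same bound: you sort all exponents globally and convert once, whereas the paper converts each block to a polynomial and then sums the blocks in $O(N)$; and you generate the squares by repeated addition, whereas the paper uses square roots.
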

\begin{rem}
In \Cref{sec:core-main} we will choose $M \asymp N/\log^3 N$
(and hence $B = N/M \asymp \log^3 N$)
so that \eqref{eq:E2-complexity} simplifies to $O(N)$.
\end{rem}
\begin{proof}
The only values of $k$ in \eqref{eq:defn-E2}
that contribute to $E_2(x) \pmod{x^N}$ are $k = 0, \ldots, B-1$.
We will compute each of these $F_*^k \restrict (G_2)_*^k$ separately
and sum the results.
One could use the algorithms from \Cref{sec:restricted-products}
to compute these restricted products,
but this turns out to be too slow.
To obtain a fast enough algorithm,
we must take advantage of the sparse structure of $F(x)$ and $G_2(x)$.

We first find a sparse representation for $F_*^k$.
Recall from \eqref{eq:defn-F} that $F_i$ is~$1$ if
$i = a^2$ for some odd $a \geq 1$, and is otherwise zero.
The number of integers in the interval $2kM \leq i < 2(k+1)M$
of the form $i = a^2$ ($a$ odd) is at most
\[
Q_k \coloneqq \sqrt{2(k+1)M} - \sqrt{2kM} + 1.
\]
We may enumerate these integers using at most $O(Q_k)$ word operations,
where by ``word operation'' we mean an addition, subtraction, comparison,
multiplication, or square root (rounding downwards)
of integers with $O(\log N)$ bits.
In this way we obtain a list of integers $u = i - 2kM$ corresponding to
terms $x^u$ appearing in $F_*^k$.

Similarly, from \eqref{eq:defn-G2} we see that $G_{2,j}$ is~$1$
when $j = 2b^2$ for some $b \geq 1$, and is otherwise zero.
The number of integers in the interval $kM + 1 \leq j < (k+1)M$
of the form $j = 2b^2$ is at most
\[
Q'_k \coloneqq \sqrt{(k+1)M/2} - \sqrt{kM/2} + 1.
\]
Again we may enumerate these integers,
and thus obtain a list of integers $v = j - kM$ corresponding to
terms $x^{-v}$ in $(G_2)_*^k$,
in at most $O(Q'_k)$ word operations.

Assuming classical algorithms for integer arithmetic,
the total work so far is $O((Q_k + Q'_k) \log^2 N)$ bit operations.

Next, for each pair $(u,v)$, we check whether $u \geq 2v$,
and if so append $u-v$ to a list.
This costs $O(P_k \log N)$ bit operations where $P_k \coloneqq Q_k Q'_k$,
and produces a list of integers $w$ corresponding to terms $x^w$
in $F_*^k \restrict (G_2)_*^k \in \FF_2[x]_{2M}$.
We then sort the list in time $O(P_k \log P_k \log N) = O(P_k \log^2 N)$
(\Cref{lem:sort}),
and remove pairs of adjacent duplicates (since we are working modulo~$2$)
in time $O(P_k \log N)$.
Finally we use \Cref{lem:convert-format} to convert the
sorted list of (distinct) exponents to a polynomial in $\FF_2[x]_{2M}$
in time $O(M + P_k \log M) = O(M + P_k \log N)$.

Combining all the contributions above,
we conclude that the cost of computing $F_*^k \restrict (G_2)_*^k$
for a single~$k$ is $O(M + P_k \log^2 N)$ bit operations.
Since
\[
Q_k, Q'_k \ll \frac{\sqrt{M}}{\sqrt{k+1}} + 1,
\]
we have
\begin{multline*}
\sum_{0 \leq k < B} P_k
   \ll \sum_{0 \leq k < B} \left(\frac{\sqrt{M}}{\sqrt{k+1}} + 1\right)^2
   \ll \sum_{0 \leq k < B}
      \left(\frac{M}{k+1} + \frac{\sqrt{M}}{\sqrt{k+1}} + 1 \right) \\
   \ll M \log B + \sqrt{MB} + B
   \ll M \log N + N^{1/2} + B.
\end{multline*}
The total cost over all~$k$ is thus
\[
O(BM + (M \log N + N^{1/2} + B) \log^2 N)
   = O(N + (M+B)\log^3 N).
\]
Finally, the cost of adding up the terms in \eqref{eq:defn-E2} is $O(N)$.
\end{proof}

Let us write
\begin{equation}
\label{eq:defn-E}
E(x) \coloneqq E_1(x) + E_2(x) \in \FF_2\bbracket{x}
\end{equation}
for the sum of the two error series.
Combining \eqref{eq:defn-E1}, \eqref{eq:Aprime-sum},
\eqref{eq:Hd-blocks} and \eqref{eq:H2-blocks},
and defining for convenience $G_2^0 \coloneqq 0$,
we obtain the desired expression for $A(x)$ in terms of products of blocks:
\begin{equation}
\label{eq:A-blocks-1}
A(x) = E(x)
   + \sum_{d =-1,-2} \;
   \sum_{\substack{j \geq 0 \\ i \geq 0}} x^{(i+j)M} F^i G_d^j
   + \sum_{\substack{j \geq 0 \\ i \geq 2j}} x^{(i-j)M} F^i G_2^j.
\end{equation}

Next we split up $A(x)$ and $E(x)$ into blocks, say
\begin{equation}
\label{eq:AE-blocks}
A(x) = \sum_{h \geq 0} x^{hM} A^h(x), \qquad
E(x) = \sum_{h \geq 0} x^{hM} E^h(x),
\end{equation}
where each $A^h, E^h \in \FF_2[x]_M$.
We want to use \eqref{eq:A-blocks-1} to obtain a formula for
each block $A^h$ in terms of the blocks of $F$, $G_d$ and~$E$.
To this end we introduce the following notation:
if $f \in \FF_2[x]$ is a polynomial and $t_1 < t_2$ are integers,
we write
\[
f_{[t_1,t_2]} \coloneqq \sum_{t_1 \leq t < t_2} f_t x^{t-t_1}
   \in \FF_2[x]_{t_2-t_1}.
\]
First consider the cases $d = -1, -2$ in \eqref{eq:A-blocks-1}.
The term $x^{(i+j)M} F^i G_d^j$ contributes $(F^i G_d^j)_{[0,M]}$
(the bottom half of $F^i G_d^j$) to $A^{i+j}(x)$
and $(F^i G_d^j)_{[M,2M]}$ (the top half) to $A^{i+j+1}(x)$.
Thus the contribution to $A^h(x)$ for a given $h \geq 0$ is
\[
\sum_{\substack{i, j \geq 0 \\ i+j=h}} (F^i G_d^j)_{[0,M]} +
\sum_{\substack{i, j \geq 0 \\ i+j=h-1}} (F^i G_d^j)_{[M,2M]}.
\]
Similarly, the contribution to $A^h(x)$
from the terms $x^{(i-j)M} F^i G_2^j$ in \eqref{eq:A-blocks-1} is
\[
\sum_{\substack{j \geq 0, \, i \geq 2j \\ i-j=h}} (F^i G_2^j)_{[0, M]} +
\sum_{\substack{j \geq 0, \, i \geq 2j \\ i-j=h-1}} (F^i G_2^j)_{[M, 2M]}.
\]
We may unify the two cases by defining index sets
\begin{equation}
\label{eq:defn-Aset}
\Aset_d(h) \coloneqq \begin{cases}
   \{(i,j) \in \ZZ^2 : i, j \geq 0, \hspace{26pt} i+j = h\}, & d = -1, -2, \\
   \{(i,j) \in \ZZ^2 : j \geq 0, \, i \geq 2j, \, i-j = h\}, & d = 2.
   \end{cases}
\end{equation}
With this notation \eqref{eq:A-blocks-1} becomes
\begin{equation}
\label{eq:A-blocks-2}
A^h(x) = E^h(x) +
   \sum_{\sigma=0,1} \;
   \sum_{\substack{d \in \dset \\ (i,j) \in \Aset_d(h-\sigma)}}
      (F^i G_d^j)_{[\sigma M, (\sigma+1)M]}, \qquad h \geq 0.
\end{equation}

\subsection{Reduction to products of slices}
\label{sec:core-slice}

We now introduce the slicing parameter $W \geq 1$.
Recall from \eqref{eq:core-auxiliary-hypotheses} that
$T \divides W$ and $W \divides M$.
For any polynomial $f \in \FF_2[x]$ and $0 \leq w < W$ define
\[
f_{\bangle{w}}(y) \coloneqq
   \sum_{\ell \geq 0} f_{\ell W + w} \, y^\ell \in \FF_2[y].
\]
In other words, $f_{\bangle{w}}$ extracts the monomials of $f(x)$
whose exponents are congruent to $w$ modulo~$W$.
We call $f_{\bangle{w}}(y)$ a ``slice'' of $f(x)$.
Notice that if $f \in \FF_2[x]_M$ then $f_{\bangle{w}} \in \FF_2[y]_L$,
where we write
\begin{equation}
\label{eq:defn-L}
L \coloneqq \frac{M}{W}
\end{equation}
for the length of each slice.
The identity
\[
BLW = N,
\]
which follows from \eqref{eq:defn-B} and \eqref{eq:defn-L},
will be used frequently below.
The goal of this section is to rewrite \eqref{eq:A-blocks-2}
in terms of slices,
i.e., we want to express each slice of $A^h(x)$
in terms of products of slices of $F^i(x)$ and $G_d^j(x)$.

\begin{rem}
\label{rem:core-slices}
There are two reasons for introducing slices.
First, slices are critical for the compression scheme
employed in \Cref{sec:core-compression}.
Second, many of the slices of $F(x)$ and $G_d(x)$ are automatically zero,
thanks to the quadratic exponents in \eqref{eq:defn-F}--\eqref{eq:defn-G2}.
Roughly speaking, each prime factor of $W$ cuts out about half of the
congruence classes modulo~$W$.
To control the cost of the forward transform step
(\Cref{sec:core-forward}),
it will be essential to choose $W$ with many prime factors,
so that a large fraction of the forward transforms are automatically zero.
\end{rem}

\begin{rem}
\label{rem:varphi-W-speedup}
The exponents appearing in $H_d(x)$ are also subject to certain restrictions
involving congruences modulo squares of small primes.
For example, a square-prime $m^2 p$ can never be $10$ or $15$ modulo $25$,
and the cases $5$ and $20$ modulo $25$ can only occur when $p = 5$.
It may be possible to use this observation to achieve a small saving
in some parts of the core algorithm.
There is no effect on the probabilistic or deterministic variants,
because the observation does not appreciably reduce the amount of
``entropy'' recovered by the compression scheme,
so the dominant first term of \eqref{eq:core-auxiliary-bound} is unaffected.
But for a suitable choice of~$W$,
and after some reorganisation of the algorithm,
it is plausible that \eqref{eq:compute-Z}
could be improved by a factor of $\varphi(W)/W$,
where $\varphi(W)$ denotes the totient function.
This might translate to an overall speedup for the heuristic variant
by a factor of $\log \log W \asymp \log \log \log N$
(see \Cref{rem:main-heuristic-precise}).
The author has not checked the details.
\end{rem}

If $f, g \in \FF_2[x]_M$ and $0 \leq w < W$,
then the slice $(fg)_{\bangle{w}} \in \FF_2[y]_{2L}$
of the product $fg \in \FF_2[x]_{2M}$ is given by
\begin{equation}
\label{eq:slice-product}
(fg)_{\bangle{w}}
   = \sum_{\substack{0 \leq u, v < W \\ u+v=w}}
         f_{\bangle{u}} g_{\bangle{v}} +
      \sum_{\substack{0 \leq u, v < W \\ u+v=w+W}}
         y f_{\bangle{u}} g_{\bangle{v}}
      \enspace \in \FF_2[y]_{2L}.
\end{equation}
To prove this formula,
consider a typical term $f_i g_j x^{i+j}$ in the product $f(x)g(x)$
such that $i + j \equiv w \pmod W$.
We may write $i = \ell W + u$ and $j = mW + v$ with $0 \leq u, v < W$,
so that $u + v \equiv w \pmod W$.
The contribution of this term to $(fg)_{\bangle{w}}$ is
$f_i g_j y^{(i+j-w)/W} = f_i g_j y^{\ell+m + (u+v-w)/W}$.
On the other hand, its contribution to $f_{\bangle{u}} g_{\bangle{v}}$
is $(f_i y^\ell) (g_j y^m) = f_i g_j y^{\ell+m}$.
If $u+v < W$ then $u + v = w$, so these contributions are identical.
But if $u+v \geq W$ then $u + v = w + W$,
so we must introduce an extra factor of $y$ to make the contributions agree.
These two cases correspond to the two sums on the right hand side of
\eqref{eq:slice-product}.

Let us now use \eqref{eq:slice-product} to
find a formula for the slices of $A^h(x)$.
Define
\begin{equation}
\label{eq:AEFG-slices}
\begin{aligned}
  A^{h,w}(y) & \coloneqq (A^h)_{\bangle{w}}   \in \FF_2[y]_L, & \quad
  F^{i,u}(y) & \coloneqq (F^i)_{\bangle{u}}   \in \FF_2[y]_L, \\
  E^{h,w}(y) & \coloneqq (E^h)_{\bangle{w}}   \in \FF_2[y]_L, &
G_d^{j,v}(y) & \coloneqq (G_d^j)_{\bangle{v}} \in \FF_2[y]_L.
\end{aligned}
\end{equation}
Then \eqref{eq:slice-product} implies that
\[
(F^i G_d^j)_{\bangle{w}} =
   \sum_{\tau=0,1} \; \sum_{\substack{0 \leq u,v < W \\ u+v=w+\tau W}}
   y^\tau F^{i,u} G_d^{j,v} \enspace \in \FF_2[y]_{2L}.
\]
It is clear that
\[
\bigl( (F^i G_d^j)_{[\sigma M, (\sigma+1) M]} \bigr)_{\bangle{w}}
   = \bigl( (F^i G_d^j)_{\bangle{w}} \bigr)_{[\sigma L, (\sigma+1) L]},
\]
so \eqref{eq:A-blocks-2} becomes
\begin{multline}
\label{eq:A-slices}
A^{h,w}(y) = E^{h,w}(y) + \phantom{x} \\*
   \sum_{\substack{\sigma = 0, 1 \\ \tau = 0, 1}} \;
   \sum_{\substack{d \in \dset \\ (i,j) \in \Aset_d(h-\sigma)}} \;
   \sum_{\substack{0 \leq u, v < W \\ u+v=w+\tau W}}
      (F^{i,u} G_d^{j,v})_{[\sigma L - \tau, (\sigma+1) L - \tau]}
   \enspace \in \FF_2[y]_L.
\end{multline}
(When $\sigma = 0$ and $\tau = 1$,
the starting index $\sigma L - \tau$ in \eqref{eq:A-slices} is~$-1$.
In this context, the coefficient of $y^{-1}$ in $F^{i,u}(y) G_d^{j,v}(y)$
is understood to be zero.)

\subsection{Forward transforms}
\label{sec:core-forward}

To take advantage of the repeated multiplicands in \eqref{eq:A-slices},
we will re-express the sum in terms of
a transform pair of order~$L$ over~$\FF_2$
(see \Cref{sec:transform-schemes}).
According to \Cref{thm:transform-scheme}
we may construct forward and inverse transform maps
\begin{equation}
\label{eq:core-transforms}
\fwd \colon \FF_2[y]_L \to \FF_2^K, \qquad
\inv \colon \FF_2^K \to \FF_2[y]_{2L-1}
\end{equation}
which can both be evaluated in time
\begin{equation}
\label{eq:core-eval-cost}
\Tcost(L) \ll \Mcost(L) = L \log L \cdot \Mstar(L)
   \leq L \log N \cdot \Mstar(N),
\end{equation}
and where $K$ satisfies
\begin{equation}
\label{eq:core-K}
K \leq \Kcost(L) = L \cdot \Kstar(L) \leq L \cdot \Kstar(N).
\end{equation}
We may then rewrite \eqref{eq:A-slices} as follows.
For each $(\sigma, \tau) \in \{0,1\}^2$ define
\begin{equation}
\label{eq:inv-sigmatau}
\inv^{\sigma,\tau} \colon \FF_2^K \to \FF_2[y]_L, \qquad
\inv^{\sigma,\tau}(Z) \coloneqq
   (\inv(Z))_{[\sigma L - \tau, (\sigma+1)L - \tau]}.
\end{equation}
Clearly each $\inv^{\sigma,\tau}$ is an $\FF_2$-linear map.
Also set
\begin{equation}
\label{eq:defn-Z}
Z_{h,w} \coloneqq
   \sum_{\substack{d \in \dset \\ (i,j) \in \Aset_d(h)}} \;
   \sum_{\substack{0 \leq u, v < W \\ u+v=w}}
   \fwd(F^{i,u}) \cdot \fwd(G_d^{j,v})
   \enspace \in \FF_2^K,
\qquad
\begin{gathered}[t]
h \geq -1, \\
0 \leq w < 2W,
\end{gathered}
\end{equation}
where the dot indicates pointwise multiplication in $\FF_2^K$.
Then \eqref{eq:A-slices} becomes simply
\begin{equation}
\label{eq:A-slices-eval}
A^{h,w}(y) = E^{h,w}(y) +
\sum_{\substack{\sigma = 0, 1 \\ \tau = 0, 1}}
   \inv^{\sigma,\tau}(Z_{h - \sigma, w + \tau W})
   \; \in \FF_2[y]_L,
\qquad
\begin{gathered}[t]
h \geq 0, \\
0 \leq w < W.
\end{gathered}
\end{equation}
Note that $Z_{h,w}$ is automatically zero when either $h = -1$ or $w = 2W-1$;
we allow these parameter values in order to simplify
the statement of \eqref{eq:A-slices-eval}.

We next discuss how to evaluate the transforms in \eqref{eq:defn-Z}
that contribute to $A(x) \pmod{x^N}$,
i.e., affecting the slices $A^{h,w} \in \FF_2[y]_L$ for $0 \leq h < B$.
It is clear from \eqref{eq:A-slices-eval} that to obtain these $A^{h,w}$,
it suffices to compute the $Z_{h,w}$ for $0 \leq h < B$.
The following result indicates which transforms in \eqref{eq:defn-Z}
contribute to these $Z_{h,w}$.
\begin{lem}
\label{lem:Aset-bounds}
Let $d \in \dset$ and $0 \leq h < B$.
Then for any $(i,j) \in \Aset_d(h)$ we have
$0 \leq i < 2B-1$ and $0 \leq j < B$.
\end{lem}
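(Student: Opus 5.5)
This is a direct case check on the definition \eqref{eq:defn-Aset} of $\Aset_d(h)$, using only that $0 \leq h < B$ and that $h$, $i$, $j$ are integers. I will treat the two shapes of index set separately.

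\emph{Case $d = -1, -2$.} Here $(i,j) \in \Aset_d(h)$ means $i, j \geq 0$ and $i + j = h$. It follows that $0 \leq i \leq h$ and $0 \leq j \leq h$. Since $h \leq B - 1$, we get $j \leq B-1 < B$. We also get $i \leq B - 1$, and $B - 1 < 2B - 1$ because $B \geq 1$. Both required bounds hold.

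\emph{Case $d = 2$.} Here $(i,j) \in \Aset_2(h)$ means $j \geq 0$, $i \geq 2j$ and $i - j = h$. Nonnegativity is immediate: $i \geq 2j \geq 0$. Substituting $i = h + j$ into $i \geq 2j$ gives $j \leq h \leq B - 1$, so $j < B$. Then $i = h + j \leq 2h \leq 2B - 2$, so $i < 2B - 1$.

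The only point needing any care is the $d = 2$ case. There the bound on $j$ comes from combining the restriction $i \geq 2j$ with $i - j = h$, rather than from nonnegativity alone. This is also why $i$ can reach $2B - 2$, which explains the asymmetric ranges in the statement. No earlier results are needed beyond the definition itself.
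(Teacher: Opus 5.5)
Your proof is correct and follows the paper's argument essentially line for line. In the $d=-1,-2$ case both bounds come from $i,j\geq 0$ and $i+j=h\leq B-1$; in the $d=2$ case you get $j\leq h$ from $i\geq 2j$ and $i-j=h$, then bound $i=j+h\leq 2B-2$, exactly as the paper does.
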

\begin{proof}
According to \eqref{eq:defn-Aset},
when $d \in \{-1, -2\}$ we have $i, j \geq 0$ and $i+j = h \leq B-1$.
These inequalities imply that $0 \leq i \leq B-1 \leq 2B-2$
and $0 \leq j \leq B-1$.

For $d = 2$ we have $i \geq 2j \geq 0$ and $i-j = h \leq B-1$.
Then $j = 2j - j \leq i - j \leq B-1$ and $i = j + h \leq (B-1)+(B-1) = 2B-2$.
\end{proof}

Before considering the cost of computing the actual transforms,
we discuss briefly how to obtain the polynomials
$F^{i,u}(y)$ and $G_d^{j,v}(y)$ themselves.
\begin{prop}
\label{prop:compute-FG-blocks}
In time $O(N)$, we may compute
\[
F^{i,u} \in \FF_2[y]_L, \qquad 0 \leq i < 2B-1, \quad 0 \leq u < W
\]
arranged lexicographically by $(i,u)$, and for each $d \in \dset$,
\[
G_d^{j,v} \in \FF_2[y]_L, \qquad 0 \leq j < B, \quad 0 \leq v < W
\]
arranged lexicographically by $(j,v)$.
\end{prop}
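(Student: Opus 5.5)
We build each series as a bit array and then rearrange the bits. First we write out $F(x) \bmod x^{(2B-1)M}$ and $G_d(x)$ (or the relevant coefficients $G_{2,j}$) as bit arrays of length $O(N)$, using \Cref{lem:convert-format}. The exponents $a^2$ ($a$ odd) up to $2N$ can be produced in increasing order by keeping a running square and adding successive differences $(a+2)^2 - a^2 = 4a+4$. Naively this costs $O(\log N)$ per exponent, but there are only $O(N^{1/2})$ exponents, so the cost is $O(N^{1/2}\log N)$. Alternatively, the list-to-array conversion can walk the output and place a $1$ after each gap, since the gaps are known arithmetically. Either way the total is $O(N + N^{1/2}\log N) = O(N)$. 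The series $G_{-1}$, $G_{-2}$ and $G_2$ are handled the same way. For $G_2$ we store the coefficients $G_{2,k}$ for $1 \le k \le BM$ in order. The block $G_2^j$ of \eqref{eq:G2-blocks} has coefficients $G_{2,jM-t}$ for $0 \le t < M$, so it is obtained by reversing the array within each length-$M$ window. Reversing costs $O(N)$, for instance by writing the array backwards onto a second tape and then reading it forwards. For each series, splitting into blocks $F^i$, $G_d^j \in \FF_2[x]_M$ is free: each block is simply a consecutive segment of length $M$.

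The main step is slicing each block. A block $f \in \FF_2[x]_M$ is viewed as an $L \times W$ array in row-major order, since $f_{\ell W + w}$ sits at row $\ell$, column $w$. The slices $f_{\bangle{w}}$ for $0 \le w < W$ in order are exactly the transpose of this array, a $W \times L$ array. By \Cref{lem:transpose} with $b = 1$, transposing one block costs $O(M \log\min(L,W))$. Summed over $O(B)$ blocks this gives $O(N \log \min(L,W))$, which is too slow by a logarithmic factor. This transposition is the main obstacle.

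To get $O(N)$ we have to exploit sparsity. A block $F^i$ has only $O(\sqrt M/\sqrt{i+1} + 1)$ nonzero coefficients, as in the proof of \Cref{prop:compute-E2}, and the same holds for the $G_d^j$. So I would not transpose dense arrays. Instead, for each block I enumerate its nonzero exponents $e$ directly from the sparse list of squares and compute pairs $(e \bmod W, \lfloor e/W \rfloor)$. These pairs are sorted lexicographically with \Cref{lem:sort} at cost $O(P \log^2 N)$, where $P$ is the number of nonzero terms in the block. The output array of length $M$ is then written with a \Cref{lem:convert-format}-style pass in time $O(M + P\log N)$, emitting zeros between entries; the offset of each entry is $wL + \ell$. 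Summing over blocks, the total cost is $O(BM + \sum P\log^2 N)$. Here $\sum P \ll \sum_i(\sqrt{M/(i+1)} + 1) \ll \sqrt{MB} + B = \sqrt N + B$. This gives $O(N + (\sqrt N + B)\log^2 N) = O(N)$, using $B \le N/\log^2 N$, or otherwise absorbing the $B\log^2 N$ term as in the parameter choice of \Cref{sec:core-main}. The blocks are processed in increasing $i$ (resp. $j$), so the output is lexicographic in $(i,u)$ (resp. $(j,v)$), as required. Generating the sparse exponent lists directly also removes the need for the initial dense array.
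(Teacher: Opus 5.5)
Your sparse route (enumerate the squares, compute each term's block, slice and position, sort, and expand with \Cref{lem:convert-format}) is essentially the paper's proof. The only difference is that the paper does one global sort of all $O(N^{1/2})$ tuples $(i,u,t)$ and converts per $(i,u)$, rather than sorting and converting block by block. Your hedge about a $B\log^2 N$ term is unnecessary: $\sum_i P_i$ is exactly the number of odd squares below $(2B-1)M<2N$ (and similarly for the $G_d$), so it is $O(N^{1/2})$ whatever the value of $B$.
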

\begin{proof}
We explain the algorithm only for the $F^{i,u}(y)$ case;
the $G_d^{j,v}(y)$ are handled in a similar fashion.

Recall from \eqref{eq:defn-F} that
$F(x) = \sum_{a \geq 1, \, \textn{$a$ odd}} x^{a^2} \in \FF_2\bbracket{x}$.
For each odd~$a$ in the range $1 \leq a < ((2B-1)M)^{1/2}$,
we compute the block index $i \coloneqq \lfloor a^2 / M \rfloor$
and the slice index $u \coloneqq a^2 \bmod W$
corresponding to the term $x^{a^2}$
(note that $0 \leq i < 2B-1$ and $0 \leq u < W$),
and its position $t \coloneqq (a^2 - iM - u)/W$ within the slice
($0 \leq t < L$).
This data corresponds to the term $y^t$ appearing in
$F^{i,u}(y) = (F^i)_{\bangle{u}}(y)$.
We put all such tuples $(i,u,t)$ into a list and
sort the list lexicographically (\Cref{lem:sort}).
There are $O(N^{1/2})$ tuples, and each has bit size $O(\log N)$,
so we can certainly do all of this in time $N^{1/2+o(1)}$.
Then for each $(i,u)$ we extract the corresponding values of~$t$
and feed them into \Cref{lem:convert-format} to obtain $F^{i,u}(y)$.
The cost for each $(i,u)$ is $O(L + n_{i,u} \log L)$ where
$n_{i,u}$ is the number of tuples,
so the total over all $(i,u)$ is
\[
O\bigl(BWL + \textstyle \sum_{i,u} n_{i,u} \log L \bigr)
   = O(N + N^{1/2} \log N) = O(N). \qedhere
\]
\end{proof}

We now consider the computation of the actual transforms
$\fwd(F^{i,u})$ and $\fwd(G_d^{j,v})$.
Note that we do not have enough time to invoke $\fwd$
for all of these polynomials;
the total amount of data to transform is $\Theta(N)$ bits,
and the FFTs impose an overhead of $\log N$,
but $\Theta(N \log N)$ grossly exceeds our time budget
in \Crefrange{thm:main-deterministic}{thm:main-heuristic}.
Instead, as mentioned in \Cref{rem:core-slices},
we must take advantage of the fact that many of the slices
vanish automatically:
\begin{lem}
\label{lem:zero-slices}
Let $i, j \geq 0$, $d \in \dset$ and $0 \leq u, v < W$. 
\begin{enumabc}
\item If $u$ is not a square in $\ZZ/W\ZZ$ then $F^{i,u} = 0$.
\item If $v$ is not $-d$ times a square in $\ZZ/W\ZZ$ then $G_d^{j,v} = 0$.
\end{enumabc}
\end{lem}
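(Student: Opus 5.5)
We prove the contrapositive in each case: if some coefficient of the slice is nonzero, then the residue $u$ (or $v$) has the stated form. The main tool is that $W \divides M$, so reducing exponents modulo $W$ commutes with passing to blocks.

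\emph{Part (a).} Suppose $F^{i,u} \neq 0$, and let $y^\ell$ be a monomial occurring in it. By the definitions \eqref{eq:F-blocks} and \eqref{eq:AEFG-slices}, this monomial comes from the coefficient of $x^{\ell W + u}$ in $F^i(x)$. That coefficient is in turn the coefficient of $x^{iM + \ell W + u}$ in $F(x)$. By \eqref{eq:defn-F}, this forces
\[
iM + \ell W + u = a^2
\]
for some odd $a \geq 1$. Since $W \divides M$, reducing modulo $W$ gives $u \equiv a^2 \pmod W$. Hence $u$ is a square in $\ZZ/W\ZZ$.

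\emph{Part (b), case $d \in \{-1,-2\}$.} The same argument applies with \eqref{eq:Gj-blocks}. A nonzero coefficient of $y^\ell$ in $G_d^{j,v}$ means that
\[
jM + \ell W + v = 4b^2 \quad \text{or} \quad 2b^2
\]
for some $b \geq 1$. Since $-d = 1$ or $2$ respectively, we get $v \equiv (-d)\,b'^2 \pmod W$ with $b' = 2b$ or $b' = b$. So $v$ is $-d$ times a square.

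\emph{Part (b), case $d = 2$.} This is the step needing the most care, because \eqref{eq:G2-blocks} uses reversed indexing. Write $G_2^j(x) = \sum_{0 \le t < M} G_{2,jM-t}\, x^t$, matching the $g^j$ of \Cref{lem:restricted-decomposition}. I would first confirm this convention: it is consistent with $G_2(x) = \sum_j x^{-jM} G_2^j(x)$, since the term $x^{-jM} x^t = x^{-(jM-t)}$ carries the coefficient $G_{2,jM-t}$. (For $j = 0$ we have $G_2^0 = 0$, so there is nothing to check.)

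Now a nonzero coefficient of $y^\ell$ in $G_2^{j,v}$ means $G_{2,jM-(\ell W+v)} = 1$. By \eqref{eq:defn-G2}, this says
\[
jM - \ell W - v = 2b^2
\]
for some $b \geq 1$. Reducing modulo $W$ gives $v \equiv -2b^2 = (-d)\,b^2 \pmod W$, since $-d = -2$. The sign flip introduced by the reversed indexing is exactly what matches the factor $-d$ for $d = 2$.

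The only genuine obstacle is getting this $d = 2$ indexing right; everything else follows directly from $W \divides M$.
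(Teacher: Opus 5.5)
Your proof is correct and is essentially the paper's argument: a nonzero coefficient of $y^\ell$ in a slice forces an exponent of the form $a^2$, $4b^2$ or $2b^2$ (or $jM - 2b^2$ for $d=2$), and reducing modulo $W$ using $W \divides M$ gives the stated residue condition. Your check that $G_2^j(x) = \sum_{0 \le t < M} G_{2,jM-t}\,x^t$ agrees with the paper's explicit formula $G_2^j(x) = \sum_{(j-1)M < 2b^2 \le jM} x^{jM - 2b^2}$, and this reversed indexing is exactly what produces the factor $-d = -2$.
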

\begin{proof}
These facts follow more or less immediately from
\eqref{eq:defn-F}--\eqref{eq:defn-G2}.
For (a), recall that
\[
F^i(x) = \hspace{-7pt}
         \sum_{\substack{a \geq 1, \, \textn{$a$ odd} \\
                        iM \leq a^2 < (i+1)M}}
                        \hspace{-7pt} x^{a^2 - iM}
         \enspace \in \FF_2[x]_M.
\]
The monomials in $F^{i,u}(y) = (F^i)_{\bangle{u}}$
correspond to those terms for which $a^2 - iM \equiv u \pmod W$.
Since $W \divides M$, this condition is equivalent to $a^2 \equiv u \pmod W$.
If $u$ is not a square modulo~$W$ then no such terms exist,
so $F^{i,u} = 0$.

For (b), an almost identical argument works for $G_d^{j,v}(x)$
for $d = -1, -2$.
For $d = 2$ the definition of the blocks is ``reversed'';
from \eqref{eq:G2-blocks} we have
\[
G_2^j(x) = \hspace{-7pt}
           \sum_{(j-1)M < 2b^2 \leq jM}
           \hspace{-7pt} x^{-2b^2 + jM}
   \enspace \in \FF_2[x]_M, \qquad j \geq 1,
\]
so the same argument works in this case as well.
(The result also holds for $j = 0$ as we defined $G_2^0 \coloneqq 0$.)
\end{proof}

To estimate the cost of the forward transforms,
it is convenient to introduce the quantity
\begin{equation}
\label{eq:defn-rho}
\rho(W) \coloneqq \max_{d \in \dset} \rho_d(W)
\end{equation}
where
\[
\rho_d(W) \coloneqq \bigl| \{w \in \ZZ/W\ZZ :
   w = -d\alpha^2 \textn{ for some } \alpha \in \ZZ/W\ZZ \} \bigr|.
\]
Then we have:
\begin{prop}
\label{prop:compute-transforms}
Given as input the polynomials $F^{i,u}, G_d^{j,v} \in \FF_2[y]_L$
computed in \Cref{prop:compute-FG-blocks},
we may compute their transforms
$\fwd(F^{i,u}), \fwd(G_d^{j,v}) \in \FF_2^K$,
arranged lexicographically by $(i,u)$ (respectively $(j,v)$ for each~$d$),
in time
\[
O\left( N \tsp \frac{\rho(W) \log N}{W} \Mstar(N) + N \cdot \Kstar(N) \right).
\]
\end{prop}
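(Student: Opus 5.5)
We walk once through the list of polynomials produced by \Cref{prop:compute-FG-blocks}, in their lexicographic order, and for each one decide whether to transform it or to output the zero vector. The decision uses only the slice index. Before the main pass we precompute, for each $d \in \dset$, the sets of squares and of $-d$ times squares in $\ZZ/W\ZZ$. Each set is stored as a bit array of length~$W$, obtained by marking $\alpha^2 \bmod W$ (respectively $-d\alpha^2 \bmod W$) for $0 \leq \alpha < W$. In the applications $W$ is tiny compared with $N$, so this costs $W^{1+o(1)} = O(N)$; in general one may simply assume $W \leq N$. During the pass we march a pointer through the appropriate bit array in step with the slice index $u$ (respectively $v$), so each lookup costs $O(1)$ amortised and no random access is needed.

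For a polynomial whose slice index is not a square (respectively not $-d$ times a square), \Cref{lem:zero-slices} says the polynomial is zero. Since $\fwd$ is linear, we write $K$ zero bits to the output. For every other index we apply the forward transform algorithm at cost $\Tcost(L) \ll L \log N \cdot \Mstar(N)$, using \eqref{eq:core-eval-cost}. The output stays in lexicographic order because the inputs are processed in order, and each output slot has fixed length $K$.

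Now count the work. For $F$ there are $2B-1$ blocks, and in each block at most $\rho_{-1}(W) \leq \rho(W)$ nonzero slice indices $u$, since the squares are exactly the image of $\alpha \mapsto -(-1)\alpha^2$. For each $G_d$ there are $B$ blocks with at most $\rho_d(W)$ admissible indices $v$. The number of genuine transforms is therefore $O(B\rho(W))$, costing
\[
O\bigl(B \rho(W) L \log N \cdot \Mstar(N)\bigr) = O\Bigl(N \tsp \frac{\rho(W)\log N}{W}\Mstar(N)\Bigr),
\]
by the identity $BLW = N$. Writing the outputs costs $O(BW \cdot K)$ bits, which is $O(BWL \cdot \Kstar(N)) = O(N \cdot \Kstar(N))$ by \eqref{eq:core-K}. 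Moving the tape head across the input list adds only $O(BWL) = O(N)$. Summing these three contributions gives the stated bound.

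The only delicate point is the Turing-model bookkeeping. The zero test must not cost $\log W$ per slice via arithmetic modulo~$W$. This could matter when $\log W$ is comparable to the ratio $K/1$ — it is harmless anyway since $BW \log W \leq N$ when $L \geq \log W$ — and the synchronized bit-array scan avoids the question entirely.
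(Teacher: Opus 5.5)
Your counting is the same as the paper's. You get $O(B\rho(W))$ genuine invocations of $\fwd$ at cost $\Tcost(L)\ll L\log N\cdot\Mstar(N)$ each, justified by \Cref{lem:zero-slices}, plus $O(BWK)=O(N\cdot\Kstar(N))$ for writing zero vectors.

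The one place you diverge is how zero slices are recognised, and there the paper's route is simpler and more robust. The paper scans each input slice and calls $\fwd$ only if the slice is actually nonzero. The slices are already stored contiguously, so this costs $O(L)$ per slice and $O(BWL)=O(N)$ in total. It needs no arithmetic modulo $W$ and no precomputation, and by \Cref{lem:zero-slices} at most $O(B\rho(W))$ slices are nonzero.

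You instead decide from the index $u$ or $v$, using a precomputed table of squares (resp.\ $-d$ times squares) modulo $W$. The synchronised lookup is fine. The problem is building the table: the natural Turing-machine method (generate the residues, sort them by \Cref{lem:sort}, convert with \Cref{lem:convert-format}) costs $O(W\log^2 W)$. Your justification, ``$W^{1+o(1)}=O(N)$; in general one may simply assume $W\le N$'', does not show this fits the stated bound. Indeed $W\le M\le N$ already holds under \eqref{eq:core-auxiliary-hypotheses} and gives nothing further.

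The precomputation is $O(N)$ whenever $W\le N/\log^2 N$. That covers the choice of $W$ in \Cref{sec:core-main}. It does not cover the proposition at the generality of \Cref{thm:core-auxiliary}: if $W$ is comparable to $N$ and $\rho(W)/W\ll 1/\log N$, the right-hand side is only $O(N(\Mstar(N)+\Kstar(N)))$.

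Replacing the table lookup by a direct zero scan of each slice closes this gap. It also makes the Turing-model concern in your final paragraph moot.
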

\begin{rem}
In \Cref{sec:core-main} we will choose $W$ divisible by
sufficiently many small primes to ensure that $\rho(W) \ll W/\log N$.
The complexity bound in \Cref{prop:compute-transforms}
then becomes simply $O(N (\Mstar(N) + \Kstar(N))) < N (\log \log N)^{o(1)}$.
\end{rem}
\begin{proof}
We walk through the input blocks $F^{i,u}(y)$ and $G_d^{j,v}(y)$.
For each block, we first check if it is identically zero.
If it is not zero, we invoke~$\fwd$.
If it is zero, we simply write $K$ zeroes to the output.

The total number of blocks is $O(BW)$,
so by \eqref{eq:core-K} the cost of processing the zero blocks is
\[
O(BWK) = O(BWL \cdot \Kstar(N)) = O(N \cdot \Kstar(N)).
\]
Let us now estimate the cost of processing the nonzero blocks.
\Cref{lem:zero-slices} implies that for each~$i$
there are at most $\rho_{-1}(W) \leq \rho(W)$
values of $u \in \{0, \ldots, W-1\}$
for which $F^{i,u}$ can possibly be nonzero,
and similarly that for each $(d,j)$
there are at most $\rho_d(W) \leq \rho(W)$
values of $v \in \{0, \ldots, W-1\}$
for which $G_d^{j,v}$ can possibly be nonzero.
The total number of invocations of $\fwd$ is therefore at most
$O(B \rho(W))$,
and by \eqref{eq:core-eval-cost}
the cost of processing the nonzero blocks is
\[
O(B \rho(W) L \log N \cdot \Mstar(N))
   = O\left(\frac{\rho(W) \log N}{W} \, N \cdot \Mstar(N) \right).
   \qedhere
\]
\end{proof}

\subsection{Pointwise sums}
\label{sec:core-pointwise}

In this section we discuss how to compute those vectors
$Z_{h,w} \in \FF_2^K$ (defined in \eqref{eq:defn-Z})
that contribute to $A(x) \pmod{x^N}$.
\begin{prop}
\label{prop:compute-Z}
Given as input the transforms $\fwd(F^{i,u}), \fwd(G_d^{j,v}) \in \FF_2^K$
computed in \Cref{prop:compute-transforms},
we may compute
\[
Z_{h,w} \in \FF_2^K, \qquad 0 \leq h < B, \quad 0 \leq w < 2W-1,
\]
arranged lexicographically by $(h,w)$, in time
\begin{equation}
\label{eq:compute-Z}
O\bigl( N \log W \cdot \Mstar(W) \Kstar(N)
   + N \log B \cdot \Rstar(B) \Kstar(W) \Kstar(N) \bigr).
\end{equation}
\end{prop}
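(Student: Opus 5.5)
The plan is to view the computation of all $Z_{h,w}$ as a two-level problem: in the index $w$ it is an ordinary product of length~$W$, and in the block index $(i,j)\mapsto h$ it is an ordinary product (for $d=-1,-2$) or a restricted product (for $d=2$) of length~$B$. All of this happens coordinatewise in $\FF_2^K$. For a fixed coordinate $k \in \{0,\ldots,K-1\}$, put
\[
\textstyle \Phi_k(x,z) \coloneqq \sum_{i,u} \fwd(F^{i,u})_k\, z^u x^i,
\qquad
\Gamma_{d,k}(x,z) \coloneqq \sum_{j,v} \fwd(G_d^{j,v})_k\, z^v x^{\pm j},
\]
with $x^{-j}$ used for $d=2$. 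Then $(Z_{h,w})_k$ is the coefficient of $x^h z^w$ in $\sum_{d=-1,-2}\Phi_k\Gamma_{d,k}+\Phi_k\restrict\Gamma_{2,k}$, where the restricted product is taken in $x$ over the ring $\FF_2[z]_{2W-1}$. By \Cref{lem:Aset-bounds} only $i<2B-1$ and $j<B$ matter. So for $d=2$ we have a restricted product of size $B$ over $\FF_2[z]_W$-coefficients, and for $d=-1,-2$ a bivariate product of size $B\times W$.

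To handle the $z$ direction, I would use a second transform pair of order $W$, with $\fwd_W\colon \FF_2[z]_W\to\FF_2^{K'}$ and $K'\le \Kcost(W)=W\Kstar(W)$. The steps are as follows.
\begin{enumalgo}
\item Transpose the input data so that for each $k$ and each block index $i$ (resp.\ $j$) the $W$ bits indexed by $u$ (resp.\ $v$) are contiguous. Then apply $\fwd_W$ to each. There are $O(BK)$ polynomials of length $W$, so this costs $O(BK\,\Mcost(W)) = O(N\log W\cdot\Mstar(W)\Kstar(N))$, using $BLW=N$ and $K\le L\Kstar(N)$.
\item Now in each of the $KK'$ coordinates we have a univariate problem over $\FF_2$ of length $O(B)$ in~$x$: an ordinary product for $d=-1,-2$ and a restricted product for $d=2$. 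Compute these by the fixed restricted product algorithm. The ordinary products are absorbed by \eqref{eq:MR-reduction}, or done directly at cost $\Mcost(B)\ll\Rcost(B)$. This costs $O(KK'\,\Rcost(B)) = O(N\log B\cdot\Rstar(B)\Kstar(W)\Kstar(N))$. Sum the three $d$ contributions pointwise in $\FF_2$.
\item Apply $\inv_W$ to the $O(BK)$ resulting vectors in $\FF_2^{K'}$ to recover polynomials in $z$ of length $2W-1$, at the same cost as step~1. Read off the coefficients of $z^w$ for $w<2W-1$.
\end{enumalgo}

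The main obstacle is the data movement on the Turing tape, since the layouts needed by steps 1--3 differ from the input order (lexicographic in $(i,u)$ with $K$ bits each) and from the output order (lexicographic in $(h,w)$). I would organise this with $O(1)$ transpositions per stage via \Cref{lem:transpose}, choosing the order of the axes carefully.
\begin{itemize}
\item Going from $(i,u,k)$ to $(k,i,u)$ is a transposition of a $(BW)\times K$ array. It costs $O(BWK\log\min(BW,K))$, which is $O(N\Kstar(N)\log N)$ and too big. So instead I would transpose only $(u,k)\to(k,u)$ within each fixed $i$. That costs $O(BWK\log W) = O(N\log W\cdot\Kstar(N))$, which is within the first term since $\Mstar\gg1$.
\item Before step 2, the array of step-1 outputs, indexed $(i,k,k')$, needs $(i)$ innermost. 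This is a transposition of $B\times (KK')$ entries, costing $O(BKK'\log B)$, which fits within the second term because $\Rstar\gg 1$.
\item The reverse moves after steps 2 and 3 are handled symmetrically.
\end{itemize}
Finally, I would verify the index bookkeeping: $\inv_W$ produces degree up to $2W-2$, matching $0\le w<2W-1$, and the linearity of $\inv_W$ allows summing over $d$ before inversion.
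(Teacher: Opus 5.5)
Your proposal is correct and is essentially the paper's argument: a second transform pair of order $W$ in the slice index, then in each of the $KK'$ coordinates an ordinary or restricted product of length $O(B)$, then inverse transforms, with all data movement charged to \Cref{lem:transpose}. The only difference is data layout: the paper transposes globally to $(k,i,u)$ order at the start and back at the end, and your objection to that is unfounded, since its cost $O(BWK\log(BW)) = O(N(\log B+\log W)\Kstar(N))$ is already dominated by \eqref{eq:compute-Z}. Your local transpositions within each fixed $i$ (or $h$) work just as well.
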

\begin{proof}
Computing $Z_{h,w}$ amounts to evaluating the pointwise sums
\[
(Z_{h,w})_k =
   \sum_{\substack{d \in \dset \\ (i,j) \in \Aset_d(h)}} \;
   \sum_{\substack{0 \leq u, v < W \\ u+v=w}}
   \fwd(F^{i,u})_k \cdot \fwd(G_d^{j,v})_k
   \in \FF_2, \qquad 0 \leq k < K.
\]
According to \Cref{lem:Aset-bounds},
all of the $\fwd(F^{i,u})$ and $\fwd(G_d^{j,v})$ appearing in this formula
are available as part of the input.

Let us first transpose the data so that we can work on each
value of~$k$ separately.
The input vectors $\fwd(F^{i,u})$ and $\fwd(G_d^{j,v})$
consist of $O(BW)$ bit arrays of length~$K$;
by \Cref{lem:transpose} and \eqref{eq:core-K},
transposing these to obtain $K$ bit arrays of size $O(BW)$ requires time
\begin{equation}
\label{eq:Z-transpose-cost}
O(BWK \log(BW)) = O(N \log(BW) \Kstar(N)).
\end{equation}
Similarly, after computing all of the $(Z_{h,w})_k$ for each~$k$ separately
(as discussed in the following paragraphs),
the cost of transposing back to obtain
the output vectors $Z_{h,w}$ is also given by \eqref{eq:Z-transpose-cost}.
Moreover \eqref{eq:Z-transpose-cost} is clearly dominated by
\eqref{eq:compute-Z}.
   
Now fix one $k \in \{0, \ldots, K-1\}$ and define
\[
f^{i,u} \coloneqq \fwd(F^{i,u})_k \in \FF_2, \qquad
g_d^{j,v} \coloneqq \fwd(G_d^{j,v})_k \in \FF_2.
\]
Thanks to the previous paragraph,
we may assume that the $f^{i,u}$ are stored as one contiguous block
of size $O(BW)$ (arranged lexicographically by $(i,u)$),
and similarly for the~$g_d^{j,v}$.
Our goal is to compute the sums
\[
z_{h,w} \coloneqq (Z_{h,w})_k =
   \sum_{\substack{d \in \dset \\ (i,j) \in \Aset_d(h)}} \;
   \sum_{\substack{0 \leq u, v < W \\ u+v = w}}
   f^{i,u} g_d^{j,v} \in \FF_2,
\qquad
\begin{aligned}[t]
0 & \leq h < B, \\
0 & \leq w < 2W-1,
\end{aligned}
\]
arranged lexicographically by $(h,w)$.
It suffices in turn to compute the sums
\begin{equation}
\label{eq:Z-inner-sum}
z_{d,h,w} \coloneqq
   \sum_{(i,j) \in \Aset_d(h)} \;
   \sum_{\substack{0 \leq u, v < W \\ u+v = w}}
   f^{i,u} g_d^{j,v} \in \FF_2
\end{equation}
for each $d \in \dset$,
since we may then recover $z_{h,w}$ via
$z_{h,w} = \sum_{d \in \dset} z_{d,h,w}$.

The sum over $(u,v)$ in \eqref{eq:Z-inner-sum} corresponds to
evaluating a certain polynomial product.
To make this explicit let us define polynomials
\begin{align*}
f^i(t) & \coloneqq \sum_{0 \leq u < W} f^{i,u} t^u \in \FF_2[t]_W,
   \qquad 0 \leq i < 2B-1, \\
g_d^j(t) & \coloneqq \sum_{0 \leq v < W} g_d^{j,v} t^v \in \FF_2[t]_W,
   \qquad d \in \dset, \quad 0 \leq j < B.
\end{align*}
Then $z_{d,h,w}$ is exactly the coefficient of~$t^w$ in the sum
\[
y_{d,h} \coloneqq
   \sum_{(i,j) \in \Aset_d(h)} f^i(t) g_d^j(t) \in \FF_2[t]_{2W-1},
   \qquad d \in \dset, \quad 0 \leq h < B.
\]

To evaluate the sums $y_{d,h}$ efficiently
we invoke \Cref{thm:transform-scheme} to introduce
yet another transform pair over $\FF_2$, this time of order~$W$.
This yields maps
\[
\fwd' \colon \FF_2[t]_W \to \FF_2^{K'}, \qquad
\inv' \colon \FF_2^{K'} \to \FF_2[t]_{2W-1}
\]
that may be evaluated in time
\[
\Tcost(W) \ll \Mcost(W) = W \log W \cdot \Mstar(W)
\]
and where $K'$ satisfies
\[
K' \leq \Kcost(W) = W \cdot \Kstar(W).
\]
We thus obtain
\[
y_{d,h} = \inv'\biggl( \, \sum_{(i,j) \in \Aset_d(h)}
   \fwd'(f^i) \cdot \fwd'(g_d^j) \biggr)
\]
where the dot indicates pointwise multiplication in~$\FF_2^{K'}$.

We now use the following algorithm to compute the $y_{d,h}$.
(We remind the reader that $k$ is held fixed throughout this discussion.)
   
First we evaluate $\fwd'(f^i), \fwd'(g_d^j) \in \FF_2^{K'}$
for all $i$ and $(d,j)$.
(Note that each $f^i, g_d^j \in \FF_2[t]_W$ is already stored as
a contiguous block of $W$ bits.)
There are $O(B)$ such transforms so the cost is
\[
O(B W \log W \cdot \Mstar(W)).
\]
   
Next, for each $k' \in \{0, \ldots, K'-1\}$
we need to compute the inner sums
\begin{equation}
\label{eq:Z-inner-sum2}
\sum_{(i,j) \in \Aset_d(h)}
   \fwd'(f^i)_{k'} \cdot \fwd'(g_d^j)_{k'} \in \FF_2
\qquad 0 \leq h < B.
\end{equation}
To access the relevant data for each~$k'$,
we must perform a transposition of size $K' \times O(B)$.
The cost of this transposition is
\[
O(B K' \log B) = O(BW \log B \cdot \Kstar(W)).
\]
Then for each $k'$ we must compute the sum in \eqref{eq:Z-inner-sum2}
for all $h \in \{0, \ldots, B-1\}$.
For $d \in \{-1, -2\}$ this reduces to an ordinary product
of two polynomials in $\FF_2[t]_B$.
For $d = 2$ it becomes a restricted product of polynomials in
$\FF_2[t]_{2B-1}$ and $\FF_2[t^{-1}]_B$.
The cost in the first case is $\Mcost(B)$,
and in the second case
(after zero-padding the first argument to $\FF_2[t]_{2B}$)
is $\Rcost(B)$.
Using \eqref{eq:MR-reduction}, we see that the cost of this step
across all values of~$k'$ is
\[
O(K' \Rcost(B)) = O(BW \log B \cdot \Rstar(B) \Kstar(W)).
\]
After performing these (ordinary or restricted) products,
we must transpose back to the original ordering,
which has the same cost as before.
   
Finally, we must evaluate the inverse transforms $\inv'$
to obtain the~$y_{d,h}$.
There are $O(B)$ such transforms, so the cost is again
$O(B W \log W \cdot \Mstar(W))$.
   
At this stage we have obtained all of the $y_{d,h}$,
and hence the $z_{d,h,w}$ (ordered by $(h,w)$ for each~$d$).
We may then compute $z_{h,w} = \sum_{d \in \dset} z_{d,h,w}$
for all $(h,w)$ in time $O(BW)$.
We conclude that the cost of computing all of the $z_{h,w}$
for a single value of~$k$ is
\[
O\bigl( BW (
   \log W \cdot \Mstar(W) + \log B \cdot \Rstar(B) \Kstar(W) ) \bigr).
\]
Summing over all $K \leq L \cdot \Kstar(N)$ values of $k$
yields the final bound \eqref{eq:compute-Z}.
\end{proof}

To simplify notation later,
it will be convenient to slightly reorganise the output
of \Cref{prop:compute-Z}.
Let us define
\begin{equation}
\label{eq:defn-Z-sigmatau}
Z^{\sigma,\tau}_{h,w} \coloneqq Z_{h - \sigma, w + \tau W} \in \FF_2^K,
\qquad (\sigma, \tau) \in \{0, 1\}^2, \quad h \geq 0, \quad 0 \leq w < W,
\end{equation}
so that the main sum \eqref{eq:A-slices-eval} becomes
\begin{equation}
\label{eq:A-slices-eval-2}
A^{h,w}(y) = E^{h,w}(y) +
   \sum_{\substack{\sigma = 0, 1 \\ \tau = 0, 1}}
   \inv^{\sigma,\tau}(Z^{\sigma,\tau}_{h,w})
   \enspace \in \FF_2[y]_L,
\qquad
\begin{gathered}[t]
h \geq 0, \\
0 \leq w < W.
\end{gathered}
\end{equation}

\begin{prop}
\label{prop:compute-Z2}
Given as input the vectors $Z_{h,w}$ computed in \Cref{prop:compute-Z},
we may compute
\[
Z^{\sigma,\tau}_{h,w} \in \FF_2^K, \qquad
(\sigma, \tau) \in \{0, 1\}^2, \quad 0 \leq h < B, \quad 0 \leq w < W,
\]
arranged lexicographically by $(h,w)$ for each $(\sigma,\tau)$, in time
\[
O(N \cdot \Kstar(N)).
\]
\end{prop}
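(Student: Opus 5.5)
The plan is to produce each of the four families $Z^{\sigma,\tau}_{h,w}$ by a single linear scan over the tape holding the $Z_{h,w}$, which is arranged lexicographically by $(h,w)$ with $0 \leq h < B$ and $0 \leq w < 2W-1$. By \eqref{eq:defn-Z-sigmatau}, for fixed $(\sigma,\tau)$ the output in lexicographic order $(h,w)$ consists of the blocks $Z_{h-\sigma, w+\tau W}$. For each fixed $h$ these are the vectors $Z_{h-\sigma,\tau W}, \ldots, Z_{h-\sigma,\tau W + W-1}$, which form a \emph{contiguous} run of $W$ consecutive vectors in the input (row $h-\sigma$, columns $\tau W$ through $\tau W + W - 1$). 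Note that when $\tau = 1$ the last column index is $2W-1$, which is not stored; in that case $Z_{h',2W-1} = 0$, so we write $K$ zeros. Likewise, when $h - \sigma = -1$ (that is, $\sigma = 1$, $h = 0$) the whole row is zero, so we write $WK$ zeros.

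Concretely, for each $(\sigma,\tau)$ we walk a read head through the input rows $h' = 0, 1, \ldots, B-1-\sigma$ in order, and a write head through the output. For each row we skip $\tau W K$ bits, copy the next $W K$ bits (or $(W-1)K$ bits plus $K$ zeros when $\tau=1$), and then skip the remainder of the row. Before the first row we emit the zero row if $\sigma = 1$. Both heads move monotonically, so a single pass costs time proportional to the input size plus the output size, which is $O(BWK)$. Skipping a fixed number of bits requires a counter; we keep a counter of bit size $O(\log(WK))$ that we reset for each row. Alternatively, we can precompute a ruler tape of length $WK$ in a spare tape and move along it in lockstep, which makes the skipping and copying linear in distance. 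This gives a total of $O(BWK)$ for all four choices of $(\sigma,\tau)$.

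Finally, by \eqref{eq:core-K}, $K \leq L \cdot \Kstar(N)$, and $BWL = N$, so $O(BWK) = O(N \cdot \Kstar(N))$, as claimed.

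The only mildly delicate point is the Turing-model bookkeeping of the skips. Naive decrement of an $O(\log N)$-bit counter per bit would add a $\log$ factor. I would handle this with the amortised counting of \Cref{sec:counting} (decrementing from $WK$ to zero costs $O(WK)$ amortised per row) or with the lockstep marker tape. Either way, the cost stays linear in the data size.
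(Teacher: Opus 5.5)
Your proposal is correct and is essentially the paper's argument, which simply calls this a copy-and-paste operation of cost $O(BWK) = O(BWL \cdot \Kstar(N)) = O(N \cdot \Kstar(N))$. Your handling of the vanishing vectors $Z_{-1,w}$ and $Z_{h,2W-1}$ and of the skip/copy bookkeeping makes explicit what the paper leaves implicit. Note that a plain counter decremented from $WK$ to zero already costs only $O(WK)$ amortised by \Cref{sec:counting}, so no extra log factor arises even with the naive approach.
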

\begin{proof}
This is just a simple copy-and-paste operation,
requiring time $O(BWK) = O(BWL \cdot \Kstar(N)) = O(N \cdot \Kstar(N))$.
\end{proof}

\subsection{Apply compression}
\label{sec:core-compression}

At this stage it would be possible to simply evaluate the inverse transforms
in \eqref{eq:A-slices-eval-2} to compute the desired $A^{h,w}(y)$.
However, as noted previously in connection with the forward transforms,
this approach is too slow because the total bit size is $\Theta(N)$.
Instead we will first use the compression framework
of \Cref{sec:compression} to reduce the number of transforms required.

To begin with, let us see how to rewrite \eqref{eq:A-slices-eval-2}
directly in terms of the target intervals~$a^r$.
By definition we have
\begin{align*}
(A^{h,w})_\ell & = ((A^h)_{\bangle{w}})_\ell \\
               & = (A^h)_{\ell W + w} = A_{hM +\ell W + w},
\qquad
0 \leq h < B, \quad 0 \leq w < W, \quad 0 \leq \ell < L.
\end{align*}
Now consider a typical entry of some~$a^r$, say
\[
(a^r)_t = A_{rT + t}, \qquad 0 \leq r < N/T, \quad 0 \leq t < T.
\]
By \eqref{eq:core-auxiliary-hypotheses}
we may write $rT + t$ uniquely in the form
\begin{equation}
\label{eq:r-decomposition}
rT + t = h_r M + \ell_r W + m_r T + t,
\end{equation}
with
\[
0 \leq h_r    < N/M = B, \qquad
0 \leq \ell_r < M/W = L, \qquad
0 \leq m_r    < W/T.
\]
Therefore we obtain
\[
(a^r)_t = A_{h_r M + \ell_r W + m_r T + t} = (A^{h_r,m_rT + t})_{\ell_r}.
\]
Similarly, if we define vectors $e^r \in \FF_2^T$ encoding
the coefficients of $E(x)$, i.e.,
\begin{equation}
\label{eq:defn-er}
(e^r)_t \coloneqq E_{rT+t},
\qquad r \geq 0, \quad 0 \leq t < T,
\end{equation}
then we obtain the analogous formula $(e^r)_t = (E^{h_r,m_rT + t})_{\ell_r}$.
Putting everything together, \eqref{eq:A-slices-eval-2} becomes
\begin{equation}
\label{eq:ar-formula}
(a^r)_t = (e^r)_t +
\sum_{\substack{\sigma = 0, 1 \\ \tau = 0, 1}}
\bigl(\inv^{\sigma,\tau}(Z^{\sigma,\tau}_{h_r,m_r,t})\bigr)_{\ell_r},
\qquad 0 \leq r < N/T, \quad 0 \leq t < T
\end{equation}
where
\begin{equation}
\label{eq:Z-sigmatau-reindexed}
Z^{\sigma,\tau}_{h,m,t} \coloneqq Z^{\sigma,\tau}_{h,mT+t} \in \FF_2^K,
\qquad
\begin{aligned}
& (\sigma, \tau) \in \{0, 1\}^2, & & 0 \leq h < B, \\
& 0 \leq m < W/T, & & 0 \leq t < T.
\end{aligned}
\end{equation}

We are now in a position to introduce the compression framework.
Let $R \geq 1$ be the parameter specified in the input to
\Cref{thm:core-auxiliary}.
Let $S \geq 1$ be determined from $T$ and $R$ via
\eqref{eq:defn-lambda} and \eqref{eq:defn-S},
and let $\kappa \colon \FF_2^T \to \FF_2^S$
be the corresponding compression map constructed
in \Cref{sec:construct-kappa}.
Let $\kappa_{s,t} \in \FF_2$ be the entries of the matrix of~$\kappa$
with respect to the standard bases for $\FF_2^T$ and $\FF_2^S$,
i.e., so that for any $a \in \FF_2^T$ we have
\[
\kappa(a)_s = \sum_{0 \leq t < T} \kappa_{s,t} a_t,
   \qquad 0 \leq s < S.
\]
Let us define
\begin{equation}
\label{eq:defn-hat-ar-er}
\hat a^r \coloneqq \kappa(a^r) \in \FF_2^S,
\qquad
\hat e^r \coloneqq \kappa(e^r) \in \FF_2^S,
\qquad 0 \leq r < N/T,
\end{equation}
and also compressed versions of the $Z^{\sigma,\tau}_{h,m,t}$ given by
\begin{equation}
\label{eq:compressed-Z}
\hat Z^{\sigma,\tau}_{h,m,s} \coloneqq
\sum_{0 \leq t < T} \kappa_{s,t} Z^{\sigma,\tau}_{h,m,t} \in \FF_2^K,
\qquad
\begin{aligned}
& (\sigma, \tau) \in \{0, 1\}^2, & & 0 \leq h < B, \\
& 0 \leq m < W/T, & & 0 \leq s < S.
\end{aligned}
\end{equation}
Applying $\kappa$ to both sides of \eqref{eq:ar-formula},
and using the $\FF_2$-linearity of~$\inv^{\sigma,\tau}$,
we then obtain the following ``compressed'' version of \eqref{eq:ar-formula}:
\begin{equation}
\label{eq:kappa-ar}
(\hat a^r)_s = (\hat e^r)_s +
   \sum_{\substack{\sigma = 0, 1 \\ \tau = 0, 1}}
   \bigl(\inv^{\sigma,\tau}(\hat Z^{\sigma,\tau}_{h_r,m_r,s})\bigr)_{\ell_r},
   \qquad 0 \leq r < N/T, \quad 0 \leq s < S.
\end{equation}

\begin{rem}
The algebra just carried out is one of the most important steps in the paper.
As mentioned in \Cref{rem:key-idea},
the key insight is that the compression and transform maps ``commute'',
essentially because they are linear maps.
This is what enables us to reduce the number of inverse transforms,
and is ultimately the source of the overall savings in the main results.
\end{rem}

The next result estimates the complexity of computing the compressed vectors.
\begin{prop}
\label{prop:compute-hatZ}
Given as input
\begin{itemize}
\item
the polynomial $E_1(x) \pmod{x^N}$ computed in
\Cref{prop:compute-E1},
\item
the polynomial $E_2(x) \pmod{x^N}$ computed in
\Cref{prop:compute-E2}, and
\item the vectors $Z^{\sigma,\tau}_{h,w} \in \FF_2^K$
computed in \Cref{prop:compute-Z2},
\end{itemize}
we may compute the compressed vectors
\[
\hat e^r \in \FF_2^S, \qquad 0 \leq r < N/T
\]
and the compressed vectors
\[
\hat Z^{\sigma,\tau}_{h,m,s} \in \FF_2^K, \qquad
\begin{aligned}
& (\sigma, \tau) \in \{0, 1\}^2, & & 0 \leq h < B, \\
& 0 \leq m < W/T, & & 0 \leq s < S,
\end{aligned}
\]
arranged lexicographically by $(h,m,s)$ for each $(\sigma,\tau)$,
in time
\[
O\left(\left(1 + \frac{R(\log T)^{1+o(1)}}{T}\right)
      N \log T \cdot \Cstar(T) \Kstar(N)\right) + T (\log T)^{3+o(1)}.
\]
\end{prop}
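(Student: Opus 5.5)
The plan is to treat the two outputs separately, both times reusing the compression algorithm of \Cref{thm:compression}. For $\hat e^r$: add $E_1$ and $E_2$ modulo $x^N$ in time $O(N)$ to obtain $E(x) \bmod x^N$, which is already laid out as the concatenation of the vectors $e^r$ for $0 \leq r < N/T$. Then apply \Cref{thm:compression} to each of the $N/T$ vectors. Only one precomputation is needed, of cost $T(\log T)^{3+o(1)}$, since it depends only on $T$ and $R$. The total cost is
\[
O\bigl((N/T)\bigl(T\log T\cdot\Cstar(T) + R(\log T)^{2+o(1)}\bigr)\bigr),
\]
which fits the claimed bound.

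For the $\hat Z$ vectors, fix $(\sigma,\tau)$. By \eqref{eq:compressed-Z}, $\hat Z^{\sigma,\tau}_{h,m,s}$ is obtained by applying $\kappa$ coordinatewise in $\FF_2^K$ to the family $(Z^{\sigma,\tau}_{h,m,t})_{0\le t<T}$. Since $\kappa$ is $\FF_2$-linear, for each fixed $(h,m)$ and each coordinate $k \in \{0,\dots,K-1\}$ we have the vector $t \mapsto (Z^{\sigma,\tau}_{h,m,t})_k \in \FF_2^T$, and its image under $\kappa$ gives the $k$-th coordinates of $\hat Z^{\sigma,\tau}_{h,m,s}$ for all $s$. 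The data from \Cref{prop:compute-Z2} is stored with $(h,w)=(h,mT+t)$ outermost and $k$ innermost. So for each $(h,m)$ I will transpose a $T\times K$ bit array into a $K\times T$ array (\Cref{lem:transpose}), then apply \Cref{thm:compression} to each of the $K$ rows, and finally transpose the resulting $K\times S$ array back to $S\times K$.

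The count is as follows. The number of pairs $(h,m)$ is $BW/T$. With $K\le L\Kstar(N)$ and $BLW=N$, the number of compressions is
\[
(BW/T)\cdot K \le (N/T)\Kstar(N),
\]
each costing $O(T\log T\cdot\Cstar(T)) + R(\log T)^{2+o(1)}$. This gives
\[
O\Bigl(\Bigl(1+\frac{R(\log T)^{1+o(1)}}{T}\Bigr) N\log T\cdot\Cstar(T)\Kstar(N)\Bigr),
\]
where the secondary term comes from $(N/T)\Kstar(N)R(\log T)^{2+o(1)}$ written as a fraction $R(\log T)^{1+o(1)}/T$ of $N\log T\,\Kstar(N)$; here we use $\Cstar\gg 1$. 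The transpositions cost $O(TK\log T)$ and $O(SK\log S)$ per $(h,m)$, totalling $O(N\log T\,\Kstar(N)) + O((S/T)N\log T\,\Kstar(N))$. Since $S = 2\lambda R \ll R\log T$, these are absorbed in the same bound. Constant factors from the four choices of $(\sigma,\tau)$ are harmless.

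The main obstacle is the bookkeeping of data layout on the Turing tape, so that the transpositions are only local of size $T\times K$ rather than global. This requires checking that, for fixed $(\sigma,\tau,h)$, the block of $Z^{\sigma,\tau}_{h,w}$ with $w=mT+t$, $0\le t<T$, is contiguous. It is, since the arrangement is lexicographic by $(h,w)$ and $m$ is the high part of $w$. Likewise the output ordering lexicographic by $(h,m,s)$ falls out directly. One should also confirm that the zero pattern and the linearity of $\kappa$ require nothing beyond \Cref{thm:compression}, and that reusing the precomputation across all calls is legitimate.
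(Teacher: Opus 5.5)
Your proposal is correct and follows the paper's proof essentially step for step. Like the paper, you form $E=E_1+E_2$ and compress each $e^r$ after a single shared precomputation. For the $\hat Z$ vectors you transpose the last two coordinates of the $B\times(W/T)\times T\times K$ array, make the $O((N/T)\Kstar(N))$ calls to \Cref{thm:compression}, and transpose the $K\times S$ blocks back, absorbing the transposition costs via $S\ll R\log T$ and $\Cstar\gg 1$.
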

\begin{proof}
We first find $E(x) = E_1(x) + E_2(x) \pmod{x^N}$ in time $O(N)$
to obtain the vectors $e^r \in \FF_2^T$.
We then compute each $\hat e^r \in \FF_2^S$ by applying
the compression algorithm of \Cref{thm:compression}.
After a precomputation of cost $T (\log T)^{3+o(1)}$,
the cost per interval is
$O(T \log T \cdot \Cstar(T)) + R (\log T)^{2+o(1)}$,
so the total cost over all $N/T$ intervals is
\[
O(N \log T \cdot \Cstar(T)) + \frac{N R (\log T)^{2+o(1)}}{T}
   \ll \left(1 + \frac{R(\log T)^{1+o(1)}}{T}\right) N \log T \cdot \Cstar(T).
\]

Now consider the $Z^{\sigma,\tau}_{h,w}$.
We must first reorganise the input data to enable contiguous access to
the relevant vectors of length~$T$.
Writing $w = mT + t$, the inputs
$(Z^{\sigma,\tau}_{h,mT+t})_k = (Z^{\sigma,\tau}_{h,m,t})_k \in \FF_2$
are provided as bit arrays of size
$B \times (W/T) \times T \times K$ for each $(\sigma,\tau)$,
i.e., arranged lexicographically by $(h,m,t,k)$.
We must transpose the last two coordinates to obtain arrays of size
$B \times (W/T) \times K \times T$.
This can be done in time
\[
O(B (W/T) \cdot K T \log T) = O(N \log T \cdot \Kstar(N)).
\]
After this rearrangement,
we evaluate \eqref{eq:compressed-Z} for each $(\sigma,\tau,h,m,k)$,
again via \Cref{thm:compression}.
The number of invocations is
\begin{equation}
\label{eq:compression-count}
O(B (W/T) K) = O((N/T) \Kstar(N)),
\end{equation}
and the cost per interval is again
$O(T \log T \cdot \Cstar(T)) + R (\log T)^{2+o(1)}$,
so the total cost is
\[
O\left(\left(1 + \frac{R(\log T)^{1+o(1)}}{T}\right)
      N \log T \cdot \Cstar(T) \Kstar(N) \right).
\]
The $\hat Z^{\sigma,\tau}_{h,m,s} \in \FF_2^K$ (the ``compressed'' data)
now form bit arrays of size
$B \times (W/T) \times K \times S$ for each $(\sigma,\tau)$,
and we must transpose the last two coordinates
to obtain the final output arrays of size
$B \times (W/T) \times S \times K$.
The cost of this step is
\[
O(B (W/T) \cdot S K \log S) =
   O\left(\frac{S}{T} \, N \log S \cdot \Kstar(N) \right).
\]
By \eqref{eq:defn-S} we have $S \ll R \log T$,
and the hypotheses of \Cref{thm:core-auxiliary}
include that $R \leq T/2$, so $\log S \ll \log T$.
The transposition cost thus becomes
\[
O\left(\frac{R \log T}{T} \, N \log T \cdot \Kstar(N) \right). \qedhere
\]
\end{proof}

\subsection{Inverse transforms and decompression}
\label{sec:core-inverse}

The next step is to perform the inverse transforms
in \eqref{eq:kappa-ar} to deduce the compressed vectors~$\hat a^r$.
\begin{prop}
\label{prop:core-compute-kappa-ar}
Given as input the vectors $\hat e^r \in \FF_2^S$
and $\hat Z^{\sigma,\tau}_{h,m,s} \in \FF_2^K$ computed in
\Cref{prop:compute-hatZ},
we may compute
\[
\hat a^r \in \FF_2^S, \qquad 0 \leq r < N/T
\]
in time
\[
O\left(N \tsp \frac{R \log T \log N}{T} \Mstar(N) \right).
\]
\end{prop}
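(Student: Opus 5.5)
The plan is to evaluate \eqref{eq:kappa-ar} directly. For each $(\sigma,\tau)\in\{0,1\}^2$, each $h<B$, each $m<W/T$ and each $s<S$, we apply the inverse transform $\inv$ to $\hat Z^{\sigma,\tau}_{h,m,s}\in\FF_2^K$. This produces a polynomial in $\FF_2[y]_{2L-1}$, from which we extract the window $[\sigma L-\tau,(\sigma+1)L-\tau]$ to get $\inv^{\sigma,\tau}(\hat Z^{\sigma,\tau}_{h,m,s})\in\FF_2[y]_L$.

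The number of inverse transforms is $4\cdot B\cdot(W/T)\cdot S$. By \eqref{eq:core-eval-cost} each costs $O(L\log N\cdot\Mstar(N))$. Using $BLW=N$ and $S\ll R\log T$ from \eqref{eq:defn-S}, the total is
\[
O\Bigl(\frac{BWS}{T}\,L\log N\cdot\Mstar(N)\Bigr)=O\Bigl(N\,\frac{R\log T\log N}{T}\Mstar(N)\Bigr),
\]
which is exactly the claimed bound. The window extraction for each transform is a linear-time copy of $O(L)$ bits, so it costs only $O(NS/T)$ in total, which is dominated.

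Next we sum over $(\sigma,\tau)$. For fixed $(h,m,s)$ we add the four resulting polynomials in $\FF_2[y]_L$, in time $O(L)$ each, giving a polynomial $P_{h,m,s}(y)$. By \eqref{eq:kappa-ar} its coefficient of $y^{\ell}$ equals $(\hat a^r)_s+(\hat e^r)_s$, where $r$ is the index with $(h_r,\ell_r,m_r)=(h,\ell,m)$ in \eqref{eq:r-decomposition}.

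The data now sit in an array indexed lexicographically by $(h,m,s,\ell)$. The desired output $\hat a^r$ is ordered by $r$, and by \eqref{eq:r-decomposition} the index $r$ corresponds lexicographically to $(h,\ell,m)$, with $s$ innermost. So for each $h$ we must transpose an array of shape $(W/T)\times S\times L$ into shape $L\times(W/T)\times S$. This can be done as a two-dimensional transposition of size $(WS/T)\times L$ with $1$-bit entries. By \Cref{lem:transpose} its cost is $O((WS/T)L\log N)$ per $h$, hence $O(N\frac{S}{T}\log N)=O(N\frac{R\log T\log N}{T})$ in total, which is within the bound since $\Mstar\gg1$.

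Finally, we add $\hat e^r$, which is provided ordered by $r$, in time $O(NS/T)$.

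The only delicate point is this data rearrangement: making sure the transposition dimensions really factor as described, so the cost carries a $\log N$ factor and not worse. Apart from that, the bound follows immediately from counting inverse transforms.
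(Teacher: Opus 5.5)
Your proposal is correct and follows the paper's proof essentially step for step: count the $O(B(W/T)S)$ inverse transforms, extract and sum the $\inv^{\sigma,\tau}$ windows in linear time, transpose $(m,s)$ against $\ell$ to reorder by $r$, and add in $\hat e^r$. The only cosmetic difference is in bounding the log factor of the transposition. You use $\log\min(WS/T, L) \leq \log L \leq \log N$, whereas the paper bounds $\log(SW/T)$ via $W \leq N$ and $S/T \ll \log N$; both give $O((S/T) N \log N)$, which is dominated by the transform cost.
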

\begin{proof}
We first compute the inverse transforms
\[
\inv(\hat Z^{\sigma,\tau}_{h,m,s}) \in \FF_2[y]_{2L-1}, \qquad
\begin{aligned}
& (\sigma, \tau) \in \{0, 1\}^2, & & 0 \leq h < B, \\
& 0 \leq m < W/T, & & 0 \leq s < S.
\end{aligned}
\]
The cost of each transform is given by \eqref{eq:core-eval-cost},
and the number of transforms is $O(B (W/T) S)$,
so the cost of this step is
\[
O(B (W/T) S \cdot L \log N \cdot \Mstar(N))
   = O\left( \frac{S}{T} \, N \log N \cdot \Mstar(N) \right).
\]

Next we compute the sums
\[
\sum_{\substack{\sigma = 0, 1 \\ \tau = 0, 1}}
   \inv^{\sigma,\tau}(\hat Z^{\sigma,\tau}_{h,m,s}) \in \FF_2[y]_L,
   \qquad 0 \leq h < B, \quad 0 \leq m < W/T, \quad 0 \leq s < S.
\]
Recall from \eqref{eq:inv-sigmatau} that $\inv^{\sigma,\tau}(Z)$
simply extracts from $\inv(Z)$ a certain subinterval of length~$L$,
parameterised by $\sigma$ and~$\tau$.
This can be achieved in linear time,
and similarly for the summation over $(\sigma, \tau)$.
The cost of this step is thus
\[
O(B (W/T) S \cdot L) = O\left(\frac{S}{T} \, N \right).
\]

At this point we have computed the quantities
\[
\sum_{\substack{\sigma = 0, 1 \\ \tau = 0, 1}}
\bigl(\inv^{\sigma,\tau}(\hat Z^{\sigma,\tau}_{h,m,s})\bigr)_\ell \in \FF_2,
\qquad
\begin{aligned}[t]
   & 0 \leq h < B, & & 0 \leq m < W/T, \\
   & 0 \leq s < S, & & 0 \leq \ell < L
\end{aligned}
\]
as a bit array of size $B \times (W/T) \times S \times L$,
ordered lexicographically by $(h,m,s,\ell)$.
These are exactly the quantities appearing in \eqref{eq:kappa-ar},
but to recover the $\hat a^r \in \FF_2^S$ ordered correctly by $r$,
we must reorganise the data.
Namely, according to \eqref{eq:r-decomposition} we have
$r = h_r (M/T) + \ell_r (W/T) + m_r$,
so we want the data ordered by $(h,\ell,m,s)$.
This may be achieved by transposing $(m,s)$ and~$\ell$,
which costs
\[
O\bigl(B \cdot (SW/T) L \log(SW/T)\bigr)
   = O\left(\frac{S}{T} \, N \log(SW/T) \right).
\]
We have $W \leq N$ and $S/T \ll (R \log T)/T \ll \log T \leq \log N$
(since we assumed that $R \leq T/2$),
so $\log(SW/T) \ll \log N$, and the last cost estimate becomes
\[
O\left(\frac{S}{T} \, N \log N \right).
\]
After this transposition, we have successfully computed
\[
\sum_{\substack{\sigma = 0, 1 \\ \tau = 0, 1}}
\bigl(\inv^{\sigma,\tau}(\hat Z^{\sigma,\tau}_{h_r,m_r,s})\bigr)_{\ell_r},
\qquad 0 \leq r < N/T, \quad 0 \leq s < S
\]
ordered correctly by $(r,s)$.
Adding in the supplied $\hat e^r$ terms costs an additional $O((S/T) N)$,
and finally yields the desired vectors $\hat a^r$
according to \eqref{eq:kappa-ar}.
\end{proof}

Finally, we run the decompression algorithm to obtain
the desired $c^r \in \FF_2^T$.
\begin{prop}
\label{prop:core-compute-cr}
Given the vectors $\hat a^r \in \FF_2^S$ computed in
\Cref{prop:core-compute-kappa-ar},
we may compute candidates
\[
c^r \in \FF_2^T, \qquad 0 \leq r < N/T
\]
satisfying the conclusion of \Cref{thm:core-auxiliary} in time
\[
O\left(\left(1 + \frac{R(\log T)^{4+o(1)}}{T}\right)
      N \log T \cdot \Cstar(T)\right) + T (\log T)^{3+o(1)}.
\]
\end{prop}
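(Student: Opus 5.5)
The plan is to apply the decompression algorithm of \Cref{thm:decompression} separately to each $\hat a^r$, with the parameters $T$ and $R$ of \Cref{thm:core-auxiliary}. The precomputation of \Cref{thm:decompression} depends only on $T$ and $R$, so we perform it once; its cost is $T(\log T)^{3+o(1)}$, which is the last term of the bound.

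For each $r$ with $0 \leq r < N/T$, we feed $b \coloneqq \hat a^r$ to the decompression routine and set $c^r$ as follows:
\begin{itemize}
\item if the routine returns a vector $a' \in \FF_2^T$, we set $c^r \coloneqq a'$;
\item if it returns ``FAIL'', we set $c^r \coloneqq 0$.
\end{itemize}
In either case $\wt(c^r) \leq R$, since by \Cref{thm:decompression} any returned vector has weight at most $R$, and the zero vector trivially does.

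Correctness follows from the uniqueness part of \Cref{thm:decompression}. By \eqref{eq:defn-hat-ar-er} we have $\hat a^r = \kappa(a^r)$, and this is exactly what \Cref{prop:core-compute-kappa-ar} produces via \eqref{eq:kappa-ar}. Suppose $\wt(a^r) \leq R$. Then $a^r$ is a vector of weight at most $R$ with $\kappa(a^r) = \hat a^r$. Hence the routine does not fail, any such vector is unique, and so the returned vector equals $a^r$, giving $c^r = a^r$. When $\wt(a^r) > R$, the theorem requires nothing beyond the weight bound on $c^r$, which we have already arranged.

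For the running time, each call to the decompression routine costs $O(T \log T \cdot \Cstar(T)) + R(\log T)^{5+o(1)}$. Writing the $c^r$ out in order, together with the FAIL bookkeeping, adds $O(T)$ per interval. The inputs are already ordered by $r$ in \Cref{prop:core-compute-kappa-ar}, so no transposition is needed. Summing over the $N/T$ intervals gives
\[
O(N \log T \cdot \Cstar(T)) + \frac{N R(\log T)^{5+o(1)}}{T}
= O\left(\left(1 + \frac{R(\log T)^{4+o(1)}}{T}\right) N \log T \cdot \Cstar(T)\right),
\]
after absorbing one factor of $\log T$ into the main term, using $\Cstar(T) \gg 1$. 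I do not expect any real obstacle here. The only subtle point is the uniqueness argument, and that is already supplied by \Cref{thm:decompression}.
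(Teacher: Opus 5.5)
Your proposal is correct and matches the paper's proof: perform the precomputation once, apply the decompression algorithm to each $\hat a^r = \kappa(a^r)$, and set $c^r$ to its output or to zero on ``FAIL''. Summing the per-interval cost over the $N/T$ intervals and absorbing one $\log T$ factor gives the stated bound.
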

\begin{proof}
We apply \Cref{thm:decompression} to each $\hat a^r = \kappa(a^r)$,
taking $c^r$ to be the output of the decompression algorithm,
or simply zero if it returns ``FAIL''.
(The algorithm could optionally return additional information to report
such failures, but we will not make use of this.)
After a precomputation of cost $T (\log T)^{3+o(1)}$,
the cost per interval is
$O(T \log T \cdot \Cstar(T)) + R (\log T)^{5+o(1)}$.
The number of intervals is~$N/T$,
leading to the claimed complexity bound.
\end{proof}

This concludes the proof of \Cref{thm:core-auxiliary}.
The complexity bound \eqref{eq:core-auxiliary-bound} is obtained by summing
the contributions from Propositions
\ref{prop:compute-E1},
\ref{prop:compute-E2},
\ref{prop:compute-FG-blocks},
\ref{prop:compute-transforms},
\ref{prop:compute-Z},
\ref{prop:compute-Z2},
\ref{prop:compute-hatZ},
\ref{prop:core-compute-kappa-ar} and
\ref{prop:core-compute-cr}.

\subsection{Proof of \Cref{thm:core}}
\label{sec:core-main}

Assume that we are given $N$, $T$ and $R$ satisfying the hypotheses
of \Cref{thm:core}.
Our first task is to choose suitable parameters $M$ and $W$
to use in \Cref{thm:core-auxiliary}.
For $W$ we will take
\begin{equation}
\label{eq:defn-W}
W \coloneqq T \prod_{3 \leq p \leq X} p
\end{equation}
where $X \geq 3$ is an integer defined by
\[
X \coloneqq 2 \log \log N \log \log \log N + O(1),
\]
and for $M$ we choose
\begin{equation}
\label{eq:defn-M}
M \coloneqq \frac{N}{\log^3 N} + O(W)
\qquad
\textn{ such that } W \divides M.
\end{equation}
To ensure that \eqref{eq:core-auxiliary-hypotheses} holds,
we round $N$ up to the nearest multiple of $M$,
setting $N' \coloneqq \lceil N/M \rceil M$.
We then invoke \Cref{thm:core-auxiliary} with the
parameters $N'$, $T$, $R$, $M$ and $W$,
to obtain a list of candidates $c^r \in \FF_2^T$ for $0 \leq r < N'/T$.
Any surplus vectors, i.e., for $r \geq N/T$, may be discarded.

The output of the algorithm is clearly correct,
and it remains to analyse its complexity.
We begin by estimating the ratio $\rho(W)/W$ appearing
in \eqref{eq:core-auxiliary-bound},
where we recall that $\rho(W)$ is defined by \eqref{eq:defn-rho}.
\begin{lem}
For $W$ defined as in \eqref{eq:defn-W} we have
\begin{equation}
\label{eq:rhoW-estimate}
\frac{\rho(W)}{W} \ll \frac{1}{\log N}.
\end{equation}
\end{lem}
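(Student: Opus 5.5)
The plan is to bound $\rho_d(W)$ for each $d \in \dset$ via the Chinese remainder theorem, reducing to a local count of squares modulo each prime power dividing~$W$.

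\textbf{Step 1: multiplicativity.} Write $W = \prod_{q^e \| W} q^e$. Since $T$ may share primes with $\prod_{3 \leq p \leq X} p$, this factorisation must be taken over the actual prime powers dividing $W$. The map $\alpha \mapsto -d\alpha^2$ on $\ZZ/W\ZZ$ decomposes under CRT into the corresponding maps modulo each $q^e$. Hence its image has size $\prod_{q^e \| W} \rho_d(q^e)$. Moreover $\rho_d(q^e) \leq s(q^e)$, where $s(q^e)$ denotes the number of squares in $\ZZ/q^e\ZZ$, because the image of $\alpha \mapsto -d\alpha^2$ is the image of the set of squares under multiplication by $-d$.

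\textbf{Step 2: local bounds.} For every prime power we use the trivial bound $s(q^e) \leq q^e$. For an odd prime $q$, a square modulo $q^e$ reduces to a square modulo $q$, and there are exactly $(q+1)/2$ squares modulo $q$ (counting $0$). Each residue modulo $q$ has $q^{e-1}$ lifts, so
\[
s(q^e) \leq q^{e-1} \cdot \frac{q+1}{2}.
\]
Every prime $3 \leq p \leq X$ divides $W$, so it contributes the factor $(1 + 1/p)/2$, and all other prime powers contribute at most~$1$. This gives
\[
\frac{\rho(W)}{W} \leq \prod_{3 \leq p \leq X} \frac{1 + 1/p}{2}
   = 2^{-(\pi(X) - 1)} \prod_{3 \leq p \leq X} \Bigl(1 + \frac{1}{p}\Bigr)
   \ll 2^{-\pi(X)} \log X,
\]
where the last step uses Mertens' theorem.

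\textbf{Step 3: the numerical comparison (the only delicate point).} Write $L \coloneqq \log \log N$ and $\ell \coloneqq \log L$, so that $X = 2L\ell + O(1)$ and $\log X = \ell + \log \ell + O(1)$. By Chebyshev's bound $\pi(X) \geq X/\log X$ (valid for large $X$) we get $\pi(X) \geq 2L\bigl(1 - O(\log \ell / \ell)\bigr)$. Therefore
\[
\pi(X) \log 2 \geq 1.38\, L - o(L),
\]
which exceeds $L + \log \log X + O(1) = L + O(\log \ell)$ for all sufficiently large~$N$. Consequently $2^{-\pi(X)} \log X \leq e^{-L} = 1/\log N$ for large $N$. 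For the finitely many small $N$ the ratio $\rho(W)/W \leq 1$ is trivially $O(1/\log N)$, and these cases are absorbed into the implied constant. This establishes \eqref{eq:rhoW-estimate}.

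The main thing to watch is that the constant $2$ in the definition of $X$ gives enough slack over the threshold $\pi(X) \log 2 \approx \log \log N$ that is needed.
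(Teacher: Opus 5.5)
Your proof is correct and follows the same route as the paper: CRT multiplicativity, the local bound $\rho_d(p^e) \le \frac{p+1}{2}p^{e-1}$ for the odd primes $p \le X$ (trivial bound elsewhere), Mertens for $\prod(1+p^{-1})$, and $\pi(X) = (2+o(1))\log\log N$ to beat $\log N \log X$. The only real difference is that you get the local bound more directly, by counting the lifts of the $(p+1)/2$ squares modulo $p$, instead of the paper's exact Hensel-lifting count. One small correction: the inequality $\pi(X) \ge X/\log X$ that you attribute to Chebyshev is really Rosser--Schoenfeld's, although the prime number theorem already suffices for your argument.
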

\begin{proof}
Let $d \in \dset$.
We use the following facts about the function
$\rho_d(\cdotspace{1pt})$:
\begin{itemize}
\item
Let $p \geq 2$ be a prime and let $e \geq 1$.
Then clearly
\[
\rho_d(p^e) \leq p^e.
\]
\item 
Let $p \geq 3$ be a prime and let $e \geq 1$.
We claim that
\[
\rho_d(p^e) \leq \frac{p+1}{2} \, p^{e-1}.
\]
To prove this, consider a nonzero $w \in \ZZ/p^e\ZZ$.
Write $w = p^f u$ where $0 \leq f < e$ and $u$ is a unit.
Since $d$ is also a unit,
Hensel lifting implies that the congruence $w \equiv -d \alpha^2 \pmod{p^e}$
can be solved for $\alpha$ if and only if $f$ is \emph{even}
and the congruence $u \equiv -d \alpha^2 \pmod p$ has a solution.
The latter is solvable for exactly $(p-1)/2$ choices of~$u \pmod p$.
Therefore
\[
\rho_d(p^e) = 1 + \sum_{\substack{0 \leq f < e \\ \textn{$f$ even}}}
   \frac{p-1}{2} \, p^{e-1-f}.
\]
If $e = 1$ then $\rho_d(p^e) = 1 + (p-1)/2 = (p+1)/2$ and we are done.
Suppose now that $e \geq 2$. Then
\begin{multline*}
\hspace{20pt} \rho_d(p^e) - \frac{p-1}{2} \, p^{e-1}
   = 1 + \sum_{\substack{2 \leq f < e \\ \textn{$f$ even}}}
      \frac{p-1}{2} \, p^{e-1-f}
   < 1 + \frac{p-1}{2} \, p^{e-1} \, \frac{1}{p^2-1} \\
   = p^{e-1} \left( \frac{1}{p^{e-1}} + \frac{1}{2(p+1)}\right)
   \leq p^{e-1} \left(\frac{1}{3} + \frac{1}{8}\right) < p^{e-1},
\end{multline*}
and the claim follows immediately.
\item
For any integer $W \geq 1$,
let $W = \prod_{p \divides W} p^{e_p}$ be the factorisation of $W$
into prime powers.
Then the Chinese remainder theorem implies that
\[
\rho_d(W) = \prod_{p \divides W} \rho_d(p^{e_p}).
\]
\end{itemize}

Now let $W$ be given by \eqref{eq:defn-W}
and let $W = \prod_{p \divides W} p^{e_p}$
be its factorisation into prime powers.
We have $e_p \geq 1$ for $3 \leq p \leq X$.
The above facts therefore imply that
\begin{multline*}
\frac{\rho_d(W)}{W}
= \prod_{p \divides W} \frac{\rho_d(p^{e_p})}{p^{e_p}}
\leq \prod_{3 \leq p \leq X} \frac{\rho_d(p^{e_p})}{p^{e_p}} \\
\leq \prod_{3 \leq p \leq X} \frac{(p+1)/2}{p}
= \frac{1}{2^{\pi(X)-1}}
   \prod_{3 \leq p \leq X} (1 + p^{-1})
\end{multline*}
where $\pi(X)$ denotes the usual prime-counting function,
i.e., $\pi(X) = \sum_{p \leq X} 1$.
We have $\pi(X) \sim X/\log X$ by the prime number theorem,
and $\prod_{p \leq X} (1 + p^{-1}) \asymp \log X$
by Mertens' theorem \cite[Thm.\,2.7(e)]{MV-mult-nt}.
Now observe that
\[
\frac{X}{\log X} =
   \frac{2 \log \log N \log \log \log N + O(1)}
      {\log \log \log N + \log \log \log \log N + O(1)}
   = (2 + o(1)) \log \log N,
\]
so
\[
\pi(X) = (1 + o(1)) \frac{X}{\log X} = (2 + o(1)) \log \log N
   > \frac{1}{\log 2}(\log \log N + \log \log X)
\]
for large $N$.
This implies that $2^{\pi(X)} > \log N \log X$ for large $N$, and hence that
\[
\frac{\rho_d(W)}{W} \ll \frac{\log X}{\log N \log X} = \frac{1}{\log N}.
\]
Since $\rho(W) = \max_{d \in \dset} \rho_d(W)$,
we conclude that also $\rho(W)/W \ll 1 / \log N$.
\end{proof}

Let us now analyse the complexity.
Let $C > 0$ be the constant appearing in \eqref{eq:logT-bound}.
For $N \leq \exp(\exp(\exp(C)))$ the running time is clearly $O_C(1)$,
so we may assume that $N > \exp(\exp(\exp(C)))$,
i.e., that $C < \log \log \log N$.
By the prime number theorem we have
$\log W = \log T + \sum_{3 \leq p \leq X} \log p \ll \log T + X$.
The hypothesis \eqref{eq:logT-bound} then implies that
\begin{equation}
\label{eq:logW-estimate}
\log W \ll \log \log N \log \log \log N.
\end{equation}
This estimate also shows that the $O(W)$ term in \eqref{eq:defn-M}
is negligible compared to the main $N/\log^3 N$ term, so
\begin{equation}
\label{eq:M-estimate}
M \sim \frac{N}{\log^3 N}.
\end{equation}
Moreover, we may clearly compute $X$, $W$ and $M$ in time $(\log N)^{O(1)}$.

After rounding $N$ up to $N'$ we have $N' = (1 + O(1/\log^3 N)) N$.
It follows easily that \eqref{eq:rhoW-estimate}, \eqref{eq:logW-estimate},
\eqref{eq:M-estimate} and \eqref{eq:logT-bound}
continue to hold with $N$ replaced by $N'$.
If we can establish \eqref{eq:core-bound} for $N'$,
then it certainly holds for the original~$N$.
Therefore for the rest of the discussion
there is no harm in writing $N$ instead of~$N'$.

Note that $B = N/M \sim \log^3 N$ by \eqref{eq:M-estimate}.
Using \eqref{eq:logT-bound}, \eqref{eq:rhoW-estimate},
\eqref{eq:logW-estimate} and \eqref{eq:M-estimate},
the complexity bound \eqref{eq:core-auxiliary-bound} simplifies to
\begin{multline}
\label{eq:core-bound-progress}
O_C \biggl(
   \frac{R \log N}{T} N \log \log N \cdot \Mstar(N) + N \cdot \Mstar(N) \\
   \shoveright{+ N \log \log N \,
      \Bigl(\log \log \log N \cdot \Mstar(W) \Kstar(N)
      + \Rstar(B) \Kstar(W) \Kstar(N) \Bigr) } \\
   + \left( 1 + \frac{R(\log \log N)^{4+o(1)}}{T} \right)
      N \log \log N \cdot \Cstar(T) \Kstar(N) \biggr).
\end{multline}
By \Cref{thm:restricted-simple} we may take
\[
\Rstar(B) < \exp((\log \log B)^{1/2+o(1)})
   < (\log B)^{o(1)} < (\log \log N)^{o(1)}.
\]
Assuming that we use a sufficiently fast multiplication algorithm
and a transform scheme with sufficiently low multiplicative complexity
(see \eqref{eq:ffmul} and \eqref{eq:Kcost-bound}), then
\begin{align*}
\Mstar(W) & \leq \Mstar(N) < (\log \log N)^{o(1)}, \\
\Kstar(W) & \leq \Kstar(N) < (\log \log N)^{o(1)},
\end{align*}
and
\[
\Cstar(T) = \Mstar(T) + \Kstar(T) \leq \Mstar(N) + \Kstar(N)
   < (\log \log N)^{o(1)}.
\]
The bound \eqref{eq:core-bound-progress}
thus simplifies to \eqref{eq:core-bound},
completing the proof of \Cref{thm:core}.

\begin{rem}
\label{rem:W-bigger-than-T}
It is clear from the above discussion that our choice of $W$
is essentially optimal,
i.e., in order to ensure that $\rho(W)/W \ll 1/\log N$,
we are forced to take $\log W \gg \log \log N \log \log \log N$.
On the other hand,
to minimise the overall complexity of the
probabilistic and deterministic algorithms,
we must take $\log T \ll \log \log N$
to optimise the dominant first term in \eqref{eq:core-auxiliary-bound}.
This discrepancy explains why we use two slicing parameters $T < W$
rather than just one.
(It may be possible to modify the heuristic version to manage with
$W = T = (\log N)^{\Theta(\log \log \log N)}$
without changing the overall complexity,
but the author has not checked this carefully.)
\end{rem}

\begin{rem}
\label{rem:W-logloglogN}
The choice $\log W \asymp \log \log N \log \log \log N$
introduces yet another obstruction to removing the
$(\log \log N)^{o(1)}$ factor in \Cref{thm:main-heuristic},
via the $N \log W \cdot \Mstar(W) \Kstar(N)$ term
in \eqref{eq:core-auxiliary-bound}.
\end{rem}

\section{The heuristic algorithm}
\label{sec:heuristic}

Recall that the core algorithm (\Cref{thm:core})
outputs an incorrect candidate $c^r \in \FF_2^T$
whenever the number of odd square-primes in the $r$\th interval
exceeds the threshold parameter~$R$.
In this section we pursue the simplest possible strategy
for dealing with these problematic intervals:
we simply \emph{assume} that they do not exist.
Our precise requirement is stated as the following conjecture.
Let $\tilde\pi(x)$ denote the counting function for the square-primes, i.e.,
\begin{equation}
\label{eq:tilde-pi-defn}
\tilde\pi(x) \coloneqq
   \sum_{m \geq 1} \;
   \sum_{\substack{m^2 p \leq x \\ \text{$p$ prime}}} 1
      = \sum_{1 \leq m \leq (x/2)^{1/2}} \pi\left(\frac{x}{m^2}\right).
\end{equation}
\begin{conj}
\label{conj:square-primes}
For all $x \geq 2$ we have
\[
\tilde\pi(x + \log^3 x) - \tilde\pi(x) < 3 \log^2 x,
\]
i.e., the interval $(x, x + \log^3 x]$ contains
fewer than $3 \log^2 x$ square-primes.
\end{conj}
We have stated the conjecture in this form for the sake of definiteness,
but to prove \Cref{thm:main-heuristic}
it would suffice to assume any inequality of the following type:
there exist constants $A > 2$ and $C > 0$ such that
\begin{equation}
\label{eq:conj-square-primes-weaker}
\tilde\pi(x + \log^A x) - \tilde\pi(x) < C \log^{A-1} x
\end{equation}
for all sufficiently large $x$.
(We could also of course replace $\tilde\pi(x)$ by the counting function
for the \emph{odd} square-primes,
but for aesthetic reasons we prefer this more general formulation.)

A proof of \Cref{conj:square-primes}
or any estimate of the form \eqref{eq:conj-square-primes-weaker}
appears to be way beyond current technology.
The best we can offer are heuristic arguments and numerical evidence,
which are presented in
\Crefrange{sec:primes-very-short}{sec:square-primes-very-short}.
Then in \Cref{sec:heuristic-algorithm} we describe
the main heuristic prime enumeration algorithm that relies on the conjecture,
and complete the proof of \Cref{thm:main-heuristic}.

\medskip
Before going on, let us determine what conditions must be imposed on
$T$ and~$R$ to ensure that the core algorithm is fast enough
to prove \Cref{thm:main-heuristic}.
Examining the second-last term of \eqref{eq:core-auxiliary-bound},
it is clear that to obtain an overall complexity bound
as sharp as \eqref{eq:main-heuristic}
we must take $\log T < (\log \log N)^{1+o(1)}$.
Similarly, by considering the first term of \eqref{eq:core-auxiliary-bound}
we see that we must have $R \log T \log N < T (\log \log N)^{1+o(1)}$.
The latter implies that $T > (\log N)^{1+o(1)}$
and hence that $\log T \gg \log \log N$,
so we are forced to take $R < T (\log \log N)^{o(1)} / \log N$.
To simplify the following discussion,
we will delete the $(\log \log N)^{o(1)}$ factors
and insist on the slightly stronger bounds
$\log T \ll \log \log N$ and $R \ll T / \log N$.
(Note that the first of these already appeared
as the hypothesis \eqref{eq:logT-bound} in \Cref{thm:core}.)

On the other hand,
we also need $R$ to be as least as large as the ``typical'' number of
odd square-primes in an interval of length~$T$.
Since the density of odd square-primes around $N$ is $\Theta(1/\log N)$
(see \Cref{rem:odd-square-primes-asymptotic}),
we are forced to take $R \gg T / \log N$, and hence 
\[
R \asymp \frac{T}{\log N}.
\]

Armed with these constraints on $T$ and $R$,
we can now briefly explain why we are led to consider a hypothesis of
the form \eqref{eq:conj-square-primes-weaker}.
To satisfy the constraints, suppose that we take for instance
\[
T = \log^A N + O(1), \qquad
R = C \log^{A-1} N + O(1)
\]
for some fixed $A > 1$ and $C > 0$.
For the algorithm to succeed on a given interval $(x, x+T]$,
we need to ensure that $\tilde\pi(x + T) - \tilde\pi(x) \leq R$
(here for simplicity we are again ignoring the ``odd'' condition),
or roughly speaking
\begin{equation}
\label{eq:tilde-pi-condition}
\tilde\pi(x + \log^A N) - \tilde\pi(x) < C \log^{A-1} N.
\end{equation}
Of course, it is unreasonable to expect this inequality to hold for
values of $x$ that are too small relative to~$N$,
as the density of square-primes will be higher for such intervals.
So we should only require \eqref{eq:tilde-pi-condition} to hold
for~$x$ in some wide range near~$N$, say $N/\log N < x < N$.
For such~$x$ we may approximate $\log N$ by $\log x$,
leading immediately to a hypothesis of the form
\eqref{eq:conj-square-primes-weaker}.

The distribution of square-primes in such short intervals
does not seem to have been studied before,
but the corresponding question for \emph{primes} in short intervals
has received considerable attention.
In the next section,
we begin our heuristic discussion by reviewing
what is known (or believed) for the prime case.

\subsection{Primes in very short intervals}
\label{sec:primes-very-short}

The prime number theorem suggests that
the number of primes in the interval $(x, x+y]$, for $y = o(x)$,
should be roughly $y / \log x$.
Let us consider the function
\[
D(x,y) \coloneqq \frac{\pi(x+y) - \pi(x)}{y / \log x},
\]
which measures the deviation from this predicted value.
For large enough $y = y(x)$,
we do indeed have $D(x,y) \to 1$ as $x \to \infty$.
For example, 
it is known that ${D(x,y) \to 1}$ holds for $y = x^{7/12 + \eps}$
\cite[\S10.5]{IK-analytic}.
Even for $y$ as small as $x^{0.525}$,
it is known at least that $D(x,y) \asymp 1$
(see \cite{BHP-diff} and \cite[Thm.\,3.9]{MV-mult-nt}
for the lower and upper bounds respectively).
But these intervals are far too large for our purposes;
as indicated above, we are interested in the case $y = \log^A x$
for fixed $A > 1$.
In this regime, very little is known.

It is believed that the cases $A \leq 2$ and $A > 2$
behave quite differently,
with a phase change in behaviour occurring around $A = 2$ \cite{GL-short}.
Henceforth we consider only the $A > 2$ case.
By analogy with our requirement \eqref{eq:conj-square-primes-weaker}
for the square-prime situation,
what we hope to be true is that $D(x, \log^A x) \ll 1$
for each fixed $A > 2$.

For $A > 2$,
Selberg showed (assuming the Riemann Hypothesis) that indeed
$D(x, \log^A x) \to 1$ as $x \to \infty$,
except possibly for $x$ lying in a sparse exceptional set
\cite{Selberg-normal-density}.
Unfortunately, ``almost all'' results are useless
for proving a result like \Cref{thm:main-heuristic}.
We need an estimate that holds for \emph{every} $x$,
as even a single exceptional interval renders
the output of the algorithm incorrect.

Analysing $D(x, y)$ rigorously seems hopeless for $y = \log^A x$,
so let us ask instead what a heuristic argument might look like.
A natural approach is to consider a Cram\'er-style probabilistic model
\cite{Cramer-model}.
Let us assume that each integer in $(x,x+y]$ is ``prime''
with probability $1/\log x$.
Under this model, $\pi(x + y) - \pi(x)$ is a random variable
following a binomial distribution,
with expected value $y/\log x = \log^{A-1} x$.
The tail probabilities may be estimated using Chernoff's inequality
\cite[Thm.\,4.4(1)]{MU-probability-computing};
for each $\delta > 0$, we obtain
\[
P(D(x,y) \geq 1+\delta) \leq \exp(-c_\delta \log^{A-1} x)
\]
for some constant $c_\delta > 0$.
In particular, if $A > 2$,
we see that the right hand side decays to zero rapidly as $x \to \infty$,
much faster than~$1/x$.
This suggests that for each $\delta > 0$,
with ``probability 1'' we should have
$D(x, \log^A x) < 1 + \delta$ for all sufficiently large~$x$.
A similar argument suggests that for each $\delta \in (0,1)$ we should have
$D(x, \log^A x) > 1 - \delta$ for all sufficiently large~$x$.
We might therefore be tempted to conclude that
for the ``real'' primes,
$D(x, \log^A x) \to 1$ (for \emph{all} $x$ without exception).

Rather surprisingly, this conclusion turns out to be incorrect.
Indeed, it was proven unconditionally by Maier \cite{Maier-short}
that for any fixed $A > 1$,
\[
\limsup_{x \to \infty} \, D(x, \log^A x) > 1,
\qquad
\liminf_{x \to \infty} \, D(x, \log^A x) < 1.
\]
In other words, for each $A > 1$,
there is some $\delta > 0$ (in principle explicitly computable)
such that $D(x, \log^A x) > 1 + \delta$
(respectively $D(x, \log^A x) < 1 - \delta$)
for a sequence of arbitrarily large values of~$x$.
The values of $\delta$ constructed by Maier are extremely small,
so this result does not immediately invalidate our hope that
$D(x, \log^A x) \ll 1$,
but it should perhaps shake the reader's confidence
in the reasonableness of the model.

One obvious way in which the model is unrealistic is that it does
not incorporate any information about congruences modulo small primes.
For example, it predicts that even numbers are occasionally prime.
The question of how this affects the distribution of $D(x,y)$
was investigated in detail by Granville and Lumley \cite{GL-short}.
Let us briefly sketch their explanation for
the unexpected behaviour of $D(x,y)$ when $y = \log^A x$.
For an interval $I$ of length $y$ and a given bound $z < y$,
let $I_z$ denote the set of integers in $I$ not divisible
by any prime $p \leq z$.
We expect $|I_z|$ to be well approximated by
$y \prod_{p \leq z} (p-1)/p$.
But there is some variation in~$|I_z|$,
and occasionally we encounter intervals $I$ for which $|I_z|$
is unusually large or small compared to this prediction,
due to the residues modulo small primes not being
completely independent on such short intervals.
For $y = \log^A x$
this effect is strong enough to bias the total number of primes in $I$
in such a way that $D(x, y)$ occasionally drifts away from~$1$.

Granville and Lumley propose a modified Cram\'er model
that explicitly takes into account the effect of small primes.
They show that under their model,
for $A > 2$ it is reasonable to expect that
\begin{equation}
\label{eq:GL-model}
\limsup_{x \to \infty} \, D(x, \log^A x) = \sigma_+(A),
\qquad
\liminf_{x \to \infty} \, D(x, \log^A x) = \sigma_-(A)
\end{equation}
for certain explicitly defined functions $\sigma_-(A) < 1 < \sigma_+(A)$.
(See \cite[\S2.2]{GL-short} for the definitions of $\sigma_\pm(A)$,
and \cite[\S9.3]{GL-short} for a statement of the above heuristic.
Strictly speaking \eqref{eq:GL-model} is slightly different to
the statement in \cite{GL-short},
but it is in the same spirit.)
In particular, under their model it is still reasonable to believe that
$D(x, \log^A x) \ll 1$,
which is our primary concern in this section.

Using the model from \cite{GL-short},
it is even possible to predict specific upper bounds
for $D(x, \log^A x)$.
Both $\sigma_+(A)$ and $\sigma_-(A)$ are continuous in $A$,
with $\sigma_+(A)$ decreasing towards $1$
and $\sigma_-(A)$ increasing towards $1$ as $A \to \infty$.
One also has the explicit bound
$1.015 \leq \sigma_+(2) \leq e^\gamma \approx 1.78107$
\cite[\S3.1]{GL-short}.
Since $\sigma_+(A)$ is decreasing, this implies that
$\limsup_{x \to \infty} \, D(x, \log^A x) < 1.79$
for all $A > 2$.
In particular, for each fixed $A > 2$,
we should expect that $D(x, \log^A x) < 1.79$
for all sufficiently large~$x$.
Tighter bounds could be calculated for specific values of $A$ if desired.

\begin{rem}
If we hold $y$ fixed and let $x \to \infty$,
then as explained in \cite[\S4]{GL-short},
the Hardy--Littlewood prime tuple conjecture \cite{HL-prime-tuples}
predicts that $\pi(x + y) - \pi(x)$
is occasionally as large as about $y / \log y$.
This extreme possibility is what makes it so difficult
to bound the number of primes in short intervals;
we would need to prove that such a fantastic coincidence
cannot occur ``too early''.
\end{rem}

\subsection{Modelling the density of square-primes}
\label{sec:modelling-density}

We would like to adapt the discussion in the previous section
from primes to square-primes.
The first step is to find the density of square-primes near $x$.
This turns out to be roughly $(\pi^2/6)/\log x$,
so the square-primes are denser than the primes by a factor of about
$\pi^2/6 \approx 1.6449$.
However, the ``correct'' density function also includes
some lower order terms.
We will take a few moments to work out these terms explicitly,
as they help explain some features of the
experimental data presented in \Cref{sec:square-primes-very-short}.

Consider the function
\[
\ell(x) \coloneqq
   \begin{cases}
   0 & x < 2,  \\[5pt]
   \displaystyle \int_2^x \frac{dt}{\log t} & x \geq 2,
   \end{cases}
\]
a slight modification of the usual logarithmic integral $\li(x)$.
It is continuous on~$\RR$
and differentiable everywhere except at $x = 2$.
It is well known that
\begin{equation}
\label{eq:pi-approximation}
\pi(x) = \ell(x) + O_k\left(\frac{x}{\log^k x}\right), \qquad x > 2
\end{equation}
for any $k \geq 2$ \cite[Thm.\,6.9]{MV-mult-nt}.

We may derive an analogue of \eqref{eq:pi-approximation}
for the square-primes as follows.
Taking our cue from \eqref{eq:tilde-pi-defn},
let us define
\[
\tilde\ell(x) \coloneqq \sum_{m \geq 1} \ell\left(\frac{x}{m^2}\right).
\]
This sum is finite for any~$x$,
as the terms for $m > (x/2)^{1/2}$ are all zero.
Hence $\tilde\ell(x)$ is continuous,
and it is differentiable everywhere except at $x = 2m^2$
for integers $m \geq 1$.
The analogue of \eqref{eq:pi-approximation} for the square-primes
is the following estimate.
\begin{lem}
\label{lem:tilde-pi-approximation}
For any $k \geq 2$,
\begin{equation}
\label{eq:tilde-pi-approximation}
\tilde\pi(x) = \tilde\ell(x) + O_k\left(\frac{x}{\log^k x}\right),
   \qquad x > 2.
\end{equation}
\end{lem}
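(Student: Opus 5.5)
We start from the identity $\tilde\pi(x) = \sum_{1 \leq m \leq (x/2)^{1/2}} \pi(x/m^2)$ in \eqref{eq:tilde-pi-defn} and the matching definition $\tilde\ell(x) = \sum_{m \geq 1} \ell(x/m^2)$. Both sums run over the same range of $m$, since $\ell(x/m^2) = 0$ and $\pi(x/m^2) = 0$ when $x/m^2 < 2$. So
\[
\tilde\pi(x) - \tilde\ell(x) = \sum_{1 \leq m \leq (x/2)^{1/2}} \Bigl( \pi\bigl(\tfrac{x}{m^2}\bigr) - \ell\bigl(\tfrac{x}{m^2}\bigr) \Bigr),
\]
and the task is to bound each summand via \eqref{eq:pi-approximation} and add up. The only issue is that \eqref{eq:pi-approximation} gives an error $x/(m^2 \log^k(x/m^2))$. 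This is not uniformly $\ll x/(m^2\log^k x)$: when $m$ is close to $\sqrt x$, $\log(x/m^2)$ is small.

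Fix $k \geq 2$ and split the range of $m$ at $m_0 \coloneqq x^{1/4}$.

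For $m \leq m_0$ we have $x/m^2 \geq x^{1/2}$, so $\log(x/m^2) \geq \tfrac12 \log x$. Then \eqref{eq:pi-approximation} gives (with the convention that for $x/m^2 \leq 2$ the difference is $O(1)$, harmless here since $x/m^2 \geq x^{1/2}$) the summand bound $O_k(2^k x/(m^2 \log^k x))$. Summing over $m$ with $\sum 1/m^2 < \infty$ yields $O_k(x/\log^k x)$.

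For $m > m_0$ we give up on cancellation and use the trivial bounds $0 \leq \pi(y) \leq y$ and $0 \leq \ell(y) \ll y$ for $y \geq 2$. Each summand is then $\ll x/m^2$, and the tail satisfies
\[
\sum_{m > x^{1/4}} \frac{x}{m^2} \ll x^{3/4} \ll_k \frac{x}{\log^k x}.
\]
Adding the two ranges proves \eqref{eq:tilde-pi-approximation} for large $x$. For bounded $x > 2$ both sides are $O(1)$ and the claim is trivial after adjusting the constant.

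The main obstacle is the non-uniformity of \eqref{eq:pi-approximation} near the lower end $x/m^2 \approx 2$, and the split at $x^{1/4}$ is designed to handle it. The exponent $1/4$ is not essential: any split $m_0 = x^{\theta}$ with $0<\theta<1/2$ works, because the small-$m$ range keeps $\log(x/m^2) \gg \log x$ and the large-$m$ tail is a power saving $x^{1-\theta}$.
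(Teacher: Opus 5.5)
Your proof is correct and follows the paper's own argument: apply \eqref{eq:pi-approximation} termwise, split at $m = x^{1/4}$, and use $\log(x/m^2) \gg \log x$ below the split and a trivial $O(x/m^2)$ bound above it, which gives $O_k(x/\log^k x) + O(x^{3/4})$. The only cosmetic difference is in the tail: you bound each summand directly via $0 \le \pi(y) \le y$ and $\ell(y) \ll y$, whereas the paper keeps the error term from \eqref{eq:pi-approximation} and implicitly uses $\log(x/m^2) \ge \log 2$.
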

\begin{proof}
We have
\begin{align*}
\tilde\pi(x)
   & = \sum_{1 \leq m \leq (x/2)^{1/2}} \pi\left(\frac{x}{m^2}\right) \\
   & = \sum_{1 \leq m \leq (x/2)^{1/2}}
       \left(\ell\left(\frac{x}{m^2}\right)
       + O_k\left(\frac{x/m^2}{\log^k(x/m^2)}\right)\right) \\
   & = \tilde\ell(x)
       + \sum_{1 \leq m \leq (x/2)^{1/2}}
         O_k\left(\frac{x/m^2}{\log^k(x/m^2)}\right).
\end{align*}
For $m \leq x^{1/4}$ we have $\log(x/m^2) \gg \log x$,
so the error term is at most
\[
\sum_{1 \leq m \leq x^{1/4}} \hspace{-8pt}
   O_k\left(\frac{x/m^2}{\log^k x}\right)
   + \sum_{m > x^{1/4}} \hspace{-5pt}
   O_k\left(\frac{x}{m^2}\right)
   \ll_k \frac{x}{\log^k x} + \frac{x}{x^{1/4}}
   \ll_k \frac{x}{\log^k x}. \qedhere
\]
\end{proof}

\begin{rem}
Under the Riemann Hypothesis,
the error term in \eqref{eq:pi-approximation} can be improved to
$x^{1/2+o(1)}$ \cite[Thm.\,13.1]{MV-mult-nt}.
It follows easily that the error term
in \eqref{eq:tilde-pi-approximation} also drops to $x^{1/2+o(1)}$,
providing some evidence that $\tilde\ell(x)$ is the correct analogue
of $\ell(x)$ for the square-primes.
\end{rem}

It is natural to model the density of the square-primes
by the derivative of~$\tilde\ell(x)$, which is given by
\begin{equation}
\label{eq:tilde-ell-derivative}
\tilde\ell'(x) =
   \sum_{1 \leq m \leq (x/2)^{1/2}} \frac{1}{m^2 \log(x/m^2)}.
\end{equation}
(For convenience we take $\tilde\ell'(x)$
to be defined by \eqref{eq:tilde-ell-derivative}
even at those points where $\tilde\ell(x)$ is not differentiable,
i.e., at the points $x = 2m^2$ for integers $m \geq 1$.)
To better understand the behaviour of $\tilde\ell'(x)$ for large~$x$,
we observe that it has the following expansion
in terms of powers of $\log x$.
\begin{lem}
\label{lem:tilde-ellprime-expansion}
For any $k \geq 2$ we have
\begin{equation}
\label{eq:tilde-ellprime-expansion}
\tilde\ell'(x) = \sum_{j=0}^{k-2} \frac{c_j}{(\log x)^{j+1}}
      + O_k\left(\frac{1}{\log^k x}\right),  \qquad x > 2
\end{equation}
where the constants $c_j$ are given by
\[
c_j \coloneqq (-2)^j \zeta^{(j)}(2)
   = \sum_{m \geq 1} \frac{(2 \log m)^j}{m^2}, \qquad j \geq 0.
\]
\end{lem}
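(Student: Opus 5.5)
The plan is to split the sum \eqref{eq:tilde-ell-derivative} at $m \leq x^{1/4}$ versus $m > x^{1/4}$. For small $m$ we expand each term in powers of $\log m/\log x$. The large $m$ contribute only a negligible tail, both to the sum itself and to the constants $c_j$.

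\emph{Small $m$.} For $1 \leq m \leq x^{1/4}$, put $u \coloneqq 2\log m/\log x \in [0, \tfrac12]$. Then
\[
\frac{1}{\log(x/m^2)} = \frac{1}{\log x}\cdot\frac{1}{1-u}
   = \frac{1}{\log x}\left(\sum_{j=0}^{k-2} u^j + \frac{u^{k-1}}{1-u}\right),
\]
and the remainder satisfies $u^{k-1}/(1-u) \leq 2u^{k-1}$. Hence the term for $m$ equals
\[
\sum_{j=0}^{k-2} \frac{(2\log m)^j}{m^2 (\log x)^{j+1}}
   + O\left(\frac{(2\log m)^{k-1}}{m^2 (\log x)^{k}}\right).
\]
Summing the error over all $m \geq 1$ gives $O_k(1/\log^k x)$, because $\sum_m (\log m)^{k-1}/m^2$ converges.

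\emph{Completing the constants.} Extending each main-term sum over $m \leq x^{1/4}$ to all $m \geq 1$ costs
\[
\sum_{m > x^{1/4}} \frac{(2\log m)^j}{m^2} \ll_j \frac{(\log x)^j}{x^{1/4}},
\]
by comparison with an integral. After dividing by $(\log x)^{j+1}$, this is $O(x^{-1/4}) = O_k(1/\log^k x)$. This produces exactly $c_j = \sum_m (2\log m)^j/m^2$. The identity $c_j = (-2)^j\zeta^{(j)}(2)$ follows from differentiating $\zeta(s) = \sum m^{-s}$ termwise $j$ times, which gives $\zeta^{(j)}(2) = \sum (-\log m)^j m^{-2}$.

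\emph{Large $m$.} It remains to bound the terms of \eqref{eq:tilde-ell-derivative} with $x^{1/4} < m \leq (x/2)^{1/2}$. This is the only mildly delicate step, since $\log(x/m^2)$ can be as small as $\log 2$ here. Using $\log(x/m^2) \geq \log 2$ for every such $m$, their total is at most
\[
\frac{1}{\log 2}\sum_{m > x^{1/4}} m^{-2} \ll x^{-1/4},
\]
which is again $O_k(1/\log^k x)$.

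Finally, for $x$ near $2$ (where the range $m \leq x^{1/4}$ may contain only $m=1$) all quantities are bounded, so the implied constant absorbs these cases. Combining the three pieces gives \eqref{eq:tilde-ellprime-expansion}.
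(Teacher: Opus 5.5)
Your proposal is correct and follows the paper's proof almost exactly: split \eqref{eq:tilde-ell-derivative} at $m = x^{1/4}$, bound the large-$m$ terms by $O(x^{-1/4})$, expand $1/\log(x/m^2)$ geometrically for $m \leq x^{1/4}$, and complete the truncated main-term sums to the constants $c_j$. The only cosmetic difference is in bounding the tail of $c_j$: the paper uses $(2\log m)^j \ll_k m^{1/2}$ to get $O_k(x^{-1/8})$, while you compare with an integral to get $O_j((\log x)^j x^{-1/4})$, and either bound suffices.
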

Here $\zeta^{(j)}(s)$ denotes the $j$\th derivative
of the Riemann zeta function.
Thus for example
\[
c_0 = \zeta(2) = \pi^2/6 \approx 1.64493, \quad
c_1 = -2\zeta'(2) \approx 1.87510, \quad
c_2 = 4\zeta''(2) \approx 7.95712,
\]
and hence
\[
\tilde\ell'(x) = \frac{1.644\ldots}{\log x} + \frac{1.875\ldots}{\log^2 x}
   + \frac{7.957\ldots}{\log^3 x} + O\left(\frac{1}{\log^4 x}\right),
   \qquad x > 2.
\]
\begin{proof}
The contribution from the terms with $m > x^{1/4}$
in \eqref{eq:tilde-ell-derivative} is at most
\[
\sum_{m > x^{1/4}} O(m^{-2}) = O(x^{-1/4})
   = O_k\left(\frac{1}{\log^k x}\right).
\]
For the terms with $m \leq x^{1/4}$
we have $|{2\log m/\log x}| \leq \tfrac12$ and hence
\begin{multline*}
\frac{1}{\log(x/m^2)}
   = \frac{1}{\log x} \left(1 - \frac{2\log m}{\log x}\right)^{-1} \\
   = \frac{1}{\log x} \left(\sum_{j=0}^{k-2}
      \left(\frac{2\log m}{\log x}\right)^j
      + O_k\left(\left(\frac{2\log m}{\log x}\right)^{k-1}\right)\right).
\end{multline*}
Thus
\begin{multline*}
\tilde\ell'(x) = \sum_{j=0}^{k-2}
   \left( \sum_{1 \leq m \leq x^{1/4}}
      \frac{(2\log m)^j}{m^2} \right) \frac{1}{(\log x)^{j+1}} \\
   + \sum_{1 \leq m \leq x^{1/4}}
      O_k\left( \frac{(2\log m)^{k-1}}{m^2} \cdot \frac{1}{\log^k x} \right)
      + O_k\left(\frac{1}{\log^k x}\right).
\end{multline*}
In this expression, the second term simplifies to $O_k(\log^{-k} x)$.
As for the first term, observe that for each $j = 0, \ldots, k-2$ we have
\begin{multline*}
\sum_{1 \leq m \leq x^{1/4}} \frac{(2\log m)^j}{m^2}
   = c_j - \sum_{m > x^{1/4}} \frac{(2\log m)^j}{m^2}
   = c_j - \sum_{m > x^{1/4}} O_k(m^{-3/2}) \\
   = c_j + O_k(x^{-1/8})
   = c_j + O_k\left(\frac{1}{\log^k x}\right).   \qedhere
\end{multline*}
\end{proof}

\begin{rem}
It is not true that $\tilde\ell'(x) = \sum_{j \geq 0} c_j / (\log x)^{j+1}$,
because this sum does not converge for any~$x$.
Instead, this series must be interpreted as an asymptotic expansion,
i.e., one obtains a good approximation by summing finitely many terms,
where the optimal number of terms depends on~$x$.
The situation is analogous to the well-known asymptotic expansion
$\sum_{j \geq 0} j! x / (\log x)^{j+1}$ for $\li(x)$
\cite[\S6.12(i)]{NIST-DLMF},
which does not converge for any~$x$.
\end{rem}

An easy consequence of the preceding results is
the following estimate for $\tilde\pi(x)$
(which could also be proved directly from \eqref{eq:tilde-pi-defn}).
\begin{cor}
\label{cor:square-prime-density}
As $x \to \infty$,
\[
\tilde\pi(x) \sim \frac{\pi^2}{6} \cdot \frac{x}{\log x}.
\]
\end{cor}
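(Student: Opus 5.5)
The plan is to combine \Cref{lem:tilde-pi-approximation} with \Cref{lem:tilde-ellprime-expansion} (taking $k = 2$) and integrate. Taking $k=2$ in \Cref{lem:tilde-pi-approximation} gives
\[
\tilde\pi(x) = \tilde\ell(x) + O\left(\frac{x}{\log^2 x}\right),
\]
and the error term is $o(x/\log x)$. It therefore suffices to show that $\tilde\ell(x) \sim (\pi^2/6)\, x/\log x$.

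Next, write $\tilde\ell(x) = \int_2^x \tilde\ell'(t)\,dt$. This is legitimate because $\tilde\ell$ is continuous, vanishes below $2$, and is differentiable away from the discrete set $\{2m^2\}$. The derivative agrees with \eqref{eq:tilde-ell-derivative} off that set, and it is a finite sum of continuous pieces on each bounded interval, so the fundamental theorem of calculus applies piecewise.

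By \Cref{lem:tilde-ellprime-expansion} with $k = 2$, for $t > 2$ we have
\[
\tilde\ell'(t) = \frac{c_0}{\log t} + O\left(\frac{1}{\log^2 t}\right), \qquad c_0 = \frac{\pi^2}{6}.
\]
Integrating gives
\[
\tilde\ell(x) = c_0\,\ell(x) + O\left(\int_2^x \frac{dt}{\log^2 t}\right).
\]

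Two standard facts finish the argument. First, $\ell(x) \sim x/\log x$, which follows by integration by parts or by L'H\^opital. Second, $\int_2^x dt/\log^2 t \ll x/\log^2 x$, which follows by splitting the integral at $\sqrt{x}$: the lower piece is at most $\sqrt x$, and on the upper piece $\log t \geq \tfrac12\log x$. Together these give $\tilde\ell(x) \sim c_0\, x/\log x$, hence the corollary.

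No step is a real obstacle. The only point needing care is justifying the integral representation of $\tilde\ell$ given its nondifferentiable points. Alternatively, one can avoid this entirely by summing $\ell(x/m^2) \sim (x/m^2)/\log x$ directly over $m \le x^{1/4}$, using the bound $\ell(y)\ll y$ for the tail.
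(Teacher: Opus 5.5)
Your proof is correct and is essentially the paper's argument: the paper likewise writes $\tilde\pi(x) = \tilde\ell(x) + O(x/\log^2 x)$, expresses $\tilde\ell(x)$ as $\int_2^x \tilde\ell'(t)\,dt$, and integrates the $k=2$ case of \Cref{lem:tilde-ellprime-expansion}. Your extra justifications fill in steps the paper leaves implicit: the piecewise use of the fundamental theorem of calculus, the bound $\int_2^x dt/\log^2 t \ll x/\log^2 x$, and $\ell(x) \sim x/\log x$.
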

\begin{proof}
By \Cref{lem:tilde-pi-approximation} and \Cref{lem:tilde-ellprime-expansion},
\begin{align*}
\tilde\pi(x)
   & = \tilde\ell(x) + O\left(\frac{x}{\log^2 x}\right) \\
   & = \int_2^x \tilde\ell'(t) \, dt + O\left(\frac{x}{\log^2 x}\right) \\
   & = \int_2^x
      \left(\frac{c_0}{\log t} + O\left(\frac{1}{\log^2 t}\right)\right) dt
      + O\left(\frac{x}{\log^2 x}\right) \\
   & = \frac{\pi^2}{6} \cdot \frac{x}{\log x}
      + O\left(\frac{x}{\log^2 x}\right). \qedhere
\end{align*}
\end{proof}

\begin{rem}
\label{rem:odd-square-primes-asymptotic}
The corresponding estimate for the number of \emph{odd} square-primes
up to $x$ is $(\pi^2/8) (x / \log x)$.
\end{rem}

\subsection{Square-primes in very short intervals}
\label{sec:square-primes-very-short}

Let us now return to our discussion of square-primes in very short intervals.
Using the proposed density function $\tilde\ell'(x)$
we may define an analogue of $D(x,y)$ for the square-primes:
\[
\tilde D(x,y) \coloneqq
   \frac{\tilde\pi(x+y) - \tilde\pi(x)}
      {\displaystyle y \cdot \tilde\ell'(x)}.
\]
For large intervals, $\tilde D(x,y)$ behaves as expected.
For example, one can show that $\tilde D(x,y) \to 1$
for $y = x^{7/12 + \eps}$,
using the analogous result for primes mentioned earlier.
However, we are chiefly interested in the regime where $y = \log^A x$;
to obtain our target bound \eqref{eq:conj-square-primes-weaker},
we want to show that
\begin{equation}
\label{eq:tilde-delta-bound}
\tilde D(x, \log^A x) \ll 1.
\end{equation}
Unfortunately, just as in the prime case,
we have no idea how to prove such a bound.
(It does not even seem possible to infer it from the corresponding
conjectured bound $D(x, \log^A x) \ll 1$ for the primes.)

What heuristic justification can we give for \eqref{eq:tilde-delta-bound}?
The author suspects that a model constructed
along similar lines to \cite{GL-short} would predict that
\begin{equation}
\label{eq:GL-model-squares}
\limsup_{x \to \infty} \, \tilde D(x, \log^A x) = \tilde\sigma_+(A),
\qquad
\liminf_{x \to \infty} \, \tilde D(x, \log^A x) = \tilde\sigma_-(A)
\end{equation}
for certain functions $\tilde\sigma_-(A) < 1 < \tilde\sigma_+(A)$.
Indeed, the arguments in \cite{GL-short} mainly rely on the fact
that primes can be detected by sieving;
square-primes can also be detected by sieving,
but working with congruences modulo~$p^k$ instead of modulo~$p$
(see \Cref{lem:sieve-criterion} and \Cref{rem:adjusted-Delta}).

We do not have the space (or frankly the inclination)
to carry out this plan in detail here.
Instead, we will conclude our discussion by presenting
the results of some computations of values of $\tilde D(x, \log^A x)$.
We hope that this data will convince the reader that
our definition of $\tilde D(x,y)$ is reasonable,
and that \eqref{eq:GL-model-squares} and \Cref{conj:square-primes} are
at the very least highly plausible.

\medskip
The computations were run on the HPC system \textsc{Katana}
at UNSW Sydney \cite{katana}.
We wrote a C++ program that uses a straightforward sieving procedure
to find all square-primes in a large interval,
typically of length $2^{34} \approx 1.7 \times 10^{10}$,
and then counts the square-primes in each subinterval of the form
$(x, x + \log^A x]$ for $A \in \{3, 4\}$.
The code is available upon request.
The computation checked all~$x$ up to $2^{52} \approx 4.5 \times 10^{15}$,
consuming roughly $92{,}000$ CPU hours altogether,
with each job requiring up to 2.6 GB of RAM.

The code found exactly $219{,}505{,}912{,}823{,}599$
square-primes less than $2^{52}$.
This matches the value computed using \eqref{eq:tilde-pi-defn}
together with the \texttt{prime\_pi} routine included in
\textsc{Sage} \cite{sage-10.6} for computing $\pi(x)$.
The latter calculation took about 9~minutes on the author's laptop.

\Cref{tab:tildeD} shows the maximum and minimum values
of $\tilde D(x, \log^A x)$ for $A \in \{3, 4\}$
over dyadic intervals from $2^{30}$ up to $2^{52}$.
The main interesting feature is that the values in each column
are very close to $1$ and remain highly stable as $x$ increases,
exactly as predicted by the square-prime analogue of the
Granville--Lumley heuristics mentioned above.
The simplest Cram\'er model would predict instead that the values
rapidly approach~$1$ as $x$ increases.

Let us examine some extreme values in more detail.
Consider the last row of the table,
covering the interval $2^{51} < x < 2^{52}$.
\begin{itemize}
\item
The maximum value of $\tilde D(x, \log^4 x)$
over this interval is $1.0185$.
\begin{itemize}
\item
This value occurs for $x = 3{,}783{,}664{,}593{,}821{,}531$.
\item
The expected number of square-primes in $(x, x + \log^4 x)$
is $\tilde\ell'(x) \log^4 x \approx 78676$.
\item
The actual number $80134$ overshoots this estimate by $1.85\%$.
\item
If we had estimated the density using only the leading term of
\eqref{eq:tilde-ellprime-expansion},
the expected number would have been
$(\pi^2/6) \log^3 x \approx 75914$.
The observed value $80134$ overshoots this estimate by $5.56\%$.
\end{itemize}
\item
The minimum value of $\tilde D(x, \log^4 x)$
over this interval is $0.9824$.
\begin{itemize}
\item
This value occurs for $x = 3{,}953{,}337{,}990{,}092{,}939$.
\item
The expected number of square-primes in $(x, x + \log^4 x)$
is $\tilde\ell'(x) \log^4 x \approx 78961$.
\item
The actual number $77571$ undershoots this estimate by $1.76\%$.
\item
If we had estimated the density using only the leading term of
\eqref{eq:tilde-ellprime-expansion},
the expected number would have been
$(\pi^2/6) \log^3 x \approx 76193$.
The observed value $77571$ \emph{overshoots} this estimate by $1.81\%$.
\end{itemize}
\end{itemize}
In this example we see strong evidence that the secondary terms
in \Cref{lem:tilde-ellprime-expansion} carry useful information:
including these terms shifts the ``error window'' for $\tilde D(x,y)$
from the unbalanced interval $(0.0181, 0.0556)$ to the
much more symmetric interval $(-0.0176, 0.0185)$.

As for \Cref{conj:square-primes} itself,
the data in \Cref{tab:tildeD} suggests that the quantity
$(\tilde\pi(x + \log^3 x) - \tilde\pi(x)) / \log^2 x$
should in the long run be bounded by a number close to
$1.13 \times \pi^2/6 \approx 1.86$.
In fact, this quantity does take somewhat larger values for small~$x$
(not shown in the table),
but even for these~$x$ it never exceeds~$2.1$,
and is certainly nowhere near the value~$3$ proposed by
\Cref{conj:square-primes}.






\begin{table}
\begin{tabular}{lcccc}
\toprule
& \multicolumn{2}{c}{$y = \log^3 x$} & \multicolumn{2}{c}{$y = \log^4 x$} \\ \cmidrule(lr){2-3} \cmidrule(lr){4-5}
& $\min \tilde D(x,y)$ & $\max \tilde D(x,y)$
& $\min \tilde D(x,y)$ & $\max \tilde D(x,y)$ \\
\midrule
$2^{30} < x < 2^{31}$ & 0.8760 & 1.1207 & 0.9797 & 1.0199 \\
$2^{31} < x < 2^{32}$ & 0.8758 & 1.1278 & 0.9813 & 1.0183 \\
$2^{32} < x < 2^{33}$ & 0.8655 & 1.1344 & 0.9806 & 1.0194 \\
$2^{33} < x < 2^{34}$ & 0.8730 & 1.1204 & 0.9809 & 1.0199 \\
$2^{34} < x < 2^{35}$ & 0.8789 & 1.1285 & 0.9805 & 1.0183 \\
$2^{35} < x < 2^{36}$ & 0.8741 & 1.1234 & 0.9802 & 1.0208 \\
$2^{36} < x < 2^{37}$ & 0.8646 & 1.1257 & 0.9809 & 1.0185 \\
$2^{37} < x < 2^{38}$ & 0.8711 & 1.1220 & 0.9793 & 1.0204 \\
$2^{38} < x < 2^{39}$ & 0.8719 & 1.1224 & 0.9796 & 1.0190 \\
$2^{39} < x < 2^{40}$ & 0.8729 & 1.1264 & 0.9811 & 1.0198 \\
$2^{40} < x < 2^{41}$ & 0.8776 & 1.1277 & 0.9790 & 1.0214 \\
$2^{41} < x < 2^{42}$ & 0.8718 & 1.1290 & 0.9797 & 1.0191 \\
$2^{42} < x < 2^{43}$ & 0.8738 & 1.1330 & 0.9804 & 1.0193 \\
$2^{43} < x < 2^{44}$ & 0.8632 & 1.1299 & 0.9807 & 1.0209 \\
$2^{44} < x < 2^{45}$ & 0.8696 & 1.1296 & 0.9803 & 1.0196 \\
$2^{45} < x < 2^{46}$ & 0.8762 & 1.1262 & 0.9808 & 1.0186 \\
$2^{46} < x < 2^{47}$ & 0.8760 & 1.1249 & 0.9822 & 1.0182 \\
$2^{47} < x < 2^{48}$ & 0.8775 & 1.1332 & 0.9810 & 1.0190 \\
$2^{48} < x < 2^{49}$ & 0.8760 & 1.1271 & 0.9808 & 1.0192 \\
$2^{49} < x < 2^{50}$ & 0.8768 & 1.1266 & 0.9825 & 1.0178 \\
$2^{50} < x < 2^{51}$ & 0.8772 & 1.1225 & 0.9822 & 1.0181 \\
$2^{51} < x < 2^{52}$ & 0.8757 & 1.1239 & 0.9824 & 1.0185 \\
\bottomrule
\end{tabular}
\caption{Minimum and maximum values of $\tilde D(x,\log^A x)$
   over dyadic intervals from $2^{30}$ up to $2^{52}$,
   for $A = 3$ and $A = 4$.}
\label{tab:tildeD}
\end{table}

\subsection{The main heuristic algorithm}
\label{sec:heuristic-algorithm}

In this section we prove \Cref{thm:main-heuristic}.
We first discuss a few subroutines.

\begin{prop}[\cite{LO-computing-pi}]
\label{prop:count-primes}
There is a deterministic algorithm that counts the exact number
of primes $p < N$, for a given integer $N \geq 2$,
in time $N^{1/2 + o(1)}$.
\end{prop}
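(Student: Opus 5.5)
This statement is cited from Lagarias--Odlyzko \cite{LO-computing-pi}, so within the paper the proof is simply an appeal to that reference. If I wanted a self-contained justification, I would not use the analytic Lagarias--Odlyzko method. I would use the combinatorial Meissel--Lehmer approach instead, since it already comfortably achieves $N^{1/2+o(1)}$; the stronger Lagarias--Miller--Odlyzko bound of $N^{2/3+o(1)}$ is not needed here. The only real care required is the Turing-model accounting.

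The plan is as follows. Set $y \coloneqq N^{1/2}$ and write $\pi(N) = \phi(N, a) + a - 1 - P_2(N,a)$, where $a \coloneqq \pi(N^{1/3})$, $\phi(x,a)$ counts integers $\leq x$ free of the first $a$ primes, and $P_2$ counts products of two primes exceeding $p_a$. First, enumerate the primes up to $y$ by any sieve; on a Turing machine the list-and-merge method of \cite{SGV-turing} costs $y^{1+o(1)} = N^{1/2+o(1)}$. Second, evaluate $P_2(N,a) = \sum_{N^{1/3} < p \leq N^{1/2}} \bigl(\pi(N/p) - \pi(p) + 1\bigr)$. The values $N/p$ lie in $[N^{1/2}, N^{2/3}]$, so I would take the bound as $N^{2/3}$ throughout. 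I would sieve the primes up to $N^{2/3}$ in segments of length $N^{1/2}$. I would obtain the required counts $\pi(N/p)$ by merging the sorted list of targets $\lfloor N/p \rfloor$ against running prime counts, which costs $N^{2/3+o(1)}$ with this crude handling. To bring this down to $N^{1/2+o(1)}$, I would instead compute each needed $\pi(N/p)$ recursively with the same algorithm; this gives the recursion $T(N) \le \sum_p T(N/p)$.

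Since that recursion looks messy, the cleaner alternative I would actually commit to is the $\phi$-recursion with truncation at $x \le N^{1/2}$. This is Legendre's formula organised as in Lehmer's method: the leaves are the values $\phi(N/d, b)$ with $d$ squarefree. The obstacle is bounding the number of leaves by $N^{1/2+o(1)}$ and answering every leaf from a table of $\phi(x,b)$ for $x \le N^{1/2}$. Building that table by a segmented sieve with sorted offline queries costs $N^{1/2+o(1)}$ on a Turing machine via \Cref{lem:sort} and \Cref{lem:merge}.

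I expect the main difficulty to be the leaf count combined with handling queries offline without random access. I would resolve both by collecting all queries, sorting them with \Cref{lem:sort}, and answering them in a single sequential sweep. Realistically, though, I would state that this is the content of \cite{LO-computing-pi} and cite it, as the paper does.
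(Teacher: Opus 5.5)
Your closing fallback, citing \cite{LO-computing-pi}, is exactly what the paper does, and it is the only part of the proposal that establishes the statement. The self-contained route rests on a false premise. The Meissel--Lehmer method does not achieve $N^{1/2+o(1)}$: even in its optimised Lagarias--Miller--Odlyzko form it runs in $N^{2/3+o(1)}$ \cite{LMO-computing-pi}. Calling $N^{2/3+o(1)}$ the ``stronger'' bound inverts the comparison. The paper's remark after the proposition notes that $N^{2/3+o(1)}$ would suffice for the application, but it does not prove the proposition as stated. The exponent $1/2$ comes from the analytic Lagarias--Odlyzko method: a smoothed contour integral of $\log\zeta$ evaluated numerically with fast multi-point evaluation of $\zeta$, plus sieving an interval of length $N^{1/2+o(1)}$ near $N$. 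The Turing-model accounting is not the real obstacle.

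Both of your concrete routes fail at the exponent.

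(i) For $P_2(N,a)$ you need $\pi(N/p)$ for all primes $N^{1/3}<p\le N^{1/2}$. Even if each value cost only $(N/p)^{1/2}$, the total would be $N^{1/2}\sum_{N^{1/3}<p\le N^{1/2}}p^{-1/2}\asymp N^{3/4}/\log N$. That is worse than sieving to $N^{2/3}$, so the recursion $T(N)\le\sum_p T(N/p)$ is not consistent with $T(N)=N^{1/2+o(1)}$.

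(ii) For the Legendre tree truncated at arguments $N/d\le N^{1/2}$, the leaf count is not $N^{1/2+o(1)}$. Take primes $q,r$ with $N^{1/6}<q<\tfrac12 N^{1/4}$ and $\tfrac12 N^{1/2}\le qr<N^{1/2}$, so that $q<r<N^{1/3}$. The node with $d=qr$ has argument $N/(qr)>N^{1/2}$, so it is split for every $c\ge 1$. For each prime $p<q$ this produces the leaf $\mu(qrp)\,\phi\bigl(N/(qrp),\pi(p)-1\bigr)$. Counting gives $\gg\sum_q \frac{N^{1/2}}{q\log N}\cdot\frac{q}{\log N}\gg N^{3/4}/(\log N)^3$ leaves, i.e.\ table queries. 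Sorting the queries and answering them in one sweep fixes only the access pattern, not their number.

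This is exactly the trade-off Lagarias--Miller--Odlyzko balance. Truncating at arguments $N/y$ costs about $N/y$ for the sieve and produces $y^{2+o(1)}$ special leaves when $y\le N^{1/3}$, and the optimum $y=N^{1/3}$ gives $N^{2/3}$. Lowering the truncation to $N^{1/2}$ only shifts the bottleneck from the sieve to the leaves.
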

\begin{rem}
For our purposes, any algorithm that solves this problem
in time $O(N^\alpha)$ for some fixed $\alpha < 1$ would suffice.
This includes for instance the ``combinatorial'' $N^{2/3+o(1)}$
method of \cite{LMO-computing-pi}.
\end{rem}

\begin{prop}
\label{prop:count-square-primes}
There is a deterministic algorithm that counts the exact number
of odd square-primes $n < N$, for a given integer $N \geq 3$,
in time $N^{1/2 + o(1)}$.
\end{prop}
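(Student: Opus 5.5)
The plan is to reduce to prime counting (\Cref{prop:count-primes}) by expanding the definition. The odd square-primes $n < N$ are exactly the integers $n = m^2 p$ with $m \geq 1$ odd and $p \geq 3$ prime. Their number, counted with multiplicity, is
\[
\sum_{\substack{m \geq 1 \\ \textn{$m$ odd}}} \bigl(\pi^-(N/m^2) - 1\bigr)^+,
\]
where $\pi^-(y)$ denotes the number of primes $p < y$, and subtracting $1$ removes $p = 2$. The sum only involves $m < (N/3)^{1/2}$.

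\textbf{Handling multiplicities.} The first point to address is that a given $n$ may have several representations $m^2 p$, as in the example $27 = 1^2\cdot 27$? No: $27 = 3^2 \cdot 3$ only, since $27$ is not prime. In fact the representation is unique. If $n = m^2 p$ with $p$ prime, then $p$ must be the unique prime appearing to an odd power in $n$. Hence $m^2 = n/p$ is determined, and the sum above counts each odd square-prime exactly once. I would state this as a one-line lemma.

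\textbf{Splitting the sum.} Let $J \coloneqq \lfloor N^{1/4} \rfloor$.

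\begin{itemize}
\item For small odd $m \leq J$, call the algorithm of \Cref{prop:count-primes} on the argument $\lceil N/m^2 \rceil$, which counts primes $< N/m^2$. The total cost is
\[
\sum_{m \leq N^{1/4}} (N/m^2)^{1/2+o(1)} \leq N^{1/2+o(1)} \sum_m m^{-1-o(1)} = N^{1/2+o(1)},
\]
where the $o(1)$ is uniform since the arguments are at most $N$. The harmonic sum contributes at most a $\log N$ factor.
\item For large $m > J$, we have $N/m^2 < N^{1/2}$. Here I would sieve once to list all primes up to $N^{1/2}$, using Eratosthenes or even trial division, in time $N^{1/2+o(1)}$. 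I would store them as a sorted table. Then, for decreasing thresholds $y_m = N/m^2$ with $m$ running over odd integers from $J$ up to $(N/3)^{1/2}$, read off $\pi^-(y_m)$ by a single linear sweep through the sorted table. This takes $O(N^{1/2})$ steps plus $O(N^{1/2})$ threshold computations, each costing $(\log N)^{O(1)}$ bit operations.
\end{itemize}

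\textbf{Adding up.} Summing $O(N^{1/2})$ counts of $O(\log N)$ bits costs $N^{1/2+o(1)}$. The Turing-model overheads, such as computing $\lceil N/m^2 \rceil$ and moving the head across the prime table, are all polynomial in $\log N$ per item and stay within the bound.

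\textbf{Main obstacle.} The only real point of care is ensuring the small-$m$ calls do not accumulate beyond $N^{1/2+o(1)}$, which the $m^{-1}$ decay handles, and writing the uniqueness of the representation cleanly. Alternatively one could call \Cref{prop:count-primes} for every $m$ directly. That would cost $\sum_{m \leq N^{1/2}} (N/m^2)^{1/2+o(1)}$, and the $o(1)$ is not uniform for tiny arguments, where the per-call overhead is at least $(\log N)^{O(1)}$. This would still give $N^{1/2+o(1)}$, so the split is mainly for safety.
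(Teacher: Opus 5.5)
\textbf{Overall.} Your reduction is the paper's. The paper writes the count as a sum, over odd $m<(N/3)^{1/2}$, of the number of primes $3\le p<N/m^2$. It then calls \Cref{prop:count-primes} for every such $m$ and bounds the total by $N^{1/2+o(1)}\sum_m m^{-1}$. Your observation that $n=m^2p$ determines $p$ as the unique prime with odd exponent in $n$, so that no multiplicities arise, is correct. The paper leaves this point implicit.

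\textbf{The gap: the large-$m$ branch.} The problem is the branch you add ``for safety''. The proposition is stated in the multitape Turing model, and there neither the sieve of Eratosthenes nor trial division lists the primes below $x=N^{1/2}$ in time $x^{1+o(1)}$.
\begin{itemize}
\item For the sieve, marking the multiples of each prime $p\le x^{1/2}$ forces the tape head to traverse the whole length-$x$ array. The sieve therefore costs about $x\cdot\pi(x^{1/2})\asymp N^{3/4}/\log N$; the paper points this out in \Cref{sec:turing-results}.
\item Trial division is likewise about $N^{3/4}$, already from testing the primes $p<x$ against all primes up to $\sqrt p$.
\end{itemize}
As written, this branch breaks the $N^{1/2+o(1)}$ bound.

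\textbf{Two repairs.}
\begin{itemize}
\item Drop the split and use the fallback in your last paragraph, which is exactly the paper's proof and is fine. There are only $O(N^{1/2})$ calls, each with $(\log N)^{O(1)}$ overhead for computing $\lceil N/m^2\rceil$. Each call on an argument $x\le N$ costs at most $x^{1/2}N^{o(1)}$, because the $x^{o(1)}$ factor can be replaced by a non-decreasing majorant that is still $x^{o(1)}$. Summing gives $N^{1/2+o(1)}\sum_m m^{-1}=N^{1/2+o(1)}$.
\item Keep the split, but generate the primes below $N^{1/2}$ with a Turing-efficient method, such as Sergeev's algorithm (\Cref{thm:sergeev}), at cost $O(N^{1/2}\log N)$. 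After that your linear sweep goes through.
\end{itemize}
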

\begin{proof}
Let $\pi_0(x)$ denote the number of primes \emph{strictly} less than~$x$.
Then the number of odd square-primes less than~$N$ is given by
\[
\sum_{\substack{1 \leq m < (N/3)^{1/2} \\ \textn{$m$ odd}}}
   \left( \pi_0\left(\frac{N}{m^2}\right) - 1 \right).
\]
Invoking \Cref{prop:count-primes} for each~$m$,
this quantity may be computed in time
\begin{align*}
\sum_{m < (N/3)^{1/2}} (N/m^2)^{1/2+o(1)}
   & < N^{1/2+o(1)} \sum_{m < (N/3)^{1/2}} m^{-1} \\
   & < N^{1/2+o(1)} \log N < N^{1/2+o(1)}.   \qedhere
\end{align*}
\end{proof}

\begin{prop}
\label{prop:primes-to-square-primes}
Let $N \geq 2$.
Given as input a sorted list of all primes $p < N$,
we may compute a sorted list of all odd square-primes $n < N$ in time $O(N)$.
\end{prop}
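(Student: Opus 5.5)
The plan is to generate the odd square-primes $m^2p<N$ ($m$ odd, $p\ge3$ prime) directly from the list of primes, grouped by $m$, and then merge the resulting sorted lists. For each odd $m$ with $3m^2<N$, the numbers $m^2p$ with $p<N/m^2$ form a sorted list $\mathcal L_m$. Since $\pi(N/m^2)\ll N/(m^2\log(N/m^2))$, the list $\mathcal L_m$ has $O(N/(m^2\log N))$ entries for $m\le N^{1/4}$. For larger $m$ we have the cruder bound $\pi(N/m^2)\le N/m^2$. Summing with $\sum_m m^{-2}<\infty$, the total number of entries is $O(N/\log N)$, and the total bit size is $O(N)$.

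\emph{Step 1 (build each list).} To form $\mathcal L_m$, we scan the input list of primes up to $N/m^2$ and multiply each prime by $m^2$. Naive multiplication costs $(\log N)^{1+o(1)}$ per entry, which is slightly too much for $m=1$. So we handle $m=1$ by copying the primes $p\ge3$ verbatim. For $m\ge3$ the number of entries is $O(N/(m^2\log N))$, and a product costs $(\log N)^{1+o(1)}$. Summing over $m\ge3$ gives $O(N(\log N)^{o(1)}/\log N)\cdot\sum_{m\ge3}m^{-2}=o(N)$. We also pay for rescanning the prime list once per $m$. This costs $O(\pi(N/m^2)\log N)$ if we stop at $N/m^2$, and such scans start from the beginning of the tape, so the cost again sums to $O(N)$. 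Tape-head movement between the regions for different $m$ is bounded by the total data size times the number of $m$ values; we avoid this by writing the lists consecutively.

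\emph{Step 2 (merge).} A naive pairwise merge would cost $O(N\cdot\#\{m\})$, which is far too much. The key observation is that the list sizes decay like $m^{-2}$. We therefore merge in order of increasing size: start from the largest $m$, merge $\mathcal L_m$ into an accumulator, and proceed downward. When we merge $\mathcal L_m$, the accumulator has size $\sum_{m'\ge m}|\mathcal L_{m'}|\ll N/(m\log N)+\sqrt N$. By Lemma \ref{lem:merge}, the cost over all $m$ is $\log N\cdot\sum_{m<\sqrt N}(N/(m\log N)+\sqrt N)$, which is about $N\log N$. This is too slow by a factor $\log N$, so we must be more careful.

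The better plan is to bucket the values of $m$ dyadically: $m\in[2^k,2^{k+1})$. First merge within each bucket by a balanced merge tree, at cost $O(\text{bucket size}\cdot\log N\cdot k)$. Bucket $k$ has total size $\ll N2^{-k}/\log N$, so this costs $O(N k2^{-k})$, which sums to $O(N)$. Then merge the buckets in order of increasing size; with geometric decay the accumulator size is dominated by the current bucket, so the cost is $\sum_k O(N2^{-k})=O(N)$.

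For $m$ near $\sqrt N$, the lists are tiny but numerous, about $\sqrt N$ of them. Their per-list overheads of $O(\log N)$ total $N^{1/2+o(1)}$, which is negligible. Duplicates such as $27=1\cdot27=9\cdot3$ cannot occur, since $p$ is prime and $m^2p$ determines $m$ uniquely; so the merge needs no deduplication. The main obstacle is the merge accounting in Step 2, particularly bounding the bucket sizes for large $m$, where $\log(N/m^2)$ is not comparable to $\log N$. There I would fall back on the trivial bound $|\mathcal L_m|\le N/m^2$ for $m>N^{1/4}$. Since each entry has size $O(\log N)$, these lists contribute $O(N^{3/4}\log^2 N)$ in total, which is negligible.
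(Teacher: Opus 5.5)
Your overall structure is the paper's: build the lists $\mathcal L_m$ from the prime list, group the odd $m$ dyadically, merge within each group by a balanced tree, and then merge the groups starting from the largest $m$. Step~2 is sound, and your split at $m=N^{1/4}$ does the same job as the paper's uniform bound $|S_m|\ll N/(m^{3/2}\log N)$.

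The gap is the cost of the multiplications in Step~1. With a per-product cost of $(\log N)^{1+o(1)}$ that does not depend on $m$, the cost for a single $m\le N^{1/4}$ is $\frac{N}{m^2\log N}\cdot(\log N)^{1+o(1)}=N(\log N)^{o(1)}/m^2$. Summing over $m\ge 3$ therefore gives $N(\log N)^{o(1)}$, not $o(N)$; the factor $1/\log N$ in your displayed bound is spurious. Indeed $m=3$ alone costs $\pi(N/9)\cdot(\log N)^{1+o(1)}$, which is the $m=1$ cost up to a factor $1/9$. So special-casing $m=1$ cannot rescue the estimate, and as written Step~1 is only shown to cost $N(\log N)^{o(1)}$.

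The missing point is that the multiplier is small. Compute $m^2p=m\cdot(mp)$ by schoolbook multiplication of an $O(\log m)$-bit number by an $O(\log N)$-bit number, at cost $O(\log m\cdot\log N)$; this is what the paper does. For $m=1$ this is just $O(\log N)$ per entry, so no special case is needed. The range $m\le N^{1/4}$ then costs $O\bigl(N\sum_m \log m/m^2\bigr)=O(N)$, and the range $m>N^{1/4}$ costs $O(N^{3/4}\log^2 N)$. In your dyadic bookkeeping, generating the lists in bucket $k\ge 1$ costs $O(k\log N)$ times the bucket size, i.e.\ $O(kN2^{-k})$. This has the same shape as your within-bucket merge cost and sums to $O(N)$. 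With this correction your proof goes through.
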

\begin{proof}
The set of odd square-primes less than~$N$ is
the (disjoint) union of the sets
\[
S_m \coloneqq \{ m^2 p : \textn{$p$ prime}, \, 3 \leq p < N/m^2 \},
   \qquad 1 \leq m < (N/3)^{1/2}, \quad \textn{$m$ odd}.
\]
The idea of the algorithm is to use the provided list of primes to generate
the $S_m$ as sorted lists, and then merge the lists via \Cref{lem:merge}.

To achieve the $O(N)$ complexity bound,
it is important to merge the lists in the right order.
Let us group the $S_m$ dyadically, defining
\begin{equation}
\label{eq:Jk-defn}
J_k \coloneqq \{2^{k-1} \leq m < 2^k : \textn{$m$ odd}\},
   \qquad 1 \leq k \leq K, \quad K \coloneqq \lg((N/3)^{1/2}).
\end{equation}
Computing the products $m^2 p = m(mp)$ via classical long multiplication,
the cost of generating all the lists $\{S_m\}_{m \in J_k}$ for a given $k$ is
\[
\sum_{m \in J_k} |S_m| \cdot O(\log m \log N)
   = O\biggl(k \log N \sum_{m \in J_k} |S_m|\biggr).
\]
(Here for convenience we have put $S_m \coloneqq \emptyset$
for $m \geq (N/3)^{1/2}$.)
We then merge together the $S_m$ for $m \in J_k$ via
a sequence of pairwise merges forming a binary tree.
The tree depth is $\log_2 |J_k| + O(1) = k + O(1)$
so the cost of this step is also
\[
O\biggl(k \log N \sum_{m \in J_k} |S_m|\biggr).
\]
Now observe that
\[
|S_m| = \pi(N/m^2) + O(1) \ll \frac{N}{m^2 \log(N/m^2)}.
\]
To simplify the subsequent calculations
it will be convenient to replace this by the weaker bound
\[
|S_m| \ll \frac{N}{m^{3/2} \log N}.
\]
To justify this, one easily checks that $m^{1/2} \log(N/m^2) \gg \log N$
holds for $1 \leq m \leq (N/3)^{1/2}$,
by considering separately the cases $m \leq N^{1/4}$ and $m > N^{1/4}$.
Thus the union $S^k \coloneqq \cup_{m \in J_k} S_m$ has cardinality
\[
|S^k| \ll \sum_{m \in J_k} \frac{N}{2^{3k/2} \log N}
   \ll \frac{N}{2^{k/2} \log N}
\]
and the cost of computing $S^k$ as a sorted list is
\[
O\left(k \log N \sum_{m \in J_k} \frac{N}{2^{3k/2} \log N}\right)
   = O\left(\frac{k N}{2^{k/2}}\right).
\]
Since $\sum_{k \geq 1} k \cdot 2^{-k/2}$ converges,
the total cost over all~$k$ is $O(N)$.

Finally we must merge the~$S^k$ into a single sorted list.
We process these in reverse order:
we begin with~$S^K$, then merge with $S^{K-1}$ to obtain $S^{K-1} \cup S^K$,
then merge with~$S^{K-2}$, and so on until reaching
$S^1 \cup \cdots \cup S^K$.
For each~$k$ we have
\[
|S^{k+1} \cup \cdots \cup S^K|
   \ll \sum_{j=k+1}^K \frac{N}{2^{j/2} \log N}
   \ll \frac{N}{2^{k/2} \log N},
\]
so the cost of merging $S^k$ with $S^{k+1} \cup \cdots \cup S^K$ is
$O(2^{-k/2} N)$,
and the total over all~$k$ is again $O(N)$.
\end{proof}

\begin{prop}
\label{prop:square-primes-to-primes}
Let $N \geq 2$.
Given as input a sorted list of all odd square-primes $n < N$,
we may compute a sorted list of all primes $p < N$ in time $O(N)$.
\end{prop}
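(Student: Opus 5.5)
The plan is to run the construction of \Cref{prop:primes-to-square-primes} in reverse. By definition of the odd square-primes we have the disjoint decomposition into the sets $S_m$ ($m$ odd), and $S_1$ is exactly the set of odd primes $p < N$. So it suffices to compute the sorted list of the non-prime odd square-primes $U \coloneqq \bigcup_{m \geq 3, \, m \textn{ odd}} S_m$, subtract it from the input list, and finally insert $2$ at the front if $N > 2$. The subtraction can be done by a single parallel walk through two sorted lists, exactly as in \Cref{lem:merge}, in time $O(N)$, since both lists have $O(N/\log N)$ entries of $O(\log N)$ bits.

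To generate $U$ we need the primes $p < N/9$. These are available from the input list: every odd prime $p < N$ is itself an odd square-prime. The only issue is that the input also contains the composites $m^2 p$ with $m \geq 3$, so we cannot simply read off the primes below $N/9$. I would resolve this by recursion on $N$. Let $L_N$ denote the input list. Truncating $L_N$ to entries below $N/9$ gives exactly the list of odd square-primes below $N/9$, and this costs $O(N)$ to scan. We recursively apply the algorithm to this truncated list to obtain the primes $p < N/9$. Then we generate each $S_m$ for $m \geq 3$ from these primes, and merge them using the same dyadic grouping $J_k$ and reverse-order merging as in the proof of \Cref{prop:primes-to-square-primes}. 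That analysis shows that building the union costs $O(N)$; the sum over $k \geq 2$ is bounded by the full sum, so excluding $m = 1$ does not hurt.

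The recursion then satisfies $C(N) \leq C(N/9) + O(N)$, which gives $C(N) = O(N)$ by a geometric series. The base case is $N$ below a fixed constant, handled trivially.

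I expect the main obstacle to be correctness of the recursion step. I need that the odd square-primes below $N/9$ are exactly the truncation of $L_N$, which is immediate from the definition. I also need that the primes below $N/9$ suffice to generate every $m^2 p < N$ with $m \geq 3$, which holds since $p < N/m^2 \leq N/9$. Beyond that, the only care needed is the bookkeeping of the cost constants across recursion levels, including the $O(\log N)$-bit arithmetic for forming $m^2 p$. That cost is already absorbed in the $k \cdot 2^{-k/2}$ bound from the earlier proof.
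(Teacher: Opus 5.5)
Your proof is correct, but it takes a different route from the paper's. The paper does not recurse. It applies M\"obius inversion modulo~$2$ to the generating function of the odd square-primes, which says that an odd $n < N$ is prime if and only if the number of odd squarefree $m$ for which $n/m^2$ is an odd square-prime is odd. It therefore scales the input list $\tilde S$ itself, forming $\{m^2 n : n \in \tilde S,\ n < N/m^2\}$ for every odd squarefree $m < (N/3)^{1/2}$. It then takes the multiset union of these lists, using the same dyadic merging as in \Cref{prop:primes-to-square-primes}, and keeps the elements of odd multiplicity.

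You instead use only the forward map of \Cref{prop:primes-to-square-primes}. The non-prime odd square-primes below $N$ form $\bigcup_{m \geq 3} S_m$, which involves only primes below $N/9$. You obtain those primes by recursing on a prefix of the input, at total cost $O(N)(1 + 1/9 + 1/81 + \cdots) = O(N)$. In effect your recursion unrolls the M\"obius inversion one layer at a time.

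Your version needs neither a sieve for odd squarefree $m$ nor any multiplicity bookkeeping. Its correctness rests only on two trivial facts: $m^2 p$ is composite for $m \geq 3$, and $p < N/m^2 \leq N/9$. The paper's version does everything in a single non-recursive merge. Since every level's input is just a prefix of the original list, you can also run your recursion bottom-up as a loop over the bounds $N/9^j$. This avoids any stack management on the Turing machine.
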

\begin{proof}
Let~$\tilde S$ be the provided list of odd square-primes less than~$N$.
Our goal is to compute the set
$S \coloneqq \{\textn{$p$ prime}: 3 \leq p < N\}$
as a sorted list.
(The prime $p = 2$ may be inserted separately at the end.)

Let
\[
U(x) \coloneqq \sum_{p \geq 3} x^p \in \ZZ\bbracket{x}
\]
and
\[
\tilde U(x) \coloneqq \sum_{\substack{m \geq 1 \\ \textn{$m$ odd}}} U(x^{m^2})
   = \sum_{\substack{\textn{$n$ odd,} \\ \textn{square-prime}}}
      \hspace{-10pt} x^n
   \in \ZZ\bbracket{x}.
\]
M\"obius inversion yields
\[
\sum_{\substack{m \geq 1 \\ \textn{$m$ odd}}} \mu(m) \tilde U(x^{m^2}) = U(x),
\]
so
\begin{equation}
\label{eq:mobius}
\sum_{\substack{m \geq 1 \\ \textn{$m$ odd, square-free}}}
   \hspace{-20pt} \tilde U(x^{m^2})
   \equiv U(x) \pmod 2.
\end{equation}
Now define
\[
\tilde S_m \coloneqq \{m^2 n : n \in \tilde S, \; 3 \leq n < N/m^2\},
   \qquad 1 \leq m < (N/3)^{1/2}, \quad \textn{$m$ odd}.
\]
Let~$S'$ be the union of the~$\tilde S_m$
\emph{as multisets}, i.e., allowing multiplicities,
taken over odd square-free values of $m < (N/3)^{1/2}$.
Then \eqref{eq:mobius} shows that a given integer~$n$
lies in~$S$ if and only if the multiplicity of~$n$ in~$S'$ is odd.

These observations lead to the following algorithm.
We first compute a list of all positive odd
square-free integers $m < (N/3)^{1/2}$;
this is easily done in time $N^{1/2+o(1)}$ using a sieve.
We next compute the relevant~$\tilde S_m$, and their multiset union~$S'$,
using essentially the same algorithm as in the proof
of \Cref{prop:primes-to-square-primes},
i.e., merging the sets using the same dyadic grouping strategy.
The complexity analysis is essentially identical,
as \Cref{cor:square-prime-density} implies that
$|\tilde S_m| \ll (N/m^2) / \log(N/m^2)$.
The only difference is that the sorted lists may contain repeated elements,
but this does not affect the complexity.
Finally, at the end we scan through $S'$ and
delete pairs of repeated integers to recover~$S$;
the cost of this step is certainly at most $O(N)$.
\end{proof}

Now we present the main heuristic algorithm.
We are given some large~$N$ as input.
The main steps of the algorithm are:
\begin{enumalgo}
\item
Choose suitable parameters $T$, $R$ and $N_0 \approx N/\log N$.
\item
Find all primes up to~$N_0$ using Sergeev's algorithm,
and deduce the list of odd square-primes up to~$N_0$
(\Cref{prop:primes-to-square-primes}).
\item
Use the core algorithm (\Cref{thm:core})
to compute candidates for the odd square-primes between $N_0$ and~$N$.
\item
Determine the exact number of odd square-primes less than~$N$
(\Cref{prop:count-square-primes}).
If the count disagrees with the proposed list from Steps 1--3,
return ``FAIL''.
(This never occurs if \Cref{conj:square-primes} is true.)
\item
Otherwise we have correctly found the odd square-primes up to~$N$.
Conclude by recovering the list of primes up to~$N$
(\Cref{prop:square-primes-to-primes}).
\end{enumalgo}
We now explain and analyse each of these steps in more detail.

\medskip
\step{1}{choose parameters.}
Choose positive integers
\begin{align*}
   T & \coloneqq \tfrac12 \log^3 N + O(1), \\
   R & \coloneqq 4 \log^2 N + O(1), \\
 N_0 & \coloneqq \frac{N}{\log N} + O(T), \qquad T \divides N_0.
\end{align*}
Increasing $N$ slightly if necessary,
we may also assume that $T \divides N$.
Clearly these parameters may be computed in time $(\log N)^{O(1)}$.

\medskip
\step{2}{find odd square-primes up to~$N_0$.}
Using Sergeev's algorithm
(see \cite{Ser-prime-turing,Ser-prime-turing-arxiv},
or the presentation in \Cref{sec:sergeev})
compute the list of primes $p < N_0$.
This requires time
\[
O(N_0 \log N_0) = O(N).
\]
We may then deduce the list of odd square-primes $n < N_0$
via \Cref{prop:primes-to-square-primes}
in time $O(N_0) = O(N / \log N)$.

\medskip
\step{3}{find candidates for the odd square-primes between $N_0$ and~$N$.}
We invoke \Cref{thm:core},
noting that the hypothesis $R \leq T/2$ holds for sufficiently large~$N$,
and \eqref{eq:logT-bound} certainly holds for our choice of~$T$.
Since $(R \log N)/T \ll 1$ the cost is
\[
N (\log \log N)^{1+o(1)}.
\]
This produces candidates $c^r \in \FF_2^T$
for the true vectors $a^r \in \FF_2^T$ for $0 \leq r < N/T$.

Let $r_0 \coloneqq N_0 / T$.
We are only interested in the candidates $c^r$
for $r$ in the range $r_0 \leq r < N/T$,
as the corresponding $a^r$ indicate the square-primality of all
integers in the interval $N_0 \leq n < N$.
For such~$r$, the number of odd square-primes in the
$r$\th interval is at most
$\tilde\pi(x + T) - \tilde\pi(x)$ where $x \coloneqq rT - 1$.
We have $x \geq r_0 T - 1 = N_0 - 1 = N/\log N + O(\log^3 N)$,
so $\log x \geq \log N + O(\log \log N)$.
It follows easily that
\[
T = \tfrac12 \log^3 N + O(1) < \log^3 x
\]
for sufficiently large~$N$.
Similarly, $R > 3 \log^2 x$ for sufficiently large~$N$.
Therefore, if \Cref{conj:square-primes} is true, we find that
\[
\tilde\pi(x + T) - \tilde\pi(x)
   \leq \tilde\pi(x + \log^3 x) - \tilde\pi(x) < 3 \log^2 x < R.
\]
This means that $\wt(a^r) \leq R$
and hence the candidate $c^r$ is correct, i.e., $c^r = a^r$,
for all such~$r$.

\begin{rem}
The constants in the definitions of $T$ and~$R$ were chosen
somewhat wastefully to simplify the analysis.
In a real implementation,
one could tune these constants to improve the performance.
\end{rem}

\step{4}{compare against the actual number of odd square-primes
   less than~$N$.}
In time $O(N)$
we may count the number of odd square-primes $n < N_0$ found in Step~2
and the number of odd square-prime candidates $n \in [N_0, N)$
found in Step~3.
We compare the total to the actual number of odd square-primes less than~$N$,
computed via \Cref{prop:count-square-primes} in time $N^{1/2+o(1)}$.
If all of the candidates are correct, these numbers will obviously agree.
But if even a single candidate is incorrect,
then the total count of odd square-prime candidates will be
strictly \emph{smaller} than the correct number,
because for such $r$ we necessarily have $\wt(a^r) > R \geq \wt(c^r)$.
If this occurs then we (correctly) return ``FAIL''.
(We stress that this argument does \emph{not} depend on
the truth or otherwise of \Cref{conj:square-primes}.)

\medskip
\step{5}{recover list of primes up to~$N$.}
Using \Cref{lem:convert-format},
we may convert the vectors $c^r$ ($ = a^r$) into a list of odd square-primes
less than~$N$ in time $O(N)$.
We then invoke \Cref{prop:square-primes-to-primes}
to recover the list of primes less than~$N$, again in time $O(N)$.
This completes the proof of \Cref{thm:main-heuristic}.

\begin{rem}
\label{rem:main-heuristic-precise}
In the above analysis, the dominant contribution to the running time
was the invocation of \Cref{thm:core}.
The remaining steps have negligible $O(N)$ complexity.
It is possible to derive a more precise complexity bound by analysing
the terms in \eqref{eq:core-auxiliary-bound}.
Given our current state of knowledge,
the most expensive term in \eqref{eq:core-auxiliary-bound}
is the fifth term $N \log B \cdot \Rstar(B) \Kstar(W) \Kstar(N)$,
which arises from the restricted products
in the pointwise multiplication step.
If we use the estimates \eqref{eq:Kcost-bound-improved},
\eqref{eq:restricted-advanced-complexity} and \eqref{eq:ffmul}
for $\Kstar$, $\Rstar$ and~$\Mstar$,
then this term, and hence the overall cost of \Cref{thm:main-heuristic},
becomes
\begin{equation}
\label{eq:heuristic-precise}
O\bigl(N \log \log N \exp\bigl(D (\log \log \log N)^{1/2}\bigr)
   (\log \log \log N)^{1/2} \cdot 32^{\log^* N}\bigr)
\end{equation}
where $D \coloneqq (2 \log 2)^{1/2} \approx 1.177$.
It is plausible that the constant $32$ could be reduced by fusing together
the evaluation strategies used at different levels of the algorithm.
But apart from this,
and possibly the small speedup hinted at in \Cref{rem:varphi-W-speedup}
(at most a factor of $\log \log \log N$),
the complicated formula \eqref{eq:heuristic-precise}
represents the best complexity bound known to the author
for enumerating the primes up to~$N$ under any
hypothesis of comparable strength to \Cref{conj:square-primes}.
\end{rem}

\section{Sieve estimates for square-rough integers}
\label{sec:sieve}

If one believes \Cref{conj:square-primes},
then there do not exist any ``short'' intervals
containing ``too many'' square-primes.
Unfortunately, we are unable to prove this conjecture.
The aim of this section is to prove \Cref{thm:sieve-main},
which guarantees that such intervals occur \emph{very rarely}.
This result will play a key role in the analysis of the
rigorous versions of the main algorithm
(Theorems \ref{thm:main-deterministic} and \ref{thm:main-probabilistic}).

In fact, for the deterministic version
we will need an even more general formulation involving
\emph{square-rough} integers, which we now define.
\begin{defn}
Let $y > 1$ be a real number and let $n$ be a positive integer.
\begin{itemize}
\item We say that $n$ is \emph{$y$-rough}
if it has no prime divisors $p \leq y$.
(This terminology is not as common in the literature 
as the complementary notion of a \emph{$y$-smooth} integer,
but see for example \cite[Ch.\,14]{Kou-primes}.)
\item We say that $n$ is \emph{square-$y$-rough} if it is
of the form $n = m^2 n'$ where $m \geq 1$ is an integer
and $n'$ is $y$-rough.
Equivalently, for all primes $p \leq y$,
the $p$-adic valuation of $n$ is \emph{even}.
\end{itemize}
\end{defn}
In particular, if $n \leq N$ and $n$ is square-$\sqrt{N}$-rough
then $n$ is either a square-prime or a perfect square.
(The converse is almost true: see \Cref{lem:square-prime-exceptions}.)

The main result of this section is the following theorem.
\begin{thm}
\label{thm:sieve-main}
There exists an absolute constant $C_0 > 1$ with the following property.
Let $g$, $U$ and $N$ be positive integers such that $U \divides N$
and let $y$ be a real number satisfying
\begin{equation}
\label{eq:y-hypothesis}
\exp(C_0 g^4 \log 5U \log(4g^2 \log 5U)) < y \leq N^{1/2}.
\end{equation}
Then the number of integers $k \in [0, N/U)$ such that
there are at least $g$ square-$y$-rough integers in the interval
$[kU, (k+1)U)$ is at most
\[
O\left( \frac{N}{U} \left(\frac{\alpha U}{\log y}\right)^g \right)
\]
where
\begin{equation}
\label{eq:alpha-defn}
\alpha \coloneqq 6 \log(4g^2 \log 5U)
   \prod_{p^2 \divides U} \left(1 - \frac{1}{p} \right).
\end{equation}
\end{thm}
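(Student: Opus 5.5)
The plan is a union bound over patterns of $g$ positions inside the interval, followed by an upper-bound sieve for each pattern. If $[kU,(k+1)U)$ contains at least $g$ square-$y$-rough integers, then there is a $g$-tuple $0 \leq h_1 < \cdots < h_g < U$ such that every $kU+h_i$ is square-$y$-rough. So the count we want is at most
\[
\sum_{h_1 < \cdots < h_g} \#\{0 \leq k < N/U : kU + h_i \text{ is square-$y$-rough for all } i\}.
\]
There are at most $U^g/g!$ tuples. The goal is to bound each inner count by $\frac{N}{U}\,(c \log w/\log y)^g$ times a local factor, where
\[
w \coloneqq 4g^2 \log 5U .
\]
After summing over tuples, the $g!$ is absorbed, and the local factor produces the product $\prod_{p^2 \divides U}(1-1/p)$ appearing in $\alpha$.

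First I treat the primes $p \leq w$. If $p^2 \divides U$, then $kU+h \equiv h \pmod{p^2}$. Hence $kU+h$ can only be square-$p$-rough if $h \not\equiv 0 \pmod p$ or $p^2 \divides h$, and in the latter case one iterates to higher powers of $p$. This condition depends only on $h$. So I will not sieve by these primes at all. Instead I restrict the sum over tuples to admissible $h_i$; the number of admissible $h \in [0,U)$ is at most $U(1-1/p+1/p^2+\cdots)$ per such $p$. Summed over tuples this gives roughly $\prod_{p^2 \divides U}(1-1/p)^g\, U^g/g!$, with the slack going into the constant $6$. For the other primes $p \leq w$ I simply discard the condition, which is an upper bound. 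The loss is harmless because the final density is normalised by $\log w$ rather than $\log 2$.

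Second I sieve the single linear variable $k$ by the primes $w < p \leq z$, where
\[
\log z \asymp \frac{\log y}{g^2 \log w}.
\]
Here $z \leq y$, so square-$y$-rough implies square-$z$-rough. For such $p$ with $p \ndivides U$, the condition "$v_p(kU+h_i)$ even for all $i$" excludes, modulo $p$, the residue classes $k \equiv -h_i U^{-1}$. Those classes with $v_p$ equal to exactly $1$ are what must be excluded; classes with higher valuation only contribute $O(g/p^2)$ corrections, and I would sieve modulo $p^2$ to make this exact. Since $p > w > g^2$, at most $g$ classes are removed and the local density is $1 - \nu(p)/p$ with $\nu(p) \leq g$. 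The finitely many $p \divides U$ with $p \ndivides U$-squared are handled like the $p^2 \divides U$ case, or dropped.

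Apply Brun's sieve, or the Selberg upper bound sieve, of dimension $\leq g$ with level $D = z^{O(g)}$. Because $w \geq g^2$, the fundamental lemma applies, and the main term is
\[
\frac{N}{U}\prod_{w<p\leq z}\Bigl(1-\frac{g}{p}\Bigr) \ll \frac{N}{U}\Bigl(\frac{\log w}{\log z}\Bigr)^{g} \ll \frac{N}{U}\Bigl(\frac{C g^2 \log^2 w}{\log y}\Bigr)^{g}.
\]
The remainder terms are bounded by $D^{1+o(1)}$. Hypothesis \eqref{eq:y-hypothesis} is what guarantees that $D$ is much smaller than $N/U$ (note $y \leq N^{1/2}$), so the error is dominated by the main term.

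The main obstacle is bookkeeping the dependence on $g$. A naive estimate gives $(g^2 \log w/\log y)^g$ times $U^g/g!$. That is acceptable only if the $g^{2g}$ is compensated by $g!$, or is avoided altogether by choosing $\log z \asymp \log y/(g \log w)$ with a sharper form of the fundamental lemma. The explicit shape $\exp(C_0 g^4 \log 5U \log w)$ in the hypothesis suggests the author accepted crude losses here. My first attempt would therefore be Brun's pure sieve with explicit truncation, tuning $z$ so that both the truncation error and the remainder fit under $(\alpha U/\log y)^g$. Only if that fails would I pass to Selberg's $\Lambda^2$ bound with explicit constants.
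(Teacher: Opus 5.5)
Your outer structure matches the paper's: a union bound over $g$-element subsets of $[0,U)$, an upper-bound sieve in $k$ for each subset, and the treatment of $p^2 \divides U$ by restricting to admissible $h_i$ (that admissibility count is fine). The gap is in the sieve step. With $\log z \asymp \log y/(g^2 \log w)$, your per-tuple main term is $\frac{N}{U}(C g^2 \log^2 w/\log y)^g$. Multiplying by at most $U^g/g!$ tuples and using $g! \geq (g/e)^g$ leaves $\frac{N}{U}(eC g \log^2 w \cdot U/\log y)^g$. So $\alpha$ is replaced by something of order $g \log^2 w$. A $g!$ can offset at most $g^g$, never $g^{2g}$, and your fallback $\log z \asymp \log y/(g \log w)$ still leaves $\log^2 w$ in place of $\log w$. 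This is not cosmetic: in the applications $g \asymp \log\log N$ while $U/\log y$ is a small constant, so such a bound is trivial. More generally, any fundamental-lemma argument needs $\log D/\log z \gg g$. That forces $\log z \ll \log y/g$ and a loss of $(cg)^g$ in the main term. After the $g!$ from the tuples, this gives at best $\alpha$ of the shape $c\log w \prod_{p^2 \divides U}(1-1/p)$, with $c$ dictated by the admissible range of the fundamental lemma rather than the stated $6$. Even that requires a fundamental lemma made explicit in the dimension $g$.

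The missing idea is to sieve essentially all the way to $y$ and let Selberg's sieve supply a matching $g!$.
\begin{itemize}
\item The paper takes $z = y/(U^{1/2}(\log y)^{3g})$, so $\log z > (1-\tfrac{1}{2g})\log y$ and $(\log z)^g \gg (\log y)^g$.
\item It applies $|\mathcal{A}(z)| \leq X/H(z) + O(R(z))$, with $B_p \subseteq \ZZ/p^2\ZZ$ the classes where some $Uk+u_i$ has $p$-adic valuation exactly $1$.
\item The main term is $g!\,\Delta(u) X/(\log z)^g$. This $g!$ is exactly cancelled by the $1/g!$ from symmetrising over ordered tuples.
\item The bad primes (all $p \leq 4g^2$, and those dividing $U$ or some $u_i-u_j$) contribute at most $(Q/\varphi(Q))^g \leq (3\log w)^g$ to $\Delta(u)$. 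This is where the $\log w$ in $\alpha$ comes from.
\item Averaging the $p^2 \divides U$ factors over $u$ gives the product in $\alpha$, and $\prod_p(1+1/(p^2-p))<2$ turns the $3$ into $6$.
\end{itemize}
The substantive work is the asymptotic $H(z) = \frac{(\log z)^g}{g!\,\Delta}\bigl(1+O(\theta/\log z)\bigr)$, uniformly in $g$. It is proved by comparing $h$ with $\tau_g(n)/n$ restricted to $(n,Q)=1$. The lower bound on $y$ in the hypothesis exists to make $z > e^{C_1\theta}$ so that this asymptotic applies, not to control the level. The $g^4$ there is therefore not a sign of slack in the main term, which is exactly where your argument loses. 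The remainder $R(z) \ll z^2(\log z)^{4g}/(2g)!^2$ is harmless, since $y \leq N^{1/2}$ gives $z^2 \leq N/(U(\log y)^{6g})$.
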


\begin{rem}
The constant $C_0$ is effectively computable,
and could be determined by working out explicit values for
various big-$O$ constants in this section.
\end{rem}


\subsection{Setting up the sieve}
\label{sec:sieve-setup}

We will prove \Cref{thm:sieve-main} using sieve methods,
working one ``constellation'' at a time.
Let $g$ and $U$ be positive integers,
and fix a $g$-tuple of pairwise distinct integers
$u = (u_1, \ldots, u_g)$ such that $0 \leq u_i < U$ for all $i$.
The bulk of \Cref{sec:sieve} is devoted to the proof of
\Cref{thm:sieve-tuple} (below),
which bounds the number of $k \in [0, X)$ such that
$Uk + u_1, \ldots, Uk + u_g$ are simultaneously square-$z$-rough,
for given parameters $X$ and $z$.
Afterwards, in \Cref{sec:sieve-main-proof},
we will show how to deduce \Cref{thm:sieve-main} from \Cref{thm:sieve-tuple}
by summing the contributions from all possible tuples $u$.

To state \Cref{thm:sieve-tuple} we must introduce several auxiliary quantities.
For any prime $p$ and $i \in \{1, \ldots, g\}$ define
\begin{equation}
\label{eq:Bpi-defn}
B_{p,i} \coloneqq \{k \in \ZZ/p^2\ZZ :
   Uk + u_i \equiv 0 \tpmod p \textn{ and }
   Uk + u_i \not\equiv 0 \tpmod{p^2} \},
\end{equation}
and put
\[
B_p \coloneqq \bigcup_{1 \leq i \leq g} B_{p,i}
   \,\, \subseteq \ZZ/p^2\ZZ.
\]
Note that $B_p$ depends of course on $u$.
The following result shows that the target integers $k$ specified
in \Cref{thm:sieve-tuple} survive after sieving out
the residue classes in $B_p$ for $p \leq z$.
\begin{lem}
\label{lem:sieve-criterion}
Let $z > 1$ be a real number and let $k \geq 0$ be an integer.
If $Uk + u_1$, $\ldots, Uk + u_g$ are all square-$z$-rough
then $(k \bmod p^2) \notin B_p$ for all primes $p \leq z$.
\end{lem}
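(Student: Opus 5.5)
We argue by contraposition: suppose that for some prime $p \leq z$ we have $(k \bmod p^2) \in B_p$, and deduce that one of the integers $Uk + u_i$ fails to be square-$z$-rough. By definition of $B_p$ as a union, there is an index $i \in \{1, \ldots, g\}$ with $(k \bmod p^2) \in B_{p,i}$. Write $n \coloneqq Uk + u_i$, a non-negative integer.

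The conditions defining $B_{p,i}$ in \eqref{eq:Bpi-defn} depend only on $k$ modulo $p^2$. This is because $U(k + p^2 j) + u_i \equiv Uk + u_i \pmod{p^2}$, so membership is well defined on $\ZZ/p^2\ZZ$. Hence $n \equiv 0 \pmod p$ and $n \not\equiv 0 \pmod{p^2}$, i.e.\ $p \divides n$ but $p^2 \ndivides n$. In particular $n \neq 0$, so $n$ is a positive integer, and its $p$-adic valuation is exactly~$1$.

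Since $p \leq z$ and the valuation is odd, the equivalent characterisation of square-$z$-roughness (every prime $p \leq z$ divides $n$ to an even power) fails. Concretely, if $n = m^2 n'$ with $n'$ being $z$-rough, then $v_p(n) = 2 v_p(m) + v_p(n') = 2v_p(m)$ is even, which is a contradiction. Thus $Uk + u_i$ is not square-$z$-rough, proving the contrapositive.

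There is no real obstacle here. The only points needing care are checking that reducing $k$ modulo $p^2$ is compatible with the definition of $B_{p,i}$, and noting that $n = 0$ is excluded automatically by the condition $p^2 \ndivides n$.
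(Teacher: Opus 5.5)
Your proof is correct and follows essentially the same route as the paper: pick $i$ with $(k \bmod p^2) \in B_{p,i}$, so that $p \divides Uk+u_i$ but $p^2 \ndivides Uk+u_i$, and contradict any decomposition $Uk+u_i = m^2 n'$ with $n'$ being $z$-rough. The paper phrases the last step as ``$p \ndivides n'$ (since $p \leq z$), hence $p \divides m$, hence $p^2 \divides Uk+u_i$'' rather than through the parity of $v_p(Uk+u_i)$, but the argument is the same.
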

\begin{proof}
If $(k \bmod p^2) \in B_p$ for some $p \leq z$,
then $(k \bmod p^2) \in B_{p,i}$ for some $i$,
so $p \divides Uk + u_i$ and $p^2 \ndivides Uk + u_i$.
By hypothesis $Uk + u_i$ is square-$z$-rough,
say $Uk + u_i = m^2 n$ where $n$ is $z$-rough.
But then $p \ndivides n$ (as $p \leq z$),
so $p \divides m$, and then $p^2 \divides Uk + u_i$, a contradiction.
\end{proof}
\begin{rem}
Zero is not considered to be square-$z$-rough (or $z$-rough),
so \Cref{lem:sieve-criterion} holds vacuously when $k = 0$ and
$u_i = 0$ for some $i$.
\end{rem}

We say that a prime $p$ is \emph{bad}
(with respect to the parameters $g$, $U$ and $u$)
if any of the following hold:
\begin{enumerate}[label={(\roman*)}]
\item
$p \divides U$,
\item
$p$ divides some $u_i - u_j$ (for $i \neq j$), or
\item
$p \leq 4g^2$.
\end{enumerate}
Otherwise $p$ is \emph{good}.
Clearly there are only finitely many bad primes.
Define
\[
Q = Q(u) \coloneqq \prod_{\textn{$p$ bad}} p.
\]
Certainly $Q \geq 6$, as (iii) implies that $p = 2, 3$ are bad.
\begin{lem}
\label{lem:p-good}
If $p$ is good then
\[
|B_p| = g(p-1) < p^2.
\]
\end{lem}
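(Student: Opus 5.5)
We count $B_{p,i}$ exactly for a single index $i$, and then show that the sets $B_{p,1},\dots,B_{p,g}$ are pairwise disjoint when $p$ is good. This gives $|B_p| = \sum_i |B_{p,i}|$.

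\emph{Counting $B_{p,i}$.} Since $p$ is good, condition (i) fails, so $p \ndivides U$ and $U$ is invertible modulo $p^2$. Hence the map $k \mapsto Uk + u_i$ is a bijection of $\ZZ/p^2\ZZ$. Under this bijection, $B_{p,i}$ corresponds to the set of residues $n \in \ZZ/p^2\ZZ$ with $p \divides n$ but $n \not\equiv 0 \pmod{p^2}$. These are the nonzero multiples of $p$ modulo $p^2$, namely $n = pc$ with $c \in \{1, \ldots, p-1\}$. So $|B_{p,i}| = p-1$.

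\emph{Disjointness.} Suppose $k \in B_{p,i} \cap B_{p,j}$ with $i \neq j$. Then $p \divides Uk+u_i$ and $p \divides Uk+u_j$, so $p \divides u_i - u_j$. This contradicts condition (ii), since $p$ is good. Hence the union defining $B_p$ is disjoint, and $|B_p| = g(p-1)$.

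\emph{The inequality.} It remains to check $g(p-1) < p^2$. Since $p$ is good, condition (iii) fails, so $p > 4g^2 \geq g$. Therefore $g(p-1) < p(p-1) < p^2$.

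All steps are routine. The only point needing care is noticing that the goodness conditions (i) and (ii) are exactly what make the counting bijective and the union disjoint.
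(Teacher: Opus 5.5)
Your proof is correct and follows the paper's argument: count $|B_{p,i}| = p-1$ using $p \ndivides U$ (your bijection $k \mapsto Uk + u_i$ just repackages the paper's ``$p$ solutions mod $p$, one of which is $0$ mod $p^2$''), use $p \ndivides u_i - u_j$ for disjointness, and use $p > 4g^2 \geq g$ for the inequality.
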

\begin{proof}
Let $i \in \{1, \ldots, g\}$.
Since $p \ndivides U$,
there are $p$ values of $k \pmod{p^2}$ such that $Uk + u_i \equiv 0 \pmod p$,
and exactly one of these satisfies $Uk + u_i \equiv 0 \pmod{p^2}$.
Thus $|B_{p,i}| = p-1$.
The condition $p \ndivides u_i - u_j$ implies that the $B_{p,i}$ are disjoint,
so $|B_p| = g(p-1)$.
Since $p > 4g^2 > g$ we obtain $g(p-1) < p^2$.
\end{proof}
Finally we set
\begin{equation}
\label{eq:Delta-defn}
\Delta = \Delta(u) \coloneqq \prod_p
   \left(1 - \frac{|B_p|}{p^2}\right) \left(1 - \frac1p \right)^{-g}.
\end{equation}
\Cref{lem:p-good} implies that the factor $1 - |B_p|/p^2$ does not vanish
for the good primes.
If $|B_p| = p^2$ for any bad prime~$p$ then $\Delta = 0$.
Otherwise, as we will see later (\Cref{cor:Fp-converges}),
the product in \eqref{eq:Delta-defn} converges absolutely and $\Delta \neq 0$.

We may now state the main estimate for a fixed tuple $u = (u_1, \ldots, u_g)$.
We stress that the implied big-$O$ constants
in the following result are absolute,
and do not depend on $g$, $U$, or $u$.

\begin{thm}
\label{thm:sieve-tuple}
There exists an absolute constant $C_1 > 1$ with the following property.
Let $g$, $U$, $u = (u_1, \ldots, u_g)$,
$Q = Q(u)$ and $\Delta = \Delta(u)$ be as above.
Let
\begin{equation}
\label{eq:theta-defn}
\theta = \theta(g,Q) \coloneqq g^2 \log 5Q \log \log 5Q.
\end{equation}
Then for any integer $X \geq 1$ and any real number $z > e^{C_1 \theta}$,
the number of integers $k \in [0,X)$ such that
$Uk + u_1, \ldots, Uk + u_g$ are all square-$z$-rough is at most
\[
\frac{g! \Delta X}{(\log z)^g}
   \left(1 + O\left(\frac{\theta}{\log z} \right)\right)
   + O\left( \frac{z^2 (\log z)^{4g}}{(2g)!^2} \right).
\]
\end{thm}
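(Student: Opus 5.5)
We run a Selberg upper-bound sieve in dimension $g$, with local densities taken modulo $p^2$. By \Cref{lem:sieve-criterion}, every $k\in[0,X)$ we must count lies in the sifted set
\[
\mathcal S(X,z)\coloneqq\{k\in[0,X): (k \bmod p^2)\notin B_p \text{ for all } p\le z\}.
\]
So it suffices to bound $|\mathcal S(X,z)|$. For squarefree $d$ with all prime factors at most $z$, write $\omega(d)$ for the density of sifted classes; it is multiplicative with $\omega(p)=|B_p|/p^2$. We then have
\[
\#\{k\in[0,X): k \bmod p^2\in B_p \ \forall p\mid d\}=\omega(d)X+O\Bigl(\textstyle\prod_{p\mid d}|B_p|\Bigr).
\]
The remainder is at most $g^{\nu(d)}$ times a constant, and in all cases at most $\prod_{p\mid d}p^2=d^2$.

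First we control the local factors. For a good prime, \Cref{lem:p-good} gives $\omega(p)=g(p-1)/p^2=g/p-g/p^2$, so
\[
\Bigl(1-\frac{|B_p|}{p^2}\Bigr)\Bigl(1-\frac1p\Bigr)^{-g}=1+O(g^2/p^2).
\]
The product over good primes therefore converges absolutely; this is where \Cref{cor:Fp-converges} comes from. Since every $p\le 4g^2$ is bad, the tail $\sum_{p>4g^2}g^2/p^2$ is $O(1)$. For the bad primes ($p\mid QU$ and so on), $|B_p|$ can be as large as $g p$, or $p^2$, in which case $\Delta=0$ and the claim is trivial. These primes enter only through $Q$.

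The sieve itself is Selberg's $\Lambda^2$ method with level $D=z$ (that is, $\sqrt D$-weights supported on $d\le z$). The standard result, for example Halberstam--Richert Theorem~4.1 or Friedlander--Iwaniec's fundamental lemma for dimension $\kappa=g$, gives
\[
|\mathcal S|\le \frac{X}{G(z)}+\sum_{d_1,d_2\le z}|r_{[d_1,d_2]}|,
\]
where $G(z)=\sum_{d\le z}\prod_{p\mid d}\frac{\omega(p)}{1-\omega(p)}$. The remainder sum is at most $\bigl(\sum_{d\le z}\mu^2(d)g^{\nu(d)}\bigr)^2 d^2$-weighted. Bounding it by $z^2\bigl(\sum_{d\le z}g^{\nu(d)}/\ldots\bigr)^2$ yields $O\bigl(z^2(\log z)^{2g}/g!^{2}\bigr)$, which is of the shape of the stated $O\bigl(z^2(\log z)^{4g}/(2g)!^2\bigr)$ term. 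We would track this by Rankin's trick rather than optimise it.

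The main term needs a lower bound for $G(z)$ that is uniform in $g$ and $Q$. Here $G(z)\ge \sum_{d\le z}h(d)$ with $h(p)\approx g/p$ for good $p$. Using the Dirichlet series $\sum h(d)d^{-s}=\zeta(s+1)^g\,H(s)$ and a contour or Rankin-type argument, we get
\[
G(z)=\frac{(\log z)^g}{g!}\,\frac{1}{\Delta}\Bigl(1+O\Bigl(\frac{\theta}{\log z}\Bigr)\Bigr),
\]
with $H(0)$ matching $\Delta^{-1}$ after the conversion $1-\omega(p)$ versus $(1-1/p)^g$.

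This uniform asymptotic is the main obstacle. The relative error must depend on $g$ and $Q$ only through $\theta=g^2\log 5Q\log\log 5Q$. The bad primes contribute $\sum_{p\mid Q}\log p\cdot g/p\ll g\log\log 5Q$, and the derivative $H'/H$ is controlled by $g^2$ and $\log 5Q$. The hypothesis $z>e^{C_1\theta}$ is exactly what is needed to make the $O(\theta/\log z)$ error less than $1/2$. I would first try the elementary approach of Halberstam--Richert, Lemma~4.1: compare $\sum_{d\le z}h(d)$ with the product $\prod_{p\le z}(1-\omega(p))^{-1}$ minus a tail $\sum_{d>z}$, and bound that tail by Rankin's trick with $d^{\epsilon}$, $\epsilon=g/\log z$. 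Finally, Mertens' theorem converts $\prod_{p\le z}(1-1/p)^{-g}$ into $(e^\gamma\log z)^g$. The resulting constant must come out as $g!$; this follows from the Rankin computation, but keeping the $e^{\gamma g}$ factors consistent is delicate.
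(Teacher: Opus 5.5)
You have the same framework as the paper: \Cref{lem:sieve-criterion} puts every $k$ you must count in the sifted set, and Selberg's bound $X/G(z)+\sum_{d_1,d_2\le z}|r([d_1,d_2])|$ with $\delta(p)=|B_p|/p^2$ is exactly what the paper uses. However, the route you propose for the step you call the main obstacle cannot succeed. You need $G(z)$ (the paper's $H(z)$) with the sharp constant, $G(z)=\frac{(\log z)^g}{g!\,\Delta}(1+O(\theta/\log z))$.

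Writing $G(z)$ as the full product $\prod_{p\le z}(1-\delta(p))^{-1}$ minus the tail $\sum_{d\mid P(z),\,d>z}h(d)$ does not work, because the truncation point coincides with the sifting range. The product is about $\Delta^{-1}(e^{\gamma}\log z)^g$, so the terms with $d\le z$ are only a fraction of roughly $1/(g!\,e^{\gamma g})$ of it. The ``tail'' is therefore essentially everything; already for $g=1$ it is about $44\%$. Since the tail really is most of the product, no upper bound for it, Rankin or otherwise, can give the lower bound you need. Indeed, for $g\ge2$ Rankin's bound $e^{-g}\prod_{p\le z}(1+h(p)p^{\epsilon})$ with $\epsilon=g/\log z$ is much larger than the product itself.

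Truncating the product at $w=z^{1/s}$ does not rescue this. The $h$-weighted typical size of $\log d$ for $d\mid P(w)$ is about $g\log w$, so you need $s\gg g$ for the tail to be small. Then you obtain only $G(z)\gg(e^{\gamma}\log z/s)^g/\Delta$, which misses the truth by a factor exponential in $g$. No bookkeeping of $e^{\gamma g}$ converts this into $1/g!$ with relative error $O(\theta/\log z)$.

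The $1/g!$ is the volume of the simplex $\{y_i\ge0,\ y_1+\cdots+y_g\le\log z\}$, and it has to come from a genuine truncated-sum asymptotic. The paper writes $h=\psi*f$ with $f(n)=\tau_g(n)/n$ for $(n,Q)=1$ and $f(n)=0$ otherwise. It proves $\sum_{n\le x,\,(n,Q)=1}\tau_g(n)/n=\frac{(\log x)^g}{g!\,\Lambda}(1+O(\theta/\log x))$ for $x>e^{\theta}$, with $\Lambda=(Q/\varphi(Q))^g$, by elementary box-counting against that simplex (\Cref{prop:tau-Q-estimate}). It then controls $\psi$ through explicit local bounds at good primes, $|\psi(p)|<g^2/p^2$ and $|\psi(p^\ell)|<g^\ell/((\ell-1)!\,p^\ell)$ for $\ell\ge2$, which use $p>4g^2$ (\Cref{lem:psi-bound}). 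These give $\sum_k\psi(k)=\Lambda/\Delta$ and $\sum_k|\psi(k)|\log k\ll\Delta^{-1}\Lambda\log Q$. They also give a tail bound through $k^{\epsilon}$ with $\epsilon=1/\log 5Q$ (not $g/\log z$), and together with $z>e^{3\theta}$ this yields exactly the error $O(\theta/\log z)$. A Selberg--Delange contour argument on the Dirichlet series is conceivable, but uniformity in $g$ is the whole difficulty, and your proposal does not address it.

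The remainder estimate also has a gap. Your displayed bound $|r(d)|\ll\prod_{p\mid d}|B_p|$ is far too weak: $|B_p|=g(p-1)$ for good $p$, and $|B_p|$ can be $p(p-1)$ when $p\mid U$. So this bound grows like $d$ and gives $\sum_{d_1,d_2\le z}|r([d_1,d_2])|$ of size roughly $z^4$, not $z^2$; the fallback bound $d^2$ is worse still. Your claim that $|r(d)|$ is bounded in terms of $g$ and $\nu(d)$ alone, independently of the size of $d$, is the right kind of statement. But it does not follow from counting residues modulo $d^2$.

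It needs the structural fact (\Cref{lem:Bp-structure}) that $B_p=B_p^1\setminus B_p^2$. Here $B_p^1$ is a union of at most $g$ classes modulo $p^{\alpha_p}$, and $B_p^2\subseteq B_p^1$ is a union of at most $g$ classes modulo $p^{\alpha_p+1}$, with $\alpha_p=1$ if $p\nmid U$ and $\alpha_p=0$ if $p\mid U$. Inclusion--exclusion over factorisations $d=d_1d_2$ then gives $|r(d)|\le(2g)^{\omega(d)}=\tau_{2g}(d)$. Note the factor $2^{\omega(d)}$, which is not an absolute constant. Hence $\sum_{d_1,d_2\le z}|r([d_1,d_2])|\le\bigl(\sum_{d\le z}\tau_{2g}(d)\bigr)^2\le\bigl(z(\log z+2g)^{2g}/(2g)!\bigr)^2\ll z^2(\log z)^{4g}/(2g)!^2$ once $\log z>g^2$.
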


Note that the hypothesis $z > e^{C_1 \theta}$
(along with $C_1 > 1$ and $Q \geq 6$)
implies that $z > Q$ and hence that $p \leq z$ for all bad primes~$p$.
If $|B_p| = p^2$ for any of these primes,
then \Cref{lem:sieve-criterion} implies that
$Uk + u_1, \ldots, Uk + u_g$ can never be simultaneously square-$z$-rough,
so in this case \Cref{thm:sieve-tuple} holds trivially
(and of course $\Delta = 0$).
Henceforth we may assume that
\begin{equation}
\label{eq:Bp-assumption}
|B_p| < p^2 \qquad \text{for all primes $p$}.
\end{equation}

\begin{rem}
\label{rem:adjusted-Delta}
It is likely that the constant $\Delta$ in \Cref{thm:sieve-tuple}
can be replaced by a smaller value $\Delta'$
that takes into account congruences modulo higher powers of~$p$,
i.e., instead of just sieving out residues
$\{x \equiv 0 \pmod p, \, x \not\equiv 0 \pmod{p^2}\}$,
we could also sieve out
$\{x \equiv 0 \pmod{p^3}, \, x \not\equiv 0 \pmod{p^4}\}$ and so on.
A rough analogy is that the density of squarefree integers is given by
$\prod_p (1 + p^{-2} + p^{-4} + \cdots)^{-1} \approx 0.6079$
rather than $\prod_p (1 + p^{-2})^{-1} \approx 0.6580$.
The author has not checked the details.
\end{rem}

\begin{rem}
Our proof of \Cref{thm:sieve-tuple} will use only
standard techniques from sieve theory,
but the author was unable to find existing statements
in the literature that could be used ``off-the-shelf''
in place of the theorem.
The closest available results are of the following type:
given an integer tuple $r = (r_1, \ldots, r_g)$,
the number of $n < X$ such that $n + r_1, \ldots, n + r_g$ are
simultaneously \emph{prime} is at most
\begin{equation}
\label{eq:prime-tuples}
\frac{2^g g! D_r X}{(\log X)^g} (1 + o_r(1)),
   \qquad X \to \infty,
\end{equation}
where $D_r$ is a product of local densities depending on~$r$.
Examples of statements along these lines include
\cite[Thm.\,5.7]{HR-sieves},
\cite[Thm.\,4, \S2.3.3]{Greaves-sieves}
(note that the definition of $\pi(f,X)$ is incorrect;
the condition ``$f(n)$ is prime'' should read
``$f(n)$ is a product of $k$ primes''),
\cite[Thm.\,6.7]{IK-analytic},
\cite[Eq.\,3.46]{MV-mult-nt},
\cite[Thm.\,7.16]{FI-opera}
and \cite[Cor.\,21.3]{Kou-primes}.
These sources also (implicitly) give asymptotic formulas for
the more general situation in which ``prime'' is replaced by ``$z$-rough''.
The most famous case of \emph{twin primes}
corresponds to taking $g = 2$, $r = (0, 2)$.

There are two main differences between \Cref{thm:sieve-tuple}
and the results just listed.
First, as we are interested in square-primes rather than primes,
we must sieve out residue classes modulo $p^2$ instead of modulo~$p$
(see \Cref{lem:sieve-criterion}).
Second, the $o_r(1)$ error terms in the abovementioned results
are not uniform in~$g$.
In the applications in
\Crefrange{sec:probabilistic}{sec:deterministic},
the value of $g$ will grow slowly as a function of~$X$
(roughly like $\log \log X$),
so we require an error term that is fully explicit in terms of~$g$.

There are very few works that keep track of the error term for varying~$g$.
One example is \cite{Elsholtz-tuples},
which gives an upper bound for the number of prime $g$\nobreakdash-tuples
up to $X$ when $g \asymp \log X$.
This choice of $g$ is much larger than in our setting,
and the resulting bounds in \cite{Elsholtz-tuples} are
not strong enough for our purposes.
Another example is \cite[Lem.\,5.1]{FKL-chains},
which is essentially a variant of \cite[Thm.\,5.7]{HR-sieves}
with a more explicit error term.
The argument in \cite{FKL-chains} includes a sketch of
what modifications must be made to \cite[pp.\,147--152]{HR-sieves}
to obtain error terms uniform in~$g$.
Similar issues will arise in our proof of \Cref{thm:sieve-tuple}.
\end{rem}

\begin{rem}
\label{rem:remove-g!}
The Hardy--Littlewood conjecture \cite[p.\,61]{HL-prime-tuples}
predicts that for fixed~$g$,
the true number of prime $g$-tuples up to $X$ is $\sim D_r X / (\log X)^g$,
i.e., with the same constant as in \eqref{eq:prime-tuples}
but with the $2^g g!$ factor removed.
Similarly, if we take say $z = X^{1/2} / (\log X)^{3g}$
in \Cref{thm:sieve-tuple},
we find that the number of $k \in [0,X)$
such that $Uk + u_1, \ldots, Uk + u_g$
are all square-prime is bounded above by
$\sim 2^g g! \Delta X / (\log X)^g$.
The author expects that the true number is asymptotically
$\Delta' X / (\log X)^g$
where $\Delta'$ is as described in \Cref{rem:adjusted-Delta}.
\end{rem}

Our proof of \Cref{thm:sieve-tuple} relies on
\emph{Selberg's sieve} \cite{Selberg-sieve}.
Let us briefly recall the general setup for the sieve,
following the variant described in \cite[\S7.2]{CM-sieve}.

Let $\Aset$ be a finite set of cardinality $X \geq 1$,
let $\Pset$ be a (possibly infinite) set of prime numbers,
and for each $p \in \Pset$ let $\Aset_p$ be a subset of $\Aset$.
For $z > 1$, our goal is to find an upper bound for the cardinality of the set
\[
\Aset(z) \coloneqq
\Aset \mathbin{\Big\backslash}
   \bigcup_{\substack{p \in \Pset \\ p \leq z}} \Aset_p,
\]
i.e., how many elements of $\Aset$ remain after sieving out the elements
of $\Aset_p$ for all $p \in \Pset$, $p \leq z$.

We require that the $\Aset_p$ are ``almost independent''
in the following sense.
For each $p$ we are given a real number $\delta(p) \in (0,1)$
approximating the density of $\Aset_p$ in~$\Aset$,
i.e., so that $|\Aset_p| \approx \delta(p) X$.
Let $\Dset$ be the set of finite products of distinct elements of $\Pset$,
i.e., the set of positive squarefree integers $d$ whose prime divisors
all lie in $\Pset$.
Define
\[
\Aset_d \coloneqq \bigcap_{p \divides d} \Aset_p, \qquad
   \delta(d) \coloneqq \prod_{p \divides d} \delta(p), \qquad d \in \Dset.
\]
(For $d = 1$ this means $\Aset_1 = \Aset$ and $\delta(1) = 1$.)
Then for all $d \in \Dset$ we want $\delta(d)$ to approximate the density
of $\Aset_d$ in $\Aset$, i.e., we want the remainders
\[
r(d) \coloneqq |\Aset_d| - \delta(d) X, \qquad d \in \Dset
\]
to be in some sense ``small''.

With the setup just described,
the following theorem provides an upper bound for $|\Aset(z)|$,
expressed in terms of the quantities
\[
H(z) \coloneqq \sum_{\substack{d \in \Dset \\ d \leq z}} h(d),
\qquad
R(z) \coloneqq
   \sum_{\substack{d_1, d_2 \in \Dset \\ d_1, d_2 \leq z}} |r([d_1,d_2])|,
\]
where
\begin{equation}
\label{eq:defn-h}
h(d) \coloneqq \prod_{p \divides d} \frac{\delta(p)}{1 - \delta(p)} > 0,
   \qquad d \in \Dset,
\end{equation}
and where $[d_1,d_2]$ denotes the least common multiple of $d_1$ and $d_2$.
\begin{thm}[Selberg's sieve]
\label{thm:selberg}
For any $z > 1$,
\[
|\Aset(z)| \leq \frac{X}{H(z)} + O(R(z)).
\]
\end{thm}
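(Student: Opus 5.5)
This is the standard Selberg upper-bound sieve, and I would prove it by the usual quadratic-form argument, following \cite[\S7.2]{CM-sieve}.

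\emph{Step 1: the majorant.} Choose real weights $\lambda_d$ for $d \in \Dset$, $d \leq \sqrt{z}$, with $\lambda_1 = 1$ and $\lambda_d = 0$ otherwise. Let $P(z) \coloneqq \prod_{p \in \Pset, \, p \leq z} p$. For $a \in \Aset$, let $D(a)$ be the product of the primes $p \mid P(z)$ with $a \in \Aset_p$. Then
\[
1_{a \in \Aset(z)} \leq \Bigl(\sum_{d \mid D(a)} \lambda_d\Bigr)^2 .
\]
If $D(a) = 1$ both sides equal $1$; otherwise the right side is a square and hence nonnegative.

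\emph{Step 2: sum over $\Aset$.} Summing over $a \in \Aset$ and swapping the order of summation, a pair $(d_1,d_2)$ contributes $|\Aset_{[d_1,d_2]}|$. Here $d \mid D(a)$ is equivalent to $a \in \Aset_d$. Now substitute $|\Aset_d| = \delta(d)X + r(d)$. This gives
\[
|\Aset(z)| \leq X \sum_{d_1,d_2} \lambda_{d_1}\lambda_{d_2}\,\delta([d_1,d_2]) + \sum_{d_1,d_2} |\lambda_{d_1}\lambda_{d_2}|\,|r([d_1,d_2])| .
\]

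\emph{Step 3: diagonalise the main term.} Since $\delta$ is multiplicative on squarefree integers,
\[
\delta([d_1,d_2]) = \frac{\delta(d_1)\delta(d_2)}{\delta((d_1,d_2))} .
\]
Write $1/\delta(m) = \sum_{e \mid m} f(e)$, where $f$ is multiplicative with $f(p) = (1-\delta(p))/\delta(p) = 1/h(p)$. The quadratic form then becomes $\sum_{e} f(e)\, y_e^2$, where $y_e \coloneqq \sum_{e \mid d} \lambda_d\, \delta(d)$. By Möbius inversion, the constraint $\lambda_1 = 1$ turns into the linear condition $\sum_e \mu(e)\, y_e/\delta(e) = 1$. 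Minimising under this constraint with Cauchy--Schwarz gives the minimum value $1/\sum_{e \leq \sqrt z} h(e)$. It is attained at $y_e = \mu(e)\, h(e)\, \delta(e)/H$, where $H$ denotes the sum just written.

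\emph{Step 4: control the weights.} The resulting weights satisfy $|\lambda_d| \leq 1$. This is the classical step where one compares $H$ with a subsum factored through $d$. It bounds the error term by $\sum_{d_1,d_2 \leq \sqrt z} |r([d_1,d_2])| \leq R(z)$.

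\emph{Normalisation and main obstacle.} Since the paper defines $H(z)$ and $R(z)$ with $d \leq z$ rather than $d \leq \sqrt z$, I would run the argument with support $d \leq z$ directly; nothing in the argument depends on the square root. The main obstacle is Step 4, proving $|\lambda_d| \leq 1$. For this I write
\[
H = \sum_{k \mid d} \;\sum_{\substack{e \leq z, \ (e,d) = k}} h(e)
\]
and use multiplicativity of $h$ to show that $|\lambda_d|\, H$ is bounded by this same sum restricted to one class. I expect the only real care to be the restriction to $\Dset$ (primes in $\Pset$), which is handled by defining everything on $\Dset$ from the start.
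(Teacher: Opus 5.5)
Your route is the same as the paper's. The paper proves this by quoting \cite[Thm.\,7.2.1]{CM-sieve}, translating $f = 1/\delta$ and $f_1 = 1/h$, and that theorem's proof is exactly your quadratic-form argument. It also uses weights supported on $d < z$ and sieving primes $p < z$, so your decision to run the argument with support $d \leq z$ is the right one. That choice produces $H(z)$ and $R(z)$ exactly as the paper defines them, with error constant $1$.

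Two formulas in your sketch are wrong as written and need fixing.

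\emph{The constraint in Step 3.} With $y_e \coloneqq \sum_{e \divides d} \lambda_d \delta(d)$, dual M\"obius inversion gives $\lambda_d \delta(d) = \sum_{d \divides e} \mu(e/d) y_e$. So $\lambda_1 = 1$ becomes $\sum_e \mu(e) y_e = 1$, with no factor $1/\delta(e)$. Cauchy--Schwarz against the positive weights $f(e) = 1/h(e)$ then gives the minimum $1/H(z)$, attained at $y_e = \mu(e) h(e)/H(z)$, with no factor $\delta(e)$. Your constraint and optimizer are consistent with each other. However, the form evaluated at your $y_e$ equals $\sum_e h(e)\delta(e)^2/H(z)^2$, not $1/H(z)$.

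\emph{Step 4.} The bound does not come from restricting to one class. Substituting the correct $y_e$ back gives $\lambda_d = \mu(d)\, H(z)^{-1} \bigl(\sum_{k \divides d} h(k)\bigr) \sum_{m \leq z/d,\, (m,d)=1} h(m)$. Here you use $\mu(e/d)\mu(e) = \mu(d)$ for squarefree $e$, and $h(d)/\delta(d) = \prod_{p \divides d}(1 + h(p)) = \sum_{k \divides d} h(k)$.

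In your decomposition of $H(z)$, the class $(e,d) = k$ consists of $e = km$ with $(m,d) = 1$. Hence its sum is $h(k) \sum_{m \leq z/k,\, (m,d)=1} h(m) \geq h(k) \sum_{m \leq z/d,\, (m,d)=1} h(m)$. Summing this over all $k \divides d$ gives $H(z) \geq |\lambda_d|\, H(z)$, which is the bound you need.

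With these two corrections the argument is complete.
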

\begin{proof}
This is essentially \cite[Thm.\,7.2.1]{CM-sieve}.
The translation to our notation is as follows:
\begin{itemize}
\item
The function $f(d)$ in \cite{CM-sieve} corresponds to our
$1/\delta(d) \in (1,\infty)$.
\item
The function $f_1(d)$ in \cite{CM-sieve} is defined by
$f_1(d) \coloneqq \sum_{e \divides d} \mu(e) f(d/e)$.
Since $f$ and $\mu$ are multiplicative, for $d \in \Dset$ this yields
\[
f_1(d) = \prod_{p \divides d} \,
         \sum_{e \divides p} \frac{\mu(e)}{\delta(p/e)}
   = \prod_{p \divides d} \left(\frac{1}{\delta(p)} - 1\right)
   = \frac{1}{h(d)}.
\]
Thus $f_1(d)$ corresponds to our $1/h(d) \in (0,\infty)$ for $d \in \Dset$.
\item
The function $V(z)$ in \cite{CM-sieve} is given by
\[
V(z) \coloneqq
   \sum_{\substack{d \leq z \\ d \divides P(z)}} \frac{\mu^2(d)}{f_1(d)}
   = \sum_{\substack{d \leq z \\ d \divides P(z)}} h(d)
\]
where $P(z) \coloneqq \prod_{p \in \Pset, p < z} p$.
This is exactly our $H(z)$,
except that $P(z)$ only includes primes $p < z$,
whereas we allow primes $p \leq z$.
We can make these agree by replacing $z$ with $z + \eps$ if $z \in \ZZ$.
(Alternatively, one can check that the proof of \cite[Thm.\,7.2.1]{CM-sieve}
still works if we change the definition of $P(z)$ to
$\prod_{p \in \Pset, p \leq z} p$.)
\item
The remainders $R_d$ in \cite{CM-sieve} are exactly our $r(d)$,
for $d \in \Dset$.
\end{itemize}
The conclusion of \cite[Thm.\,7.2.1]{CM-sieve} is that
$|\Aset(z)| \leq X/H(z) + O(R(z))$.
(Again, to identify the latter term as $R(z)$ we may have to replace
$z$ by $z + \eps$ if $z \in \ZZ$.)
\end{proof}
\begin{rem}
Incidentally, it is clear from the proof of \cite[Thm.\,7.2.1]{CM-sieve}
that the error term $O(R(z))$ could be replaced by simply $R(z)$.
\end{rem}

Let us now return to the proof of \Cref{thm:sieve-tuple}.
We will apply \Cref{thm:selberg} with the following data:
\begin{itemize}
\item
$\Aset \coloneqq \{0, 1, \ldots, X-1\}$.
\item
$\Pset$ is the set of primes such that $|B_p| \neq 0$.
(By \Cref{lem:p-good} this includes all good primes.)
\item
$\Aset_p \coloneqq \{k \in \Aset : (k \bmod p^2) \in B_p\}$
and $\delta(p) \coloneqq |B_p|/p^2$ for $p \in \Pset$.
\end{itemize}
According to the assumption \eqref{eq:Bp-assumption}
we have $\delta(p) \in (0, 1)$ for every $p \in \Pset$.

By \Cref{lem:sieve-criterion},
the integers $k$ described in \Cref{thm:sieve-tuple} all lie in $\Aset(z)$,
so our goal is to find an upper bound for $|\Aset(z)|$.
As input to \Cref{thm:selberg}
we will need the following estimates for $H(z)$ and $R(z)$.
\begin{prop}
\label{prop:H-estimate}
Let $\theta = \theta(g,Q)$ be as in \eqref{eq:theta-defn}.
For $z > e^{3\theta}$,
\[
H(z) = \frac{(\log z)^g}{g! \Delta}
   \left(1 + O\left(\frac{\theta}{\log z}\right) \right).
\]
\end{prop}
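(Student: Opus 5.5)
We estimate $H(z)=\sum_{d\in\Dset,\,d\le z}h(d)$ by the standard Rankin/contour-free method for sums of multiplicative functions. The plan is to compare $h$ with the $g$-fold divisor-type function $p\mapsto g/p$. For a good prime $p$ we have $\delta(p)=g(p-1)/p^2$, so
\[
h(p)=\frac{g(p-1)}{p^2-g(p-1)}=\frac{g}{p}+O\Bigl(\frac{g^2}{p^2}\Bigr).
\]
For bad primes $h(p)$ is merely some positive number, possibly large; this is why $\log Q$ enters $\theta$. We write $h=h_1*h_2$ in the sense of Dirichlet convolution restricted to squarefree $d$: here $h_1$ is the squarefree multiplicative function with $h_1(p)=g/p$ for all primes $p$, and $h_2$ is determined by $1+h(p)=(1+h_1(p))(1+h_2(p))$ locally. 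The truth is slightly subtler, since the convolution of squarefree-supported functions is not squarefree-supported. So more concretely we will work with the Dirichlet series $\mathcal H(s)=\prod_{p\in\Pset}(1+h(p)p^{-s})$ and factor it as $\zeta(s+1)^g\,\mathcal G(s)$, where
\[
\mathcal G(s)=\prod_p(1+h(p)p^{-s})(1-p^{-1-s})^g
\]
is absolutely convergent for $\real s>-1/2$ away from the bad primes. One checks that
\[
\mathcal G(0)=\prod_p\frac{1}{1-\delta(p)}\Bigl(1-\frac1p\Bigr)^g=\frac1\Delta,
\]
which gives the main term $(\log z)^g/(g!\Delta)$.

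The key steps, in order, are as follows. First, prove the lemma (presumably \Cref{cor:Fp-converges}) that the local factors satisfy $(1+h(p))(1-1/p)^g=1+O(g^2/p^2)$ for good $p$; this also yields absolute convergence of $\Delta$. Second, establish the elementary estimate
\[
\sum_{n\le z}\frac{\tau_g(n)}{n}=\frac{(\log z)^g}{g!}\Bigl(1+O\Bigl(\frac{g^2}{\log z}\Bigr)\Bigr),
\]
uniform in $g$ for $\log z\gg g^2$; this follows by induction on $g$ via partial summation, or via Rankin's trick. Third, write $H(z)=\sum_{m}c(m)\sum_{n\le z/m}\tau_g(n)/n$ (up to adjusting for squarefreeness), where $c$ is the coefficient sequence of $\mathcal G$ (including the squarefree correction). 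Then substitute the second step with $\log(z/m)$ in place of $\log z$, and expand $(\log z-\log m)^g=(\log z)^g(1+O(g\log m/\log z))$ for $\log m$ small. This reduces matters to bounding $\sum_m|c(m)|$ and $\sum_m|c(m)|\log m$, with the tail $m>z^{1/2}$ handled by Rankin's trick using $m^{\eta}$ weights with $\eta\asymp 1/\log z$ or $\eta$ fixed small.

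The main obstacle is the uniformity in $g$ and $Q$. The bad primes contribute factors to $\sum|c(m)|m^{\eta}$ that may be as large as $\prod_{p\mid Q}(1+O(g)/p^{1-\eta})^{O(g)}$, which is roughly $(\log Q)^{O(g)}$ or so. The good primes contribute $\exp(O(g^2\sum_{p>4g^2}p^{-2+\eta}))=O(1)$. We need the relative error $\sum|c(m)|\log m/\sum c(m)$ to be $O(\theta)$, with $\theta=g^2\log 5Q\log\log 5Q$. We will try to get it by logarithmic differentiation of $\mathcal G$: the quantity $\mathcal G'(0)/\mathcal G(0)$ is a sum over primes of $\log p$ times local terms. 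For bad primes this is at most $g\sum_{p\mid Q}\log p/p\ll g\log\log 5Q$ plus the primes up to $4g^2$, giving $\ll g\log g$. For good primes it is $\ll g^2\sum_{p>4g^2}\log p/p^2\ll 1$. Managing the sign and size of the $c(m)$ carefully (possibly negative coefficients from the $(1-p^{-1-s})^g$ factor, bounded using $|c|$ majorised by $\prod(1+h(p)p^{-s})(1+p^{-1-s})^g$) is where the factor $\log 5Q$ and the hypothesis $z>e^{3\theta}$ get used, the latter to make $m$-tail terms and the $O(g^2/\log z)$ error absorbable into $O(\theta/\log z)$.
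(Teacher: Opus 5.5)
Your factorisation $\mathcal H(s)=\zeta(s+1)^g\,\mathcal G(s)$, the identity $\mathcal G(0)=\Delta^{-1}$, and the Rankin treatment of the tail $m>z^{1/2}$ are all fine. However, the main-range error analysis does not close, and it fails at the bad primes.

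Your second step only gives the two-sided bound $(\log x)^g/g!\le\sum_{n\le x}\tau_g(n)/n\le(\log x+g)^g/g!$. So after writing $H(z)=\sum_m c(m)\sum_{n\le z/m}\tau_g(n)/n$, the error is bounded only by $\frac{(\log z)^{g-1}}{g!}\sum_m|c(m)|\,(g^2+g\log m)$. Since $c$ is multiplicative with coefficients of both signs, $\sum_m|c(m)|$ is the product of the local absolute sums, and these can dwarf $\mathcal G(0)$.

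Take $p=2$, which is always bad. The local factor $(1+h(2)2^{-s})(1-2^{-1-s})^g$ equals $(1+h(2))2^{-g}\le 4\cdot 2^{-g}$ at $s=0$. Its coefficients, however, have absolute sum $\gg(3/2)^g g^{-1/2}$ (bound it below by the modulus of the polynomial in $x=2^{-s}$ on $|x|=1$ near $x=-1$). Every other bad prime contributes a similar factor, roughly $((p+1)/(p-1))^g$.

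Hence your bound on the relative error is at least of order $g^{3/2}3^g/\log z$. This is not $O(\theta/\log z)$ with an absolute constant, since $\theta$ is only polynomial in $g$ and $\log Q$. Logarithmic differentiation of $\mathcal G$ does not repair this: it controls the signed sum $\sum_m c(m)\log m$, whereas your error terms carry $|c(m)|$. To exploit the cancellation you would need a full asymptotic expansion of $\sum_{n\le x}\tau_g(n)/n$ with error terms explicit in $g$, which you do not have.

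The missing idea is to strip the bad primes out of the comparison function: compare $h$ with $f(n)=\tau_g(n)/n$ for $(n,Q)=1$ and $f(n)=0$ otherwise. Then $\psi$, defined by $h=\psi*f$, has Euler factor $1+h(p)p^{-s}$ at each bad prime. These coefficients are nonnegative, so nothing is lost in passing to absolute values. At good primes the factor is $(1+h(p)p^{-s})(1-p^{-1-s})^g$, and coefficientwise bounds make it harmless: $|\psi(p)|<g^2/p^2$ and $|\psi(p^\ell)|<g^\ell/((\ell-1)!\,p^\ell)$. You need these coefficient bounds, not merely the value of the local factor at $s=0$.

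This gives $\sum_n|\psi(n)|\ll\prod_{\textn{$p$ bad}}(1+h(p))\asymp\Lambda\Delta^{-1}=\sum_n\psi(n)$, so error and main term are of the same size. The factor $\Lambda=(Q/\varphi(Q))^g$ then cancels against the main term of the coprime divisor sum.

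The price is that your second step must be replaced by the uniform estimate $\sum_{n\le z,\,(n,Q)=1}\tau_g(n)/n=\frac{(\log z)^g}{g!\Lambda}\bigl(1+O(\theta/\log z)\bigr)$ for $z>e^\theta$. This estimate, rather than sign management in $\mathcal G$, is where $\theta=g^2\log5Q\log\log5Q$ comes from. The tail $k\ge e^{-\theta}z$ is then handled by Rankin's trick with exponent $1/\log 5Q$, using $z>e^{3\theta}$.
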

\begin{prop}
\label{prop:R-bound}
For $z > e^{g^2}$,
\[
R(z) \ll \frac{z^2 (\log z)^{4g}}{(2g)!^2}.
\]
\end{prop}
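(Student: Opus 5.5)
The plan is to bound each remainder $r(d)$ by a quantity that depends only on the number of prime factors $\omega(d)$, not on the size of $d$, and then to sum these bounds over $d_1, d_2 \leq z$. The trivial bound, that $\Aset_d$ is a union of $\prod_{p \mid d} |B_p|$ residue classes modulo $d^2$, gives $|r(d)| \leq \prod_{p \mid d} |B_p|$. This is of size roughly $g^{\omega(d)} d$, so summing it over $d_1, d_2 \leq z$ already gives about $z^4$, which is far too weak. Instead I will write the indicator function of each $B_p$ as a signed combination of a few arithmetic progressions.

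\emph{Step 1: local structure.} I claim that for every prime $p$, the indicator of $B_p \subseteq \ZZ/p^2\ZZ$ is a signed sum of at most $2g$ indicators of residue classes, each modulo $p$ or modulo $p^2$, with coefficients $\pm 1$.
\begin{itemize}
\item Suppose $p \ndivides U$. Then each $B_{p,i}$ is a class $S_i$ modulo $p$ with one class $T_i \subseteq S_i$ modulo $p^2$ removed. The classes $S_i$ may coincide for bad primes, but the union $B_p$ is still the union of the at most $g$ distinct classes $S_i$, minus the union of the at most $g$ distinct classes $T_i$. Since a class $T_i$ lies in only one $S_j$, this gives a signed sum of at most $2g$ progressions.
\item Suppose $p \divides U$. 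Then $Uk + u_i \bmod p$ does not depend on $k$, so $B_{p,i}$ is empty unless $p \divides u_i$. In that case $B_{p,i}$ is the complement of $T_i = \{k : Uk + u_i \equiv 0 \tpmod{p^2}\}$, and $T_i$ is empty, everything, or (when $p \Vert U$) a single class modulo $p$. So $B_p$ is the complement of an intersection of such sets, which is $\emptyset$, everything, or the complement of one class. That is at most $2$ progressions.
\end{itemize}

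\emph{Step 2: the global remainder.} For squarefree $d \in \Dset$, I multiply these local expansions over $p \mid d$. By the Chinese remainder theorem, $\mathbf 1_{\Aset_d}$ becomes a signed sum of at most $(2g)^{\omega(d)}$ indicators of single residue classes modulo divisors $q$ of $d^2$, restricted to $[0,X)$. Each class contains $X/q + O(1)$ elements, with the $O(1)$ bounded by $1$ in absolute value. Summing the main terms recovers exactly $\delta(d) X$, because the densities are multiplicative and agree with $|B_p|/p^2$ locally. Hence
\[
|r(d)| \leq (2g)^{\omega(d)}.
\]

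\emph{Step 3: summation.} Since $\omega([d_1,d_2]) \leq \omega(d_1) + \omega(d_2)$, we get
\[
R(z) \leq \Bigl(\sum_{d \leq z,\ \mu^2(d) = 1} (2g)^{\omega(d)}\Bigr)^2 .
\]
So it suffices to show that
\[
\sum_{d \leq z} \mu^2(d)\, (2g)^{\omega(d)} \ll \frac{z (\log z)^{2g}}{(2g)!}
\]
uniformly for $z > e^{g^2}$. My approach is to split the sum according to $\omega(d) = j$. The inner sum over $d = p_1 \cdots p_j \leq z$ is at most about
\[
\frac{z}{\log z} \cdot \frac{(\log\log z + O(1))^{j-1}}{(j-1)!}
\]
(Hardy--Ramanujan / Landau type bounds), and I then sum over $j$ with weights $(2g)^j$. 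If that is awkward to make uniform in $g$, a fallback is Rankin's trick: bound the sum by $z \sum_{d \leq z} \mu^2(d) (2g)^{\omega(d)}/d$. The logarithmic sum over $d$ with $\omega(d) = j$ is at most $\bigl(\sum_{p \leq z} 1/p\bigr)^j / j!$. The hypothesis $\log z > g^2$ is what should allow replacing $\log\log z$-type quantities by $\log z$ and absorbing the factor $(2g)^j$, so that the terms $j \leq 2g$ are dominated by $(\log z)^{2g}/(2g)!$ (note $(\log z)^{2g-1}/(2g-1)! \leq (\log z)^{2g}/(2g)!$ when $\log z \geq 2g$). The terms with $j > 2g$ need separate control.

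\emph{Main obstacle.} I expect this last estimate, uniformly in $g$, to be the hardest step. The convenient bounds go through $\log\log z$ and the product $\prod_{p \leq z}(1 + 2g/p)$, which is of size $(\log z)^{2g}$ times a constant like $e^{O(g)}$ or $g^{O(g)}$. Getting the exact factorial $1/(2g)!$ requires care with the large-$j$ tail, and the hypothesis $z > e^{g^2}$ will be used precisely there. I would first try the dyadic split by $\omega(d)$ with the bound $(\sum_{p \leq z} 1/p)^j/j!$, and check that the ratio of consecutive terms falls below $1/2$ once $j \gtrsim 2g$.
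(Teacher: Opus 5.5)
\textbf{Steps 1 and 2.} These follow the paper's route (\Cref{lem:Bp-structure} and \Cref{prop:r-bound}): write $\mathbf{1}_{B_p}$ as a signed combination of at most $2g$ progressions, expand over $p \divides d$, and conclude $|r(d)| \leq (2g)^{\omega(d)}$.

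There is one slip. For $p \ndivides U$ it is not true that $B_p = \bigcup_i S_i \setminus \bigcup_i T_i$. Suppose $u_i \equiv u_j \tpmod p$ but $u_i \not\equiv u_j \tpmod{p^2}$. Then $S_i = S_j$ while $T_i \cap T_j = \emptyset$, so $B_{p,i} \cup B_{p,j} = S_i$. For example, $p = 5$, $U = 1$, $u = (0,5)$ gives $B_5 = \{k \equiv 0 \tpmod 5\}$, whereas your formula removes the classes $0$ and $20$ modulo $25$. The correct statement, as in the paper, is $B_p = B_p^1 \setminus B_p^2$ with $B_p^2 \coloneqq B_p^1 \setminus B_p \subseteq \bigcup_i T_i$. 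This is a union of \emph{some} of the $T_i$, so your count of at most $2g$ progressions survives.

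\textbf{Step 3: the genuine gap.} You leave this step open, and the routes you propose cannot produce the factor $1/(2g)!$.

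Bounding $\sum_{d \leq z, \, \omega(d) = j} \mu^2(d)/d$ by $(\sum_{p \leq z} 1/p)^j/j!$ and summing with weights $(2g)^j$ gives $\exp\bigl(2g \sum_{p \leq z} 1/p\bigr) = (\log z)^{2g} e^{O(g)}$. This discards the constraint $d \leq z$ entirely; it is essentially the Euler product bound. It exceeds the target by a factor of order $(2g)!\,e^{O(g)}$.

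Your plan to check that consecutive terms shrink once $j \gtrsim 2g$ also fails. With these bounds the ratio is $2g\sum_{p\le z}1/p\,/(j+1)$, so the terms keep growing until $j \approx 2g \log\log z$. That is exactly the range where per-$j$ bounds of Hardy--Ramanujan type overestimate by a factor comparable to $\Gamma(2g+1)$. Using Hardy--Ramanujan directly gives only about $z(\log z)^{2g-1}e^{O(g)}$. No care with the tail can repair this, because the loss is already in the per-$j$ inputs.

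\textbf{The missing idea.} For squarefree $d$ one has $(2g)^{\omega(d)} = \tau_{2g}(d)$. Hence
\[
\sum_{d \leq z} \mu^2(d)\,(2g)^{\omega(d)} \leq z \sum_{d \leq z} \frac{\tau_{2g}(d)}{d} \leq z\, \frac{(\log z + 2g)^{2g}}{(2g)!}
\]
by \Cref{lem:tau-sum-estimate}. There the factorial arises as the volume of a simplex $\{y_i \geq 0,\ \sum y_i \leq \log z\}$ in the coordinates $y_i = \log n_i$; this is how the constraint $d \leq z$ is exploited. The hypothesis $z > e^{g^2}$ enters only to give $(1 + 2g/\log z)^{2g} \leq (1 + 2/g)^{2g} < e^4$, and squaring yields the proposition.
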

These results will be proved in \Cref{sec:sieve-H-bound}
and \Cref{sec:sieve-R-bound} respectively,
after a detour in \Cref{sec:sieve-preliminary} to develop some
preliminary estimates involving sums of the $g$-fold divisor function.

To finish the proof of \Cref{thm:sieve-tuple},
observe that by \Cref{prop:H-estimate} there is a constant $C > 0$ such that
for any $z > e^{3\theta}$,
\[
H(z) = \frac{(\log z)^g}{g! \Delta} (1 + t(z)), \qquad
   |t(z)| \leq \frac{C \theta}{\log z}.
\]
Taking $C_1 \coloneqq \max(2C, 3)$,
we find that for $z > e^{C_1 \theta}$ we have $|t(z)| \leq \tfrac12$
and hence $(1 + t(z))^{-1} = 1 + O(|t(z)|)$.
Therefore
\[
\frac{X}{H(z)} = \frac{g! \Delta X}{(\log z)^g}
   \left(1 + O\left(\frac{\theta}{\log z} \right)\right),
   \qquad z > e^{C_1 \theta}.
\]
Combining this with \Cref{thm:selberg} and \Cref{prop:R-bound}
(whose hypothesis is satisfied as certainly $3\theta > g^2$)
completes the proof of \Cref{thm:sieve-tuple}.

\begin{rem}
It is likely that a result similar to \Cref{thm:sieve-tuple}
could be proved using alternative sieve techniques
such as the \emph{large sieve} as described in \cite[Thm.\,7.7]{IK-analytic}.
Note that applying the latter theorem in the most straightforward way
leads to suboptimal results:
since we are working with congruences modulo~$p^2$ instead of modulo~$p$,
the points $\alpha_r$ (in the notation of \cite{IK-analytic})
are $\delta$-spaced for $\delta \approx z^{-4}$
instead of the usual~$z^{-2}$.
The factor on the right hand side of \cite[Thm.\,7.7]{IK-analytic}
thus becomes $N + O(z^4)$,
so we can only sieve up to $z \leq N^{1/4}$.
It may be possible to sieve up to $\approx N^{1/2}$
by a more careful application of the large sieve,
but we have not investigated this.
Sieving up to $N^{1/4}$ would be adequate for the applications
in \Crefrange{sec:probabilistic}{sec:deterministic},
but we prefer to state \Cref{thm:sieve-tuple} in this stronger form.
\end{rem}

\subsection{Divisor sum estimates}
\label{sec:sieve-preliminary}

For integers $g \geq 1$ and $n \geq 1$ define
\[
\tau_g(n) \coloneqq \sum_{n_1 \cdots n_g = n} 1,
\]
i.e., $\tau_g$ is the $g$-fold Dirichlet convolution
of the constant~$1$ function.
Define also, for any integer $Q \geq 1$,
\[
\Lambda(g,Q) \coloneqq \left(\frac{Q}{\varphi(Q)}\right)^g \geq 1.
\]
In this section we prove the following estimate.
\begin{prop}
\label{prop:tau-Q-estimate}
Let $g \geq 1$ and $Q \geq 1$ be integers,
let $\theta = \theta(g,Q)$ be as in \eqref{eq:theta-defn},
and let $\Lambda = \Lambda(g,Q)$ be as above.
Then for any $z > e^\theta$,
\begin{equation}
\label{eq:tau-Q-estimate}
\sum_{\substack{1 \leq n \leq z \\ (n,Q) = 1}} \frac{\tau_g(n)}{n}
   = \frac{(\log z)^g}{g! \Lambda}
      \left(1 + O\left(\frac{\theta}{\log z}\right)\right).
\end{equation}
\end{prop}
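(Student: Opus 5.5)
The plan is to prove \eqref{eq:tau-Q-estimate} by induction on~$g$. Throughout write $c \coloneqq \varphi(Q)/Q$, $L \coloneqq \log\log 5Q$ and $S_g(z)$ for the left-hand side of \eqref{eq:tau-Q-estimate}; note that $\Lambda^{-1} = c^g$ and that $\sum_{p \divides Q} (\log p)/(p-1) \ll L$. Rather than carrying the asymptotic \eqref{eq:tau-Q-estimate} through the induction directly (which creates awkward thresholds $z > e^{\theta(g-1,Q)}$ at each level), I will aim for the two-sided ``shifted logarithm'' bounds
\[
\frac{c^g}{g!}\,(\log z - A g L)_+^{\,g} \;\leq\; S_g(z) \;\leq\; \frac{c^g}{g!}\,(\log z + A g L)^g, \qquad z \geq 1,
\]
for an absolute constant $A$. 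These imply \eqref{eq:tau-Q-estimate}: expanding $(1 \pm AgL/\log z)^g = 1 + O(g^2 L/\log z)$ is valid once $g^2 L/\log z$ is bounded, and this holds because $\log z > \theta \geq g^2 L$. If $g^2L/\log z$ is small but not tiny one uses $(1+x/g)^g \leq e^x$ and $(1-x/g)^g \ge 1-x$ with $x = Ag^2L/\log z$.

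\emph{Base case $g=1$.} By M\"obius inversion over $d \divides Q$, and the estimate $\sum_{n \le y} 1/n = \log y + \gamma + O(1/y)$, one gets
\[
S_1(y) = c\Bigl(\log y + \gamma + \sum_{p\divides Q}\frac{\log p}{p-1}\Bigr) + O\bigl(2^{\omega(Q)}/y\bigr).
\]
The upper bound $S_1(y) \le c(\log y + AL)$ for all $y\ge 1$ also follows more robustly from $S_1(y) \le \prod_{p \le y,\, p\ndivides Q}(1-1/p)^{-1}$ and Mertens, since $\prod_{p\divides Q}(1-1/p)^{-1}$ over small primes absorbs the rest.

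\emph{Inductive step.} Write $\tau_g = \tau_{g-1} * 1$, restricted to integers coprime to $Q$. This gives
\[
S_g(z) = \sum_{m \le z,\,(m,Q)=1} \frac{\tau_{g-1}(m)}{m}\, S_1(z/m).
\]
Insert the bounds $c(\log(z/m) \pm AL)$ for $S_1(z/m)$. By partial summation this becomes
\[
c\int_1^{z} \frac{S_{g-1}(t)}{t}\,dt \;\pm\; c A L\, S_{g-1}(z).
\]
Feeding in the inductive bounds, $\int_1^z (\log t + A(g-1)L)^{g-1}\,dt/t \le (\log z + A(g-1)L)^g/g$, and $AL(\log z+A(g-1)L)^{g-1}/(g-1)!$ is at most the increment obtained by shifting $A(g-1)L \to AgL$ in $(\cdot)^g/g!$, by the mean value theorem. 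So the upper bound closes cleanly with the shift growing by $AL$ per level. The lower bound is handled symmetrically, using the positive part to discard the range where $\log(z/m) < AL$.

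\emph{Main obstacle.} I expect the real difficulty to be the lower bound for $S_1(y)$ that is uniform in $Q$, namely $S_1(y) \ge c(\log y - AL)$ for all $y \ge 1$. The M\"obius error $2^{\omega(Q)}/y$ is useless when $y$ is small compared with $2^{\omega(Q)}$, which can be much larger than any power of $\log Q$. My first attempt will be to note that only primes $p \divides Q$ with $p \le y$ matter. I will then split these into small primes $p \le (\log y)^{B}$, handled exactly by M\"obius with the truncated error, and large primes, whose total effect on $\sum 1/n$ is bounded by $\frac1y\sum_{p\divides Q,\, p>(\log y)^B} \cdots$, i.e.\ a relative loss of $\sum_{p\divides Q,\,p > (\log y)^B} 1/p \ll L$. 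If that bookkeeping becomes messy, the fallback is a fundamental-lemma (Brun-type) lower bound for the coprime count in $[1,t]$, followed by partial summation. Either way only an additive loss $O(L)$ inside the logarithm is needed, which is exactly what the shifted form above tolerates.
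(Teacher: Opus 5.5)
There is a genuine gap, but it sits entirely in the base case you chose, and you can remove it by weakening that base case.

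\textbf{What works.} Your reduction to $g=1$ is correct. From $S_g(z)=\sum_{m\le z,\,(m,Q)=1}\tau_{g-1}(m)m^{-1}S_1(z/m)$, partial summation, the mean value step and the positive-part trick close both the upper and the lower bound exactly, with the shift growing by one base-case shift per level. The final expansion is also fine because $\log z>\theta$.

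\textbf{The gap.} You need $c(\log y-AL)_+\le S_1(y)\le c(\log y+AL)$ uniformly in $Q$ and $y\ge 1$, with $L=\log\log 5Q$. The proposal does not prove this, and the tools you list do not reach it. The difficulty is not confined to the lower bound. In the range $(\log 5Q)^{O(1)}<y<2^{\omega(Q)}$ neither of your upper-bound arguments works. Take $Q$ to be the product of the first $\lceil 2\log_2 y\rceil$ primes, so that $L\asymp\log\log y$. Then the M\"obius error $2^{\omega(Q)}/y$ exceeds $y$. Meanwhile Mertens gives $\prod_{p\le y,\,p\nmid Q}(1-1/p)^{-1}\sim e^{\gamma}c\log y$, which overshoots $c(\log y+AL)$ by a factor close to $e^{\gamma}$.

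Your lower-bound sketch fails for similar reasons. Truncating at $(\log y)^B$ with $B>1$ leaves a M\"obius error that can be as large as $2^{\pi((\log y)^B)}/y$, far bigger than $y$. Removing the prime factors $p>w$ of $Q$ by one inclusion--exclusion step produces the factor $1-\sum_p 1/p$ in place of $\prod_p(1-1/p)$. That loses a constant proportion of $c\log y$, not $O(cL)$, for instance when $Q=\prod_{w<p\le w^2}p$.

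\textbf{The repair.} Ask less of the base case. The statement only claims an error $O(\theta/\log z)$ with $\theta=g^2\log 5Q\log\log 5Q$, so a base-case shift $D\ll\log 5Q\log\log 5Q$ is enough, and that is elementary.

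\emph{Lower bound.} Write $n=Qs-r$ with $0\le r<Q$, $(r,Q)=1$ and $1\le s\le y/Q$. Each such $n$ is at most $y$, coprime to $Q$, and satisfies $1/n\ge 1/(Qs)$. Hence $S_1(y)\ge c(\log y-\log Q)_+$.

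\emph{Upper bound.} The terms with $n\le Q$ contribute at most $1+\log Q$, which is less than $3cL(1+\log Q)$ by \Cref{lem:Q-phiQ-bound}. Write the remaining $n$ as $Qs+r$ with $s\ge 1$, $1\le r\le Q$, $(r,Q)=1$; they contribute at most $c(1+\log y)$. This gives $S_1(y)\le c(\log y+D)$ with $D=1+3L(1+\log Q)$.

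Run your induction verbatim with these shifts. It gives
$\frac{c^g}{g!}(\log z-g\log Q)_+^g\le S_g(z)\le\frac{c^g}{g!}(\log z+gD)^g$.
Since $g^2D\ll\theta<\log z$, your expansion then yields \eqref{eq:tau-Q-estimate}.

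\textbf{Comparison with the paper.} These are exactly the bounds of \Cref{lem:tau-Q-bounds}. The paper gets them by applying the same residue-class substitution coordinatewise to $g$-tuples. For the upper bound it splits according to which $n_i$ exceed $Q$ and invokes \Cref{lem:tau-sum-estimate}. Your convolution induction lifts the one-variable bound to all $g$ more economically. The sharper shift $O(g\log\log 5Q)$ you aimed for would give a better error term than needed, but it requires a uniform estimate for $\sum_{n\le y,\,(n,Q)=1}1/n$ that the proposal does not supply.
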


\begin{rem}
A frequently quoted statement in the literature is that for each $g \geq 1$,
\[
\sum_{1 \leq n \leq z} \tau_g(n)
   = z P_g(\log z) + O_{g,\eps}(z^{\alpha_g+\eps})
\]
for a certain polynomial $P_g(x)$ of degree $g-1$
and a constant $\alpha_g < 1$.
(See for instance \cite[Eq.\,12.1.3]{Titchmarsh-zeta}
or \cite[Thm.\,8.31]{IK-analytic}.
These works consider $\sum \tau_g(n)$,
but similar techniques can be applied to $\sum \tau_g(n)/n$.)
Considerable effort has gone into optimising the error term,
i.e., minimising the exponent~$\alpha_g$.
For our purposes we do not need such a small error term,
but we do need an error term that is completely explicit
with respect to~$g$;
the above results do not specify the value of the big-$O$ constant,
or the lower order terms in $P_g(x)$, as a function of~$g$.
Our \Cref{prop:tau-Q-estimate} is more in the spirit of the estimate
used in the proof of \cite[Thm.\,6.5]{Lenstra-algorithms-ANT}
(for a stronger version see \cite{Bordelles-inequality}).
\end{rem}

The following rough upper bound for $Q/\varphi(Q)$ will be
occasionally useful.
\begin{lem}
\label{lem:Q-phiQ-bound}
For $Q \geq 1$,
\[
\frac{Q}{\varphi(Q)} < 3 \log \log 5Q.
\]
\end{lem}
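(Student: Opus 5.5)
We need to prove $Q/\varphi(Q) < 3\log\log 5Q$ for all $Q \geq 1$. The plan is to write
\[
\frac{Q}{\varphi(Q)} = \prod_{p \divides Q} \Bigl(1 - \frac1p\Bigr)^{-1},
\]
and observe that the product only increases if we replace the primes dividing $Q$ by the first $k$ primes $p_1 < \cdots < p_k$, where $k = \omega(Q)$ is the number of distinct prime factors. Indeed, the factor $(1-1/p)^{-1}$ is decreasing in $p$, and the $i$th smallest prime divisor of $Q$ is at least $p_i$. Meanwhile $Q \geq p_1 \cdots p_k$, and the right-hand side $3\log\log 5Q$ is increasing in $Q$. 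It therefore suffices to prove the inequality for primorials $Q_k = p_1 \cdots p_k$, with $Q = 1$ checked directly: $1 < 3\log\log 5 \approx 1.43$.

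For primorials we use Mertens' theorem in explicit form. With $x = p_k$,
\[
\prod_{p \leq x} \Bigl(1-\frac1p\Bigr)^{-1} \leq e^\gamma \log x \Bigl(1 + \frac{1}{\log^2 x}\Bigr),
\]
which is the Rosser--Schoenfeld bound valid for $x \geq 286$. We also have $\log Q_k = \theta(x) \geq c\,x$ for an explicit $c$; for instance Rosser--Schoenfeld gives $\theta(x) \geq x/2$ for $x \geq 3$, hence $\log\log 5Q_k \geq \log x - \log 2$. Since $e^\gamma \approx 1.781$, it is enough to have
\[
1.781 \log x \,\bigl(1 + 1/\log^2 x\bigr) < 3(\log x - \log 2),
\]
and this holds comfortably once $\log x$ exceeds a small constant (roughly $x \geq 6$). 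The remaining finitely many cases, $p_k < 286$, are finished by direct computation. For small $k$ this is immediate:
\begin{itemize}
\item $k = 1$: $2 < 3\log\log 10 \approx 2.50$;
\item $k = 2$: $3 < 3\log\log 30 \approx 3.67$;
\item $k = 3$: $3.75 < 3\log\log 150 \approx 4.84$.
\end{itemize}
The margin keeps growing after that, so the remaining primorials are straightforward to check.

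The main obstacle is only bookkeeping. One must cite a fully explicit Mertens-type upper bound rather than an asymptotic one, and fix the threshold below which direct checking takes over. If explicit constants are inconvenient, an alternative is the elementary bound $\prod_{p \leq x}(1-1/p)^{-1} \leq \sum_{n \leq x} 1/n \leq 1 + \log x$, paired with the crude estimate $\log Q_k \geq \theta(x) \geq x/2$, valid for $x \geq 3$. This gives $1 + \log x < 3(\log x - \log 2)$ once $\log x > (1+3\log 2)/2 \approx 1.54$, i.e. $x \geq 5$. The cases $x = 2, 3$ are covered by the checks for $k = 1, 2$ above. This avoids Mertens' constant entirely, and it is the route I would take.
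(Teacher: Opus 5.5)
\textbf{The elementary route you chose fails.} The route you say you would take rests on the bound $\prod_{p\leq x}(1-1/p)^{-1}\leq\sum_{n\leq x}1/n\leq 1+\log x$, and the first inequality is backwards. Expand each factor as a geometric series. Then $\prod_{p\leq x}(1-1/p)^{-1}$ equals $\sum 1/n$ taken over all $n$ whose prime factors are at most $x$. That set contains every $n\leq x$, so the product is \emph{at least} $\sum_{n\leq x}1/n$.

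Concretely, at $x=5$ the product is $3.75$, while $1+\log 5\approx 2.61$. By Mertens' theorem the product is asymptotic to $e^\gamma\log x\approx 1.78\log x$, so a bound of the shape $1+\log x$ cannot hold. The Euler-product comparison only ever gives lower bounds. What you actually need is an explicit Mertens-type upper bound $\prod_{p\leq x}(1-1/p)^{-1}\leq c\log x+(\text{explicit lower-order terms})$ with $c$ safely below $3$; the true constant is $e^\gamma$. No argument that avoids Mertens' constant in this way is available, so this route has to be abandoned.

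\textbf{Your first route works.} The reduction to primorials is correct. The Rosser--Schoenfeld bound $\prod_{p\leq x}p/(p-1)<e^\gamma\log x\,(1+1/\log^2 x)$, combined with $\log Q_k\geq p_k/2$, handles all large $p_k$. That leaves a finite machine check of the primorials with $p_k<286$.

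There are three minor slips, none fatal:
\begin{itemize}
\item The inequality $1.782\log x\,(1+1/\log^2x)<3(\log x-\log 2)$ needs roughly $x\geq 11$, not $x\geq 6$. This is harmless given your finite check.
\item $e^\gamma=1.78107\ldots$ slightly exceeds $1.781$, so use $1.782$ as the constant.
\item $\theta(x)\geq x/2$ fails for some non-prime $x$, for example $\theta(4)=\log 6<2$. It does hold at $x=p_k$, which is all you use.
\end{itemize}

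\textbf{Comparison with the paper.} The paper skips the reduction entirely by quoting Rosser--Schoenfeld's Theorem~15, $Q/\varphi(Q)<e^\gamma\log\log Q+2.50637/\log\log Q$ for $Q\geq 3$, which is stated directly in terms of $Q$. Using $\log\log Q\,\log\log 5Q>2.06$ for $Q\geq 34$ turns this into $Q/\varphi(Q)<2.999\log\log 5Q$, and only $1\leq Q\leq 33$ is checked by hand. Your first route rebuilds the content of that theorem from Mertens' third theorem and a Chebyshev-type lower bound. It uses more basic inputs, at the cost of a longer finite verification.
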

\begin{proof}
According to \cite[Thm.\,15]{RS-primes},
\[
\frac{Q}{\varphi(Q)} < e^\gamma \log \log Q + \frac{2.50637}{\log \log Q},
   \qquad Q \geq 3,
\]
where $\gamma \approx 0.5772$ is Euler's constant.
Since $e^\gamma < 1.782$ and
$\log \log Q \log \log 5Q > 2.06$ for $Q \geq 34$ we obtain
\[
\frac{Q}{\varphi(Q)}
   < 1.782 \log \log 5Q + \frac{2.50637 \log \log 5Q}{2.06}
   < 2.999 \log \log 5Q, \qquad Q \geq 34.
\]
For $1 \leq Q \leq 33$ the inequality may be checked directly.
\end{proof}

To prove \Cref{prop:tau-Q-estimate},
we begin with an estimate for the $Q = 1$ case, i.e., for the sum
\begin{equation}
\label{eq:tau-sum}
\sum_{1 \leq n \leq z} \frac{\tau_g(n)}{n}
   = \sum_{1 \leq n_1 \cdots n_g \leq z} \frac{1}{n_1 \cdots n_g}.
\end{equation}
\begin{lem}
\label{lem:tau-sum-estimate}
For any $g \geq 1$ and $z \geq 1$,
\[
\frac{(\log z)^g}{g!}
   \leq \sum_{1 \leq n \leq z} \frac{\tau_g(n)}{n}
   \leq \frac{(\log z + g)^g}{g!}.
\]
\end{lem}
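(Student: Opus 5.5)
Both inequalities come from comparing the discrete sum \eqref{eq:tau-sum} with the volume of a simplex in logarithmic coordinates, and we argue by induction on~$g$. For $g = 1$ the statement reads $\log z \leq \sum_{n \leq z} 1/n \leq \log z + 1$. This is the standard comparison of the harmonic sum with $\int dt/t$. The lower bound holds because $1/n \geq \int_n^{n+1} dt/t$ and $\lfloor z\rfloor + 1 > z$. The upper bound holds because $1/n \leq \int_{n-1}^n dt/t$ for $n \geq 2$, giving $\sum_{n\le z} 1/n \leq 1 + \log z$.

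For the inductive step we write
\[
S_g(z) \coloneqq \sum_{n \leq z} \frac{\tau_g(n)}{n}
   = \sum_{n_g \leq z} \frac{1}{n_g} \, S_{g-1}(z/n_g),
\]
with the convention that $S_{g-1}(w) = 0$ for $w < 1$. For the lower bound we would rather avoid reproving a sum--integral comparison at each level, so we work directly with a continuous majorant. Each term $1/n$ dominates $\int_n^{n+1} dt/t$, hence
\[
S_g(z) \geq \int_{\substack{t_1,\ldots,t_g \geq 1 \\ \lfloor t_1\rfloor \cdots \lfloor t_g \rfloor \leq z}} \frac{dt_1 \cdots dt_g}{t_1 \cdots t_g}.
\]
The region $\{t_i \geq 1,\ t_1 \cdots t_g \leq z\}$ is contained in the domain of integration, since $\lfloor t_i\rfloor \leq t_i$. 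Substituting $s_i = \log t_i$ turns that region into the simplex $\{s_i \geq 0,\ \sum s_i \leq \log z\}$, whose volume is $(\log z)^g/g!$. This gives the lower bound immediately, with no induction needed.

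For the upper bound we use $1/n \leq \int_{n-1}^{n} dt/t$ for $n \geq 2$, and keep the value $1$ when $n = 1$. To handle both cases uniformly, we bound each $1/n_i$ by $\int_{\max(n_i-1,\, e^{-1})}^{\,n_i} dt/t$. This works because for $n_i = 1$ the integral equals exactly $1$, and for $n_i \geq 2$ we have $n_i - 1 \geq 1 > e^{-1}$. The cells $[\max(n-1, e^{-1}), n]$ are disjoint for distinct~$n$. Moreover, in each cell we have $t_i \leq n_i$, so $\prod t_i \leq \prod n_i \leq z$. Therefore
\[
S_g(z) \leq \int_{\substack{t_i \geq e^{-1} \\ t_1 \cdots t_g \leq z}} \frac{dt}{t_1\cdots t_g}.
\]
Substituting $s_i = \log t_i + 1 \geq 0$ turns the constraint $\prod t_i \leq z$ into $\sum s_i \leq \log z + g$. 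The domain becomes a simplex of volume $(\log z + g)^g/g!$, which is the claimed upper bound. So both bounds reduce to the simplex volume formula, and no induction is actually required.

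The only delicate point is the bookkeeping in the upper bound. We must check three things: that the cells for $n_i = 1$ and $n_i \geq 2$ do not overlap, that each cell integral dominates $1/n_i$, and that the product constraint is preserved inside each cell. Once these are verified, the remainder of the argument is the elementary volume computation.
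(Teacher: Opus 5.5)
Your proof is correct. It shares the paper's basic idea (compare the sum with $\int \prod_i dt_i/t_i$ and use the volume of a simplex in logarithmic coordinates), but both halves are carried out differently.

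For the lower bound, the paper inducts on $g$. It applies the inductive hypothesis to the inner sum to get $\sum_{n\le z}\frac1n\cdot\frac{\log(z/n)^{g-1}}{(g-1)!}$, then uses that $t\mapsto t^{-1}\log(z/t)^{g-1}$ is decreasing on $[1,z]$. You instead compare the full $g$-fold sum with the integral over the boxes $\prod_i[n_i,n_i+1)$ in one step, which needs no monotonicity check.

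For the upper bound, the paper splits the sum according to the set $S$ of indices with $n_i\ge 2$. It bounds each piece by $(\log z)^k/k!$ via the boxes $\prod(n_i-1,n_i]$, obtaining $\sum_k\binom gk(\log z)^k/k!$. It then needs the extra inequality $g!/k!\le g^{g-k}$ to reach $(\log z+g)^g/g!$. Your device of giving $n_i=1$ the cell $[e^{-1},1]$, whose logarithmic length is exactly $1$, merges all $2^g$ cases into a single simplex of side $\log z+g$. It lands on the final bound directly.

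Your route is shorter and more uniform. The paper's route yields the sharper intermediate bound $\sum_k\binom gk(\log z)^k/k!$. More to the point, it sets up the ``small coordinates versus large coordinates'' decomposition that is reused almost verbatim in \Cref{lem:tau-Q-bounds}. There, coordinates $n_i\le Q$ and $n_i>Q$ must be treated differently because of the condition $(n_i,Q)=1$, and your cell trick does not transfer as directly to that setting.

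Two cosmetic remarks. In the lower bound the integral is a minorant of the sum, not a majorant. The induction and the recursion for $S_g(z)$ announced at the start are never used and can be removed.
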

\begin{proof}
We prove the lower bound by induction on~$g$.
It holds for $g = 1$ as $\log z \leq \sum_{1 \leq n \leq z} n^{-1}$.
For $g \geq 2$ we have by induction
\[
\sum_{1 \leq n \leq z} \frac{\tau_g(n)}{n}
   = \sum_{1 \leq n_1 \leq z} \frac{1}{n_1}
      \sum_{1 \leq n_2 \cdots n_g \leq \tfrac{z}{n_1}}
      \frac{1}{n_2 \cdots n_g}
   \geq \sum_{1 \leq n \leq z} \frac1n \cdot \frac{\log(z/n)^{g-1}}{(g-1)!}.
\]
The function $t \mapsto t^{-1} \log(z/t)^{g-1}$
is non-negative and decreasing on $[1,z]$,
so the previous expression is at least
\[
\int_1^z \frac{1}{t} \cdot \frac{\log(z/t)^{g-1}}{(g-1)!} \, dt
   = \int_1^z \frac{(\log u)^{g-1}}{(g-1)!} \cdot \frac{du}{u}
   = \frac{(\log z)^g}{g!}.
\]

For the upper bound we use a similar argument to the proof of
\cite[Thm.\,6.5]{Lenstra-algorithms-ANT}.
Let $S = \{i_1, \ldots, i_k\}$ be a subset of $\{1, \ldots, g\}$,
and let $X_S$ be the sum of those terms in \eqref{eq:tau-sum}
for which $n_i \geq 2$ for $i \in S$ and $n_i = 1$ for $i \notin S$,
i.e.,
\[
X_S =
   \sum_{\substack{1 \leq n_{i_1} \cdots n_{i_k} \leq z \\ n_{i_t} \geq 2}}
   \frac{1}{n_{i_1} \cdots n_{i_k}}.
\]
Replacing each $(n_{i_1}, \ldots, n_{i_k})$ by the box
$\prod_{t=1}^k (n_{i_t}-1, n_{i_t}]$
and comparing with an integral over the region
$E_S \coloneqq
\{1 \leq x_{i_1} \cdots x_{i_k} \leq z\} \subset [1,\infty)^k$ yields
\[
X_S \leq \int_{E_S} \frac{dx_{i_1} \cdots dx_{i_k}}{x_{i_1} \cdots x_{i_k}}
   = \int_{\substack{y_{i_1}, \ldots, y_{i_k} \geq 0 \\
   y_{i_1} + \cdots + y_{i_k} \leq \log z}}
   \, dy_{i_1} \cdots dy_{i_k}
   = \frac{(\log z)^k}{k!}.
\]
(This inequality holds even when $k = 0$,
as in this case $X_S$ contains the single term $n_1 = \cdots = n_g = 1$.)
Summing over all subsets $S \subseteq \{1, \ldots, g\}$ we obtain
\[
\sum_{1 \leq n \leq z} \frac{\tau_g(n)}{n} = \sum_S X_S
   \leq \sum_{k=0}^g \binom{g}{k} \frac{(\log z)^k}{k!},
\]
and since $g!/k! \leq g^{g-k}$,
\[
\sum_{1 \leq n \leq z} \frac{\tau_g(n)}{n}
   \leq \sum_{k=0}^g \binom{g}{k} \frac{(\log z)^k g^{g-k}}{g!}
   = \frac{(\log z + g)^g}{g!}. \qedhere
\]
\end{proof}

We now examine the $Q \geq 2$ case. Here the sum is given by
\begin{equation}
\label{eq:tau-Q-sum}
\sum_{\substack{1 \leq n \leq z \\ (n,Q) = 1}} \frac{\tau_g(n)}{n}
   = \sum_{\substack{1 \leq n_1 \cdots n_g \leq z \\ (n_i,Q) = 1}}
      \frac{1}{n_1 \cdots n_g}.
\end{equation}

\begin{lem}
\label{lem:tau-Q-bounds}
For any $g \geq 1$, $Q \geq 2$ and $z > Q^g$,
\begin{equation}
\label{eq:tau-Q-bounds}
\frac{\log(z/Q^g)^g}{g! \Lambda} \leq
   \sum_{\substack{1 \leq n \leq z \\ (n,Q) = 1}} \frac{\tau_g(n)}{n}
   \leq \frac{(\log z)^g}{g! \Lambda}
      \left(1 + \frac{4 g \log 5Q \log \log 5Q}{\log z} \right)^g.
\end{equation}
\end{lem}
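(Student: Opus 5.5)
The plan is to relate the restricted sum \eqref{eq:tau-Q-sum} to the unrestricted sum \eqref{eq:tau-sum} through the Euler factors at the primes $p \divides Q$. Write every $n \geq 1$ uniquely as $n = m n'$ with $(n',Q) = 1$ and $m$ composed only of primes dividing $Q$. Since $\tau_g$ is multiplicative, $\tau_g(mn') = \tau_g(m)\tau_g(n')$, and the generating function of the $Q$-part is
\[
\sum_{m \divides Q^\infty} \frac{\tau_g(m)}{m} = \prod_{p \divides Q} (1 - p^{-1})^{-g} = \Lambda.
\]
Both bounds in \eqref{eq:tau-Q-bounds} will follow by convolving the coprime sum $S_Q(x) \coloneqq \sum_{n \leq x, (n,Q)=1} \tau_g(n)/n$ against this $Q$-part.

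For the lower bound, apply the identity $\sum_{n \leq z} \tau_g(n)/n = \sum_{m \divides Q^\infty} (\tau_g(m)/m)\, S_Q(z/m)$ in the opposite direction. Perhaps the cleanest route is to multiply $S_Q(z)$ by $\Lambda$ and bound from below:
\[
\Lambda S_Q(z) \geq \sum_{m \divides Q^\infty,\, m \leq Q^g} \frac{\tau_g(m)}{m}\, S_Q(z/m) \ \cdots
\]
This is awkward. A better approach is multiplicative: write $S_Q$ as a $g$-fold sum over $n_1,\ldots,n_g$ coprime to $Q$, as in \eqref{eq:tau-Q-sum}. In one variable, $\sum_{n \leq x, (n,Q)=1} 1/n \geq (\varphi(Q)/Q)\log(x/Q)$, by comparing blocks of length $Q$: each complete block $[kQ+1,(k+1)Q]$ contributes at least $\varphi(Q)/((k+1)Q)$. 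Then I would induct on $g$ exactly as in the lower bound of \Cref{lem:tau-sum-estimate}, with $\log z$ replaced by $\log(z/Q)$ at each step. This loses a factor $Q$ per variable, giving $\log(z/Q^g)^g/(g!\Lambda)$. Some care is needed to make the induction go through with the shifted logarithm, using a decreasing-function integral comparison on $[Q^{g-1}, z/Q]$ or similar.

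For the upper bound, use $S_Q(z) \leq \Lambda^{-1}\sum_{m}\ldots$ inverted through M\"obius-type factors: $1_{(n,Q)=1} = \sum_{d \divides (n,Q)} \mu(d)$ is not sign-definite, so instead use the Dirichlet series identity
\[
S_Q(z) = \sum_{n \leq z} \frac{\tau_g(n)}{n} \sum_{e \divides Q^\infty,\, e \divides n} \frac{c(e)}{\,}
\]
with $c = \tau_g^{-1}$ on the $Q$-part, namely $\prod_{p \divides Q}(1-p^{-s})^g$. This gives $S_Q(z) = \sum_{e \leq z} (c(e)/e) \sum_{n \leq z/e} \tau_g(n)/n$. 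Here $c$ is supported on squarefree-ish $e \divides Q^g$-type products with $|c(p^j)| \leq \binom{g}{j}$.

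Next, insert the two-sided bounds of \Cref{lem:tau-sum-estimate}. The main term $\sum_e c(e)/e \cdot (\log z)^g/g! \to \Lambda^{-1}(\log z)^g/g!$. The errors involve $\sum_e |c(e)|/e\,\bigl((\log z + g)^g - (\log(z/e))^g\bigr)/g!$, controlled by $\sum_e |c(e)|(\log e + g)/e$. Since $\sum_e |c(e)|/e = \prod_{p \divides Q}(1+p^{-1})^g$, this is at most $\Lambda^{-1}(3\log\log 5Q)^{O(g)}$ times a derivative factor $\sum_{p \divides Q} g\log p/p \ll g \log\log 5Q$ by Mertens.

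The main obstacle is making these error terms fit the clean form $(1 + 4g\log 5Q\log\log 5Q/\log z)^g$ without losing an exponential in $g$. The ratio $\prod(1+1/p)^g / \prod(1-1/p)^{-g}$ must be absorbed, and the crude signed-sum approach may lose $(\log\log Q)^{2g}$. If it does, I would instead prove the upper bound by the same box-integral argument as \Cref{lem:tau-sum-estimate}, applied to each variable restricted to integers coprime to $Q$. Bounding $\sum_{n \leq x, (n,Q)=1} 1/n \leq (\varphi(Q)/Q)(\log x + c\log 5Q\log\log 5Q)$ per variable (using \Cref{lem:Q-phiQ-bound} and the block comparison) and iterating yields exactly the $(\log z + 4g\log 5Q\log\log 5Q)^g/(g!\Lambda)$ shape.
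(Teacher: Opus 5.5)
Your final plan (the inductive lower bound and the fallback upper bound) is correct, but it is organised differently from the paper. The paper proves both inequalities in one shot by reducing to the unrestricted \Cref{lem:tau-sum-estimate}. For the lower bound it uses the injection $(s_i,r_i)\mapsto Qs_i-r_i$ from tuples with $s_1\cdots s_g\le z/Q^g$, $0\le r_i<Q$, $(r_i,Q)=1$. For the upper bound it splits each coordinate according to whether $n_i\le Q$ or $n_i>Q$, and writes $n_i=Qs_i+r_i$ in the second case. It then applies \Cref{lem:tau-sum-estimate} to the $s$-variables and $\sum_{n\le Q}n^{-1}\le\log Q+1$ to the rest, and sums over subsets with the binomial theorem and $g!/k!\le g^{g-k}$.

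You instead induct on $g$, one variable at a time. A clean way to make this precise: let $a\coloneqq\varphi(Q)/Q$, $\beta\coloneqq a+\log Q+1$ and $S_g(x)\coloneqq\sum_{n\le x,\,(n,Q)=1}\tau_g(n)/n$. Block comparison gives $S_1(x)\ge a\log(x/Q)$ for $x\ge Q$ and $S_1(x)\le a\log x+\beta$ for all $x\ge1$. Write $S_g(z)=\sum_{(m,Q)=1}\tau_{g-1}(m)m^{-1}S_1(z/m)$ and use $\sum_{m\le y}c_m m^{-1}\log(y/m)=\int_1^y\bigl(\sum_{m\le t}c_m m^{-1}\bigr)\,dt/t$. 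For the lower bound this gives $S_g(z)\ge a\int_{Q^{g-1}}^{z/Q}S_{g-1}(t)\,dt/t$ (exactly your interval), which closes the induction. For the upper bound, the hypothesis $S_{g-1}(t)\le(a\log t+(g-1)\beta)^{g-1}/(g-1)!$ gives $S_g(z)\le Y^{g-1}(Y+g\beta)/g!\le(Y+\beta)^g/g!$ with $Y\coloneqq a\log z+(g-1)\beta$. That is literally the paper's intermediate bound $\frac{1}{g!}\bigl(\frac{\varphi(Q)}{Q}(\log z+g)+g(\log Q+1)\bigr)^g$, and \Cref{lem:Q-phiQ-bound} finishes as in the paper.

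What each buys: the paper's route reuses \Cref{lem:tau-sum-estimate} verbatim and needs no induction. Yours is self-contained and handles both bounds by one mechanism, but the normalisation of the inductive hypothesis matters. Closing the step with the cruder $W^{g-1}(W+gb)\le(W+gb)^g$ would leave $g^2$ instead of $g$ inside the bracket, which would spoil the $\theta$ in \Cref{prop:tau-Q-estimate}.

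Your first upper-bound idea (the signed convolution with the Dirichlet inverse) does fail, as you suspected, so commit to the fallback. The triangle inequality replaces $\Lambda^{-1}=\prod_{p\divides Q}(1-p^{-1})^g$ by $\prod_{p\divides Q}(1+p^{-1})^g$ in the secondary term. That is already a factor $3^g$ for $Q=2$, and it cannot be absorbed into $(1+4g\log 5Q\log\log 5Q/\log z)^g$ once $g\ge 3$ and $\log z$ is large.
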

\begin{proof}
To prove the lower bound,
consider the set of integer tuples $(s_1, \ldots, s_g)$
such that $1 \leq s_1 \cdots s_g \leq z/Q^g$.
Each such tuple yields $\varphi(Q)^g$ tuples
$(n_1, \ldots, n_g) = (Qs_1 - r_1, \ldots, Qs_g - r_g)$
(for $0 \leq r_i < Q$ with $(r_i, Q) = 1$)
such that $(n_i,Q) = 1$ and $1 \leq n_1 \cdots n_g \leq z$.
Therefore
\begin{align*}
\sum_{\substack{1 \leq n_1 \cdots n_g \leq z \\ (n_i,Q) = 1}}
      \frac{1}{n_1 \cdots n_g}
& \geq \sum_{\substack{1 \leq s_1 \cdots s_g \leq \tfrac{z}{Q^g}}} \;
   \sum_{\substack{0 \leq r_i < Q \\ (r_i, Q) = 1}}
   \frac{1}{(Qs_1 - r_1) \cdots (Qs_g - r_g)} \\
& \geq \varphi(Q)^g
   \sum_{\substack{1 \leq s_1 \cdots s_g \leq \tfrac{z}{Q^g}}}
   \frac{1}{Q^g s_1 \cdots s_g}
= \Lambda^{-1} \sum_{1 \leq n \leq \tfrac{z}{Q^g}} \frac{\tau_g(n)}{n}.
\end{align*}
The lower bound in \eqref{eq:tau-Q-bounds}
then follows from \Cref{lem:tau-sum-estimate}.

We now prove the upper bound.
For any subset $S = \{i_1, \ldots, i_k\} \subseteq \{1, \ldots, g\}$
let $X_S$ denote the sum of those terms in \eqref{eq:tau-Q-sum}
for which $n_i > Q$ for $i \in S$ and $n_i \leq Q$ for $i \notin S$.
We will write the complement of $S$ as $\{j_1, \ldots, j_m\}$
with $k + m = g$.
Thus
\[
X_S = \sum_{\substack{1 \leq n_{j_t} \leq Q \\ (n_{j_t},Q) = 1}}
   \frac{1}{n_{j_1} \cdots n_{j_m}}
   \sum_{\substack{1 \leq n_{i_1} \cdots n_{i_k}
      \leq \tfrac{z}{n_{j_1} \cdots n_{j_m}} \\
      (n_{i_t},Q) = 1 \\ n_{i_t} > Q}}
   \frac{1}{n_{i_1} \cdots n_{i_k}}.
\]
For $k = 0$, i.e., $S = \emptyset$,
the inner sum is taken to have the value~$1$.
To estimate the inner sum for $k \geq 1$,
observe that each $(n_{i_1}, \ldots, n_{i_k})$ appearing in the sum
may be written uniquely in the form
$(Q s_{i_1} + r_{i_1}, \ldots, Q s_{i_k} + r_{i_k})$ with
\[
s_{i_t} \geq 1, \qquad 1 \leq r_{i_t} \leq Q, \qquad (r_{i_t}, Q) = 1,
\qquad s_{i_1} \cdots s_{i_k} \leq \frac{z}{Q^k n_{j_1} \cdots n_{j_m}}.
\]
Thus the inner sum is bounded above by
\begin{multline*}
\sum_{1 \leq s_{i_1} \cdots s_{i_k} \leq
   \tfrac{z}{Q^k n_{j_1} \cdots n_{j_m}}} \;
\sum_{\substack{1 \leq r_{i_t} \leq Q \\ (r_{i_t}, Q) = 1}}
   \frac{1}{(Q s_{i_1} + r_{i_1}) \cdots (Q s_{i_k} + r_{i_k})} \\
\leq \frac{\varphi(Q)^k}{Q^k}
   \sum_{1 \leq s_{i_1} \cdots s_{i_k} \leq
   \tfrac{z}{Q^k n_{j_1} \cdots n_{j_m}}} \;
   \frac{1}{s_{i_1} \cdots s_{i_k}}
\leq \frac{\varphi(Q)^k}{Q^k}
   \sum_{1 \leq s_{i_1} \cdots s_{i_k} \leq z}
   \frac{1}{s_{i_1} \cdots s_{i_k}} \\
\leq \frac{\varphi(Q)^k}{Q^k} \cdot \frac{(\log z + k)^k}{k!},
\end{multline*}
the last inequality following from \Cref{lem:tau-sum-estimate}.
Substituting this bound (which also holds for $k = 0$)
into the formula for $X_S$ yields
\[
X_S \leq
   \frac{1}{k!}
   \left( \frac{\varphi(Q)}{Q} (\log z + k) \right)^k
   \sum_{\substack{1 \leq n_{j_t} \leq Q \\ (n_{j_t},Q) = 1}}
   \frac{1}{n_{j_1} \cdots n_{j_m}}.
\]
Using the estimate
\[
\sum_{\substack{1 \leq n \leq Q \\ (n,Q) = 1}} n^{-1}
   \leq \sum_{1 \leq n \leq Q} n^{-1} \leq \log Q + 1
\]
we obtain
\[
X_S \leq \frac{1}{k!}
   \left( \frac{\varphi(Q)}{Q} (\log z + g) \right)^k (\log Q + 1)^{g-k}.
\]
Summing over all subsets~$S$,
and again using the estimate $g!/k! \leq g^{g-k}$,
\begin{align*}
\sum_{\substack{1 \leq n \leq z \\ (n,Q) = 1}} \frac{\tau_g(n)}{n}
   & \leq \sum_{k=0}^g \binom{g}{k} \frac{1}{k!}
   \left( \frac{\varphi(Q)}{Q} (\log z + g) \right)^k (\log Q + 1)^{g-k} \\[-9pt]    
   & \leq \frac{1}{g!} \sum_{k=0}^g \binom{g}{k}
   \left( \frac{\varphi(Q)}{Q} (\log z + g) \right)^k (g(\log Q + 1))^{g-k} \\
   & = \frac{1}{g!} \left( \frac{\varphi(Q)}{Q}(\log z + g)
                              + g (\log Q + 1)\right)^g \\
   & = \frac{(\log z)^g}{g! \Lambda} \left(1 +
      \frac{g \bigl(\frac{Q}{\varphi(Q)} (\log Q + 1) + 1\bigr)}{\log z}
      \right)^g.
\end{align*}
The proof is concluded by invoking \Cref{lem:Q-phiQ-bound}
and observing that for $Q \geq 2$,
\[
3 \log \log 5Q \, (\log Q + 1) + 1
   < 3 \log 5Q \log \log 5Q + 1 < 4 \log 5Q \log \log 5Q.
   \qedhere
\]
\end{proof}

\begin{proof}[Proof of \Cref{prop:tau-Q-estimate}]
First suppose that $Q \geq 2$.
We will show that the upper and lower bounds in \eqref{eq:tau-Q-bounds}
both simplify to \eqref{eq:tau-Q-estimate}.
(The hypothesis of \Cref{lem:tau-Q-bounds} holds as
we are assuming that $z > e^\theta$;
it is enough to check that $\log 5Q \log \log 5Q > \log Q$ for $Q \geq 2$.)
For the lower bound, first observe that
\[
\log(z/Q^g)^g = (\log z)^g \left(1 - \frac{g \log Q}{\log z}\right)^g.
\]
The hypothesis $z > e^\theta$ implies that
$g \log Q / \log z \ll 1/g$, so
\[
\left(1 - \frac{g \log Q}{\log z}\right)^g
   = 1 + O\left(\frac{g^2 \log Q}{\log z}\right)
   = 1 + O\left(\frac{\theta}{\log z}\right).
\]
For the upper bound, the hypothesis $z > e^\theta$ likewise yields
\[
\left(1 + \frac{4 g \log 5Q \log \log 5Q}{\log z} \right)^g
   = 1 + O\left(\frac{\theta}{\log z}\right).
\]
If $Q = 1$ then the result follows directly from
\Cref{lem:tau-sum-estimate} in a similar way.
\end{proof}

\subsection{The estimate for $H(z)$}
\label{sec:sieve-H-bound}

Returning to the notation of \Cref{sec:sieve-setup},
in this section we prove \Cref{prop:H-estimate},
showing along the way that the defining product \eqref{eq:Delta-defn}
for $\Delta$ converges absolutely.
Our strategy for estimating $H(z)$ will be to approximate $h(n)$
by the multiplicative function
\[
f(n) \coloneqq
\begin{cases}
   \tau_g(n)/n & \textn{if } (n,Q) = 1, \\
   0           & \textn{otherwise}.
\end{cases}
\]
The point is that the summatory function of $f(n)$ is easy to estimate
thanks to \Cref{prop:tau-Q-estimate}.
(This line of attack is inspired by the proof of
\cite[Thm.\,3.10]{MV-mult-nt}.)

Let us examine the formal Dirichlet series associated to $h(n)$ and $f(n)$.
For convenience we extend $h(n)$ to a multiplicative function
defined for all $n \geq 1$ by setting $h(n) \coloneqq 0$ for $n \notin \Dset$.
In particular, $h(p) = 0$ for $p \notin \Pset$,
and $h(p^\ell) = 0$ for any prime $p$ and any $\ell \geq 2$.
We then have
\[
\sum_{n \geq 1} h(n) \tsp n^{-s} =
   \prod_p (1 + h(p) \tsp p^{-s}).
\]
To compute the Dirichlet series for $f(n)$,
observe that the Euler factor at $p$ for $\tau_g(n)$ is
$(1 + p^{-s} + p^{-2s} + \cdots)^g = (1 - p^{-s})^{-g}$,
so
\[
\sum_{n \geq 1} f(n) \tsp n^{-s} =
   \prod_{\textn{$p$ good}} (1 - p^{-1} p^{-s})^{-g}.
\]
Now let $\psi(n)$ be the multiplicative function defined by $h = \psi * f$,
where $*$ denotes Dirichlet convolution.
Then the Dirichlet series for $\psi(n)$ is given by
\begin{equation}
\label{eq:psi-series}
\sum_{n \geq 1} \psi(n) \tsp n^{-s} =
   \prod_{\textn{$p$ bad}} (1 + h(p) \tsp p^{-s})
   \prod_{\textn{$p$ good}} (1 + h(p) \tsp p^{-s}) (1 - p^{-1} p^{-s})^g.
\end{equation}
This expression leads to the following bound for $\psi(p^\ell)$,
for good primes $p$.
\begin{lem}
\label{lem:psi-bound}
For $p$ good we have
\[
|\psi(p)| < \frac{g^2}{p^2}
\]
and
\[
|\psi(p^\ell)| < \frac{g^\ell}{(\ell-1)! \tsp p^\ell},
   \qquad \ell \geq 2.
\]
\end{lem}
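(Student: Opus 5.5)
For a good prime $p$, \eqref{eq:psi-series} shows that the Euler factor of $\psi$ at $p$ is the polynomial
\[
(1 + h(p)\tsp p^{-s})(1 - p^{-1} p^{-s})^g .
\]
Writing $X \coloneqq p^{-s}$, this is $(1 + hX)(1 - X/p)^g$ with $h \coloneqq h(p)$. The plan is to compute $h(p)$ exactly from \Cref{lem:p-good}, expand this product, and bound each coefficient directly.

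By \Cref{lem:p-good} we have $\delta(p) = g(p-1)/p^2$. Hence
\[
h = \frac{\delta(p)}{1-\delta(p)} = \frac{g(p-1)}{p^2 - g(p-1)}.
\]
Expanding the Euler factor gives
\[
\psi(p) = h - \frac{g}{p}, \qquad
\psi(p^\ell) = (-1)^\ell \binom{g}{\ell} p^{-\ell} + h\,(-1)^{\ell-1}\binom{g}{\ell-1} p^{-(\ell-1)}
\quad (\ell \geq 2).
\]

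For $\ell = 1$, we put everything over a common denominator:
\[
\psi(p) = \frac{gp(p-1) - g\bigl(p^2 - g(p-1)\bigr)}{p\bigl(p^2 - g(p-1)\bigr)}
= \frac{g(g-1)(p-1) - gp}{p\bigl(p^2 - g(p-1)\bigr)}.
\]
The numerator has absolute value at most $g^2 p$, since both $g(g-1)(p-1)$ and $gp$ lie in $[0, g^2 p]$. Because $p$ is good, $p > 4g^2$, so $g(p-1) < p^2/4$ and the denominator exceeds $\tfrac34 p^3$. This yields only $|\psi(p)| < \tfrac43 g^2/p^2$, so the constant needs a sharper look.

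The numerator is $g\bigl[(g-1)(p-1) - p\bigr]$. Its absolute value is at most $g \max\bigl((g-1)(p-1),\, p\bigr) \leq g \max\bigl((g-1)p,\, p\bigr)$. For $g \geq 2$ this is $\leq g(g-1)p$, and we need
\[
g(g-1)p \leq \frac{g^2 \bigl(p^2 - g(p-1)\bigr)}{p^2} \cdot p,
\quad\text{i.e.}\quad
g - 1 \leq g - \frac{g^2(p-1)}{p^2},
\]
which holds because $g^2(p-1)/p^2 < g^2/p < \tfrac14 < 1$. For $g = 1$ the numerator is $-p$, and $p \cdot p^2 < p^2 \cdot p + \ldots$ reduces to $p^2 > p^2 - (p-1)$, which is true. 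So a little bookkeeping gives the strict bound $|\psi(p)| < g^2/p^2$.

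For $\ell \geq 2$, we use $h < \tfrac43 g/p$ together with $\binom{g}{\ell} \leq g^\ell/\ell!$ and $\binom{g}{\ell-1} \leq g^{\ell-1}/(\ell-1)!$. This gives
\[
|\psi(p^\ell)| \leq \frac{g^\ell}{\ell!\,p^\ell} + \frac{4}{3}\cdot\frac{g^\ell}{(\ell-1)!\,p^\ell}.
\]
The right side does not come out below $g^\ell/\bigl((\ell-1)!\,p^\ell\bigr)$, so this route needs sharpening. Three facts help:

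- The two terms have opposite signs, because $(-1)^\ell$ and $(-1)^{\ell-1}$ differ.
- The second term vanishes for $\ell > g+1$, and the first vanishes for $\ell > g$.
- The bound $h < \tfrac43 g/p$ can be replaced by $h \leq g/p$ whenever $g(p-1) \leq p^2 - g(p-1)$ times a suitable factor. More simply, $h < \bigl(g(p-1)/p^2\bigr)/(1 - 1/4)$, and if that stays too weak, use $h\, p^{-(\ell-1)} \leq g/p^\ell \cdot \tfrac43 \cdot \tfrac{p-1}{p}$.

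Since the terms have opposite signs, $|\psi(p^\ell)| \leq \max(\text{term}_1, \text{term}_2)$. The second term is at most $\tfrac43 \cdot \tfrac{p-1}{p}\, g^{\ell-1}/(\ell-1)! \cdot g/p^\ell$.

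The main obstacle is this constant $\tfrac43$. The fix is to use the exact expression $h = g(p-1)/\bigl(p^2 - g(p-1)\bigr)$ and verify
\[
\frac{g(p-1)}{p^2 - g(p-1)} < \frac{g}{p}
\iff p(p-1) < p^2 - g(p-1)
\iff g(p-1) < p.
\]
That is false, so instead we absorb the discrepancy using $\binom{g}{\ell-1} \leq g^{\ell-1}/(\ell-1)!$ minus slack. Write $\binom{g}{\ell-1} = \tfrac{g^{\ell-1}}{(\ell-1)!}\prod_{i<\ell-1}(1 - i/g)$, and use $h\,p \leq g\bigl(1 + 2g/p\bigr)$ with $g/p < 1/(4g)$. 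For $\ell \geq 3$ the product supplies the slack $(1 - 1/g) \leq 1 - 1/(4g)$. For $\ell = 2$, the positive first term $\binom{g}{2}/p^2$ partially cancels the negative term $g h/p$, leaving a net below $g^2/p^2$. This case analysis is the step I would carry out carefully.
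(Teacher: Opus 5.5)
Your strategy (expand $(1+h(p)X)(1-X/p)^g$ and compare $h(p)p$ with $g$) is the paper's, but as written the proposal does not prove the lemma: two steps fail or are left undone.

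First, the $\ell=1$ numerator is wrong. The correct computation is $gp(p-1)-g\bigl(p^2-g(p-1)\bigr)=g^2(p-1)-gp=g\bigl((g-1)p-g\bigr)$, not $g(g-1)(p-1)-gp$. This matters for $g=1$. There your numerator $-p$ would give $|\psi(p)|=1/(p^2-p+1)>1/p^2$, and the inequality you verify is the reverse of the one required. With the correct numerator your common-denominator argument works: it is $-1$ when $g=1$, and it lies in $(0,g(g-1)p)$ when $g\ge2$.

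Second, you leave $\ell=2$ open, and this is exactly where your max-of-two-terms bound breaks. For $g\ge2$ one has $h(p)p>g$, so the term $g\,h(p)p/p^2$ alone exceeds $g^2/p^2$. The sign cancellation therefore has to be used quantitatively. The paper organises this via
\[
|\psi(p^\ell)|=\binom{g}{\ell-1}p^{-\ell}\left|\frac{g-\ell+1}{\ell}-h(p)p\right|,\qquad 1\le\ell\le g+1.
\]
It combines this with the two-sided bound $g<h(p)p<g+g^2/p$ for $g\ge2$, treating $g=1$ by hand. This yields
\[
|\psi(p^2)|<\frac{g}{p^2}\Bigl(\frac{g+1}{2}+\frac14\Bigr)<\frac{g^2}{p^2}.
\]
Your own estimate $h(p)p\le g(1+2g/p)<g+\tfrac12$ would also close the case. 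For $g\ge2$ it gives
\[
|\psi(p^2)|=\frac{g}{p^2}\Bigl|h(p)p-\frac{g-1}{2}\Bigr|<\frac{g}{p^2}\cdot\frac{g+2}{2}\le\frac{g^2}{p^2}.
\]
For $g=1$ one has $\psi(p^2)=-h(p)/p$ with $h(p)p<1$.

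For $\ell\ge3$ your max argument is sound and essentially the paper's. It uses $\binom{g}{\ell-1}\le g^{\ell-2}(g-1)/(\ell-1)!$ and $(g-1)(g+\tfrac12)<g^2$.
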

\begin{proof}
Consider the Euler factor at $p$ in \eqref{eq:psi-series}, i.e.,
\[
1 + \psi(p) \tsp p^{-s} + \psi(p^2) \tsp p^{-2s} + \cdots
   = (1 + h(p) \tsp p^{-s}) (1 - p^{-1} p^{-s})^g.
\]
Expanding out the last factor using the binomial theorem,
we obtain the formula
\[
\psi(p^\ell) = (-1)^\ell \left(
   \binom{g}{\ell} p^{-\ell}
      - \binom{g}{\ell-1} h(p) \tsp p^{-\ell+1} \right),
   \qquad \ell \geq 1.
\]
Clearly $\psi(p^\ell) = 0$ when $\ell \geq g+2$,
so the result holds in this case.
Henceforth we assume that $1 \leq \ell \leq g+1$.
For such $\ell$ we have
$\binom{g}{\ell} = \binom{g}{\ell-1} \cdot (g-\ell+1)/\ell$,
so
\begin{equation}
\label{eq:psi-p-ell}
|\psi(p^\ell)| =
   \binom{g}{\ell-1} p^{-\ell} \left|\frac{g - \ell + 1}{\ell}
      - h(p) p \right|,
   \qquad 1 \leq \ell \leq g+1.
\end{equation}
We also recall from \eqref{eq:defn-h} and \Cref{lem:p-good} that
\[
h(p) = \frac{\delta(p)}{1 - \delta(p)}
   = \frac{g(p-1)}{p^2 - g(p-1)} = \frac{gp-g}{p^2 - gp + g},
\]
Moreover, since $p$ is good we know that $p > 4g^2$.

Let us first consider the special case $g = 1$.
Here we have
\[
h(p) p = \frac{p^2 - p}{p^2 - p + 1}
   = 1 - \frac{1}{p^2 - p + 1}.
\]
Applying \eqref{eq:psi-p-ell},
for $\ell = 1$ we obtain
\[
|\psi(p)| = p^{-1} \bigl\lvert 1 - h(p) p \bigr\rvert
   = \frac{1}{p(p^2 - p + 1)} < \frac{1}{p^2} = \frac{g^2}{p^2},
\]
and for $\ell = 2$,
\[
|\psi(p^2)| = p^{-2} \bigl\lvert 0 - h(p) p \bigr\rvert
   = \frac{1}{p^2} \cdot \frac{p^2-p}{p^2 - p + 1} < \frac{1}{p^2}
   = \frac{g^2}{1! \tsp p^2}.
\]
In both cases the desired result holds.
Henceforth we assume that $g \geq 2$.

We claim that
\begin{equation}
\label{eq:hp-p-estimate}
g < h(p) p < g + \frac{g^2}{p}.
\end{equation}
To prove this, first observe that
\[
h(p) p - g = \frac{gp^2 - gp}{p^2 - gp + g} - g
   = \frac{g(gp - p - g)}{p^2 - gp + g}.
\]
The factor $gp - p - g$ is positive,
as $gp - p - g \geq 2p - p - g = p - g > 4g^2 - g > 0$.
This establishes the first inequality in \eqref{eq:hp-p-estimate},
and the second one follows from
\[
h(p) p - g < \frac{g(gp - 4g^2 - g)}{p^2 - gp}
   < \frac{g(gp - g^2)}{p^2 - gp} = \frac{g^2}{p}.
\]

Using \eqref{eq:hp-p-estimate}, we may rewrite \eqref{eq:psi-p-ell} as follows:
\begin{align}
|\psi(p^\ell)|
   & \leq \binom{g}{\ell-1} p^{-\ell} \left(
      \left|\frac{g - \ell + 1}{\ell} - g \right|
      + |g - h(p) p| \right) \notag \\
   & < \binom{g}{\ell-1} p^{-\ell}
      \left( \frac{(\ell-1)(g+1)}{\ell} + \frac{g^2}{p} \right),
      \qquad 1 \leq \ell \leq g+1.
      \label{eq:psi-p-ell-2}
\end{align}
We now apply \eqref{eq:psi-p-ell-2} with various values of $\ell$.
For $\ell = 1$, we get
\[
|\psi(p)| < \frac{1}{p} \cdot \frac{g^2}{p} = \frac{g^2}{p^2}.
\]
For $\ell = 2$,
\[
|\psi(p^2)| < \frac{g}{p^2} \left( \frac{g+1}{2} + \frac{g^2}{p} \right)
   < \frac{g}{p^2} \left( \frac{g+1}{2} + \frac{1}{4} \right)
   < \frac{g}{p^2} \cdot g = \frac{g^2}{1! \tsp p^2}.
\]
Finally, for $\ell \geq 3$, since
\[
\frac{(\ell-1)(g+1)}{\ell} = (g+1) - \frac{g+1}{\ell} \leq (g+1) - 1 = g
\]
we find that
\begin{multline*}
|\psi(p^\ell)|
   < \frac{g(g-1)(g-2) \cdots (g-\ell+2)}{(\ell-1)! \tsp p^\ell}
      \left(g + \frac14\right) \\
   < \frac{g^{\ell-2} (g-1)}{(\ell-1)! \tsp p^\ell} (g+1)
   = \frac{g^{\ell-2}(g^2 - 1)}{(\ell-1)! \tsp p^\ell}
   < \frac{g^\ell}{(\ell-1)! \tsp p^\ell}. \qedhere
\end{multline*}
\end{proof}

\begin{rem}
The proof of \Cref{lem:psi-bound} is essentially due to ChatGPT 5.4
(Extended Pro).
The chatbot was given \eqref{eq:psi-p-ell} and the definition of $h(p)$
and asked to prove the two inequalities in \Cref{lem:psi-bound}.
The argument was found after about 45 minutes of ``thinking'',
including some back-and-forth discussion regarding the
strength of the condition $p > 4g^2$ in the definition of ``good prime''.
The text of the proof was written by the author,
after rearranging the chatbot output and
making some minor simplifications.
\end{rem}

Let $E_p(s) \coloneqq
1 + \psi(p) \tsp p^{-s} + \psi(p^2) \tsp p^{-2s} + \cdots$
denote the Euler factor at $p$ in \eqref{eq:psi-series} and let
\[
F_p(\sigma) \coloneqq
   1 + |\psi(p)| \tsp p^{-\sigma} + |\psi(p^2)| \tsp p^{-2\sigma} + \cdots
\]
be the corresponding series with non-negative coefficients.
For good $p$ we obtain the following estimate for $F_p(\sigma)$.
\begin{lem}
\label{lem:Fp-estimate}
Let $0 \leq \eps \leq \tfrac12$.
For any good prime $p$ we have
\[
|F_p(-\eps) - 1| < \frac{3 g^2}{p^{2-2\eps}} < \frac{3}{4}.
\]
\end{lem}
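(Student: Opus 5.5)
The plan is to plug the bounds of \Cref{lem:psi-bound} directly into the definition of $F_p(-\eps)$ and sum the resulting series. By definition,
\[
F_p(-\eps) - 1 = \sum_{\ell \geq 1} |\psi(p^\ell)| \, p^{\ell\eps},
\]
and I will split this into the $\ell = 1$ term and the tail $\ell \geq 2$.

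For $\ell = 1$, \Cref{lem:psi-bound} gives $|\psi(p)| p^{\eps} < g^2 p^{\eps - 2} \leq g^2 p^{2\eps-2}$, since $\eps \geq 0$. For $\ell \geq 2$, the bound $|\psi(p^\ell)| < g^\ell/((\ell-1)!\,p^\ell)$ gives a term of size at most
\[
\frac{g^\ell p^{\ell\eps}}{(\ell-1)!\,p^\ell}
   = \frac{1}{(\ell-1)!}\Bigl(\frac{g}{p^{1-\eps}}\Bigr)^{\ell}.
\]
Write $x \coloneqq g/p^{1-\eps}$. Since $p$ is good we have $p > 4g^2$, and $\eps \leq \tfrac12$ gives $p^{1-\eps} \geq p^{1/2} > 2g$, so $x < \tfrac12$. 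The tail is then
\[
\sum_{\ell \geq 2} \frac{x^\ell}{(\ell-1)!} = x(e^x - 1) \leq x^2 e^{x} < e^{1/2} x^2 < 1.65\, x^2,
\]
using $e^x - 1 \leq x e^x$. Since $x^2 = g^2/p^{2-2\eps}$, the tail contributes less than $1.65\, g^2/p^{2-2\eps}$.

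Adding the two pieces gives
\[
|F_p(-\eps) - 1| < (1 + 1.65) \frac{g^2}{p^{2-2\eps}} < \frac{3g^2}{p^{2-2\eps}},
\]
which is the first inequality. For the second, $p^{2-2\eps} \geq p > 4g^2$ because $2 - 2\eps \geq 1$, so $3g^2/p^{2-2\eps} < \tfrac34$.

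The only delicate step is keeping the constant below $3$ in the tail estimate. That is where $\eps \leq \tfrac12$ and the condition $p > 4g^2$ from the definition of a good prime are both used: they are exactly what force $x < \tfrac12$. Since $F_p$ has non-negative coefficients, $F_p(-\eps) - 1 \geq 0$ and the absolute value is harmless. The remaining steps are routine.
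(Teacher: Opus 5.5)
Your proof is correct and follows the paper's argument. Both plug the bounds of \Cref{lem:psi-bound} into $F_p(-\eps)-1$, use $p^{1-\eps}\geq p^{1/2}>2g$ to force $g/p^{1-\eps}<\tfrac12$, and sum the resulting exponential-type series; the only cosmetic difference is that the paper bounds the tail termwise to get the overall constant $2e^{1/2}-1$, whereas you use $x(e^x-1)\leq x^2e^x$ to get $1+e^{1/2}$, and either is below $3$.
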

\begin{proof}
By \Cref{lem:psi-bound},
\begin{align*}
|F_p(-\eps) - 1|
   & \leq |\psi(p)| \tsp p^{\eps} + |\psi(p^2)| \tsp p^{2\eps} + \cdots \\
   & < \frac{g^2}{p^{2-\eps}} + \frac{g^2}{1! \tsp p^{2-2\eps}}
      + \frac{g^3}{2! \tsp p^{3-3\eps}}
      + \frac{g^4}{3! \tsp p^{4-4\eps}} + \cdots \\
   & = \frac{g^2}{p^{2-2\eps}}
      \left(p^{-\eps} + \frac{1}{1!} + \frac{g}{2! \tsp p^{1-\eps}}
      + \frac{g^2}{3! \tsp p^{2-2\eps}} + \cdots \right).
\end{align*}
Since $p > 4g^2$ we have $p^{1-\eps} \geq p^{1/2} > 2g$, so we get
\begin{align*}
|F_p(-\eps) - 1|
   & < \frac{g^2}{p^{2-2\eps}}
   \left( 1 + \frac{1}{1!} + \frac{2^{-1}}{2!} + \frac{2^{-2}}{3!}
      + \cdots \right) \\
   & = \frac{g^2}{p^{2-2\eps}} (2 e^{1/2} - 1)
      < \frac{3g^2}{p^{2-2\eps}} < \frac34. \qedhere
\end{align*}
\end{proof}

\begin{cor}
\label{cor:log-Fp-bound}
Let $0 \leq \eps < \tfrac12$.
Then $F_p(-\eps) > 0$ for all good $p$ and
\[
\sum_{\textn{$p$ good}} \log F_p(-\eps) \ll \frac{g^{4\eps}}{1 - 2\eps}.
\]
\end{cor}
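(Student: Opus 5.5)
The plan is to deduce this directly from \Cref{lem:Fp-estimate}. Positivity comes first. For good $p$ the lemma gives $|F_p(-\eps) - 1| < \tfrac34$, so $F_p(-\eps) > \tfrac14 > 0$. In fact, since $F_p$ has non-negative coefficients and constant term $1$, we trivially have $F_p(-\eps) \geq 1$; the lemma is what guarantees finiteness. Hence $\log F_p(-\eps)$ is well defined and non-negative. Using $\log(1+x) \leq x$ for $x \geq 0$, we get
\[
0 \leq \log F_p(-\eps) \leq F_p(-\eps) - 1 < \frac{3g^2}{p^{2-2\eps}}.
\]

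The sum over good primes is then bounded by $3g^2 \sum_{p > 4g^2} p^{-(2-2\eps)}$, since every good prime satisfies $p > 4g^2$. I would bound this by the sum over all integers $n > 4g^2$ and compare with an integral. Writing $\beta \coloneqq 2 - 2\eps > 1$, we have
\[
\sum_{n > Y} n^{-\beta} \leq Y^{-\beta} + \int_Y^\infty t^{-\beta}\,dt = Y^{-\beta} + \frac{Y^{1-\beta}}{\beta-1},
\qquad Y \coloneqq 4g^2 .
\]
Here $\beta - 1 = 1 - 2\eps$, and $Y^{1-\beta} = (4g^2)^{-(1-2\eps)} \ll g^{-2+4\eps}$. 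The leftover term $Y^{-\beta}$ is even smaller, and since $1-2\eps \leq 1$ it is absorbed into the same bound. Multiplying by $3g^2$ gives
\[
\sum_{p \textn{ good}} \log F_p(-\eps) \ll \frac{g^{4\eps}}{1-2\eps},
\]
with an absolute implied constant.

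There is no real obstacle here. The only care needed is to keep the constant uniform in $\eps$ as $\eps \to \tfrac12$, which the explicit $1/(1-2\eps)$ factor from the integral tail handles. One should also check that replacing the sum over primes by a sum over integers does not lose the $g$-dependence; it does not, because the lower cutoff $4g^2$ is retained.
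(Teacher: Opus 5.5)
Your proof is correct and follows the paper's argument. You bound $\log F_p(-\eps)$ by $3g^2/p^{2-2\eps}$ using \Cref{lem:Fp-estimate}, extend the sum to all integers $n > 4g^2$, and compare with $\int_{4g^2}^\infty t^{-(2-2\eps)}\,dt$. The differences are cosmetic: you use $F_p(-\eps) \geq 1$ together with $\log(1+x) \leq x$ where the paper says the $F_p(-\eps)$ are bounded away from zero, and your extra $Y^{-\beta}$ term is harmless but unnecessary, since $4g^2$ is an integer and so $\sum_{n > 4g^2} n^{-\beta} \leq \int_{4g^2}^\infty t^{-\beta}\,dt$ directly.
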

\begin{proof}
By \Cref{lem:Fp-estimate} the terms $F_p(-\eps)$
are positive and bounded away from zero, so
\begin{multline*}
\sum_{\textn{$p$ good}} \log F_p(-\eps)
   \ll \sum_{\textn{$p$ good}} \frac{g^2}{p^{2-2\eps}}
   < \sum_{n > 4g^2} \frac{g^2}{n^{2-2\eps}} \\
   < g^2 \int_{4g^2}^\infty \frac{dt}{t^{2-2\eps}}
   = \frac{g^2}{1 - 2\eps} (4g^2)^{-1+2\eps}
   \ll \frac{g^{4\eps}}{1 - 2\eps}.
   \qedhere
\end{multline*}
\end{proof}

\begin{cor}
\label{cor:Fp-converges}
The product $\prod_{\textn{$p$ good}} E_p(s)$ converges absolutely
on the half-plane $\real(s) > -\tfrac12$.
In particular, the defining product \eqref{eq:Delta-defn} for $\Delta$
converges absolutely and $\Delta \neq 0$.
\end{cor}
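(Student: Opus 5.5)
The first claim follows from \Cref{cor:log-Fp-bound} with $\eps$ chosen according to $\real(s)$. Fix $s$ with $\sigma \coloneqq \real(s) > -\tfrac12$, and set $\eps \coloneqq \max(0,-\sigma) < \tfrac12$. Since the coefficients of $F_p$ are non-negative, for each good prime $p$ we have
\[
\sum_{\ell \geq 1} |\psi(p^\ell) \tsp p^{-\ell s}|
   = F_p(\sigma) - 1 \leq F_p(-\eps) - 1.
\]
We then sum over good primes and use $x \leq 2\log(1+x)$ for $0 \leq x \leq \tfrac34$, together with \Cref{lem:Fp-estimate}. This gives
\[
\sum_{\textn{$p$ good}} \sum_{\ell\ge1}|\psi(p^\ell)p^{-\ell s}|
   \ll \sum_{\textn{$p$ good}} \log F_p(-\eps) < \infty .
\]
Hence $\sum_p |E_p(s)-1|$ converges, so $\prod_{p \textn{ good}} E_p(s)$ converges absolutely. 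Moreover $|E_p(s) - 1| \leq F_p(-\eps) - 1 < \tfrac34$, so no factor vanishes and the product is nonzero. (If one prefers a statement on compact sets, the same bound is uniform for $\real(s) \geq -\tfrac12 + \delta$.)

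For $\Delta$ we specialise to $s = 0$, comparing the Euler factor of \eqref{eq:psi-series} at $s=0$ with the factor in \eqref{eq:Delta-defn}. For a good prime, $1 - \delta(p) = (1+h(p))^{-1}$, so
\[
\left(1 - \frac{|B_p|}{p^2}\right)\left(1 - \frac1p\right)^{-g}
   = \bigl((1+h(p))(1-p^{-1})^g\bigr)^{-1} = E_p(0)^{-1}.
\]
By the first part, $\prod_{\textn{$p$ good}} E_p(0)$ converges absolutely to a nonzero limit. Since every factor lies in $(\tfrac14, \tfrac74)$, the reciprocal product $\prod_{\textn{$p$ good}} E_p(0)^{-1}$ also converges absolutely to a nonzero limit: $|E_p(0)^{-1} - 1| \leq 4|E_p(0) - 1|$.

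Only finitely many primes are bad. For each bad $p$ the factor $(1 - |B_p|/p^2)(1-1/p)^{-g}$ is finite, and it is nonzero by \eqref{eq:Bp-assumption}. Multiplying these finitely many factors in preserves absolute convergence and non-vanishing, so $\Delta \neq 0$.

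The only mildly delicate step is the identification of the $\Delta$-factor with $E_p(0)^{-1}$. Once that is in place, the rest is bookkeeping from \Cref{lem:Fp-estimate}.
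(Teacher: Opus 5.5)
Your proof is correct and follows essentially the same route as the paper. You bound $|E_p(s)-1|$ by $F_p(\sigma)-1$ and invoke \Cref{cor:log-Fp-bound}, then specialise to $s=0$, identify $E_p(0)^{-1}$ with the good-prime factors of $\Delta$, and dispose of the finitely many bad primes via \eqref{eq:Bp-assumption}. Your extra details, namely the explicit choice $\eps=\max(0,-\sigma)$ and the bound $|E_p(0)^{-1}-1|\leq 4|E_p(0)-1|$ for the reciprocal product, simply make explicit steps the paper leaves to the reader.
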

\begin{proof}
\Cref{cor:log-Fp-bound} implies that $\prod_{\textn{$p$ good}} F_p(\sigma)$
converges for $\sigma > -\tfrac12$.
The first statement follows as $|E_p(s) - 1| \leq |F_p(\sigma) - 1|$
for $\sigma \coloneqq \real(s)$.

For the second statement, take $s = 0$.
We find that
\[
\prod_{\textn{$p$ good}} E_p(0)
   = \prod_{\textn{$p$ good}} (1 + h(p))(1 - p^{-1})^g
\]
converges absolutely.
But $(1 + h(p))^{-1} = 1 - \delta(p)$, so the product
\[
\prod_{\textn{$p$ good}} E_p(0)^{-1}
   = \prod_{\textn{$p$ good}} (1 - \delta(p))(1 - p^{-1})^{-g}
\]
also converges absolutely.
This is exactly the product in \eqref{eq:Delta-defn}
with the bad primes omitted.
Moreover \eqref{eq:Bp-assumption} implies that $\delta(p) \neq 1$ for bad $p$,
so $\Delta \neq 0$.
\end{proof}

Next we give a preliminary estimate for $H(z)$ with the correct main term.
\begin{lem}
For any $z > e^{2\theta}$,
\begin{multline}
\label{eq:H-estimate-prelim}
H(z) = \frac{(\log z)^g}{g! \Delta} + \mathord{\phantom{x}} \\
   O\Biggl( \frac{(\log z)^{g-1}}{g! \Lambda}
      \biggl(
         \theta \sum_{k \geq 1} |\psi(k)|
         + g \sum_{k \geq 1} |\psi(k)| \log k +
         \Lambda \theta \sum_{k \geq e^{-\theta} z} |\psi(k)|
      \biggr) \Biggr).
\end{multline}
\end{lem}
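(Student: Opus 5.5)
Since $h = \psi * f$ with $f$ supported on $(n,Q)=1$, we may write
\[
H(z) = \sum_{\substack{n \leq z}} h(n) = \sum_{k \leq z} \psi(k) \sum_{\substack{m \leq z/k \\ (m,Q)=1}} \frac{\tau_g(m)}{m},
\]
using that $h(n)$ vanishes off $\Dset$ (we extended $h$ by zero, so summing over all $n \leq z$ is the same as over $d \in \Dset$). The plan is to split the outer sum at $k \leq e^{-\theta} z$ versus $e^{-\theta} z < k \leq z$.

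For $k \leq e^{-\theta} z$ we have $z/k > e^\theta$. Here we apply \Cref{prop:tau-Q-estimate} to get the inner sum as $\frac{(\log(z/k))^g}{g!\Lambda}(1 + O(\theta/\log(z/k)))$. We then expand $(\log z - \log k)^g$ as $(\log z)^g + O(g (\log z)^{g-1}\log k)$. That expansion needs $\log k$ small relative to $\log z/g$, which fails in general. I would therefore handle it via the mean value bound $|(\log z)^g - (\log z - \log k)^g| \leq g \log k (\log z)^{g-1}$, valid since $0 \leq \log(z/k) \leq \log z$.

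The error $O(\theta(\log(z/k))^{g-1})$ is bounded by $\theta(\log z)^{g-1}$. This produces the terms $\theta\sum|\psi(k)|$ and $g\sum|\psi(k)|\log k$, all multiplied by $(\log z)^{g-1}/(g!\Lambda)$. The main term becomes $\frac{(\log z)^g}{g!\Lambda}\sum_{k \leq e^{-\theta}z}\psi(k)$. We complete this to the full series $\sum_{k\geq1}\psi(k)$, at the cost of a tail $\frac{(\log z)^g}{g!\Lambda}\sum_{k>e^{-\theta}z}|\psi(k)|$. To fit this tail into the stated error term $\frac{(\log z)^{g-1}}{g!\Lambda}\Lambda\theta\sum_{k \geq e^{-\theta}z}|\psi(k)|$ we need $\log z \ll \Lambda\theta$, which is false. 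So the tail should instead be absorbed through the $\log k$ sum: for $k > e^{-\theta}z$ we have $\log k > \log z - \theta \geq \tfrac12\log z$ (using $z > e^{2\theta}$). Hence $(\log z)|\psi(k)| \leq 2|\psi(k)|\log k$, and the tail is covered by the $g\sum|\psi(k)|\log k$ term (since $g \geq 1$).

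For $e^{-\theta}z < k \leq z$ we use the upper bound from \Cref{lem:tau-sum-estimate}: the inner sum is at most $\frac{(\log(z/k)+g)^g}{g!}$ with $\log(z/k) < \theta$. So it is bounded by $(\theta+g)^g/g!$, and we would want this $\ll \Lambda\theta(\log z)^{g-1}/(g!\Lambda) = \theta(\log z)^{g-1}/g!$. This holds because $\theta + g \leq 2\theta \leq \log z$, so $(\theta+g)^g \leq 2\theta(\log z)^{g-1}$ (using $\theta+g\leq 2\theta$ once and $\leq\log z$ for the rest). The stated form includes an extra factor $\Lambda$, presumably because the author did not insert the coprimality gain here; that is harmless. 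This gives the third error term.

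Finally, the identity $\sum_{k\geq1}\psi(k)/\Lambda = 1/\Delta$ must be checked. The series $\sum\psi(k)k^{-s}$ converges absolutely at $s=0$: for the good primes this is \Cref{cor:Fp-converges}, and the bad-prime factors are finite. Its value is $\prod_{p\text{ bad}}(1+h(p))\prod_{p\text{ good}}E_p(0)$. Here $(1+h(p))=(1-\delta(p))^{-1}$, and $\prod_{p\mid Q}(1-1/p)^{-g}=\Lambda$, so dividing by $\Lambda$ reproduces $\Delta^{-1}$ from \eqref{eq:Delta-defn}. If any bad $p$ has $h(p)=0$ (that is, $p\notin\Pset$), it still contributes the correct factor $1\cdot(1-1/p)^g$ from $\Delta^{-1}$ times $\Lambda$. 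The main obstacle is the bookkeeping in the first range so that every error lands exactly in one of the three stated sums.
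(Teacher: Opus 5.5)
Your proof is correct and follows the paper's route. Like the paper, you split $\sum_k\psi(k)\sum_{n\le z/k}f(n)$ at $k_0=e^{-\theta}z$, apply \Cref{prop:tau-Q-estimate} below $k_0$, apply \Cref{lem:tau-sum-estimate} above $k_0$ with the coprimality condition dropped (using $\theta+g\le 2\theta\le\log z$, where $\theta>g$ because $Q\geq 6$), and finish with the Euler-product identity $\sum_k\psi(k)=\Lambda\Delta^{-1}$. The only difference is cosmetic: your inequality $a^g-b^g\le g(a-b)a^{g-1}$ holds on the whole range $k<k_0$, so you avoid the paper's intermediate cutoff $k_1=z^{1/(2g)}$ and absorb the completion tail directly via $\log k>\tfrac12\log z$ for $k>k_0$. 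One small fix: put the boundary case $k=k_0$ in the upper range, since \Cref{prop:tau-Q-estimate} needs $z/k>e^\theta$ strictly.
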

\begin{proof}
By the definition of $\psi(n)$ we have
\[
H(z) = \sum_{1 \leq n \leq z} h(n)
   = \sum_{1 \leq k \leq z} \psi(k) \sum_{1 \leq n \leq \tfrac{z}{k}} f(n).
\]
We consider first the sum over $1 \leq k < k_0$ where
\[
k_0 \coloneqq e^{-\theta} z.
\]
For such $k$ we have $z/k > e^\theta$,
and hence by \Cref{prop:tau-Q-estimate},
\begin{align*}
\sum_{1 \leq k < k_0} \psi(k) \sum_{1 \leq n \leq \tfrac{z}{k}} f(n)
   & = \sum_{1 \leq k < k_0} \psi(k) \, \frac{\log(z/k)^g}{g! \Lambda}
      \left(1 + O\left(\frac{\theta}{\log(z/k)}\right)\right) \\
   & = \sum_{1 \leq k < k_0} \psi(k) \, \frac{\log(z/k)^g}{g! \Lambda}
      + O\left( \frac{(\log z)^{g-1}}{g! \Lambda} \, \theta
         \sum_{k \geq 1} |\psi(k)| \right).
\end{align*}
We split up the first sum into subintervals
$1 \leq k < k_1$ and $k_1 \leq k < k_0$
where
\[
k_1 \coloneqq z^{1/(2g)};
\]
this decomposition is valid as the inequality
$z^{1 - 1/(2g)} \geq z^{1/2} > e^\theta$ ensures that ${k_1 < k_0}$.
The sum over the second subinterval is bounded by
\begin{align}
\label{eq:tail-estimate}
\frac{(\log z)^g}{g! \Lambda} \sum_{k_1 \leq k < k_0} |\psi(k)|
   & \leq \frac{(\log z)^g}{g! \Lambda} \cdot
      \frac{1}{\log k_1} \sum_{k \geq k_1} |\psi(k)| \log k \\
   & \ll \frac{(\log z)^{g-1}}{g! \Lambda} \,
      g \sum_{k \geq 1} |\psi(k)| \log k.   \notag
\end{align}
For $k$ in the first subinterval we have
$\log k \ll g^{-1} \log z$ and hence
\[
\log(z/k)^g = (\log z)^g \left(1 - \frac{\log k}{\log z}\right)^g
   = (\log z)^g \left(1 + O\left(\frac{g \log k}{\log z}\right)\right),
\]
so the sum over this subinterval becomes
\begin{equation}
\label{eq:first-interval}
\frac{(\log z)^g}{g! \Lambda} \sum_{1 \leq k < k_1} \psi(k)
   + O\left( \frac{(\log z)^{g-1}}{g! \Lambda} \,
      g \sum_{k \geq 1} |\psi(k)| \log k \right).
\end{equation}
The series $\sum_{k \geq 1} \psi(k) = \prod_p E_p(0)$
converges thanks to \Cref{cor:Fp-converges}
(and \cite[Thm.\,1.9]{MV-mult-nt}).
Estimating the tail
$|\sum_{k \geq k_1} \psi(k)| \leq \sum_{k \geq k_1} |\psi(k)|$
via the same method used in \eqref{eq:tail-estimate},
the first term of \eqref{eq:first-interval} becomes
\[
\frac{(\log z)^g}{g! \Lambda} \sum_{k \geq 1} \psi(k)
   + O\left( \frac{(\log z)^{g-1}}{g! \Lambda} \,
      g \sum_{k \geq 1} |\psi(k)| \log k \right).
\]
Moreover, by \eqref{eq:Delta-defn} and \eqref{eq:psi-series},
\begin{align*}
\prod_p E_p(0)
   & = \prod_{\textn{$p$ bad}} (1 + h(p))
   \prod_{\textn{$p$ good}} (1 + h(p)) (1 - p^{-1})^g \\
   & = \prod_{\textn{$p$ bad}} (1 - p^{-1})^{-g}
   \prod_p (1 + h(p))(1 - p^{-1})^g = \Lambda \Delta^{-1},
\end{align*}
yielding the main term in \eqref{eq:H-estimate-prelim}.

Finally we consider the sum over $k \geq k_0$.
Here we discard the condition ${(n,Q) = 1}$
and apply \Cref{lem:tau-sum-estimate} to obtain
\begin{align*}
\Biggl| \sum_{k_0 \leq k \leq z} \psi(k)
   \sum_{1 \leq n \leq \tfrac{z}{k}} f(n) \Biggr|
   & \leq \sum_{k_0 \leq k \leq z} |\psi(k)|
      \sum_{1 \leq n \leq \tfrac{z}{k}} \frac{\tau_g(n)}{n} \\
   & \leq \sum_{k_0 \leq k \leq z} |\psi(k)| \,
      \frac{(\log(z/k) + g)^g}{g!} \\
   & \leq \frac{(\log z)^{g-1}}{g! \Lambda} \cdot
      \Lambda \, \frac{(\theta + g)^g}{(\log z)^{g-1}}
      \sum_{k \geq k_0} |\psi(k)|.
\end{align*}
Since $Q \geq 6$ we clearly have $\theta > g$,
so the last expression is bounded by
\[
\frac{(\log z)^{g-1}}{g! \Lambda} \cdot
   \Lambda \, \frac{(2\theta)^g}{(2\theta)^{g-1}}
   \sum_{k \geq k_0} |\psi(k)|
   \ll \frac{(\log z)^{g-1}}{g! \Lambda} \,
   \Lambda \theta \sum_{k \geq k_0} |\psi(k)|.
   \qedhere
\]
\end{proof}

We now proceed to bound the error terms in \eqref{eq:H-estimate-prelim}.
\begin{lem}
\label{lem:psi-eps-bound}
Fix $c \in (0, \tfrac12)$.
Then for $0 \leq \eps \leq c$,
\[
\sum_{n \geq 1} |\psi(n)| \tsp n^\eps
   \ll \Delta^{-1} \Lambda \cdot Q^\eps \exp(O_c(g^{4\eps})).
\]
\end{lem}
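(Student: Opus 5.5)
Since $\psi$ is multiplicative, the sum $\sum_{n \geq 1} |\psi(n)| \tsp n^\eps$ factors as an Euler product $\prod_p F_p(-\eps)$. The Euler factors with positive coefficients converge absolutely for $\eps < \tfrac12$ by \Cref{cor:Fp-converges}. I will split this product into the bad primes and the good primes and handle each part separately.

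\emph{Good primes.} \Cref{cor:log-Fp-bound} gives directly
\[
\prod_{\textn{$p$ good}} F_p(-\eps) \le \exp\bigl(O(g^{4\eps}/(1-2\eps))\bigr) = \exp(O_c(g^{4\eps})),
\]
because $1 - 2\eps \ge 1 - 2c$. This accounts for the factor $\exp(O_c(g^{4\eps}))$ in the statement.

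\emph{Bad primes.} By \eqref{eq:psi-series}, the Euler factor at a bad prime is simply $1 + h(p) p^{-s}$. So $\psi(p) = h(p) \ge 0$, $\psi(p^\ell) = 0$ for $\ell \ge 2$, and $F_p(-\eps) = 1 + h(p) p^{\eps} \le p^\eps (1 + h(p))$. Taking the product over bad primes gives at most $Q^\eps \prod_{\textn{$p$ bad}} (1 + h(p))$. This is where the factor $Q^\eps$ comes from.

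\emph{Identifying the remaining constant.} It remains to show that
\[
\prod_{\textn{$p$ bad}} (1+h(p)) \ll \Delta^{-1}\Lambda.
\]
Since $1 + h(p) = (1-\delta(p))^{-1}$, the definition \eqref{eq:Delta-defn} gives
\[
\Delta^{-1} = \prod_{\textn{$p$ bad}} (1+h(p))(1-p^{-1})^{g} \cdot \prod_{\textn{$p$ good}} E_p(0),
\]
and $\prod_{\textn{$p$ bad}} (1-p^{-1})^g = \Lambda^{-1}$. Hence
\[
\prod_{\textn{$p$ bad}}(1+h(p)) = \Delta^{-1}\Lambda \prod_{\textn{$p$ good}} E_p(0)^{-1}.
\]
So the task reduces to bounding $\prod_{\textn{$p$ good}} E_p(0)^{-1}$ by an absolute constant, or at least by $\exp(O(1))$, which can be absorbed into $\exp(O_c(g^{4\eps}))$. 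This is the step I expect to need the most care, since $E_p(0)$ could a priori be small. However, \Cref{lem:Fp-estimate} with $\eps = 0$ gives $|E_p(0) - 1| \le |F_p(0) - 1| < 3g^2/p^2 < 3/4$. Therefore $E_p(0) \ge 1/4$ and $\log E_p(0)^{-1} \ll g^2/p^2$. Summing over $p > 4g^2$, exactly as in \Cref{cor:log-Fp-bound}, bounds this by $O(1)$.

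Multiplying the three pieces together yields $\Delta^{-1}\Lambda\, Q^\eps \exp(O_c(g^{4\eps}))$, which is the claimed bound.
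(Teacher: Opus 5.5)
Your proof is correct and matches the paper's argument step for step. You factor the sum into Euler factors, bound the good primes via \Cref{cor:log-Fp-bound}, bound each bad factor by $p^\eps(1+h(p))$, and use the identity $\prod_{\textn{$p$ bad}}(1+h(p)) = \Delta^{-1}\Lambda \prod_{\textn{$p$ good}} E_p(0)^{-1}$. Your explicit use of \Cref{lem:Fp-estimate} to get $E_p(0) \geq 1/4$, and hence $\prod_{\textn{$p$ good}} E_p(0)^{-1} \ll 1$, is slightly more careful than the paper, which just cites \Cref{cor:log-Fp-bound} with $\eps = 0$ for that step.
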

\begin{proof}
By \Cref{cor:log-Fp-bound} we have
\begin{multline*}
\sum_{n \geq 1} |\psi(n)| \tsp n^\eps
   = \prod_p F_p(-\eps)
   = \prod_{\textn{$p$ bad}} (1 + h(p) \tsp p^\eps)
       \prod_{\textn{$p$ good}} F_p(-\eps) \\
   \leq \prod_{\textn{$p$ bad}} p^\eps (1 + h(p)) \cdot
       \exp(O_c(g^{4\eps}))
   = \prod_{\textn{$p$ bad}} (1 + h(p)) \cdot
       Q^\eps \exp(O_c(g^{4\eps})).
\end{multline*}
To estimate $\prod_{\textn{$p$ bad}} (1 + h(p))$ observe that
\[
\Delta^{-1} = \prod_p (1 + h(p)) (1 - p^{-1})^g
   = \prod_{\textn{$p$ bad}} (1 + h(p)) (1 - p^{-1})^g
      \prod_{\textn{$p$ good}} E_p(0),
\]
so
\[
\prod_{\textn{$p$ bad}} (1 + h(p))
   = \Delta^{-1} \Lambda \prod_{\textn{$p$ good}} E_p(0)^{-1}.
\]
By \Cref{cor:log-Fp-bound} again with $\eps = 0$,
we find that $\prod_{\textn{$p$ good}} E_p(0)^{-1} \ll 1$.
\end{proof}

\begin{lem}
\label{lem:psi-log-bound}
We have
\[
\sum_{n \geq 1} |\psi(n)| \log n \ll \Delta^{-1} \Lambda \cdot \log Q.
\]
\end{lem}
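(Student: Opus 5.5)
The plan is to deduce \Cref{lem:psi-log-bound} from \Cref{lem:psi-eps-bound} using the elementary inequality $\log n \leq n^\eps/\eps$, valid for all $n \geq 1$ and $\eps > 0$. (This follows from $e^{x} \geq x$ applied to $x = \eps \log n$.) Summing gives
\[
\sum_{n \geq 1} |\psi(n)| \log n \leq \frac{1}{\eps} \sum_{n \geq 1} |\psi(n)| \tsp n^\eps
   \ll \frac{1}{\eps} \, \Delta^{-1} \Lambda \cdot Q^\eps \exp(O(g^{4\eps})).
\]
Here we fix, say, $c = \tfrac14$ in \Cref{lem:psi-eps-bound}, so that the implied constant in $O_c(g^{4\eps})$ is absolute for all $0 \leq \eps \leq \tfrac14$.

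It remains to choose $\eps$ so that $Q^\eps \ll 1$, $g^{4\eps} \ll 1$ and $1/\eps \ll \log Q$. The natural choice is $\eps \coloneqq \min(\tfrac14, 1/\log Q)$; this is legitimate because $Q \geq 6$, so $\log Q > 1$. With this choice $Q^\eps \leq e$ and $1/\eps \ll \log Q$.

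The only point needing care is the factor $g^{4\eps} = \exp(4\eps \log g)$. Every prime $p \leq 4g^2$ is bad, so $Q \geq \prod_{p \leq 4g^2} p$. By Chebyshev's bound this gives $\log Q \gg g^2 \geq \log g$, and hence $\eps \log g \ll 1$. If one prefers to avoid Chebyshev, any prime in $(2g^2, 4g^2]$ (Bertrand) divides $Q$, so already $Q > 2g^2$, $\log Q > \log g$, and therefore $g^{4\eps} \leq e^4$. Thus $\exp(O(g^{4\eps})) \ll 1$, and the bound $\Delta^{-1}\Lambda \log Q$ follows.

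There is no real obstacle; the one step to check is that $\log Q$ dominates $\log g$, which comes from condition (iii) in the definition of bad primes.
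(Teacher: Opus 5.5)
Your proof is correct, but it takes a different route from the paper.

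\emph{The paper's route.} The paper argues directly at the level of Euler factors. Using $\log n = \sum_{q \divides n} v_q(n)\log q$ (equivalently, differentiating $\prod_p F_p(\sigma)$ at $\sigma = 0$), it writes $\sum_n |\psi(n)|\log n$ as a sum over primes $q$ of the local term $\sum_{\ell \geq 1} \ell\,|\psi(q^\ell)|\log q$ times $\prod_{p \neq q} F_p(0)$.
\begin{itemize}
\item For bad $q$ the local term is $h(q)\log q$. After dividing the factor $1 + h(q)$ out of $\prod_{p\ \textn{bad}}(1+h(p))$, each bad prime contributes less than $\log q$, for a total of at most $\log Q$.
\item For good $q$, \Cref{lem:psi-bound} gives $O(g^2 \log q / q^2)$. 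This sums to $O(g^2)$, which is absorbed using $g^2 \ll \log Q$.
\end{itemize}

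\emph{Your route.} You apply Rankin's trick $\log n \leq n^\eps/\eps$ and feed the result into \Cref{lem:psi-eps-bound} with $\eps \asymp 1/\log Q$. This is the same choice the paper makes when bounding the tail $\sum_{k \geq e^{-\theta} z} |\psi(k)|$ in the proof of \Cref{prop:H-estimate}.

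\emph{Comparison.} Your argument is shorter and reuses a lemma already proved (there is no circularity, since \Cref{lem:psi-eps-bound} does not depend on this lemma). It also needs only the weak input $\log g \ll \log Q$, to make $g^{4\eps} \ll 1$, rather than $g^2 \ll \log Q$. The paper's computation is more explicit: it isolates the contribution of each prime before bounding crudely by $\log Q$. That extra precision is not used later, and the constant-factor losses in your approach are harmless for an upper bound up to an absolute constant.
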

\begin{proof}
The identity $\log n = \sum_{q \text{ prime}, \, q \divides n} v_q(n) \log q$
implies that
\begin{multline*}
\sum_{n \geq 1} |\psi(n)| \log n = \\
   \sum_q \left(|\psi(q)| \log q + |\psi(q^2)| \, 2 \log q + \cdots \right)
   \prod_{p \neq q} \left( 1 + |\psi(p)| + |\psi(p^2)| + \cdots \right).
\end{multline*}
(Alternatively, this identity could be obtained by differentiating
$\sum_{n \geq 1} |\psi(n)| \tsp n^{-\sigma} = \prod_p F_p(\sigma)$
and evaluating at $\sigma = 0$.)
Recalling from \eqref{eq:psi-series} that
$F_p(\sigma) = 1 + h(p) \tsp p^{-\sigma}$ for $p$ bad and
$F_p(\sigma) = 1 + |\psi(p)| \tsp p^{-\sigma}
   + |\psi(p^2)| \tsp p^{-2\sigma} + \cdots$
for $p$ good,
we obtain
\begin{multline*}
\sum_{n \geq 1} |\psi(n)| \log n
   = \sum_{\textn{$q$ bad}} h(q) \log q
      \prod_{\substack{\textn{$p$ bad} \\ p \neq q}} (1 + h(p))
      \prod_{\textn{$p$ good}} F_p(0) \\
   + \sum_{\textn{$q$ good}}
      \bigl( |\psi(q)| \log q + |\psi(q^2)| \, 2 \log q + \cdots \bigr)
      \prod_{\textn{$p$ bad}} (1 + h(p))
      \prod_{\substack{\textn{$p$ good} \\ p \neq q}} F_p(0).
\end{multline*}
We saw in the proof of \Cref{lem:psi-eps-bound} that
$\prod_{\textn{$p$ bad}} (1 + h(p)) \ll \Delta^{-1} \Lambda$,
and \Cref{cor:log-Fp-bound} implies that
$\prod_{\textn{$p$ good}} F_p(0) \ll 1$.
By \Cref{lem:psi-bound}, for good $q$ we have
\[
|\psi(q)| \log q + |\psi(q^2)| \, 2 \log q + \cdots
   \ll \left( \frac{g^2}{q^2} + \frac{2g^2}{1! \tsp q^2}
      + \frac{3g^3}{2! \tsp q^3} + \cdots \right) \log q
   \ll \frac{g^2 \log q}{q^2}.
\]
Combining these observations we find that
\[
\sum_{n \geq 1} |\psi(n)| \log n \ll
   \left(\sum_{\textn{$q$ bad}} \frac{h(q) \log q}{1 + h(q)}
      + \sum_{\textn{$q$ good}} \frac{g^2 \log q}{q^2} \right)
   \Delta^{-1} \Lambda.
\]
Since $h(q)/(1+h(q)) < 1$
the first sum is bounded by $\sum_{\textn{$q$ bad}} \log q = \log Q$.
The second sum is $O(g^2)$,
and we certainly have $g^2 \ll \log Q$
as $Q$ is divisible by all primes $p \leq 4g^2$.
\end{proof}

\begin{proof}[Proof of \Cref{prop:H-estimate}]
To estimate the first error term in \eqref{eq:H-estimate-prelim},
we apply \Cref{lem:psi-eps-bound} with $\eps = 0$ (and say $c = \tfrac13$)
to obtain
\[
\theta \sum_{k \geq 1} |\psi(k)| \ll \theta \cdot \Delta^{-1} \Lambda.
\]
For the second term, \Cref{lem:psi-log-bound} yields
\[
g \sum_{k \geq 1} |\psi(k)| \log k \ll g \log Q \cdot \Delta^{-1} \Lambda
   \ll \theta \cdot \Delta^{-1} \Lambda.
\]
To handle the third term,
observe that for any $y > 1$ and $0 \leq \eps \leq \tfrac13$,
\Cref{lem:psi-eps-bound} implies that
\[
\sum_{k \geq y} |\psi(k)|
   \leq y^{-\eps} \sum_{k \geq y} |\psi(k)| \tsp k^\eps
   \ll \Delta^{-1} \Lambda \cdot y^{-\eps} Q^\eps \exp(O(g^{4\eps})).
\]
Taking $\eps \coloneqq (\log 5Q)^{-1}$
(and noting that $\eps \leq (\log 30)^{-1} < \tfrac13$ as $Q \geq 6$),
we obtain $Q^\eps \ll 1$ and also $\exp(O(g^{4\eps})) \ll 1$
as certainly $g < Q$.
The third term thus becomes
\[
\Lambda \theta \sum_{k \geq e^{-\theta} z} |\psi(k)|
   \ll \theta \cdot \Delta^{-1} \Lambda \cdot
      (e^\theta / z)^\eps \Lambda.
\]
By \Cref{lem:Q-phiQ-bound} we have
\[
\eps^{-1} \log \Lambda < \log 5Q \cdot g \log(3 \log \log 5Q)
   < 2 g \log 5Q \log \log 5Q \leq 2\theta,
\]
since $\log(3x) < 2x$ for all $x > \log \log 30 \approx 1.224$.
Using the hypothesis that $z > e^{3\theta}$ it follows that
\[
(e^\theta / z)^\eps \Lambda \leq
   \left(\frac{e^\theta \cdot e^{2\theta}}{e^{3 \theta}}\right)^\eps = 1,
\]
so our bound for the third term becomes simply
\[
\Lambda \theta \sum_{k \geq e^{-\theta} z} |\psi(k)|
   \ll \theta \cdot \Delta^{-1} \Lambda.
\]
Putting everything together, we conclude that
\[
H(z) = \frac{(\log z)^g}{g! \Delta}
      + O\Biggl( \frac{(\log z)^{g-1}}{g! \Lambda}
      \cdot \theta \cdot \Delta^{-1} \Lambda \Biggr)
   = \frac{(\log z)^g}{g! \Delta}
   \left(1 + O\left(\frac{\theta}{\log z}\right) \right). \qedhere
\]
\end{proof}

\subsection{The estimate for $R(z)$}
\label{sec:sieve-R-bound}

In this section we prove \Cref{prop:R-bound}.
Our main task is to bound the quantity
$r(d) = |\Aset_d| - \delta(d) X$ for $d \in \Dset$.
By definition
\[
\Aset_d = \{0 \leq k < X : (k \bmod p^2) \in B_p
   \textn{ for all } p \divides d\},
   \qquad d \in \Dset.
\]
We begin by analysing the structure of $B_p$ in more detail.
Recall that \eqref{eq:Bp-assumption} remains in force,
so $\delta(p) \in (0,1)$ for all $p \in \Pset$.
\begin{lem}
\label{lem:Bp-structure}
Let $p \in \Pset$.
Then there exist $\alpha_p \in \{0, 1\}$
and integers $n_p^1, n_p^2 \in [0,g]$
such that $B_p$ is of the form $B_p = B_p^1 \setminus B_p^2$ where
\begin{itemize}
\item
$B_p^1 \subseteq \ZZ/p^2\ZZ$ is a union of
exactly $n_p^1$ residue classes modulo~$p^{\alpha_p}$, and
\item
$B_p^2 \subseteq B_p^1$ is a union of
exactly $n_p^2$ residue classes modulo~$p^{\alpha_p+1}$.
\end{itemize}
\end{lem}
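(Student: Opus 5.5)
The plan is to treat each residue $u_i$ separately according to the $p$-adic valuation of $U$, and then assemble $B_p$ as a union. Fix $p \in \Pset$ and write $v \coloneqq v_p(U)$. For each $i$, the set $B_{p,i} = \{k : p \divides Uk+u_i\} \setminus \{k : p^2 \divides Uk+u_i\}$, viewed in $\ZZ/p^2\ZZ$, splits into three cases according to $v$.

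\emph{Case $v = 0$.} Here $k \mapsto Uk+u_i$ is a bijection modulo $p$ and modulo $p^2$. So $\{p \divides Uk+u_i\}$ is a single class modulo $p$, and $\{p^2 \divides Uk+u_i\}$ is a single class modulo $p^2$ contained in it. This suggests the choice $\alpha_p = 1$, with $B_p^1$ the union over $i$ of the classes $k \equiv -u_i U^{-1} \pmod p$, and $B_p^2$ the union over $i$ of the classes $k \equiv -u_iU^{-1} \pmod{p^2}$.

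\emph{Case $v = 1$.} The condition $p \divides Uk+u_i$ holds for all $k$ if $p \divides u_i$ and for no $k$ otherwise. When $p \divides u_i$, we have $Uk+u_i = p\bigl((U/p)k + u_i/p\bigr)$, so $p^2$ divides it exactly on one class of $k$ modulo $p$. Thus $B_{p,i}$ is either empty or all of $\ZZ/p^2\ZZ$ minus one class modulo $p$. The natural choice is $\alpha_p = 0$ (one class modulo $1$ means all of $\ZZ/p^2\ZZ$), with $B_p^1$ empty or everything and $B_p^2$ a union of classes modulo $p$.

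\emph{Case $v \geq 2$.} Here $Uk+u_i \equiv u_i \pmod{p^2}$, so $B_{p,i}$ is either empty or everything, depending on whether $v_p(u_i) = 1$. Again take $\alpha_p = 0$ and $B_p^2 = \emptyset$.

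Next I would take unions over $i$ and check the set identity $\bigcup_i (C_i \setminus D_i) = (\bigcup_i C_i)\setminus(\bigcup_i D_i)$, where $C_i$ is the class set and $D_i \subseteq C_i$. This identity is the main point to verify, and it needs the extra fact $D_j \cap C_i \subseteq D_i$ for all $i,j$.
\begin{itemize}
\item In case $v = 0$, if $k \in D_j \cap C_i$ then $Uk \equiv -u_j \pmod{p^2}$ and $Uk \equiv -u_i \pmod p$. Hence $u_i \equiv u_j \pmod p$, and I expect the argument to force $k \in D_i$ or to need care. It fails if $u_i \not\equiv u_j \pmod{p^2}$ yet $u_i \equiv u_j \pmod p$. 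To handle this, instead define $B_p^2$ as the classes modulo $p^2$ lying in $B_p^1$ but outside $B_p$, and count them. Inside each class modulo $p$ of $B_p^1$, the excluded classes modulo $p^2$ are exactly the $k$ with $Uk+u_i \equiv 0 \pmod{p^2}$ for every $i$ in that group. There is at most one such class, and only when all those $u_i$ are congruent modulo $p^2$. So $n_p^2 \leq n_p^1 \leq g$.
\item In case $v = 1$, $B_p$ is either empty or all of $\ZZ/p^2\ZZ$ minus the classes modulo $p$ where every relevant $i$ has $p^2 \divides Uk+u_i$. This is at most one class modulo $p$ (or, if the relevant $u_i$ disagree, no class at all). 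It also cannot be all of $\ZZ/p^2\ZZ$ by \eqref{eq:Bp-assumption}, and it is nonempty since $p \in \Pset$.
\item In case $v \geq 2$, $B_p$ would be empty or everything. The first contradicts $p\in\Pset$ and the second contradicts \eqref{eq:Bp-assumption}, so this case cannot occur.
\end{itemize}
With these choices the count $n_p^1 \leq g$ is the number of distinct classes $-u_iU^{-1}$ modulo $p$ in case $v = 0$, and it equals $1$ (with $\alpha_p = 0$) in case $v = 1$. The bound $n_p^2 \leq 1 \leq g$ holds in case $v = 1$.
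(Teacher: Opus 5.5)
Your proof is correct and is essentially the paper's. When $p \mid U$, both arguments show that $B_p$ is all of $\ZZ/p^2\ZZ$ minus exactly one class modulo $p$, using $p \in \Pset$ and \eqref{eq:Bp-assumption} to rule out the degenerate cases, which also disposes of $v_p(U) \geq 2$; this gives $\alpha_p = 0$ and $n_p^1 = n_p^2 = 1$. When $p \nmid U$, your corrected choice $B_p^2 \coloneqq B_p^1 \setminus B_p$ is exactly the paper's, and you were right to abandon $\bigcup_i D_i$, which fails when $u_i \equiv u_j \pmod p$ but $u_i \not\equiv u_j \pmod{p^2}$. The only difference is the count: the paper gets $n_p^2 \leq g$ from $B_p^2 \subseteq \bigcup_i B_{p,i}^2$, while you get the slightly sharper $n_p^2 \leq n_p^1$ by noting that each class of $B_p^1$ loses at most one class modulo $p^2$.
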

\begin{proof}
We first consider the case that $p \divides U$.
Fix $i \in \{1, \ldots, g\}$.
We claim that $B_{p,i}$ (see \eqref{eq:Bpi-defn})
is a union of either zero, $p-1$ or $p$ residue classes modulo~$p$.
Indeed, if $p \ndivides u_i$ then $B_{p,i} = \emptyset$.
Now suppose that $p \divides u_i$. Then
\[
B_{p,i} = \bigl\{ k \in \ZZ/p^2\ZZ :
   \tfrac{U}{p}k \not\equiv -\tfrac{u_i}{p} \tpmod p \bigr\}.
\]
If $p^2 \divides U$, then $B_{p,i} = \emptyset$ or $\ZZ/p^2\ZZ$
according to whether $p^2 \divides u_i$ or not;
whereas if $p^2 \ndivides U$, then $B_{p,i}$ consists of exactly $p-1$
residue classes modulo $p$.
In all cases the claim is proved.

It follows that $B_p = \cup_i B_{p,i}$
is itself a union of either zero, $p-1$ or $p$ residue classes modulo~$p$.
The first and last possibilities are ruled out because
we know that $\delta(p) \in (0,1)$,
so the second option must hold.
We may therefore take $B_p^1 \coloneqq \ZZ/p^2\ZZ$
and $B_p^2 \coloneqq \{k \in \ZZ/p^2\ZZ : k \equiv c \pmod p\}$
for a suitable integer~$c$,
and the result holds with $\alpha_p \coloneqq 0$,
$n_p^1 \coloneqq 1$ and $n_p^2 \coloneqq 1$.

Now suppose that $p \ndivides U$.
Then $B_{p,i} = B_{p,i}^1 \setminus B_{p,i}^2$ where
\begin{align*}
B_{p,i}^1 & \coloneqq \{k \in \ZZ/p^2\ZZ : Uk \equiv -u_i \tpmod{p}\}, \\
B_{p,i}^2 & \coloneqq \{k \in \ZZ/p^2\ZZ : Uk \equiv -u_i \tpmod{p^2}\}
   \subseteq B_{p,i}^1.
\end{align*}
Note that $B_{p,i}^1$ consists of exactly one residue class modulo~$p$,
and $B_{p,i}^2$ of exactly one residue class modulo~$p^2$.
Let $B_p^1 \coloneqq \cup_i B_{p,i}^1$;
this set is a union of $n_p^1$ residue classes modulo~$p$
for some $n_p^1 \leq g$.
We have $B_p = \cup_i B_{p,i} \subseteq \cup_i B_{p,i}^1 = B_p^1$,
so $B_p = B_p^1 \setminus B_p^2$ where $B_p^2 \coloneqq B_p^1 \setminus B_p$.
Now observe that
\[
B_p^2 = (\cup_i B_{p,i}^1) \setminus (\cup_i B_{p,i})
   \subseteq \cup_i (B_{p,i}^1 \setminus B_{p,i})
   = \cup_i B_{p,i}^2,
\]
so $B_p^2$ is a union of $n_p^2$ residue classes modulo $p^2$
for some $n_p^2 \leq g$.
The result therefore holds with $\alpha_p \coloneqq 1$
and with $n_p^1$ and $n_p^2$ as described above.
\end{proof}

\begin{prop}
\label{prop:r-bound}
For any $d \in \Dset$ we have
\[
|r(d)| \leq (2g)^{\omega(d)}
\]
where $\omega(d)$ denotes the number of (distinct) prime divisors of $d$.
\end{prop}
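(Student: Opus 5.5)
We plan to use \Cref{lem:Bp-structure} and the Chinese remainder theorem to write $\Aset_d$ as a signed combination of arithmetic progressions. For each $p \divides d$, the indicator of $B_p$ equals $\mathbf{1}_{B_p^1} - \mathbf{1}_{B_p^2}$, because $B_p^2 \subseteq B_p^1$. Here $B_p^1$ is a disjoint union of $n_p^1$ residue classes modulo $p^{\alpha_p}$, and $B_p^2$ is a disjoint union of $n_p^2$ residue classes modulo $p^{\alpha_p+1}$. Expanding the product over $p \divides d$ gives
\[
\mathbf{1}_{\Aset_d}(k) = \prod_{p \divides d} \bigl(\mathbf{1}_{B_p^1}(k \bmod p^2) - \mathbf{1}_{B_p^2}(k \bmod p^2)\bigr).
\]
This is a sum of $2^{\omega(d)}$ terms, indexed by choices $\epsilon_p \in \{1,2\}$ for each $p$. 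Each term carries the sign $\prod_p (-1)^{\epsilon_p+1}$ and is the indicator of the intersection $\bigcap_p B_p^{\epsilon_p}$. By CRT, each such intersection is a disjoint union of exactly $\prod_p n_p^{\epsilon_p}$ residue classes modulo $q_\epsilon \coloneqq \prod_p p^{\alpha_p + \epsilon_p - 1}$.

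Next, for a single residue class modulo $q$, the number of $k \in [0,X)$ in that class is $X/q + \eta$ with $|\eta| < 1$. Hence the count for the term indexed by $\epsilon$ equals $X \prod_p (n_p^{\epsilon_p}/p^{\alpha_p+\epsilon_p-1}) + \eta_\epsilon$, where $|\eta_\epsilon| \leq \prod_p n_p^{\epsilon_p}$. Summing the main terms with their signs factors as
\[
X \prod_{p \divides d}\Bigl(\frac{n_p^1}{p^{\alpha_p}} - \frac{n_p^2}{p^{\alpha_p+1}}\Bigr) = X \prod_{p\divides d} \frac{|B_p^1| - |B_p^2|}{p^2} = X\prod_{p \divides d} \frac{|B_p|}{p^2} = \delta(d) X,
\]
using that the density of $B_p^j$ in $\ZZ/p^2\ZZ$ is $n_p^j/p^{\alpha_p+j-1}$.

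The error therefore satisfies
\[
|r(d)| \leq \sum_\epsilon \prod_p n_p^{\epsilon_p} = \prod_{p \divides d}(n_p^1 + n_p^2) \leq (2g)^{\omega(d)},
\]
since $n_p^1, n_p^2 \leq g$. The only delicate point is the bookkeeping in the inclusion–exclusion expansion. In particular we must check that, because $B_p^2 \subseteq B_p^1$, we have $\mathbf{1}_{B_p} = \mathbf{1}_{B_p^1} - \mathbf{1}_{B_p^2}$ exactly, and that CRT applies to the moduli $p^{\alpha_p+\epsilon_p-1}$ for distinct primes. Both are immediate. The single-progression error bound $|\eta|<1$ holds because $X$ is an integer and each class modulo $q$ meets $[0,X)$ in either $\lfloor X/q\rfloor$ or $\lceil X/q\rceil$ points.
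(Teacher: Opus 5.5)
Your proposal is correct and follows essentially the same route as the paper, which also uses \Cref{lem:Bp-structure} to expand $|\Aset_d|$ by inclusion--exclusion as $\sum_{d_1 d_2 = d} (-1)^{\omega(d_2)} N_{d_1,d_2}$. It then counts each $N_{d_1,d_2}$ via the Chinese remainder theorem with error at most the number of residue classes, and notes that the main terms factor as $\prod_{p \mid d}(\delta^1(p) - \delta^2(p)) = \delta(d)$. Your error bound $\prod_{p \mid d}(n_p^1 + n_p^2)$ is marginally sharper than the paper's $2^{\omega(d)} g^{\omega(d)}$, but it leads to the same conclusion.
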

\begin{proof}
We continue with the notation of \Cref{lem:Bp-structure}.
For $p \divides d$ let
\begin{align*}
\Aset_p^1 & \coloneqq \{ 0 \leq k < X : (k \bmod p^2) \in B_p^1 \}, \\
\Aset_p^2 & \coloneqq \{ 0 \leq k < X : (k \bmod p^2) \in B_p^2 \}
   \subseteq \Aset_p^1,
\end{align*}
so that $\Aset_p = \Aset_p^1 \setminus \Aset_p^2$ and hence
\begin{equation}
\label{eq:Aset-d-formula}
\Aset_d = \bigcap_{p \divides d} {(\Aset_p^1 \setminus \Aset_p^2)}.
\end{equation}
The cardinality of $\Aset_d$ is given by
\begin{equation}
\label{eq:Aset-d-cardinality}
|\Aset_d| = \sum_{d_1 d_2 = d} (-1)^{\omega(d_2)} N_{d_1,d_2},
   \qquad 
   N_{d_1,d_2} \coloneqq \biggl|
      \bigcap_{p \divides d_1} \Aset_p^1 \cap
      \bigcap_{p \divides d_2} \Aset_p^2
   \biggr|.
\end{equation}
(To prove this formula from \eqref{eq:Aset-d-formula},
one may argue by induction on~$\omega(d)$.)

Let us now estimate $N_{d_1,d_2}$ for a given pair $(d_1, d_2)$.
Each $\Aset_p^1$ (for $p \divides d_1$)
is defined by congruences modulo~$p^{\alpha_p}$,
and each $\Aset_p^2$ (for $p \divides d_2$)
by congruences modulo~$p^{\alpha_p+1}$.
Combining these congruences by the Chinese remainder theorem,
we obtain exactly
\[
n \coloneqq
   \prod_{p \divides d_1} n_p^1 \cdot
   \prod_{p \divides d_2} n_p^2
\]
residues modulo
\[
L \coloneqq
   \prod_{p \divides d_1} p^{\alpha_p} \cdot
   \prod_{p \divides d_2} p^{\alpha_p+1}.
\]
The quantity $N_{d_1,d_2}$ counts the number of integers $k \in [0,X)$
satisfying these congruences,
so by splitting up the interval into blocks of length~$L$ we obtain
\[
\left\lfloor \frac{X}{L} \right\rfloor n
   \leq N_{d_1,d_2} \leq \left\lceil \frac{X}{L} \right\rceil n
\]
or in other words
\begin{equation}
\label{eq:Nd1d2-estimate}
\left| N_{d_1,d_2} - \frac{X}{L} \cdot n \right| \leq n.
\end{equation}
Recalling the bounds $n_p^1 \leq g$ and $n_p^2 \leq g$
from \Cref{lem:Bp-structure},
and defining
\[
\delta^1(p) \coloneqq \frac{|B_p^1|}{p^2} = \frac{n_p^1}{p^{\alpha_p}},
\qquad
\delta^2(p) \coloneqq \frac{|B_p^2|}{p^2} = \frac{n_p^2}{p^{\alpha_p+1}},
\]
the inequality \eqref{eq:Nd1d2-estimate} becomes
\[
\Biggl| N_{d_1,d_2} - X
   \prod_{p \divides d_1} \delta^1(p) \prod_{p \divides d_2} \delta^2(p)
   \Biggr| \leq g^{\omega(d_1)} g^{\omega(d_2)} = g^{\omega(d)}.
\]
Substituting this estimate into \eqref{eq:Aset-d-cardinality},
we find that
\[
\Biggl| |\Aset_d| - X
   \sum_{d_1 d_2 = d} (-1)^{\omega(d_2)}
   \prod_{p \divides d_1} \delta^1(p) \prod_{p \divides d_2} \delta^2(p)
   \Biggr| \leq \sum_{d_1 d_2 = d} g^{\omega(d)} = (2g)^{\omega(d)}.
\]
The proof is completed by observing that
\[
\sum_{d_1 d_2 = d} (-1)^{\omega(d_2)}
   \prod_{p \divides d_1} \delta^1(p) \prod_{p \divides d_2} \delta^2(p)
   = \prod_{p \divides d} (\delta^1(p) - \delta^2(p))
   = \prod_{p \divides d} \delta(p) = \delta(d).
   \qedhere
\]
\end{proof}

\begin{proof}[Proof of \Cref{prop:R-bound}]
By \Cref{prop:r-bound} we know that
\[
R(z) \leq
   \sum_{\substack{d_1, d_2 \in \Dset \\ d_1, d_2 \leq z}}
      (2g)^{\omega([d_1,d_2])}
   \leq
   \sum_{\substack{d_1, d_2 \in \Dset \\ d_1, d_2 \leq z}}
      (2g)^{\omega(d_1) + \omega(d_2)}
   \leq
   \left( \sum_{d \in \Dset, \, d \leq z} (2g)^{\omega(d)} \right)^2.
\]
For $d \in \Dset$ we have
$(2g)^{\omega(d)} = \prod_{p \divides d} 2g
   = \prod_{p \divides d} \tau_{2g}(p) = \tau_{2g}(d)$
(where $\tau_g(n)$ is the multiplicative function
defined in \Cref{sec:sieve-preliminary}), so
\[
R(z) \leq
   \left( \sum_{d \in \Dset, \, d \leq z} \tau_{2g}(d) \right)^2
   \leq \left( \sum_{1 \leq d \leq z} \tau_{2g}(d) \right)^2.
\]
By \Cref{lem:tau-sum-estimate},
\[
\sum_{1 \leq d \leq z} \tau_{2g}(d)
   \leq z \sum_{1 \leq d \leq z} \frac{\tau_{2g}(d)}{d}
   \leq z \, \frac{(\log z + 2g)^{2g}}{(2g)!}
   = z \, \frac{(\log z)^{2g}}{(2g)!}
      \left(1 + \frac{2g}{\log z}\right)^{2g}.
\]
The hypothesis $z > e^{g^2}$ implies that
$(1 + 2g/\log z)^{2g} \ll 1$, so we are done.
\end{proof}

\subsection{Proof of \Cref{thm:sieve-main}}
\label{sec:sieve-main-proof}

Let $g$, $U$, $N$ and $y$ be as in \Cref{thm:sieve-main},
let $X \coloneqq N/U \geq 1$, and define
\[
E \coloneqq \{ k \in [0, X): \textn{there are at least $g$
   square-$y$-rough integers in } [kU, (k+1)U) \}.
\]
Our goal is to bound $|E|$ from above.
Let
\[
\Gamma = \Gamma(g, U) \coloneqq \{0, 1, \ldots, U-1\}^g,
\]
and for $u \in \Gamma$ define
\[
E(u) \coloneqq \{ k \in [0, X): Uk + u_1, \ldots, Uk + u_g
   \textn{ are all square-$y$-rough} \}.
\]
Writing $\Gamma^*$ for the set of tuples
$u = (u_1, \ldots, u_g) \in \Gamma$ whose
entries are pairwise distinct,
we clearly have $E = \cup_{u \in \Gamma^*} E(u)$.
Taking into account the possible permutations of each
$(u_1, \ldots, u_g) \in \Gamma^*$,
we find that
\begin{equation}
\label{eq:E-average}
|E| \leq \frac{1}{g!} \sum_{u \in \Gamma^*} |E(u)|.
\end{equation}

We will use \Cref{thm:sieve-tuple} to bound each $|E(u)|$, taking
\[
z \coloneqq \frac{y}{U^{1/2} (\log y)^{3g}}.
\]
To apply \Cref{thm:sieve-tuple} we must check that the hypothesis
$z > e^{C_1 \theta}$ is satisfied.
We claim that this follows from \eqref{eq:y-hypothesis},
provided that $C_0$ is taken sufficiently large.
To prove this, first observe that
\[
\frac{\log z}{\log y}
   = 1 - \frac{\log U}{2\log y} - \frac{3g \log \log y}{\log y}.
\]
For $C_0$ sufficiently large,
\eqref{eq:y-hypothesis} certainly implies that $\log y > 2 g \log U$ and that
\[
\frac{\log y}{\log \log y} > (\log y)^{1/2} > 12g^2,
\]
and therefore that
\begin{equation}
\label{eq:logz-logy-ratio}
\frac{\log z}{\log y} > 1 - \frac{1}{4g} - \frac{1}{4g}
   = 1 - \frac{1}{2g} \geq \frac12.
\end{equation}
We now observe the following estimates for $Q$ and~$\theta$.
\begin{lem}
\label{lem:Q-theta-bounds}
Let $u \in \Gamma^*$,
and let $Q \coloneqq Q(u)$ and $\theta = \theta(g, Q)$
be as in \Cref{sec:sieve-setup}.
Then
\[
\log 5Q < 4g^2 \log 5U
\]
and
\[
\theta < 4 g^4 \log 5U \log(4g^2 \log 5U).
\]
\end{lem}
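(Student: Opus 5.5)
The plan is to bound $Q$ by splitting it into the three kinds of bad primes and estimating $\log Q$ by the sum of the logarithms of the bad primes. Since $Q$ is squarefree, we have $Q \le Q_1 Q_2 Q_3$, where
\begin{itemize}
\item[(i)] $Q_1 = \operatorname{rad}(U) \le U$;
\item[(ii)] $Q_2$ is the radical of $\prod_{i<j} |u_i - u_j|$;
\item[(iii)] $Q_3 = \prod_{p \le 4g^2} p$.
\end{itemize}

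For (ii), each of the $\binom{g}{2}$ differences is a nonzero integer of absolute value less than $U$, because $u \in \Gamma^*$ has distinct entries in $[0,U)$. Hence $Q_2 \le U^{g(g-1)/2}$. For (iii), I will use an elementary Chebyshev bound of the form $\prod_{p \le x} p < 4^x$, which gives $\log Q_3 < 4g^2 \log 4$.

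Combining these, I get
\[
\log 5Q \le \log 5 + \Bigl(1 + \tfrac{g(g-1)}{2}\Bigr)\log U + 4g^2 \log 4 .
\]
I then need to check that the right side is less than $4g^2 \log 5U$. Write $4g^2 \log 5U = 4g^2 \log 5 + 4g^2 \log U$. Since $1 + g(g-1)/2 \le g^2 < 4g^2$, the $\log U$ terms are absorbed. For the constants, we need $\log 5 + 4g^2\log 4 < 4g^2 \log 5$, which holds because $4g^2(\log 5 - \log 4) = 4g^2 \log(5/4) \ge 4 \cdot 0.223 > \log 5$ for $g \ge 1$. This small constant check is the only step that needs care, and it goes through with room to spare. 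If one prefers to avoid the $\log 5$ bookkeeping, there is an alternative: since $U \ge 1$, just use $\log 5U \ge \log 5 > \log 4 + \tfrac{\log 5}{4g^2}$.

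For $\theta$, recall $\theta = g^2 \log 5Q \log\log 5Q$. The function $t \mapsto t\log t$ is increasing for $t \ge 1$, and $\log 5Q \ge \log 30 > 1$. Substituting the bound $\log 5Q < 4g^2\log 5U$ therefore gives
\[
\theta < g^2 \cdot 4g^2 \log 5U \cdot \log(4g^2 \log 5U),
\]
which is exactly the stated inequality.
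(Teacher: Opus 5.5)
Your decomposition of the bad primes into the three classes, and the deduction of the $\theta$ bound from the monotonicity of $t\log t$, match the paper. The one difference is how you bound $\prod_{p\le 4g^2}p$: the paper uses the Rosser--Schoenfeld estimate $\sum_{p\le x}\log p<1.02x$, whereas you use $\prod_{p\le x}p<4^x$.

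That substitution breaks your constant check at $g=1$. You assert $4g^2\log(5/4)\ge 4\cdot 0.223>\log 5$, but $4\cdot 0.223\approx 0.89$ while $\log 5\approx 1.61$. So the inequality $\log 5+4g^2\log 4<4g^2\log 5$ is false for $g=1$, where it reads $\log 1280<\log 625$. Your ``alternative'' $\log 5>\log 4+\tfrac{\log 5}{4g^2}$ is the same inequality divided by $4g^2$, so it fails there too.

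This case is not vacuous. Take $g=1$, $U=1$, $u=(0)$, which is admissible. The $\log U$ terms then give no slack, and your chain only yields $\log 5Q<\log 1280$, which does not imply $\log 5Q<4\log 5=\log 625$. For $g=1$ with $U\ge 2$ you are rescued by the slack $3\log U\ge\log 8>\log(1280/625)$. For $g\ge 2$ your check is correct, since $16\log(5/4)\approx 3.57>\log 5$.

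The repair is easy; any one of the following works:
\begin{itemize}
\item Use the sharper form $\prod_{p\le x}p\le 4^{x-1}$ (valid for real $x\ge 2$) of the same Chebyshev-type bound. The needed inequality becomes $\log 5+(4g^2-1)\log 4<4g^2\log 5$, i.e.\ $(4g^2-1)\log(5/4)>0$, which holds for all $g\ge 1$.
\item Use the Rosser--Schoenfeld bound as the paper does. The requirement $\log 5+4.08g^2<4g^2\log 5$ then holds for every $g\ge 1$.
\item Treat $g=1$ directly. The bad primes are then $2$, $3$ and the primes dividing $U$, so $Q\le 6U$ and $\log 5Q\le\log 30+\log U<4\log 5U$.
\end{itemize}
The rest of your argument, including the $\theta$ step, is fine.
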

\begin{proof}
By definition of~$Q(u)$,
\[
Q \leq U \cdot \prod_{1 \leq i < j \leq g} |u_i - u_j|
   \cdot \prod_{p \leq 4g^2} p
\]
so
\[
\log 5Q \leq \log 5U + \binom{g}{2} \log U + \sum_{p \leq 4g^2} \log p.
\]
By \cite[Thm.\,9]{RS-primes} we have $\sum_{p \leq x} \log p < 1.02x$
for~$x > 0$ so
\[
\log 5Q < \bigl(1 + \tbinom{g}{2}\bigr) \log 5U + 4.08g^2
   < g^2 \log 5U + \frac{4.08 g^2 \log 5U}{\log 5} < 4 g^2 \log 5U.
\]
It follows immediately that
\[
\theta = g^2 \log 5Q \log \log 5Q < 4 g^4 \log 5U \log(4 g^2 \log 5U).
\qedhere
\]
\end{proof}
If \eqref{eq:y-hypothesis} holds,
then $\log y > (C_0/4) \theta$ by \Cref{lem:Q-theta-bounds},
and then $\log z > (C_0/8) \theta$ by \eqref{eq:logz-logy-ratio}.
Taking $C_0 > 8 C_1$ we conclude that $z > e^{C_1 \theta}$ as claimed.

Applying \Cref{thm:sieve-tuple}, we obtain an upper bound for the number of
$k \in [0,X)$ such that $Uk + u_1, \ldots, Uk + u_g$ are all square-$z$-rough.
Any square-$y$-rough integer is automatically square-$z$-rough
(as $z \leq y$),
so the same bound holds for~$E(u)$, i.e.,
\[
|E(u)| \leq \frac{N}{U} \cdot \frac{g! \Delta(u)}{(\log z)^g}
   \left(1 + O\left(\frac{\theta}{\log z} \right)\right)
   + O\left( \frac{z^2 (\log z)^{4g}}{(2g)!^2} \right).
\]
Clearly $(\log z)^g \leq (\log y)^g$,
and from \eqref{eq:logz-logy-ratio} we have
\[
(\log z)^g = \left(\frac{\log z}{\log y}\right)^g (\log y)^g
   > \left(1 - \frac{1}{2g}\right)^g (\log y)^g \gg (\log y)^g,
\]
so the bound for $|E(u)|$ becomes
\[
|E(u)| \ll \frac{N}{U} \cdot \frac{g! \Delta(u)}{(\log y)^g}
   + \frac{y^2}{(2g)!^2 U (\log y)^{2g}}.
\]

We now sum over $u \in \Gamma^*$ and apply \eqref{eq:E-average}.
Since $|\Gamma^*| \leq |\Gamma| = U^g$,
and using the assumption that $y \leq N^{1/2}$
(see \eqref{eq:y-hypothesis}), we obtain
\begin{equation}
\label{eq:E-bound}
|E| \ll \frac{N}{U} \cdot \frac{1}{(\log y)^g}
      \sum_{u \in \Gamma^*} \Delta(u)
   + \frac{N}{U} \cdot \frac{1}{g! (2g)!^2} \cdot \frac{U^g}{(\log y)^{2g}}.
\end{equation}
The key issue is now to estimate $\sum_{u \in \Gamma^*} \Delta(u)$,
i.e., to bound the average value of $\Delta(u)$ over all possible tuples~$u$.

Recall that by definition
\[
\Delta(u) = \prod_p \left(1 - \frac{|B_p(u)|}{p^2}\right)
   \left(1 - \frac{1}{p}\right)^{-g},
\]
where $B_p(u)$ means the set $B_p \subseteq \ZZ/p^2\ZZ$
corresponding to the given~$u$.
By \Cref{cor:log-Fp-bound} with $\eps = 0$
the contribution to the above product from the good primes
(for the given $u$) is $\Theta(1)$, so
\[
\Delta(u) \ll \prod_{p \divides Q(u)}
   \left(1 - \frac{|B_p(u)|}{p^2}\right) \left(1 - \frac{1}{p}\right)^{-g}.
\]
By \Cref{lem:Q-phiQ-bound} and \Cref{lem:Q-theta-bounds} we have
\begin{equation}
\label{eq:alpha1-inequality}
\prod_{p \divides Q} \left(1 - \frac{1}{p}\right)^{-1} =
   \frac{Q}{\varphi(Q)} \leq 3 \log \log 5Q
   \leq 3 \log(4g^2 \log 5U).
\end{equation}
Therefore
\[
\Delta(u) \ll \alpha_1^g \prod_{p \divides Q(u)}
   \left(1 - \frac{|B_p(u)|}{p^2}\right),
   \qquad \alpha_1 \coloneqq 3 \log(4g^2 \log 5U).
\]

This last inequality remains valid after omitting from the product
any desired subset of the bad primes.
We will retain only those primes for which $p^2 \divides U$,
as these are the only ones that appear to be susceptible to
a straightforward averaging argument.
We thus have
\[
\Delta(u) \ll \alpha_1^g \prod_{p^2 \divides U}
   \left(1 - \frac{|B_p(u)|}{p^2}\right).
\]
For these primes we may calculate $|B_p(u)|$ as follows.
Define
\[
S_p \coloneqq \{x \in \ZZ/p^2\ZZ : x \equiv 0 \tpmod{p}
   \textn{ and } x \not\equiv 0 \tpmod{p^2}\}.
\]
One checks from \eqref{eq:Bpi-defn} that $B_{p,i}(u) = \ZZ/p^2\ZZ$
if $(u_i \bmod p^2) \in S_p$,
and otherwise $B_{p,i}(u) = \emptyset$.
As $B_p(u) = \cup_i B_{p,i}(u)$, we get
\[
|B_p(u)| = \begin{cases}
   p^2 & \textn{if $(u_i \bmod p^2) \in S_p$
      for some $i \in \{1, \ldots, g\}$}, \\
   0   & \textn{otherwise}.
\end{cases}
\]
It follows that
\begin{align*}
\sum_{u \in \Gamma^*} \Delta(u)
   & \ll \alpha_1^g
   \cdot \bigl\lvert \{u \in \Gamma^* : (u_i \bmod p^2) \notin S_p
      \textn{ for all $i = 1, \ldots, g$ and all $p^2 \divides U$} \} \bigr\rvert \\
   & \leq \alpha_1^g
   \cdot \bigl\lvert \{u \in \Gamma : (u_i \bmod p^2) \notin S_p
      \textn{ for all $i = 1, \ldots, g$ and all $p^2 \divides U$} \} \bigr\rvert \\
   & = \alpha_1^g \cdot U^g \prod_{p^2 \divides U}
      \left(1 - \frac{|S_p|}{p^2}\right)^g
   = \alpha_1^g \cdot U^g \prod_{p^2 \divides U}
      \left(1 - \frac{p-1}{p^2}\right)^g.
\end{align*}
Combining with \eqref{eq:E-bound} we obtain
\begin{equation}
\label{eq:E-bound2}
|E| \ll \frac{N}{U} \cdot \frac{U^g}{(\log y)^g}
   \left( \alpha_1^g \alpha_2^g + \frac{1}{g! (2g)!^2 (\log y)^g} \right)
\end{equation}
where
\[
\alpha_2 \coloneqq \prod_{p^2 \divides U} \left(1 - \frac{p-1}{p^2}\right).
\]
By \eqref{eq:alpha1-inequality} we have
\[
\alpha_2 \geq \prod_{p^2 \divides U} \left(1 - \frac{1}{p}\right)
   \geq \prod_{p \divides Q(u)} \left(1 - \frac{1}{p}\right)
   \geq \alpha_1^{-1}
\]
so the second term in \eqref{eq:E-bound2} is negligible
and we conclude that
\[
|E| \ll \frac{N}{U} \cdot \frac{(\alpha_1 \alpha_2 U)^g}{(\log y)^g}.
\]

It remains to show that $\alpha_1 \alpha_2 < \alpha$
where $\alpha$ is as defined in \eqref{eq:alpha-defn}.
For this it suffices to observe that
\[
\alpha_2
   = \prod_{p^2 \divides U} \left(1 - \frac{1}{p} \right)
      \left(1 + \frac{1}{p^2 - p}\right)
   < \prod_{p^2 \divides U} \left(1 - \frac{1}{p} \right)
      \prod_p \left(1 + \frac{1}{p^2 - p}\right),
\]
and to check numerically that
$\prod_p (1 + 1/(p^2 - p)) \approx 1.9436 < 2$.
This completes the proof of \Cref{thm:sieve-main}.

\section{The probabilistic algorithm}
\label{sec:probabilistic}

The goal of this section is to prove \Cref{thm:main-probabilistic}.
Recall that the principal difficulty in the core algorithm
is that if a vector $a \in \FF_2^T$ has weight larger than
the compression threshold $R$,
we cannot reliably detect this by examining $\kappa(a)$
because $\kappa(a)$ might coincide with $\kappa(a')$
for some ``impostor'' $a' \in \FF_2^T$ with $\wt(a') \leq R$.
The key idea of this section is to add a randomisation stage
to the compression map,
in such a way that for any \emph{fixed} $a$,
an impostor is unlikely to exist.

\subsection{The core algorithm with permuted vectors}
\label{sec:permuted-core}

Let $T$ and $R \leq T/2$ be positive integers,
and let $S$ and $\kappa \colon \FF_2^T \to \FF_2^S$
be defined as in \Cref{sec:compression}.

Let $\Pi_T$ be the set of permutations of $\{0, \ldots, T-1\}$,
i.e., the set of bijections from this set to itself.
For any $\pi = (\pi_0, \ldots, \pi_{T-1}) \in \Pi_T$
define a corresponding injective linear map
\[
Q_\pi \colon \FF_2^T \to \FF_2^T
\]
by sending the basis element $e_t$ to $e_{\pi_t}$ for each
$t = 0, \ldots, T-1$.
Equivalently,
$Q_\pi(a_0, \ldots, a_{T-1}) \coloneqq (a_{\rho_0}, \ldots, a_{\rho_{T-1}})$
where $\rho \coloneqq \pi^{-1}$ is the inverse permutation.
Clearly $Q_\pi$ preserves weights, i.e.,
$\wt(Q_\pi(a)) = \wt(a)$ for all $a \in \FF_2^T$.
Define also a modified compression map
\[
\kappa_\pi \colon \FF_2^T \to \FF_2^S, \qquad
   \kappa_\pi \coloneqq \kappa \circ Q_\pi.
\]
We then have the following variant of \Cref{thm:core} (the core algorithm)
in which the compression map $\kappa$ is replaced by~$\kappa_\pi$.
The vectors $a^r \in \FF_2^T$ have the same meaning as in \Cref{sec:core}.

\begin{thm}[Core algorithm with permuted vectors]
\label{thm:core-permuted}
There is a (deterministic) algorithm with the following properties.
It takes as input positive integers $N$, $T$ and~$R$
such that $T \divides N$ and $R \leq T/2$,
and a permutation $\pi = (\pi_0, \ldots, \pi_{T-1}) \in \Pi_T$.
Its output is the list of vectors
\[
\kappa_\pi(a^r) \in \FF_2^S, \qquad 0 \leq r < N/T.
\]
Assuming that \eqref{eq:logT-bound} holds for some $C > 0$,
the running time of the algorithm is given by \eqref{eq:core-bound}.
\end{thm}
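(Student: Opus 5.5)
The plan is to rerun the algorithm of \Cref{thm:core-auxiliary} (with the same choices of $M$ and $W$ as in \Cref{sec:core-main}), modifying only the compression stage (\Cref{sec:core-compression}) and stopping just before decompression. Since $\kappa_\pi = \kappa \circ Q_\pi$ is again an $\FF_2$-linear map $\FF_2^T \to \FF_2^S$, the derivation of \eqref{eq:kappa-ar} goes through verbatim with $\kappa_{s,t}$ replaced by the matrix entries of $\kappa_\pi$. Concretely, $(\kappa_\pi)_{s,t} = \kappa_{s,\pi_t}$. We therefore define $\hat a^r \coloneqq \kappa_\pi(a^r)$, $\hat e^r \coloneqq \kappa_\pi(e^r)$ and
\[
\hat Z^{\sigma,\tau}_{h,m,s} \coloneqq \sum_{0 \leq t < T} \kappa_{s,\pi_t} Z^{\sigma,\tau}_{h,m,t}.
\]
By linearity of $\inv^{\sigma,\tau}$, the identity \eqref{eq:kappa-ar} holds for these quantities, so \Cref{prop:core-compute-kappa-ar} outputs exactly the vectors $\kappa_\pi(a^r)$. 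Decompression (\Cref{prop:core-compute-cr}) is simply omitted, which only saves time.

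The only new ingredient is evaluating $\kappa_\pi(v)$ for each of the $O((N/T)\Kstar(N))$ length-$T$ vectors $v$ that appear: the $e^r$, and, after the transposition described in the proof of \Cref{prop:compute-hatZ}, the vectors $(Z^{\sigma,\tau}_{h,m,t})_k$ indexed by $t$. For each such $v$ we first form $Q_\pi(v)$ and then apply the compression algorithm of \Cref{thm:compression}.

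The main obstacle is forming $Q_\pi(v)$ cheaply on a Turing machine, since a permutation requires random access. A naive approach, sorting each vector by target index, would cost $O(T\log^2 T)$ per vector and exceed the $O(T\log T\cdot\Cstar(T))$ budget. Instead, we exploit the fact that all vectors are permuted by the same $\pi$, so the permutation can be batched. Consider a batch of vectors stored as an array of size $n \times T$, where $n$ is the number of vectors. Transposing gives a $T \times n$ array whose rows are indexed by $t$. We then tag each row with $\pi_t$, sort the rows by their tags, and transpose back. By \Cref{lem:transpose}, each transposition costs $O(nT\log\min(n,T)) = O(nT\log T)$. Treating each row as an $n$-bit object, the sort by \Cref{lem:sort} costs $O((n+\log T)\,T\log T)$. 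This gives $O(\log T)$ per entry overall, which is absorbed into the existing $N\log T\cdot\Cstar(T)\Kstar(N)$ term of \eqref{eq:core-auxiliary-bound}. If a single huge row is awkward, we can batch separately over each group of $(\sigma,\tau,h,m)$ indices of length $K$, with the same per-entry cost.

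Computing the tags $\pi_t$ and $\pi^{-1}$ from the input takes time $O(T\log^2 T)$, which is negligible. The remaining terms of the complexity analysis are unchanged, so the simplification in \Cref{sec:core-main} yields the bound \eqref{eq:core-bound}.
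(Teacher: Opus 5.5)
Your proposal is correct and essentially matches the paper's proof. You run the core algorithm with $Q_\pi$ inserted before each compression and omit decompression. You apply $Q_\pi$ to the whole batch at once by transposing, sorting the $T$ rows by the tags $\pi_t$ via \Cref{lem:sort}, and transposing back, at cost $O(nT\log T + T\log^2 T)$, which is absorbed into the bound of \Cref{prop:compute-hatZ}.
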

\begin{proof}
The algorithm is exactly the same as in \Cref{thm:core},
except that we must apply $Q_\pi$ before each invocation of~$\kappa$,
and we leave out the final decompression.
Therefore the only additional thing to check is the cost of
the applications of~$Q_\pi$.

Given an integer $n \geq 1$ and a list of vectors
$u_0, \ldots, u_{n-1} \in \FF_2^T$,
we claim that the permuted vectors
$Q_\pi(u_0), \ldots, Q_\pi(u_{n-1}) \in \FF_2^T$
may be computed in time
\[
O(n T \log T + T \log^2 T).
\]
To achieve this,
we first use \Cref{lem:transpose} to transpose the data,
obtaining a list of vectors $v_0, \ldots, v_{T-1} \in \FF_2^n$
in time $O(n T \log T)$.
We then construct a list of pairs $(v_j, \pi_j)_{0 \leq j < T}$
and use \Cref{lem:sort} to sort the list by the second component
in time $O(T(n + \log T) \log T)$;
this produces the list $v_{\rho_0}, \ldots, v_{\rho_{T-1}}$
where $\rho \coloneqq \pi^{-1}$.
Transposing back again yields the desired array
$Q_\pi(u_0), \ldots, Q_\pi(u_{n-1})$ in time $O(n T \log T)$,
completing the proof of the claim.

The relevant invocations of $\kappa$ all occur in
the proof of \Cref{prop:compute-hatZ},
namely, to compute the $\hat e^r$ from $e^r$
and to compute the $\hat Z^{\sigma,\tau}_{h,m,s}$
from~$Z^{\sigma,\tau}_{h,m,t}$.
We have $n = N/T$ in the first case and
$n \ll (N/T) \Kstar(N)$ in the second (see \eqref{eq:compression-count}),
so the total cost of all applications of $Q_\pi$ is
\[
O(N \log T \cdot \Kstar(N) + T \log^2 T),
\]
which is negligible compared to the cost estimate stated in
\Cref{prop:compute-hatZ}.
Thus this additional work does not change the final cost bound
given by \eqref{eq:core-bound}.
\end{proof}

The motivation for introducing permutations is the following result.
It says that if $\wt(a) > R$, and if $\pi \in \Pi_T$ is selected randomly,
then $\kappa_\pi(a)$ is unlikely to be confused with $\kappa(b)$
for any vector $b \in \FF_2^T$ such that $\wt(b) \leq R/2$.
The point is that these $\kappa(b)$ form a very sparse subset of~$\FF_2^S$,
and $\kappa_\pi(a)$ will almost certainly miss it.
\begin{prop}
\label{prop:kappa-unlucky}
Assume that $R < T/16$ and let
\[
\Gset \coloneqq \{b \in \FF_2^T : \wt(b) \leq R/2\}.
\]
Fix $a \in \FF_2^T$ with $R < \wt(a) \leq T/2$.
If $\pi \in \Pi_T$ is chosen uniformly randomly then
\[
P\bigl(\kappa_\pi(a) \in \kappa(\Gset)\bigr) < \frac{1}{2^{R-1}}.
\]
\end{prop}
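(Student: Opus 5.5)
The plan is a union bound over $\Gset$, combined with the injectivity of $\kappa$ on vectors of weight at most $R$ (\Cref{thm:decompression}). Fix $b \in \Gset$. If $\kappa_\pi(a) = \kappa(b)$, then $\kappa(Q_\pi(a) + b) = 0$. Now $\wt(Q_\pi(a)) = \wt(a) =: w > R$, while $\wt(b) \le R/2$. Hence $c := Q_\pi(a) + b$ is nonzero, with weight between $w - R/2 > R/2$ and $w + R/2$. This alone does not contradict injectivity, since $c$ may have weight exceeding $R$. So the injectivity argument has to be used more cleverly.

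The refinement is to split $c$. Write $Q_\pi(a) + b = 0$ in the image, so $\kappa(Q_\pi(a)) = \kappa(b)$. The event therefore says: the random set $P := \pi(\operatorname{supp} a)$, which is a uniformly random $w$-subset of $\{0,\dots,T-1\}$, satisfies $\kappa(\mathbf 1_P) = \kappa(b)$ for some $b$ with $\wt(b) \le R/2$.

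Decompose $P = P_1 \sqcup P_2$ with $|P_1| = \lfloor R/2 \rfloor$ chosen as a deterministic function of $P$, say the $\lfloor R/2 \rfloor$ smallest elements. Then $\kappa(\mathbf 1_{P_1}) = \kappa(\mathbf 1_{P_2} + b)$. The left side has weight at most $R/2$, and the right side must then be determined by injectivity only if its weight is at most $R$, which is not guaranteed. So instead I would count directly. For each fixed $b$, the map $P \mapsto \kappa(\mathbf 1_P)$ restricted to $w$-subsets containing a fixed $(w-R)$-subset $P_0$ is injective on the remaining $R$-subsets. Indeed, two such sets $P, P'$ with equal $\kappa$ give $\kappa(\mathbf 1_{P \setminus P_0} + \mathbf 1_{P' \setminus P_0}) = 0$, and the vector inside has weight at most $2R$. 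Injectivity on weight $\le R$ means $\kappa$ has no nonzero kernel vector of weight $\le 2R$ (the difference of two weight-$\le R$ vectors), so $P = P'$.

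Thus, conditioning on the "first" $w - R$ elements of $P$ (in the order given by $\pi$ restricted to a fixed ordering of $\operatorname{supp} a$), the remaining $R$ elements form a uniformly random $R$-subset of the complement. That complement has size $T - w + R \ge T/2$. At most one such subset matches a given target value $\kappa(b) - \kappa(\mathbf 1_{P_0})$. Hence
\[
P(\kappa_\pi(a) = \kappa(b)) \le \binom{T/2}{R}^{-1}.
\]
The union bound then gives
\[
P(\kappa_\pi(a) \in \kappa(\Gset)) \le |\Gset| \binom{T/2}{R}^{-1} \le \sum_{j \le R/2} \binom{T}{j} \Big/ \binom{T/2}{R}.
\]

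The main obstacle is the final numerical estimate. I would bound $|\Gset| \le 2\binom{T}{\lfloor R/2\rfloor}$ (the terms grow geometrically since $R/2 < T/32$), and $\binom{T/2}{R} \ge (T/(2R))^R$. Meanwhile $\binom{T}{R/2} \le (2eT/R)^{R/2}$. The ratio is then at most $2\,(2eT/R)^{R/2}(2R/T)^R = 2\,(8eR/T)^{R/2}$, and with $R < T/16$ the base is $8e/16 = e/2 > 1$, which is too weak. So I would need to sharpen this using $\wt(b)\le R/2$ more carefully. One option is to condition on $w - R/2$ elements instead of $w-R$: the kernel argument still works since the difference has weight $\le R$, which is fine under plain injectivity on weight $\le R$ after re-pairing. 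This gives probability $\binom{T/2}{\lceil R/2\rceil}^{-1}$ per $b$ but a better exponent balance. Adjusting which sizes to condition on until the constants give $2^{-(R-1)}$ is the step I expect to require the most care.
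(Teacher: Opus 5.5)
Your reduction is correct and is essentially the paper's argument in probabilistic form. Condition on the images of all but $R$ points of the support and apply injectivity of $\kappa$ on weight-$R$ vectors. This gives $P\bigl(\kappa_\pi(a)=\kappa(b)\bigr)\le\binom{T-w+R}{R}^{-1}$ for each $b\in\Gset$, so the union bound yields $|\Gset|/\binom{T-w+R}{R}$. That is exactly the quantity $|\Gset|\binom{T}{w-R}\big/\bigl(\binom{T}{w}\binom{w}{R}\bigr)$ the paper obtains by double counting pairs $(b_0,b_1)$.

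The gap is the last step, which you leave open and misdiagnose. The union bound is not too weak; your binomial estimates are. The bound $\binom{T/2}{R}\ge(T/(2R))^R$ discards the $R!$ that should cancel against the $\lfloor R/2\rfloor!$ coming from $|\Gset|$, which costs you roughly $e^R$. You should also not shrink the complement size $T-w+R$ to $T/2$ inside the binomial coefficient. Instead, bound the falling product $(T-w+1)\cdots(T-w+R)$ below by $(T/2)^R$. Keeping the factorials and using $w\le T/2$, $R!/\lfloor R/2\rfloor!\le R^{\lceil R/2\rceil}$ and $R<T/16$, one gets
\[
\frac{|\Gset|}{\binom{T-w+R}{R}}
< 2\,\frac{T^{\lfloor R/2\rfloor}}{\lfloor R/2\rfloor!}\cdot\frac{R!}{(T-w+1)\cdots(T-w+R)}
\le \frac{2\,T^{\lfloor R/2\rfloor}R^{\lceil R/2\rceil}}{(T/2)^R}
= 2^{R+1}\Bigl(\frac{R}{T}\Bigr)^{\lceil R/2\rceil}
< 2^{R+1}\cdot 16^{-R/2} = 2^{1-R}.
\]

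Your proposed repair would not work. Conditioning on $w-R/2$ points leaves only about $R/2$ free coordinates, so for even $R$ the per-$b$ bound weakens to $\binom{T-w+R/2}{R/2}^{-1}$. Against $|\Gset|\ge\binom{T}{R/2}$ the union bound is then at least $1$, because $T-w+R/2\le T$. Leaving exactly $R$ coordinates free is the best available, since injectivity of $\kappa$ is only guaranteed up to weight $R$.
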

\begin{proof}
Since $\pi$ is chosen uniformly randomly,
$Q_\pi(a)$ has the same distribution as a vector
selected uniformly randomly from all vectors in $\FF_2^T$
of weight $w \coloneqq \wt(a)$.
Therefore, if we set
\[
S_w \coloneqq \{b \in \FF_2^T : \wt(b) = w, \;
   \kappa(b) \in \kappa(\Gset) \},
\]
then
\[
P\bigl(\kappa_\pi(a) \in \kappa(\Gset)\bigr) = |S_w| \big\slash \binom{T}{w}.
\]
   
Next, consider the set
\begin{multline*}
S'_w \coloneqq \bigl\{ (b_0, b_1) \in \FF_2^T \times \FF_2^T :
   \wt(b_0) = w-R, \; \wt(b_1) = R, \\
   \textn{$b_0$ and $b_1$ have disjoint support}, \;
   \kappa(b_0 + b_1) \in \kappa(\Gset) \bigr\}.
\end{multline*}
By ``disjoint support'', we mean that $b_0$ and $b_1$ do not share
any coordinates $t$ where $(b_0)_t = (b_1)_t = 1$.
Then we have
\[
|S'_w| = \binom{w}{R} |S_w|.
\]
To see this, observe that the map $(b_0, b_1) \mapsto b_0 + b_1$
sends $S'_w$ to~$S_w$,
and hits each $b \in S_w$ exactly $\binom{w}{R}$ times,
once for each cardinality-$R$ subset of the support of~$b$.
   
Now let us count the elements of $S'_w$ in a different way.
The number of $b_0 \in \FF_2^T$ with $\wt(b_0) = w-R$ is $\binom{T}{w-R}$.
Fix one such $b_0$, and let us count the $b_1 \in \FF_2^T$
such that $(b_0, b_1) \in S'_w$.
The condition $\kappa(b_0 + b_1) \in \kappa(\Gset)$ is equivalent
to $\kappa(b_1) \in \kappa(\Gset) - \kappa(b_0)$.
We know from \Cref{thm:decompression} that $\kappa$ is injective
on the set of vectors of weight at most~$R$,
so the number of possible $b_1$ is at most
\[
|\kappa(\Gset) - \kappa(b_0)| = |\kappa(\Gset)|.
\]
But again by \Cref{thm:decompression} we know that
$|\kappa(\Gset)| = |\Gset|$,
so we conclude that
\[
|S'_w| \leq \binom{T}{w - R} |\Gset|.
\]
   
Finally we estimate~$|\Gset|$.
By definition
\[
|\Gset| = \sum_{0 \leq k \leq R/2} \binom{T}{k}.
\]
For $k = 1, \ldots, \lfloor R/2 \rfloor$, since $T \geq 2R$ we have
\[ 
\binom{T}{k-1} \Big\slash \binom{T}{k} = \frac{k}{T-k+1}
   < \frac{R/2}{2R - R/2} = \frac{1}{3},
\]
so the terms decrease rapidly and we find that
\[
|\Gset| < 2 \binom{T}{\lfloor R/2 \rfloor}.
\]
   
Putting everything together we obtain
\begin{align*}
P\bigl(\kappa_\pi(a) \in \kappa(\Gset)\bigr)
   & < 2 \binom{T}{w-R} \binom{T}{\lfloor R/2 \rfloor}
      \Big\slash \binom{T}{w} \binom{w}{R} \\
   & = 2 \cdot \frac{T!}{(T - \lfloor R/2 \rfloor)!}
      \cdot \frac{(T-w)!}{(T-w+R)!} \cdot \frac{R!}{\lfloor R/2 \rfloor!} \\
   & \leq 2 \cdot T^{\lfloor R/2 \rfloor}
      \cdot \frac{1}{(T - w)^R} \cdot R^{\lceil R/2 \rceil}.
\end{align*}
Since we assumed that $w \leq T/2$ and $T \geq 16R$ this becomes
\begin{multline*}
P\bigl(\kappa_\pi(a) \in \kappa(\Gset)\bigr)
   < 2 \cdot \frac{T^{\lfloor R/2 \rfloor} R^{\lceil R/2 \rceil}}{(T/2)^R}
   = 2^{R+1} (R/T)^{\lceil R/2 \rceil} \\
   \leq 2^{R+1} (1/16)^{\lceil R/2 \rceil}
   \leq 2^{R+1} (1/16)^{R/2}
   = 2^{-R+1}.
   \qedhere
\end{multline*}
\end{proof}

\subsection{Testing for square-primality}
\label{sec:modified-AKS}

In the main probabilistic algorithm,
we need a fallback method to directly test square-primality
of integers lying in the problematic intervals,
i.e., for those $a^r$ with $\wt(a^r) > R$.
For testing \emph{primality}, we have available the following famous result.
\begin{prop}[\protect{\cite[Thm.\,5.1]{AKS-primes}}]
\label{prop:AKS}
There is a deterministic algorithm that tests primality of a given
integer $n \geq 2$ in time $(\log n)^{10.5 + o(1)}$.
\end{prop}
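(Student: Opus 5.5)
This is a cited result, so the plan is to invoke \cite[Thm.\,5.1]{AKS-primes} and check that its complexity claim transfers to the multitape Turing model used throughout this paper. First, recall the structure of the AKS algorithm on input $n$. (1) Test whether $n$ is a perfect power $a^b$ with $b \geq 2$; it suffices to try $b < \log_2 n$, computing integer $b$\th roots in time $(\log n)^{O(1)}$. (2) Find the least $r$ with $\ord_r(n) > (\log_2 n)^2$, by trying successive $r$ and computing powers $n^k \bmod r$ for $k \leq (\log_2 n)^2$. (3) Check $\gcd(a,n)$ for $a \leq r$. (4) Verify the polynomial congruences $(x+a)^n \equiv x^n + a$ in $(\ZZ/n\ZZ)[x]/(x^r - 1)$ for $1 \leq a \leq \lfloor \sqrt{\varphi(r)} \log_2 n \rfloor$. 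Correctness is exactly the main theorem of \cite{AKS-primes}, and I would not reprove it.

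For the complexity, the key input is the bound $r = O(\log^5 n)$ from \cite[Lem.\,4.3]{AKS-primes}, proved by an elementary lcm argument. Steps (1)--(3) then cost $(\log n)^{O(1)}$ with small exponent, specifically $O(\log^7 n)$ up to $(\log n)^{o(1)}$ factors. Step (4) dominates. Each congruence needs $O(\log n)$ multiplications of polynomials of degree $r$ with coefficients of $O(\log n)$ bits. Using Kronecker substitution plus fast integer multiplication, each multiplication costs $r \log n \cdot (\log n)^{o(1)}$. There are about $\sqrt r \log n$ values of $a$. The total is therefore
\[
\sqrt r \log n \cdot \log n \cdot r \log n \cdot (\log n)^{o(1)} = r^{3/2} (\log n)^{3+o(1)} = (\log n)^{10.5+o(1)}.
\]

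The only point needing care, and the one I expect to be the mild obstacle, is that \cite{AKS-primes} counts ``bit operations'' without fixing a machine model. I would argue that every step is a sequential scan or a multiplication of contiguous arrays, so each step runs on a multitape Turing machine within the same bounds. In particular, fast integer multiplication and Kronecker substitution are native to the Turing model \cite{HvdH-nlogn}, and reduction modulo $x^r - 1$ is a linear-time fold of the coefficient array. Hence no memory-access penalty arises, and the bound $(\log n)^{10.5+o(1)}$ holds in our model.
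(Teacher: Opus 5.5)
Your proposal is correct and takes the same route as the paper, which gives no argument beyond citing \cite[Thm.\,5.1]{AKS-primes}, plus a remark that only polynomial dependence on $\log n$ matters downstream. Your cost accounting reproduces the analysis in \cite{AKS-primes}: Lemma~4.3 gives $r = O(\log^5 n)$, and the congruence checks dominate at $r^{3/2}(\log n)^{3+o(1)} = (\log n)^{10.5+o(1)}$. Your observation that every step uses only sequential scans and fast multiplication of contiguous arrays is a valid justification for transferring the bound to the multitape Turing model.
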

\begin{rem}
We have cited here the simplest version of the AKS algorithm.
Exponents better than 10.5 are known,
but this is irrelevant for our purposes; our main requirement
is that the complexity is polynomial in $\log n$.
(A better exponent would allow us to replace \Cref{prop:rejected-intervals}
by a slightly weaker statement.)
\end{rem}

The author does not know how to test an integer for square-primality
in polynomial time, deterministically or otherwise.
We will instead deploy the following workaround that
enables us to efficiently test many integers simultaneously.
\begin{prop}[Testing square-primality in bulk]
\label{prop:square-prime-bulk}
There is a (deterministic) algorithm with the following properties.
It takes as input an integer $N \geq 1$
and a list $\Rset$ of (distinct) integers in $[1,N)$.
It returns a sorted list consisting of all square-primes in~$\Rset$.
Its running time is
\[
O(N) + |\Rset| (\log N)^{11.5+o(1)}.
\]
\end{prop}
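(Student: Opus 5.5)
The plan is to split the square-primes $n = m^2 p$ according to the size of $m$, with a threshold $M_0 \coloneqq \lceil \log N \rceil$. For each $n \in \Rset$, small $m < M_0$ will be handled by direct AKS tests. Large $m \geq M_0$ will be handled by enumerating every square-prime below~$N$ of that shape and intersecting with~$\Rset$. The exponent $11.5 = 10.5 + 1$ suggests exactly this: about $\log N$ calls to \Cref{prop:AKS} per element of~$\Rset$.

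\emph{Small $m$.} For each $n \in \Rset$ and each $m$ with $1 \leq m < M_0$, check whether $m^2 \divides n$. If it does, test $n/m^2$ for primality via \Cref{prop:AKS}. The integer $n$ is a square-prime with some such $m$ iff one of these tests succeeds. This uses $O(\log N)$ divisibility checks and AKS calls per $n$, each costing $(\log N)^{10.5+o(1)}$. The total is therefore $|\Rset| (\log N)^{11.5+o(1)}$.

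\emph{Large $m$.} Every square-prime $n = m^2 p < N$ with $m \geq M_0$ has $p < N/M_0^2 \leq N/\log^2 N$.
\begin{enumerate}
\item Compute the primes below $N_1 \coloneqq N/M_0^2$ using Sergeev's algorithm (as in Step~2 of the heuristic algorithm). This costs $O(N_1 \log N_1) = O(N/\log N)$.
\item Form the sets $S_m \coloneqq \{m^2 p : p < N/m^2\}$ for $M_0 \leq m < N^{1/2}$, using all primes $p$ including $p = 2$.
\item Merge the $S_m$ into one sorted list (keeping one copy of any repeated value) exactly as in the proof of \Cref{prop:primes-to-square-primes}, i.e.\ with the dyadic grouping~\eqref{eq:Jk-defn}.
\end{enumerate}
The cost estimate from that proof carries over and is in fact smaller, since $\sum_{m \geq M_0} |S_m| \ll N/(M_0 \log N)$. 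This is where the main care is needed: I must check that the weighted sum $\sum_k k \, 2^{-k/2}$ from that proof, now starting at $2^k \approx M_0$, still gives $O(N)$. It clearly does, since it is only a tail of the same convergent series. Next, sort $\Rset$ in time $O(|\Rset| \log^2 N)$ (\Cref{lem:sort}) and intersect it with the merged list by a parallel linear scan (as in \Cref{lem:merge}). This costs $O((|\Rset| + N/\log N)\log N) = O(N + |\Rset|\log N)$.

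\emph{Output.} Finally, take the union of the elements of $\Rset$ flagged in the two cases, and output it in sorted order using the already sorted copy of~$\Rset$. Correctness holds because every square-prime representation $n = m^2p$ has either $m < M_0$ or $m \geq M_0$, and both ranges are covered exactly. The total running time is $O(N) + |\Rset|(\log N)^{11.5+o(1)}$.

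The main obstacle is the bookkeeping in the large-$m$ enumeration: verifying that the merge tree of \Cref{prop:primes-to-square-primes} still costs $O(N)$ when $m$ is restricted to $m \geq M_0$ and the primes are unrestricted in parity. The remaining steps are routine.
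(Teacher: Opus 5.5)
Your proof is correct and is essentially the paper's argument, with the threshold placed on $m$ instead of on $p$. The paper uses Sergeev's algorithm to enumerate all $m^2p<N$ with $p<N_0\approx N/\log^2 N$, sorts this list, and intersects it with the sorted $\Rset$. It then applies AKS to $n/m^2$ for the $O(\log N)$ values $m\le\lceil (N/N_0)^{1/2}\rceil$, which are forced once $p\ge N_0$.

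The bookkeeping you single out as the main obstacle is unnecessary. The enumerated list has only $O(N/\log^2 N)$ entries of $O(\log N)$ bits, so a plain merge sort (\Cref{lem:sort}) already costs $O(N)$, and that is what the paper does instead of the dyadic merging.
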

\begin{proof}
Choose an integer parameter
\[
N_0 \coloneqq \frac{N}{\log^2 N} + O(1).
\]
We first compute the list of all primes $p < N_0$
using Sergeev's algorithm \cite{Ser-prime-turing}
(or see \Cref{thm:sergeev} of this paper).
This requires time $O(N_0 \log N_0) = O(N / \log N)$.

Next, we generate a list of all integers of the form $m^2 p$
(with $p$ prime) such that $p < N_0$ and $m^2 p < N$.
For each $p < N_0$ there are at most $(N/p)^{1/2}$ possible values of $m$,
so the number of pairs is at most
\[
\sum_{p < N_0} \frac{N^{1/2}}{p^{1/2}}
   \asymp N^{1/2} \int_2^{N_0} \frac{dt}{t^{1/2} \log t}
   \asymp N^{1/2} \cdot \frac{N_0^{1/2}}{\log N_0}
   \asymp \frac{N}{\log^2 N}.
\]
Using classical multiplication,
each $m^2 p$ may be computed in time $O(\log^2 N)$,
so the total cost of computing the list is
$O((N / \log^2 N) \log^2 N) = O(N)$.
We then sort the list using \Cref{lem:sort}
in time $O((N / \log^2 N) \log N \cdot \log N) = O(N)$.
Let $\Sset$ denote the resulting sorted list.
Since $|\Rset| \leq N$,
we may also sort $\Rset$ in time $O(|\Rset| \log^2 N)$.
Walking through $\Sset$ and $\Rset$ in parallel,
we can find any matches,
and hence identify any square-primes in~$\Rset$
of the form $m^2 p$ with $p < N_0$,
in time $O((|\Sset| + |\Rset|) \log N) = O(N/\log N + |\Rset| \log N)$.

It remains to find those square-primes in~$\Rset$ of the form $m^2 p$
with $N_0 \leq p < N$.
In such cases we must have $m \leq m_0$ where
\[
m_0 \coloneqq \bigl\lceil (N/N_0)^{1/2} \bigr\rceil \asymp \log N.
\]
As we walk through $\Rset$,
whenever we encounter some $n \notin \Sset$,
it suffices to check, for each $m \in \{1, \ldots, m_0\}$,
whether $n$ is divisible by $m^2$, and if so, whether $n/m^2$ is prime
(via \Cref{prop:AKS}).
The overall cost is
\[
|\Rset| \tsp m_0 (\log N)^{10.5+o(1)} < |\Rset| (\log N)^{11.5+o(1)}.
\qedhere
\]
\end{proof}

\subsection{The main probabilistic algorithm}
\label{sec:probabilistic-algorithm}

We are given some large $N$ as input.
The main steps of the algorithm are:
\begin{enumalgo}
\item
Choose suitable parameters $T$ and~$R$.
\item 
Randomly select $\pi \in \Pi_T$,
and use the modified core algorithm (\Cref{thm:core-permuted})
to compute $\kappa_\pi(a^r)$ for all~$r$.
\item
For each~$r$, attempt to decompress $\kappa_\pi(a^r)$
(\Cref{thm:decompression}).
If decompression succeeds,
and if the resulting candidate $c^r$ for $a^r$ satisfies $\wt(c^r) \leq R/2$,
then ``accept'' this candidate as correct.
Otherwise mark this interval as ``rejected''.
\item
Find the odd square-primes in the rejected intervals
directly using the bulk square-primality test (\Cref{prop:square-prime-bulk}).
\item
Determine the exact number of odd square-primes less than~$N$
(\Cref{prop:count-square-primes}).
If the count disagrees with the proposed list from Steps 1--4,
return ``FAIL''.
\item
Otherwise, we have correctly found the odd square-primes up to~$N$.
Conclude by recovering the list of primes up to~$N$
(\Cref{prop:square-primes-to-primes}).
\end{enumalgo}
We now explain and analyse each of these steps in more detail.

\medskip
\step{1}{choose parameters}.
Let
\[
\tilde A \coloneqq \frac13 \log\left(\frac{\log N}{80} \right),
\]
noting that $\tilde A > 1$ for large enough~$N$.
Choose a positive integer $A = \tilde A + O(1)$ with $A < \tilde A$
and define
\[
U' \coloneqq \prod_{p \leq A} p^2.
\]
By the prime number theorem $\log U' \sim 2A$,
so for large enough $N$ we have $\log U' < 3A < 3\tilde A$ and hence
\[
U' < \frac{\log N}{80}.
\]
Choose also an integer $\beta \geq 0$ such that
\[
\frac13 < 2^\beta \cdot \frac{U'}{\tfrac{1}{80} \log N} < 1
\]
and set
\begin{equation}
\label{eq:U-defn}
U \coloneqq 2^\beta U' = 2^\beta \prod_{p \leq A} p^2.
\end{equation}
Then by construction
\begin{equation}
\label{eq:U-interval}
\frac{\log N}{240} < U < \frac{\log N}{80}.
\end{equation}
Next choose a positive integer $T$ such that
\[
T = \frac{\log^2 N}{7000 \log \log N} + O(U), \qquad U \divides T.
\]
Thus
\begin{equation}
\label{eq:T-estimate}
T = \left(1 + O\left(\frac{\log \log N}{\log N}\right)\right)
      \frac{\log^2 N}{7000 \log \log N},
\end{equation}
and certainly $T$ is even.
Finally, choose a positive integer $R$ such that
\[
R = \log_2 N + O(1).
\]
Increasing $N$ slightly if necessary, we may assume that $T \divides N$.
All of the above parameters may be computed in time $(\log N)^{O(1)}$.

\begin{rem}
\label{rem:U-special}
The reason for choosing $U$ of this special form is to take advantage of the
$\prod_{p^2 \divides U} (1 - p^{-1})$ factor in \eqref{eq:alpha-defn}.
The algorithm can be made to work without this choice,
but we would need to reduce $U$ and $T$ by
a factor of $\Theta(\log \log \log N)$ to compensate
(in order to make the proof of \Cref{prop:rejected-intervals} go through).
The effect would be to increase the complexity in
\Cref{thm:main-probabilistic} by a factor of $\log \log \log N$.
This would of course be absorbed by the $(\log \log N)^{o(1)}$ factor
in \Cref{thm:main-probabilistic},
but see \Cref{rem:main-probabilistic-precise}.
\end{rem}

\step{2}{compute $\kappa_\pi(a^r)$ for all $r$.}
We now select a random permutation $\pi \in \Pi_T$
and invoke \Cref{thm:core-permuted}
to compute $\kappa_\pi(a^r) \in \FF_2^S$ for $0 \leq r < N/T$.
The hypotheses $R \leq T/2$ and \eqref{eq:logT-bound} (with $C = 2$)
are certainly satisfied for large enough~$N$.
The cost of invoking \Cref{thm:core-permuted} is
\begin{equation}
\label{eq:step2-cost}
\left(1 + \frac{R \log N}{T}\right) N (\log \log N)^{1+o(1)}
   = N (\log \log N)^{2+o(1)}.
\end{equation}

\begin{rem}
See \cite[Sec.\,3.4.2]{Knu-TAOCP2} for a discussion of methods
for selecting random permutations.
As mentioned in \Cref{sec:new-results},
the number of random bits needed to generate~$\pi$ is
$O(\log T!) = O(T \log T) = O(\log^2 N)$.
\end{rem}

\step{3}{decompression and rejection tests.}
For each $r$ we perform the following steps.
We first apply \Cref{thm:decompression} (the decompression algorithm)
to $\kappa_\pi(a^r) = \kappa(Q_\pi(a^r))$.
The cost over all $r$ is the same as in the proof of \Cref{thm:core},
and so is covered by \eqref{eq:step2-cost}.

If decompression fails, we mark this interval as ``rejected''.
If it succeeds then we have a candidate for $Q_\pi(a^r)$.
We apply $(Q_\pi)^{-1} = Q_{\pi^{-1}}$ using the same method as described
in the proof of \Cref{thm:core-permuted}.
This produces a candidate for~$a^r$, say $c^r \in \FF_2^T$.
Finally, we check whether $\wt(c^r) \leq R/2$.
If so, we accept the candidate $c^r$ as correct.
Otherwise, we reject this interval.
We analyse the complexity of these rejection tests below,
but first we prove the following two results.
\begin{prop}
\label{prop:accepted-correct}
For sufficiently large~$N$,
the probability that all accepted intervals are correct
is at least~$\tfrac12$.
\end{prop}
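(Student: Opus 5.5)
The plan is to reduce incorrect acceptance to the event bounded in \Cref{prop:kappa-unlucky}, and then take a union bound over the intervals. First I check which intervals could be wrongly accepted. If $\wt(a^r) \leq R$, then decompression of $\kappa(Q_\pi(a^r))$ returns exactly $Q_\pi(a^r)$. This follows from the uniqueness in \Cref{thm:decompression} and the fact that $Q_\pi$ preserves weight. After applying $Q_{\pi^{-1}}$ we get $c^r = a^r$, so any accepted candidate from such an interval is correct, and the only change is that it may be rejected when $\wt(a^r) > R/2$. So an accepted interval can be incorrect only if $\wt(a^r) > R$. In that case acceptance means decompression returned some $b$ with $\kappa(b) = \kappa_\pi(a^r)$ and $\wt(b) = \wt(c^r) \leq R/2$, since $Q_{\pi^{-1}}$ also preserves weight. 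Equivalently, $\kappa_\pi(a^r) \in \kappa(\Gset)$ with $\Gset$ as in \Cref{prop:kappa-unlucky}.

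Next I verify the hypotheses of \Cref{prop:kappa-unlucky} for each bad interval $r$, meaning each $r$ with $\wt(a^r) > R$. With $R = \log_2 N + O(1)$ and $T \asymp \log^2 N / \log\log N$, we have $R < T/16$ for large $N$. The condition $\wt(a^r) \leq T/2$ holds because an odd square-prime is odd, so at most $\lceil T/2 \rceil = T/2$ entries of $a^r$ can be nonzero (here $T$ is even). The vector $a^r$ is fixed, independent of $\pi$. The proposition therefore gives, for each bad $r$, a probability at most $2^{-(R-1)}$ that $\kappa_\pi(a^r) \in \kappa(\Gset)$.

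Finally, a union bound over all $r < N/T$ bounds the failure probability by
\[
\frac{N}{T} \cdot 2^{1-R} = \frac{N}{T} \cdot 2^{1 - \log_2 N + O(1)} = O(1/T).
\]
This is at most $\tfrac12$ for large $N$; if needed I would fix the $O(1)$ in $R$ so that $2^R \geq 4N$, which gives at most $\tfrac12$ directly.

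The main obstacle is the argument in the first paragraph that good intervals cannot be accepted wrongly and that the accepted weight bound transfers through the permutation. A second point needing care is that a single $\pi$ is shared across all intervals. This is harmless, because the union bound requires no independence between the events for different $r$.
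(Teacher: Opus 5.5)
Your proposal is correct and follows the paper's proof essentially step for step. You use the same case split on $\wt(a^r)$, with $c^r = a^r$ whenever $\wt(a^r) \leq R$; the same reduction of wrongful acceptance to the event $\kappa_\pi(a^r) \in \kappa(\Gset)$, bounded via \Cref{prop:kappa-unlucky} after checking $R < T/16$ and $\wt(a^r) \leq T/2$; and the same union bound $(N/T) \cdot 2^{1-R} = O(1/T)$.
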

\begin{prop}
\label{prop:rejected-intervals}
The number of rejected intervals is at most $O(N / \log^{14} N)$.
\end{prop}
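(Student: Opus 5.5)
We need to prove Proposition \ref{prop:rejected-intervals}: the number of rejected intervals is $O(N/\log^{14} N)$. The plan is to show that every rejected interval must contain an unusually large number of odd square-primes, and then count such intervals with \Cref{thm:sieve-main}.

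\emph{Step 1: rejection forces large weight.} If $\wt(a^r) \leq R/2$, then $Q_\pi(a^r)$ has weight at most $R/2 \leq R$. So decompression of $\kappa(Q_\pi(a^r))$ succeeds and returns $Q_\pi(a^r)$ by \Cref{thm:decompression}. Undoing $Q_\pi$ gives $c^r = a^r$ with $\wt(c^r) \leq R/2$, and the interval is accepted. Hence every rejected interval satisfies $\wt(a^r) > R/2$, i.e.\ the interval $[rT,(r+1)T)$ contains more than $R/2 \approx \tfrac12 \log_2 N$ odd square-primes.

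\emph{Step 2: pass to subintervals of length $U$.} Since $U \divides T$, each rejected interval splits into $T/U$ subintervals $[kU,(k+1)U)$. By pigeonhole, at least one of them contains at least
\[
g \coloneqq \Bigl\lceil \frac{R/2}{T/U} \Bigr\rceil \asymp \frac{UR}{T} \asymp \frac{\log N\cdot\log N\cdot\log\log N}{\log^2 N} \asymp \log\log N
\]
odd square-primes. With the constants fixed in Step~1, $g \geq \tfrac12\log_2 N \cdot 7000\log\log N/(240\log N)\cdot\log N/\log N$, so $g \geq c\log\log N$ with an explicit $c$ that is roughly $7000/(480\log 2)\approx 21$. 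It is safest to simply use $g \geq 20 \log\log N$.

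Take $y \coloneqq N^{1/2}$. Every square-prime $n < N$ not lying in a short exceptional set is square-$y$-rough: write $n = m^2 p$ with $p$ prime. The only failure is $p \leq y$ with $v_p(m^2p)$ odd, i.e.\ $p \leq \sqrt N$. I would handle this by counting the square-primes $m^2p$ with $p\leq\sqrt N$ and $m^2p<N$. There are $O(N^{3/4})$ of them, hence $O(N^{3/4})$ affected intervals, which is negligible. So, apart from $O(N^{3/4})$ intervals, each rejected interval yields some $k$ with at least $g$ square-$y$-rough integers in $[kU,(k+1)U)$.

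\emph{Step 3: apply \Cref{thm:sieve-main}.} The hypothesis \eqref{eq:y-hypothesis} holds, since its left side is $\exp((\log\log N)^{O(1)}) < N^{1/2}$. The theorem bounds the number of such $k$ by
\[
O\Bigl(\frac{N}{U}\Bigl(\frac{\alpha U}{\log y}\Bigr)^{g}\Bigr).
\]
Here $\alpha = 6\log(4g^2\log 5U)\prod_{p^2\divides U}(1-1/p)$. Because $U$ is divisible by $p^2$ for every $p \leq A$, Mertens gives $\prod_{p\leq A}(1-1/p)\asymp 1/\log A \asymp 1/\log\log\log N$. Meanwhile $\log(4g^2\log 5U)\sim\log\log\log N$ as well, so $\alpha = O(1)$; carefully, $\alpha \leq 6e^{-\gamma}(1+o(1)) < 4$. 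Combined with $U < \log N/80$ and $\log y = \tfrac12\log N$, this gives $\alpha U/\log y < 8/80 = 1/10$. The count is therefore at most $N\cdot 10^{-g} \leq N \exp(-20\log 10 \log\log N) \ll N/\log^{14}N$. Each rejected interval accounts for at least one such $k$, and each $k$ lies in exactly one interval, so the bound transfers directly.

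\emph{Main obstacle.} The delicate point is the constant bookkeeping: checking that $\alpha U/\log y$ is a fixed constant less than $1$, and that $g$, which is proportional to $\log\log N$ with the constant $7000$, is large enough to give exponent $14$. This is exactly where the special shape of $U$ (\Cref{rem:U-special}) is needed to cancel the $\log\log\log N$ growth of the first factor of $\alpha$. I would verify this numerically using the explicit constants from Step~1.
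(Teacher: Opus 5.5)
Your argument follows the paper's proof step for step. Rejection forces more than $R/2$ odd square-primes in the interval. Pigeonhole then gives a length-$U$ subinterval containing $g \asymp \log\log N$ of them. \Cref{lem:square-prime-exceptions} disposes of the $O(N^{3/4})$ subintervals containing a square-prime that is not square-$N^{1/2}$-rough. Finally, \Cref{thm:sieve-main} with $y = N^{1/2}$ bounds the rest.

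There is one slip in your constant bookkeeping. Since $g \asymp \log\log N$ and $\log 5U \asymp \log\log N$, you have $\log(4g^2\log 5U) = 3\log\log\log N + O(1)$, not $\sim \log\log\log N$. Hence $\alpha \to 18e^{-\gamma} \approx 10.1$ rather than $\alpha < 4$; the paper uses the cruder bound $\alpha < 18 + o(1)$. Consequently $\alpha U/\log y$ is at most about $1/4$, and at worst below $0.46$, rather than below $1/10$.

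This is harmless. With your $g \geq 20\log\log N$, the factor $0.46^{g}$ is already at most $(\log N)^{-15}$, even without the $1/U$ factor you discarded. So the $O(N/\log^{14} N)$ bound still follows.
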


\begin{proof}[Proof of \Cref{prop:accepted-correct}]
Let us examine what happens for each~$r$, depending on the weight of $a^r$:
\begin{itemize}
\item
\textit{Case 1: $\wt(a^r) \leq R/2$}.
\newline
\Cref{thm:decompression} implies that decompression succeeds
and returns the correct vector, so $c^r = a^r$.
Thus $\wt(c^r) = \wt(a^r) \leq R/2$ and the interval is accepted.
\item
\textit{Case 2: $R/2 < \wt(a^r) \leq R$.}
\newline
Again \Cref{thm:decompression} implies that decompression
succeeds and that $c^r = a^r$.
Then $\wt(c^r) = \wt(a^r) > R/2$, so the interval is rejected.
\item
\textit{Case 3: $\wt(a^r) > R$.}
\newline
Here the decompression succeeds only if there exists a vector
$b \in \FF_2^T$ such that $\wt(b) \leq R$ and $\kappa_\pi(a^r) = \kappa(b)$.
Moreover, we only accept this interval if additionally $\wt(b) \leq R/2$.
\Cref{prop:kappa-unlucky} shows that the probability
of this occurring is at most $2^{1-R} \ll 1/N$.
(The hypothesis $R < T/16$ holds for sufficiently large~$N$,
and the hypothesis $\wt(a^r) \leq T/2$ holds as $a^r$ only includes
those bits corresponding to \emph{odd} square-primes.)
\end{itemize}
Therefore, the probability that even a single interval is incorrectly
accepted is at most
\[
O\left( \frac{N}{T} \cdot \frac{1}{N}\right) = O(1/T) < \frac12
\]
for sufficiently large~$N$.
\end{proof}

\begin{rem}
On heuristic grounds,
similar to the arguments discussed in \Cref{sec:heuristic},
we expect that \emph{all} intervals are accepted (correctly),
provided that $N$ is large enough.
In other words, cases~2 and~3 should never occur.
Indeed, if we adopt a simple Cram\'er-type model,
then in an interval of length $T \asymp \log^2 N / \log \log N$,
the expected number of square-primes is $\asymp \log N / \log \log N$,
and the variance is also $\asymp \log N / \log \log N$.
Observing more than $R/2 \asymp \log N$ square-primes
is an event lying $\asymp \sqrt{\log N \log \log N}$ standard deviations
above the mean,
which should occur much less than $1/N$ of the time.
\end{rem}

For the proof of \Cref{prop:rejected-intervals}
we first make a preliminary observation:
\begin{lem}
\label{lem:square-prime-exceptions}
Let $N$ and $Y \leq N$ be positive integers.
Then there are at most $O(N^{1/2} Y^{1/2})$ positive integers $n < N$
that are square-prime but not square-$Y$-rough.
\end{lem}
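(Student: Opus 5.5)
Write a square-prime $n<N$ as $n=m^2p$ with $p$ prime. The plan is to show that failure of square-$Y$-roughness forces the prime factor $p$ to be small, and then to count such $n$ directly.

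First, the structural observation. Suppose $n=m^2p$ and $p>Y$. Then every prime $q\le Y$ satisfies $q\ne p$, so $v_q(n)=2v_q(m)$ is even. Hence $n$ is square-$Y$-rough, since we may write $n=m'^2n'$ with $n'=p\cdot(\text{the $Y$-rough part of }m)^2$, which is $Y$-rough.

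Therefore every $n$ we need to count has a representation $n=m^2p$ with $p\le Y$. It follows that the number of such $n$ is at most the number of pairs $(m,p)$ with $p\le Y$ prime and $m^2p<N$. For each such $p$ there are at most $(N/p)^{1/2}$ values of $m$, so the count is at most
\[
\sum_{p\le Y}\Bigl(\frac{N}{p}\Bigr)^{1/2}\le N^{1/2}\sum_{1\le k\le Y}k^{-1/2}\ll N^{1/2}Y^{1/2}.
\]

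There is no real obstacle here. The only point needing care is the roughness argument in the case where $m$ itself is divisible by primes up to $Y$. This is handled by the equivalent definition of square-$Y$-roughness in terms of even valuations at all primes $q\le Y$.
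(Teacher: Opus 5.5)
Your proposal is correct and follows the paper's proof: a square-prime $m^2 p$ that fails to be square-$Y$-rough must have $p \leq Y$, and summing $(N/p)^{1/2}$ over $p \leq Y$ gives $O(N^{1/2} Y^{1/2})$. Your valuation argument justifying that $p > Y$ forces square-$Y$-roughness spells out a step the paper asserts directly.
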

\begin{proof}
Suppose that $n < N$ is square-prime, say $n = m^2 p$
for some $m \geq 1$ and prime $p$.
The only way that $n$ can fail to be square-$Y$-rough
is for $p$ itself to satisfy $p \leq Y$.
For each such $p$ there are at most $(N/p)^{1/2}$ possible choices for $m$,
so the number of possibilities is at most
\[
\sum_{p \leq Y} (N/p)^{1/2}
   \ll N^{1/2} \sum_{1 \leq k \leq Y} k^{-1/2}
   \ll N^{1/2} Y^{1/2}. \qedhere
\]
\end{proof}

\begin{proof}[Proof of \Cref{prop:rejected-intervals}]
In the proof of \Cref{prop:accepted-correct},
we saw that if an interval is rejected then
it necessarily contains more than $R/2$ square-primes.
Let us split up each interval into $T/U$ subintervals of length $U$,
for $U$ defined as in \eqref{eq:U-defn}.
If an interval contains more than $R/2$ square-primes,
then by the pigeonhole principle at least one of its $T/U$ subintervals
contains more than $(R/2)/(T/U)$ square-primes.
Now observe, by \eqref{eq:U-interval} and \eqref{eq:T-estimate}, that
\begin{align*}
\frac{R/2}{T/U} & >
   \frac{\log_2 N + O(1)}{2} \cdot \frac{\log N}{240}
   \cdot \left(1 + O\left(\frac{\log \log N}{\log N}\right)\right)
   \frac{7000 \log \log N}{\log^2 N} \\
   & = \left(\frac{1}{480} + o(1)\right) 7000 \log_2 \log N
      > 13 \log_2 \log N
\end{align*}
for large enough~$N$.
Therefore it suffices to obtain an upper bound for the number of
subintervals (of length~$U$) containing at least
\[
g \coloneqq \lceil 13 \log_2 \log N \rceil
\]
square-primes.

By \Cref{lem:square-prime-exceptions},
the number of such subintervals that do \emph{not} contain at least~$g$
square-$\sqrt{N}$-rough integers is $O(N^{3/4})$,
which is negligible compared to the target $O(N/\log^{14} N)$ bound.
So it suffices in turn to bound the number of subintervals containing at least
$g$ square-$\sqrt{N}$-rough integers.

We will estimate the number of such subintervals by applying
\Cref{thm:sieve-main} with $y \coloneqq N^{1/2}$.
The hypothesis \eqref{eq:y-hypothesis} certainly holds for large enough~$N$
as $g \asymp \log \log N$ and $U \asymp \log N$.
Therefore by \eqref{eq:U-interval}
the number of such subintervals is at most
\[
O\biggl( \frac{N}{U} \left(\frac{\alpha U}{\log y}\right)^g\biggr)
   = O\biggl( \frac{N}{\log N}
      \left(\frac{\tfrac1{80} \alpha \log N}{\tfrac12 \log N}\right)^g \biggr)
   = O\left(\frac{N}{\log N} \left(\frac{\alpha}{40}\right)^g \right)
\]
where $\alpha$ is given by \eqref{eq:alpha-defn}.
Now, by \eqref{eq:U-defn} and \cite[Eq.\,3.26]{RS-primes} we have
\[
\prod_{p^2 \divides U} \left(1 - \frac{1}{p}\right)
   \leq \prod_{p \leq A} \left(1 - \frac{1}{p}\right)
   < \frac{1}{\log A}
\]
for large~$N$.
Since $g \asymp \log \log N$, $\log 5U \asymp \log \log N$
and $A \asymp \log \log N$ we obtain
\begin{align}
\label{eq:alpha-calculation}
\alpha < \frac{6 \log(4g^2 \log 5U)}{\log A}
   & = \frac{6 \log(\Theta((\log \log N)^3))}{\log(\Theta(\log \log N))} \\
   & = \frac{18 \log \log \log N + O(1)}{\log \log \log N + O(1)}
      = 18 + o(1). \notag
\end{align}
Therefore, for large enough $N$,
\[
\left(\frac{\alpha}{40}\right)^g < (1/2)^g
   \leq 2^{-13 \log_2 \log N} = \frac{1}{(\log N)^{13}}. \qedhere
\]
\end{proof}

Let us write $\Rset$ for the set of rejected integers,
i.e., those integers lying in some rejected interval.
The cost of the rejection tests may be estimated as follows.
As shown in the proof of \Cref{thm:core-permuted},
the cost over all~$r$ of applying $Q_{\pi^{-1}}$
is already covered by \eqref{eq:step2-cost}.
To check the condition $\wt(c^r) \leq R/2$,
we simply walk through the array,
counting down from $R/2$ in each interval,
at a cost of $O(N)$.
Writing down the rejected integers as we proceed
costs an additional $O(|\Rset| \log N)$.
Each rejected interval contains at most $T \ll \log^2 N$ integers,
so \Cref{prop:rejected-intervals} implies that
\begin{equation}
\label{eq:Rset-cardinality}
|\Rset| \ll N / \log^{12} N,
\end{equation}
and the cost of the rejection tests is only $O(N)$.

\medskip
\step{4}{handle rejected intervals.}
We apply \Cref{prop:square-prime-bulk} to determine which
(odd) integers in $\Rset$ are square-prime.
By \eqref{eq:Rset-cardinality} the cost of this step is
\begin{equation}
\label{eq:bulk-invocation-cost}
O(N) + \frac{N}{\log^{12} N} \cdot (\log N)^{11.5+o(1)} = O(N).
\end{equation}
At this point, if there were no incorrectly accepted intervals,
then we know the correct value of $a^r$ for all~$r$.

\medskip
\step{5}{check against actual number of odd square-primes.}
Using \Cref{lem:convert-format},
in time $O(N)$ we may combine the results of the previous steps
into a single proposed list of odd square-primes less than $N$.
We count these integers and compare to the correct value,
obtained via \Cref{prop:count-square-primes} in time $N^{1/2+o(1)}$.
If even a single candidate $c^r$ is incorrect,
then it must have arisen from an incorrectly accepted interval,
so we must have $\wt(a^r) > R \geq \wt(c^r)$.
As in the proof of \Cref{thm:main-heuristic} (\Cref{sec:heuristic-algorithm}),
our overall tally will then be too small, and we return ``FAIL''.

\medskip
\step{6}{recover primes up to $N$.}
Otherwise our list must be correct,
and we finish up by applying \Cref{prop:square-primes-to-primes}
to deduce the list of primes less than $N$ in time $O(N)$.
This completes the proof of \Cref{thm:main-probabilistic}.

\begin{rem}
\label{rem:main-probabilistic-precise}
The complexity in \Cref{thm:main-probabilistic} is given more precisely by
\[
O(N (\log \log N)^2 \Mstar(N)).
\]
This arises from the first term of \eqref{eq:core-auxiliary-bound}
(the inverse transform step in the core algorithm).
All other terms in \eqref{eq:core-auxiliary-bound},
and all other steps in the proof of \Cref{thm:main-probabilistic},
contribute at most $N (\log \log N)^{1+o(1)}$.
Combining with \eqref{eq:ffmul},
we obtain the best rigorous randomised (Las Vegas) complexity bound
known to the author for enumerating the primes up to $N$:
\[
O\bigl(N (\log \log N)^2 \cdot 4^{\log^* N}\bigr).
\]
\end{rem}

\subsection{On the gap in complexity between
   the probabilistic and heuristic algorithms}
\label{sec:probabilistic-complexity-gap}

In this section we briefly indicate some reasons for the $\log \log N$
complexity gap between the probabilistic and heuristic algorithms
(Theorems \ref{thm:main-probabilistic} and \ref{thm:main-heuristic}).
This has apparently very little to do with our randomisation strategy,
but is due rather to limitations in the sieve methods from \Cref{sec:sieve}.

Whatever randomisation strategy we pursue,
to get a rigorous probabilistic algorithm based on the existing core algorithm
it seems necessary to bound the number of intervals of length $T$ up to~$N$
containing more than $R$ square-primes,
since these are the intervals that the core algorithm
cannot reliably resolve.
Moreover, because of the cost of checking the integers in these intervals
(\Cref{prop:square-prime-bulk}),
this bound must be at least as good as $N / (\log N)^{O(1)}$
for suitable $O(1)$.

To achieve any nontrivial bound at all via \Cref{thm:sieve-main},
we require at the very least that $\alpha U / \log y < 1$.
We saw in the proof of \Cref{thm:sieve-main} that
$\alpha > \alpha_1 \alpha_2 \geq 1$,
so this requirement implies that $U < \log y$.
For the choice $y \coloneqq N^{1/2}$, we see that \Cref{thm:sieve-main}
can only cope with intervals of length $U < \tfrac12 \log N$.

Let us assume that if the target length $T$ is larger than $\tfrac12 \log N$,
then we use the pigeonhole principle approach
(as in the proof of \Cref{thm:main-probabilistic})
to recover a bound for the length-$T$ intervals from
some bound for length-$U$ intervals for a suitable $U \divides T$
(possibly incurring a $(\log N)^{O(1)}$ loss in the quality of the bound).
Thus our problem becomes to bound the number of
length-$U$ intervals containing more than $g \coloneqq (U/T)R$ square-primes.
If we try to do this using \Cref{thm:sieve-main},
and if we want to achieve the desired $N/(\log N)^{O(1)}$ bound,
then we need $(\alpha U / \log y)^g \ll 1/(\log N)^C$ for some $C > 0$.
For $y = N^{1/2}$ as above,
this implies that $((\log N)/2U)^g \gg (\log N)^C$ and hence that
\[
g \log\left(\frac{\log N}{2U}\right) > C \log \log N + O(1).
\]

On the other hand, the cost of the core algorithm
is driven mainly by the ratio $R/T$,
via the first term of \eqref{eq:core-auxiliary-bound}.
Now observe that
\[
\frac{R}{T} = \frac{g}{U} = \frac{2g}{\log N} \cdot \frac{\log N}{2U}.
\]
Since $(\log N)/2U > 1$ and $x > \log x$ for $x > 1$ we obtain
\[
\frac{R}{T} > \frac{2g}{\log N} \log\left(\frac{\log N}{2U}\right)
   > \frac{2C \log \log N + O(1)}{\log N} \asymp \frac{\log \log N}{\log N}.
\]
We conclude that the first term of \eqref{eq:core-auxiliary-bound}
grows at least as rapidly as
\[
N \tsp \frac{\log \log N}{\log N} \log \log N \log N \cdot \Mstar(N)
   \asymp N (\log \log N)^2 \Mstar(N),
\]
which matches the upper bound actually achieved
by the probabilistic algorithm (see \Cref{rem:main-probabilistic-precise}).

By constrast, for the heuristic algorithm we are able to take
$R/T \asymp 1/\log N$,
so that the first term of \eqref{eq:core-auxiliary-bound} is only
$N (\log \log N)^{1+o(1)}$.
Of course this choice of $R/T$ corresponds to
the optimal possible compression ratio,
as the density of the square-primes within the integers is $\Theta(1/\log N)$.

The foregoing discussion suggests that even a slight improvement
in \Cref{thm:sieve-main} might be enough to make the probabilistic
algorithm competitive with the heuristic algorithm.
Let us give an example.
We already mentioned in \Cref{rem:remove-g!} that
estimates like \eqref{eq:prime-tuples} are believed to hold
with the $g!$ factor deleted, at least for \emph{fixed}~$g$.
Suppose that we could similarly remove the $g!$ factor
from \Cref{thm:sieve-tuple}, even for $g$ as large as $\Theta(\log \log N)$.
The corresponding bound in \Cref{thm:sieve-main} would then be
\[
O\left(\frac{N}{U} \cdot \frac{1}{g!}
   \left(\frac{\alpha U}{\log y}\right)^g \right).
\]
Taking $y = N^{1/2}$, $R \asymp \log N$ and $\alpha \asymp 1$ as before,
but increasing $T$ and $U$ by a factor of $\log \log N$ to
$T \asymp \log^2 N$ and $U \asymp \log N \log \log N$,
the bound would become
\[
O\left(\frac{N}{U} \cdot \frac{(\Theta(\log \log N))^g}{g!} \right)
   = O\left(\frac{N}{U} \left(\frac{\Theta(\log \log N)}{g/e}\right)^g\right).
\]
By taking a large enough constant in $g \asymp UR/T \asymp \log \log N$,
the bound would simplify to $N / (\log N)^{\Theta(1)}$,
so \Cref{prop:rejected-intervals} would continue to hold.
The ratio $R/T$ would fall to $1/\log N$,
and the overall complexity would match the heuristic algorithm.
This provides further tantalising evidence that the primes up to~$N$
may be enumerated in time $N (\log \log N)^{1+o(1)}$.

\section{The deterministic algorithm}
\label{sec:deterministic}

The goal of this section is to prove \Cref{thm:main-deterministic}.
The general strategy from \Cref{sec:probabilistic} of modifying
the compression map to avoid impostors cannot possibly work in this context,
because for any fixed choice of modified compression map,
we cannot rule out the existence of some exotic pattern of square-primes
that defeats the choice.
Instead we must find some way of locating the problematic intervals
\emph{in advance}.
In this section we will show how to adapt
Sergeev's prime enumeration algorithm \cite{Ser-prime-turing}
for this purpose.
Sergeev's algorithm in its original form is too slow,
but by terminating the algorithm early,
we obtain a cheaper algorithm that nevertheless
(for suitable parameter choices) yields enough information
to correctly recognise \emph{almost all} intervals as being non-problematic.

\subsection{A generalisation of Sergeev's algorithm}
\label{sec:sergeev}

As mentioned in \Cref{sec:introduction},
Sergeev \cite{Ser-prime-turing} gave the first algorithm that
finds the primes $p < N$ in time $O(N \log N)$ in the Turing model.
The following result adapts Sergeev's method to the
more general problem of finding all $Y$-rough integers up to~$N$.

\begin{thm}
\label{thm:sergeev}
There is a deterministic algorithm with the following properties.
It takes as input positive integers $N$ and $Y$ such that
$2 \leq Y \leq N^{1/2}$.
Its output is a bit array $(u_1, \ldots, u_{N-1})$ such that
$u_n = 1$ if and only if $n$ is $Y$-rough.
Its running time is $O(N \log Y)$.
\end{thm}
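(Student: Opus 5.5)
We plan to follow Sergeev's list-and-merge strategy, stopping the sieve at $Y$ instead of $\sqrt N$. Each composite is listed exactly once via Mairson's canonical form. An integer $n < N$ fails to be $Y$-rough exactly when its least prime factor $p$ satisfies $p \leq Y$. Every such $n$ can be written uniquely as $n = p m$ with $p \leq Y$ prime and $m$ having no prime factor smaller than $p$. So the set of non-$Y$-rough integers is the disjoint union over primes $p \leq Y$ of
\[
C_p \coloneqq \{ p m < N : m \geq 1 \text{ is } (p-1)\text{-rough}\}.
\]
First, we find the primes up to $Y \leq N^{1/2}$ by any reasonable method, in time $N^{1/2+o(1)}$. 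Then we produce each $C_p$ as a sorted list and merge them. Finally, we complement the merged list to obtain the bit array $(u_n)$, using \Cref{lem:convert-format}.

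To generate the lists $C_p$ we use the recursive structure of roughness. Write $\Rset_q(x)$ for the sorted list of $q$-rough integers below $x$. Then $C_p = p \cdot \Rset_{p-1}(N/p)$, and
\[
\Rset_{p}(x) = \Rset_{p-1}(x) \setminus p\,\Rset_{p-1}(x/p).
\]
We process the primes $p \leq Y$ in increasing order and maintain rough lists on the relevant ranges; sorted-list differences cost linear time, as in \Cref{lem:merge}. The sizes are governed by Mertens' theorem: $|C_p| \asymp (N/p)\prod_{q<p}(1-1/q) \asymp N/(p\log p)$, so $\sum_{p \leq Y} |C_p| \asymp N$ up to the $(1 - 1/\log Y)$ factor. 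In a naive representation, each element costs $\Theta(\log N)$ bits. Merging $O(\pi(Y))$ lists in a size-balanced order (Sergeev's ordering, largest lists merged last, like the dyadic grouping in \Cref{prop:primes-to-square-primes}) would then cost about $\sum_p |C_p| \cdot \log N \cdot (\text{depth})$. That is too much, so this is where the main work lies.

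The key step, and the main obstacle, is Sergeev's compressed list format. We store a sorted list of integers below $N$ by splitting the range $[0,N)$ into buckets of length about $D$. Within each bucket we record only the low $\lg D$ bits of each element, plus a unary or counter-based marker for bucket boundaries. A list of $k$ elements in a range of length $N$ then occupies $O(k \log(N/k) + k)$ bits rather than $O(k\log N)$. It can still be merged, differenced, and scaled by $p$ in time linear in this encoded size plus $O(N/D)$, using the amortised counter trick of \Cref{sec:counting}.

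Under this encoding $C_p$ has density about $1/(p\log p)$, so its encoded size is $O((N/(p\log p))\log(p\log p)) = O(N/p)$ bits. Summing over $p \leq Y$ gives $O(N\log\log Y)$ for generation. For merging, we merge the $C_p$ for $p$ in dyadic ranges $[2^j,2^{j+1})$ by a balanced tree. We must check that the total cost across tree levels is dominated by $\sum_p (N/p)\cdot O(\log p) \ll N\log Y$, and we expect to use the bound $O(N\log Y)$ crudely here. The dyadic groups are then merged in order of decreasing $p$ at geometrically decreasing cost, as in \Cref{prop:primes-to-square-primes}.

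We expect the bookkeeping in this step to be the hardest part: converting between compressed formats of different densities when merging and when multiplying by $p$, without exceeding $O(N\log Y)$, including the $O(N/D)$ bucket overheads per operation. The plan is to choose the bucket size per list adaptively, of order the reciprocal density. Conversions then cost linear time in the larger encoding, and overheads sum to $O(N)$ per dyadic group, hence $O(N\log Y)$ overall. The final complement and output is $O(N)$.
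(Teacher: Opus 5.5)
Your outline is the paper's proof: Mairson's decomposition into the disjoint sets of $n<N$ with least prime factor $p\le Y$ (your $C_p$, the paper's $L_p$), Sergeev's compressed lists with block length tied to $p$, and balanced merging within each dyadic range $2^{r-1}<p\le 2^r$ at cost $O(N)$ per range. That within-range merging is where the $O(N\log Y)$ bound comes from. However, two of your cost claims do not hold as stated.

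\emph{Scaling by $p$.} You assert that a compressed list can be scaled by $p$ in time linear in its encoded size, using the counter trick. This is unsupported, and the natural method does not achieve it. To emit the low bits and block offset of $pm$, you add $p\,d_i$ to the previous residue, where $d_i=m-m'$ is the gap to the previous element of $V_p$. Maintaining $d_i$ is cheap, but the product costs $O(\log p\log d_i)$.

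The paper controls this cost using concavity of $\log$:
$\sum_i\log d_i\le|V_p|\log\bigl(|V_p|^{-1}\sum_i d_i\bigr)$.
With $|V_p|\asymp N/(p\log p)$ and $\sum_i d_i\le N/p$, this gives $O(N\log\log p/p)$ per prime. Primes $p\ge\tfrac12N^{1/2}$ are handled crudely using $\log d_i\le\log N$. So generation costs $O(N(\log\log Y)^2)$, not your $O(N\log\log Y)$. This still fits the budget, but you must supply the argument; computing each $pm$ afresh as a full-length integer would already cost $\Omega(N\log N)$.

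A smaller point: you maintain the rough lists by sorted-list differences, which forces you to re-encode a prefix of $C_p$ at a different density. The paper instead keeps $V_p$ as a bit array of length $\lceil N/p\rceil$ and clears the entries with $p\mid m$ using a countdown counter, at cost $O(N/p)$.

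\emph{Merging across dyadic groups.} The claim that the groups merge ``at geometrically decreasing cost, as in \Cref{prop:primes-to-square-primes}'' is false. Here $|T_r|$ is of order $N/r^2$, so the sizes decay only polynomially. Moreover, the final union contains all but $O(N/\log Y)$ of the integers in $[1,N)$. If the running union, or the final list passed to \Cref{lem:convert-format}, were held as full-width integers as in \Cref{prop:primes-to-square-primes}, the cost would be $\Omega(N\log N)$. That exceeds $N\log Y$ precisely in the regime $\log Y\asymp(\log N)^{1/2}\log\log N$ needed in \Cref{sec:deterministic}.

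The step survives only because the running union stays compressed. In $r$-compressed form its encoded size is $O(N)$, so each of the $\lceil\log_2 Y\rceil$ stages costs $O(N)$, for $O(N\log Y)$ in total. This is a linear count of stages, not a geometric sum. The paper avoids this accounting altogether: it converts each $T_r$ separately into a length-$N$ bit array (blockwise via \Cref{lem:convert-format}, at cost $O(N)$ each), ORs the $\lceil\log_2 Y\rceil$ arrays, and complements.
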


\begin{rem}
Taking $Y \coloneqq \lfloor N^{1/2} \rfloor$ in \Cref{thm:sergeev}
essentially recovers Sergeev's result, with two caveats.
First, the output only includes the primes in the range $N^{1/2} < p < N$.
We must also find the primes $p \leq N^{1/2}$,
which can be done easily in negligible time.
Second, the algorithm returns a bit array rather than a list.
If a list is preferred, we can use \Cref{lem:convert-format}
to perform the conversion in time $O(N)$.
\end{rem}

Before giving the proof of \Cref{thm:sergeev},
we introduce a data structure used by the algorithm
to represent subsets $S \subseteq \{0, \ldots, N-1\}$.
Let $r \geq 1$ be an integer.
The \emph{$r$-compressed representation} of $S$
is a string defined as follows.
We break up $\{0, \ldots, N-1\}$ into blocks of length~$2^r$,
writing $S = \cup_i S^{[i]}$ where
\[
S^{[i]} \coloneqq S \cap \bigl[i \cdot 2^r, (i+1) \cdot 2^r\bigr),
   \qquad i = 0, \ldots, \lceil N/2^r \rceil - 1.
\]
To encode a single block $S^{[i]}$,
we walk through the $n \in S^{[i]}$ in increasing order,
writing out the $r$ least significant bits of each~$n$,
with a separator symbol (say \texttt{:}) between each group of $r$ bits.
We encode the entire set $S$ by simply concatenating the encodings
of the blocks $S^{[i]}$ for $i = 0, 1, \ldots, \lceil N/2^r \rceil - 1$,
using a terminating symbol (say \texttt{\#}) to mark the end of each $S^{[i]}$.
For example, if $N = 50$ and $r = 3$,
the set $S = \{7, 8, 9, 10, 32, 49\}$ is encoded as the string
``\texttt{111\#000:001:010\#\#\#000\#\#001\#}''.
In this representation,
$S$ occupies $O(r|S| + \lceil N/2^r \rceil)$ cells on the tape.
(Strictly speaking the \texttt{:} separator is unnecessary
as the algorithm always knows the value of~$r$.)

\begin{lem} Let $r \geq 1$.
\label{lem:compressed-ops}
\begin{enumabc}
\item
Given the $r$-compressed representations of disjoint sets $S_1$ and $S_2$,
we may compute the $r$-compressed representation of $S_1 \cup S_2$ in time
\[
O(r|S_1| + r|S_2| + \lceil N/2^r \rceil).
\]
\item
Given the $r$-compressed representations of
pairwise disjoint sets $S_1, \ldots, S_m$
(assumed to be stored consecutively on the tape),
we may compute the $r$-compressed representation of
$S \coloneqq S_1 \cup \cdots \cup S_m$ in time
\[
O(r|S| \log m + m \lceil N / 2^r \rceil).
\]
\end{enumabc}
\end{lem}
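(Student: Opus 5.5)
For part (a), walk through the two encodings in parallel, one block at a time. For each block index $i = 0, 1, \ldots, \lceil N/2^r \rceil - 1$, the encodings of $S_1^{[i]}$ and $S_2^{[i]}$ are sorted lists of $r$-bit strings terminated by \texttt{\#}. Within a block all elements share the same high-order bits $i$, so comparing two elements of the same block amounts to comparing their $r$ low bits. That comparison costs $O(r)$ and does not require knowing $i$. We therefore run the standard merge of \Cref{lem:merge} on the two block lists and write out the merged block followed by a single \texttt{\#}. When both inputs reach their \texttt{\#} markers, we advance both heads to the next block. Disjointness guarantees that no equal elements occur, so no deduplication is needed. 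Each element is copied and compared $O(1)$ times at cost $O(r)$, and each block boundary costs $O(1)$. The output tape head moves monotonically and the two input heads move monotonically as well; if we prefer, we first copy $S_2$ to a second tape so that both inputs can be scanned simultaneously, which costs $O(r|S_2| + \lceil N/2^r\rceil)$. The total is $O(r|S_1| + r|S_2| + \lceil N/2^r \rceil)$.

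For part (b), merge the $m$ sets pairwise in a balanced binary tree of depth $\lceil \log_2 m \rceil$. At each level, walk along the tape and apply (a) to consecutive pairs of representations, writing the results consecutively to an output tape. If $m$ is odd at some level, simply copy the leftover representation. Copying between tapes and locating the start of the next representation only require linear scans, so they are absorbed into the cost. At a given level the sets are still pairwise disjoint and their total cardinality is $|S|$. The number of representations at that level is at most $m/2^{\ell}$ (rounded up), so by (a) that level costs $O(r|S| + (m/2^\ell + 1)\lceil N/2^r\rceil)$. Summing over the $O(\log m)$ levels, the $r|S|$ terms give $O(r|S|\log m)$. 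The block-overhead terms form a geometric series, bounded by $O(m\lceil N/2^r\rceil)$ plus an extra $O(\log m \cdot \lceil N/2^r\rceil)$, and the latter is $O(m\lceil N/2^r\rceil)$ since $\log m \le m$. This gives the claimed bound.

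The main point needing care is the tape-management in (a): the two input encodings must be traversed in lockstep block by block, even though a block of one set may be empty while the corresponding block of the other is long. Handling this with the \texttt{\#} terminators is what keeps the overhead at $O(\lceil N/2^r\rceil)$ rather than something larger. Avoiding explicit storage of block indices is exactly why no $\log N$ factor appears.
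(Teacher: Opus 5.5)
Your proposal is correct and follows essentially the same route as the paper: for (a), merge the two encodings block by block using \Cref{lem:merge}; for (b), merge pairwise in repeated passes, where each pass costs $O(r|S|)$ plus a geometrically decreasing block overhead. Your treatment of the tape layout (copying one input to a second tape) and of the rounding term $\lceil m/2^\ell\rceil$ fills in details that the paper leaves implicit.
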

\begin{proof}
For (a) we invoke \Cref{lem:merge}, one block at a time,
working from left to right.
For (b), we apply (a) to the $S_j$ in pairs,
repeating the process until all the lists are merged.
By (a) the cost of the $k$\th pass is
$O(r|S| + (m/2^k) \lceil N / 2^r \rceil)$,
so the total over all $k$ is $O(r|S| \log m + m \lceil N / 2^r \rceil)$.
\end{proof}

We will also need the following estimate.
For $x \geq y \geq 2$ let $\Phi(x,y)$ denote the
number of $y$-rough integers $n \in [1,x]$.
\begin{lem}
\label{lem:y-rough-estimate}
For any $x \geq y \geq 2$,
\begin{equation}
\label{eq:Phi-upper-bound}
\Phi(x,y) \ll \frac{x}{\log y}.
\end{equation}
If additionally $y < Cx$ for fixed $C \in (0,1)$ then
\begin{equation}
\label{eq:Phi-lower-bound}
\Phi(x,y) \asymp_C \frac{x}{\log y}.
\end{equation}
\end{lem}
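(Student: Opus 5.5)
The upper bound \eqref{eq:Phi-upper-bound} is a standard sieve upper bound, and I would obtain it from the machinery already in place rather than from scratch. The cleanest route is Selberg's sieve (\Cref{thm:selberg}) applied with $\Aset \coloneqq \{1, \ldots, \lfloor x \rfloor\}$, $\Pset$ the set of all primes, $\Aset_p$ the multiples of $p$, and $\delta(p) \coloneqq 1/p$. Then $|r(d)| \leq 1$, so $R(z) \leq z^2$. For $H(z)$ we have $h(d) = 1/(p-1)$ multiplicatively on squarefree $d$. Hence
\[
H(z) = \sum_{d \leq z,\ d \text{ squarefree}} \prod_{p \divides d} \frac{1}{p-1} \geq \sum_{n \leq z} \frac{1}{n} \geq \log z,
\]
by the usual expansion $\prod_{p\divides d}(p-1)^{-1} = d^{-1}\sum_{\text{rad}(m)=d} m^{-1}\cdot d$-type argument: every $n \leq z$ has squarefree kernel $d \leq z$, and $\sum_{\text{rad}(n) = d} 1/n = \prod_{p \divides d} 1/(p-1)$.

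Every $y$-rough $n \in (z, x]$ lies in $\Aset(z)$ once $z \leq y$. Taking $z \coloneqq \min(y, x^{1/3})$ gives
\[
\Phi(x,y) \leq z + x/\log z + O(z^2) \ll x/\log z.
\]
If $y \leq x^{1/3}$ this is $x/\log y$. Otherwise $\log z \asymp \log x \geq \log y$, which again gives $x/\log y$.

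For the lower bound \eqref{eq:Phi-lower-bound} I would split into two ranges. If $y \leq x^{1/2}$, say, the primes in $(x^{1/2}, x]$ are all $y$-rough. However, this only yields $x/\log x$, which is too weak when $y$ is small, so something better is needed there. For small $y$ I would instead use a lower bound from the fundamental lemma of sieve theory or from Buchstab's identity. Concretely, when $y \leq x^{1/4}$, the classical estimate $\Phi(x,y) = x\prod_{p\leq y}(1-1/p)\,(1+O(e^{-u/2})) + O(\ldots)$ with $u = \log x/\log y$, combined with Mertens' theorem, gives $\gg x/\log y$. That is the part I would cite (for instance \cite[Thm.\,7.11]{MV-mult-nt} or Kou's book) rather than reprove.

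In the range $x^{1/4} < y < Cx$, it suffices to count primes in $(y, x]$. By the prime number theorem (or Chebyshev bounds), $\pi(x) - \pi(y) \geq \pi(x) - \pi(Cx) \gg_C x/\log x \asymp x/\log y$, since $\log y \asymp \log x$ here. This is the only place where $C<1$ is needed: the condition keeps the interval $(y,x]$ a positive proportion of $[1,x]$. The hypothesis is genuinely necessary, because for $y$ very close to $x$ the count can be tiny.

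The main obstacle is the small-$y$ lower bound, which needs a fundamental-lemma-type statement with uniformity in $u$. I would simply invoke the literature there; alternatively, Buchstab's recursion together with the upper bound already proved can bootstrap it, but citing is safer.
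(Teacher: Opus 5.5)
Your argument is sound in outline, but it takes a different route from the paper. The paper derives everything from one cited result: the uniform Buchstab asymptotic $\Phi(x,y) = (x\omega(u) - y)/\log y + O(x/\log^2 y)$ of \cite[Thm.\,III.6.4]{Tenenbaum-NT}, together with $\omega(u) \in [\tfrac12, 1]$. From this the upper bound is immediate. The lower bound follows for $y < x^{1/2}$ (bounded $y$ being trivial). For $x^{1/2} \leq y < Cx$ the paper uses $\Phi(x,y) = \pi(x) - \pi(y) + 1$ and the prime number theorem.

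Your upper bound via \Cref{thm:selberg} is correct. You take $\delta(p) = 1/p$, note $|r(d)| \leq 1$ and $H(z) \geq \sum_{n \leq z} 1/n \geq \log z$, and choose $z = \min(y, x^{1/3})$. This has the merit of being self-contained within the paper, with no Buchstab function. However, an upper-bound sieve cannot give the lower bound, so you still need an external sieve input there. Overall the paper's single citation is more economical. In the $H(z)$ step, the identity you want is simply $\prod_{p \divides d} (p-1)^{-1} = \sum_{\mathrm{rad}(m) = d} 1/m$; the expression preceding it in your write-up is garbled.

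One step needs repair. An estimate of the shape $\Phi(x,y) = x \prod_{p \leq y}(1 - 1/p)\,(1 + O(e^{-u/2}))$ plus a negligible error gives a positive lower bound only for $u \geq u_0$. Here $u_0$ depends on the unspecified implied constant, and nothing guarantees $u_0 \leq 4$. So the range $x^{1/u_0} < y \leq x^{1/4}$ is not covered as written.

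The fix is the argument you dismissed. For $y \leq x^{1/2}$, the primes in $(x^{1/2}, x]$ are $y$-rough. Hence $\Phi(x,y) \gg x/\log x \geq x/(u_0 \log y)$ whenever $y > x^{1/u_0}$. The prime count is only too weak when $u$ is unbounded, so split at $x^{1/u_0}$ instead of $x^{1/4}$.

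Two smaller points. First, Chebyshev bounds alone do not give $\pi(x) - \pi(Cx) \gg x/\log x$ for $C$ close to $1$, so you genuinely need the prime number theorem there. Second, since that is asymptotic, use $\Phi(x,y) \geq 1$ (from $n = 1$) to cover bounded $x$.
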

\begin{proof}
According to \cite[Thm.\,III.6.4]{Tenenbaum-NT},
\[
\Phi(x,y) = \frac{x \omega(u) - y}{\log y} + O\left(\frac{x}{\log^2 y}\right)
\]
where $u \coloneqq \log x / \log y \geq 1$
and where $\omega(u)$ denotes the Buchstab function.
(For the definition of $\omega(u)$ see the discussion preceding
\cite[Thm.\,III.6.4]{Tenenbaum-NT}.)

By \cite[Eq.\,III.6.23]{Tenenbaum-NT}
we have $\omega(u) \in [\tfrac12,1]$ for all $u \geq 1$.
The upper bound \eqref{eq:Phi-upper-bound} follows immediately.

To prove the lower bound, suppose first that $y < x^{1/2}$. Then
\[
\Phi(x,y)
   > \frac{x}{2 \log y} + O\left(\frac{x^{1/2} \log y + x}{\log^2 y}\right)
   = \frac{x}{2 \log y} + O\left(\frac{x}{\log^2 y}\right).
\]
This implies \eqref{eq:Phi-lower-bound} for $y \geq y_0$,
provided $y_0$ is chosen large enough.
But \eqref{eq:Phi-lower-bound} also holds for $y < y_0$,
as trivially $\Phi(x,y) \asymp x \asymp x / \log y$ for bounded~$y$.

Finally, suppose that $x^{1/2} \leq y < Cx$ for fixed $C \in (0,1)$.
Then the $y$-rough integers $n \leq x$ are exactly
the primes $y < p \leq x$ together with $n = 1$,
so $\Phi(x,y) = \pi(x) - \pi(y) + 1$.
By the prime number theorem, the latter quantity is
\begin{multline*}
\frac{x}{\log x} - \frac{y}{\log y} + O\left(\frac{x}{\log^2 x}\right)
> \frac{x}{\log x} - \frac{Cx}{\log Cx} + O\left(\frac{x}{\log^2 x}\right) \\
= \frac{x}{\log x}\left(1 - \frac{C}{1 + \frac{\log C}{\log x}}\right) 
+ O\left(\frac{x}{\log^2 x}\right)
\gg_C \frac{x}{\log x} \gg \frac{x}{\log y}. \qedhere
\end{multline*}
\end{proof}

\begin{proof}[Proof of \Cref{thm:sergeev}]
For each prime $p \leq Y$ let
\[
L_p \coloneqq \{1 \leq n < N :
   \textn{the smallest prime divisor of $n$ is $p$}\}.
\]
For example,
\begin{align*}
L_2 & = \{2, 4, 6, 8, 10, 12, \ldots\}, \\
L_3 & = \{3, 9, 15, 21, 27, 33, \ldots\}, \\
L_5 & = \{5, 25, 35, 55, 65, 85, \ldots\}.
\end{align*}
These sets are disjoint and their union is exactly the set of
positive integers $n < N$ that are \emph{not} $Y$-rough.
The algorithm has three main steps.
In Step 1 we generate each $L_p$ as an $r_p$-compressed list
for $r_p \coloneqq \lg p = \lceil \log_2 p \rceil$.
These lists are written to a single tape successively
for $p = 2, 3, 5, \ldots$ up to~$Y$.
(The cost of finding these primes is $Y^{1+o(1)} < N^{1/2+o(1)}$
which is negligible.)
In Step 2 we merge together the lists corresponding to
each possible value of~$r$,
and in Step 3 we write the output array.
We now describe these steps in more detail.

\medskip
\step{1}{generate compressed lists.}
Consider the $p$\th iteration, i.e., the iteration that generates~$L_p$.
The key observation (which goes back to \cite{Mai-primes}) is that
$L_p$ consists of exactly those integers of the form $n = pm$
where $m$ lies in the set
\[
V_p \coloneqq \{1 \leq m < N/p :
   \textn{$m$ not divisible by any prime $q < p$}\}.
\]

At the beginning of the $p$\th iteration
we assume that $V_p$ is already known,
and that it is stored on a separate tape as a bit array
$v = (v_1, \ldots, v_{N_p-1})$
where $N_p \coloneqq \lceil N / p \rceil$,
i.e., with $v_m = 1$ if and only if $m \in V_p$.
(Prior to the first iteration,
since $V_2 = \{1, 2, \ldots, N_2 - 1\}$
it is trivial to generate the required array $v = (1, \ldots, 1)$.)
To generate $L_p$,
the basic idea is to examine $v_m$ for each
$m = 1, 2, \ldots, N_p - 1$ in turn,
computing $pm \in L_p$ whenever we see $v_m = 1$,
and writing it in the desired $r_p$-compressed format as we proceed.
In preparation for the next iteration,
we must also generate the bit array $v'$ corresponding to $V_{p'}$,
where $p'$ is the next prime after~$p$.

The bottleneck in this algorithm is the computation of the products $pm$.
If these are calculated from scratch for each $m$,
the resulting algorithm turns out to be too slow,
even using the fastest known methods for integer multiplication
\cite{HvdH-nlogn}.
To achieve the target complexity bound
we instead adopt the following strategy suggested by Sergeev.

As we iterate over $m = 1, 2, \ldots, N_p-1$,
let $m' < m$ denote the most recent value such that $v_{m'} = 1$,
taking $m' \coloneqq 0$ if no such $m'$ has yet occurred.
We keep track of the difference $m - m'$ on a separate tape;
for each~$m$, the amortised cost of incrementing $m - m'$ is $O(1)$
(see \Cref{sec:counting}).

Let $a_m$ denote the quantity $pm \pmod{2^{r_p}}$
with $0 \leq a_m < 2^{r_p}$.
During the loop over $m$,
we retain the value of $a_{m'}$ for $m'$ as defined above.
Whenever we encounter $v_m = 1$,
we compute the product $p(m - m')$ using
the classical multiplication algorithm
and add it to the previous value $a_{m'}$.
This yields $2^{r_p} b + a_m$ for some integer $b \geq 0$.
We advance $b$ blocks by writing $b$ terminating \texttt{\#} symbols,
and then write the new~$a_m$.
Finally we reset $m - m'$ to zero and proceed to the next value of~$m$.

We can decide when the loop over $m$ has finished
by counting downwards from $N_p$ on another tape,
noting that $N_p$ may be computed in time $O(\log^2 N)$
using classical long division.
After concluding the loop over~$m$,
we must also write sufficiently many \texttt{\#} symbols
to cover any remaining empty blocks;
again, this can be achieved by counting down from
$\lceil N/2^{r_p} \rceil$ on another tape.

While performing these steps, it is easy to generate the array $v'$
on a separate tape.
We simply set $v'_m \coloneqq v_m$ for all~$m$,
with the exception that $v'_m \coloneqq 0$ whenever $p \divides m$.
The latter condition can be recognised by simply counting down from $p-1$
repeatedly on another tape.
Thus we can generate $v'$ in amortised $O(1)$ cost for each value of $m$.

Let us now analyse the overall complexity for a given~$p$.
For each $m$ there are several operations of cost $O(1)$,
whose total contribution is $O(N_p) = O(N/p)$.
The number of \texttt{\#} symbols written is
$\lceil N / 2^{r_p} \rceil \asymp N/p$, contributing another $O(N/p)$.
The $O(\log^2 N)$ division cost is certainly also $O(N/p)$
as $N/p \geq N/Y \geq N^{1/2}$.

Consider now the operations performed each time we encounter $v_m = 1$.
Let $d_i \geq 1$ be the difference $m - m'$ for the $i$\th such pair,
indexed over $1 \leq i \leq |V_p|$.
The cost of computing the product $p d_i$ and adding it to $a_{m'}$
is $O(\log p \log d_i + \log p)$,
and the cost of writing the resulting $a_m$ is $O(r_p) = O(\log p)$.
The total is thus
\begin{equation}
\label{eq:sum-log-di}
O\biggl(\log p \sum_i \log d_i + |V_p| \log p \biggr).
\end{equation}
To analyse this expression,
we consider two different cases depending on the size of~$p$.
We will need the formula
\[
|V_p| = \Phi(N/p, p-1) + O(1),
\]
which follows from the fact that $V_p$ is exactly the set of
$(p-1)$-rough integers (strictly) less than~$N/p$.

\emph{Case 1: $p \geq \tfrac12 N^{1/2}$.}
Then by \Cref{lem:y-rough-estimate} we have $|V_p| \ll N / (p \log p)$,
so \eqref{eq:sum-log-di} becomes
\[
O(\log p \cdot |V_p| \log N + |V_p| \log p) = O(N^{1/2} \log N).
\]
This bound also absorbs the $O(N/p) = O(N^{1/2})$
contribution mentioned earlier.
Since there are at most $\pi(Y) \leq \pi(N^{1/2}) \asymp N^{1/2} / \log N$
such primes,
the total contribution from this case is at most
\[
O\left( \frac{N^{1/2}}{\log N} \cdot N^{1/2} \log N \right) = O(N).
\]

\emph{Case 2: $p < \tfrac12 N^{1/2}$.}
By the convexity of the $\log$ function
(equivalently, the inequality of arithmetic and geometric means),
\[
\sum_i \log d_i \leq |V_p| \log\biggl( |V_p|^{-1} \sum_i d_i \biggr).
\]
Since $p-1 < \tfrac12 N^{1/2} < \tfrac14 (N/p)$,
we find from \Cref{lem:y-rough-estimate} that
\[
|V_p| \asymp \frac{N}{p \log p}.
\]
(\Cref{lem:y-rough-estimate} is inapplicable when $p = 2$,
but the same bound holds trivially in this case.)
As $\sum_i d_i \leq N/p$ we obtain
\[
\sum_i \log d_i \ll \frac{N}{p \log p}
   \log\biggl( \frac{p \log p}{N} \cdot \frac{N}{p}\biggr)
   = \frac{N \log \log p}{p \log p}
\]
so the cost estimate \eqref{eq:sum-log-di} becomes
\[
O\left(\frac{N \log \log p}{p}\right).
\]
Again this absorbs the earlier $O(N/p)$ contribution.
Summing over all $p \leq Y$, the total contribution from this case is
\[
\sum_{p \leq Y} O\left( \frac{N \log \log p}{p} \right)
   = O\biggl(N \log \log Y \sum_{p \leq Y} p^{-1} \biggr)
   = O(N (\log \log Y)^2).
\]

In summary, the total cost of Step~1, including primes from both cases, is
\[
O(N (\log \log Y)^2).
\]
We also note for later use that in both cases we have proved the upper bound
\begin{equation}
\label{eq:Vp-upper-bound}
|V_p| \ll \frac{N}{p \log p}.
\end{equation}

\begin{rem}
Sergeev's analysis of the multiplication step is essentially the same as ours,
but his analysis of the cost of writing the compressed list is not as tight.
To compensate, he includes a separate loop at the beginning
to handle the primes ${p < \log N}$.
The cost of this loop is $O(N \log N / \log \log N)$,
which is negligible compared to our target bound $O(N \log Y)$
when $Y \approx N^{1/2}$.
However, it is too slow if $Y$ is much smaller,
for instance when $Y \approx \exp(\sqrt{\log N})$.
This is exactly the regime of interest
in \Cref{sec:deterministic-algorithm} below.
\end{rem}

\step{2}{merge compressed lists for each $r$.}
The values of~$r_p$ that occur in Step~1 lie in the range
$1 \leq r_p \leq \beta$ where
$\beta \coloneqq \lg Y = \lceil \log_2 Y \rceil$.
For each $r \in \{1, 2, \ldots, \beta\}$ let $T_r$ be the union
of those $L_p$ for which $r_p = r$, i.e.,
\[
T_r \coloneqq \bigcup_{\substack{2^{r-1} < p \leq 2^r \\ p \leq Y}} L_p.
\]
The number of primes contributing to $T_r$ is at most $\pi(2^r) \ll 2^r/r$,
so the cost of invoking \Cref{lem:compressed-ops}(b) to compute
the $r$-compressed representation of $T_r$ is
\[
O\bigl(r |T_r| \log(2^r / r) + (2^r / r) \lceil N/2^r \rceil\bigr).
\]
Recalling that $|L_p| = |V_p|$, by \eqref{eq:Vp-upper-bound} we obtain
\[
|T_r| \ll \sum_{2^{r-1} < p \leq 2^r} \frac{N}{p \log p}
   \ll \frac{N}{2^r \cdot r} \sum_{2^{r-1} < p \leq 2^r} 1
   \ll \frac{N}{2^r \cdot r} \cdot \frac{2^r}{r} = \frac{N}{r^2}.
\]
Also, since $r \leq \lceil \log_2 Y \rceil < \frac12 \log_2 N + O(1)$
we have $2^r \ll N^{1/2}$ and hence $\lceil N / 2^r \rceil \asymp N / 2^r$.
The cost estimate for this $r$ thus becomes
\[
O\left(r \cdot \frac{N}{r^2} \cdot r
   + \frac{2^r}{r} \cdot \frac{N}{2^r}\right)
   = O(N).
\]
Summing over the $\beta \asymp \log Y$ values of~$r$,
the total cost of Step~2 is $O(N \log Y)$.

\medskip
\step{3}{write output array.}
For each $r \in \{1, \ldots, \beta\}$
we must convert the $r$-compressed representation of $T_r$
to a bit vector of length~$N$.
This may be achieved by invoking \Cref{lem:convert-format}
separately for each block of length~$2^r$.
The cost for the $i$\th block is $O(2^r + n_i r)$
where $n_i$ is the number of elements in the $i$\th block.
The total over all $\lceil N / 2^r \rceil$ blocks is
\[
O\bigl( \lceil N / 2^r \rceil 2^r + |T_r| r \bigr) = O(N + N/r) = O(N)
\]
so the total over all~$r$ is $O(N \log Y)$.
We must then sum the resulting vectors
(i.e., compute the union of the $T_r$),
again at a cost of $O(N \log Y)$,
and perform a final trivial $O(N)$ pass to compute the complement.
\end{proof}

\begin{rem}
Noting that Steps~2 and~3 in the above algorithm are
asymptotically more expensive than Step~1,
it is tempting to wonder if the overall complexity could be improved
by somehow pushing some work from Steps~2 and~3 into Step~1.
For example, perhaps a more complicated version of Step~1 could
generate a union of several $L_p$ directly,
in a bid to reduce the cost of the merging step.
The author has so far been unable to find
any asymptotic improvement along these lines.
\end{rem}

\Cref{thm:sergeev} finds the $Y$-rough integers up to~$N$,
but what is actually needed in the main deterministic algorithm
is the set of (odd) \emph{square}-$Y$-rough integers up to~$N$.
The next result shows how to compute these integers.
\begin{prop}
\label{prop:rough-to-square-rough}
Let $N$ and $Y$ be as in \Cref{thm:sergeev}.
Given the output of \Cref{thm:sergeev},
we may compute a sorted list of all odd square-$Y$-rough integers
$n < N$ in time
\[
O\left(\frac{N \log N}{\log Y}\right).
\]
\end{prop}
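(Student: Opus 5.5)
The plan is to follow the same dyadic grouping and merging strategy as in the proof of \Cref{prop:primes-to-square-primes}. Every odd square-$Y$-rough integer $n<N$ has a unique representation $n=m^2 n'$ with $n'$ a $Y$-rough integer and $m$ odd and $Y$-smooth. Uniqueness holds because the $p$-adic valuations of $n$ for $p\le Y$ are all even, so $m=\prod_{p\le Y}p^{v_p(n)/2}$. Note that $n'$ is automatically odd, since $Y\ge 2$.

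The first step is to use \Cref{lem:convert-format} to turn the bit array from \Cref{thm:sergeev} into a sorted list $\Rset_Y$ of the $Y$-rough integers less than $N$. By \eqref{eq:Phi-upper-bound} this list has $O(N/\log Y)$ entries, so the conversion costs $O(N+(N/\log Y)\log N)=O(N\log N/\log Y)$.

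Next, enumerate the odd $Y$-smooth integers $m<N^{1/2}$; this takes time $N^{1/2+o(1)}$, for instance by factoring each candidate through trial division by the primes up to $Y\le N^{1/2}$. For each such $m$, form the sorted list $S_m=\{m^2n': n'\in\Rset_Y,\ n'<N/m^2\}$, obtained by walking through a prefix of $\Rset_Y$ and multiplying each entry by $m^2$. By the uniqueness above, the sets $S_m$ are disjoint and their union is exactly the required set.

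For the size of $S_m$, \Cref{lem:y-rough-estimate} gives $|S_m|\ll N/(m^2\log Y)$ when $N/m^2\ge Y$. When $N/m^2<Y$, the list contains only $n'=1$ (or nothing), so $|S_m|\le 1$. Generating $S_m$ by classical multiplication costs $O(|S_m|\log m\log N)$ plus $O(\log N)$ per list for overhead.

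The sets are then merged with \Cref{lem:merge}, using the dyadic grouping $J_k$ of \eqref{eq:Jk-defn}. Within each group the lists are combined by a balanced binary tree of depth $k+O(1)$. The combined unions $S^k$ are then merged in reverse order, from $k=K$ down to $k=1$.

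The cost within group $k$ is $O\bigl(k\log N\sum_{m\in J_k}|S_m|\bigr)$. Since $\sum_{m\in J_k}N/(m^2\log Y)\ll N/(2^k\log Y)$, this is $O(kN\log N/(2^k\log Y))$. Summed over $k$, the total is $O(N\log N/\log Y)$.

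The reverse-order merges are bounded by $\sum_k|S^k\cup\cdots\cup S^K|\log N\ll\sum_k N\log N/(2^k\log Y)$, which is the same order. The lists with $|S_m|\le1$ number at most $N^{1/2}$, so they contribute only $N^{1/2+o(1)}$ to the total.

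I expect the main obstacle to be keeping the bookkeeping of the tape layout within budget rather than any mathematical difficulty. The specific points to watch are: scanning the prefix of $\Rset_Y$ for each of the up to $N^{1/2}$ values of $m$ must be charged to $|S_m|+O(1)$ and not to $|\Rset_Y|$, which requires copying and rewinding carefully; and the per-$m$ overhead must stay $N^{1/2+o(1)}$, which is negligible next to $N\log N/\log Y\ge N$. Unlike in \Cref{prop:primes-to-square-primes}, no $m^{3/2}$ trick is needed, because the bound $N/(m^2\log Y)$ already has the right denominator uniformly in $m$.
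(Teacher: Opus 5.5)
Your proof is correct and follows the paper's outline: convert Sergeev's array to a sorted list, build the lists $S_m$, merge within the dyadic groups $J_k$ by a binary tree, then merge the $S^k$ in reverse order. It differs from the paper in two details.

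\emph{Choice of $m$.} The paper takes \emph{all} odd $m<N^{1/2}$, notes explicitly that the resulting $S_m$ are not disjoint, and lets the merges (which compute unions) discard duplicates. You instead restrict to $Y$-smooth $m$, so that the $S_m$ partition the target set. This gives disjointness, but the cost bound does not need it, and it adds an enumeration step whose cost you misstate. Trial-dividing every odd $m<N^{1/2}$ by all primes up to $Y$ takes about $N^{1/2}\pi(Y)$ divisions, which is far from $N^{1/2+o(1)}$ when $Y$ is a power of $N$. Stopping the trial division at $\sqrt m$, or using a sieve, keeps this step $o(N)$. Alternatively, you can drop the smoothness condition as the paper does.

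\emph{Bounding $|S_m|$ when $N/m^2<Y$.} The paper folds this case into the single bound $|S_m|\ll N/(m^{3/2}\log Y)$, using $m^{1/2}\gg\log Y$ there. You use $|S_m|\le 1$ instead. Your bound is therefore really $|S_m|\ll N/(m^2\log Y)+1$, not a bound uniform in $m$. As you note, though, the extra $+1$ terms cost only $O(N^{1/2}\log^2 N)$ across generation, the tree merges and the reverse-order merges, so the two accountings are interchangeable.
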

\begin{proof}
The number of $Y$-rough integers less than $N$ is $O(N / \log Y)$
(\Cref{lem:y-rough-estimate}),
so by \Cref{lem:convert-format} we may obtain these integers
from the output of \Cref{thm:sergeev} as a sorted list in time
\[
O\left(N + \frac{N}{\log Y} \log N \right).
\]
Call this list $S$.
We now apply the same strategy as in the proof of
\Cref{prop:primes-to-square-primes}:
we generate the sorted lists
\[
S_m \coloneqq \{ m^2 n : n \in S, \; 1 \leq n < N/m^2 \},
   \qquad 1 \leq m < N^{1/2}, \quad \textn{$m$ odd},
\]
and then merge the $S_m$ together.
(Unlike the situation in \Cref{prop:primes-to-square-primes},
the $S_m$ are \emph{not} in general disjoint.)
To estimate the cost, observe first that by \Cref{lem:y-rough-estimate},
\[
|S_m| \ll
   \begin{cases}
      N/(m^2 \log Y), & Y \leq N/m^2, \\
      N/m^2,          & Y > N/m^2.
   \end{cases}
\]
In both cases it follows that
\[
|S_m| \ll \frac{N}{m^{3/2} \log Y};
\]
the argument in the second case is that the condition $Y > N/m^2$
implies that $m^{1/2} \gg (N/Y)^{1/4} \geq Y^{1/4} \gg \log Y$.
Therefore the union $S^k \coloneqq \cup_{m \in J_k} S_m$
(where $J_k$ is defined by \eqref{eq:Jk-defn})
has cardinality
\[
|S^k| \ll \sum_{m \in J_k} \frac{N}{2^{3k/2} \log Y}
   = \frac{N}{2^{k/2} \log Y},
\]
and a similar argument to the proof of \Cref{prop:primes-to-square-primes}
shows that the cost of generating $S^k$ as a sorted list is
\[
O\left(k \log N \sum_{m \in J_k} \frac{N}{2^{3k/2} \log Y}\right)
   = O\left(\frac{k N \log N}{2^{k/2} \log Y}\right).
\]
The total over all $k$ is thus $O(N \log N / \log Y)$.
Finally we merge together the $S^k$ to obtain a single sorted list;
the cost of this step is also $O(N \log N / \log Y)$,
by a similar argument to the last part of the proof of
\Cref{prop:primes-to-square-primes}.
\end{proof}

\subsection{The main deterministic algorithm}
\label{sec:deterministic-algorithm}

We are given some large $N$ as input.
The main steps of the algorithm are:
\begin{enumalgo}
\item
Choose suitable parameters $T$, $R$ and~$Y$.
\item
Use the core algorithm (\Cref{thm:core})
to compute candidates $c^r$ for all~$r$.
\item
Find all $Y$-rough integers less than $N$ using the
generalisation of Sergeev's algorithm (\Cref{thm:sergeev}).
\item
Deduce the list of all square-$Y$-rough integers less than~$N$
(\Cref{prop:rough-to-square-rough}).
\item
Mark as ``rejected'' any interval containing
more than $R$ odd square-$Y$-rough integers,
or containing any square-primes that are not square-$Y$-rough.
\item
Find the odd square-primes in the rejected intervals directly
using the bulk square-primality test (\Cref{prop:square-prime-bulk}).
\item
We have correctly found the odd square-primes up to~$N$.
Conclude by recovering the list of primes up to~$N$
(\Cref{prop:square-primes-to-primes}).
\end{enumalgo}
We now explain and analyse each of these steps in more detail.

\medskip
\step{1}{choose parameters}.
We follow here a similar procedure to
the proof of \Cref{thm:main-probabilistic}.
Let
\[
\tilde A \coloneqq \tfrac13 \log\bigl((\log N)^{1/2} \log \log N \bigr),
\]
noting that $\tilde A > 1$ for large enough~$N$.
Choose a positive integer $A = \tilde A + O(1)$ with $A < \tilde A$
and define
\[
T' \coloneqq \prod_{p \leq A} p^2.
\]
By the prime number theorem $\log T' \sim 2A$,
so for large enough $N$ we have $\log T' < 3A < 3\tilde A$ and hence
\[
T' < (\log N)^{1/2} \log \log N.
\]
Choose also an integer $\beta \geq 0$ such that
\[
\frac13 < 2^\beta \cdot \frac{T'}{(\log N)^{1/2} \log \log N} < 1
\]
and set
\[
T \coloneqq 2^\beta T' = 2^\beta \prod_{p \leq A} p^2.
\]
Then by construction
\begin{equation}
\label{eq:T-interval}
\tfrac13 (\log N)^{1/2} \log \log N < T < (\log N)^{1/2} \log \log N.
\end{equation}
Finally, choose integers $R \geq 1$ and $Y \geq 2$ such that
\begin{align*}
R & = 13 \log_2 \log N + O(1), \\
Y & = \exp\bigl(40 (\log N)^{1/2} \log \log N\bigr) + O(1).
\end{align*}
Increasing $N$ slightly if necessary, we may assume that $T \divides N$.
All of the above parameters may be computed in time $(\log N)^{O(1)}$.

\begin{rem}
As in \Cref{rem:U-special},
the reason for choosing $T$ of the above form is to take advantage of
the $\prod_{p^2 \divides U} (1 - p^{-1})$ factor in \eqref{eq:alpha-defn}.
Without this choice the overall complexity would increase by
$(\log \log \log N)^{1/2}$.
See also \Cref{rem:main-deterministic-precise}.
\end{rem}

\step{2}{compute candidates $c^r$ for all~$r$.}
We invoke \Cref{thm:core} to compute candidates $c^r \in \FF_2^T$
for the vectors $a^r \in \FF_2^T$ for $0 \leq r < N/T$.
The hypotheses $R \leq T/2$ and \eqref{eq:logT-bound} (with $C = 1$)
are certainly satisfied for large enough $N$.
The cost of applying \Cref{thm:core} is
\[
\left(1 + \frac{R \log N}{T}\right) N (\log \log N)^{1+o(1)}
   = N (\log N)^{1/2} (\log \log N)^{1+o(1)}.
\]

\smallskip
\step{3}{find all $Y$-rough integers up to~$N$.}
We apply \Cref{thm:sergeev} to find the $Y$-rough integers less than~$N$,
represented as a bit array.
The hypothesis $2 \leq Y \leq N^{1/2}$ is satisfied for large enough~$N$,
and the cost is
\[
O(N \log Y) = O(N (\log N)^{1/2} \log \log N).
\]

\smallskip
\step{4}{find all odd square-$Y$-rough integers up to~$N$.}
We apply \Cref{prop:rough-to-square-rough} to the output of Step~3
to find the odd square-$Y$-rough integers less than~$N$,
as a sorted list. The cost is
\begin{equation}
\label{eq:square-rough-cost}
O\left(\frac{N \log N}{\log Y}\right)
   = O\left(\frac{N (\log N)^{1/2}}{\log \log N} \right).
\end{equation}

\smallskip
\step{5}{find all rejected intervals.}
We ``reject'' an interval if either
\begin{enumabc}
\item
it contains more than $R$ odd square-$Y$-rough integers, or
\item
it contains a square-prime that is not square-$Y$-rough.
\end{enumabc}
Observe that if an interval is accepted (i.e., not rejected),
then the candidate $c^r$ must be correct.
For if $c^r$ is incorrect,
then the corresponding interval contains more than $R$ odd square-primes,
so either it contains at least one square-prime
that is not square-$Y$-rough (and is hence rejected by condition (b)),
or it contains more than $R$ odd square-$Y$-rough integers
(and so is rejected by condition (a)).

To analyse the cost of identifying the rejected intervals,
we first establish the following estimate.
\begin{prop}
\label{prop:rejected-intervals-deterministic}
The number of rejected intervals is at most $O(N / \log^{13} N)$.
\end{prop}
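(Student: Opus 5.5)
We bound the two rejection criteria separately. Intervals rejected by condition~(b) each contain a square-prime that is not square-$Y$-rough. By \Cref{lem:square-prime-exceptions} there are at most $O(N^{1/2} Y^{1/2})$ such integers below~$N$. Since $\log Y \asymp (\log N)^{1/2} \log \log N$, we have $Y = N^{o(1)}$, so this count is at most $N^{1/2+o(1)}$. That is negligible compared to $N/\log^{13} N$, and it bounds the number of intervals rejected for reason~(b).

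For condition~(a) we apply \Cref{thm:sieve-main} directly with $U \coloneqq T$, $g \coloneqq R+1$ and $y \coloneqq Y$. Here $U \divides N$ holds because we arranged $T \divides N$. An interval rejected by~(a) contains at least $g$ square-$Y$-rough integers, so the number of such intervals is
\[
O\left(\frac{N}{T}\left(\frac{\alpha T}{\log Y}\right)^{g}\right).
\]
Before invoking the theorem we check the hypothesis \eqref{eq:y-hypothesis}. We have $g \asymp \log \log N$ and $\log 5T \asymp \log \log N$, so the left side is $\exp(O((\log\log N)^5 \log\log\log N))$. This is much smaller than $Y = \exp(40(\log N)^{1/2}\log\log N)$, and $Y \leq N^{1/2}$ holds for large~$N$.

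Next we estimate $\alpha$ exactly as in \eqref{eq:alpha-calculation}. The special form $T = 2^\beta \prod_{p \leq A} p^2$ with $A \asymp \log\log N$ gives $\prod_{p^2 \divides T}(1-1/p) < 1/\log A$. Also $\log(4g^2 \log 5T) = 3\log\log\log N + O(1)$. Together these give $\alpha < 18 + o(1) < 20$ for large~$N$. Using \eqref{eq:T-interval} we obtain
\[
\frac{\alpha T}{\log Y} < \frac{20\,(\log N)^{1/2}\log\log N}{40\,(\log N)^{1/2}\log\log N} = \frac12.
\]
Hence the bound is $O((N/T)\,2^{-g})$. With $g = R+1 \geq 13\log_2\log N + O(1)$ we get $2^{-g} \ll (\log N)^{-13}$, and the extra factor $1/T$ only helps. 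This yields $O(N/\log^{13} N)$, and adding the contribution from~(b) finishes the proof.

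The main obstacle is the constant bookkeeping in the $\alpha$ estimate. The ratio $\alpha T/\log Y$ must be strictly below~$1/2$, with the constants $40$ and $13$ chosen to match. That is the only delicate step: it needs the $1/\log A$ saving coming from the squares of small primes dividing~$T$, and it needs care with the $O(1)$ terms in $R$ and $A$. The remaining steps are direct substitutions into earlier results.
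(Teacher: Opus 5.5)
Your proposal is correct and follows the paper's proof essentially line by line. For type (a) you apply \Cref{thm:sieve-main} with $U = T$, $g = R+1$, $y = Y$ and bound $\alpha < 18 + o(1)$ as in \eqref{eq:alpha-calculation}; for type (b) you use \Cref{lem:square-prime-exceptions}. The paper differs only in writing $\alpha T/\log Y < 18/40 + o(1) < \tfrac12$, which sidesteps your slightly loose rounding to exactly $20/40$, and in keeping the factor $1/T$ to obtain the marginally stronger $O(N/(\log N)^{13.5})$.
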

\begin{proof}
To estimate the number of intervals of type (a)
we apply \Cref{thm:sieve-main} with
$U \coloneqq T$, $g \coloneqq R + 1$ and $y \coloneqq Y$.
Since $g \asymp \log \log N$ and $\log 5U = \log 5T \asymp \log \log N$
by \eqref{eq:T-interval},
the left hand side of \eqref{eq:y-hypothesis} is
$\exp(O((\log \log N)^5 \log \log \log N))$,
whereas $y = \exp(\Theta((\log N)^{1/2} \log \log N))$,
so the hypothesis \eqref{eq:y-hypothesis}
certainly holds for large enough~$N$.
Therefore the number of intervals satisfying (a) is at most
\[
O\left(\frac{N}{T} \left(\frac{\alpha T}{\log Y}\right)^g\right)
\]
where $\alpha$ is given by \eqref{eq:alpha-defn}.
A calculation similar to \eqref{eq:alpha-calculation} shows that
$\alpha < 18 + o(1)$,
and by \eqref{eq:T-interval} we have
\[
\log Y = 40(\log N)^{1/2} \log \log N + O(1) > 40 T + O(1).
\]
Thus $\alpha T / \log Y < 18/40 + o(1) < \tfrac12$ for large~$N$,
and the previous estimate becomes
\[
O\left(\frac{N}{T} (1/2)^g\right)
   = O\left(\frac{N}{(\log N)^{1/2}} \, 2^{-13 \log_2 \log N + O(1)}\right)
   = O(N / (\log N)^{13.5}).
\]

As for the intervals of type (b),
by \Cref{lem:square-prime-exceptions} the number of such intervals
is at most $O(N^{1/2} Y^{1/2}) < N^{1/2+o(1)}$,
which is negligible.
\end{proof}

We now discuss the complexity of the rejection tests.
Let $\Rset$ denote the set of rejected integers,
i.e., those integers lying in some rejected interval.
By \Cref{prop:rejected-intervals-deterministic} we have
$|\Rset| \ll N / \log^{12} N$ as certainly $T \ll \log N$.
First consider the intervals of type (a).
The proof of \Cref{prop:rough-to-square-rough} shows that
the number of square-$Y$-rough integers less than $N$ is $O(N / \log Y)$,
so we may convert the output of Step~4 to a bit array in time
$O(N \log N / \log Y)$ via \Cref{lem:convert-format}.
We then walk through this array,
counting down from $R$ in each interval,
at a cost of $O(N)$.
Writing down the rejected integers as we proceed
costs an additional $O(|\Rset| \log N) = O(N)$.
To find the intervals of type (b)
we use the obvious algorithm based on the proof of
\Cref{lem:square-prime-exceptions}, whose complexity is $N^{1/2+o(1)}$.
We must then merge the lists arising from the intervals of type (a) and (b).
Overall, the dominant contribution is the
$O(N \log N / \log Y)$ conversion cost,
which is already covered by \eqref{eq:square-rough-cost}.

\medskip
\step{6}{handle rejected intervals.}
We apply \Cref{prop:square-prime-bulk} to determine which
elements of $\Rset$ are odd square-primes.
As in \eqref{eq:bulk-invocation-cost} the cost is $O(N)$.

As noted earlier,
the accepted candidates $c^r$ are already known to be correct.
We may thus easily combine the output of \Cref{prop:square-prime-bulk}
with the $c^r$ array from Step~2
to obtain a list of correct values of $a^r$ for $0 \leq r < N/T$
in time $O(N)$.

\medskip
\step{7}{recover list of primes up to $N$.}
We use \Cref{lem:convert-format} to convert the bit array
to a sorted list of all odd square-primes less than~$N$
in time $O(N)$,
and then finally recover the list of primes up to~$N$ via
\Cref{prop:square-primes-to-primes}, again in time $O(N)$.
This completes the proof of \Cref{thm:main-deterministic}.

\begin{rem}
\label{rem:main-deterministic-precise}
The complexity in \Cref{thm:main-deterministic} is given more precisely by
\[
O\bigl(N (\log N)^{1/2} \log \log N \cdot \Mstar(N)\bigr).
\]
As in \Cref{rem:main-probabilistic-precise}
this bound arises from the first term of \eqref{eq:core-auxiliary-bound},
with the other terms contributing only $N (\log \log N)^{1+o(1)}$.

If the algorithm ``knows'' the function $\Mstar(n)$
then we can do slightly better.
Indeed, increasing $T$ and $\log Y$ by a factor of $(\Mstar(N))^{1/2}$,
the cost of Step~2 falls and the cost of Step~3 grows by the same factor.
The argument in \Cref{prop:rejected-intervals-deterministic} remains valid,
and the resulting complexity bound is
\begin{equation}
\label{eq:deterministic-optimal}
O\bigl(N (\log N)^{1/2} \log \log N \cdot (\Mstar(N))^{1/2} \bigr).
\end{equation}
Combining with \eqref{eq:ffmul},
we obtain the best rigorous, deterministic complexity bound
known to the author for enumerating the primes up to~$N$:
\[
O\bigl(N (\log N)^{1/2} \log \log N \cdot 2^{\log^* N}\bigr).
\]
\end{rem}

\subsection{On the gap in complexity between
   the deterministic and other two algorithms}
\label{sec:deterministic-complexity-gap}

There is unfortunately a rather large $(\log N)^{1/2+o(1)}$ gap
in complexity between the deterministic algorithm
(\Cref{thm:main-deterministic})
and the probabilistic and heuristic versions
(Theorems \ref{thm:main-probabilistic} and \ref{thm:main-heuristic}).
In this section we explain briefly
(along similar lines to \Cref{sec:probabilistic-complexity-gap})
why this gap seems to be unavoidable,
at least if we insist on using our generalisation of Sergeev's method
(\Cref{thm:sergeev}) to find the problematic intervals.

As in \Cref{sec:probabilistic-complexity-gap},
assume that we need to bound the number of $U$-intervals containing
at least $g \coloneqq (U/T)R$ square-$Y$-rough integers,
for some $U \divides T$.
Using \Cref{thm:sieve-main},
we need $(\alpha U / \log Y)^g \ll 1/(\log N)^C$ for some $C > 0$, and thus
\[
g \log\left( \frac{\log Y}{U} \right) > C \log \log N + O(1).
\]
But then, again as in \Cref{sec:probabilistic-complexity-gap},
\[
\frac{R}{T} = \frac{g}{\log Y} \cdot \frac{\log Y}{U}
   > \frac{g}{\log Y} \log\left(\frac{\log Y}{U}\right)
   > \frac{C \log \log N + O(1)}{\log Y} \asymp \frac{\log \log N}{\log Y}.
\]
This places a lower limit on the first term of
\eqref{eq:core-auxiliary-bound},
and implies that to minimise this term we must take $Y$ as large as possible.
But this increases the $O(N \log Y)$ cost of \Cref{thm:sergeev}.
Balancing these contributions leads exactly to the bound
\eqref{eq:deterministic-optimal}.

\printbibliography

\end{document}